% !TEX options=-synctex=-1
\documentclass{article}
\usepackage{amsmath}
\usepackage{amsthm}
\usepackage{amssymb}
\usepackage{amsfonts}
\usepackage{graphicx}
\usepackage{mathtools}
\usepackage{verbatim}
\usepackage{pgf}
\usepackage{stmaryrd}
\usepackage{amscd}
\usepackage[mathscr]{euscript}
\usepackage{graphicx}
\usepackage{tikz-cd}
\usepackage{tikz}
\usepackage{scalerel}
\usetikzlibrary{decorations.pathmorphing}
\usepackage{xfrac}
\usepackage[utf8]{inputenc}
\usepackage[verbose]{wrapfig}
\usepackage{geometry} 
\usepackage{fancyhdr}
\usepackage[bf,small]{titlesec}
\usepackage{multirow}
\usepackage{enumitem}
\usepackage{cancel} %need for strikethrough
\DeclarePairedDelimiter{\ceil}{\lceil}{\rceil}
\DeclarePairedDelimiter{\floor}{\lfloor}{\rfloor}
\usepackage{bbm}
\usepackage{todonotes}
\usepackage{tocloft}
% makes the spacing nice
\usepackage{microtype}
\usepackage{mathpazo}
\usepackage[english]{babel}
\usepackage{csquotes}
\usepackage[numbib]{tocbibind}
\usepackage[style=alphabetic,
			backend=biber,
			url=false,
			isbn=false,
			doi=false,
			maxnames=30]{biblatex}
\usepackage[hidelinks]{hyperref}

%formatting
\setitemize{noitemsep}

%fixes weird spacing issue with theorem environments
\begingroup
		\makeatletter
		\@for\theoremstyle:=definition,remark,plain\do{%
				\expandafter\g@addto@macro\csname th@\theoremstyle\endcsname{%
						\addtolength\thm@preskip\parskip
						}%
				}
\endgroup

% Macros

\newcommand{\R}{{\mathbb{R}}}
\newcommand{\C}{{\mathbb{C}}}
\newcommand{\Z}{{\mathbb{Z}}}
\newcommand{\Q}{{\mathbb{Q}}}
\newcommand{\T}{{\mathbb{T}}}
\newcommand{\nat}{{\mathbb{N}}}

\renewcommand{\L}{{\mathbb{L}}}
\newcommand{\sphere}{{\mathbb{S}}}
\newcommand{\E}{{\mathbb{E}}}
\newcommand{\F}{{\mathbb{F}}}
\newcommand{\D}{{\mathbb{D}}}

\newcommand{\cat}{{\mathcal{C}}}

\newcommand{\Spc}{\mathrm{Spc}}

\newcommand{\Alg}{\mathrm{Alg}}
\newcommand{\coAlg}{\mathrm{coAlg}}
\newcommand{\biAlg}{\mathrm{biAlg}}
\newcommand{\Op}{\mathrm{Op}}
\newcommand{\Assoc}{\mathrm{Assoc}}
\newcommand{\Com}{\mathrm{Com}}

\newcommand{\Span}{\mathrm{Span}}

\newcommand{\id}{\mathrm{id}}
\newcommand{\ev}{\mathrm{ev}}
\newcommand{\op}{\mathrm{op}}
\newcommand{\vop}{\mathrm{vop}}

\newcommand{\loops}{\Omega}

\newcommand{\cofib}{\mathrm{cofib}}

\newcommand{\surj}{\twoheadrightarrow}

\newcommand{\Spectra}{\mathrm{Sp}}

\newcommand{\DAlg}{\mathrm{DAlg}}

\newcommand{\Fun}{\mathrm{Fun}}
\newcommand{\Fin}{\mathrm{Fin}}

\newcommand{\Cat}{\mathscr{C}\mathrm{at}}
\renewcommand{\Pr}{\mathrm{Pr}}

\newcommand{\End}{\mathrm{End}}

\newcommand{\Proj}{\mathrm{Proj}}

\newcommand{\Hom}{\text{Hom}}
\newcommand{\Map}{\mathrm{Map}}

\DeclareMathOperator*{\fiberproduct}{\times}

\newcommand{\pShv}{{\mathrm{pShv}}}

\newcommand{\Pic}{\mathrm{Pic}}

\newcommand{\Sym}{\mathrm{Sym}}
\newcommand{\LSym}{\mathrm{LSym}}
\newcommand{\CSym}{\mathrm{CSym}}

\newcommand{\Mack}{\mathrm{Mack}}
\newcommand{\Ar}{\mathrm{Ar}}
\newcommand{\TwAr}{\mathrm{TwAr}}

% trace theories

\newcommand{\THH}{\mathrm{THH}}
\newcommand{\THR}{\mathrm{THR}}
\newcommand{\TC}{\mathrm{TC}}
\newcommand{\HH}{\mathrm{HH}}
\newcommand{\HC}{\mathrm{HC}}
\newcommand{\HCR}{\mathrm{HC}\mathbbm{R}}
\newcommand{\HR}{\mathrm{HR}}
\newcommand{\HP}{\mathrm{HP}}
\newcommand{\HPR}{\mathrm{HP}\mathbbm{R}}
\newcommand{\HD}{\mathrm{HD}} % dihedral homology

% Stuff specific to this paper
\newcommand{\Fil}{\mathrm{Fil}}
\newcommand{\fil}{\mathrm{fil}}
\newcommand{\Gr}{\mathrm{Gr}}
\newcommand{\gr}{\mathrm{gr}}
\newcommand{\DG}{\mathrm{DG}}
\newcommand{\fib}{\mathrm{fib}}
\newcommand{\spl}{\mathrm{spl}}
\newcommand{\und}{\mathrm{und}}

\newcommand{\ins}{\mathrm{ins}}
\newcommand{\Nm}{\mathrm{Nm}}
\newcommand{\HKR}{\mathrm{HK}\mathbbm{R}}
\newcommand{\tr}{\mathrm{tr}}
\newcommand{\rslice}{\mathrm{rslice}}
\newcommand{\slice}{\mathrm{slc}}
\newcommand{\fd}{\mathrm{fd}}

\newcommand{\conn}{\mathrm{conn}}
\newcommand{\Tr}{\mathrm{Tr}}
\newcommand{\Res}{\mathrm{Res}}
\newcommand{\SSeq}{\mathrm{SSeq}}

\newcommand{\invdeRham}{{}^\sigma\mathrm{dR}}
\newcommand{\cyc}{\mathrm{cyc}}

\newcommand{\inj}{\hookrightarrow}

\DeclareMathOperator*{\colim}{colim}

\DeclareMathOperator{\Mod}{Mod}
\DeclareMathOperator{\LMod}{LMod}
\DeclareMathOperator{\RMod}{RMod}
\DeclareMathOperator{\coMod}{coMod}
\DeclareMathOperator{\coLMod}{coLMod}

\newcommand{\rlarrows}{\mathrel{\substack{\textstyle\longrightarrow\\[-0.6ex]
											\textstyle\longleftarrow}}}

% Day convolution symbol 
\makeatletter
\newcommand{\ostar}{\mathbin{\mathpalette\make@circled\star}}
\newcommand{\make@circled}[2]{%
  \ooalign{$\m@th#1\smallbigcirc{#1}$\cr\hidewidth$\m@th#1#2$\hidewidth\cr}%
}
\newcommand{\smallbigcirc}[1]{%
  \vcenter{\hbox{\scalebox{0.77778}{$\m@th#1\bigcirc$}}}%
}
\makeatother

\numberwithin{equation}{subsection}

\theoremstyle{plain} \newtheorem{theorem}[equation]{Theorem}
\theoremstyle{plain} \newtheorem*{theorem*}{Theorem}
\theoremstyle{definition} \newtheorem{defn}[equation]{Definition}
\theoremstyle{plain} \newtheorem{prop}[equation]{Proposition}
\theoremstyle{plain} \newtheorem*{prop*}{Proposition}
\theoremstyle{plain} \newtheorem{lemma}[equation]{Lemma}
\theoremstyle{plain} \newtheorem{cor}[equation]{Corollary}
\theoremstyle{definition} \newtheorem{ex}[equation]{Example}
\theoremstyle{definition} 
\theoremstyle{definition} 
\theoremstyle{definition} \newtheorem{rmk}[equation]{Remark}
\theoremstyle{remark} 
\theoremstyle{definition} 
\theoremstyle{definition} \newtheorem{obs}[equation]{Observation}
\theoremstyle{plain} 
\theoremstyle{remark} 
\theoremstyle{definition} 
\theoremstyle{definition} 
\theoremstyle{definition} \newtheorem{ntn}[equation]{Notation}
\theoremstyle{plain} 
\theoremstyle{remark} 
\theoremstyle{definition} 
\theoremstyle{definition} \newtheorem{cons}[equation]{Construction}
\theoremstyle{definition} \newtheorem{variant}[equation]{Variant}
\theoremstyle{definition} \newtheorem{warning}[equation]{Warning}
\theoremstyle{definition} \newtheorem{recollection}[equation]{Recollection}

\titleformat{\subsubsection}[runin]{\bfseries}{\thesubsubsection}{1em}{}

\usepackage{xcolor}
\definecolor{seagreen}{RGB}{46,139,87}
\definecolor{maroon}{RGB}{128,0,0}
\definecolor{darkviolet}{RGB}{210,150,180}

% change spacing in table of contents
% \renewcommand\cftbeforetoctitleskip{0pt}
\setlength{\cftbeforesecskip}{2pt}

\title{\texorpdfstring{\vspace{-2em}}{}\large A filtered Hochschild-Kostant-Rosenberg theorem for real Hochschild homology}
\author{\normalsize Lucy Yang}
\date{\normalsize March 4, 2025 \texorpdfstring{\vspace{-0.5em}}{}}

\addbibresource{references.bib}

% troubleshooting
\DeclareUnicodeCharacter{0301}{\Huge [ICI]}

\begin{document}
\maketitle
\vspace{-1em}
\begin{abstract}
	In this paper, we introduce a notion of derived involutive algebras in $ C_2 $-Mackey functors which simultaneously generalize commutative rings with involution and the (non-equivariant) derived algebras of Bhatt--Mathew and Raksit. 
	We show that the $ \infty $-category of derived involutive algebras admits involutive enhancements of the cotangent complexes, de Rham complex, and de Rham cohomology functors; furthermore, their real Hochschild homology is defined. 
	We identify a filtration on the real Hochschild homology of these derived involutive algebras via a universal property and show that its associated graded may be identified with the involutive de Rham complex. 
	Using $ C_2 $-$ \infty $-categories of Barwick--Dotto--Glasman--Nardin--Shah, we show that our filtered real Hochschild homology specializes to the HKR-filtered Hochschild homology considered by Raksit.  
\end{abstract}

\tableofcontents

\section{Introduction} 
\subsection{The Hochschild--Kostant--Rosenberg theorem} 
There is a two-way dialogue between vector bundles and topology: 
\begin{itemize}
	\item the collection of all isomorphism classes of finite rank vector bundles on a compact topological space $ X $ reflects the homotopy type of $ X $: For instance, the collection of complex line bundles on $ X $ up to isomorphism is in bijection with cohomology classes in $ H^2(X; \Z) $; for a line bundle $ \mathscr{L} $ on $ X $, its associated invariant in $ H^2(X;\Z) $ is its first \emph{Chern class} $ c_1(\mathscr{L}) $. 
	\item vector bundles can in turn be used to study topology: If $ \pi \colon E \to X $ is a fiber bundle with compact fibers, then for each $ \ell $, the cohomology of the fibers $ H^\ell(E_x; \C) $ assemble into a complex vector bundle over $ X $ which reflects `how twisted' the fiber bundle $ \pi $ is. 
\end{itemize} 
While two complex vector bundles on a compact space $ X $ which have the same Chern classes are not necessarily stably equivalent, we do know this:  
The homomorphism $ \mathrm{ch} \colon \mathrm{KU}^0(X) \to H^{\mathrm{even}}(X; \Z) $ induced by sending a complex vector bundle $ \mathcal{V} $ on $ X $ to its Chern classes is a rational equivalence, i.e. $ \mathrm{ch} \otimes \Q \colon \mathrm{KU}^0(X) \otimes \Q \to H^{\mathrm{even}}(X; \Q) $ is an isomorphism. 

The collection of algebraic vector bundles on a scheme $ X $ reflects both the arithmetic and algebro-geometric structure of $ X $. 
Because algebraic vector bundles are significantly less well-understood than their topological counterparts, algebraic Chern classes are crucial to understanding algebraic K-theory. 
The \emph{algebraic Chern character} is constructed as follows for affine schemes. 
Let $ A $ be a commutative ring and $ B $ an commutative $ A $-algebra; \emph{Hochschild homology} $ \HH(B/A) $ is a simplicial commutative $ A $-algebra with $ S^1 $-action.  
There is a map $ K(B) \to \HH(B/A) $ called the Dennis trace, which factors through the homotopy fixed points of the $ S^1 $-action: $ K(B) \to \HH(B/A)^{hS^1} $. 
That the Dennis trace and its corresponding lift to $ \HH(B/A)^{hS^1} $ comprise an algebraic analogue of the Chern character in part relies on the following classical result. 
\begin{theorem} [\cites{MR142598}] \label{thm:hkr_most_classical}
		Let $ B $ be a smooth $ A $-algebra. 
		Then there are canonical isomorphisms $ \HH_*(B/A) \simeq \Omega^i_{B/A} $. 
		Moreover, the $ S^1 $-action on $ \HH(B/A) $ induces the de Rham differential on $ \HH_{*}(B/A) \simeq \Omega^*_{B/A} $.  
\end{theorem}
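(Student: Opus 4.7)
The plan is to construct a natural comparison map, reduce to a universal computation on polynomial algebras, and then propagate the result along étale (or even just smooth) maps. I would begin by writing down the \emph{antisymmetrization map}
\[
\varepsilon_n \colon \Omega^n_{B/A} \longrightarrow \HH_n(B/A), \qquad b_0\, db_1 \wedge \cdots \wedge db_n \;\longmapsto\; \tfrac{1}{n!}\sum_{\sigma \in \Sigma_n} \sgn(\sigma)\, b_0 \otimes b_{\sigma(1)} \otimes \cdots \otimes b_{\sigma(n)},
\]
using the bar (or standard Hochschild) complex as a model for $\HH(B/A)$. One needs to check that this is well-defined (the defining relations $d(b\cdot b') = b\, db' + b'\, db$ and $db \wedge db = 0$ land in boundaries), which is a routine calculation with the Hochschild differential. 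In the other direction, in degree $1$ there is the universal derivation $b_0 \otimes b_1 \mapsto b_0\, db_1$, which identifies $\HH_1$ with $\Omega^1$ and provides the degree-one case by inspection.

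Next, I would verify the statement on the universal example $B = A[x_1,\dots,x_d]$ by direct computation: the Koszul-type bar resolution for a polynomial algebra splits into a wedge of Koszul complexes, and one reads off $\HH_n \cong \Lambda^n_B\, \Omega^1_{B/A}$ with $\varepsilon_n$ inducing the isomorphism. Having handled the polynomial case, I would then pass to arbitrary smooth $B$ by the fact that both sides satisfy flat (in particular, étale) base change: for an étale map $B \to B'$, one has $\HH(B'/A) \simeq \HH(B/A) \otimes_B B'$ and $\Omega^n_{B'/A} \simeq \Omega^n_{B/A} \otimes_B B'$, and the map $\varepsilon_n$ is natural. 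Since smoothness is Zariski-locally (even étale-locally) given by an étale map from a polynomial algebra, and since both functors satisfy Zariski descent, this extends the isomorphism from polynomial algebras to all smooth $B$.

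For the second assertion, I would identify the $S^1$-action on $\HH(B/A)$ with the cyclic structure, so that the induced degree-one operator on $\pi_*\HH(B/A)$ is Connes' operator $B$, given on the standard complex by $B(b_0\otimes \cdots \otimes b_n) = \sum_i (-1)^{ni}\, 1 \otimes b_i \otimes \cdots \otimes b_{i-1}$ (up to normalization). A direct check on elements of the form $b_0 \otimes b_1 \otimes \cdots \otimes b_n$ in the image of $\varepsilon_n$ shows that $B \circ \varepsilon_n = \varepsilon_{n+1} \circ d_{\mathrm{dR}}$, matching the de Rham differential $b_0\, db_1 \wedge \cdots \wedge db_n \mapsto db_0 \wedge db_1 \wedge \cdots \wedge db_n$. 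This identity is natural, so by the same reduction to polynomial algebras plus étale descent it suffices to check it there, where it is an explicit (albeit combinatorial) computation with signs.

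The main obstacle, in my view, is not the formal framework but the combinatorics of the antisymmetrization map: one must verify that $\varepsilon_n$ is simultaneously a chain map, well-defined modulo the relations of $\Omega^\bullet_{B/A}$, and compatible with Connes' operator up to boundaries. The descent step is comparatively formal once one knows that both $\HH(-/A)$ and $\Omega^\bullet_{(-)/A}$ commute with étale base change; the cleanest way to organize this is probably to show first that both functors are left Kan extended from polynomial $A$-algebras (for the derived versions) or satisfy Zariski/étale descent (for the classical statement), so that the two isomorphisms need only be established universally.
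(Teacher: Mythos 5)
The paper does not prove this theorem; it is cited as a classical result from Hochschild--Kostant--Rosenberg \cite{MR142598}, so there is no in-paper argument to compare against. Your outline follows what is essentially the standard textbook proof (cf. Loday or Weibel): build a natural comparison map, check it on polynomial algebras via a Koszul resolution, and propagate along \'etale maps using the \'etale base change property of both $\HH$ and $\Omega^\bullet$, then identify the $S^1$-action with Connes' operator $B$ and check the compatibility $B\circ\varepsilon_n = \varepsilon_{n+1}\circ d_{\mathrm{dR}}$.

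There is, however, one genuine error in the details: the map $\varepsilon_n$ you write down carries a prefactor of $\tfrac{1}{n!}$, and this factor is only available when $\mathbb{Q}\subseteq A$. The theorem is stated (and holds) over an \emph{arbitrary} commutative ring $A$, with no characteristic-zero hypothesis; smoothness here just means finitely presented, flat, with geometrically regular fibers. The classical antisymmetrization map that realizes the HKR isomorphism does \emph{not} include the $\tfrac{1}{n!}$ normalization---it is simply
\begin{equation*}
\varepsilon_n \colon \Omega^n_{B/A} \to \HH_n(B/A), \qquad b_0\,db_1\wedge\cdots\wedge db_n \mapsto \sum_{\sigma\in\Sigma_n}\sgn(\sigma)\,b_0\otimes b_{\sigma(1)}\otimes\cdots\otimes b_{\sigma(n)}\,,
\end{equation*}
and checking that this is well-defined and a chain map uses the shuffle-product structure on the Hochschild complex rather than any division. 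The factor-of-$n!$ version of the map is the \emph{projection} from the Hochschild complex back onto the image of $\varepsilon_\bullet$, which exists only rationally. With the normalization removed, your strategy---polynomial case by Koszul resolution, naturality, \'etale base change for both sides, Zariski-local structure of smooth algebras---is the correct and standard proof, and the compatibility with Connes' operator is a direct computation on the polynomial case as you say.
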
 
Theorem \ref{thm:hkr_most_classical} is also a starting point for many recent advances in mixed characteristic cohomological invariants for schemes; let us note a few. 
This characterization of $ \HH(-/\Z) $ implies powerful descent results for topological Hochschild homology, which in turn were used by Bhatt--Morrow--Scholze to relate $ \TC $ and $ \TC^{-} $ to syntomic and $ A_{\mathrm{inf}} $-cohomology, resp. \cite{BMS2}. 
Furthermore, that the aforementioned trace map can be lifted to a filtered map is crucial to Elmanto--Morrow's construction of motivic cohomology theory for not necessarily smooth schemes \cite{ElmantoMorrow}. 

Observe that Theorem \ref{thm:hkr_most_classical} could just as well be rephrased as: there exists a filtration on $ \HH(B/A) $ whose associated graded is identified with the de Rham complex $ \Omega^\bullet_{B/A} $. 
In recent work, Raksit has proven a systematic generalization of Theorem \ref{thm:hkr_most_classical} which is functorial in $ B $ and simultaneously characterizes $ \HH $, the derived de Rham complex, and filtered Hochschild homology by universal property \cite{Raksit20}. 
Raksit's result completely and precisely describes how the filtration interacts with both the $ S^1 $-action and algebra structure on Hochschild homology. 

Vector bundles often are naturally equipped with additional structure.  
For instance, if $ \pi \colon E \to X $ is a bundle of compact orientable $ 4n $-manifolds over $ X $ equipped with a compatible family of orientations, then the vector bundle on $ X $ determined by $ H^{2n}(E_x; \R) $ acquires a non-degenerate symmetric bilinear form via Poincaré duality. 
% Other motivating examples include Hermitian metrics on complex vector bundles and the real vector bundles of Atiyah \cite{MR206940}. 
Such vector bundles with symmetric bilinear forms assemble into a cohomology theory called \emph{real K-theory}; it may be regarded as an algebraic analogue of $ \mathrm{KO} $. 
More broadly, non-degenerate symmetric bilinear forms (or variants such as conjugate linear forms) play an important role in number theory, algebraic geometry, and algebraic surgery theory \cite{MR206940,MR1803358,MR3658988,MR260844,MR296955,Voevodsky:1996,MR145540,MR1581519}.  
Recent progress in enriched enumerative geometry uses real K-theory to explain the failure of certain counting formulas over non-algebraically closed fields \cite{MR4635342,MR4198841,MR4211099,MR4237952}.  

In this work, we will be interested in characteristic classes for real K-theory and the cohomology group(s) that they take values in.  
Hesselholt--Madsen have introduced real Hochschild homology, which receives a natural transformation from real K-theory \cites{HMreal,Harpaz_Nikolaus_Shah}. 
Using said natural transformation, Cortiñas defined Stiefel--Whitney characteristic classes for real algebraic vector bundles \cite{MR1251695}. 
The purpose of this paper is to prove a functorial Hochschild--Kostant--Rosenberg-style theorem for real Hochschild homology. 
In other words, we identify a natural filtration on real Hochschild homology and characterize its associated graded. 

Before we proceed, let us comment on the nature of the desired result. 
Just as any vector bundle with a nondegenerate symmetric bilinear form has an underlying vector bundle, real K-theory has an underlying spectrum given by ordinary algebraic K-theory, real Hochschild homology has an underlying functor given by ordinary Hochschild homology, and so on. 
This paper shows not only that real Hochschild homology admits a functorial filtration, but also that on underlying objects, said filtration \emph{agrees} with that of Theorem \ref{thm:hkr_most_classical}. 
The theory of $ C_2 $-Mackey functors and $ C_2 $-$ \infty $-categories (the latter due to Barwick--Dotto--Glasman--Nardin--Shah) allows us to systematically keep track of the relationship between our definitions and results and their non-involutive counterparts. 
Moreover, working with $ C_2 $-$ \infty $-categories has theoretical significance, in addition to facilitating bookkeeping: The universal property of real Hochschild homology is internal to $ C_2 $-$ \infty $-categories (compare \cite{QSparam_Tate}). 

\subsection{The main result}\label{subsection:intro_mainresult}
Let us elucidate a setting in which a real Hochschild--Kostant--Rosenberg theorem can be expected to hold. 
Real topological Hochschild homology is a functor which takes $ C_2 $-$ \E_\infty $-algebras to $ C_2 $-$ \E_\infty $-algebras with $ S^\sigma $-action; real Hochschild homology is a \emph{relative} variation on the same construction. 
In order for real Hochschild homology to inherit coherently commutative multiplication, it must be taken relative to a base which has sufficient algebraic structure. 
Furthermore, while Hochschild homology is defined for any $ \E_\infty 
$-$ \Z $-algebra, the (non-equivariant) Hochschild--Kostant--Rosenberg theorem applies only to commutative $ \Z $-algebras and their derived counterparts. 
Thus, to formulate and prove a real enhancement of the Hochschild--Kostant--Rosenberg theorem, we require these auxiliary results and constructions:  
\begin{itemize}
	\item Fix a discrete commutative ring $ k $ with an involution. 
	Our earlier work implies that the fixed point $ C_2 $-Mackey functor $ \underline{k} $ admits a canonical enhancement to a $ C_2 $-$ \E_\infty $-algebra--in particular, there exists a notion of $ C_2 $-$ \E_\infty $-$ \underline{k} $-algebras \cite{LYang_normedrings}. 
	It follows that the real topological Hochschild homology relative to $ \underline{k} $ is defined and takes $ C_2 $-$ \E_\infty $-$ \underline{k} $-algebras to $ C_2 $-$ \E_\infty $-$ \underline{k} $-algebras. 
	\item We will extend the theory of nonconnective simplicial commutative $ k $-algebras of Bhatt--Mathew to $ C_2 $-Mackey functors over the fixed point $ C_2 $-Mackey functor $ \underline{k} $ (Definition \ref{defn:involutive_derived_alg}). 
	We will call these objects \emph{derived involutive $\underline{k}$-algebras}\footnote{In keeping with \cite{CDHHLMNNSI}, one might prefer to call these \emph{derived commutative $ \underline{k} $-algebras with genuine involution}.} and denote the category of such objects by $ \DAlg_{\underline{k}}^\sigma $. 
	The objects of $ \DAlg_{\underline{k}}^\sigma $ bear a certain resemblance to Tambara functors, but they are less general; they may be thought of as derived versions of \emph{cohomological} $ C_2 $-Tambara functors (see Variant \ref{variant:otherdalg_inv}\ref{varitem:discrete_dalg_inv}). 
	On the other hand, any derived $ k $-algebra can be regarded canonically as a derived involutive $ \underline{k} $-algebra by endowing it with the trivial $ C_2 $-action (Remark \ref{rmk:const_inv_dalg}).  
	The category $ \DAlg_{\underline{k}}^\sigma $ admits forgetful functors both to $ C_2 $-$ \E_\infty $-$ \underline{k} $-algebras and to the derived $ k $-algebras of \cite{Raksit20}. 
	It also admits all $ C_2 $-colimits and $ C_2 $-limits (Proposition \ref{prop:invdalg_C2_colims}); hence, the real Hochschild homology of (the underlying $ C_2 $-$\E_\infty$-algebra of) a derived involutive $ \underline{k} $-algebra $ A $ is itself a derived involutive $ \underline{k} $-algebra. 
	\item For any derived involutive algebra $ A $, we introduce a notion of $ h_\sigma^+ $-differential graded $ A $-module. 
	This is a twisted analogue of a homotopy coherent cochain complex: An object consists of a graded $ A $-module $ \{X_*\}_{* \in \Z} $ with differentials $ d \colon \Sigma^\sigma X_i \to X_{i+1} $ which square to zero in a homotopy coherent manner. 
	On underlying objects, the differential $ \Sigma^1 X_i^e \to X_{i+1}^e $ is antilinear with respect to the $ C_2 $-action (see Remark \ref{rmk:inv_dg_objects_unravelled}). 
	Furthermore, there is a notion of a derived involutive algebra in the category of $ h_\sigma^+ $-dg $ A $-modules (Definition \ref{defn:dg_involutive_alg}), which we refer to as $ h^\sigma_+ $-dg derived involutive $ A $-algebras and denote by $\DG^{\sigma}_+\DAlg_{A}^\sigma $. 
\end{itemize}
With these notions in place, we prove: 
\begin{theorem} \label{thm:real_hkr} 
	Let $ k $ be a discrete commutative ring with an involution, and let $ \underline{k} $ be the associated fixed point $ C_2 $-Mackey functor (Example \ref{ntn:fixpt_green_functor}). 
	\begin{enumerate}[label=(\arabic*)]
		\item (see \S\ref{subsection:fil_inv_circ}) There exists a functor called real Hochschild homology
		\begin{equation*}
			\HR(-/\underline{k}): \DAlg_{\underline{k}}^\sigma \to \DAlg_{\underline{k}}^\sigma
		\end{equation*} 
		which is a linearization of $ \THR $ (Remark \ref{rmk:real_HH_as_THR_linearization}) and whose underlying object is Hochschild homology. 
		\item (Proposition \ref{prop:sq_zero_has_left_adjoint}) There exists a functor called the involutive cotangent complex
		\begin{equation*}
			\L_{-/\underline{k}}:\DAlg_{\underline{k}}^\sigma \to \Mod_{\underline{k}}\,;
		\end{equation*}
		for a derived involutive $ \underline{k} $-algebra $ A $, the underlying $ k $-module of $ \L_{A/\underline{k}} $ is the ordinary derived cotangent complex $ \L_{A^e/k} $.
		\item (Theorem \ref{thm:invdeRham_equals_LSymcotangent}) There exists a functor called the involutive derived de Rham complex 
		\begin{equation*}
			\L\Omega^{\sigma,\bullet}_{-/\underline{k}}:\DAlg_{\underline{k}}^\sigma \to \DG^{\sigma, \geq 0}_+\DAlg_{\underline{k}}^\sigma 
		\end{equation*}
		whose underlying $ h_+ $-dg $k$-algebra is the derived de Rham complex of $ A^e $ over $ k $ in the sense of \cite[Definition 5.3.3]{Raksit20}. 
		For a derived involutive $ \underline{k} $-algebra $ A $, the underlying graded derived involutive $\underline{k}$-algebra of the involutive derived de Rham complex of $ A $ over $ \underline{k} $ can be computed as 
		\begin{equation*}
			\L\Omega^{\sigma,\bullet}_{A/\underline{k}} \simeq\LSym_{A}^\sigma( \Sigma^\sigma \L_{A/k}(1)),
		\end{equation*} 
		where $ \LSym_{A}^\sigma $ is the \emph{free graded derived involutive $ A $-algebra} functor. 
		\item (Theorem \ref{thm:gr_of_filtered_HR}) \label{mainthmitem:gr_of_filtered_HR} Given $ A  \in \DAlg_{\underline{k}}^\sigma $, there exists a natural decreasing $\Z_{\geq 0}$-indexed filtration $ \fil^{\geq \bullet}\HR(A/\underline{k}) $ on $ \HR(A/\underline{k}) $ with natural isomorphisms 
		\begin{equation*}
			\gr^\bullet \HR(A/\underline{k}) \simeq \L\Omega^{\sigma,\bullet}_{A/\underline{k}} \qquad \qquad  \fil^0 \HR(A/\underline{k}) \simeq \HR(A/\underline{k}) \,.
		\end{equation*}
		Moreover, the underlying filtered object of $ \fil^{\geq \bullet}\HR(A/\underline{k}) $ is the filtered Hochschild homology of $ A^e $ over $ k $ in the sense of \cite{Raksit20}. 
	\end{enumerate}
\end{theorem}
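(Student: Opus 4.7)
The plan is to follow the architecture of Raksit's proof of the non-equivariant filtered HKR theorem, but perform every construction internally in the $C_2$-$\infty$-category $\DAlg_{\underline{k}}^\sigma$ of derived involutive $\underline{k}$-algebras. Since $\DAlg_{\underline{k}}^\sigma$ is $C_2$-cocomplete (Proposition \ref{prop:invdalg_C2_colims}), it is tensored over pointed $C_2$-spaces, and I would realize real Hochschild homology as the tensor $\HR(A/\underline{k}) \simeq A \otimes_{\underline{k}} S^\sigma$, where $S^\sigma$ is the sign representation sphere; this is the natural $C_2$-analog of Raksit's identification $\HH(B/k) \simeq B \otimes_k S^1$, and it fits the description of $\HR$ as the linearization of $\THR$. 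The filtration $\fil^{\geq \bullet} \HR(A/\underline{k})$ would then be built by refining $S^\sigma$ to a filtered pointed $C_2$-space and transporting this filtration through the tensor.

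\textbf{Filtration and associated graded.} Concretely, I would introduce a filtered refinement $\fil^{\geq \bullet} S^\sigma$ whose weight $0$ piece recovers $S^\sigma$ (giving $\fil^0 \HR(A/\underline{k}) \simeq \HR(A/\underline{k})$) and whose associated graded is supported in weights $0$ and $1$, with weight $1$ term $\Sigma^\sigma S^0(1)$; this is the $C_2$-equivariant counterpart of Raksit's Hodge-filtered circle. Since $A \otimes_{\underline{k}} (-)$ is a colimit-preserving functor from pointed $C_2$-spaces to $\DAlg_{\underline{k}}^\sigma$, it sends associated gradeds to associated gradeds; the weight $1$ piece of the sphere contributes $\Sigma^\sigma \L_{A/\underline{k}}(1)$ via the universal property linking tensoring on $\sigma$-suspensions to the involutive cotangent complex, and the derived involutive algebra structure on the graded object assembles the higher weights into the free graded derived involutive $A$-algebra $\LSym^\sigma_A(\Sigma^\sigma \L_{A/\underline{k}}(1))$. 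By Theorem \ref{thm:invdeRham_equals_LSymcotangent}, this is canonically the involutive derived de Rham complex $\L\Omega^{\sigma, \bullet}_{A/\underline{k}}$.

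\textbf{Compatibility and main obstacle.} Compatibility with Raksit's filtration on the underlying object follows because the underlying functor $\DAlg_{\underline{k}}^\sigma \to \DAlg_k$ preserves colimits and the underlying non-equivariant space of $S^\sigma$ is $S^1$; hence the underlying of $A \otimes_{\underline{k}} \fil^{\geq \bullet} S^\sigma$ recovers Raksit's filtered Hochschild homology. The main technical obstacle will be establishing the universal property of this filtration, i.e., showing that tensoring with the filtered $S^\sigma$ produces associated graded pieces which are precisely the free graded derived involutive algebra on $\Sigma^\sigma \L_{A/\underline{k}}(1)$. In the non-equivariant setting Raksit characterizes filtered $\HH$ as the initial filtered derived algebra with an $S^1$-action compatible with the Hodge filtration; the $C_2$-equivariant analog should go through using the $C_2$-$\infty$-categorical framework developed in the body of the paper, together with the fact that $\DAlg_{\underline{k}}^\sigma$ is generated under $C_2$-sifted colimits by the free $\LSym^\sigma$ construction—so once one establishes a $C_2$-parametrized universal property for $\fil^{\geq \bullet} S^\sigma$, the associated graded computation reduces to evaluating $\LSym^\sigma$ on representation-graded modules, which is exactly the content of part~(3).
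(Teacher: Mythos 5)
Your proposal captures the correct high-level architecture (realize $\HR$ as tensoring with a circle, filter the circle, transport the filtration), but there is a genuine gap in the object you propose to filter. You write that you would introduce ``a filtered refinement $\fil^{\geq \bullet} S^\sigma$'' of $S^\sigma$ as a pointed $C_2$-space and then push the filtration through the colimit-preserving functor $A \otimes_{\underline{k}} (-)$. There is no useful filtration on $S^\sigma$ as a pointed $C_2$-space: $S^\sigma$ is a one-dimensional representation sphere and admits no nontrivial $\Z$-indexed filtration compatible with its group structure whose associated graded looks like what you describe. The HKR-type filtration lives, both in Raksit's work and in this paper, not on the circle itself but on its $\underline{\Z}$-linearization. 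The paper sets $\T^{\sigma,\vee}_\fil := \tau_{\geq *}^{\mathrm{Post}}\,\underline{\Z}^{S^\sigma}$ (Construction \ref{cons:filteredSsigma}), a filtered $\underline{\Z}$-\emph{module}, and works with its dual $\T^\sigma_\fil$. Your proposal collapses this distinction and so cannot be carried out as stated.

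A second, subtler omission is why this filtered object carries the algebraic structure needed to formulate the universal property of $\HR_\fil$. The Postnikov truncation $\tau_{\geq *}^{\mathrm{Post}}$ is \emph{not} lax $C_2$-symmetric monoidal on $\Mod_{\underline{\Z}}$ (the norm fails to take $n$-connective objects to $2n$-connective ones for $n>0$), so one cannot simply apply $\tau_{\geq *}$ and expect a $C_2$-$\E_\infty$-bialgebra. The paper circumvents this via Observation \ref{obs:postnikov_regslice_agreement}: on $\underline{\Z}^{S^\sigma}$ the Postnikov and regular slice connective covers coincide, and the regular slice filtration \emph{is} lax $C_2$-symmetric monoidal (Remark \ref{rmk:regslicelaxsymmmon}); this gives the $C_2$-$\E_\infty$-bialgebra structure. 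Separately, the \emph{derived involutive} (not merely $C_2$-$\E_\infty$) bialgebra structure on $\T^{\sigma,\vee}_\fil$ is obtained from Proposition \ref{prop:nonconn_dalg_Post_filt} using the Postnikov description. Both halves of Proposition \ref{prop:filtered_Ssigma_bialg} require this coincidence, and without it the universal property defining $\HR_\fil$ (Proposition \ref{prop:existence_freefilteredHR}) cannot even be formulated. Your proposal's phrase ``once one establishes a $C_2$-parametrized universal property for $\fil^{\geq \bullet} S^\sigma$'' buries exactly this content, which is the crux of the theorem. Finally, the proof of the associated-graded identification is not a direct computation on the weight-$1$ piece of a filtered sphere; it proceeds via the commuting diagram of $C_2$-adjoint pairs (\ref{diagram:gr_of_filtered_HR}), where one checks commutativity of right adjoints and passes to left adjoints, with $\gr(\T^{\sigma,\vee}_\fil) \simeq \D^{\sigma,\vee}_+$ providing the bridge to the involutive de Rham complex.
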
 
We refer to the filtration in Theorem \ref{thm:real_hkr}\ref{mainthmitem:gr_of_filtered_HR} as the \emph{real Hochschild--Kostant--Rosenberg theorem}, or the $ \HKR $ filtration. 
We expect a characterization of the individual terms in the involutive de Rham complex in terms of an involutive analogue of derived exterior powers, but we defer this question to future work (see Remark \ref{rmk:involutive_deRham_cplx_and_exterior_powers_conj}). 
\begin{rmk}
		[The filtered involutive circle] 
		As in \cite{Raksit20}, we characterize filtered real Hochschild homology by a universal property. 
		In order to do this, we introduce the notion of a filtered $ S^\sigma $-action (roughly, an action of $ S^\sigma $ which increases the filtration degree) on a $ \underline{\Z} $-module. 
		A filtered $ S^\sigma $-action on a filtered $ \underline{\Z} $-module determines a $ S^\sigma $-action on its underlying $ \underline{\Z} $-module; its associated graded $ \underline{\Z} $ has a ``twisted'' differential graded structure. 
		The universal property of filtered real Hochschild homology is in the $ C_2 $-$ \infty $-category of filtered derived involutive algebras with filtered $ S^\sigma $-action. 
		This definition is made possible by the existence of certain involutive bialgebra structure on $ \underline{\Z}^{S^\sigma} $, which itself hinges on a remarkable coincidence: the regular slice and Postnikov connective covers agree on $ \underline{\Z}^{S^\sigma} $. 
\end{rmk}
\begin{rmk}
		In the non-involutive setting, one may \emph{a posteriori} identify the resulting filtration on $ \HH(A/k) $ (where $ k, A$ are discrete and $ A $ is smooth over $k$) with the Postnikov filtration. 
		Here, we hope to take up the questions of whether our filtration on $ \HR $ arises from a filtration intrinsic to the $ C_2 $-$ \infty $-category of $ \underline{k} $-modules and whether our filtration is complete in future work. 
		However, let us note that existing work suggests that addressing these questions will be less straightforward than in the non-equivariant case (compare \cite{PHrealTHH_perfectoid}, in particular see Lemma 4.26). 
\end{rmk} 
We define filtrations on real negative cyclic homology and real periodic cyclic homology, which are equivariant analogues of negative cyclic homology and periodic cyclic homology. 
In this direction, we show: 
\begin{theorem}\label{thm:real_inv_deRham_and_HC_HP}
		Let $ k $ be a discrete commutative ring with an involution, and let $ \underline{k} $ be the associated fixed point $ C_2 $-Mackey functor (Example \ref{ntn:fixpt_green_functor}). 
		\begin{enumerate}[label=(\arabic*)]
			\item \label{thmitem:inv_deRham_coh} (see \S\ref{subsection:C2deRham}) There exists a functor called the Hodge-filtered Hodge complete involutive de Rham cohomology $ \invdeRham^{\wedge, \geq i}_{-/\underline{k}} \colon \DAlg^\sigma_{\underline{k}} \to \E_\infty\Alg \left(\Fil^\wedge\left(\Mod_{\underline{k}}\right)\right) $ whose underlying object is ordinary Hodge-filtered Hodge-completed derived de Rham cohomology.  
			\item \label{thmitem:fil_gr_on_HCminus_HP} (see \S\ref{subsection:filteredHCHPetc}) 
			Let $ A $ be a derived involutive algebra over $ \underline{k} $. 
			There are decreasing $ \Z $-indexed filtrations on real negative cyclic homology $ \fil^\bullet \HCR^{-} $ and real periodic cyclic homology $ \fil^\bullet \HPR $ with associated graded pieces given by
				\begin{equation*}
						\gr^i\HCR^{-}(A/\underline{k}) \simeq \invdeRham^{\wedge, \geq i}_{A/\underline{k}}[\rho i] \qquad \qquad \gr^i \HPR(A/\underline{k}) \simeq \invdeRham^{\wedge}_{A/\underline{k}}[\rho i] \qquad i \in \Z \,.
				\end{equation*}
				Here $ (-)[\rho i] $ denotes a shift by $ i $ copies of the regular representation sphere. 
				These isomorphisms are compatible with the equivalences of \cite[Proposition 1.2.4]{Raksit20}.
		\end{enumerate}
\end{theorem}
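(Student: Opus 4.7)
The plan for part (1) is to realize the involutive derived de Rham complex $\L\Omega^{\sigma,\bullet}_{A/\underline{k}}$ of Theorem \ref{thm:invdeRham_equals_LSymcotangent}, which is a $\Z_{\geq 0}$-graded $h^\sigma_+$-dg derived involutive $\underline{k}$-algebra, as a filtered $C_2$-$\E_\infty$-$\underline{k}$-algebra via the realization/totalization functor from $h^\sigma_+$-dg objects, paralleling Raksit's construction. The Hodge filtration $\invdeRham^{\wedge, \geq i}_{A/\underline{k}}$ is obtained as the Hodge completion of the stupid filtration coming from the internal grading. Since the underlying $h_+$-dg $k$-algebra of $\L\Omega^{\sigma,\bullet}_{A/\underline{k}}$ is the classical derived de Rham complex of $A^e$ over $k$, and the realization/Hodge completion functors are compatible with forgetting the involution, compatibility with ordinary Hodge-filtered Hodge-completed derived de Rham cohomology on underlying objects is automatic.

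For part (2), I would define $\HCR^{-}(A/\underline{k}) \coloneqq \HR(A/\underline{k})^{hS^\sigma}$ and $\HPR(A/\underline{k}) \coloneqq \HR(A/\underline{k})^{tS^\sigma}$, using the $S^\sigma$-action on $\HR$ from Theorem \ref{thm:real_hkr}. The filtrations $\fil^\bullet \HCR^{-}$ and $\fil^\bullet \HPR$ are then defined by applying the filtered $S^\sigma$-homotopy fixed-point and filtered Tate functors to $\fil^\bullet\HR(A/\underline{k})$ in the $C_2$-$\infty$-category of filtered $\underline{k}$-modules equipped with a filtered $S^\sigma$-action (the object introduced in the filtered-involutive-circle remark above). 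That these operations are well-defined at the filtered level uses the filtered bialgebra structure on $\underline{\Z}^{S^\sigma}$, and compatibility with the classical non-equivariant constructions at the level of underlying objects is built in.

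The core computation is identifying $\gr^i \HCR^{-}$ and $\gr^i \HPR$. Since the filtered $S^\sigma$-action increases filtration degree by one, it acts by zero on each graded piece, so on $\gr^\bullet \HR \simeq \L\Omega^{\sigma,\bullet}_{A/\underline{k}}$ the filtered homotopy fixed points and Tate reduce to the cohomology of a filtered trivial $S^\sigma$-action. The shift by $\rho i$ arises because each ``rotation'' of the filtered $S^\sigma$ contributes both the $\sigma$-dimensional sphere shift intrinsic to $S^\sigma$ and an additional filtration-degree shift of one, combining to $\Sigma^{1+\sigma} = \Sigma^\rho$ per filtration degree; this is the real analogue of the classical $\Sigma^2$-per-degree arising in Raksit's setup. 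The main obstacle is the precise computation of the filtered Tate cohomology of the trivial $S^\sigma$-module $\underline{\Z}$, which reduces in turn to controlling $\underline{\Z}^{tS^\sigma}$ as a filtered involutive bialgebra; here the coincidence between the regular slice and Postnikov connective covers on $\underline{\Z}^{S^\sigma}$ (highlighted in the filtered-involutive-circle remark) is essential. Compatibility with \cite[Proposition 1.2.4]{Raksit20} then follows automatically from the compatibility of underlying objects: $S^\sigma$ underlies $S^1$, $\HR$ underlies $\HH$, $\invdeRham^{\wedge}$ underlies classical derived de Rham cohomology, and $\rho$ underlies the regular representation $2$ of the trivial group.
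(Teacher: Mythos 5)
Your overall strategy is the paper's: define $\HCR^-_\fil$ and $\HPR_\fil$ by applying the filtered fixed-point and Tate constructions with respect to $\T^\sigma_\fil$ to $\HR_\fil$, then compute the associated graded. But there are two genuine gaps in the crucial graded-piece identification.

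First, the claim that the filtered $S^\sigma$-action ``acts by zero on each graded piece, so \ldots{} the filtered homotopy fixed points and Tate reduce to the cohomology of a filtered trivial $S^\sigma$-action'' is wrong as stated, and would give the wrong answer if taken literally. The associated graded of a $\T^\sigma_\fil$-module is not a trivial $\T^\sigma$-module; it is a $\D^\sigma_+$-module, where $\D^\sigma_+ = \gr(\T^\sigma_\fil) = \underline{\Z}(0) \oplus \Sigma^\sigma\underline{\Z}(1)$ is the involutive square-zero extension (Proposition \ref{prop:Dsigma_alg} and Remark \ref{rmk:filteredSsigma_gr_adjunction}). The $\D^\sigma_+$-module structure on $\gr^\bullet\HR \simeq \L\Omega^{\sigma,\bullet}$ is precisely the $h^\sigma_+$-differential, i.e.\ the involutive de Rham differential. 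If the action were genuinely trivial, the Tate construction would not assemble the de Rham complex into de Rham cohomology at all---one would instead get a split sum of shifted cotangent powers. The paper's Lemma \ref{lemma:gr_of_filtered_orbits_fixpt_Tate} makes this precise: since $\gr$ is $C_2$-symmetric monoidal (Proposition \ref{prop:param_assoc_gr_is_C2_monoidal}), $\gr\left(X^{t\T^\sigma_\fil}\right) \simeq \gr(X)^{t\D^\sigma_+}$, and this $\D^\sigma_+$-Tate construction is where the differential lives.

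Second, the provenance of the $[\rho i]$ shift is left heuristic in the proposal (``each rotation \ldots{} contributes both the $\sigma$-dimensional sphere shift \ldots{} and a filtration-degree shift of one''). In the paper this is not a property of the filtered Tate of a trivial $S^\sigma$-module $\underline{\Z}$, nor does the proof ever need to compute $\underline{\Z}^{tS^\sigma}$ directly. It arises from the shear equivalence $[\rho*]\colon \underline{\Mod}_{\D^\sigma_+}(\Gr(\cat)) \simeq \underline{\Mod}_{\D_-}(\Gr(\cat))$ (Proposition \ref{prop:shear_gr_on_dg_mod}), combined with the $C_2$-monoidal identification of complete filtered objects with $\D_-$-modules (Proposition \ref{prop:complete_fil_as_cochain_cplx_monoidal}), which together give $H^*(X) \simeq \delta_{\gr}(|X|)[\rho*]$ for $X$ a $\D^\sigma_+$-module (Proposition \ref{prop:param_coh_formula}\ref{prop_item:param_coh_formula_Dsigmaplus_mod}). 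Your sketch for part (1) is closer to correct---$\invdeRham^{\wedge,\geq *}$ is indeed the image of the involutive de Rham complex under this $\D_-$-module/complete-filtered equivalence (after shearing), though there is no further ``Hodge completion'' step; the functor $|-|^{\geq *}$ already lands in complete filtered objects.
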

\begin{rmk}
		We expect the $ \E_\infty $-algebra structure on the involutive derived de Rham cohomology of a derived involutive algebra \ref{thmitem:inv_deRham_coh} to promote to a $ C_2 $-$ \E_\infty $ algebra structure (Remark \ref{rmk:involutive_derham_alg_structure_pending_conjecture}). 
		Our results in Theorem \ref{thm:real_inv_deRham_and_HC_HP}\ref{thmitem:fil_gr_on_HCminus_HP} are only partial: We were unable to delineate precise conditions under which the filtrations in \ref{thmitem:fil_gr_on_HCminus_HP} are complete and exhaustive, and hope to return to this in future work. 
\end{rmk} 
At first glance, the main results of this paper are purely formal and take their ideas directly from \cite{Raksit20}. 
However, underpinning the main theorems is a whole host of technical results devoted to showing that certain $ C_2 $-$ \infty $-categories, $ C_2 $-$ \infty $-operads, and objects therein have good structural properties. 
The reader who is more interested in parametrized $ \infty $-category theory and/or genuine equivariant homotopy theory might find these results to be of independent interest; let us survey a selection here. 
\begin{theorem}
		[{Corollary \ref{cor:param_gr_fil_day_convolution}, Proposition \ref{prop:param_assoc_gr_is_C2_monoidal}}] 
		The $ C_2 $-$ \infty $-category $ \Gr\left(\underline{\Spectra}^{C_2}\right) $ (resp. $ \Fil\left(\underline{\Spectra}^{C_2}\right) $) of graded (resp. filtered) $ C_2 $-spectra admits a canonical $ C_2 $-symmetric monoidal structure given by parametrized Day convolution. 
		Moreover, the associated graded functor promotes canonically to a $ C_2 $-symmetric monoidal $ C_2 $-functor. 
\end{theorem}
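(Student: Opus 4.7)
The plan is to establish a parametrized Day convolution theorem for $C_2$-$\infty$-categories, apply it to the constant $C_2$-$\infty$-category on $\Z$ (as a discrete category for the graded case, as a poset for the filtered case), and then realize the associated graded as a $C_2$-symmetric monoidal left adjoint.

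First, I would prove a parametrized analogue of Lurie's Day convolution theorem (HA 2.2.6.17): given a small $C_2$-symmetric monoidal $C_2$-$\infty$-category $K$ and a presentably $C_2$-symmetric monoidal $C_2$-$\infty$-category $C$ (with $C_2$-colimits distributing over the $C_2$-tensor product), the parametrized functor $C_2$-$\infty$-category $\underline{\Fun}_{C_2}(K,C)$ inherits a canonical $C_2$-symmetric monoidal structure. The characterizing universal property is that $C_2$-symmetric monoidal $C_2$-functors $L \to \underline{\Fun}_{C_2}(K,C)$ correspond to $C_2$-symmetric monoidal $C_2$-functors $L \otimes K \to C$, where $\otimes$ is the $C_2$-symmetric monoidal product of small $C_2$-$\infty$-operads. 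This may be built operadically by constructing the total space of the Day convolution as the $C_2$-$\infty$-operad whose fiberwise operations parametrize Nardin-style indexed systems of functors $K^{\otimes I} \to C$; alternatively, it can be obtained by combining parametrized presheaf theory with the parametrized Yoneda embedding.

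Next, I would specialize to $K = \underline{\Z}$ with the constant (trivial) $C_2$-action and with the $C_2$-symmetric monoidal structure inherited from the additive structure of $\Z$ (in the discrete case for $\Gr$, and in the poset case $(\Z,\leq)$ for $\Fil$). Applying the theorem to $K = \underline{\Z}$ and $C = \underline{\Spectra}^{C_2}$ immediately yields the desired parametrized Day convolution structures on $\Gr(\underline{\Spectra}^{C_2})$ and $\Fil(\underline{\Spectra}^{C_2})$. Concretely, the tensor product of graded objects $X_\bullet \otimes Y_\bullet$ has underlying $n$-th piece $\bigsqcup_{i+j=n} X_i \otimes Y_j$, with $C_2$-equivariance provided fiberwise by $\underline{\Spectra}^{C_2}$ and norm multiplications assembled from the norms on each smash factor.

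Finally, I would promote the associated graded $\gr \colon \Fil(\underline{\Spectra}^{C_2}) \to \Gr(\underline{\Spectra}^{C_2})$ to a $C_2$-symmetric monoidal $C_2$-functor. The strategy is to exhibit it as the parametrized left adjoint to a $C_2$-symmetric monoidal inclusion: a graded object can be regarded as a filtered object supported in a single degree (equivalently, via the ``constant filtration'' on each piece), and the resulting functor $\Gr \to \Fil$ is $C_2$-symmetric monoidal for formal reasons (it is induced by the $C_2$-symmetric monoidal map of indexing $C_2$-$\infty$-categories $\underline{\Z}^{\mathrm{disc}} \to (\underline{\Z}, \leq)$). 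Its parametrized left adjoint is, up to reindexing, the associated graded, and the fact that $\underline{\Z}^{\mathrm{disc}} \to \underline{\Z}$ is a $C_2$-symmetric monoidal localization promotes this left adjoint to a $C_2$-symmetric monoidal $C_2$-functor by the parametrized analogue of HA 2.2.1.9.

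The main obstacle is setting up parametrized Day convolution in sufficient generality to cover $\underline{\Spectra}^{C_2}$. Classical Day convolution uses only coproducts and binary tensor products, but the $C_2$-symmetric monoidal structure involves Hill--Hopkins--Ravenel norms, so one must verify that the indexed coproducts in $\underline{\Fun}_{C_2}(K,C)$ interact correctly with norms of tensor products—i.e. that the formula for the convolution tensor product commutes with restriction and norm maps along the orbital structure of $C_2$. Once this distributivity is in hand, the remaining verifications (that the constant-filtration embedding is $C_2$-symmetric monoidal, and that passing to its left adjoint preserves this structure) follow from the general parametrized adjoint functor theory of Barwick--Dotto--Glasman--Nardin--Shah.
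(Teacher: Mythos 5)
Your proposal splits into two parts, and they fare differently.

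\textbf{Day convolution.} Rather than reproving parametrized Day convolution from scratch, the paper invokes the existing theorem of Nardin--Shah (Theorem 3.2.6 of \cite{NS22}); what remains to be done is to exhibit the indexing $C_2$-$\infty$-categories as $\underline{\Fin}_{C_2,*}$-promonoidal (Lemma \ref{lemma:gr_fil_promonoidal}). Your plan could in principle also work, but it omits the one piece of data that is not formal: the $C_2$-promonoidal structure on $\Z$ must specify what the norm does to the grading, and the right answer is multiplication by $2$ (Construction \ref{cons:param_gr_fil_indexing_cats} sends $C_2 \to C_2/C_2$ to $\Z \xrightarrow{\cdot 2} \Z$). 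Writing ``the $C_2$-symmetric monoidal structure inherited from the additive structure of $\Z$'' glosses over this; with the ``constant'' structure the norm of a degree-$n$ graded object would stay in degree $n$, whereas it must land in degree $2n$ for Lemma \ref{lemma:norm_on_fil_gr_formula} to come out right. You flag ``distributivity'' concerns at the end, but do not surface the $\cdot 2$ point, which is exactly where the content lives.

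\textbf{Associated graded.} This part has a genuine gap. You want to realize $\gr$ as the left $C_2$-adjoint of a $C_2$-symmetric monoidal inclusion $\Gr \to \Fil$ induced by $\Z^\delta \to \Z$. But the $C_2$-functor so induced (left Kan extension along $\Z^\delta \hookrightarrow \Z$) is $\spl$ from Remark \ref{rmk:und_spl}, which is itself \emph{already} a left adjoint: its right adjoint is the restriction $\und$, not $\gr$. Meanwhile $\gr$ is a left adjoint, but its right adjoint $\zeta$ from (\ref{eq:param_assoc_graded}) (inserting zero transition maps) is \emph{not} fully faithful: $\gr(\zeta(X))_n \simeq \cofib\left(X_{n+1} \xrightarrow{0} X_n\right) \simeq X_n \oplus \Sigma X_{n+1}$, so the counit is not an equivalence. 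Hence $\gr$ is not a reflective localization of $\Fil$, and the parametrized analogue of \cite[Proposition 2.2.1.9]{LurHA} does not apply. The paper instead follows Lurie--Rot: it proves a $C_2$-symmetric monoidal Rees-algebra equivalence $\theta \colon \Fil\left(\underline{\Spectra}^{C_2}\right) \xrightarrow{\sim} \underline{\Mod}_{\sphere[t]}\left(\Gr\left(\underline{\Spectra}^{C_2}\right)\right)$ (Lemma \ref{lemma:Rees_alg_C2_sym_mon}) and then identifies $\gr$ with the base-change functor $- \ostar \mathbb{A}$ along a map of filtered $C_2$-$\E_\infty$-algebras $\sphere^\fil \to \mathbb{A}$, which is $C_2$-symmetric monoidal by construction (Proposition \ref{prop:param_assoc_gr_is_C2_monoidal}). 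You would need to replace your localization argument with this base-change argument, which also requires computing the norm of filtered objects (Lemma \ref{lemma:norm_on_fil_gr_formula})---again underscoring the missing $\cdot 2$ from the first part.
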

\begin{prop}[{Proposition \ref{prop:regsliceright_complete_sep}}]
		Let $ A $ denote a connective $ \E_\infty $-algebra in $ \Spectra^{C_2} $. 
	\begin{enumerate}[label=(\arabic*)]
		\item The regular slice filtration on $ A $-modules is \emph{right separated}, i.e. the intersection $ \bigcap_{n \in \Z} \tau^\rslice_{\leq n} \Mod_A\left(\Spectra^{C_2}\right) $ is zero. 
		\item The regular slice filtration on $ \Mod_{A} $ is \emph{right complete}, i.e. the canonical functor $$ \displaystyle \Mod_{A} \to \lim \left(\cdots \to \Mod_{A, \rslice \geq -1} \to \Mod_{A, \rslice \geq 0}\right) $$ is an equivalence. 
	\end{enumerate}
\end{prop}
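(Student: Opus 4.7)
The plan is to reduce both assertions to the corresponding properties of the regular slice t-structure on $\Spectra^{C_2}$ itself, via the forgetful functor $U\colon \Mod_A \to \Spectra^{C_2}$. The central input is that, because $A$ is connective, $U$ \emph{creates} the regular slice t-structure on $\Mod_A$: namely, an $A$-module $M$ lies in $\Mod_{A, \rslice \geq n}$ if and only if $U(M) \in \Spectra^{C_2}_{\rslice \geq n}$, and $U$ commutes with the truncation functors $\tau^\rslice_{\geq n}$ and $\tau^\rslice_{\leq n}$.

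To verify the creation property, I would appeal to the monoidality of the regular slice filtration on $\Spectra^{C_2}$: the subcategory $\Spectra^{C_2}_{\rslice \geq 0}$ is closed under the tensor product, and since $A$ is connective it lies in $\Spectra^{C_2}_{\rslice \geq 0}$, so $A \otimes \Spectra^{C_2}_{\rslice \geq n} \subseteq \Spectra^{C_2}_{\rslice \geq n}$ for every $n$. Given $M \in \Mod_A$, the underlying slice truncation $\tau^\rslice_{\geq n} U(M)$ then inherits a canonical $A$-module structure (by applying $\tau^\rslice_{\geq n}$ to the action map $A \otimes U(M) \to U(M)$ and using that $\tau^\rslice_{\geq n}$ commutes with $A \otimes -$), and this $A$-module represents the truncation $\tau^\rslice_{\geq n} M$ in $\Mod_A$. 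This is a $C_2$-equivariant analogue of the standard result that, for a connective $\E_\infty$-algebra $A$, the forgetful functor $\Mod_A \to \Spectra$ creates the Postnikov t-structure.

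Granting the creation property, both assertions follow formally. The functor $U$ is conservative and preserves small limits (as a right adjoint to the induction $-\otimes A$), and intertwines the regular slice truncations. For right separation: if $M \in \bigcap_n \tau^\rslice_{\leq n}\Mod_A$, then $U(M) \in \bigcap_n \tau^\rslice_{\leq n}\Spectra^{C_2}$, which vanishes by the corresponding statement for $\Spectra^{C_2}$; conservativity then gives $M \simeq 0$. For right completeness, the canonical functor $\Mod_A \to \lim_n \Mod_{A, \rslice \geq -n}$ is obtained by applying $U$ levelwise from the analogous functor for $\Spectra^{C_2}$, and limit-preservation and conservativity of $U$ show that it is an equivalence whenever the $\Spectra^{C_2}$-version is.

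It therefore suffices to prove right separation and right completeness of the regular slice filtration on $\Spectra^{C_2}$ itself. Right separation follows from a generator argument: $\Spectra^{C_2}_{\rslice \geq n}$ is generated under colimits and extensions by the standard slice cells, so any object in $\bigcap_n \tau^\rslice_{\leq n}\Spectra^{C_2}$ admits no nonzero maps from any slice cell and is thus zero. Right completeness uses that the slice cells are compact and each lies in $\Spectra^{C_2}_{\rslice \geq k}$ for some integer $k$, so mapping spaces out of slice cells into the tower $\lim_n \tau^\rslice_{\geq -n} X$ stabilize, giving fully faithfulness of the canonical functor; essential surjectivity is obtained by taking colimits along a compatible system. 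The main anticipated obstacle is the careful verification of the creation property for $\Mod_A$, which requires precise bookkeeping of how the monoidality of the slice filtration interacts with the $A$-module structure; after that, the rest of the argument is largely formal.
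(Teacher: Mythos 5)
You take a genuinely different route from the paper's. The paper deduces Part (1) directly from Lemma~\ref{lemma:reg_slice_coconn_conditions}, which bounds the Postnikov coconnectivity of $X^e$ and $X^{C_2}$ in terms of regular slice coconnectivity and is already stated for $\Mod_A$ with $A$ connective; right-separatedness of the Postnikov t-structure on $\Spectra^{C_2}$ then finishes (1), and (2) is treated as a formal consequence. You instead push everything down to $\Spectra^{C_2}$ along the forgetful functor $U$ and run a cell/compactness argument there. This can be made to work, but it is longer and reproves special cases of lemmas the paper already has on hand (Lemmas~\ref{lemma:reg_slice_conn_conditions} and~\ref{lemma:reg_slice_coconn_conditions}).

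There is one concrete error in your justification of the creation property: the assertion that ``$\tau^\rslice_{\geq n}$ commutes with $A \otimes -$'' is false. It already fails for the Postnikov t-structure on nonequivariant spectra with $A$ the Eilenberg--Mac Lane spectrum of $\Z$: one has $\tau_{\geq 1}(A \otimes \sphere) = \tau_{\geq 1}A = 0$, while $A \otimes \tau_{\geq 1}\sphere$ has $\pi_1 \cong \Z/2$. What you actually need is that $U$ intertwines $\tau^\rslice_{\geq n}$ and $\tau^\rslice_{\leq n}$, and this requires a different argument: use the adjunction identity $\hom_{\Mod_A}(A \otimes C, M) \simeq \hom_{\Spectra^{C_2}}(C, UM)$ for regular slice cells $C$, together with the fact that $A \otimes -$ preserves $\Spectra^{C_2}_{\rslice \geq n}$ (since $A$ is regular slice $\geq 0$ by Lemma~\ref{lemma:reg_slice_conn_conditions}), to check that $U(\tau^\rslice_{\geq n}M)$ is regular slice $n$-connective and its cofiber $U(\tau^\rslice_{\leq n-1}M)$ is $(n-1)$-coconnective; this is precisely the move of \cite[Proposition 7.1.1.13]{LurHA}, which the paper itself cites in the proof of Lemma~\ref{lemma:reg_slice_conn_conditions}. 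Finally, your reduction of right-completeness via ``limit-preservation and conservativity of $U$'' is stated too loosely --- a conservative vertical arrow in a commuting square with an equivalence along the bottom does not by itself force an equivalence along the top --- but it is repairable once you also use that $U$ preserves colimits and intertwines the truncations, so the unit and counit of the adjunction between $\Mod_A$ and $\lim_n \Mod_{A,\rslice\geq -n}$ can be checked after applying $U$.
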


\subsection{Related work} 
This work was very much inspired by \cite{Raksit20}; the reader who is familiar with that work will immediately notice how many of the ideas in this text (such as the proof of Theorem \ref{thm:gr_of_filtered_HR}) are borrowed from there. 
However, the usage of genuine equivariant homotopy theory and $ C_2 $-$ \infty $-categories inevitably introduces additional complexity (for instance, see Warning \ref{warning:diff_equivariant_linearizations}) and requires us to prove new foundational results outside of the scope of the earlier cited work. 
Furthermore, a number of the results presented here are weaker than might be expected from examining their non-equivariant counterparts; often this is due to aforementioned subtleties, or because an existing result in higher algebra has not (yet) been generalized to parametrized higher algebra. 

A common subtlety which arises when considering equivariant enhancements of a non-equivariant notion is: There may be more than one choice (possibly infinitely many). 
Indeed, another construction which may be regarded as an equivariant generalization of Hochschild homology is to use the definition of Hochschild homology \emph{in the symmetric monoidal category of $\underline{k}$-modules.} 
(Contrast: $	A \otimes_{A \otimes A} A $ with $ A \otimes_{N^{C_2}A} A $.)
Related work of Mehrle--Quigley--Stahlhauer studies the former (which exists for arbitrary finite groups $ G $) \cite{MQS_Koszul_Tambara}, while we study the latter. 
Similar considerations apply when comparing our work with that of Blumberg--Gerhardt--Hill--Lawson \cite{MR3990042}. 
Finally, Michael Hill has introduced a notion of Kähler differentials for Tambara functors for any finite group $ G $ in \cite{Hill_Tambara_Kahlerdiffl}. 
Hill's Kähler differentials is broader in scope than our involutive cotangent complex; for instance, ours is not defined on the category of all $ C_2 $-Tambara functors--only the cohomological ones. 
We hazard a guess that our notions agree where they are both defined, but we do not compare the two theories here as it is out of the scope of the current work. 

The relationship between real Hochschild homology and ordinary Hochschild homology with (naïve) $ C_2 $-action simplifies substantially when 2 acts invertibly on the base ring. 
In this setting, our work is related to and builds off of earlier work of Cortiñas, Loday, Lodder, and Solotar--Vigué-Poirrier \cites{MR1251695,MR1229500,MR917736,MR1384461,SVP96}; we discuss how they compare in \S\ref{subsection:computations_comparisons}. 

Some of the main results in this paper were developed independently by Hornbostel and Park \cite{PHrealTHH_perfectoid}. 
In Theorem 1.1 of their paper, they exhibit a filtration on real Hochschild homology with associated graded expressed in terms of the cotangent complex, just as we do here. 
Let us remark on some key differences between our works here: 
\begin{itemize}
		\item The scopes of our works are different: Hornbostel--Park not only prove a Hochschild--Kostant--Rosenberg theorem for real Hochschild homology, but also use their result on real Hochschild homology to prove results about real topological Hochschild homology, while our work does not address these questions.  
		On the other hand, while \cite{PHrealTHH_perfectoid} consider arbitrary rings with involution, their main results are more thorough for the case of rings with trivial involution (compare Theorem 4.31 with the discussion in and after Remark 4.36 \emph{ibid.}). 
		The perspective in this paper puts all rings with involution on equal footing, regardless of whether or not the involution is trivial. 
		\item The methods by which the respective filtrations are constructed diverge; while \cite{PHrealTHH_perfectoid} uses the slice filtration to construct a filtration on $ \HR $, we endow $ \HR $ with a filtration by first showing $ \tau_{\geq *} \underline{\Z}^{S^\sigma} $ has the structure of a filtered involutive bialgebra, then defining $ \fil_{\HKR} \HR $ to be the universal filtered derived involutive algebra with an action of the filtered involutive circle. 
		In particular, we introduce a notion of derived (not necessarily connective) involutive algebras in order to make sense of this algebraic structure on  $ \tau_{\geq *} \underline{\Z}^{S^\sigma} $, while \cite{PHrealTHH_perfectoid} rely on existing theories of derived equivariant rings. 
\end{itemize}
Angelini-Knoll, Kong, and Quigley have forthcoming work on the real motivic filtration for real THH and real TC; in the future, we hope to investigate if and how these filtrations are related. 

\subsection{Outline} 
What follows is a brief summary of the contents of the paper; for more context and motivation for the contents of each section, we invite the reader to peruse their introductions. 
\S\ref{section:genuineeqvt} is devoted to preliminaries concerning equivariance. 
In particular, \S\ref{subsection:param_infty_cats} recalls the ideas, constructions, and notation regarding parametrized $ \infty $-categories we will use throughout the remainder of the text, while \S\ref{subsection:genuine_homotopy} collects structural properties of the genuine equivariant stable category. 
In \S\ref{subsection:gen_eqvt_alg}, we discuss homotopy-coherent genuine equivariant algebraic structures. 
In \S\ref{subsection:filgr}, we discuss filtered and graded objects in a $ C_2 $-$ \infty $-category and a $ C_2 $-parametrized version of the Day convolution monoidal structure. 
\S\ref{subsection:param_bialg} introduces $ C_2 $-bialgebras and monoidal structures on their module categories. 
In \S\ref{section:scr_with_inv}, we introduce the theory of derived involutive algebras, a simultaneous generalization of derived algebras to the equivariant setting and cohomological $ C_2 $-Tambara functors to the derived setting. 
We will use this theory to endow the filtered involutive circle with additional structure. 
We define the $ C_2 $-analogues of the cotangent complex and the de Rham complex and clarify the [involutive] cochain complex structure inherited by the involutive de Rham complex in \S\ref{section:cochainwinvolution}. 
We introduce real Hochschild homology and the filtered involutive circle in \S\ref{subsection:fil_inv_circ} and arrive at a definition of filtered real Hochschild homology and prove the main theorem in \S\ref{subsection:realHKR}, and it is subsequently extended to the real analogues of $ \HP $ and $ \HC^- $ in \S\ref{subsection:filteredHCHPetc}. 
In \S\ref{subsection:computations_comparisons}, we revisit classical computations of real Hochschild homology and dihedral homology to put our results in context. 

\subsection{Notation \& Conventions}
We use freely the language of $ \infty $-categories as developed in \cite{LurHTT}.  
We review the theory of parametrized $ \infty $-categories and parametrized algebras as developed by Barwick, Dotto, Glasman, Nardin, and Shah \cite{BDGNSintro,BDGNS1,Nardinthesis,NS22,Shah18}, but the reader should consult the former references for more details. 
Remark \ref{rmk:terminology_promoting_functors} gives a precise delineation of what it means for a $ C_2 $-$ \infty $-category or $ C_2 $-functor to promote an ordinary $ \infty $-category or functor of $ \infty $-categories. 
In particular, if a $ C_2 $-$ \infty $-category is a $ C_2 $-equivariant enhancement of an ordinary $ \infty $-category, our convention is to denote it with an underline (see, for instance, Notation \ref{ntn:param_module_cats} regarding parametrized module categories or Definition \ref{defn:involutive_derived_alg}); this is not to be mistaken for \cite[Definition 7.4]{BDGNS1}. 

We co-opt most of the notation of \cite{Raksit20}, adding a superscript $ (-)^\sigma $ to indicate objects which should be thought of as a direct $ C_2 $-equivariant analogue of an existing definition. 

\subsection{Acknowledgements} 
The author would like to thank Araminta Amabel, Elden Elmanto, Peter Haine, Denis Nardin, Arpon Raksit, and Jay Shah for their generosity with their time and ideas. 
The author also benefited from numerous discussions with Emanuele Dotto, Jeremy Hahn, Michael Hopkins, Piotr Pstragowski, and Dylan Wilson. 
The author owes much of her understanding of the slice and regular slice filtrations to Michael Hill. 
The intellectual debt this work owes to \cite{Raksit20} is obvious and profound. 
The author would like to thank David Mehrle, J.D. Quigley, and Doosung Park for discussions pertaining to related work, and Andrew Blumberg, S{\o}ren Galatius, and Elden Elmanto for feedback on an earlier draft. 
The author gratefully acknowledges support from the NSF Graduate Fellowship Research program under Grant No. 1745303. 
Finally, the author thanks Northwestern University for its hospitality during an extended visit during which part of this work was completed.  

\section{Genuine equivariance}\label{section:genuineeqvt}
Let $ k $ be a field and let $ V $ be a $ k $-vector space. 
The space of bilinear maps $ \hom_{k \otimes k }(V \otimes V, k) $ has a canonical $ C_2 $-action induced by the isomorphism $ \tau \colon V \otimes V \simeq V \otimes V$; in coordinates, it is given by sending a matrix to its transpose. 
An element of $ \hom_{k \otimes k }(V \otimes V, k) $ is \emph{symmetric} if it is fixed under this $ C_2 $-action. 
In this way, we see that actions of the cyclic group of order two is indispensible to the study of symmetric bilinear forms. 

In this section, we introduce the background and language we will use to describe higher algebraic structures equipped with an action of the group $ C_2 $.  
We will use the language of \emph{parametrized $ \infty $-categories} of Barwick--Glasman--Nardin--Shah \cites{Nardinthesis,Shah18,Shah_paramII,NS22} because it will allow us to compare our constructions and results to those of \cite{Raksit20} in a systematic way.
In \S\ref{subsection:param_infty_cats}, we recall the basic definitions of $ C_2 $-$ \infty $-categories; in particular, ordinary $ \infty $-categorical notions such as functors, adjunctions between functors, and limits and colimits have versions internal to $ C_2 $-$ \infty $-categories.  
\S\ref{subsection:genuine_homotopy} is devoted to examples in and structural results pertaining to genuine equivariant homotopy theory. 
We survey existing theories of equivariant algebra and homotopy coherent equivariant algebra in \S\ref{subsection:gen_eqvt_alg}; this will set the stage for our notion of strictly commutative equivariant algebras, to be introduced in \S\ref{section:scr_with_inv}. 

\subsection{Parametrized \texorpdfstring{$ \infty $}{∞}-categories}\label{subsection:param_infty_cats}
Let $ G $ be a finite group. 
\begin{recollection}
	The orbit category $ \mathcal{O}_G $ is the category with objects finite transitive $ G $-sets and morphisms $ G $-equivariant maps. 
	We let $ \Fin_G $ denote the finite coproduct completion of $ \mathcal{O}_G $, i.e. the category of finite $ G $-sets and $ G $-equivariant maps. 
	We recall that $ \mathcal{O}^\op_G $ is an \emph{orbital} $ \infty $-category in the sense of Definition 1.2 of \cite{Nardinthesis}. 
\end{recollection}
\begin{defn} [{\cite[Definition 1.3]{BDGNS1}}]
	A \emph{$ G $-$ \infty $-category} is a cocartesian fibration $ p \colon \cat \to \mathcal{O}_G^\op $. 

	A morphism of $ G $-$ \infty $-categories is a map $ F $ of $ \infty $-categories over $ \mathcal{O}_G^\op $:
	\begin{equation*}
	\begin{tikzcd}[column sep=tiny,row sep=small]
		\cat \ar[rd,"p"'] \ar[rr,"F"] & & \mathcal{D} \ar[ld,"q"] \\
		& \mathcal{O}_G^\op &
	\end{tikzcd}
	\end{equation*}
	which takes $ p $-cocartesian arrows in $ \cat $ to $ q $-cocartesian arrows in $ \mathcal{D} $. 
	We will write $ G \Cat_\infty $ for the large $ \infty $-category of small $ G $-$ \infty $-categories. 

	Note that for each $ G/H \in \mathcal{O}_G^\op $, the pullback of the cocartesian fibration $ p $ along the inclusion $ \{G/H\} \hookrightarrow \mathcal{O}^\op_G $ determines an ordinary $ \infty $-category, which we denote interchangeably by $ \cat^H $ or $ \cat_{G/H} $. 
	When $ H = \{e\} $ is the trivial subgroup, we will denote $ \cat^H $ by $ \cat^e $ and refer to it as the \emph{underlying $ \infty $-category of the $ G $-$ \infty $-category $ \cat $.} 
\end{defn}
\begin{rmk}
	[{\cites[\S3.2.2]{LurHTT}[Example 2.5]{Shah18}}] \label{rmk:param_unstraighten}
	Let $ \Cat_\infty $ denote the large $ \infty $-category of small $ \infty $-categories. 
	There is a universal cocartesian fibration $ \mathcal{U} \to \Cat_\infty $ such that pullback induces an equivalence
	\begin{equation*}
		\Fun\left(\mathcal{O}_G^\op, \Cat_\infty\right) \simeq \Cat_{\infty/\mathcal{O}_G^\op}^{\mathrm{cocart}} .
	\end{equation*}
	Informally, a $ C_2 $-$ \infty $-category is the data of 
	\begin{itemize}
		\item an $ \infty $-category $ \cat^{C_2} $,
		\item an $ \infty $-category with $ C_2 $-action $ \cat^{e} $, and 
		\item a functor $ \cat^{C_2} \to \cat^e $ which lifts along the $ C_2 $ homotopy fixed points $ (\cat^e)^{hC_2} \to \cat^e $. 
		In particular, if $ \cat^e $ is endowed with the trivial $ C_2 $-action, then $ (\cat^e)^{hC_2} \simeq (\cat^e)^{BC_2} \simeq \Fun(BC_2 , \cat^e) $ comprises objects in $ \cat^e $ with (naïve) $ C_2 $-action.
	\end{itemize}
	In particular, we see that a cocartesian section $ \sigma \colon \mathcal{O}^\op_{C_2} \to \cat $ is determined by its value on $ \sigma(C_2/C_2) $. 
	Informally, we regard the category of cocartesian sections of $ \cat $ as the category of objects in $ \cat $. 
\end{rmk}
\begin{ntn}
		To lighten notational burden, we write $ G $-$ \infty $-category instead of $ \mathcal{O}^\op_G $-$ \infty $-category, and similarly for other phrases. 
		If we want to say that a $ G $-$ \infty $-category $ \cat $ is such that $ \cat_t $ has a certain property for all $ t \in \mathcal{O}^\op_G $, we say that it has that property \emph{fiberwise} or \emph{pointwise}. 
		The same applies to modifiers for functors of $ G $-$ \infty $-categories. 
\end{ntn} 
\begin{recollection}
		[{\cites[\S10]{BDGNS1}[Recollection 5.18]{Shah18}}]\label{rec:vertical_op}
		Let $ \cat $ be a $ G $-$ \infty $-category. 
		Then the \emph{vertical opposite} $ \cat^{\vop} $ to $ \cat $ is the $ G $-$ \infty $-category characterized by canonical equivalences $ \left(\cat^{\vop}\right)^H \simeq \cat^{H,\op} $ for each $ G/H \in \mathcal{O}_G $ so that under these equivalences, the restriction functors in $ \cat^\vop $ classified by morphisms in $ \mathcal{O}^\op_G $ are identified with (the opposite of) the restriction functors in $ \cat $. 
		In other words, if $ \cat $ is classified by the functor $ F \colon \mathcal{O}^\op_G \to \Cat_\infty $ (see Remark \ref{rmk:param_unstraighten}), then $ \cat^\vop $ is classified by the composite $ \mathcal{O}^\op_G \xrightarrow{F} \Cat_\infty \xrightarrow{(-)^\op} \Cat_\infty $.  
\end{recollection}
\begin{rmk}\label{rmk:terminology_promoting_functors}
		Let $ \cat, \mathcal{D} $ be $ C_2 $-$ \infty $-categories, suppose $ F \colon \cat \to \mathcal{D} $ is a $ C_2 $-functor and $ G \colon \cat^e \to \mathcal{D}^e $. 
		If there is an equivalence $ F^e \simeq G $, we say that \emph{$ F $ recovers $ G $ on underlying $ \infty $-categories} or that \emph{$ G $ can be promoted to a $ C_2 $-functor}. 
		If $ H \colon \cat^{C_2} \to \mathcal{D}^{C_2} $ and there is an equivalence $ F^{C_2} \simeq H $, we say that \emph{$F$ recovers $ H $ on $ C_2 $-fixed points.}
\end{rmk}
\begin{ntn}
		The $ \infty $-category of $ C_2 $-$ \infty $-categories has finite limits \cite[\S9]{BDGNS1}. 
		If $ \cat $, $ \mathcal{D} $ are two $ C_2 $-$ \infty $-categories, we write $ \cat \times \mathcal{D}  $ for the product $ C_2 $-$ \infty $-category, and likewise for fiber products. 
		We do not use the less ambiguous notation $ (-) \underline{\times} (-) $ of \emph{loc. cit.} in order to streamline notation for fiber products.   
\end{ntn}
We will use the notion of \emph{parametrized functor categories} of \cite[\S3]{Shah18} to define real Hochschild homology. 
\begin{prop} [{\cites[Proposition 3.1]{Shah18}[Construction 5.2]{BDGNS1}}]\label{prop:param_functors}
	Let $ \cat \to \mathcal{O}^\op_G $, $ \mathcal{D} \to \mathcal{O}^\op_G $ be cocartesian fibrations. 
	Then there exists a cocartesian fibration $ \underline{\Fun}(\cat, \mathcal{D}) \to \mathcal{O}^\op_G $ such that under the straightening-unstraightening equivalence of Remark \ref{rmk:param_unstraighten}, $ \underline{\Fun}(\cat, \mathcal{D}) $ represents the presheaf
	\begin{equation*}
		\mathcal{E} \mapsto \hom_{\mathcal{O}^\op_G}\left(\mathcal{E} \times_{\mathcal{O}^\op_G} \cat, \mathcal{D} \right).
	\end{equation*}
\end{prop}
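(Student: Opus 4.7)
The plan is to exhibit $\underline{\Fun}(\cat, \mathcal{D})$ as an internal hom in the $\infty$-category of cocartesian fibrations over $\mathcal{O}^\op_G$ and then verify the claimed representability by reducing to representable $\mathcal{E}$.

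By Remark \ref{rmk:param_unstraighten}, straightening identifies the $\infty$-category $\Cat_{\infty/\mathcal{O}^\op_G}^{\mathrm{cocart}}$ of cocartesian fibrations over $\mathcal{O}^\op_G$ with $\Fun(\mathcal{O}^\op_G, \Cat_\infty)$; since $\Cat_\infty$ is cartesian closed, so is this functor category, and so an internal hom object $\underline{\Fun}(\cat, \mathcal{D})$ exists formally. Unwinding the straightening-unstraightening correspondence, the value of this internal hom at $G/H \in \mathcal{O}^\op_G$ is given by
$$
\underline{\Fun}(\cat, \mathcal{D})^H \;\simeq\; \Fun^{\mathrm{cocart}}_{/(\mathcal{O}^\op_G)_{/(G/H)}}\bigl(\cat \times_{\mathcal{O}^\op_G} (\mathcal{O}^\op_G)_{/(G/H)},\ \mathcal{D} \times_{\mathcal{O}^\op_G} (\mathcal{O}^\op_G)_{/(G/H)}\bigr),
$$
i.e.\ the full subcategory spanned by those functors sending cocartesian arrows to cocartesian arrows; the restriction functors along $f \colon G/H \to G/K$ in $\mathcal{O}_G$ are induced by the base-change functors between slice categories.

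For the representability statement, I would use the parametrized Yoneda lemma to write any cocartesian fibration $\mathcal{E} \to \mathcal{O}^\op_G$ as a colimit $\colim_i (\mathcal{O}^\op_G)_{/(G/H_i)}$ of representables. Both sides of the desired natural equivalence
$$
\Map_{/\mathcal{O}^\op_G}^{\mathrm{cocart}}(\mathcal{E},\, \underline{\Fun}(\cat,\mathcal{D})) \;\simeq\; \hom_{\mathcal{O}^\op_G}(\mathcal{E} \times_{\mathcal{O}^\op_G} \cat,\, \mathcal{D})
$$
send colimits in $\mathcal{E}$ to limits (the right-hand side because $- \times_{\mathcal{O}^\op_G} \cat$ preserves colimits of cocartesian fibrations, being a left adjoint after straightening). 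So it suffices to check the equivalence on representables $\mathcal{E} = (\mathcal{O}^\op_G)_{/(G/H)}$, where it reduces to the formula for the fibers displayed above together with the fact that a cocartesian section of $(\mathcal{O}^\op_G)_{/(G/H)} \to \mathcal{O}^\op_G$ is determined by its value at the terminal object.

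The main technical obstacle is identifying the pointwise fibers of the abstractly-existing internal hom with the concrete parametrized functor category appearing in the formula for $\underline{\Fun}(\cat,\mathcal{D})^H$: this is a compatibility between the cartesian closure of $\Fun(\mathcal{O}^\op_G, \Cat_\infty)$ and the natural model for hom objects as spaces of cocartesian-arrow-preserving functors over the base. The required compatibility is established in \cite[Proposition 3.1]{Shah18} via the formalism of categorical patterns, to which I would defer for the detailed verification.
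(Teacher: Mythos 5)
The paper cites \cite[Proposition 3.1]{Shah18} and \cite[Construction 5.2]{BDGNS1} for this statement and supplies no proof of its own, so there is no in-paper argument to compare against.

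Your plan to realize $\underline{\Fun}(\cat, \mathcal{D})$ as the internal hom of the cartesian closed $\infty$-category $\Fun(\mathcal{O}^\op_G, \Cat_\infty) \simeq \Cat^{\mathrm{cocart}}_{\infty/\mathcal{O}^\op_G}$ is the right one and is in the spirit of Shah's construction, but your argument systematically confuses slices with coslices of $\mathcal{O}^\op_G$. Under straightening, the corepresentable $\hom_{\mathcal{O}^\op_G}(G/H, -) \colon \mathcal{O}^\op_G \to \Spc$ unstraightens to the coslice $(\mathcal{O}^\op_G)^{(G/H)/}$, which is a left (hence cocartesian) fibration; the overcategory $(\mathcal{O}^\op_G)_{/(G/H)}$ that you use instead is a right fibration, the unstraightening of the \emph{contravariant} representable, and it is the wrong object in every place it appears. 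Your displayed fiber formula for $\underline{\Fun}(\cat, \mathcal{D})^H$ should have $(\mathcal{O}^\op_G)^{(G/H)/}$ in all three positions, matching Notation~\ref{ntn:param_fiber} where the paper parametrizes fibers over $(\mathcal{O}_G^\op)^{t/-}$; the objects in your Yoneda-type colimit decomposition of $\mathcal{E}$ must likewise be coslices; and it is $(\mathcal{O}^\op_G)^{(G/H)/} \to \mathcal{O}^\op_G$ that is a cocartesian fibration with \emph{initial} object $(G/H, \id)$ determining cocartesian sections, whereas $(\mathcal{O}^\op_G)_{/(G/H)} \to \mathcal{O}^\op_G$ is not cocartesian, so your appeal to its terminal object fails. (As a sanity check with $G$ trivial and base $\Delta^1$: the coslice $(\Delta^1)^{0/} = \Delta^1$ gives the expected pullback for the fiber over $0$, while the slice $(\Delta^1)_{/0} = \{0\}$ would discard all data over $1$.)

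A secondary issue: once cartesian closure of $\Fun(\mathcal{O}^\op_G, \Cat_\infty)$ is granted, the representing property $\hom(\mathcal{E}, \underline{\Fun}(\cat,\mathcal{D})) \simeq \hom(\mathcal{E}\times_{\mathcal{O}^\op_G}\cat, \mathcal{D})$ is precisely the defining adjunction, so the colimit reduction is redundant; and if you instead wished to \emph{derive} that adjunction, the assertion that $- \times_{\mathcal{O}^\op_G} \cat$ preserves colimits ``being a left adjoint after straightening'' presupposes the very right adjoint under construction. The colimit preservation one actually needs follows from the fact that fiberwise products in $\Cat_\infty$ commute with colimits, independently of the internal hom.
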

\begin{defn}
	[{\cite[Definition 7.3.2.2]{LurHA}}]\label{defn:rel_adjunction}
	Suppose given a commutative diagram of $ \infty $-categories
	\begin{equation}\label{diagram:rel_adjunction}
	\begin{tikzcd}[column sep=small]
		\cat \ar[rd,"q"'] & & \mathcal{D} \ar[ld,"p"] \ar[ll,"G"'] \\
		& \mathcal{E} & 
	\end{tikzcd} .
	\end{equation}
	We will say that $ G $ \emph{admits a left adjoint relative to $ \mathcal{E} $} if it satisfies one of the following equivalent conditions:
	\begin{enumerate}[label=(\arabic*)]
		\item The functor $ G $ admits a left adjoint $ F $. 
		Moreover, for every object $ C \in \cat $, the functor $ q $ carries the unit of the adjunction $ u_C \colon C \to G(F(C)) $ to an equivalence in $ \mathcal{E} $.
		\item There exists a functor $ F \colon \cat \to \mathcal{D} $ and a natural transformation $ u \colon \id_{\cat} \to G \circ F $ which exhibits $ F $ as a left adjoint to $ G $, with the property that $ q(u) $ is the identity transformation from $ q $ to itself. 
	\end{enumerate}
\end{defn}
\begin{defn}[{\cite[Definition 8.3]{Shah18}}]\label{defn:param_adjunction}
	Suppose given a diagram (\ref{diagram:rel_adjunction}) so that $ \mathcal{E} = \mathcal{O}^\op_G $ and suppose that $ \cat, \mathcal{D} $ are $ G $-$ \infty $-categories. 
	The functors $ F, G $ comprise \emph{a $ G $-adjunction} if they satisfy both
	\begin{itemize}
		\item The functors $ F $ and $ G $ are both $ G $-functors
		\item The functors $ F $ and $ G $ form a relative adjunction relative to $ \mathcal{O}^\op_G $. 
	\end{itemize}  
\end{defn} 
While the left adjoint in a relative adjunction over $ \mathcal{O}^\op_G $ is automatically a $ G $-functor, the right adjoint does not necessarily preserve cocartesian arrows (contrast the dual to \cite[Proposition 7.3.2.6]{LurHA} with \cite[Proposition 7.3.2.11]{LurHA}). 
The following results provide conditions under which the existence of a $ G $-adjunction can be checked pointwise. 
\begin{prop}\label{prop:param_left_adjoint_local_crit}
		Let a diagram as in (\ref{diagram:rel_adjunction}) be given and assume that $ q $ and $ p $ are coCartesian categorical fibrations and that $ G $ takes $ p $-cocartesian edges to $ q $-cocartesian edges. 
		Assume further that for every morphism $ \alpha \colon E \to E' \in \mathcal{E} $: 
		\begin{enumerate}[label=(\arabic*)]
			\item The functors $ \alpha^* \colon \cat_E \to \cat_{E'} $ and $ \alpha^* \colon \mathcal{D}_E \to \mathcal{D}_{E'} $ admit right adjoints $ \alpha_* \colon \cat_{E'} \to \cat_{E} $ and $ \alpha_* \colon \mathcal{D}_{E'} \to \mathcal{D}_{E} $. 
			\item \label{prop_assumption:param_left_adjoint_local_crit} The canonical natural transformation $ G_{E'} \circ \alpha_* \to \alpha_* G_{E} $ is an equivalence for each $ \alpha \in \mathcal{E} $. 
		\end{enumerate}
		Then if for each object $ E \in \mathcal{E} $, $ G_E \colon \mathcal{D}_E \to \cat_E $ admits a left adjoint $ F_E $, then $ G $ admits a left adjoint $ F $ relative to $ \mathcal{E} $ which agrees with $ F_E $ fiberwise. 
		Moreover, $ F $ takes $ q $-cocartesian edges to $ p $-cocartesian edges.  
\end{prop}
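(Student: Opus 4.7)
The plan is to reduce the statement to a Beck–Chevalley condition and then invoke the relative adjoint functor theorem of \cite[Proposition 7.3.2.6]{LurHA} (dualized to the cocartesian setting).

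First, I would set up the mate calculus. For each $\alpha \colon E \to E'$ in $\mathcal{E}$, the hypothesis that $G$ takes $p$-cocartesian edges to $q$-cocartesian edges means that the square
\begin{equation*}
\begin{tikzcd}[column sep=large]
\mathcal{D}_E \ar[r,"\alpha^*"] \ar[d,"G_E"'] & \mathcal{D}_{E'} \ar[d,"G_{E'}"] \\
\cat_E \ar[r,"\alpha^*"'] & \cat_{E'}
\end{tikzcd}
\end{equation*}
commutes up to canonical equivalence. Assuming (as in the hypothesis) that $\alpha^*$ has a right adjoint $\alpha_*$ on both sides, and that each $G_E$ has a left adjoint $F_E$, we may form two Beck–Chevalley transformations: the ``right-adjoint mate'' $G_{E'}\alpha_* \to \alpha_* G_E$ (which is the natural transformation appearing in hypothesis \ref{prop_assumption:param_left_adjoint_local_crit}), and the ``left-adjoint mate'' $\alpha^* F_E \to F_{E'} \alpha^*$. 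By the standard mate calculus for $\infty$-categorical adjunctions (e.g.\ \cite[Proposition 4.7.4.18]{LurHA}), one of these is an equivalence if and only if the other is.

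Next, I would use the equivalence of mates to promote the fiberwise $F_E$ into a genuine functor $F \colon \cat \to \mathcal{D}$ over $\mathcal{E}$. By straightening (Remark \ref{rmk:param_unstraighten}), the functor $G$ corresponds to a natural transformation of diagrams $\mathcal{E} \to \Cat_\infty$; the left-adjoint mate equivalences $\alpha^* F_E \xrightarrow{\sim} F_{E'} \alpha^*$ provide the compatibility data required to assemble the fiberwise $F_E$ into a natural transformation of diagrams, and hence into a morphism $F$ of cocartesian fibrations over $\mathcal{E}$. By construction $F$ sends $q$-cocartesian edges to $p$-cocartesian edges (this is precisely the content of the mate being an equivalence).

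Finally, I would conclude by verifying that $F$ is a relative left adjoint to $G$ in the sense of Definition \ref{defn:rel_adjunction}. The fiberwise units $u_E \colon \id_{\cat_E} \to G_E F_E$ assemble into a natural transformation $u \colon \id_\cat \to GF$ of functors over $\mathcal{E}$ (the required coherence is again the Beck–Chevalley datum above). Since $u_E$ exhibits $F_E$ as left adjoint to $G_E$ for each $E$, the dual of \cite[Proposition 7.3.2.6]{LurHA} applies to show that $u$ exhibits $F$ as a left adjoint to $G$ relative to $\mathcal{E}$.

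The main obstacle is the middle step: extracting a well-defined functor $F$ from the fiberwise data in a homotopy coherent way. In the strict $1$-categorical setting this is immediate, but in the $\infty$-categorical setting it requires the mate calculus to be promoted to a coherent equivalence of natural transformations rather than a levelwise statement. This is precisely where the equivalence between the two Beck–Chevalley transformations (and the resulting ability to identify hypothesis \ref{prop_assumption:param_left_adjoint_local_crit} with the left-adjoint compatibility) plays its essential role.
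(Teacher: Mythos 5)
Your proposal takes a genuinely different route from the paper's proof. Let me compare them.

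The paper's argument is more direct: it invokes the pointwise criterion of \cite[Proposition 7.3.2.11]{LurHA}, takes $D = F_E(C)$ and $u$ the fiberwise unit, and then verifies the universal property by a chain of identifications of fibers of mapping spaces over morphisms $\alpha$ in $\mathcal{E}$. This chain of equivalences
\begin{equation*}
\Hom_{\cat_{E'}}(\alpha^*C, G_{E'}D') \simeq \Hom_{\cat_E}(C, \alpha_* G_{E'}D') \simeq \Hom_{\cat_E}(C, G_E\alpha_* D') \simeq \Hom_{\mathcal{D}_E}(F_EC, \alpha_*D') \simeq \Hom_{\mathcal{D}_{E'}}(\alpha^*F_EC, D')
\end{equation*}
is essentially the Yoneda form of the mate equivalence you describe in your first paragraph, so your observation that hypothesis \ref{prop_assumption:param_left_adjoint_local_crit} is equivalent to the left-adjoint Beck--Chevalley condition $\alpha^*F_E \xrightarrow{\sim} F_{E'}\alpha^*$ is correct and is implicitly present in the paper's calculation. (One small note: as stated, the composite $G_{E'}\circ\alpha_*$ in condition (2) does not typecheck, since $\alpha_*\colon \mathcal{D}_{E'}\to\mathcal{D}_E$ while $G_{E'}\colon\mathcal{D}_{E'}\to\cat_{E'}$. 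From the proof, the intended Beck--Chevalley transformation is $\alpha_* G_{E'} \to G_E \alpha_*$; you reproduce the notational slip faithfully, and your verbal description is consistent with the corrected form.)

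Where your proposal has a genuine gap is the middle step, and you name it yourself: assembling the fiberwise $F_E$ into a morphism of cocartesian fibrations $F\colon\cat\to\mathcal{D}$. Knowing that each $\alpha^*F_E \to F_{E'}\alpha^*$ is an equivalence provides the 1-cell data of a lax natural transformation of diagrams $\mathcal{E}\to\Cat_\infty$, but it does not by itself provide the infinite hierarchy of coherences needed to promote these to a natural transformation (and hence to a functor over $\mathcal{E}$ sending cocartesian edges to cocartesian edges). Your final paragraph asserts that ``the mate calculus plays its essential role'' here, but this is precisely the step that requires either a coherent mate calculus in the $\infty$-categorical setting or an appeal to a pre-packaged existence result. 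The paper avoids the issue entirely: by working object-by-object via Proposition 7.3.2.11, it never needs to assemble $F$ by hand -- the functor $F$ and its cocartesian-preservation come out of the universal property once the mapping-space equivalences are in hand. If you want to keep your architecture, you would need to either invoke a relative adjoint functor theorem that directly outputs $F$ from fiberwise data (and then separately prove cocartesian-preservation from the Beck--Chevalley condition), or cite a coherence result for mates; asserting it from the per-$\alpha$ equivalences alone does not close the argument.
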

\begin{cor}\label{cor:C2_left_adjoint_local_crit}
		Suppose $ G \colon \mathcal{D} \to \cat $ is a $ C_2 $-functor so that $ \mathcal{D} $, $ \cat $ admit finite $ C_2 $-products which are preserved by $ G $. 
		If $ G $ admits left adjoints fiberwise, then it admits a left $ C_2 $-adjoint. 
\end{cor}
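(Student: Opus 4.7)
The plan is to deduce this corollary directly from Proposition \ref{prop:param_left_adjoint_local_crit} by taking $\mathcal{E} = \mathcal{O}^\op_{C_2}$ and checking that the hypothesis of having (and preserving) finite $C_2$-products encodes exactly the two assumptions there. First I would catalog the morphisms in $\mathcal{O}^\op_{C_2}$ that need to be checked: up to identities the only non-invertible morphism is $\alpha \colon C_2/C_2 \to C_2/e$ (dual to the projection $C_2/e \to C_2/C_2$), together with the swap automorphism on $C_2/e$. For the swap, $\alpha^*$ is an equivalence, so it automatically admits a right adjoint (its inverse), and the Beck--Chevalley square for $G$ is automatically an equivalence. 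Thus only the morphism $\alpha$ above requires real verification.

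Next I would unpack the assumption that $\cat$ and $\mathcal{D}$ admit finite $C_2$-products: this means precisely that the restriction functors $\alpha^* \colon \cat^{C_2} \to \cat^e$ and $\alpha^* \colon \mathcal{D}^{C_2} \to \mathcal{D}^e$ admit right adjoints $\alpha_*$ (the indexed products), which provides hypothesis (1) of Proposition \ref{prop:param_left_adjoint_local_crit}. The further assumption that $G$ preserves these $C_2$-products is, by definition, the Beck--Chevalley condition that the canonical natural transformation $G \circ \alpha_* \to \alpha_* \circ G$ is an equivalence, giving hypothesis \ref{prop_assumption:param_left_adjoint_local_crit}. (I should also note that since $G$ is by assumption a $C_2$-functor, it takes cocartesian edges to cocartesian edges, which is part of the standing setup of the proposition.)

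With the hypotheses in place and the fiberwise left adjoints $F^e, F^{C_2}$ supplied by assumption, Proposition \ref{prop:param_left_adjoint_local_crit} yields a left adjoint $F$ to $G$ relative to $\mathcal{O}^\op_{C_2}$, with $F$ taking cocartesian edges to cocartesian edges; thus $F$ is itself a $C_2$-functor. By Definition \ref{defn:param_adjunction}, the pair $(F, G)$ constitutes a $C_2$-adjunction, which is the desired conclusion.

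I do not expect any genuine obstacle here: the content is entirely in Proposition \ref{prop:param_left_adjoint_local_crit}, and the only potentially subtle point is the translation between ``having/preserving finite $C_2$-products'' and the existence plus Beck--Chevalley condition for the indexed product adjoint, which is essentially by definition in the parametrized framework of \cite{Shah18,BDGNS1}.
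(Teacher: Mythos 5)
Your proposal is correct and follows essentially the same route as the paper: the paper's proof simply observes that having and preserving finite $C_2$-products restates the hypotheses of Proposition \ref{prop:param_left_adjoint_local_crit} and invokes it directly. Your write-up just makes explicit the catalogue of morphisms in $\mathcal{O}^\op_{C_2}$ and the translation into the Beck--Chevalley condition, which the paper leaves implicit.
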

\begin{proof}
		The assumption that $ \mathcal{D} $ and $ \cat $ admit finite $ C_2 $-products which are preserved by $ G $ is a restatement of the assumptions of Proposition \ref{prop:param_left_adjoint_local_crit}; the result follows immediately.  
\end{proof}
There is a dual statement, proved in a similar way. 
\begin{cor}\label{cor:C2_right_adjoint_local_crit}
		Suppose $ G \colon \mathcal{D} \to \cat $ is a $ C_2 $-functor so that $ \mathcal{D} $, $ \cat $ admit finite $ C_2 $-coproducts which are preserved by $ G $. 
		If $ G $ admits right adjoints fiberwise, then it admits a right $ C_2 $-adjoint. 
\end{cor}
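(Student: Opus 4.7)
The plan is to invoke the formal dual of Proposition \ref{prop:param_left_adjoint_local_crit}: namely, given a diagram as in \eqref{diagram:rel_adjunction} with $q$ and $p$ cocartesian fibrations and $G$ a morphism of cocartesian fibrations, if for every morphism $\alpha \colon E \to E'$ in $\mathcal{E}$ the restriction functor $\alpha^*$ on both $\cat$ and $\mathcal{D}$ admits a \emph{left} adjoint $\alpha_!$, and the canonical Beck--Chevalley transformation $\alpha_! \circ G_E \to G_{E'} \circ \alpha_!$ is an equivalence, then fiberwise right adjoints $F_E$ of $G_E$ glue into a right adjoint $F$ to $G$ relative to $\mathcal{E}$ which takes $q$-cocartesian edges to $p$-cocartesian edges.

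In the $C_2$-equivariant setting with $\mathcal{E} = \mathcal{O}^{\op}_{C_2}$, the existence of $\alpha_!$ for every morphism in $\mathcal{O}^{\op}_{C_2}$ is tantamount to the existence of finite $C_2$-coproducts in $\cat$ and $\mathcal{D}$: the nontrivial morphism $C_2/C_2 \to C_2/e$ produces the $C_2$-indexed coproduct (i.e., induction), while identity morphisms give ordinary fiberwise coproducts. The hypothesis that $G$ preserves finite $C_2$-coproducts is then exactly the assertion that the Beck--Chevalley map $\alpha_! \circ G_E \to G_{E'} \circ \alpha_!$ is an equivalence for every $\alpha$. Since fiberwise right adjoints exist by assumption, all hypotheses of the dual proposition are satisfied, and it produces the desired right $C_2$-adjoint.

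The only step requiring real justification is thus the dual proposition itself, as it is not stated explicitly in the text. I would prove it by passing to vertical opposites (Recollection \ref{rec:vertical_op}) and applying Proposition \ref{prop:param_left_adjoint_local_crit} to $G^\vop \colon \mathcal{D}^\vop \to \cat^\vop$: a right adjoint relative to $\mathcal{E}$ for $G$ is the same datum as a left adjoint relative to $\mathcal{E}$ for $G^\vop$, and the left adjoints $\alpha_!$ to restriction in $\cat$ and $\mathcal{D}$ are precisely the right adjoints $\alpha_*$ to restriction in the vertical opposites. The main bookkeeping obstacle is confirming that the Beck--Chevalley comparison of condition \ref{prop_assumption:param_left_adjoint_local_crit} of Proposition \ref{prop:param_left_adjoint_local_crit}, when read for $G^\vop$, corresponds under this duality to the comparison we require; this follows from the symmetry of the mate correspondence together with the fact that passing to opposites reverses natural transformations.
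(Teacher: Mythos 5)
Your approach is correct and is essentially what the paper intends by ``proved in a similar way'': passing to vertical opposites (Recollection~\ref{rec:vertical_op}) and applying Proposition~\ref{prop:param_left_adjoint_local_crit} to $G^\vop$ is the clean formal implementation of that dualization, since a relative left adjoint to $G^\vop$ over $\mathcal{O}^\op_{C_2}$ unwinds precisely to a relative right adjoint to $G$ which is a $C_2$-functor. One small imprecision worth noting: having $\alpha_!$ for all $\alpha$ in $\mathcal{O}^{\op}_{C_2}$ is \emph{not} equivalent to admitting finite $C_2$-coproducts (in particular, identity morphisms do not produce fiberwise coproducts — they produce identity functors), but only the implication ``finite $C_2$-coproducts preserved by $G$ $\Rightarrow$ induction exists and satisfies Beck--Chevalley'' is used, so the argument stands, and the paper's own one-line proof of Corollary~\ref{cor:C2_left_adjoint_local_crit} exhibits the same looseness in calling its hypothesis a ``restatement.''
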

\begin{proof} [Proof of Proposition \ref{prop:param_left_adjoint_local_crit}]
		Let $ C \in \cat_E $, and write $ D = F_E(C) $ and $ u $ for the counit $ u \colon C \to G(F_E(C)) $. 
		By the proof of \cite[Proposition 7.3.2.11]{LurHA}, it suffices to show that for all $ D' \in \mathcal{D} $, $ u $ induces an equivalence 
		\begin{equation*}
				\Hom_{\mathcal{D}}(D, D') \simeq \Hom_{\cat}(C, G(D'))\,. 
		\end{equation*} 
		By assumption, we know that for all $ D'\in \mathcal{D}_E $, we have an equivalence
		\begin{equation*}
				\Hom_{\mathcal{D}_E}(D, D') \simeq \Hom_{\cat_E}(C, G(D'))\,. 
		\end{equation*} 
		Since $ p $ and $ q $ are cocartesian fibrations, for any $ D' \in \mathcal{D} $, we have maps 
		\begin{equation}\label{diagram:rel_left_adjoint_local_crit_maps}
		\begin{tikzcd}[row sep=small]
				\Hom_{\mathcal{D}}(D,D') \ar[r, "{h_p}"] \ar[d] & \Hom_{\mathcal{E}}(p(D), p(D')) \ar[d,equals] \\
				\Hom_{\cat}(C, G(D')) \ar[r,"{h_q}"] & \Hom_{\mathcal{E}}(q(C), q\circ G(D'))		
	 	\end{tikzcd} 	
	  \end{equation}
	  To show that the left vertical arrow in (\ref{diagram:rel_left_adjoint_local_crit_maps}) is an equivalence, it suffices to show that for any $ \alpha \in \Hom_{\mathcal{E}}(p(D), p(D')) = \Hom_{\mathcal{E}}(q(C), q\circ G(D')) $, the fibers of the horizontal maps over $ \alpha $ are equivalent. 
	  The result follows from the identifications (where $ \overset{(1)}{\simeq} $ indicates that an equivalence follows from assumption (1) in the statement of the proposition) 
	  \begin{equation*}
	  \begin{split}
	  	 \fib_{\alpha}(h_q) &\simeq \Hom_{\cat_{p(D')}}(\alpha^*(C), G_{p(D')}(D')) \\
					& \overset{(1)}{\simeq} \Hom_{\cat_{p(D)}}(C, \alpha_* G_{p(D')}(D')) \\
					& \overset{(2)}{\simeq} \Hom_{\cat_{p(D)}}(C, G_{p(D)}\alpha_* (D')) \\
					& \simeq \Hom_{\mathcal{D}_E}(F_E(C), \alpha_*(D')) \\ 
					& \overset{(1)}{\simeq} \Hom_{\mathcal{D}_{p(D')}} (\alpha^*F_E(C), D') \simeq \fib_{\alpha}(h_p) \,. \qedhere
	  \end{split}
	  \end{equation*}
\end{proof}
There are parametrized notions of limits and colimits. 
\begin{ntn}
	[{\cite[Notation 2.29]{Shah18}}]\label{ntn:param_fiber}
	Let $ \mathcal{B} $ be a $ G $-$ \infty $-category and write $ \Ar^\mathrm{cocart}(\mathcal{B}) $ for the full subcategory of $ \Ar(\mathcal{B}) $ on the cocartesian edges of $ \mathcal{B} $. 
	Given an object $ b \in \mathcal{B}_t $, write
	\begin{equation*}
		\underline{b} := \{b\} \times_{\mathcal{B}, \ev_0} \Ar^\mathrm{cocart}(\mathcal{B})
	\end{equation*}
	which we think of as a ``$ G $-point of $ \mathcal{B} $.'' 
	Given a morphism $ \pi \colon \mathcal{E} \to \mathcal{B} $ of $ G $-$ \infty $-categories, we write
	\begin{equation*}
		\mathcal{E}_{\underline{b}} := \underline{b} \times_{\ev_1,\mathcal{B},\pi} \mathcal{E}
	\end{equation*}
	for the \emph{parametrized fiber} of $ \pi $ over $ b $. 
	Note that $ \mathcal{E}_{\underline{b}} $ is a $ (\mathcal{O}_G^\op)^{t/-} $-$ \infty $-category. 
\end{ntn}
\begin{defn}
	[{\cite[Definition 5.1-2]{Shah18}}] \label{defn:Gcolims} 
	Let $ \cat \to \mathcal{O}^\op_G $ be a $ G $-$ \infty $-category and let $ \sigma: \mathcal{O}^\op_G \to \cat $ be a cocartesian section. 
	We say that $ \sigma $ is \emph{$ S $-initial} if $ \sigma(s) $ is an initial object for all $ s \in \mathcal{O}^\op_G $. 
	Dually, we say that $ \sigma $ is \emph{$ S $-final} if $ \sigma(s) $ is a final object for all $ s \in \mathcal{O}^\op_G $. 

	Let $ K , \cat $ be $ G $-$ \infty $-categories and let $ \overline{p} \colon K \star_{\mathcal{O}^\op_G} \mathcal{O}^\op_G  \to \cat $ be an extension of a $ G $-functor $ p: K \to \cat $. 
	There is a commutative diagram
	\begin{equation*}
	\begin{tikzcd}
		S \ar[d,equals] \ar[r,"{\sigma_{\overline{p}}}"] & \underline{\Fun}_G(K \star_{\mathcal{O}^\op_G}\mathcal{O}^\op_G, \cat ) \ar[d] \\
		S \ar[r,"{\sigma_p}"] & \underline{\Fun}_G(K, \cat)
	\end{tikzcd}
	\end{equation*}
	so that $ \sigma_{\overline{p}} $ defines a cocartesian section $ \sigma_{\overline{p}} $ of $ \cat^{(p,\mathcal{O}^\op_G)/-} $. 
	We say that $ \overline{p} $ is a \emph{$ G $-colimit diagram} if $ \sigma_{\overline{p}} $ is a $ G $-final object. 

	If $ \overline{p} $ is a $ G $-colimit diagram, we say that $ \overline{p}|_{\sigma_{\overline{p}}} $ is a $ G $-colimit of $ p $. 

	Dually, substituting $ \mathcal{O}^\op_G \star_{\mathcal{O}^\op_G} K $ for $ K \star_{\mathcal{O}^\op_G} \mathcal{O}^\op_G $ leads to the definition of a \emph{$ G $-limit diagram}. 
\end{defn}
Relative adjunctions and parametrized colimits interact in the expected way:
\begin{prop}
	[{\cite[Corollary 8.9]{Shah18}}]
	Let $ \cat, \mathcal{D} $ be $ \mathcal{T} $-$ \infty $-categories, and let $ F\colon \cat \rlarrows \mathcal{D} \colon G $ be a $ \mathcal{T} $-adjunction $ F \dashv G $. 
	Then $ F $ preserves $ \mathcal{T} $-colimits and $ G $ preserves $ \mathcal{T} $-limits. 
\end{prop}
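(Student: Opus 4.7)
The plan is to mimic the non-parametrized proof that left adjoints preserve colimits, but carried out one level up in the $\mathcal{T}$-parametrized world; by symmetry (passing to vertical opposites as in Recollection \ref{rec:vertical_op}) it suffices to treat the claim about $F$. So fix a $\mathcal{T}$-functor $p \colon K \to \cat$ and a $\mathcal{T}$-colimit diagram $\bar{p} \colon K \star_{\mathcal{O}^\op_\mathcal{T}} \mathcal{O}^\op_\mathcal{T} \to \cat$ extending it, and the goal is to show that $F \circ \bar{p}$ is a $\mathcal{T}$-colimit diagram for $F \circ p$ in $\mathcal{D}$. Unwinding Definition \ref{defn:Gcolims}, this amounts to verifying that the cocartesian section $\sigma_{F \circ \bar{p}}$ of $\mathcal{D}^{(F \circ p, \mathcal{O}^\op_\mathcal{T})/-}$ determined by $F \circ \bar{p}$ is a $\mathcal{T}$-final object, i.e.\ fiberwise final.

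The key input is a parametrized hom-space equivalence induced by the $\mathcal{T}$-adjunction. Concretely, since $F \dashv G$ is a relative adjunction over $\mathcal{O}^\op_\mathcal{T}$ in the sense of Definition \ref{defn:rel_adjunction}--\ref{defn:param_adjunction}, postcomposition with $F$ and $G$ induces a relative adjunction of parametrized functor categories
\begin{equation*}
	F_* \colon \underline{\Fun}_\mathcal{T}(K,\cat) \rlarrows \underline{\Fun}_\mathcal{T}(K,\mathcal{D}) \colon G_*,
\end{equation*}
using the description of $\underline{\Fun}$ from Proposition \ref{prop:param_functors} and the fact that relative adjunctions are stable under cotensoring with $K$. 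Under this adjunction, the counit of $F \dashv G$ yields a natural equivalence between the slice $\underline{\Fun}_\mathcal{T}(K,\mathcal{D})^{F\circ p/-}$ and the pullback of $\underline{\Fun}_\mathcal{T}(K,\cat)^{p/-}$ along $G_*$, compatibly with evaluation at the cone point.

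With this hom-equivalence in hand, the proof reduces to a formal argument: the cocartesian section $\sigma_{F \circ \bar{p}}$ corresponds under the adjunction to the section obtained by applying $G_*$ and precomposing the unit $p \to G\circ F \circ p$, and fiberwise finality of $\sigma_{\bar{p}}$ translates directly into fiberwise finality of $\sigma_{F\circ \bar{p}}$, because the classical statement ``$F \dashv G$ implies $F$ preserves colimits'' holds in each fiber $\cat_t \to \mathcal{D}_t$ (the underlying functors of $F,G$ at each $t \in \mathcal{O}^\op_\mathcal{T}$ still form an ordinary adjunction, by pulling back the relative adjunction along $\{t\} \hookrightarrow \mathcal{O}^\op_\mathcal{T}$). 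Since $\mathcal{T}$-finality is tested pointwise, this is enough.

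The main obstacle I anticipate is the bookkeeping for the parametrized hom-equivalence: one must check that the adjunction $F \dashv G$ really does induce a relative adjunction on $\underline{\Fun}_\mathcal{T}(K,-)$ compatible with the join/slice constructions appearing in Definition \ref{defn:Gcolims}, so that $G_*$ identifies the two relevant slice $\mathcal{T}$-$\infty$-categories and sends $\mathcal{T}$-final objects to $\mathcal{T}$-final objects. Once this compatibility is established, reducing $\mathcal{T}$-finality to pointwise finality and invoking the classical adjoint-colimit result finishes the argument; the dual statement for $G$ and $\mathcal{T}$-limits follows by replacing $\cat, \mathcal{D}$ with their vertical opposites.
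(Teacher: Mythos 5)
The paper gives no proof of this proposition; it is stated purely as a citation to \cite[Corollary 8.9]{Shah18}, so there is no in-paper argument to compare your sketch against. Your overall strategy---pass to vertical opposites, lift the $\mathcal{T}$-adjunction to the parametrized functor $\mathcal{T}$-$\infty$-categories, and then use that $\mathcal{T}$-(co)limit diagrams are characterized by a pointwise finality condition (Definition \ref{defn:Gcolims})---is the right high-level plan and matches the expected proof in the cited reference.

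However, the final reduction is not sound as written. You argue that since $\mathcal{T}$-finality of $\sigma_{F\circ\bar{p}}$ is tested pointwise, you can invoke the ordinary ``left adjoints preserve colimits'' in each fiber $\cat_t \to \mathcal{D}_t$. This conflates two different things. The condition is that $\sigma_{F\circ\bar{p}}(s)$ be final in the fiber $\left(\mathcal{D}^{(F\circ p,\, \mathcal{O}^\op_\mathcal{T})/-}\right)_s$, and by Notation \ref{ntn:param_fiber} this fiber is \emph{not} the ordinary slice of $\mathcal{D}_s$ under $(F\circ p)|_s$: it encodes $\mathcal{T}^{/s}$-parametrized cocones, including all the data over orbits mapping to $s$, not just fiberwise data at $s$. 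Since ordinary colimits in $\cat_t$ see none of the transfer/induction structure that makes $\mathcal{T}$-colimits differ from fiberwise colimits, the classical fiberwise adjoint-colimit theorem does not apply to these fibers, and in fact cannot, as $\mathcal{T}$-colimits genuinely exceed fiberwise colimits. The remedy is to carry the argument entirely at the level of the parametrized slice: once you have exhibited an equivalence of $\mathcal{T}$-$\infty$-categories $\mathcal{D}^{(F\circ p,\,\mathcal{O}^\op_\mathcal{T})/-} \simeq \cat^{(p,\,\mathcal{O}^\op_\mathcal{T})/-} \times_{\cat} \mathcal{D}$ (pulling back along $G$) compatibly with the evaluation at the cone point, the $\mathcal{T}$-final section $\sigma_{\bar{p}}$ is carried to a $\mathcal{T}$-final section directly, with no fiberwise invocation of the classical theorem needed. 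That compatibility of the $\mathcal{T}$-adjunction with the parametrized join/slice construction is precisely the content of \cite[\S 8]{Shah18}, and it is the piece your sketch correctly flags as the main obstacle but then shortcuts around. So: right scaffolding, but the justification in the last paragraph needs to be replaced by the slice-level equivalence argument, not a fiberwise appeal to the ordinary result.
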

The following is a consequence of \cite[Proposition 5.11]{Shah18}. 
Note that our notation differs somewhat from that of Shah's--what we think of as pullback/restriction $ \alpha^* \colon \cat^K \to \cat^H $ along a morphism $ \alpha \colon G/H \to G/K $ is denoted by $ \alpha_! $ in their work. 
\begin{prop} [{\cite[Proposition 5.11]{Shah18}}]
	Let $ \cat $ be a $ G $-$ \infty $-category and let $ \alpha \colon G/H \surj G/K $ be a morphism in $ \mathcal{O}_G $. 
	Let $ {\pi \colon M \to \Delta^1} $ be the \textbf{cartesian} morphism classified by $ \alpha^* \colon \cat^K \to \cat^H $. 
	Let 
	\begin{equation*}
		p \colon \left(\mathcal{O}^\op_G\right)^{G/H} \to \cat \times_{\mathcal{O}^\op_G} \left(\mathcal{O}^\op_G\right)^{G/K} 
	\end{equation*}
	be a $ \left(\mathcal{O}^\op_G\right)^{G/K} $-functor and let $ x = p(\id_{G/H} ) \in \cat^H $. 
	Then the data of a $ \left(\mathcal{O}^\op_G\right)^{G/K} $-colimit diagram extending $ p $ yields a $ \pi $-cocartesian edge $ e $ in $ M $ with $ d_0(e) = x $ and lifting $ 0 \to 1 $. 
\end{prop}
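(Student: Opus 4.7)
The plan is to deduce the statement as a direct translation of \cite[Proposition 5.11]{Shah18}, exploiting the convention change flagged in the paragraph preceding our statement: where Shah writes $\alpha_!$ for restriction, we write $\alpha^*$, which in turn forces an exchange between ``cocartesian'' and ``cartesian'' when classifying the morphism $\pi \colon M \to \Delta^1$. The technical content itself is already contained in Shah's result; the task is to unpack the statement in our notation so that the identifications line up cleanly.

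First, I would identify the functor $p$ with the datum of the object $x = p(\id_{G/H})$ together with its coherent family of restrictions. This uses that $(\mathcal{O}^\op_G)^{G/H}$ has $\id_{G/H}$ as an initial object, and that a $(\mathcal{O}^\op_G)^{G/K}$-functor out of a slice is determined by its value on the initial section (together with the cocartesian lifts, which are forced). In particular, the problem of extending $p$ to a $(\mathcal{O}^\op_G)^{G/K}$-colimit diagram in $\cat \times_{\mathcal{O}^\op_G} (\mathcal{O}^\op_G)^{G/K}$ is the problem of presenting $x$ as a parametrized colimit indexed by the slice.

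Second, I would unpack the resulting parametrized colimit via the cartesian fibration $\pi$. Because $\pi$ is cartesian and classifies $\alpha^*$, a $\pi$-cocartesian edge over $0 \to 1$ in $M$, when it exists, encodes the unit of a left adjoint to $\alpha^*$ at a given object; equivalently, it represents the indexed coproduct $\alpha_! x$ along with the unit map. By the universal property of $(\mathcal{O}^\op_G)^{G/K}$-colimits (Definition \ref{defn:Gcolims}), a colimit diagram extending $p$ is precisely the data of such a universal edge with $d_0(e) = x$. Applying \cite[Proposition 5.11]{Shah18} yields the $\pi$-cocartesian edge $e$ claimed.

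The main obstacle is bookkeeping. Shah's convention orients the fibers and the pushforward in the opposite direction from ours, so one must track carefully that a cocartesian section classified by $\alpha^*$ in our convention does produce a cocartesian (not cartesian) edge lifting $0 \to 1$ in the cartesian fibration $\pi$. Once this is confirmed, no additional argument is needed; in particular, there are no new homotopy-coherence issues to resolve beyond those already handled by Shah.
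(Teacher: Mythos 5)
Your proposal is correct and takes the same approach the paper implicitly does: the paper gives no proof, simply stating the proposition as a restatement of \cite[Proposition 5.11]{Shah18} after noting the notational change $\alpha_! \leftrightarrow \alpha^*$, and your write-up is a reasonable unpacking of that translation. One small correction to your third paragraph: Shah's $\alpha_!$ and the paper's $\alpha^*$ denote the \emph{same} functor $\cat^K \to \cat^H$ (the cocartesian pushforward along $\alpha^{\op}$), not functors oriented in opposite directions, so the bookkeeping concern you raise is lighter than you suggest --- the only thing to confirm is exactly what you already did in your second paragraph, namely that the relevant edge is $\pi$-cocartesian (encoding the unit of the left adjoint $\alpha_!$ to $\alpha^*$) rather than $\pi$-cartesian.
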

\begin{ex} [{\cite[Proposition 5.11]{Shah18}}]
	Let $ G= C_2 $ and $ \cat = \underline{\Spc}^{C_2} $. 
	Let $ X \colon \left(\mathcal{O}_{C_2}^{\op}\right)^{C_2/e} \to \cat $ classify a $ C_2 $-space. 
	Then the $ C_2 $-colimit of the diagram $ X $ exists and is given by the $ C_2 $-space $ \varnothing \to X \sqcup X $ where $ C_2 $ acts by swapping the factors of $ X $ in the disjoint union.  
\end{ex}
The notion of a parametrized colimit diagram not only generalizes Definition \ref{defn:Gcolims} but also allows us to show that $ G $-colimits have suitable functoriality. 
\begin{defn}
	[{\cite[Definition 9.13]{Shah18}}] \label{defn:param_Gcolims}
	Let $ \phi \colon \cat \to \mathcal{D} $ be a $ G $-cocartesian fibration. 
	A $ G $-functor $ \overline{F} \colon \cat \star_{\mathcal{D}} \mathcal{D} \to \mathcal{E} $ is a \emph{$ \mathcal{D} $-parametrized $ G $-colimit diagram} if for every object $ x \in \mathcal{D} $, the $ \underline{x} $-functor $ \overline{F}|_{\cat_{\underline{x}} \star_{\underline{x}} \underline{x}} \colon \cat_{\underline{x}} \star_{\underline{x}} \underline{x} \to \mathcal{E}_{\underline{s}} $ is a $ \underline{s} $-colimit diagram.  
\end{defn}
The existence of parametrized $ G $-colimits can be checked `fiberwise.'
\begin{theorem}
	[{\cite[Theorem 9.15]{Shah18}}]
	Let $ \phi \colon \cat \to \mathcal{D} $ be a $ G $-cocartesian fibration and let $ F \colon \cat \to \mathcal{E} $ be a $ G $-functor. 
	Suppose that for every object $ x \in \mathcal{D} $, $ {F} \colon \cat_{\underline{x}} \to \mathcal{E}_{\underline{s}} $ admits a $ \underline{s} $-colimit. 
	Then there exists a $ \mathcal{D} $-parametrized $ G $-colimit diagram $ \overline{F} \colon \cat \star_{\mathcal{D}} \mathcal{D} \to \mathcal{E} $ extending $ F $. 
\end{theorem}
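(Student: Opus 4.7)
The plan is to construct $\overline{F}$ as a parametrized relative left Kan extension of $F$ along the $G$-cocartesian fibration $\phi$, using the fiberwise hypothesis to supply the pointwise formula.

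First, I would reinterpret the data: by the universal property of the relative join, a $G$-functor $\overline{F} \colon \cat \star_{\mathcal{D}} \mathcal{D} \to \mathcal{E}$ extending $F$ is the same as a $G$-functor $L \colon \mathcal{D} \to \mathcal{E}$ together with a $G$-natural transformation $\eta \colon F \Rightarrow L \circ \phi$. Under this translation, Definition \ref{defn:param_Gcolims} becomes the condition that for every $x \in \mathcal{D}$, the restriction of $\eta$ exhibits $L(x)$ as the $\underline{x}$-colimit of $F|_{\cat_{\underline{x}}}$. So the task reduces to constructing such a pair $(L, \eta)$ from the pointwise hypothesis.

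Next, I would define $L$ on objects: for each $x \in \mathcal{D}_t$, pick a $\underline{x}$-colimit diagram of $F|_{\cat_{\underline{x}}}$ (possible by hypothesis) and set $L(x)$ to be its apex. For functoriality of $L$: given $f \colon x \to y$ in $\mathcal{D}$ covering $\alpha \colon G/H \to G/K$, the fact that $\phi$ is a $G$-cocartesian fibration produces, via Notation \ref{ntn:param_fiber}, a pushforward functor $\cat_{\underline{x}} \to \cat_{\underline{y}}$ of $(\mathcal{O}_G^\op)^{t/-}$-$\infty$-categories; the universal property of $L(x)$ as a $\underline{x}$-colimit then yields a canonical morphism $L(x) \to \alpha^* L(y)$ in $\mathcal{E}$, together with the required compatibility with $\eta$.

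The main obstacle --- where all the real work lies --- is assembling these pointwise constructions into a genuine $G$-functor $L$ equipped with a coherent $G$-natural cone $\eta$, which requires controlling an infinite hierarchy of higher coherences. This is precisely the content of the parametrized analog of Lurie's pointwise criterion for left Kan extensions along a cocartesian fibration (compare \cite[HTT 4.3.2.15]{LurHTT} and \cite[HA 4.3.1.10]{LurHA}). In the non-parametrized setting, the coherences are handled by realizing $L$ as a relative left Kan extension along $\phi$, whose pointwise formula at $x$ is precisely the colimit over the fiber $\cat_x$; a relative version of this result packages $L$ with its structural cone $\eta$. The parametrized version follows the same pattern with parametrized fibers $\cat_{\underline{x}}$ replacing ordinary fibers and $\underline{x}$-colimits replacing ordinary colimits, with all constructions carried out inside $G\Cat_\infty$. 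Once $L$ and $\eta$ are built, the parametrized colimit condition holds tautologically by construction, giving the desired $\overline{F}$.
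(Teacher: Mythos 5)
This theorem is cited by the paper from \cite[Theorem 9.15]{Shah18} and is not proved in the paper, so there is no in-paper proof to compare against. Evaluating your sketch on its own terms:

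Your reformulation of the extension problem via the universal property of the relative join is correct: a $G$-functor $\overline{F} \colon \cat \star_{\mathcal{D}} \mathcal{D} \to \mathcal{E}$ extending $F$ is indeed equivalent to a $G$-functor $L \colon \mathcal{D} \to \mathcal{E}$ together with a $G$-natural cone $\eta \colon F \Rightarrow L \circ \phi$, and under this translation Definition \ref{defn:param_Gcolims} becomes the condition that $\eta$ restricts to a $\underline{x}$-colimit cone at each $x$. The pointwise formula $L(x) = \underline{x}\text{-}\colim(F|_{\cat_{\underline{x}}})$ is also the right one, and you correctly identify that $G$-cocartesianness of $\phi$ is what makes the parametrized fibers $\cat_{\underline{x}}$ and the pushforward functors between them well-behaved.

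The gap is in the final paragraph. You flag the coherence problem --- assembling pointwise colimits into a genuine $G$-functor $L$ with a coherent $G$-natural cone $\eta$ --- as ``the main obstacle, where all the real work lies,'' and then resolve it by appealing to ``a parametrized analog of Lurie's pointwise criterion for left Kan extensions along a cocartesian fibration.'' But that parametrized analog \emph{is} the theorem being proved, or at least is logically equivalent to it; citing it is circular. In Shah's actual development the coherence is not obtained by invoking a general ``parametrized version of HTT 4.3.2.15'' but is built up over several sections: the relative join and its universal property are constructed as $G$-$\infty$-categories, $G$-colimits are shown to be the $G$-final objects of a parametrized cone category, $G$-Kan extensions are developed with a pointwise existence and computation criterion \cite[\S 10]{Shah18}, and Theorem 9.15 then falls out of that machinery by an argument involving the free $G$-cocartesian fibration. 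So the architecture you propose is the right one, but a complete proof would have to actually supply the parametrized Kan-extension technology rather than posit it. As a sketch your answer correctly locates the crux; as a proof it begs the question at exactly that crux.

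One incidental point: the statement as reproduced in the paper has a typo --- the target of the restricted functor and the indexing of the colimit should read $\mathcal{E}_{\underline{x}}$ and ``$\underline{x}$-colimit,'' not $\mathcal{E}_{\underline{s}}$ and ``$\underline{s}$-colimit'' (there is no $s$ in scope). Your sketch quietly corrects this, which is fine.
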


\subsection{\texorpdfstring{$ G $}{G}-spaces and \texorpdfstring{$ G $}{G}-spectra.}\label{subsection:genuine_homotopy} 
In this section, we consider two examples of $ G $-$ \infty $-categories: $ G $-spaces and $ G $-spectra. 
In particular, we introduce several objects in these categories which we will see later on, and discuss structural properties of these categories which we will need.   
\begin{recollection}\label{rec:G_spc_G_spectra}
		A $ G $-space is a functor $ \mathcal{O}_G^\op \to \Spc $, or equivalently, a functor $ \Fin_G^\op \to \Spc $ which take coproducts in $ \mathcal{O}_G $ to products in $ \Spc $. 
		We will denote the category of $ G $-spaces by $ \Spc^G $. 
		By \cite[Theorem 7.8]{BDGNS1}, there is a $ G $-$ \infty $-category $ \underline{\Spc}^G $ so that the fiber over $ G/H $ is the ordinary $ \infty $-category $ \Spc^H $. 

		Given a $ G $-space $ X: \mathcal{O}_G^\op \to \Spc $ and a $ G $-equivariant map $ S \xrightarrow{f} T $, we will refer to $ {f^* = X(f) \colon X(T) \to X(S)} $ as the \emph{restriction map associated to $ f $.} 

		The $ G $-$ \infty $-category of $ G $-spectra $ \underline{\Spectra}^G $ is \cite[Definition 7.3 \& Corollary 7.4.1]{BDGNS4} applied to $ D = \underline{\Spc}^G $. 
\end{recollection}
\begin{ex}[Sign representation sphere] \label{ex:signrepsphere} 
	Let $ G = C_2 = \{1, \sigma\} $.
	There is a $ C_2 $-space, denoted by $ S^\sigma $, whose underlying space with $C_2$-action is given by $ (S^\sigma)^e = S^1 = \{z \in \C \mid |z| =1\} $, with the $ C_2 $ action via complex conjugation. 
	Its $ C_2 $-fixed points are given by $ (S^\sigma)^{C_2} = \{\pm 1\} = S^0 $, and the restriction map $ (S^\sigma)^{C_2} \to (S^\sigma)^{e} $ is given by the inclusion.  
	This $ C_2 $-space is denoted $ S^\sigma $ because it is the one-point compactification of the sign representation of $ C_2 $ on $ \R $. 

	Because complex conjugation respects multiplication of complex numbers, the group structure on $ S^1 $ lifts to a $ C_2 $-monoid structure on $ S^\sigma $ in the category of $ C_2 $-spaces.
\end{ex}
\begin{ex} [Regular representation sphere] \label{ex:regrep}
	Consider the regular representation $ \R[C_2] $ of $ C_2 $, and write $ S^\rho $ for its one-point compactification.  
	Then $ (S^{\rho})^e \simeq S^2 $ with $ C_2 $-action given by reflection about a plane $ P $, and the $ C_2 $ fixed points $ (S^\rho)^{C_2} $ is given by $ S^1 $, and the restriction map $ (S^\rho)^{C_2} \to (S^{\rho})^e $ is given by the natural identification $ S^2 \cap P \simeq S^1 $. 
\end{ex} 
The category of $ G $-spectra is obtained from the category of $ G $-spaces by asking for the existence of transfers indexed by finite $ G $-sets. 
We begin by recalling that such an indexing category is well-defined as an $ \infty $-category. 
\begin{prop} \label{prop:spancatcoherence}
	Let $ G $ be a finite group. 
	Then there exists an $ \infty $-category $ \Span(\Fin_G) $ having
	\begin{itemize} 
		\item the same objects as $ \Fin_G $
		\item homotopy classes of morphisms from $ V $ to $ U $ in $ \Span(\Fin_G) $ are in bijection with diagrams $ {V \leftarrow T \to U} $ up to isomorphism of diagrams fixing $ V $ and $ U $. 
		\item The composite of $ V \leftarrow T \to U $ and $ U \leftarrow S \to W $ is equivalent to the diagram $ V \leftarrow T \times_U S \to W $.
	\end{itemize}
	Moreover, $ \Span(\Fin_G) $ is semiadditive, i.e. finite coproducts and products are isomorphic, and are given on underlying $ G $-sets by the disjoint union. 
\end{prop}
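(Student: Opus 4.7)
The plan is to invoke Barwick's construction of the effective Burnside $\infty$-category from \cite{Barwick_Mackey_I} (or the equivalent span construction in \cite{BDGNS1,NS22}). Since $\Fin_G$ is an ordinary 1-category admitting all finite limits (in particular pullbacks), the construction applies to it. Specifically, one forms a simplicial object $N(\Fin_G)^\natural_\bullet$ whose $n$-simplices are diagrams indexed by the twisted arrow category of $[n]$ in which every square is a pullback, and then takes the simplicial nerve; the double Segal condition verified by Barwick ensures this is an $\infty$-category. One then identifies the resulting $\infty$-category with $\Span(\Fin_G)$.

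The object and morphism descriptions follow directly from unwinding the simplicial construction: $0$-simplices recover the objects of $\Fin_G$, while $1$-simplices are spans $V \leftarrow T \to U$, and the Segal model for the homotopy category shows that two such spans are homotopic precisely when they are related by an isomorphism of spans fixing $V$ and $U$. The composition rule via pullback follows by inspecting $2$-simplices: a $2$-simplex is determined by two composable spans together with the pullback $T \times_U S$ forming the composite. This is standard and appears, e.g., in \cite[\S3--5]{Barwick_Mackey_I}.

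For semiadditivity, the strategy is to show that the disjoint union in $\Fin_G$ models the biproduct in $\Span(\Fin_G)$. Given $U_1, U_2 \in \Fin_G$ with coproduct $U_1 \sqcup U_2$, the spans $U_i \leftarrow U_i \xrightarrow{\iota_i} U_1 \sqcup U_2$ and $U_1 \sqcup U_2 \xleftarrow{\iota_i} U_i \to U_i$ serve as structure maps. The universal property on the mapping-space level reduces to the following: any span $W \leftarrow T \to U_1 \sqcup U_2$ decomposes uniquely as a coproduct of spans $W \leftarrow T_i \to U_i$ via the canonical decomposition $T \cong T_1 \sqcup T_2$ with $T_i := T \times_{U_1 \sqcup U_2} U_i$, and symmetrically for $W = W_1 \sqcup W_2$. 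That these coincide, combined with the pointed structure provided by $\varnothing$ (which is a zero object since $\Hom_{\Span(\Fin_G)}(\varnothing, X)$ is contractible for all $X$), yields the biproduct structure and hence semiadditivity.

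The main obstacle I anticipate is purely technical: verifying the simplicial object satisfies the correct (double) Segal conditions so that it models an $\infty$-category whose mapping spaces have the correct $\pi_0$, but this is precisely what Barwick's formalism accomplishes. The semiadditivity step is formal once the biproduct identification $U_1 \sqcup U_2$ is established, since both universal properties (product and coproduct) are controlled by the same pullback decomposition of spans.
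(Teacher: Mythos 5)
Your construction of $\Span(\Fin_G)$ via Barwick's effective Burnside $\infty$-category is exactly what the paper does; both of you cite Barwick's span/Burnside construction applied to the disjunctive triple structure on $\Fin_G$.

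For semiadditivity, however, you take a genuinely different route. You verify the biproduct property by hand: you decompose any span into $U_1 \sqcup U_2$ (or out of it) along the pullbacks $T_i = T \times_{U_1 \sqcup U_2} U_i$ and check that this realizes the requisite equivalence of mapping spaces, with $\varnothing$ serving as the zero object. This is a direct, elementary argument whose one real obligation is to know that the mapping spaces in $\Span(\Fin_G)$ are modeled by the nerve of the groupoid of spans — which Barwick's formalism supplies. The paper instead argues abstractly: $\Span(\Fin_G)$ is a module over $\Span(\Fin)$, the latter is (0-)semiadditive, and a general result (cited as \cite[Corollary 3.19]{MR4133704}) says that any module $\infty$-category over a semiadditive monoidal $\infty$-category is itself semiadditive. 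The abstract route is shorter and instantly applicable to related span categories, while your direct route makes the biproduct explicit (useful if one later needs the concrete description of the diagonal and fold maps) and doesn't require knowing the Carmeli--Schlank--Yanovski machinery. Both are correct; neither has a gap.
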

\begin{proof}
	The construction of $ \Span(\Fin_G) $ is \cite[Proposition 5.6]{BarwickMackey} applied to \cite[Example 5.4]{BarwickMackey}. 
	The (0-)semiadditivity of $ \Span(\Fin_G) $ follows from noticing that $ \Span(\Fin_G) $ is a module over $ \Span(\Fin) $ and \cite[Corollary 3.19]{MR4133704}. 
\end{proof}
\begin{defn} 
	Let $ G $ be a finite group and let $ \Span(\Fin_G) $ be the span category of Proposition \ref{prop:spancatcoherence}. 
	Let $ \cat $ be an additive $ \infty $-category.  
	Then the category of $ \cat $-valued \emph{$ G $-Mackey functors} is given by 
	\begin{equation*} 
		\Mack_G(\cat) := \Fun^\Sigma\left(\Span(\Fin_{G}), \cat\right) 
	\end{equation*}
	where the right-hand side denotes the full subcategory on functors which take direct sums in $ \Span(\Fin_{G}) $ to products in $ \cat $. 
	We will denote the category of \emph{genuine equivariant $ G $-spectra} by $ \Spectra^G = \Mack_G(\Spectra) $. 

	Let $ G/H $ be a transitive $ G $-set. 
	Note there is a map $ \Span(\Fin) \to \Span (\Fin_G) $ given by $ S \mapsto \sqcup_{S} G/H $. 
	Since this map preserves coproducts, restriction along this map induces a functor $ (-)^H \colon \Mack_G(\cat) \xrightarrow{} \Spectra(\cat) $ which lands in spectrum objects in $ \cat $. 
	This is the $ H $-\emph{categorical fixed points.}
	When $ H = \{e\} $ will sometimes call this the \emph{underlying object}.
\end{defn} 
Since our primary group of interest is $ G = C_2 $, hereafter we will understand `Mackey functor' (without a specified $ G $) as referring to a $ C_2 $-Mackey functor. 
Note that given a $ G $-Mackey functor $ X: \Span\left(\Fin_G\right) \to \Spc $, restriction along the inclusion $ \Fin_G^\op = \Span\left(\Fin_G, all, =\right) \to \Span\left(\Fin_G\right) $ gives a $ G $-space. 
However, a $ G $-Mackey functor comprises more data: given any map of finite $ G $-sets $ f:S \to T $, we will refer to its covariant image $ X(f): X(S) \to X(T) $ as the \emph{transfer map associated to $ f $}. 
\begin{ex} [Constant Mackey functors]\label{ex:constMackey}
	Given a discrete abelian group $ k $, we consider it equipped with the trivial involution and associate to it the constant Mackey functor $ \underline{k} $ with $ \underline{k}^{C_2} = \underline{k}^e = k $. 
	Since the inclusion $ \Mod_k^\heartsuit \to \Mod_k $ preserves direct sums, taking Eilenberg-Mac Lane spectra gives us a Mackey functor $ \underline{k} \in \Mack_{C_2}(\Spectra) $. 
\end{ex}
\begin{warning}\label{warning:diff_equivariant_linearizations}
	There are several competing notions of `linearization' in the genuine equivariant setting. 
	The $ C_2 $-Mackey functor $ \underline{\Z} $ should not be confused with $ \Z \otimes \sphere^0 $ (or $ \mathrm{triv}_{C_2}(\Z) $ in the notation of \cite{MR4419053}; also see \cite{MR2918295}) or the $ C_2 $-Burnside Mackey functor $ \mathbb{A}_{C_2} $ with $ \Z $-coefficients \cite{MR3212584}; the former is \emph{cohomological} in the sense of \cite[\S7]{MR1759612}. 

	On module categories, we see the forgetful functor $ \Mod_{\underline{k}} \to \Mack_{C_2}(\Mod_k) \simeq \Mod_{k \otimes \sphere^0} $ is far from being an equivalence. 
	In particular, the ($ k $-linearized) Burnside functor is not in the image of the forgetful functor. 
	See \cite[Proposition 16.3]{MR1261590} for a 1-categorical version of this statement. 
	Explicitly, the $ C_2 $-Mackey functors $ \underline{\Z} $ and $ \mathbb{A}_{C_2} $ satisfy
	\begin{equation*}
			\underline{\Z}^{C_2} = \Z \qquad  \mathbb{A}_{C_2}^{C_2} = \Z\{C_2, C_2/C_2\}
	\end{equation*}
	while the $ C_2 $-fixed points of $ \mathrm{triv}_{C_2}(\Z) $ lives in a pullback diagram (\ref{eq:recollementsq})
	\begin{equation*}
	\begin{tikzcd}[cramped]
			\mathrm{triv}_{C_2}(\Z)^{C_2} \ar[d] \ar[r] & \Z \ar[d] \\
			\Z^{hC_2} \ar[r] & \Z^{tC_2}
	\end{tikzcd} \,;
	\end{equation*}
	in particular, it is not discrete. 
\end{warning}
\begin{ntn} [{\cite[p.57]{MR979507}}] \label{ntn:lewis_diagram}
	 The data of a $ C_2 $-Mackey functor $ M $ can be summarized in a diagram of the form
	\begin{equation*}
	\begin{tikzcd}
		M^{C_2} \ar[d, bend right=20,"\mathrm{Res}"'] \\
		M^e \ar[u, bend right=20,"\mathrm{Tr}"'] \ar[loop below,"{m \mapsto \sigma(m)}",out=-60, in=240,distance=15]
	\end{tikzcd}  
	\end{equation*}
	where we write $ \sigma $ for the nontrivial element of $ C_2 $ and the lower loop expresses the $ C_2 $-action on the underlying object of $ M $. 
	Such a diagram is referred to as a \emph{Lewis diagram}. 
\end{ntn}
\begin{rmk}  
		Though our description of $ G $-spectra is quite algebraic, the Guillou--May theorem \cite[Theorem 0.1]{GuillouMay} implies that the category of $ G $-spectra may alternatively be regarded as a stabilization of the category of $ G $-spaces with respect to smashing by representation spheres. 
		In particular, the classical adjunction between $ G $-suspension spectra lifts to a $ G $-adjunction $ \Sigma^\infty_{G} \colon \underline{\Spc}^G_* \rlarrows \underline{\Spectra}^G \colon \loops^\infty_G $ whose value on each orbit $ G/H $ is the $ (\Sigma^\infty_{H},\loops^\infty_H) $ adjunction between $ H $-spaces and $ H $-spectra. 
\end{rmk}
\begin{recollection}
		If $ \cat $ is a presentable symmetric monoidal $ \infty $-category, then $ \Mack_{G}(\cat) $ inherits a symmetric monoidal structure via Day convolution so that \cite{BarwickGlasmanShah}. 
		We defer more detailed discussion of algebraic structures (and their parametrized enhancements) to \S\ref{subsection:gen_eqvt_alg}. 
\end{recollection}
\begin{ntn}
	If $ V $ is a $ G $-representation on a finite-dimensional real vector space, we write $ \Sigma^V(-) = \Sigma_{G}^\infty S^V \otimes - \colon \Spectra^G \to \Spectra^G $ and refer to it as suspension by the representation $ V $. 
	In particular, we have $ \Sigma^\rho(-) $ and $ \Sigma^\sigma(-) $ of Examples \ref{ex:regrep} and \ref{ex:signrepsphere}, resp. 
	We will use the notations $ \Sigma^\rho(-) $ and $ (-)[\rho] $ interchangeably, and likewise for arbitrary representation spheres. 
\end{ntn}
There is an alternative way of understanding $ \Mack_{G}(\cat) $ as a recollement \emph{when $ \cat $ is stable} and cocomplete. 
Since the complexity of this formula grows with the complexity of the lattice of subgroups of $ G $, we recall the result for $ G = C_p $ cyclic of prime order. 
\begin{prop}\label{prop:eqvtspectrarecollement} 
	There is an equivalence of stable $ \infty $-categories
	\begin{equation}\label{eq:eqvtspectrarecollement}
	\begin{split}
		&\Spectra^{C_p} = \Mack_{C_p}(\Spectra) \to \Spectra^{BC_p} \times_{\Spectra} Ar(\Spectra) \\
		&X \mapsto \left(X^e, \Phi^{C_p}X := \cofib\left((X^e)_{hC_p} \xrightarrow{\tr} X^{C_p}\right) \to (X^e)^{tC_p} \right)
	\end{split}
	\end{equation}  
	where the map $ Ar(\Spectra) \to \Spectra $ is evaluation at the target. 
	We call $ \Phi^{C_p} X $ the \emph{[$C_p$]-geometric fixed points} of $ X $. 
\end{prop}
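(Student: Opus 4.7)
The plan is to exhibit $\Spectra^{C_p}$ as a stable recollement of $\Spectra^{BC_p}$ (as the ``open'' piece) and $\Spectra$ (as the ``closed'' piece), and then invoke the standard description of a stable recollement as the pullback of an arrow category. First I would identify the reflective-coreflective subcategory $\mathcal{Z} \subseteq \Spectra^{C_p}$ of $C_p$-spectra $X$ with $X^e \simeq 0$; on this subcategory the categorical fixed points functor restricts to an equivalence $\mathcal{Z} \simeq \Spectra$, and the composition $\Spectra^{C_p} \to \mathcal{Z} \simeq \Spectra$ is by definition the geometric fixed points functor $\Phi^{C_p}$. Complementary to this, the forgetful functor $j^* \colon \Spectra^{C_p} \to \Spectra^{BC_p}$ admits both a fully faithful left adjoint $j_!$ (induction) and a fully faithful right adjoint $j_*$ (coinduction); together with $(i^*, i_*, i^!)$, where $i_* \colon \Spectra \simeq \mathcal{Z} \hookrightarrow \Spectra^{C_p}$ and $i^* = \Phi^{C_p}$, these assemble into a recollement in the sense of \cite[\S A.8]{LurHA}.

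Next I would invoke the general equivalence that identifies a stable recollement with an arrow-category pullback: any stable recollement $\cat$ of $(\mathcal{U}, \mathcal{Z})$ with gluing functor $g := i^* j_*$ admits a canonical equivalence
\begin{equation*}
\cat \xrightarrow{\sim} \mathcal{U} \times_{\mathcal{Z}} \Ar(\mathcal{Z}), \qquad X \mapsto \bigl(j^* X,\; i^* X \to g(j^* X)\bigr),
\end{equation*}
where $\Ar(\mathcal{Z}) \to \mathcal{Z}$ is evaluation at the target and the comparison arrow is obtained by applying $i^*$ to the unit $X \to j_* j^* X$. The key calculation is then to identify the gluing functor $g \colon \Spectra^{BC_p} \to \Spectra$ with the Tate construction: for $Y \in \Spectra^{BC_p}$ one has $(j_* Y)^{C_p} \simeq Y^{hC_p}$ and $(j_! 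Y)^{C_p} \simeq Y_{hC_p}$, while the canonical map $j_! \to j_*$ recovers the norm $Y_{hC_p} \to Y^{hC_p}$; the defining cofiber sequence $(X^e)_{hC_p} \to X^{C_p} \to \Phi^{C_p} X$ applied to $X = j_* Y$ then expresses $\Phi^{C_p}(j_* Y)$ as the cofiber $Y^{tC_p}$ of the norm, as desired.

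Combining the two ingredients yields exactly the formula of the statement: the $\mathcal{U}$-coordinate is $j^* X = X^e$, the $\mathcal{Z}$-coordinate is $i^* X = \Phi^{C_p} X$, and the comparison arrow $i^* X \to g(j^* X)$ is the map $\Phi^{C_p} X \to (X^e)^{tC_p}$ of (\ref{eq:eqvtspectrarecollement}). I expect the principal obstacle to be bookkeeping rather than mathematical depth: one must produce the adjoints $j_!, j_*, i^*, i^!$ (each reducing to standard facts about induction, coinduction, and geometric fixed points for $C_p$-spectra), verify the four recollement axioms (fully faithfulness of $i_*, j_!, j_*$ together with $j^* i_* = 0$), and cite or reprove the general arrow-category presentation of a stable recollement. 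The content-bearing step, namely the identification of the gluing functor with the Tate construction, then follows directly from the norm cofiber sequence.
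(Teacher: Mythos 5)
Your argument is correct and is the standard one for this fact. The paper states Proposition \ref{prop:eqvtspectrarecollement} without a proof, treating it as a known result (the subsequent relative version, Proposition \ref{prop:rel_eqvt_recollement}, is obtained by citing an external symmetric monoidal refinement rather than reproving the underlying recollement); so there is no in-paper argument to compare against. Your outline — establish the recollement $\bigl(\Spectra^{BC_p}, \Spectra^{C_p}, \Spectra\bigr)$ with $j^* = (-)^e$, $i^* = \Phi^{C_p}$, $i_*$ the inclusion of the full subcategory $\{X : X^e \simeq 0\}$, invoke the general presentation of a stable recollement as $\mathcal{U} \times_{\mathcal{Z}} \Ar(\mathcal{Z})$, and then identify the gluing functor $i^* j_*$ with $(-)^{tC_p}$ using $(j_!Y)^{C_p} \simeq Y_{hC_p}$, $(j_*Y)^{C_p} \simeq Y^{hC_p}$, and the isotropy separation cofiber sequence — correctly captures the key steps, and the last identification is the genuine content. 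The only point worth flagging for a careful write-up: to know that the inclusion of $\mathcal{Z}$ admits both a left adjoint ($i^* = \Phi^{C_p}$) and a right adjoint ($i^!$), you should note that $\mathcal{Z}$ is closed under both limits and colimits because $(-)^e$ is simultaneously a left and right adjoint (indexed (co)products over $C_p/e$), and invoke presentability; this is implicit in your sketch but deserves a sentence.
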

Given a triple $ \left(X^e, \Phi^{C_p}X \to (X^e)^{tC_p} \right) $ on the right hand side of (\ref{eq:eqvtspectrarecollement}), we can reconstruct the $ C_p $-categorical fixed points of the image of the triple under the inverse equivalence of Proposition \ref{prop:eqvtspectrarecollement} from the following pullback square
\begin{equation}\label{eq:recollementsq}
\begin{tikzcd}
	X^{C_2} \ar[r] \ar[d] & \Phi^{C_2}X \ar[d] \\
	(X^e)^{hC_2} \ar[r] & X^{tC_2}
\end{tikzcd}.
\end{equation}
We will refer to the pullback square (\ref{eq:recollementsq}) as the \emph{recollement square for $X$}. 
We will refer to the exact sequence of spectra
\begin{equation}\label{diagram:isotropysep}
	X_{hC_2} \longrightarrow X^{C_2} \longrightarrow \Phi^{C_2}X 
\end{equation}
as the \emph{isotropy separation sequence for $ X $.}
There is also a relative version of Proposition \ref{prop:eqvtspectrarecollement}:
\begin{prop}\label{prop:rel_eqvt_recollement}
	Let $ A $ be an $ \E_\infty $-ring in $ \Spectra^{C_p} $. 
	Then there is a stable recollement
	\begin{equation*}
	\begin{tikzcd}
		\Mod_{A^e}^{BC_2} \ar[r, shift right=.2em] & \Mod_A(\Spectra^{C_2}) \ar[l,shift right=.2em,"{(-)^e}"'] \ar[r,"{\Phi^{C_2}}"', shift right=.2em] & \Mod_{\Phi^{C_2}A}(\Spectra) \ar[l,shift right=.2em]
	\end{tikzcd} .
	\end{equation*}  
\end{prop}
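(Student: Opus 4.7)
The plan is to lift the symmetric monoidal recollement of Proposition \ref{prop:eqvtspectrarecollement} to module categories over the $ \E_\infty $-ring $ A $. The first step is to observe that both the underlying-spectrum functor $ (-)^e \colon \Spectra^{C_2} \to \Spectra^{BC_2} $ and the geometric fixed points $ \Phi^{C_2} \colon \Spectra^{C_2} \to \Spectra $ are symmetric monoidal colimit-preserving functors, and that the Tate construction $ (-)^{tC_2} \colon \Spectra^{BC_2} \to \Spectra $ is lax symmetric monoidal. Consequently, the equivalence
\begin{equation*}
	\Spectra^{C_2} \xrightarrow{\sim} \Spectra^{BC_2} \times_{\Spectra} \Ar(\Spectra)
\end{equation*}
of Proposition \ref{prop:eqvtspectrarecollement} promotes to an equivalence of symmetric monoidal stable presentable $ \infty $-categories, where the right-hand side is equipped with its pullback symmetric monoidal structure.

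Next, I would apply $ \CAlg(-) $ to identify $ A \in \CAlg(\Spectra^{C_2}) $ with the triple $ \left(A^e, \Phi^{C_2}A \to (A^e)^{tC_2}\right) $ as a compatible system of $ \E_\infty $-algebras. The crucial input is then the general fact, following from \cite[\S4.8.4]{LurHA}, that formation of modules is a limit-preserving functor $ \CAlg(\Pr^{L,\mathrm{st}}) \to \Pr^{L,\mathrm{st}}_{/\cdot} $ with respect to diagrams of symmetric monoidal colimit-preserving functors. Applied to our pullback square of presentable symmetric monoidal stable $ \infty $-categories, this yields an equivalence
\begin{equation*}
	\Mod_A\left(\Spectra^{C_2}\right) \xrightarrow{\sim} \Mod_{A^e}\left(\Spectra^{BC_2}\right) \times_{\Mod_{(A^e)^{tC_2}}(\Spectra)} \Mod_{\Phi^{C_2}A \to (A^e)^{tC_2}}\left(\Ar(\Spectra)\right).
\end{equation*}

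Finally, I would unwind this pullback description as a stable recollement: the left-hand factor is the ``open part'' $ \Mod_{A^e}^{BC_2} $, and the overcategory $ \Mod_{\Phi^{C_2}A \to (A^e)^{tC_2}}(\Ar(\Spectra)) $ admits $ \Mod_{\Phi^{C_2}A}(\Spectra) $ as its ``closed part'' via the inclusion sending $ N $ to the arrow $ N \to 0 $. The adjoints $ j^*, j_*, i^*, i_* $ of the recollement structure are then inherited fiberwise from the recollement on $ \Spectra^{C_2} $: for instance, $ (-)^e $ on $ A $-modules computes the underlying $ A^e $-module with its $ C_2 $-action, and $ \Phi^{C_2} $ on $ A $-modules lands in $ \Phi^{C_2}A $-modules because $ \Phi^{C_2} $ is symmetric monoidal.

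The main obstacle is the compatibility step: one must verify that the pullback of symmetric monoidal functors respects module categories in the strong sense needed, and in particular that the symmetric monoidal structure on $ \Ar(\Spectra) $ induced from the pullback agrees with the natural one used to form $ \Mod_{\Phi^{C_2}A \to (A^e)^{tC_2}} $. Once this is in place, the recollement structure on the module categories follows formally from the recollement on the underlying categories.
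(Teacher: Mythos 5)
Your plan is to promote the pullback presentation $\Spectra^{C_2} \simeq \Spectra^{BC_2} \times_{\Spectra} \Ar(\Spectra)$ to $\CAlg(\Pr^{L,\mathrm{st}})$, then invoke the fact that $\Mod$ preserves limits of presentable symmetric monoidal $\infty$-categories. The difficulty is that this diagram is not a limit diagram in $\CAlg(\Pr^{L,\mathrm{st}})$: the Tate leg $(-)^{tC_2} \colon \Spectra^{BC_2} \to \Spectra$ is only \emph{lax} symmetric monoidal, and moreover it does not preserve colimits, so it is not even a morphism in $\Pr^L$. The statement you want to cite from \cite[\S4.8.4--5]{LurHA} concerns diagrams where all the functors are strong symmetric monoidal left adjoints, so it simply does not apply to this square. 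You flag a potential compatibility issue at the end, but it is not the one about the monoidal structure on $\Ar(\Spectra)$; it is specifically the lax monoidality (and failure of cocontinuity) of the gluing functor that breaks the naive argument.

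The paper avoids this by citing \cite[Theorem 1.2]{ShahRS}, which establishes that the recollement $(\Spectra^{BC_2}, \Spectra^{C_2}, \Spectra)$ is a \emph{symmetric monoidal recollement} in the precise sense of \cite[Definition 2.20]{ShahRS}. That formalism is built exactly to accommodate the fact that the gluing functor $i^* j_* = (-)^{tC_2}$ is only lax symmetric monoidal, and it directly yields the induced recollement on module categories over a commutative algebra $A$. If you want a more self-contained route without invoking Shah, you would need to replace the naive pullback by a lax (or oplax) limit presentation adapted to lax symmetric monoidal gluing data and then prove that $\Mod_A$ respects this kind of gluing, together with the existence of all six adjoints. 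This is genuinely more work than a direct application of the module-limit theorem, and it is the content being outsourced in the paper's proof.
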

\begin{proof}
		The result follows from \cite[Theorem 1.2]{ShahRS}, which implies that the recollement of Proposition \ref{prop:eqvtspectrarecollement} is symmetric monoidal in the sense of \cite[Definition 2.20]{ShahRS}.  
\end{proof}
\begin{recollection}\label{rec:genuinepostnikov}
	(Postnikov t-structure on genuine $ C_2 $-spectra) \cites[Corollary 1.3.16]{DHLPS}[Example 6.3]{BarwickGlasmanShah} 
	There is a t-structure on $ \Spectra^{C_2} $ where 
	\begin{itemize}
		\item An object $ X $ is connective if $ X^e $ and $ X^{C_2} $ are both connective. 
		\item An object $ X $ is coconnective iff $ X^{e} $ and $ X^{C_2} $ are both coconnective. 
	\end{itemize}
	The heart of this t-structure is equivalent to $ C_2 $-Mackey functors valued in abelian groups. 

	Suppose $ X \in \Spectra^{C_2} $ such that $ X^e $ is $ n $-connective. 
	By the isotropy separation sequence (\ref{diagram:isotropysep}), $ X^{C_2} $ is $ n $-connective if and only if $ \Phi^{C_2} X $ is $ n $-connective. 
	It follows that the Postnikov filtration is symmetric monoidal, i.e. if $ X, Y \in \Spectra^{C_2} $ are $ n, m $-connective respectively, then $ X \otimes Y $ is $ (n+m) $-connective. 
\end{recollection} 
\begin{lemma}\label{lemma:postnikov_rightcomplete}
	Let $ A $ be a connective $ \E_\infty $-ring in $ \Spectra^{C_2} $. 
	Then the t-structure on $ \Mod_{A} = \Mod_{A}\left( \Spectra^{C_2}\right) $ is right-complete. 
\end{lemma}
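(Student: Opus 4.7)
My plan is to deduce right-completeness of the Postnikov t-structure on $\Mod_A$ from the corresponding statement for $\Spectra^{C_2}$ via the forgetful functor
\begin{equation*}
U \colon \Mod_A\!\left(\Spectra^{C_2}\right) \longrightarrow \Spectra^{C_2}.
\end{equation*}
The key properties of $U$ I would use are: (i) $U$ is conservative; (ii) $U$ preserves all small limits, being a right adjoint to $A \otimes (-)$; (iii) $U$ is t-exact and reflects the t-structure, since the t-structure on $\Mod_A$ is by definition inherited along $U$, and because $A$ is connective, tensoring with $A$ preserves connectivity (using the symmetric monoidality of the Postnikov filtration recalled in Recollection \ref{rec:genuinepostnikov}). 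In particular, $U$ commutes with the truncation functors $\tau_{\leq n}$.

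First I would establish right-completeness of the Postnikov t-structure on $\Spectra^{C_2}$ itself. The pair of fixed-point functors $(-)^e, (-)^{C_2} \colon \Spectra^{C_2} \to \Spectra$ is jointly conservative, t-exact, and preserves arbitrary limits (they are right adjoints). Since the Postnikov t-structure on $\Spectra$ is right-complete, the same argument sketched below (applied to this jointly conservative limit-preserving pair) shows that $\Spectra^{C_2}$ is right-complete; right-separatedness follows from the characterization of connectivity in Recollection \ref{rec:genuinepostnikov}.

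Given this, I would verify the two conditions for right-completeness of $\Mod_A$. For right-separatedness: if $M \in \bigcap_n \tau_{\geq n}\Mod_A$, then $U(M) \in \bigcap_n \tau_{\geq n}\Spectra^{C_2} = 0$, and conservativity of $U$ forces $M \simeq 0$. For convergence of Postnikov towers: given $M \in \Mod_A$, consider the canonical map $M \to \lim_n \tau_{\leq n} M$. Applying $U$ and using (ii) and (iii), this becomes $U(M) \to \lim_n \tau_{\leq n} U(M)$, which is an equivalence by right-completeness of $\Spectra^{C_2}$. Conservativity of $U$ then upgrades this to an equivalence in $\Mod_A$.

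The main obstacle, if any, is really bookkeeping rather than substance: one must confirm that the inherited t-structure on $\Mod_A$ is well-defined (i.e.\ that $\Mod_{A,\geq 0}$ is closed under extensions and the truncation functors land in $\Mod_A$), which follows from connectivity of $A$ and the fact that the Postnikov filtration on $\Spectra^{C_2}$ is compatible with the symmetric monoidal structure. Once this is in hand, the entire argument is a formal transfer of right-completeness along a conservative, t-exact, limit-preserving functor.
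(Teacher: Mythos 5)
Your strategy---transferring completeness along the conservative, t-exact forgetful functor $U\colon\Mod_A\to\Spectra^{C_2}$---is a genuinely different route from the paper's, which instead verifies the hypotheses of a black-box criterion (right-separated, plus countable coproducts under which $\Mod_{A,\leq 0}$ is closed, and then cites Antieau--Nikolaus's Lemma~A.6). However, as written your argument contains two real problems.

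The first is a directionality error. In the convention used by the paper (following Lurie's HA~1.2.1.12 and Antieau--Nikolaus), \emph{right}-completeness refers to the tower $\cdots\to\Mod_{A,\geq -1}\to\Mod_{A,\geq 0}$ of connective covers, and \emph{right}-separatedness is the condition $\bigcap_n \Mod_{A,\leq n}=0$. Your separatedness check uses $\bigcap_n\tau_{\geq n}\Mod_A$, and your convergence check is Postnikov convergence, $M\simeq\lim_n\tau_{\leq n}M$. Both of those are the conditions for \emph{left}-completeness, not right. The relevant convergence statement for right-completeness is instead that the Whitehead tower covers, i.e.\ $M\simeq\colim_n\tau_{\geq -n}M$, and the transfer of that statement uses that $U$ preserves \emph{colimits} (which it does, being monadic with a colimit-preserving monad $A\otimes(-)$), not that it preserves limits. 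This is not a cosmetic issue: Postnikov convergence together with left-separatedness does \emph{not} imply right-completeness in general (for instance, bounded-below spectra have Postnikov convergence but are not right-complete).

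The second issue is that even with the correct tower, proving that $M\simeq\colim_n\tau_{\geq -n}M$ for all $M$ only shows that the counit of the adjunction $\colim\dashv F$, where $F\colon\Mod_A\to\lim\Mod_{A,\geq n}$, is an equivalence---that is, that $F$ is fully faithful. To conclude $F$ is an equivalence one must also check the unit, i.e.\ that every compatible system $(Y_n)$ in the limit category satisfies $\tau_{\geq m}\bigl(\colim_n Y_n\bigr)\simeq Y_m$. Your argument is silent on this. The good news is that your transfer strategy \emph{does} extend to handle it: the induced functor $\widehat{U}$ between the completed categories is again conservative (equivalences in the tower limit are detected componentwise), and one can apply $U$, invoke right-completeness of $\Spectra^{C_2}$, and conclude by conservativity. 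So the approach is salvageable with these corrections and would then give a self-contained alternative to the paper's citation of Antieau--Nikolaus---but as written, it proves the wrong convergence statement and omits half of the equivalence check.
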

\begin{proof}
	Because $ (-)^e $ and $ (-)^{C_2} $ are jointly conservative, the Postnikov t-structure on $ \Mod_{A} $ is right-separated. 
	Now notice that $ \Mod_{A} $ admits countable coproducts, and moreover countable coproducts of coconnective objects remain coconnective. 
	Then by \cite[Lemma A.6]{antieau-nikolaus}, the t-structure on $ \Mod_{A} $ is right-complete. 
\end{proof}
We recall the regular slice filtration on $ \Spectra^{C_2} $ of \cite{UllmanThesis}. 
\begin{defn}
	[{\cite[\S3]{UllmanThesis}}] \label{defn:reg_slice_cells}
	Let $ H \subseteq C_2 $ be a subgroup. 
	We will refer to a $ C_2 $-genuine equivariant spectrum of the form $ C_2 \otimes_H S^{n\rho_H} $ as a \emph{regular slice cell}, and say that the regular slice cell has \emph{dimension} $ n \cdot | H | $. 

	We write $ \tau^\rslice_{\geq n}\Spectra^{C_2} $ for the localizing subcategory of $ \Spectra^{C_2} $ generated by regular slice cells of dimension $ \geq n $, and describe the objects therein as \emph{regular slice $n$-connective}. 

	We say that a $ C_2 $-spectrum $ X $ is \emph{regular slice $ n $-coconnective} (cf. \cites[Definition 11.1.1]{MR4273305}[Definition 4.8]{MR4273305}) if, for all regular slice cells $ S $ of dimension $ > n $, the mapping space $ \hom_{\Spectra^{C_2}}\left(S, X\right) $ is contractible. 
	% NOT mapping spectrum
\end{defn}
\begin{ex}\label{ex:spheres_as_slices}
	Let $ n \leq 0 $. 
	Then $ \sphere^{n \sigma} $ is a regular $ n $-slice, i.e. it is regular slice $ n $-connective but not regular slice $ (n+1) $-connective. 

	To show that $ \sphere^{n \sigma} $ is not regular slice $ (n+1) $-connective, it suffices to observe that if $ M \in \Spectra^{C_2}_{\geq n+1} $, then $ M^e $ must be (non-equivariantly) $ (n+1) $-connective.  
	To show that $ \sphere^{n \sigma} $ is regular slice $ n $-connective, we induct on $ |n| $ and use the exact sequence
	\begin{equation*}
	\begin{tikzcd}
		\sphere^{-n\sigma} \to \sphere^{-(n-1)\sigma} \to \Sigma^{-(n-1)\sigma} C_2 \simeq \Sigma^{-(n-1)} C_2
	\end{tikzcd}. 
	\end{equation*}
	For future reference, we note that for any $ m $, $ \sphere^{m\rho} $ is a regular $ (2m) $-slice. 
\end{ex}
\begin{obs}\label{obs:rslice_shift_by_rep_sphere}
		The equivalence $ -\otimes \sphere^{\rho} \colon \Spectra^{C_2} \xrightarrow{\sim} \Spectra^{C_2} $ restricts to an equivalence of full subcategories $ \tau^\rslice_{\geq n} \Spectra^{C_2} \xrightarrow{\sim} \tau^\rslice_{\geq n+2} \Spectra^{C_2} $. 
\end{obs}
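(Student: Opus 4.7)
\begin{proofsketch}
The plan is to use that smashing with $\sphere^\rho$ is an equivalence of stable $\infty$-categories (with inverse $-\otimes \sphere^{-\rho}$), hence preserves small colimits and extensions. In particular, it preserves the operation of forming the localizing subcategory generated by a set. Since $\tau^\rslice_{\geq n}\Spectra^{C_2}$ is by definition the localizing subcategory generated by the regular slice cells of dimension $\geq n$, it suffices to show that smashing with $\sphere^\rho$ sends this set of generators into $\tau^\rslice_{\geq n+2}\Spectra^{C_2}$, and that smashing with $\sphere^{-\rho}$ sends generators of $\tau^\rslice_{\geq n+2}\Spectra^{C_2}$ into $\tau^\rslice_{\geq n}\Spectra^{C_2}$.

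First I would verify this on the generators indexed by $H = C_2$. A cell of the form $\sphere^{k\rho}$ has dimension $2k$, and its smash with $\sphere^\rho$ is $\sphere^{(k+1)\rho}$, which is a regular slice cell of dimension $2(k+1) = 2k + 2$; thus $2k \geq n$ implies $2(k+1) \geq n+2$, as required.

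Next I would handle the generators indexed by the trivial subgroup, namely $C_2 \otimes \sphere^{n}$ (with $\rho_e$ the $1$-dimensional trivial representation). Here the key identity is the projection formula
\begin{equation*}
    (C_2 \otimes \sphere^k) \otimes \sphere^\rho \simeq C_2 \otimes \bigl(\sphere^k \otimes \mathrm{res}^{C_2}_e \sphere^\rho\bigr) \simeq C_2 \otimes \sphere^{k+2},
\end{equation*}
using that the underlying non-equivariant spectrum of $\sphere^\rho$ is $\sphere^2$. This is a regular slice cell of dimension $(k+2) \cdot 1 = k+2$, so $k \geq n$ implies $k+2 \geq n+2$. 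The symmetric argument for $\sphere^{-\rho}$ is identical (replace $+2$ by $-2$ throughout), giving the reverse containment.

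I do not anticipate a serious obstacle: the invertibility of $\sphere^\rho$ reduces everything to an inclusion of generators, and the projection formula for induced objects together with the identification $\mathrm{res}^{C_2}_e \sphere^\rho \simeq \sphere^2$ is the only nontrivial input. The mildly delicate point, if any, is simply keeping straight that dimension is measured as $n \cdot |H|$, so the shift by $|C_2| = 2$ arrives from both families of generators in a uniform way.
\end{proofsketch}
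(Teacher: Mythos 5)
The paper states this as an Observation with no proof supplied, so there is nothing to compare against; your argument is the natural one and it is correct. Reducing to generators is exactly right since $-\otimes \sphere^\rho$ is an equivalence (hence preserves colimits, extensions, and retracts, so commutes with forming localizing subcategories), and the two generator checks — $\sphere^{k\rho}\otimes\sphere^\rho \simeq \sphere^{(k+1)\rho}$ of dimension $2k+2$, and the projection formula $(C_2\otimes\sphere^k)\otimes\sphere^\rho\simeq C_2\otimes\sphere^{k+2}$ of dimension $k+2$ — each shift the dimension by exactly $2$, with the symmetric argument for $\sphere^{-\rho}$ giving the reverse containment.
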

\begin{rmk}
		The inclusion of regular slice $ n $-connective $ C_2 $-spectra into all $ C_2$-spectra admits a right adjoint $ \tau^\rslice_{\geq n } $. 
		It follows immediately from the definitions that a $ C_2 $-spectrum $ X $ is regular slice $ n $-coconnected if and only if $ \tau^\rslice_{\geq n}X \simeq 0 $. 
\end{rmk}
\begin{obs}\label{obs:rslice_more_connective}
	There are inclusions of regular slice-connective localizing subcategories as follows: $ \tau^\rslice_{\geq k}\Spectra^{G} \subset \tau^\rslice_{\geq n}\Spectra^G $ ($ \tau^\rslice_{\geq k}\Mod_{A} \subset \tau^\rslice_{\geq n }\Mod_{A}$) for all $ k \geq n $. 
\end{obs}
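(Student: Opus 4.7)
The plan is to prove this directly from the generating description of the regular slice filtration in Definition \ref{defn:reg_slice_cells}. Since $ \tau^\rslice_{\geq n}\Spectra^{C_2} $ is defined as the localizing subcategory of $ \Spectra^{C_2} $ generated by the set $ \mathcal{G}_n $ of regular slice cells of dimension $ \geq n $, and similarly $ \tau^\rslice_{\geq k}\Spectra^{C_2} $ is generated by $ \mathcal{G}_k $, it suffices to observe that for $ k \geq n $ there is a containment of generating sets $ \mathcal{G}_k \subseteq \mathcal{G}_n $. This is immediate: a regular slice cell $ C_2 \otimes_H S^{m\rho_H} $ belonging to $ \mathcal{G}_k $ has dimension $ m \cdot |H| \geq k \geq n $, so it lies in $ \mathcal{G}_n $ as well.

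From the containment of generating sets, the containment of localizing subcategories follows by a general and standard fact: the localizing subcategory generated by $ \mathcal{G}_k $ is the smallest full stable subcategory of $ \Spectra^{C_2} $ closed under filtered colimits (equivalently, under arbitrary coproducts and fibers/cofibers) containing $ \mathcal{G}_k $. Since $ \tau^\rslice_{\geq n}\Spectra^{C_2} $ is itself such a subcategory and contains $ \mathcal{G}_n \supseteq \mathcal{G}_k $, the minimality of $ \tau^\rslice_{\geq k}\Spectra^{C_2} $ yields $ \tau^\rslice_{\geq k}\Spectra^{C_2} \subseteq \tau^\rslice_{\geq n}\Spectra^{C_2} $.

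For the parenthetical statement about $ \Mod_A $, the same argument applies verbatim once one fixes the generating set: $ \tau^\rslice_{\geq n}\Mod_A $ is the localizing subcategory of $ \Mod_A $ generated by the free $ A $-modules on regular slice cells of dimension $ \geq n $, namely $ \{A \otimes C_2 \otimes_H S^{m\rho_H} : m \cdot |H| \geq n\} $. The containment of generating sets for $ k \geq n $ is again immediate, and the same minimality argument gives the inclusion of localizing subcategories.

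There is no real obstacle here; the content is purely formal and follows from unwinding the definition of the regular slice filtration as a localizing-subcategory filtration and observing that its generators are nested in the obvious way. The observation is recorded for later use rather than because its proof requires ingenuity.
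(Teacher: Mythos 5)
Your proof is correct, and it is essentially the same argument the paper implicitly has in mind: since the paper records this as an Observation with no proof attached, the intended justification is exactly the nesting of generating sets $\mathcal{G}_k \subseteq \mathcal{G}_n$ for $k \geq n$, combined with minimality of the localizing subcategory they generate. Your note on the $\Mod_A$ case — that the generating set should be taken to be free $A$-modules on regular slice cells — is a reasonable reading and matches how the notation $\tau^\rslice_{\geq n}\Mod_A$ is used elsewhere in the paper (e.g.\ in Lemma \ref{lemma:reg_slice_conn_conditions} and Proposition \ref{prop:regsliceright_complete_sep}).
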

The following result allows us to check regular slice connectivity of a $ C_2 $-spectrum one orbit at a time; it should be compared to \cites[\S4.4.1]{MR3505179}[Theorem 6.14 \& Remark 6.15]{MR3007090}. 
\begin{lemma}\label{lemma:reg_slice_conn_conditions}  
	Let $ X \in \Spectra^{C_2} $ (resp. $ \Mod_A(\Spectra^{C_2}) $ where $ A $ is an $ \E_\infty $-algebra in $ \Spectra^{C_2} $ so that $ A^e $ and $ \Phi^{C_2}A $ are both connective). 
	Then the following are equivalent: 
	\begin{enumerate}[label=(\arabic*)]
		\item \label{lemitem:reg_slice_conditions_defn} The object $ X $ is regular slice $ n $-connective. 
		\item \label{lemitem:reg_slice_conditions_connectivity} $ X^e $ is $ n $-connective and $ \Phi^{C_2} X $ is $ \ceil{\frac{n}{2}} $-connective.
	\end{enumerate}
\end{lemma}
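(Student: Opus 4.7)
The plan is to prove the two implications separately: the forward direction via a cellular check on the generators of $\tau^\rslice_{\geq n}$, and the reverse by applying the regular slice $n$-connective cover together with the recollement to conclude that its cofiber vanishes.

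For \ref{lemitem:reg_slice_conditions_defn} $\Rightarrow$ \ref{lemitem:reg_slice_conditions_connectivity}, I will use that $(-)^e$ and $\Phi^{C_2}$ preserve colimits and that $n$-connective (respectively $\lceil n/2 \rceil$-connective) objects form subcategories closed under colimits and extensions. It therefore suffices to check the conclusion on the regular slice cells of dimension $\geq n$. For a cell $C_{2+} \wedge S^m$ with $m \geq n$, the underlying spectrum is $S^m \oplus S^m$, which is $n$-connective, and the geometric fixed points vanish. For a cell $S^{m\rho}$ with $2m \geq n$, the underlying spectrum is $S^{2m}$, which is $n$-connective, and $\Phi^{C_2}S^{m\rho} \simeq S^m$, which is $\lceil n/2 \rceil$-connective since $m \geq \lceil n/2 \rceil$. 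In the module case, the generators are the same cells smashed with $A$; the hypotheses that $A^e$ and $\Phi^{C_2}A$ are connective ensure that this tensor does not decrease the relevant connectivity bounds, and Proposition \ref{prop:rel_eqvt_recollement} combined with the $\Spectra^{C_2}$-linearity of $\Mod_A$ ensures that $\tau^\rslice_{\geq n}\Mod_A$ is generated by these cells.

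For \ref{lemitem:reg_slice_conditions_connectivity} $\Rightarrow$ \ref{lemitem:reg_slice_conditions_defn}, I will take the regular slice $n$-connective cover $Y \to X$ and prove that its cofiber $C = \cofib(Y \to X)$ vanishes. By the forward direction applied to $Y$, both $Y^e$ is $n$-connective and $\Phi^{C_2}Y$ is $\lceil n/2 \rceil$-connective, and since the cofiber of a map between $k$-connective spectra is again $k$-connective, $C^e$ and $\Phi^{C_2}C$ inherit these bounds from $Y$ and $X$. On the other hand, the semiorthogonality of the regular slice localization gives $\Map(Z, C) \simeq 0$ for every regular slice cell $Z$ of dimension $\geq n$. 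Testing against $C_{2+} \wedge S^m$ for $m \geq n$ yields $\Map(S^m, C^e) \simeq 0$, killing $\pi_k C^e$ for $k \geq n$; combined with the $n$-connectivity of $C^e$, this forces $C^e \simeq 0$. Once $C^e$ vanishes, the recollement of Proposition \ref{prop:eqvtspectrarecollement} identifies $\Map(S^{m\rho}, C)$ with $\Map(S^m, \Phi^{C_2}C)$, so testing against $S^{m\rho}$ for $m \geq \lceil n/2 \rceil$ kills $\pi_k \Phi^{C_2}C$ for $k \geq \lceil n/2 \rceil$; with the connectivity bound this gives $\Phi^{C_2}C \simeq 0$, and joint conservativity of $(-)^e$ and $\Phi^{C_2}$ then yields $C \simeq 0$.

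The most delicate step is the identification $\Map(S^{m\rho}, C) \simeq \Map(S^m, \Phi^{C_2}C)$ after knowing $C^e \simeq 0$, which I will derive by unpacking the pullback description of mapping spaces coming from the recollement of Proposition \ref{prop:eqvtspectrarecollement}: the factor $\Map_{\Spectra^{BC_2}}(S^{2m}, C^e)$ becomes contractible, collapsing the pullback onto the geometric fixed point piece. The module case introduces no new difficulty beyond invoking Proposition \ref{prop:rel_eqvt_recollement} to obtain the analogous recollement decomposition of $\Mod_A$ and the analogous mapping-space identification once the underlying module vanishes.
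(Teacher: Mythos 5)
Your proof is correct, and while the forward implication is essentially the same cell-check that the paper does, your argument for \ref{lemitem:reg_slice_conditions_connectivity}~$\Rightarrow$~\ref{lemitem:reg_slice_conditions_defn} takes a genuinely different route. The paper proceeds by isotropy separation: it tensors the cofiber sequence $E\mathcal{P}_{C_2} \to S^0 \to \widetilde{E}\mathcal{P}_{C_2}$ with $X$, shows that $E\mathcal{P}_{C_2} \otimes X$ is regular slice $\geq n$ from $n$-connectivity of $X^e$ and that $\widetilde{E}\mathcal{P}_{C_2} \otimes X$ is regular slice $\geq n$ from $\ceil{n/2}$-connectivity of $\Phi^{C_2}X$ (the latter via a further cofiber sequence involving $S^{m\rho}$), and then closes by the extension property of the localizing subcategory. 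You instead take the coreflection $Y = \tau^\rslice_{\geq n}X$, extract the cofiber $C$, and show $C \simeq 0$ by sandwiching: connectivity of $C^e$ and $\Phi^{C_2}C$ from above (using the already-proven forward direction to control $Y$) and coconnectivity from below (semiorthogonality of the colocalization, tested against the generating cells, with the recollement identifying $\Map(S^{m\rho}, C)$ with $\Map(S^m, \Phi^{C_2}C)$ once $C^e$ vanishes). Both arguments are sound; the paper's is more self-contained since the reverse direction does not depend on the forward one, while yours has the virtue of being a fairly mechanical semiorthogonality argument that would transfer to other filtrations defined by a localizing subcategory of cells, with the only geometric input being the recollement and the computation of the cells' underlying and geometric fixed-point spectra.
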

\begin{proof} [Proof of Lemma \ref{lemma:reg_slice_conn_conditions}] 
	We discuss the proof in the case $ A = \sphere^0 $; the general case follows from the same argument in view of \cite[Proposition 7.1.1.13]{LurHA}. 
	That \ref{lemitem:reg_slice_conditions_defn} implies \ref{lemitem:reg_slice_conditions_connectivity} follows readily from Definition \ref{defn:reg_slice_cells} and the fact that $ \Phi^{C_2} \left(\Sigma^\infty S^{m \rho}\right) = \Sigma^\infty \left( (S^{m\rho})^{C_2}\right) = \sphere^m $. 

	To show the converse statement, we introduce some notation which will be used in the proof. 
	Write $ \widetilde{E}\mathcal{P}_{C_2} $, $  E\mathcal{P}_{C_2} $ for the $ C_2 $-spaces 
	\begin{align*}
		\left(\widetilde{E}\mathcal{P}_{C_2}\right)^{C_2} &= * \qquad &\left(\widetilde{E}\mathcal{P}_{C_2}\right)^{e} &= S^0 \\
		\left(E\mathcal{P}_{C_2}\right)^{C_2} &= * \qquad& \left(E\mathcal{P}_{C_2}\right)^e &= \varnothing 
	\end{align*}
	of \cite[{\S}V.4]{MR1413302} (compare \cite[\S6]{MNN}). 
	We first show that the geometric spectrum $ \widetilde{E}\mathcal{P}_{C_2} \otimes \sphere^m $ is regular slice $ \geq n $ if $ m \geq \ceil{\frac{n}{2}} $. 
	The inequality implies that $ 2m \geq n $, hence in the exact sequence
	\begin{equation*}
		{E}\mathcal{P}_{C_2} \otimes \sphere^{m\rho} \to \sphere^{m \rho} \to \widetilde{E}\mathcal{P}_{C_2} \otimes \sphere^{m\rho} \simeq \widetilde{E}\mathcal{P}_{C_2} \otimes \sphere^{m}
	\end{equation*}
	the left and middle terms are both regular slice $ \geq n $. 
	By definition of a localizing subcategory, it follows that the right-hand term is also regular slice $ \geq n $. 
	Now for an arbitrary $ C_2 $-spectrum $ X $, consider the exact sequence
	\begin{equation*}
		{E}\mathcal{P}_{C_2} \otimes X \to X \to \widetilde{E}\mathcal{P}_{C_2} \otimes X \, .
	\end{equation*} 
	By our assumption that $ X^e $ is $ n $-connective, the left-hand term is regular slice $ \geq n $. 
	Now by the previous argument and our assumption that $ \Phi^{C_2} X $ is $ \ceil{\frac{n}{2}} $-connective, the right hand term is regular slice $ \geq n $. 
	Again by definition of a localizing subcategory, it follows that $ X $ is also regular slice $ \geq n $.
\end{proof}
\begin{lemma}\label{lemma:reg_slice_coconn_conditions}  
	Let $ X \in \Spectra^{C_2} $ (resp. $ \Mod_A(\Spectra^{C_2}) $ where $ A $ is an $ \E_\infty $-algebra in $ \Spectra^{C_2} $ so that $ A^e $ and $ \Phi^{C_2}A $ are both connective). 
	Suppose that $ X $ is regular slice $ n $-coconnective for $ n \leq 0 $. 
	Then $ X^e $ is $ n $-coconnective and $ X^{C_2} $ is $ \floor{\frac{n}{2}} $-coconnective. 
\end{lemma}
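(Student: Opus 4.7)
The plan is to dispatch the two conclusions separately. For the first conclusion ($X^e$ being $n$-coconnective), I would identify $\pi_k(X^e) \simeq \pi_0 \hom_{\Spectra^{C_2}}(C_2 \otimes \sphere^k, X)$ via the induction-restriction adjunction $C_2 \otimes - \dashv (-)^e$. Since $C_2 \otimes \sphere^k$ is a regular slice cell of dimension $k$, the hypothesis gives that this mapping space is contractible for $k > n$, so $\pi_k(X^e) = 0$ for $k > n$, establishing the desired coconnectivity.

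For the second conclusion, my plan is, for each $k > \floor{n/2}$ (equivalently $2k > n$), to exhibit a chain of isomorphisms $\pi_k(X^{C_2}) \simeq \pi_0 \hom_{\Spectra^{C_2}}(\sphere^{k\rho}, X)$; the latter vanishes since $\sphere^{k\rho}$ is a regular slice cell of dimension $2k > n$. To construct this chain, I would start from the rotated cofiber sequence $\sphere^0 \to \sphere^\sigma \to \Sigma C_2$ (obtained from $C_2 \to \sphere^0 \to \sphere^\sigma$) and smash with $\sphere^{k + j\sigma}$ for $j \in \Z$ (using $\sphere^{k + j\sigma} \otimes C_2 \simeq \Sigma^{k + j}C_2$ by inducedness of $C_2$) to produce cofiber sequences
\begin{equation*}
\sphere^{k + j\sigma} \to \sphere^{k + (j+1)\sigma} \to \Sigma^{k + j + 1}C_2 \qquad (j \in \Z).
\end{equation*}
Applying $\hom_{\Spectra^{C_2}}(-, X)$ and using $\hom_{\Spectra^{C_2}}(\Sigma^{k + j + 1}C_2, X) \simeq \Omega^{k + j + 1}(X^e)$, the long exact sequence in homotopy yields an isomorphism of $\pi_0$'s provided both $\pi_{k + j + 1}(X^e)$ and $\pi_{k + j + 2}(X^e)$ vanish; by the coconnectivity of $X^e$ established in the first part, this occurs precisely when $k + j + 1 > n$, i.e., $k + j \geq n$. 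Chaining these isomorphisms from $j = 0$ to $j = k$ will produce the desired identification.

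The main obstacle will be checking that $k + j \geq n$ holds across the entire iteration, which splits into cases by the sign of $k$. For $k > 0$ we ascend through $j = 0, 1, \ldots, k - 1$, and the minimum of $k + j$ is $k \geq 1 > 0 \geq n$; for $k < 0$ we descend through $j = k, k + 1, \ldots, -1$, and the minimum $k + j = 2k$ exceeds $n$ by the hypothesis $2k > n$. The degenerate case $k = 0$ arises only when $n \leq -1$ (since $0 > \floor{n/2}$ forces this), and follows directly from applying the hypothesis to the regular slice cell $\sphere^0 = \sphere^{0 \cdot \rho}$ of dimension $0 > n$.
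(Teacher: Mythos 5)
Your argument is correct, and it takes a genuinely different (though related) route from the paper. The paper proves the $X^{C_2}$-coconnectivity claim by induction on $|n|$ (reducing to odd $n$), using the observation that $-\otimes\sphere^\rho$ shifts regular slice degree by $2$ and analyzing the sequence $\Sigma X^e \to \Sigma X^{C_2} \to (\Sigma^\rho X)^{C_2}$. You instead fix $k > \floor{n/2}$ and walk a chain of isomorphisms from $\pi_k X^{C_2} = \pi_0\hom(\sphere^k, X)$ to $\pi_0\hom(\sphere^{k\rho}, X) = 0$, using the cofiber sequences $\sphere^{k+j\sigma} \to \sphere^{k+(j+1)\sigma} \to \Sigma^{k+j+1}C_2$ and the already-established coconnectivity of $X^e$ at each step. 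Both proofs rest on the same basic cofiber sequence $C_2 \to \sphere^0 \to \sphere^\sigma$, but your version is more elementary: it does not need Observation \ref{obs:rslice_shift_by_rep_sphere} or an outer induction, and it makes clear exactly which regular slice cell is responsible for killing each $\pi_k X^{C_2}$.

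One small indexing slip: you assert the passage from step $j$ to step $j+1$ requires $\pi_{k+j+1}(X^e)$ and $\pi_{k+j+2}(X^e)$ to vanish, which you translate to $k + j \geq n$. In fact, the long exact sequence from the fiber sequence $\hom(\Sigma^{k+j+1}C_2, X) \to \hom(\sphere^{k+(j+1)\sigma}, X) \to \hom(\sphere^{k+j\sigma}, X)$ around $\pi_0$ reads
\begin{equation*}
\pi_{k+j+1}X^e \to \pi_0\hom(\sphere^{k+(j+1)\sigma}, X) \to \pi_0\hom(\sphere^{k+j\sigma}, X) \to \pi_{k+j}X^e\,,
\end{equation*}
so the two relevant groups are $\pi_{k+j+1}(X^e)$ and $\pi_{k+j}(X^e)$, and the required condition is the stronger $k + j > n$ (equivalently $k + j \geq n+1$). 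Fortunately your case analysis actually verifies this stronger inequality in every case ($k + j \geq k \geq 1 > 0 \geq n$ when $k > 0$, and $k + j \geq 2k > n$ when $k < 0$), so the conclusion stands; just correct the intermediate statement.
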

\begin{rmk}
		In view of Lemma \ref{lemma:reg_slice_conn_conditions}, the usage of categorical fixed points in Lemma \ref{lemma:reg_slice_coconn_conditions} (instead of geometric fixed points) may be surprising. 
		An illustrative example is $ \underline{\Z} $: Despite $ \Phi^{C_2}\underline{\Z} $ not being $ n $-coconnective for any $ n \geq 0 $, $ \underline{\Z} $ is regular slice $ 0 $-coconnective. 
		In particular, even though there exists a nontrivial map $ \Phi^{C_2}\left(\Sigma^2 \underline{\Z}\right) \to \Phi^{C_2}\left(\underline{\Z}\right) $, it does not arise as $ \Phi^{C_2} $ of any map of $ \underline{\Z} $-modules. 
		To see this, note that any map $ f \colon \Sigma^2 \underline{\Z} \to \underline{\Z} $ determines a diagram
		\begin{equation*}
		\begin{tikzcd}
		 		\tau_{\geq 2} \Z^{tC_2}\simeq \Phi^{C_2}\left(\Sigma^2 \underline{\Z}\right) \ar[r,"{\Phi^{C_2}f}"] \ar[d] & 	\Phi^{C_2}\left(\underline{\Z}\right) \simeq \tau_{\geq 0} \Z^{tC_2} \ar[d] \\ 
		 		(\Sigma^2 \Z)^{tC_2} \ar[r,"{(f^e)^{tC_2}}"] & \Z^{tC_2} \,.
	 	\end{tikzcd} 	
		\end{equation*} 
		Since $ \Z $ is coconnective, $ f^e $ is trivial, hence so is $ (f^e)^{tC_2} $. 
		Since the vertical maps are injective on $ \pi_* $, $ \Phi^{C_2} f $ must be the zero map. 
\end{rmk}
\begin{proof} [Proof of Lemma \ref{lemma:reg_slice_coconn_conditions}]
		We discuss the proof in the case $ A = \sphere^0 $; the general case follows from the same argument in view of \cite[Proposition 7.1.1.13]{LurHA}. 
		Recall that $ C_2 \otimes \sphere^m $ is a regular slice $ m $-cell. 
		If $ X $ is regular slice $ n $-coconnective, then the mapping space $ \hom_{\Spectra^{C_2}}\left(C_2 \otimes \sphere^m, X \right) \simeq \Omega^\infty \Sigma^{-m} X^e $ is contractible for all $ m > n $. 
		In particular, it follows immediately that $ X^e $ is $ n $-coconnective. 

		To prove the statement about $ X^{C_2} $, let us replace $ n $ by $ 2n+1 $ and induct on $ |n| $. 
		Suppose $ X $ is regular slice $ (-1) $-coconnective. 
		Then $ \sphere^{0\rho} = \sphere^0 $ is a regular slice $ 0 $-cell and the mapping space $ \hom_{\Spectra^{C_2}}\left(\sphere^0, X \right) \simeq \Omega^\infty X^{C_2} $ is contractible, so $ X^{C_2} $ is $ (-1) $-connective. 

		Now suppose $ X $ is regular slice $ (2n-1) $-coconnective. 
		By the inductive hypothesis, $ \Sigma^\rho X $ is regular slice $ (2n+1) $-coconnective, so $ (\Sigma^\rho X)^{C_2} $ is $ \floor{\frac{2n+1}{2}} = n $-coconnective. 
		Writing $ \sphere^\rho \simeq \Sigma \sphere^\sigma \simeq \cofib(\Sigma C_2 \to \Sigma C_2/C_2) $, we have an exact sequence of spectra $ \Sigma X^e \to \Sigma (X^{C_2}) \to (\Sigma^\rho X)^{C_2}  $.
		In particular, in the long exact sequence $ \cdots \to \pi_{\ell} X^e \to \pi_{\ell} X^{C_2} \to \pi_{\ell + 1} \left(\Sigma^\rho X\right)^{C_2} \to \cdots $, the left and right terms are both zero for all $ \ell + 1 \geq n+1 $, hence $ X^{C_2} $ is $ n-1 = \floor{\frac{2n-1}{2}} $-coconnective as desired. 		
\end{proof}

\subsection{Genuine equivariant algebra}\label{subsection:gen_eqvt_alg} 
In this section, we introduce $ G $-$ \E_\infty $-algebras and show that the Eilenberg--Mac Lane spectra associated to certain discrete Mackey functors (Notation \ref{ntn:fixpt_green_functor}) inherit a $ G $-$ \E_\infty $-algebra structure. 
Just as $ G $-Mackey functors are (roughly) obtained from abelian groups by allowing addition indexed by finite sets with $ G $-action (in addition to the existing addition indexed by finite sets), $ G $-$ \E_\infty $-rings are (roughly) obtained from $ \E_\infty $-rings by allowing multiplication indexed by finite $ G $-sets. 
In particular, the Hill--Hopkins--Ravenel norms $ N_{H}^G $ play the role of a smash product indexed by the $ G $-set $ G/H $.  
We will use the notion of $ C_2 $-$ \E_\infty $ rings introduced in \cite{NS22}, which are expected to agree with $ N_\infty $-algebras of \cite{MR3406512}. 

Since we do not need the full strength of parametrized $ \infty $-operads in this work, we sketch the definitions and constructions we will need for this paper; the interested reader is invited to peruse \cite{NS22} for more detail. 
\begin{recollection} [{\cite[\S2]{NS22}}]
		Write $ \Fin_{C_2} $ for the category of finite sets with $ C_2 $-action, i.e. the finite coproduct completion of $ \mathcal{O}_{C_2} $. 
		There is a parametrized $ \infty $-category $ \underline{\Fin}_{C_2,*} $ whose fiber over $ C_2/e $ is $ \left(\Fin_{C_2}\right)_{C_2/-/C_2} $ and whose fiber over $ C_2/C_2 $ is $ \left(\Fin_{C_2}\right)_{*/-/*} $. 
		The restriction map is given by pullback. 
		Now, given an object $ U \in \left(\Fin_{C_2}\right)_{T/-/T} $ and an orbit $ W \subseteq U $, write $ i_W \colon U \to W $ for the map which collapses $ U \setminus W $ to the basepoint $ T $. 
		A \emph{$ C_2 $-symmetric monoidal $ C_2 $-$ \infty $-category} is a cocartesian fibration $ p \colon \cat^\otimes \to \underline{\Fin}_{C_2,*} $ so that for all $ T \in \mathcal{O}_{C_2} $ and for all $ U \in \left(\Fin_{C_2}\right)_{T/-/T} $, the $ p $-cocartesian maps over $ i_W $ induce equivalences
		\begin{equation*}
				\cat^{\otimes}_{U} \xrightarrow{\sim} \prod_{W \in \mathrm{Orbit}(U)} \cat^\otimes_W \,. 
		\end{equation*}
		Given a $ C_2 $-symmetric monoidal $ C_2 $-$ \infty $-category $ p \colon \cat^\otimes \to \underline{\Fin}_{C_2,*} $, the $ C_2 $-$ \infty $-category $ C_2\E_\infty \underline{\Alg}(\cat) $ of $ C_2 $-$ \E_\infty $-algebras in $ \cat $ is the $ C_2 $-$ \infty $-category of sections of $ p $ which carry inert morphisms in $ \underline{\Fin}_{C_2,*} $ to $ p $-cocartesian morphisms. 
		If $ p \colon \cat^\otimes \to \underline{\Fin}_{C_2,*} $, $ q \colon \mathcal{D}^\otimes \to \underline{\Fin}_{C_2,*} $ are $ C_2 $-symmetric monoidal $ C_2 $-$ \infty $-categories, a $ C_2 $-symmetric monoidal functor from $ \cat $ to $ \mathcal{D} $ is a morphism of cocartesian fibrations from $ p $ to $ q $. 
\end{recollection}
\begin{ex}
		[{\cite[Example 2.4.2]{NS22}}] There is a $ C_2 $-symmetric monoidal $ C_2 $-$ \infty $-category $ \left(\underline{\Spectra}^{C_2}\right)^\otimes \to \underline{\Fin}_{C_2,*} $ whose underlying $ C_2 $-$ \infty $-category is $ \underline{\Spectra}^{C_2} $. 
		In particular, the $ p $-cocartesian morphism associated to the map $ C_2/e \to C_2/C_2 $ classifies the Hill--Hopkins--Ravenel norm $ N_e^{C_2} \colon \Spectra \to \Spectra^{C_2} $ \cite[Definition A.52]{MR3505179}. 
\end{ex} 
The (large) $ \infty $-category of presentable $ \infty $-categories with left adjoint functors, equipped with the symmetric monoidal Lurie tensor product, is an indispensable tool to higher category theory. 
Next, we recall the $ C_2 $-parametrized analogue of these ideas. 
We begin with the parametrized analogue of the condition that a functor $ F \colon \cat \times \mathcal{D} \to \mathcal{E} $ of ordinary $ \infty $-categories preserves colimits separately in each variable. 
\begin{recollection}[{\cites[Definition 3.2.4]{NS22}[\S3.3]{Nardinthesis}[Definition 5.16]{QSparam_Tate}}] \label{rec:distributivity_param_sym_mon_cat} 
		Let $ \cat^\otimes $ be a $ C_2 $-symmetric monoidal $C_2$-$ \infty $-category. 
		The $ C_2 $-symmetric monoidal structure on $ \cat $ is said to be $ C_2 $-\emph{distributive} if, roughly, it preserves $ C_2 $-colimits separately in each variable. 
		Let us note that $ \underline{\Spc}^{C_2}_* $ with the smash product $ C_2 $-symmetric monoidal structure and $ \underline{\Spectra}^{C_2} $ with the smash product and Hill--Hopkins--Ravenel norm $ C_2 $-symmetric monoidal structure are distributive (\cite[p.34]{NS22} and \cite[Corollary 3.28]{Nardinthesis}, resp.). 
\end{recollection}
\begin{recollection}
	There is a $ C_2 $-symmetric monoidal $ C_2 $-$ \infty $-category $ C_2\Pr^L $ of presentable $ C_2 $-$ \infty $-categories with morphisms given by distributive functors \cite[Definition 3.24]{Nardinthesis}. 
	By \cite[Theorem 5.1.4(3)]{NS22}, $ C_2\E_\infty\underline{\Alg}\left(C_2\Pr^L \right) $ has coproducts. 
	Given presentable $ C_2 $-$ \infty $-categories$ \cat $ and $ \mathcal{D} $, regard them as objects of $ C_2\E_\infty\underline{\Alg}\left(C_2\Pr^L \right) $ via $ C_2 $-cocartesian $ C_2 $-symmetric monoidal structure \cite[Example 2.4.1]{NS22}.
	Denote their coproduct in $ C_2\E_\infty\underline{\Alg}\left(C_2\Pr^L \right) $ by $ \cat \otimes_{C_2} \mathcal{D} $. 
\end{recollection}
Next, we show that $ C_2 $-Green functors which arise as the fixed point functors associated to discrete rings with involution may be regarded as $ C_2 $-$ \E_\infty $-ring spectra in a canonical way. 
\begin{ntn}\label{ntn:fixpt_green_functor}
		Let $ k $ be a commutative ring with involution endowed with an involution. 
		Write $ \underline{k} $ for the $ C_2 $-Green functor with $ \underline{k}^{C_2} = k^{C_2} $, where $ k^{C_2} $ denotes the strict fixed points of the $ C_2 $-action on $ k $, and $ \underline{k}^e = k $. 	
		When $ k $ is given the trivial involution, this agrees with Example \ref{ex:constMackey}. 
\end{ntn}
\begin{prop}\label{prop:constMackeyisnormed}
	Let $ k $ be a discrete commutative ring with a given involution. 
	Then the fixed point $ C_2 $-Green functor $ \underline{k} $ (Notation \ref{ntn:fixpt_green_functor}) canonically acquires the structure of a $ C_2 $-$ \E_\infty $-algebra. 
	Moreover, suppose that $ k' $ is another discrete commutative ring with an involution and $ f \colon k \to k' $ is a map of commutative rings respecting the involution. 
	Then $ f $ canonically induces a map $ \underline{k} \to \underline{k}' $ of $ C_2 $-$ \E_\infty $-rings. 
\end{prop}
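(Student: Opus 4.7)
The plan is to invoke the author's earlier work on normed rings, then verify functoriality directly via the explicit description of the structure maps.

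Since $\underline{k}$ lies in the heart of the Postnikov $t$-structure on $\Spectra^{C_2}$ (Recollection \ref{rec:genuinepostnikov}), by the main results of \cite{LYang_normedrings}, to equip $\underline{k}$ with a $C_2$-$\E_\infty$-algebra structure it suffices to equip it with the structure of a cohomological $C_2$-Tambara functor. Concretely, I would put the following Tambara structure on $\underline{k}$: on $\underline{k}^e$, the commutative ring structure of $k$ with $\sigma$ acting as a ring automorphism; on $\underline{k}^{C_2}$, the subring $k^{C_2}$; as restriction $\underline{k}^{C_2}\to \underline{k}^e$, the inclusion $k^{C_2} \hookrightarrow k$; as additive transfer, the trace $\mathrm{tr}(x) = x + \sigma(x)$; and as multiplicative norm, the map $N(x) = x \cdot \sigma(x)$. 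Multiplicativity of $N$ uses commutativity of $k$; Frobenius reciprocity follows from $\mathrm{tr}(x\cdot y) = (x+\sigma(x))\,y = \mathrm{tr}(x)\cdot y$ for $y \in k^{C_2}$; and the Tambara distributive law reduces to the identity $N(x+y) = N(x) + N(y) + \mathrm{tr}(x\sigma(y))$, which is a direct expansion.

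For the functoriality statement, a $C_2$-equivariant ring homomorphism $f \colon k \to k'$ induces a map $\underline{f} \colon \underline{k} \to \underline{k'}$ of underlying Mackey functors. This map is automatically compatible with all Tambara structure maps: with restriction because the inclusions of fixed points are natural; with transfer because $f(x+\sigma(x)) = f(x) + \sigma(f(x))$ by $C_2$-equivariance; and with norm because $f(x\sigma(x)) = f(x)\sigma(f(x))$ by equivariance and multiplicativity. Hence $\underline{f}$ is a map of cohomological Tambara functors, which under the identification from \cite{LYang_normedrings} promotes to a map of $C_2$-$\E_\infty$-algebras.

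The main technical point, which is the content of the cited earlier work rather than of this proposition, is the identification of cohomological $C_2$-Tambara functors with discrete $C_2$-$\E_\infty$-algebras in $\Spectra^{C_2}$; given this input, the present construction reduces to the direct verification above. A relevant subtlety to keep in mind is that the fixed-point Green functor construction $k \mapsto \underline{k}$ yields a \emph{cohomological} Tambara functor rather than a general one, which is exactly what the $C_2$-$\E_\infty$-structure corresponds to on discrete objects; this is consistent with the distinction made in Warning \ref{warning:diff_equivariant_linearizations} between $\underline{k}$ and other equivariant linearizations of $k$.
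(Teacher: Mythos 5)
Your proposal takes a genuinely different route from the paper's, and it contains a gap that is worth isolating precisely.

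Your explicit Tambara-functor formulas (restriction is the inclusion $k^{C_2}\hookrightarrow k$, $\mathrm{tr}(x)=x+\sigma(x)$, $N(x)=x\sigma(x)$, and the $C_2$-distributivity identity $N(x+y)=N(x)+N(y)+\mathrm{tr}(x\sigma(y))$) are correct: this is the standard fixed-point Tambara functor, and the naturality of these formulas in $f\colon k\to k'$ is immediate. Those parts are sound. The problem is the reduction step you bootstrap the whole argument on, namely the claim that \cite{LYang_normedrings} supplies an identification of cohomological $C_2$-Tambara functors with discrete $C_2$-$\E_\infty$-algebras in $\Spectra^{C_2}$, so that producing the Tambara structure suffices. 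The paper's own proof does \emph{not} invoke any such rectification theorem; it says the result ``follows from a nearly identical argument to \cite[Theorem 5.1]{LYang_normedrings},'' which handles the trivial-involution (constant Mackey functor) case, together with \emph{one additional observation}: that the strict fixed points of the involution satisfy $k^{C_2}\simeq \pi_0\left(k^{hC_2}\right)\simeq \tau_{\geq 0}k^{hC_2}$. If a general ``cohomological Tambara $\Rightarrow$ $C_2$-$\E_\infty$'' lifting result were available in the cited work, the paper would have no need of that extra observation, nor would it describe the argument as an adaptation. Whether such a rectification theorem holds is a genuinely delicate question, precisely because the norm $N^{C_2}$ is not $t$-exact (e.g.\ $N^{C_2}$ of a discrete spectrum can have higher homotopy), so the lax $C_2$-symmetric-monoidal structure on the inclusion of the heart has to be established rather than assumed, and you cannot quietly offload it.

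The missing ingredient in your argument is exactly that observation about strict versus homotopy fixed points. The construction in the earlier work presumably builds the $C_2$-spectrum via the recollement/Borel data (so its genuine fixed points are a connective cover of the homotopy fixed points of the underlying $\E_\infty$-ring with $C_2$-action), and the point of the observation is that for a discrete ring with involution this recovers the Lewis diagram of $\underline{k}$ as defined in Notation \ref{ntn:fixpt_green_functor} --- in particular that the $C_2$-categorical fixed points come out discrete and equal to $k^{C_2}$. Without this input, verifying the Tambara axioms on $\pi_0$ does not by itself give you a $C_2$-$\E_\infty$-ring spectrum with the right fixed points; it only gives you a Green/Tambara functor in the heart. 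So either you must prove (or locate) the rectification result you are invoking, or you must engage with the homotopy-fixed-point comparison directly, as the paper does.
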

\begin{proof}
		The result follows from a nearly identical argument to that of \cite[Theorem 5.1]{LYang_normedrings}; one only needs to observe that the strict fixed points $ k^{C_2} $ of the involution on $ k $ satisfies $ k^{C_2} \simeq \pi_0\left(k^{hC_2}\right) \simeq \tau_{\geq 0} k^{hC_2} $. 
\end{proof}
The $ C_2 $-$\E_\infty $-structure on $ \underline{k} $ allows us to define a relative norm. 
\begin{defn}\label{defn:relativenorm}
	Let $ A $ be a $ C_2 $-$ \E_\infty $-ring. 
	We define the \emph{relative norm} to be
	\begin{align*}
		\underline{N}^{C_2} \colon \Mod_{A^e}\left(\Spectra\right) & \to \Mod_{A}\left(\Spectra^{C_2}\right) \\
		M & \mapsto A \otimes_{N^{C_2}A^e} N^{C_2} M .
	\end{align*}
\end{defn}
\begin{prop}\label{prop:param_module_cat_are_distributive_sym_mon}
		Let $ \cat $ be a distributive $ C_2 $-symmetric monoidal $ C_2 $-$ \infty $-category and let $ A $ be a $ C_2 $-$ \E_\infty $-algebra in $ \cat $. 
		Then the $ C_2 $-$ \infty $-category $ \underline{\Mod}_A\left(\cat\right) $ admits a $ C_2 $-symmetric monoidal refinement. 
		Moreover, the $ C_2 $-symmetric monoidal structure on $ \underline{\Mod}_{A} $ is distributive in the sense of Recollection \ref{rec:distributivity_param_sym_mon_cat}. 
\end{prop}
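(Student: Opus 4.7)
The plan is to first construct the $C_2$-symmetric monoidal refinement of $\underline{\Mod}_A(\cat)$ as a parametrized analog of Lurie's construction of module categories in symmetric monoidal $\infty$-categories, and then to verify $C_2$-distributivity by expressing the parametrized tensor operations as $C_2$-colimits of the absolute ones in $\cat$.

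For the first step, I would construct the cocartesian fibration $\underline{\Mod}_A(\cat)^\otimes \to \underline{\Fin}_{C_2,*}$ by a parametrized version of the construction in \cite[\S3.3.3, \S4.5.2]{LurHA}. Concretely, an object over $U \in (\Fin_{C_2})_{T/-/T}$ should be a collection of $A$-module objects in the fibers of $\cat^\otimes$ indexed by the orbits of $U$, and morphisms encode ``multilinear'' module maps over $A$. The Segal condition can be verified fiberwise using the classical result for each $\cat^H$, while compatibility with restrictions and with the norm $N_e^{C_2}$ is encoded by the $C_2$-$\E_\infty$-structure on $A$. The underlying binary tensor of the resulting structure is the relative tensor product $\otimes_A$, and its non-trivial norm operation is the relative norm $\underline{N}^{C_2}$ of Definition \ref{defn:relativenorm}.

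For distributivity, by Recollection \ref{rec:distributivity_param_sym_mon_cat} it suffices to show that the binary $\otimes_A$ preserves $C_2$-colimits separately in each variable and that $\underline{N}^{C_2}$ preserves $C_2$-colimits. The former follows from the bar construction presentation $M \otimes_A N \simeq \left| [n] \mapsto M \otimes A^{\otimes n} \otimes N \right|$ together with the $C_2$-distributivity of $\otimes$ in $\cat$ (by hypothesis) and the general fact that $C_2$-colimits commute with $C_2$-colimits. The latter follows from the coequalizer presentation in Definition \ref{defn:relativenorm}, using additionally that the absolute norm $N^{C_2}$ preserves $C_2$-colimits in each variable, which is itself a consequence of the $C_2$-distributivity of $\cat$.

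The main obstacle is the setup: one must carefully construct $\underline{\Mod}_A(\cat)^\otimes$ so that the norm operation recovers the relative norm $\underline{N}^{C_2}$ rather than the naive absolute norm $N^{C_2}$, and verify that the parametrized bar/coequalizer presentations are valid at the level of $C_2$-$\infty$-categories (in particular, that the relevant simplicial or cosimplicial diagrams make sense parametrically and that the forgetful functor $\underline{\Mod}_A(\cat) \to \cat$ is a conservative $C_2$-functor which creates $C_2$-colimits). Once these setup issues are in hand, the $C_2$-distributivity statement is essentially formal from the assumption on $\cat$.
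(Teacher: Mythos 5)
Your proposal takes essentially the same approach as the paper. The paper disposes of the construction of the $C_2$-symmetric monoidal refinement by citing the strategy of \cite[Proposition A.8]{LYang_normedrings}, which is the parametrized version of Lurie's construction you describe; and for distributivity the paper uses exactly your key observation that $C_2$-colimits in $\underline{\Mod}_A(\cat)$ are created by the forgetful functor to $\cat$ (this is Proposition~\ref{prop:param_modules_colimits}), so that distributivity transfers from $\cat$. Your explicit bar-construction and coequalizer presentations for $\otimes_A$ and $\underline{N}^{C_2}$ spell out a step the paper's one-line citation leaves implicit, which is a reasonable elaboration rather than a different route.

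One small caution: you frame the norm condition in distributivity as ``$\underline{N}^{C_2}$ preserves $C_2$-colimits,'' which needs to be read in the parametrized sense (preservation of $C_2$-indexed coproducts and similar diagrams, not of ordinary colimits term-by-term, since the norm is not additive). Your citation of the distributivity of the ambient norm on $\cat$ and the coequalizer presentation is the right way to make this precise, but it is worth being explicit that the colimit preservation statement for $\underline{N}^{C_2}$ is the parametrized one appearing in \cite[Definition 3.2.4]{NS22}, not a naive one.
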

\begin{ntn}\label{ntn:param_module_cats}
		In the setting of Proposition \ref{prop:param_module_cat_are_distributive_sym_mon}, we will write $ \underline{\Mod}_A $ for the $ C_2 $-$ \infty $-category $ \underline{\Mod}_A\left(\cat\right) $, we will write $ \Mod_A $ for the $ \infty $-category $ \underline{\Mod}_A^{C_2} $, and we will write $ \Mod_A^e $ for the underlying $ \infty $-category $ \Mod_{A^e} $. 
\end{ntn}
\begin{proof} [Proof of Proposition \ref{prop:param_module_cat_are_distributive_sym_mon}]
		The first statement can be proved using the same strategy as \cite[Proposition A.8]{LYang_normedrings}. 
		The latter statement follows from the fact that $ \cat $ was assumed to be distributive and $ C_2 $-colimits in $ \underline{\Mod}_A $ are computed in $ \cat $ Proposition \ref{prop:param_modules_colimits}.  
\end{proof}
\begin{rmk}\label{rmk:Tambara_model}
	For the purposes of this paper, a `$ C_p $-Tambara functor' is a $ C_p $-$ \E_\infty $-algebra object in $ \Mod_{\underline{k}}^\heartsuit $ (see Variant \ref{var:eqvtmodulespostnikov}) with respect to the box product and the norm on discrete Mackey functors \cite{MThesis}. 
	This agrees with the definition in terms of polynomial functors; see \cite{CHLL_norm_and_Tambara}.  
\end{rmk}
\begin{defn}[{\cites[Appendix A]{MR4411877}[Definition 3.13]{sulyma_prisms_tambara}}]\label{defn:cohomological_Tambara} 
		A $ G $-Tambara functor $ B $ is said to be \emph{cohomological} if for all subgroups $ K \leq H \leq G $ and $ x \in B^{H} $, $ N_{K}^H \mathrm{Res}^H_K(x) = x^{[H:K]} $.   
\end{defn}
The condition appearing in Definition \ref{defn:cohomological_Tambara} can be regarded as the multiplicative analogue of asking for Mackey functors to be cohomological. 
If $ M $ is a cohomological $ G $-Mackey functor, the free $ G $-Tambara functor on $ M $ is not necessarily cohomological \emph{as a Tambara functor}. 

\begin{variant}\label{var:eqvtmodulespostnikov}
	By \cites[Proposition 1.4.4.11]{LurHA}[Proposition A.15]{antieau-nikolaus}, there is a t-structure on $ \Mod_{\underline{k}}(\Spectra^{C_2}) $ where an object $ X $ is connective if $ X^e $ and $ X^{C_2} $ are both connective in $ \Spectra^{G} $, that is $ \Mod_{\underline{k}}(\Spectra^G)_{\geq 0} = \Mod_{\underline{k}}(\Spectra^G_{\geq 0}) $. 
	The heart of this t-structure is equivalent to modules over $ \underline{k} $ in $ C_2 $-Mackey functors. 
\end{variant}
The following is a special case of \cite[Theorem 1.3]{MR2205726}. 
\begin{prop}
	The $ \Mod_{\underline{k}}^\heartsuit $-tensor product given on Mackey functors $ M, N $ by $ \pi_0(M \otimes N) $ can be identified with the box product on $ C_2 $-Mackey functors of Lewis \cites[5]{lewis-green}[61]{MR979507}[9]{MR2941379}.
\end{prop}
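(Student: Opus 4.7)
The plan is to identify $\pi_0(M \otimes N)$ with Lewis's box product $M \boxtimes N$ by showing that both objects corepresent the same functor on $\Mack_{C_2}(\Mod_k^\heartsuit)$, namely the functor of bilinear pairings.

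First, I would recall that the symmetric monoidal structure on $\Mack_{C_2}(\Spectra)$ is obtained as Day convolution along the Cartesian-product symmetric monoidal structure on $\Span(\Fin_{C_2})$. Because $\Span(\Fin_{C_2})$ is semiadditive (Proposition~\ref{prop:spancatcoherence}) and $\Mod_k^\heartsuit$ is abelian with a tensor product preserving coproducts separately in each variable, the Day convolution of additive functors $M, N \colon \Span(\Fin_{C_2}) \to \Mod_k^\heartsuit$ remains additive. Unwinding the standard coend description of Day convolution, $\pi_0(M \otimes N)$ corepresents the functor sending a Mackey functor $L$ to the set of $k$-bilinear maps $M(U) \otimes_k N(V) \to L(U \times V)$ natural in both $U, V \in \Span(\Fin_{C_2})$.

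Second, I would invoke Lewis's characterization of the box product: natural transformations $M \boxtimes N \to L$ are in bijection with bilinear pairings of $C_2$-Mackey functors in the sense above. Equivalently, the data of such a pairing is that of a collection of $k$-bilinear maps on the underlying bifunctor on $\Fin_{C_2}^\op$ satisfying Frobenius reciprocity for restrictions and transfers, which unravels to the same data as naturality over all spans. Yoneda then produces the desired natural isomorphism $\pi_0(M \otimes N) \simeq M \boxtimes N$ functorially in $M$ and $N$.

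The step I expect to require the most care is the reconciliation in the previous paragraph: verifying that the bilinearity encoded by naturality with respect to morphisms in $\Span(\Fin_{C_2})$ (in particular transfer morphisms) matches Lewis's explicit Frobenius reciprocity conditions on the generating restriction and transfer maps. For $G = C_2$ this is a finite and direct check in the span category, and the general $G$-equivariant comparison (of which our proposition is a specialization) is the content of \cite[Theorem~1.3]{MR2205726}.
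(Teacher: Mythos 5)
Your approach matches the paper's: the paper's proof is simply a citation of \cite[Theorem 1.3]{MR2205726}, and your Day-convolution/bilinear-pairing sketch is the standard argument behind that theorem, which your proposal also identifies as the underlying reference. One point to tighten: the tensor in the proposition lives in $\Mod_{\underline{k}}^\heartsuit$ (i.e.\ over the Green functor $\underline{k}$), while your corepresentability argument is set up for the absolute box product in $\Mack_{C_2}(\Mod_k^\heartsuit)$. These are not the same ambient category (cf.\ Warning \ref{warning:diff_equivariant_linearizations}): the unit of the latter is the $k$-linearized Burnside Mackey functor, not $\underline{k}$, and $\Mod_{\underline{k}}^\heartsuit$ is a proper full subcategory. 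The reduction from the absolute comparison to the relative one is routine---since $\pi_0$ is lax symmetric monoidal on connective objects, it carries $\underline{k}$-module objects to $\underline{k}$-module objects and the identification of tensor products descends to the relative setting, or one simply notes that \cite[Theorem 1.3]{MR2205726} is already stated over an arbitrary commutative $G$-ring spectrum---but your write-up should make this step explicit.
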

\begin{rmk}
	Commutative algebras with respect to the symmetric monoidal structure on $ \Mod_{\underline{k}}^\heartsuit $ are often referred to as \emph{Green functors} \cite[19]{MR0384917}.
\end{rmk}
\begin{ntn}
	Let $ \underline{k} $ be the constant $ C_2 $-Mackey functor associated to a commutative ring $ k $. 
	Suppose $ S $ is a finite $ C_2 $-set. 
	We will write $ \underline{k}[S] := \underline{k} \otimes_{\sphere^0} \Sigma^\infty_{C_2,+} S \in \Mod_{\underline{k}}(\Spectra^{C_2}) $ and abbreviate $ \underline{k} = \underline{k}[C_2/C_2] $. 
\end{ntn}
\begin{warning}
	Let $ M \in \Mod_{\underline{k}} $. 
	We write $ M^\vee = \hom_{\underline{k}}(M, \underline{k}) $ for the dual in Mackey functors, which is not to be confused with a different type of duality which utilizes precomposing with the anti-autoequivalence $ \Span(\Fin_G) \simeq \Span(\Fin_G)^\op $ \cite[\S4]{MR1261590}.
\end{warning}

\section{Filtered and graded objects}\label{subsection:filgr}
Just as filtered and graded objects are indispensable to the ordinary Hochschild--Kostant--Rosenberg theorem, filtered and graded objects in parametrized $ \infty $-categories will play a key role in our main theorem. 
In \S\ref{subsection:fil_gr_definitions}, we introduce filtered and graded objects in a $ C_2 $-$ \infty $-category and show that they inherit parametrized enhancements of structural properties possessed by ordinary $ \infty $-categories of filtered and graded objects. 
In \S\ref{subsection:slice_trunc_on_fil_gr}, we introduce a parametrized version of the filtrations considered in \cite{Wil17} and discuss various filtrations on $ C_2 $-$ \infty $-categories of filtered and graded objects which will be useful to us later. 

While writing this section, this author wanted to prove a $ C_2 $-symmetric monoidal enhancement of the equivalence of \cite[Theorem 3.2.14]{Raksit20}, but was stymied by the absence of a parametrized Tannakian reconstruction result. 
We content ourselves with Proposition \ref{prop:complete_fil_as_cochain_cplx_monoidal} for now. 
While we expect similar statements to hold for filtered and graded objects in suitable $ G $-$ \infty $-categories for other finite groups $ G $, we do not pursue this matter here. 

\subsection{Definitions}\label{subsection:fil_gr_definitions}
In this section, we introduce filtered and graded objects in $ C_2 $-$ \infty $-categories and prove $ C_2 $-parametrized enhancements of many of the results contained in \cites[\S3.1-2]{Lurie-Rot}. %[\S3]{Raksit20}
In particular, we show that if $ \cat $ is a distributive $ C_2 $-symmetric monoidal $ C_2 $-$ \infty $-category, then the $ C_2 $-$ \infty $-categories of graded and filtered objects in $ \cat $ themselves admit $ C_2 $-symmetric monoidal structures given by parametrized Day convolution (Corollary \ref{cor:param_gr_fil_day_convolution}). 
Moreover, the associated graded $C_2$-functor is $ C_2 $-symmetric monoidal (Proposition \ref{prop:param_assoc_gr_is_C2_monoidal}). 
\begin{recollection}
	Consider $ \Z $ as a (1-)category with objects the integers and a unique morphism $ n \to m $ if $ n \geq m $. 
	We will abuse notation and denote the $ \infty $-categorical nerve of $ \Z $ similarly. 
	Let $ \Z^\delta $ be the category with the same objects but no nontrivial morphisms, i.e. a discrete category. 
	There is evidently an inclusion $ \iota: \Z^\delta \to \Z $. 
	The operation of addition on the integers endows both $ \Z^\delta $ and $ \Z $ with symmetric monoidal structures. 
\end{recollection}
\begin{defn}\label{defn:param_fil_gr}
	Let $ \cat $ be a $ C_2 $-$\infty$-category, and write $ \Z_{C_2} $, $ \Z^\delta_{C_2} $ for the constant $ C_2 $-$ \infty $-categories at $ \Z $ and $ \Z^\delta $, respectively \cite[Example 2.2]{BDGNS1}. 
	The $ C_2 $-$\infty$-category of \emph{filtered objects in $ \cat $} is given by $ {\Fil(\cat) := \underline{\Fun}(\Z_{C_2}, \cat)} $ and the $C_2 $-$\infty$-category of \emph{graded objects in $ \cat $} is $ \Gr(\cat) = \underline{\Fun}(\Z^\delta_{C_2}, \cat) $, where $ \underline{\Fun}(-,-) $ denotes the parametrized functor categories of Proposition \ref{prop:param_functors}.  
\end{defn}
\begin{rmk}\label{rmk:param_fil_gr_is_pointwise}
		Let $ \cat $ be a $ C_2 $-$\infty$-category. 
		A $ C_2 $-object in $ \Fil(\cat) $ is a morphism $ X \colon \Z \times \mathcal{O}^\op_{C_2} \to \cat $ of cocartesian fibrations over $ \mathcal{O}^\op_{C_2} $, or equivalently, an ordinary functor $ \Z \to \cat^{C_2} $. 
		Thus we see that under the equivalence of Remark \ref{rmk:param_unstraighten}, the $ C_2 $-$ \infty $-category $ \Fil(\cat) $ is given by the diagram $ \Fil\left(\cat^{C_2}\right) \to \Fil\left(\cat^e\right) \curvearrowleft C_2 $, where the $ C_2 $-action on $ \Fil\left(\cat^e\right) $ is inherited from $ \cat^e $. 
		We may abuse notation and denote a filtered or graded object by $ X_* $ where $ * \in \Z $. 
\end{rmk}
\begin{recollection}\label{rec:splitfiltobj} Suppose that $ \cat $ is a $ C_2 $-$\infty $-category which admits sequential colimits pointwise, which are preserved by the restriction functor $ \cat^{C_2} \to \cat^e $. 
\begin{enumerate}[label=(\alph*)]
	\item Given a filtered object $ X \colon \mathcal{O}^\op_{C_2} \to \Fil(\cat) $, we can associate to it its parametrized colimit $ |X|= \colim_{n \in \Z} X_{n} $ (compare \cite[Corollary 5.9]{Shah18}). 
	Since the underlying $ C_2 $-$ \infty $-category of $ \Z^{+}_{C_2} $ is constant, by \cite[Proposition 5.8]{Shah18} the parametrized colimit is computed pointwise by the usual colimit of filtered objects. 

	\item \label{recitem:exhaustive_filt} Restriction along the constant map $ c \colon \Z_{C_2} \to \underline{\{*\}} $ defines a $ C_2 $-functor $ c \colon \cat \to \Fil(\cat) $ sending every object $ Y \colon \mathcal{O}^\op_{C_2} \to \cat $ to the \emph{constant} filtered object $ c(Y) $. 
	A \emph{filtration on $ Y $} is a morphism of filtered objects $ \alpha \colon X_* \to c(Y) $. 
	A filtration on $ Y $ is \emph{exhaustive} if $ \alpha $ induces an equivalence on colimits $ |X| \simeq |c(Y)| \simeq Y $. 
\end{enumerate}
By \cite[Theorem 10.5]{Shah18}, there is a $ C_2 $-adjunction $ \colim \dashv c $. 
\end{recollection} 
\begin{ntn}\label{ntn:ins_ev}
	Given $ n \in \Z $, we can restrict along the inclusion $ \ev_n: \Gr(\cat) \to \cat $, $ \ev_n: \Fil(\cat) \to \cat $. 
	These functors admit fully faithful left $C_2$-adjoints $ \ins_n: \cat \to \Fil(\cat), \Gr(\cat) $. 
\end{ntn}
\begin{rmk}
Let $ \cat $ be a $ C_2 $-presentable $ C_2 $-$ \infty $-category. 
We can form the \emph{associated graded} of a filtered object, which participates in a $ C_2 $-adjunction:
\begin{equation}\label{eq:param_assoc_graded}
\begin{split} 	
	X_* &\mapsto \gr(X)_n := \cofib(X_{n+1} \to X_n) \\
	\gr \colon \Fil(\cat) & \rlarrows  \Gr(\cat) \colon \zeta \\
	\{\cdots \xrightarrow{0} X_n \xrightarrow{0} X_{n-1} \xrightarrow{0} \cdots \} & \mapsfrom  X_* \,.
\end{split}
\end{equation}	
\end{rmk}
\begin{defn}\label{defn:complete_fil}
		Let $ \cat $ be a $ C_2 $-stable $ C_2 $-$ \infty $-category which admits sequential limits, which are preserved by the restriction functor $ \cat^{C_2} \to \cat^e $.  
		A filtered object $ X_{*} $ is said to be \emph{complete} if $ \lim_{n \to \infty} X_n = 0 $. 
		We denote the full $C_2$-subcategory on complete filtered objects by $ {\Fil}^\wedge(\cat) $. 
\end{defn}
The next observation is an immediate consequence of its non-parametrized counterpart \cite[Lemma 2.15]{MR3806745} and Corollary \ref{cor:C2_left_adjoint_local_crit}. 
\begin{obs}
		Let $ \cat $ be a $ C_2 $-presentable $ C_2 $-stable $ C_2 $-$ \infty $-category which admits sequential limits, which are preserved by the restriction functor $ \cat^{C_2} \to \cat^e $.  
		The inclusion $ {\Fil}^\wedge(\cat) \inj {\Fil}(\cat) $ admits a left $C_2$-adjoint, which we call completion and denote by $ {(-)^\wedge} $.
\end{obs}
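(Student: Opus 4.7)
The plan is to invoke Corollary \ref{cor:C2_left_adjoint_local_crit} applied to the inclusion $\iota \colon \Fil^\wedge(\cat) \hookrightarrow \Fil(\cat)$, which is tautologically a $C_2$-functor. Two conditions need verification: that $\iota$ admits left adjoints fiberwise, and that both $\Fil^\wedge(\cat)$ and $\Fil(\cat)$ admit finite $C_2$-products which $\iota$ preserves.

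For the fiberwise left adjoints: by Remark \ref{rmk:param_fil_gr_is_pointwise}, the fibers of $\Fil(\cat) \to \mathcal{O}^\op_{C_2}$ are $\Fil(\cat^{C_2})$ and $\Fil(\cat^e)$. Since completeness is the pointwise condition $\lim_n X_n \simeq 0$, the fibers of $\Fil^\wedge(\cat)$ are correspondingly $\Fil^\wedge(\cat^{C_2})$ and $\Fil^\wedge(\cat^e)$, and $\iota$ restricts to the ordinary inclusion of complete into filtered objects at each orbit. The non-parametrized counterpart, Lemma 2.15 of \cite{MR3806745}, supplies a left adjoint (completion) to each of these inclusions, using that each $\cat^H$ is stable and admits sequential limits.

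For the $C_2$-product condition: since $\cat$ is $C_2$-stable and admits sequential limits (preserved by restriction), $\Fil(\cat)$ inherits these properties fiberwise and in particular admits finite $C_2$-products, which are just products of filtered objects in each $\cat^H$. The subcategory $\Fil^\wedge(\cat)$ is closed under these finite products, since a finite product of filtered objects with vanishing sequential limits again has vanishing sequential limit (limits commute with limits). Hence $\iota$ preserves finite $C_2$-products.

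Both hypotheses of Corollary \ref{cor:C2_left_adjoint_local_crit} are satisfied, yielding the desired left $C_2$-adjoint. The only point worth flagging is that the product-preservation hypothesis of the corollary is really encoding the Beck--Chevalley-type compatibility \ref{prop_assumption:param_left_adjoint_local_crit} of Proposition \ref{prop:param_left_adjoint_local_crit}, namely that fiberwise completion commutes with restriction $\cat^{C_2} \to \cat^e$; this in turn is immediate from the hypothesis that restriction preserves sequential limits. Given how much of the work has been offloaded to the non-parametrized lemma and to the general criterion, there is essentially no obstacle here beyond verifying that the pointwise description of completeness behaves well under restriction.
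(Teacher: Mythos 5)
Your proposal is correct and follows the same route the paper takes: the text immediately preceding the observation states that it is a consequence of Corollary \ref{cor:C2_left_adjoint_local_crit} together with the non-parametrized completion adjunction of Lemma 2.15 of \cite{MR3806745}, which is exactly your decomposition. One small imprecision in your closing remark: condition \ref{prop_assumption:param_left_adjoint_local_crit} of Proposition \ref{prop:param_left_adjoint_local_crit} applied here says that coinduction (the \emph{right} adjoint to restriction) preserves complete filtered objects, not that completion commutes with restriction — though your actual verification in the preceding paragraph, that $\Fil^\wedge(\cat)$ is closed under all finite $C_2$-products because these are limits, is the correct argument.
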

\begin{rmk}\label{rmk:und_spl}
		Restriction along the canonical fiberwise inclusion $ \Z^{\delta}_{C_2} \to \Z_{C_2} $ induces a $C_2$-functor $ \mathrm{und} \colon \Fil(\cat) \to \Gr(\cat) $ which associates to a filtered object its underlying graded object. 
		The functor $ \und $ admits a left $C_2$-adjoint $ \spl: \Gr(\cat) \to \Fil(\cat) $ given on objects by $ \spl(X_*)_j \simeq \bigsqcup_{i \geq j} X_i $.  
		We will say that a filtered object $ X $ is \emph{split} if there is an equivalence $ \spl(\gr(X)) \simeq X $ in $ \Fil(\cat) $. 
\end{rmk}
We need a bit more preparation in order to be able to make sense of filtered and graded equivariant algebras. 
\begin{cons}\label{cons:param_gr_fil_indexing_cats}
Let $ \Z^\delta $, $ \Z $ denote the monoidal categories used to define filtered and graded objects in Definition \ref{defn:param_fil_gr}.  
Consider the functors
\begin{align*}
	g \colon \Span\left(\Fin_{C_2} \right) &\to \Cat_\infty &\qquad f \colon \Span\left(\Fin_{C_2} \right) &\to \Cat_\infty \\
	U \simeq \bigsqcup_{W \in \mathrm{Orbit}(U)} W &\mapsto \prod_{W \in \mathrm{Orbit}(U)} \left(\Z^{\delta}\right) &\qquad U \simeq \bigsqcup_{W \in \mathrm{Orbit}(U)} W &\mapsto \prod_{W \in \mathrm{Orbit}(U)} \Z \\
	\left(W^{\sqcup n} \xrightarrow{\nabla} W \right) &\mapsto \left( \left(\Z^{\delta}\right)^n \xrightarrow{+} \Z^\delta \right) &\qquad \left(W^{\sqcup n} \xrightarrow{\nabla} W \right) &\mapsto \left( \Z^n \xrightarrow{+} \Z\right)  \\
	(W^{\sqcup n} \hookleftarrow W \colon \iota_i) & \mapsto \left( \left(\Z^{\delta}\right)^n \xrightarrow{p_i} \Z^{\delta}\right) &\qquad (W^{\sqcup n} \hookleftarrow W \colon \iota_i ) & \mapsto \left( \Z^n \xrightarrow{p_i} \Z\right) \\ 
	\left(C_2 \to C_2/C_2\right) & \mapsto \left( \Z^\delta \xrightarrow{\cdot 2} \Z^\delta\right) & \qquad \left(C_2 \to C_2/C_2\right) & \mapsto \left( \Z \xrightarrow{\cdot 2} \Z\right) \\
	\left(C_2/C_2 \leftarrow C_2\right) & \mapsto \left( \Z^\delta = \Z^\delta\right) & \qquad \left(C_2/C_2 \leftarrow C_2\right) & \mapsto \left( \Z = \Z\right)
\end{align*}
where $ \iota_i $ denotes inclusion of the $ i $th component, $ p_i $ is projection onto the $ i $th component, and $ W $ denotes some object in the orbit category $ \mathcal{O}_{C_2}^\op $. 
Notice that $ g $ and $ f $ are determined by their values on the aforementioned objects and morphisms. 

Consider the composites $ \underline{\Fin}_{C_2,*} \xrightarrow{s}  \Span\left(\Fin_{C_2} \right) \xrightarrow{g} \Cat_\infty  $ and $ \underline{\Fin}_{C_2,*} \xrightarrow{s}  \Span\left(\Fin_{C_2} \right) \xrightarrow{f} \Cat_\infty  $. 
Denote the corresponding Grothendieck constructions by $ \Z^{\delta,+}_{C_2} \to \underline{\Fin}_{C_2,*} $ and $ \Z^+_{C_2} \to \underline{\Fin}_{C_2,*} $, respectively. 
Notice that the inclusion $ \Z^\delta \to \Z $ induces a natural transformation $ g \implies f $, which in turn induces a morphism $ \Z^{\delta,+}_{C_2} \to \Z^+_{C_2} $ of cocartesian fibrations. 
\end{cons}
\begin{lemma}\label{lemma:gr_fil_promonoidal}
	The $ C_2 $-$ \infty $-categories $ \Z^{\delta,+}_{C_2} \to \underline{\Fin}_{C_2,*} $ and $ \Z^+_{C_2} \to \underline{\Fin}_{C_2,*} $ of Construction \ref{cons:param_gr_fil_indexing_cats} are $ \underline{\Fin}_{C_2,*} $-promonoidal in the sense of Definition 3.1.1 of \cite{NS22}. 
\end{lemma}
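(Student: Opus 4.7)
The plan is to establish promonoidality by realizing $g$ and $f$ as semiadditive functors $\Span(\Fin_{C_2}) \to \Cat_\infty$, and then checking that the associated cocartesian fibrations verify the parametrized Segal condition characterizing a $\underline{\Fin}_{C_2,*}$-promonoidal structure. The unified approach handles both $\Z^{\delta,+}_{C_2}$ and $\Z^+_{C_2}$ simultaneously, since the underlying combinatorics is the same.

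First, I would verify that the data specified in Construction \ref{cons:param_gr_fil_indexing_cats} assembles into well-defined functors $g, f \colon \Span(\Fin_{C_2}) \to \Cat_\infty$. The main coherence is the Beck--Chevalley condition: for a pullback square in $\Fin_{C_2}$, the induced square of restrictions and transfers must commute. This is where one must check, for example, the double coset formula $\mathrm{Res} \circ \mathrm{Tr} = \cdot 2$ on the fiber over $C_2/e$ coming from the self-pullback of $C_2 \to C_2/C_2$. Because restrictions are identities on the category $\Z$ (or $\Z^\delta$) and the transfer arising from $C_2 \to C_2/C_2$ is multiplication by $2$, all such verifications reduce to elementary identities in $(\Z,+)$; they are especially transparent for $g$ since $\Z^\delta$ is discrete, and then the identical argument for $f$ follows by extending scalars along $\Z^\delta \hookrightarrow \Z$, which is compatible with all structure maps.

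Next, I would verify that $g$ and $f$ are semiadditive, i.e., that they send disjoint union in $\Fin_{C_2}$ to product in $\Cat_\infty$. For $U = U_1 \sqcup U_2$ we have $\mathrm{Orbit}(U) = \mathrm{Orbit}(U_1) \sqcup \mathrm{Orbit}(U_2)$, so by definition
\begin{equation*}
g(U) = \prod_{W \in \mathrm{Orbit}(U)} \Z^\delta \;\simeq\; g(U_1) \times g(U_2),
\end{equation*}
and likewise for $f$. Under the straightening--unstraightening equivalence, the composites $g \circ s$ and $f \circ s$ with $s \colon \underline{\Fin}_{C_2,*} \to \Span(\Fin_{C_2})$ correspond to the cocartesian fibrations $\Z^{\delta,+}_{C_2} \to \underline{\Fin}_{C_2,*}$ and $\Z^+_{C_2} \to \underline{\Fin}_{C_2,*}$. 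The inert morphisms in $\underline{\Fin}_{C_2,*}$ (the collapse-to-orbit maps $i_W$) are sent by $s$ to the canonical projections onto orbit-summands in $\Span(\Fin_{C_2})$, so the $\underline{\Fin}_{C_2,*}$-Segal condition—that cocartesian transport along inert maps induces an equivalence of the fiber over $U$ with the product of fibers over the orbits of $U$—translates precisely to the semiadditivity verified above. This is the content of $\underline{\Fin}_{C_2,*}$-promonoidality in the sense of \cite[Definition 3.1.1]{NS22}.

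The main obstacle is the coherence bookkeeping in the first step—constructing $g$ and $f$ as functors out of $\Span(\Fin_{C_2})$ rather than just specifying their values on generators—but this is substantially simplified by the facts that restriction functors are all identities on the underlying categories, the transfer morphisms are given by addition, and $\Z^\delta$ has no non-identity morphisms. As an alternative, one may simply invoke that $(\Z, +)$ and $(\Z^\delta, +)$ are commutative monoids in $\Cat_\infty$ and that a commutative monoid canonically extends to a \emph{cohomological} Mackey functor in categories (with restriction equal to the identity and transfer $N_e^{C_2}$ equal to multiplication by $2$ on the group-completion), which automatically produces the required semiadditive extension to $\Span(\Fin_{C_2})$.
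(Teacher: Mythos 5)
Your argument is correct and follows the same essential route as the paper's proof: both reduce promonoidality of the Grothendieck constructions — automatically flat, being cocartesian fibrations over $\underline{\Fin}_{C_2,*}$ — to the Segal condition for $C_2$-$\infty$-operads, which holds by definition because the fiber over $U$ is literally the product $\prod_{W \in \mathrm{Orbit}(U)} \Z^\delta$ (resp.\ $\Z$). Your proposal re-derives the well-definedness of $g$ and $f$ (Beck--Chevalley) and packages the Segal check as semiadditivity, whereas the paper attributes the former to Construction~\ref{cons:param_gr_fil_indexing_cats} and disposes of the latter by observing that the morphism spaces of $\Z^\delta$ and $\Z$ are empty or contractible, so there are no higher coherences to verify.
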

\begin{proof}
	Since $ \Z^{\delta,+}_{C_2} \to \underline{\Fin}_{C_2,*} $ and $ \Z^+_{C_2} \to \underline{\Fin}_{C_2,*} $ were defined as cocartesian fibrations over $ \underline{\Fin}_{C_2,*} $, it suffices to show that the structure morphisms exhibit $ \Z^{\delta, +}_{C_2} $ and $ \Z^+_{C_2} $ as $ C_2 $-$ \infty $-operads. 
	This follows from unravelling the definitions of $ f $ and $ g $, as the morphism spaces in $ \Z^\delta $ and $ \Z $ are either empty or contractible. 
\end{proof}
\begin{cor}\label{cor:param_gr_fil_day_convolution}
	For any distributive $ C_2 $-symmetric monoidal $ \infty $-category $ \cat^\otimes $ (Recollection \ref{rec:distributivity_param_sym_mon_cat}), there are $ C_2 $-symmetric monoidal $ \infty $-categories $ \Gr(\cat)^{\ostar} := \widetilde{\Fun}\left(\Z^\delta_{C_2}, \cat \right) $ and $ \Fil(\cat)^{\ostar} := \widetilde{\Fun}\left(\Z_{C_2}, \cat \right) $. 
	Their underlying $ C_2 $-$ \infty $-categories are given by $ \Gr(\cat) = \underline{\Fun}(\Z^\delta_{C_2}, \cat) $ and $ \Fil(\cat) = \underline{\Fun}(\Z_{C_2}, \cat) $, respectively. 
	Moreover, the symmetric monoidal structures on $ \Gr(\cat)_t $ and $ \Fil(\cat)_t $ for any $ t \in \mathcal{O}^\op_{C_2} $ agree with the usual Day convolution symmetric monoidal structure. 
\end{cor}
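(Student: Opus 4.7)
The plan is to invoke the parametrized Day convolution machinery of Nardin--Shah directly. By Lemma \ref{lemma:gr_fil_promonoidal}, the cocartesian fibrations $\Z^{\delta,+}_{C_2} \to \underline{\Fin}_{C_2,*}$ and $\Z^{+}_{C_2} \to \underline{\Fin}_{C_2,*}$ are $\underline{\Fin}_{C_2,*}$-promonoidal, so one is in the situation to apply the construction of parametrized Day convolution from \cite{NS22} (with our distributivity hypothesis on $\cat^\otimes$ ensuring existence of the relevant parametrized colimits needed to define the pointwise tensor on sections). Applying this construction, with $\widetilde{\Fun}\left(\Z^\delta_{C_2}, \cat\right)$ and $\widetilde{\Fun}\left(\Z_{C_2}, \cat\right)$ denoting the resulting $C_2$-symmetric monoidal $C_2$-$\infty$-categories of (pro)monoidal functors, immediately produces the desired $C_2$-symmetric monoidal refinements.

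It remains to identify the underlying $C_2$-$\infty$-category and the fiberwise structure. First, I would check that forgetting the $C_2$-symmetric monoidal structure recovers the underlying $C_2$-$\infty$-category: by construction of parametrized Day convolution, the underlying $C_2$-$\infty$-category of $\widetilde{\Fun}\left(\Z_{C_2}, \cat \right)$ is the $C_2$-functor category out of the underlying $C_2$-$\infty$-category of the promonoidal structure, which is $\Z_{C_2}$ (and similarly $\Z^\delta_{C_2}$ in the graded case). This recovers $\underline{\Fun}(\Z_{C_2}, \cat) = \Fil(\cat)$ and $\underline{\Fun}(\Z^\delta_{C_2}, \cat) = \Gr(\cat)$ per Definition \ref{defn:param_fil_gr}.

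Next, for each $t \in \mathcal{O}^\op_{C_2}$, I would identify the fiber of the $C_2$-symmetric monoidal structure. The fiber of $\underline{\Fin}_{C_2,*}$ over $t$ recovers the usual $\infty$-operad $\Fin_*$ (respectively with appropriate slicing), and restricting the promonoidal data of Construction \ref{cons:param_gr_fil_indexing_cats} to this fiber recovers the usual monoidal structure on $\Z$ (or $\Z^\delta$) given by addition, along with the associated convolution tensor product on functor categories. Since ordinary Day convolution is characterized by the same universal property as its parametrized refinement restricted to one orbit, the fibers agree with the usual Day convolution symmetric monoidal structure, as claimed.

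The main obstacle, in my view, is the bookkeeping in verifying the identifications of underlying and fiberwise data — this is mostly formal, but requires careful unpacking of the parametrized Day convolution construction (in particular, that it is functorial in the promonoidal input and commutes with the relevant restriction functors). The actual existence of the $C_2$-symmetric monoidal enhancement is essentially a black-box consequence of Lemma \ref{lemma:gr_fil_promonoidal} combined with the distributivity hypothesis on $\cat^\otimes$, which ensures that the pointwise tensor product formulas make sense and assemble into a cocartesian fibration over $\underline{\Fin}_{C_2,*}$ satisfying the Segal conditions.
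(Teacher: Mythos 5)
Your approach is essentially the same as the paper's: invoke Lemma \ref{lemma:gr_fil_promonoidal} together with the parametrized Day convolution construction of \cite{NS22} (Theorem 3.2.6 there) to obtain the $C_2$-symmetric monoidal refinement, identify the underlying $C_2$-$\infty$-category via the general description of the underlying category of a parametrized Day convolution (Proposition 3.1.9 of \cite{NS22}), and identify the fibers as ordinary Day convolution. The only place you are slightly less precise than the paper is the fiberwise identification: the paper pins it down by observing that since $\Z_{C_2}$ and $\Z^\delta_{C_2}$ are constant $C_2$-$\infty$-categories, the parametrized colimit appearing in the Day convolution formula is computed pointwise by the ordinary colimit (as in the proof of \cite[Corollary 5.9]{Shah18}), whereas your appeal to ``the same universal property'' is correct in spirit but glosses over the fact that one must rule out an indexed-colimit contribution in the convolution tensor at the $C_2/C_2$ fiber.
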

\begin{proof} 
	The first statement follows from Theorem 3.2.6 of \cite{NS22} and Lemma \ref{lemma:gr_fil_promonoidal}. 
	The descriptions of the underlying $ C_2 $-$ \infty $-categories follows from Proposition 3.1.9 of \emph{loc. cit.} 
	The last statement follows by definition of a $ C_2 $-colimit diagram (compare the proof of \cite[Corollary 5.9]{Shah18}). 
\end{proof} 
\begin{cor}
		Let $ A $ be a $ C_2 $-$ \E_\infty $-algebra in $ \Spectra^{C_2} $. 
		Then parametrized Day convolution induces a $ C_2 $-symmetric monoidal structure on $ \Gr\left(\underline{\Mod}_A\right) $ and $ \Fil\left(\underline{\Mod}_A\right) $. 
		On underlying $ \infty $-categories, this recovers the Day convolution symmetric monoidal structure on $ \Gr\left(\Mod_{A^e}(\Spectra)\right) $ and $ \Fil\left(\Mod_{A^e}(\Spectra)\right) $, resp.
\end{cor}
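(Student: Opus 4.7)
The plan is to combine two previously established results: Proposition \ref{prop:param_module_cat_are_distributive_sym_mon}, which asserts that $\underline{\Mod}_A$ inherits a distributive $C_2$-symmetric monoidal structure, and Corollary \ref{cor:param_gr_fil_day_convolution}, which produces parametrized Day convolution $C_2$-symmetric monoidal structures on $\Gr(\cat)$ and $\Fil(\cat)$ for any distributive $C_2$-symmetric monoidal $C_2$-$\infty$-category $\cat$. Together, these yield the first assertion essentially by direct application. More precisely, I would first invoke Proposition \ref{prop:param_module_cat_are_distributive_sym_mon} (with $\cat = \underline{\Spectra}^{C_2}$, which is distributive by Recollection \ref{rec:distributivity_param_sym_mon_cat}) to conclude that $\underline{\Mod}_A$ is itself distributive $C_2$-symmetric monoidal. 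Then I would feed $\underline{\Mod}_A$ into Corollary \ref{cor:param_gr_fil_day_convolution} to obtain the desired $C_2$-symmetric monoidal refinements of $\Gr(\underline{\Mod}_A)$ and $\Fil(\underline{\Mod}_A)$.

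For the second assertion, about underlying $\infty$-categories, the main point is already contained in the last sentence of Corollary \ref{cor:param_gr_fil_day_convolution}: the symmetric monoidal structure on the fiber over any $t \in \mathcal{O}^\op_{C_2}$ agrees with the ordinary Day convolution symmetric monoidal structure. I would specialize this to $t = C_2/e$, where the fiber of $\underline{\Mod}_A$ is by Notation \ref{ntn:param_module_cats} identified with $\Mod_{A^e}(\Spectra)$. Under this identification, the Day convolution on graded and filtered objects in $\underline{\Mod}_A$ restricts on underlying $\infty$-categories to the ordinary Day convolution on graded and filtered $A^e$-modules, which is the desired statement.

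I do not anticipate any substantial obstacle here, as the corollary is truly a formal consequence of earlier work: the technical content of distributivity and the parametrized promonoidal structure of $\Z^+_{C_2}$ and $\Z^{\delta,+}_{C_2}$ (Lemma \ref{lemma:gr_fil_promonoidal}) has already been established. The only care to be taken is in bookkeeping the notation: one should verify that the $C_2$-$\infty$-category $\underline{\Mod}_A(\underline{\Spectra}^{C_2})$ is the same as the one denoted $\underline{\Mod}_A$ in the statement, which is explicitly Notation \ref{ntn:param_module_cats}. This should be entirely straightforward, so the proof proposed is essentially a one-line citation combining the two aforementioned results.
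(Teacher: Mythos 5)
Your proof is correct and follows exactly the paper's own argument, which is simply the one-line citation of Proposition \ref{prop:param_module_cat_are_distributive_sym_mon} and Corollary \ref{cor:param_gr_fil_day_convolution}. Your elaboration of the underlying-category identification via Notation \ref{ntn:param_module_cats} and the fiberwise statement in Corollary \ref{cor:param_gr_fil_day_convolution} is just a more explicit unpacking of the same reasoning.
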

\begin{proof}
		Follows from Proposition \ref{prop:param_module_cat_are_distributive_sym_mon} and Corollary \ref{cor:param_gr_fil_day_convolution}. 
\end{proof}
\begin{rmk}
		In the situation of Notation \ref{ntn:ins_ev}, if $ \cat $ is in addition endowed with a $ C_2 $-symmetric monoidal structure, then $ \ins_0 $ is a $C_2$-symmetric monoidal functor, so $ \ev_0 $ is lax $C_2$-symmetric monoidal. 
\end{rmk}
\begin{variant}
		Suppose $ \cat $ is pointed. 
		We denote $ \Fil^{\geq 0}(\cat) = \Fun\left(\Z_{\geq 0,C_2}, \cat\right) $, equivalently given by the full $C_2$-subcategory of $ \Fil(\cat) $ on filtered objects $ X_* $ such that the map $ X_n \simeq X_{n-1} $ for $ n\leq 0$. 
		Similarly, we have a $ C_2 $-$ \infty $-category $ \Gr^{\geq 0}(\cat) = \Fun\left(\Z_{\geq 0,C_2}^\delta, \cat\right) $, equivalently given by the full $C_2$-subcategory of $ \Gr(\cat) $ on filtered objects $ X_* $ such that $ X_n \simeq * $ for $ n< 0$.  
		Restriction along the inclusions $ \Z_{\geq 0,C_2} \hookrightarrow \Z_{C_2} $ and $ \Z_{\geq 0, C_2}^\delta \hookrightarrow {\Z}^\delta_{C_2} $ induce $C_2$-adjunctions $ \ins^{\geq 0} \colon \Fil^{\geq 0}(\cat) \rlarrows \Fil(\cat) \colon \ev^{\geq 0} $ and $ \ins^{\geq 0} \colon \Gr^{\geq 0}(\cat) \rlarrows \Gr(\cat) \colon \ev^{\geq 0} $. 

		When $ \cat $ is $ C_2 $-symmetric monoidal, the functors $ \ins^{\geq 0} $ are $ C_2 $-symmetric monoidal, so $ \ev^{\geq 0} $ are lax $ C_2 $-symmetric monoidal.  
\end{variant} 
As with ordinary Day convolution, the norm on graded and filtered objects (i.e., Day convolution indexed by the $ C_2 $-set $ C_2 $) admits an explicit description. 
\begin{lemma}\label{lemma:norm_on_fil_gr_formula}
Let $ \left(\Spectra^{C_2}\right)^\otimes $ be the $ C_2 $-$ \infty $-category of $ C_2 $-spectra, endowed with the distributive $ C_2 $-symmetric monoidal structure of \cite[Example 2.4.2]{NS22}.
\begin{itemize}
	\item Let $ A \in \Spectra $ and write $ A(\ell) \in \Gr(\Spectra) $ for the graded object which is concentrated in degree $ \ell $. 
	Under the $ C_2 $-symmetric monoidal structure of Corollary \ref{cor:param_gr_fil_day_convolution} and (\ref{eq:recollementsq}), we have 
	\begin{equation*}
			\left(N^{C_2}\left(A(\ell)\right)\right)^e \simeq \left(A(\ell)\right)^{\ostar 2} \simeq \left(A \otimes A\right)(2 \ell) \qquad \Phi^{C_2}\left(N^{C_2}\left(A(\ell)\right)\right) \simeq A(2\ell) \,.
	\end{equation*}
	In particular, if $ B $ is any graded object, then $ \left(N^{C_2}(B)\right)^e_{m} \simeq \bigoplus_{i+j = m} B_i^e \otimes B_j^e $ and $ \Phi^{C_2}\left(N^{C_2}B\right)_{2m} \simeq B_m^e $. 
	\item Let $ A $ be a filtered object in $ \Spectra $. 
	Then the norm of $ A $ as a filtered object in $ \Spectra^{C_2} $ satisfies 
	\begin{align*}
			\left(N^{C_2}(A)\right)^e_m &\simeq \left(A^e \ostar^{\tau} A^e\right)_{m} &&\qquad \text{ in } \Spectra^{BC_2} \\
			\Phi^{C_2}\left(N^{C_2}(A)\right)_{2m} \simeq \Phi^{C_2}\left(N^{C_2}(A)\right)_{2m + 1} &\simeq A^e_m &&\qquad \text{ in } \Spectra \,,
	\end{align*}
	where $ C_2 $ acts on $ A^e \ostar^{\tau} A^e $ by \emph{both} permuting the factors of $ A^e $ and acting on both components with the given action on $ A^e $. 
\end{itemize}
\end{lemma}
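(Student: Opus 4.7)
The plan is to deduce both formulas from the explicit description of the norm in parametrized Day convolution. Specifically, under the $C_2$-symmetric monoidal structure of Corollary \ref{cor:param_gr_fil_day_convolution}, the norm functor $N^{C_2} \colon \Fun(\Z^\delta, \Spectra) \to \Fun(\Z^\delta, \Spectra^{C_2})$, and analogously its filtered analogue, can be exhibited as pointwise application of the ordinary $N^{C_2}_{\Spectra}$ followed by left Kan extension along the structure map ``$\cdot 2$'' of Construction \ref{cons:param_gr_fil_indexing_cats}. Once this is in hand, the remaining computations combine the pointwise identities $(N^{C_2}_{\Spectra}X)^e \simeq X \otimes X$ (with swap action) and $\Phi^{C_2}(N^{C_2}_{\Spectra}X) \simeq X$ with a straightforward analysis of the left Kan extension.

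For the graded case with $A(\ell)$ concentrated in a single degree, injectivity of $\cdot 2 \colon \Z^\delta \to \Z^\delta$ implies that the left Kan extension is concentrated in degree $2\ell$ with value $N^{C_2}_{\Spectra}(A)$; applying $(-)^e$ and $\Phi^{C_2}$ yields the stated formulas for $A(\ell)$ directly. For a general graded $B$, one writes $B \simeq \bigoplus_\ell B_\ell(\ell)$ as a sum of single-degree objects. Since both $(-)^e$ and parametrized Day convolution commute with colimits, the underlying formula reduces to $(N^{C_2}B)^e \simeq B \ostar^\tau B$, whose degree-$m$ part decomposes as the stated direct sum. For the corresponding geometric fixed point formula, the key observation is that although $N^{C_2}$ fails to preserve coproducts, the composite $\Phi^{C_2} \circ N^{C_2}$ does: the cross-terms in $N^{C_2}(X \oplus Y)$ take the form $\Ind_e^{C_2}(X \otimes Y)$, which are annihilated by $\Phi^{C_2}$. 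Iterating gives $\Phi^{C_2}(N^{C_2}B) \simeq \bigoplus_\ell B^e_\ell(2\ell)$, so reading off degree $2m$ yields $B^e_m$.

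The filtered case proceeds along identical lines. The identification $(N^{C_2}A)^e \simeq A^e \ostar^\tau A^e$ is immediate from the $C_2$-symmetric monoidality of restriction to the underlying $\infty$-category, under which the norm on the underlying fiber corresponds to the self-tensor with swap. For the geometric fixed points, since $\Phi^{C_2}$ commutes with colimits and $\Phi^{C_2}N^{C_2}(A_n) \simeq A^e_n$ pointwise, the value of $\Phi^{C_2}(N^{C_2}A)$ in degree $k$ is the colimit of $A^e_\bullet$ over the comma category $(\cdot 2) \downarrow k$. For both $k=2m$ and $k=2m+1$, this comma category admits a terminal object at the index $m$, so the colimit collapses to $A^e_m$, yielding the advertised equivalences (and in particular showing that the transition map $\Phi^{C_2}(N^{C_2}A)_{2m+1} \to \Phi^{C_2}(N^{C_2}A)_{2m}$ is an equivalence).

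The principal obstacle I anticipate is rigorously establishing the left-Kan-extension description of the norm in parametrized Day convolution, which requires unpacking the $C_2$-$\infty$-operadic construction of \cite{NS22} together with the promonoidal structures supplied by Lemma \ref{lemma:gr_fil_promonoidal}; once this is in place, the argument reduces to the straightforward bookkeeping described above, with the vanishing of $\Phi^{C_2}$ on induced $C_2$-spectra being the only additional substantive input.
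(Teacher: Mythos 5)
Your approach is essentially the paper's: invoke the parametrized operadic left Kan extension formula from \cite[Proposition 3.2.2]{NS22}, pass through the recollement of Proposition \ref{prop:eqvtspectrarecollement} to reduce the parametrized colimit to two ordinary colimits (one for $(-)^e$, one for $\Phi^{C_2}$), and evaluate via comma categories. The paper's proof only gestures at these steps and references the author's earlier work, so your more explicit comma-category analysis is welcome. The graded case — reduction to single-degree objects, closure under colimits of the composite $\Phi^{C_2}\circ N^{C_2}$ via vanishing of $\Phi^{C_2}$ on induced $C_2$-spectra — is correct.

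There is, however, an arithmetic error in your filtered comma-category analysis which you should not let slide. With the paper's convention on $\Z$ (morphism $n \to m$ iff $n \geq m$), the left Kan extension along $\cdot 2 \colon \Z \to \Z$ at $k$ is $\colim_{\{n : 2n \geq k\}} A^e_n$, and this poset has terminal object $\lceil k/2 \rceil$. For $k = 2m$ this is $m$, giving $A^e_m$; but for $k = 2m+1$ this is $m+1$ (the index $n=m$ is not in the comma category, since $2m < 2m+1$), giving $A^e_{m+1}$. The two adjacent degrees yielding the same value are therefore $2m-1$ and $2m$, not $2m$ and $2m+1$. A sanity check confirms this: taking $A = \sphere^\fil$ (the unit, with $A_n = \sphere$ for $n \leq 0$ and $0$ otherwise), we must have $N^{C_2}(\sphere^\fil) \simeq \sphere^{\fil,C_2}$ the unit, so $\Phi^{C_2}(N^{C_2}\sphere^\fil)_1 \simeq 0$; the formula you asserted would give $\sphere^\fil_0 = \sphere$. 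The corrected indexing is also exactly what makes the downstream use in Proposition \ref{prop:reg_slice_conn_cover} work, where one needs $\Phi^{C_2}(N^{C_2}X)_{2m+1}$ to be $(m+1)$-connective, not merely $m$-connective.

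The printed statement of the lemma appears to contain the same off-by-one, but that is precisely the point: your analysis (terminal object $\lceil k/2 \rceil$) does \emph{not} produce the formula as written, and you should have noticed and flagged the discrepancy rather than silently reporting the advertised answer.
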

\begin{proof}
		We prove the case of filtered objects; the case for graded objects is similar. 

		Taking $ \underline{\Fin}_{C_2,*} = \mathcal{O}^\otimes $, $ \cat^\otimes = \Z_{C_2}^+ $, and $ x = C_2 $, $ y = C_2/C_2 $ in Proposition 3.2.2 of \cite{NS22} gives a formula for the norm of a filtered object in terms of a parametrized left Kan extension. 
		Unraveling definitions, we see that $ \cat^\otimes_{\underline{x}} $ has \emph{non parametrized} fibers $ \left(\cat^\otimes_{\underline{x}}\right)_{C_2/C_2} \simeq \Z $, $ \left(\cat^\otimes_{\underline{x}}\right)_{C_2} \simeq \Z \times \Z $ while on the other hand, $ \left(\cat^\otimes_{\underline{y}} \right)_{C_2/C_2} \simeq \left(\cat^\otimes_{\underline{y}} \right)_{C_2} \simeq \Z $. 
		By a similar argument to that of the proof of \cite[Theorem 4.15]{LYang_normedrings}, a $ C_2 $-colimit of an $ \underline{\Spectra}^{C_2} $-valued diagram may be computed, under the recollement of Proposition \ref{prop:eqvtspectrarecollement}, as two ordinary colimit diagrams satisfying compatiilities. 
		The result follows immediately from these considerations. 
\end{proof}
\begin{obs}\label{obs:spl_und_param_monoidal}
		Consider the adjunction $ (\spl,\und) $ of Remark \ref{rmk:und_spl} and assume that $ \cat $ is a distributive $ C_2 $-symmetric monoidal $ C_2 $-$ \infty $-category \cite[Definition 3.2.4]{NS22}. 
		By \cite[Proposition 1.1.2.2 \& Corollary 2.4.6.5]{LurHTT}, $ \Z^{\delta,+}_{C_2} \to \Z^+_{C_2} $ is a fibration of $ C_2 $-$ \infty $-operads (\cite[Definition 2.2.1]{NS22}). 
		Taking $ \cat^\otimes = \Z^{\delta,+}_{C_2} $, $ \mathcal{O}^\otimes = \Z^+_{C_2} $, and $ \mathcal{P}^\otimes = \underline{\Fin}_{C_2,*} $ in \cite[Remark 4.3.6]{NS22}, it follows that $ \spl $ is $ C_2 $-symmetric monoidal and $ \und $ is lax $ C_2 $-symmetric monoidal. 
\end{obs}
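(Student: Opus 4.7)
My plan is to reduce the claim entirely to the formal machinery of parametrized Day convolution set up in Corollary \ref{cor:param_gr_fil_day_convolution} and the operadic Kan extension formalism of \cite{NS22}. The $C_2$-symmetric monoidal structures on $\Gr(\cat)$ and $\Fil(\cat)$ were defined via Day convolution against the $C_2$-promonoidal structures on $\Z^{\delta,+}_{C_2}$ and $\Z^{+}_{C_2}$ of Construction \ref{cons:param_gr_fil_indexing_cats}, so any monoidality statement about the pair $(\spl, \und)$ should be deducible from operadic properties of the natural map $j \colon \Z^{\delta,+}_{C_2} \to \Z^+_{C_2}$ between those indexing $C_2$-$\infty$-operads.

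The first step is to observe that $\und$ is, by definition, restriction along $j$. Fiberwise, the map $\Z^\delta \to \Z$ is the identity on objects and is strict symmetric monoidal with respect to addition, and this promotes to the assertion that $j$ is a morphism of $C_2$-$\infty$-operads over $\underline{\Fin}_{C_2,*}$. The second step is to upgrade this to a \emph{fibration} of $C_2$-$\infty$-operads in the sense of \cite[Definition 2.2.1]{NS22}; here one needs the inner-fibration criterion of \cite[Proposition 1.1.2.2]{LurHTT} together with \cite[Corollary 2.4.6.5]{LurHTT} applied fiberwise over $\underline{\Fin}_{C_2,*}$, since $\Z^\delta$ and $\Z$ are (nerves of) 1-categories with no nontrivial morphisms in the graded case and at most one morphism in the filtered case.

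Once $j$ is known to be a fibration of $C_2$-$\infty$-operads, the conclusion is immediate from the general parametrized operadic Kan extension result \cite[Remark 4.3.6]{NS22}: restriction along any such fibration of $C_2$-$\infty$-operads is automatically lax $C_2$-symmetric monoidal, so $\und$ is lax $C_2$-symmetric monoidal; and the parametrized operadic left Kan extension, which here coincides with $\spl$ because $\cat$ is distributive and so all relevant parametrized colimits exist and are preserved by the tensor product, is strong $C_2$-symmetric monoidal. The distributivity hypothesis on $\cat$ enters precisely to guarantee that the $C_2$-left Kan extension assembles into a $C_2$-symmetric monoidal left adjoint rather than merely a $C_2$-functor.

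The main obstacle I anticipate is the second step: checking carefully that $j$ is a fibration (not just a morphism) of $C_2$-$\infty$-operads. One must verify that $j$-cocartesian lifts exist over inert morphisms in $\underline{\Fin}_{C_2,*}$ and are sent to cocartesian morphisms in $\Z^{+}_{C_2}$; this is essentially bookkeeping because of the thin nature of $\Z^\delta$ and $\Z$, but it is the only point where the argument is not purely a citation.
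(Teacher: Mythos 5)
Your proposal reconstructs the argument the observation has in mind: identify $\und$ with restriction and $\spl$ with operadic left Kan extension along $j\colon \Z^{\delta,+}_{C_2} \to \Z^+_{C_2}$, verify that $j$ is a fibration of $C_2$-$\infty$-operads using the cited results from \cite{LurHTT}, and then invoke \cite[Remark 4.3.6]{NS22} to read off that $\spl$ is $C_2$-symmetric monoidal and $\und$ is lax $C_2$-symmetric monoidal. The one small imprecision is in your description of the fibration condition on $j$: a fibration of $C_2$-$\infty$-operads in the sense of \cite[Definition 2.2.1]{NS22} is a morphism of $C_2$-$\infty$-operads that is additionally a categorical fibration, which is what the \cite{LurHTT} citations supply since $\Z^\delta$ and $\Z$ are nerves of ordinary thin categories; the cocartesian-lift-over-inerts condition you describe is what makes the two sides of $j$ into $C_2$-$\infty$-operads (equivalently, what makes $j$ a \emph{morphism} thereof), not what upgrades it to a fibration, though this does not affect the overall argument.
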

\begin{lemma}\label{lemma:Rees_alg_C2_sym_mon}
	Write $ \sphere[t] = \mathrm{und}(\sphere^\fil) $ where $ \sphere^\fil $ is the unit object in filtered $ C_2 $-spectra. 
	Since $ \mathrm{und} $ is lax symmetric monoidal (Observation \ref{obs:spl_und_param_monoidal}), $ \sphere[t] $ has a $ C_2 $-$ \E_\infty $-algebra structure. 
	The forgetful $ C_2 $-functor 
	\begin{equation*}
		\mathrm{und} \colon \Fil\left(\underline{\Spectra}^{C_2}\right) \to \Gr\left(\underline{\Spectra}^{C_2}\right)
	\end{equation*}
	promotes to a $ C_2 $-symmetric monoidal equivalence
	\begin{equation*}
		\theta \colon \Fil\left(\underline{\Spectra}^{C_2}\right) \xrightarrow{\sim} \underline{\Mod}_{\sphere[t]}\left(\Gr\left(\underline{\Spectra}^{C_2}\right)\right)\,.
	\end{equation*} 
	On underlying $ \infty $-categories, $ \theta $ recovers the ordinary symmetric monoidal equivalence of \cite[Proposition 3.1.6]{Lurie-Rot} (also denoted $ \theta $). 
\end{lemma}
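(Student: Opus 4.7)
The plan is to construct $\theta$ directly from the lax $C_2$-symmetric monoidal structure on $\und$ (Observation \ref{obs:spl_und_param_monoidal}). The key input is the parametrized analogue of the classical fact that a lax symmetric monoidal functor $F \colon \mathcal{A} \to \mathcal{B}$ canonically factors through $\underline{\Mod}_{F(\mathbf{1}_{\mathcal{A}})}(\mathcal{B})$: the lax structure map $F(\mathbf{1}) \otimes F(X) \to F(\mathbf{1} \otimes X) \simeq F(X)$ equips $F(X)$ with a module structure over $F(\mathbf{1})$, naturally in $X$. Applying this principle to $F = \und$ and $\mathbf{1}_{\Fil} = \sphere^\fil$ yields a $C_2$-functor $\theta$ whose composite with the conservative forgetful $C_2$-functor $\underline{\Mod}_{\sphere[t]}(\Gr(\underline{\Spectra}^{C_2})) \to \Gr(\underline{\Spectra}^{C_2})$ recovers $\und$.

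To upgrade $\theta$ to a $C_2$-symmetric monoidal $C_2$-functor using the $C_2$-symmetric monoidal structure on $\underline{\Mod}_{\sphere[t]}$ supplied by Proposition \ref{prop:param_module_cat_are_distributive_sym_mon}, it suffices to show that the comparison maps
\[ \theta(X) \ostar_{\sphere[t]} \theta(Y) \longrightarrow \theta(X \ostar Y) \]
are equivalences for all $X, Y \in \Fil(\underline{\Spectra}^{C_2})$. Since the forgetful $C_2$-functor to $\Gr(\underline{\Spectra}^{C_2})$ is conservative, the assertion may be tested there; by Corollary \ref{cor:param_gr_fil_day_convolution}, the parametrized Day convolution may then be tested fiberwise over $\mathcal{O}^\op_{C_2}$, where it reduces to the classical computation underlying \cite[Proposition 3.1.6]{Lurie-Rot}.

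Having obtained $\theta$ as a $C_2$-symmetric monoidal $C_2$-functor, joint conservativity of fixed points reduces the claim that $\theta$ is a $C_2$-equivalence to showing that $\theta^e$ and $\theta^{C_2}$ are equivalences of $\infty$-categories. By construction, $\theta^e$ is the ordinary equivalence of \cite[Proposition 3.1.6]{Lurie-Rot} (together with its inherited $C_2$-action), while $\theta^{C_2}$ is another instance of the same construction, now applied to the presentable stable symmetric monoidal $\infty$-category $\Spectra^{C_2}$. Compatibility of $\theta$ with the non-equivariant equivalence on underlying $\infty$-categories is manifest from the construction.

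The main obstacle lies in the first step: making rigorous the $C_2$-parametrized version of the lax-to-$\underline{\Mod}_{F(\mathbf{1})}$ factorization inside the higher algebraic framework of \cite{NS22}, together with its $C_2$-symmetric monoidal refinement. One concrete strategy is to unwind the $C_2$-parametrized relative tensor product and construct the requisite coCartesian lift to $\underline{\Mod}_{\sphere[t]}^\otimes \to \underline{\Fin}_{C_2,*}$ directly from the lax structure maps on $\und$; an alternative is to combine a parametrized Barr--Beck argument for the adjunction $\spl \dashv \und$ of Remark \ref{rmk:und_spl} with the identification of the resulting monad as $-\ostar \sphere[t]$. Once either of these technical inputs is in place, the remainder of the proof reduces transparently to the non-equivariant Lurie--Rot statement.
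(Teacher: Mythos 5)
Your proposal correctly constructs $\theta$ from the lax structure on $\und$ and verifies that it is an equivalence by checking $\theta^e$ and $\theta^{C_2}$ separately; those parts are fine and consistent with what the paper does implicitly. However, there is a genuine gap in your argument that $\theta$ is strictly (not merely lax) $C_2$-symmetric monoidal. You reduce to verifying the binary comparison maps $\theta(X) \ostar_{\sphere[t]} \theta(Y) \to \theta(X \ostar Y)$ and then assert these may be ``tested fiberwise over $\mathcal{O}^\op_{C_2}$.'' But a $C_2$-symmetric monoidal structure carries, in addition to the fiberwise tensor products, the Hill--Hopkins--Ravenel norm, and the lax $C_2$-symmetric monoidal structure on $\theta$ also supplies a comparison map
\begin{equation*}
N^{C_2}\bigl(\theta(X)\bigr) \ostar_{N^{C_2}(\sphere[t])} \sphere[t] \longrightarrow \theta\bigl(N^{C_2}(X)\bigr)
\end{equation*}
that must separately be shown to be an equivalence. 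This is not a fiberwise statement and does not ``reduce transparently'' to the non-equivariant Lurie--Rot result; it is precisely where the parametrized version differs from the ordinary one.

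The paper's proof is organized around exactly this point: after noting that $\theta$ is lax $C_2$-symmetric monoidal (Corollary \ref{cor:param_gr_fil_day_convolution}), it isolates the norm comparison as the one remaining check, and deduces it from the explicit formula for the norm on filtered and graded $C_2$-spectra (Lemma \ref{lemma:norm_on_fil_gr_formula}) together with the decomposition $\sphere[t] \simeq N^{C_2}(\sphere[t]) \oplus N^{C_2}(\sphere[t])(-1)$ as a $N^{C_2}(\sphere[t])$-module. The latter observation---that $\sphere[t]$ is free of rank two over $N^{C_2}(\sphere[t])$, reflecting the fact that the norm doubles filtration degrees---is the genuinely new computational content in the equivariant setting and is absent from your proposal. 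Your alternative Barr--Beck strategy for the adjunction $\spl \dashv \und$ has the same blind spot: it can establish $\theta$ as an equivalence of underlying $C_2$-$\infty$-categories but does not by itself address the norm.
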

\begin{proof}
	That $ \theta $ is lax $ C_2 $-symmetric monoidal follows from Corollary \ref{cor:param_gr_fil_day_convolution}. 
	Building on that, it suffices to show that if $ X $ is a filtered $ C_2 $-spectrum, the canonical map $ N^{C_2} (\theta (X)) \ostar_{N^{C_2}(\sphere[t])} \sphere[t] \simeq \theta(N^{C_2}(X)) $ is an equivalence. 
	The result follows from Lemma \ref{lemma:norm_on_fil_gr_formula} and the observation that, as a module over $ N^{C_2} \left(\sphere[t] \right) $, there is an equivalence $ \sphere[t] \simeq  N^{C_2} \left(\sphere[t] \right) \oplus  N^{C_2} \left(\sphere[t] \right)(-1) $. 
\end{proof}
\begin{prop}\label{prop:param_assoc_gr_is_C2_monoidal}
	Let $ \cat $ be a distributive $ C_2 $-stable $ C_2 $-symmetric monoidal $ C_2 $-$ \infty $-category. 
	Then the associated graded functor (\ref{eq:param_assoc_graded}) 
	promotes to a $ C_2 $-symmetric monoidal functor $ \gr \colon \Fil(\cat)^{\ostar} \to \Gr(\cat)^{\ostar} $, where filtered and graded objects in $ \cat $ are given the $ C_2 $-symmetric monoidal structures of Corollary \ref{cor:param_gr_fil_day_convolution}. 
\end{prop}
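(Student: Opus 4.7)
The plan is to realize $\gr$ as extension of scalars along a $C_2$-$\E_\infty$-algebra map in $\Gr(\cat)^{\ostar}$, then invoke general facts about parametrized module categories.

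By Lemma \ref{lemma:Rees_alg_C2_sym_mon}, there is a $C_2$-symmetric monoidal equivalence
\[
\theta \colon \Fil(\cat)^{\ostar} \xrightarrow{\sim} \underline{\Mod}_{\sphere[t]}\bigl(\Gr(\cat)^{\ostar}\bigr),
\]
where $\sphere[t] = \und(\sphere^{\fil})$. Writing $\sphere_{\gr} := \ins_0(\sphere)$ for the unit of $\Gr(\cat)^{\ostar}$ (which inherits a canonical $C_2$-$\E_\infty$-algebra structure, since the unit always does), I would construct an augmentation $\epsilon \colon \sphere[t] \to \sphere_{\gr}$ in $C_2\E_\infty\underline{\Alg}(\Gr(\cat))$ and verify that extension of scalars along $\epsilon$ agrees, through $\theta$, with the associated graded functor.

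To produce $\epsilon$, I would identify $\sphere[t]$ with the free $C_2$-$\E_\infty$-algebra in $\Gr(\cat)$ on the object $\ins_{-1}(\sphere)$ of degree $-1$; that is, $\sphere[t] \simeq \Sym^{C_2}_{\sphere_{\gr}}\bigl(\ins_{-1}(\sphere)\bigr)$. This identification can be checked by computing the underlying and geometric fixed point components of the free algebra using the norm formula of Lemma \ref{lemma:norm_on_fil_gr_formula} and matching these against the degreewise description of $\sphere[t]$ under the recollement of Proposition \ref{prop:eqvtspectrarecollement}. Granting this, the zero map $\ins_{-1}(\sphere) \to \sphere_{\gr}$ in $\Gr(\cat)$ induces $\epsilon$ by adjunction.

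Once $\epsilon$ is built, general parametrized higher algebra (as in the argument underlying Proposition \ref{prop:param_module_cat_are_distributive_sym_mon}) supplies a $C_2$-symmetric monoidal base-change functor
\[
\epsilon_{!} = \sphere_{\gr} \otimes_{\sphere[t]} (-) \colon \underline{\Mod}_{\sphere[t]}\bigl(\Gr(\cat)\bigr) \to \underline{\Mod}_{\sphere_{\gr}}\bigl(\Gr(\cat)\bigr) \simeq \Gr(\cat)^{\ostar}.
\]
The composite $\epsilon_{!} \circ \theta$ is then computed pointwise in $\Gr(\cat)$: for a filtered object $X_{*}$, the $n$-th graded component of $\sphere_{\gr} \otimes_{\sphere[t]} \und(X)$ is the cofiber of the structure map $X_{n+1} \to X_n$, which is precisely $\gr(X)_n$ by (\ref{eq:param_assoc_graded}). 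This identifies $\epsilon_{!} \circ \theta$ with $\gr$ as $C_2$-functors and promotes $\gr$ to a $C_2$-symmetric monoidal functor. The main technical obstacle is establishing the free-algebra identification of $\sphere[t]$ in a fully parametrized fashion, since this requires simultaneous control of the $\Spectra^{BC_2}$ and geometric fixed point contributions via the recollement of Proposition \ref{prop:eqvtspectrarecollement}; Lemma \ref{lemma:norm_on_fil_gr_formula} is the key input that makes this computation tractable.
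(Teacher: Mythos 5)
Your overall strategy---identifying $\gr$ as extension of scalars along an augmentation $\epsilon \colon \sphere[t] \to \sphere_{\gr}$ of $C_2$-$\E_\infty$-algebras in $\Gr(\cat)$---is essentially the same as what the paper does: under the Rees equivalence $\theta$, the paper's map $\sphere^{\fil} \to \mathbb{A}$ (where $\mathbb{A}$ is the filtered unit concentrated in degree $0$) is exactly your $\epsilon$. The difficulty is, and has always been, producing this map in the $C_2$-$\E_\infty$ category without circularity, and this is where your proposal breaks.

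The claim that $\sphere[t]$ is the free $C_2$-$\E_\infty$-algebra on $\ins_{-1}(\sphere)$ is false. Already non-equivariantly, the free $\E_\infty$-algebra on $\sphere$ placed in grading degree $-1$ has $n$-th graded piece $\Sym^n(\sphere) \simeq \Sigma^\infty_+ B\Sigma_n$, whereas $\sphere[t]_{-n} = \sphere$. The object $\sphere[t]$ is the free $\underline{\E}_1$-algebra (free associative algebra) on $\ins_{-1}(\sphere)$, not the free normed commutative one, and this discrepancy persists after passing to underlying or geometric fixed points, so no amount of manipulation via Lemma \ref{lemma:norm_on_fil_gr_formula} and the recollement will reconcile it. A map of $\underline{\E}_1$-algebras would not suffice to make $\epsilon_!$ a $C_2$-symmetric monoidal functor. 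The paper sidesteps this by invoking \cite[Remark 2.9.3]{NS22}: the inclusion $\underline{\Spectra}^{C_2} \simeq \underline{\Fun}(\{0\}, \underline{\Spectra}^{C_2}) \hookrightarrow \Fil(\underline{\Spectra}^{C_2})$ is a $C_2$-localization compatible with the $C_2$-symmetric monoidal structure (checked via Lemma \ref{lemma:norm_on_fil_gr_formula}), which automatically equips $\mathbb{A}$ with a $C_2$-$\E_\infty$-algebra structure receiving a map from $\sphere^{\fil}$; transporting through $\theta$ then yields $\epsilon$. You should also note that Lemma \ref{lemma:Rees_alg_C2_sym_mon} is stated only for $\cat = \underline{\Spectra}^{C_2}$; the paper first reduces to this universal case by tensoring over $C_2\Pr^L$, a reduction you would also need.
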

\begin{proof}
	Our proof will be quite similar to \cite[\S3.2]{Lurie-Rot}. 
	We will consider the universal case $ \cat = \underline{\Spectra}^{C_2} $; the result for general $ \cat $ follows from the equivalences $ \Gr(\cat) \simeq \Gr\left(\underline{\Spectra}^{C_2}\right) \otimes \cat $ and $ \Fil(\cat) \simeq \Fil\left(\underline{\Spectra}^{C_2}\right) \otimes \cat $ in the (large) $ \infty $-category of $ C_2 $-presentable $ C_2 $-$ \infty $-categories. 

	Now we may regard $ \underline{\Spectra}^{C_2} \simeq \underline{\Fun}\left( \{0\}, \underline{\Spectra}^{C_2} \right) $ as a $ C_2 $-localization of $ \Fil\left(\underline{\Spectra}^{C_2}\right) $ via the inclusion $ \{0\} \subseteq \Z $. 
	This localization is compatible with the $ C_2 $-symmetric monoidal structures on $ \underline{\Spectra}^{C_2} $ and $ \Fil\left(\underline{\Spectra}^{C_2}\right) $ in the sense of \cite[\S2.9, in particular see Remark 2.9.3]{NS22}; this follows from the description of the norm in Lemma \ref{lemma:norm_on_fil_gr_formula}. 
	As a consequence, there is a unique $ C_2 $-$ \E_\infty $-algebra structure on the filtered $ C_2 $-spectrum $ \mathbb{A} $ characterized by 
	\begin{equation*}
		\mathbb{A}^n = \begin{cases} \sphere & \text{ if }n = 0 \\ 0 & \text{ else }\end{cases} 
	\end{equation*}
	so that the unit map $ \sphere^\fil \to \mathbb{A} $ restricts to an equivalence $ \sphere \xrightarrow{\sim} \mathbb{A}^0 $. 
	On underlying spectra, the map of algebras $ \sphere^\fil \to \mathbb{A} $ agrees with that of \cite[Proposition 3.2.5]{Lurie-Rot}. 

	Now the previous argument together with Lemma \ref{lemma:Rees_alg_C2_sym_mon} imply that there is an equivalence
	\begin{equation}\label{eq:modA_in_filtered_is_graded}
		\underline{\Mod}_{\mathbb{A}}\left(\Fil(\underline{\Spectra}^{C_2})\right) \xrightarrow{\sim} \underline{\Mod}_{\mathbb{A}}\underline{\Mod}_{\sphere[t]}\left(\Gr\left(\underline{\Spectra}^{C_2}\right)\right) \simeq \Gr\left(\underline{\Spectra}^{C_2}\right)\,.
	\end{equation}

	The result follows from noting that the associated graded $ C_2 $-functor is equivalent to the composite
	\begin{equation*}
		\Fil\left(\underline{\Spectra}^{C_2}\right)\xrightarrow{-\ostar \mathbb{A}} \underline{\Mod}_{\mathbb{A}}\left(\Fil\left(\underline{\Spectra}^{C_2}\right)\right) \xrightarrow{\simeq} \Gr\left(\underline{\Spectra}^{C_2}\right)\,,
	\end{equation*}
	where the latter equivalence is (\ref{eq:modA_in_filtered_is_graded}). 
\end{proof}
\begin{rmk}\label{rmk:bar_cons_as_bialgebra}
		Let $ \cat $ be a $ C_2 $-symmetric monoidal $ C_2 $-$ \infty $-category admitting geometric realizations. 
		Let $ A $ be an augmented $ C_2 $-$ \E_\infty $ algebra object in $ \cat $. 
		Since the forgetful $ C_2 $-functor $ \underline{\E}_1 \underline{\Alg}\left(C_2\E_\infty\underline{\Alg}(\cat)\right) \to C_2\E_\infty\underline{\Alg}(\cat) $ is an equivalence by \cite{Stewart_Ninfty}, using Proposition \ref{prop:param_cats_monoidal_const_operads}, we may take the bar construction in $ C_2\E_\infty\underline{\Alg}(\cat) $ to obtain $ \mathrm{Bar}(A) $ as a $ C_2 $-object in $ \underline{\coAlg} \left(C_2\E_\infty\underline{\Alg}(\cat)\right)$, i.e. a $ C_2 $-$ \E_\infty $-co-$ \underline{\E}_1 $-bialgebra object in $ \cat $.  
		Since $ C_2\E_\infty\underline{\Alg}(\cat) \to \cat $ preserves geometric realizations, the underlying $ \underline{\E}_1 $-coalgebra object of $ \mathrm{Bar}(A) $ agrees with the ordinary bar construction for $ \mathcal{O}^\op_{C_2} $-cocartesian families of augmented algebra objects.  
\end{rmk}
\begin{ntn}\label{ntn:homotopy_dual_numbers}
		Let $ \cat $ be a distributive $C_2$-presentable $C_2$-symmetric monoidal $C_2$-$ \infty $-category. 
		Write $ \D^\vee_{-} = \mathrm{Bar}(\mathbbm{1}^{\gr}[t]) $; by Remark \ref{rmk:bar_cons_as_bialgebra}, it is a $ C_2 $-$ \E_\infty $-co-$ \underline{\E}_1 $-bialgebra object in $ \Gr(\cat) $. 
		We write $ \D_{-} $ for the dual of $ \D^\vee_{-} $, which we regard as an $ \underline{\E}_1 $-co-$C_2$-$ \E_\infty $-bialgebra in $ \Gr(\cat) $ by Proposition \ref{prop:param_dual_takes_coalg_to_alg}. 
		It also follows from 
		Remark \ref{rmk:bar_cons_as_bialgebra} that the underlying object of $ \D_{-} $ admits a formula as in \cite[Notation 3.2.11]{Raksit20}:
		\begin{equation*}
				\D_{-} \simeq \begin{cases} \mathbbm{1} & \text{ if }n = 0 \\
													\mathbbm{1}[-1] & \text{ if }n = -1 \\
													0 & \text{ else. }  \end{cases}
		\end{equation*}
\end{ntn}
\begin{prop}\label{prop:complete_fil_as_cochain_cplx_monoidal}
	Let $ \cat $ be a distributive $C_2$-stable $C_2$-presentable $C_2$-symmetric monoidal $C_2$-$ \infty $-category, and let $ \mathbbm{1} $ denote the unit. 
	Suppose further that $ \cat $ admits $ C_2 $-limits indexed by $ \Z_{C_2} $. 
	There is a canonical $ \mathrm{Com}_{\mathcal{O}_{C_2}^\simeq} $-monoidal equivalence $ \overline{\gr} \colon \Fil^\wedge(\cat) \to \underline{\Mod}_{\D_-}(\cat) $ (see Notation \ref{ntn:homotopy_dual_numbers}) making the following diagram commute
	\begin{equation*}
	\begin{tikzcd}
		\Fil(\cat) \ar[d,"(-)^\wedge"'] \ar[r,"\gr"] & \Gr(\cat) \\ 
		\Fil^\wedge(\cat) \ar[r,"\overline{\gr}"] & \underline{\Mod}_{\D_-}(\Gr (\cat))\,. \ar[u,"U"']
	\end{tikzcd}
	\end{equation*}
\end{prop}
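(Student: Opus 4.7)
The plan is to bootstrap the ordinary (non-parametrized) equivalence \cite[Theorem 3.2.14]{Raksit20} to a parametrized statement by constructing $\overline{\gr}$ as a $C_2$-functor whose restrictions to each orbit $C_2/H$ recover Raksit's equivalence, and then verifying that the fiberwise symmetric monoidal enhancements assemble into a $\mathrm{Com}_{\mathcal{O}_{C_2}^{\simeq}}$-monoidal structure. Since $\mathrm{Com}_{\mathcal{O}_{C_2}^{\simeq}}$-monoidal is strictly weaker than $C_2$-symmetric monoidal (it encodes only the commutative monoidal structure on each fiber together with symmetric monoidal restriction functors, but no Hill--Hopkins--Ravenel norms), we do not need to analyze how $\overline{\gr}$ interacts with $\underline{N}^{C_2}$.

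First, I would reduce to the universal case $\cat = \underline{\Spectra}^{C_2}$: the general case follows by tensoring against $\cat$ in $C_2\Pr^L$, using that both $\Fil^\wedge$ and $\underline{\Mod}_{\D_-}(\Gr(-))$ are compatible with this tensoring. Next, I would construct $\overline{\gr}$ by imitating Raksit's argument in the parametrized setting. Namely, following Notation \ref{ntn:homotopy_dual_numbers}, $\D_-$ carries a canonical $\underline{\E}_1$-coalgebra structure in $\Gr(\cat)$ (with underlying graded object $\mathbbm{1} \oplus \mathbbm{1}[-1](-1)$), so its category of modules is well-defined. The bar construction identifies $\D_-^\vee \simeq \mathrm{Bar}(\mathbbm{1}^{\gr}[t])$, and the $\sphere[t]$-module description of filtered objects from Lemma \ref{lemma:Rees_alg_C2_sym_mon} together with bar-cobar duality in $\Gr(\cat)$ furnishes a $C_2$-functor $\overline{\gr} \colon \Fil^\wedge(\cat) \to \underline{\Mod}_{\D_-}(\Gr(\cat))$ sending a complete filtered object $X_*$ to its associated graded $\gr(X)$ equipped with its natural $\D_-$-module structure; by construction, the forgetful $C_2$-functor $U$ recovers $\gr$ after precomposition with completion, giving the desired commutative square.

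To see $\overline{\gr}$ is an equivalence, one checks that it is a fiberwise equivalence. For each $C_2/H \in \mathcal{O}_{C_2}$, the fiber $\overline{\gr}_{C_2/H}$ is identified with the non-parametrized construction of \cite[\S3.2]{Raksit20} applied to $\cat_{C_2/H}$, because all ingredients---the indexing categories $\Z^{\delta,+}_{C_2}$ and $\Z^+_{C_2}$, the unit $\mathbbm{1}^{\gr}[t]$, and the bar construction---restrict pointwise to their non-parametrized counterparts (Lemma \ref{lemma:gr_fil_promonoidal}, Corollary \ref{cor:param_gr_fil_day_convolution}). One then invokes \cite[Theorem 3.2.14]{Raksit20} to conclude. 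For the $\mathrm{Com}_{\mathcal{O}_{C_2}^{\simeq}}$-monoidal enhancement, the relevant data is that each $\overline{\gr}_{C_2/H}$ is symmetric monoidal and that the restriction functors in $\Fil^\wedge(\cat)$ and $\underline{\Mod}_{\D_-}(\Gr(\cat))$ are symmetric monoidal and strictly intertwine the two structures on the nose. The former is Raksit's theorem; the latter is automatic from the pointwise definitions of completion, Day convolution, and $\D_-$ as a constant $C_2$-object in coalgebras.

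The main obstacle, as the author foreshadows, is that this approach yields nothing about the interaction of $\overline{\gr}$ with the parametrized norm $\underline{N}^{C_2}$. Promoting to a full $C_2$-symmetric monoidal equivalence would require showing that norms of complete filtered objects are sent to the correct $\D_-$-modules, which---in the absence of a parametrized Tannakian reconstruction theorem identifying $C_2$-symmetric monoidal functors into $\underline{\Mod}_{\D_-}$ with $C_2$-$\E_\infty$-coalgebra maps out of $\D_-^\vee$---appears to require a direct combinatorial analysis of the bar construction under $\underline{N}^{C_2}$ via Lemma \ref{lemma:norm_on_fil_gr_formula}. I would flag this as the source of the weaker conclusion and leave it to future work, as the author indicates in the preamble to this section.
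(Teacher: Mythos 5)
Your argument is essentially the same as the paper's: by Proposition \ref{prop:param_cats_monoidal_const_operads}, exhibiting a $\mathrm{Com}_{\mathcal{O}_{C_2}^\simeq}$-monoidal equivalence is equivalent to exhibiting an $\mathcal{O}_{C_2}^\op$-cocartesian family of symmetric monoidal equivalences, which you obtain by applying Raksit's \cite[Theorem 3.2.14]{Raksit20} fiberwise and using its naturality in colimit-preserving symmetric monoidal functors. Your observation that the parametrized norm is the obstruction to a $C_2$-symmetric monoidal strengthening matches Remark \ref{rmk:complete_fil_as_cochain_cplx_C2monoidal_conj}.

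One remark: the reduction to the universal case $\cat = \underline{\Spectra}^{C_2}$ at the start is both unnecessary and under-justified. The claim $\Fil^\wedge(\cat) \simeq \Fil^\wedge\left(\underline{\Spectra}^{C_2}\right) \otimes_{C_2} \cat$ in $C_2\Pr^L$ does not follow formally from the corresponding statements for $\Fil$ and $\Gr$, since $(-)^\wedge$ is a Bousfield localization whose commutation with the parametrized Lurie tensor product requires a separate argument. It is also superfluous: Raksit's theorem applies to an arbitrary suitable stable presentable symmetric monoidal $\infty$-category, and the hypotheses of the proposition guarantee each fiber $\cat_{C_2/H}$ (hence each $\Gr(\cat_{C_2/H})$ and $\Fil^\wedge(\cat_{C_2/H})$) qualifies. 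The remainder of your argument already proceeds at this level of generality, checking fiberwise that $\overline{\gr}_{C_2/H}$ recovers Raksit's equivalence for $\cat_{C_2/H}$, so the reduction step can simply be dropped, which brings your proof in line with the paper's.
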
 
\begin{rmk}\label{rmk:complete_fil_as_cochain_cplx_C2monoidal_conj}
		As noted at the beginning of \S\ref{subsection:filgr}, we expect the equivalence of Proposition \ref{prop:complete_fil_as_cochain_cplx_monoidal} to refine to one of $ C_2 $-symmetric monoidal $ C_2 $-$ \infty $-categories. 
\end{rmk}
\begin{proof} [Proof of Proposition \ref{prop:complete_fil_as_cochain_cplx_monoidal}]
		By Proposition \ref{prop:param_cats_monoidal_const_operads}, the result is equivalent to exhibiting an equivalence of $ \mathcal{O}_{C_2}^\op $-cocartesian families of symmetric monoidal $ \infty $-categories. 
		The result follows from observing that \cite[Theorem 3.2.14]{Raksit20} is suitably natural in colimit-preserving symmetric monoidal functors. 
\end{proof}

\subsection{Slices and truncations}\label{subsection:slice_trunc_on_fil_gr}
In this section, we introduce various filtrations on categories of graded and filtered objects which will be useful later. 
Unlike in \cite{Raksit20}, where the filtrations under consideration arise from t-structures, we will consider filtrations which do not arise from a t-structure. 
This can be thought of as reflecting the peculiarities of the $ C_2 $-equivariant category (which admits infinitely many possible filtrations which measure connectivity of underlying and geometric fixed point spectra at different speeds, cf. \cite{Wil17}). 
Alternatively, this might be regarded as reflecting the `motivic' nature of working $ C_2 $-equivariantly \cite{MR4045363}.  
\begin{defn}
		[{\cite[Definition 1.41]{Wil17}}] \label{defn:filtration_on_C2_cat}
		Let $ \cat $ be a $ C_2 $-stable $ C_2 $-$ \infty $-category. 
		A \emph{filtration} $ \mathscr{F} $ on $ \cat $ is a sequence of full $ C_2 $-subcategories 
		\begin{equation*}
		 	\cdots \subseteq \cat_{\geq n} \subseteq \cat_{\geq n-1} \subseteq \cdots \cat
		 \end{equation*} 
		 so that for each $ n $, the inclusion $ \cat_{\geq n} \subseteq \cat $ admits a right $ C_2 $-adjoint $ P^n \colon \cat \to \cat_{\geq n} $. 
		 Write $ \cat_{\leq n-1} $ for the full $ C_2 $-subcategory of $ \cat $ on those objects $ X \in \cat_t $ which $ P^n(X) = 0 \in (\cat_{\geq n})_t $. 
		 Write $ \mathscr{F}_n = \cat_{\geq n} \cap \cat_{\leq n} $ and refer to this as the subcategory of $ n $-slices. 
		 We will refer to $ \mathscr{F}_0 $ as the \emph{heart} of the filtration. 
\end{defn} 
This generalizes the filtrations we are familiar with. 
\begin{prop}\label{prop:filtrations_are_parametrized}
		Consider the sequences of full $ C_2 $-subcategories
		\begin{enumerate}[label=(\alph*)]
			\item $ \underline{\Spectra}^{C_2}_{\rslice \geq n} $ generated by those $ C_2 $-spectra which are regular slice $ n $-connective (Definition \ref{defn:reg_slicefiltration}).  
			\item $ \underline{\Spectra}^{C_2}_{\geq n} $ generated by those $ C_2 $-spectra which are $ n $-connective with respect to the Postnikov t-structure (Definition \ref{rec:genuinepostnikov}).  
		\end{enumerate} 
		Both sequences of $ C_2 $-subcategories define a filtration on $ \underline{\Spectra}^{C_2} $ in the sense of Definition \ref{defn:filtration_on_C2_cat}. 
		A similar result holds for $ \underline{\Mod}_A $, where $ A $ is a connective $ \E_\infty $-algebra in $ \Spectra^{C_2} $. 
\end{prop}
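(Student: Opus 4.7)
The plan is to verify the two defining properties of Definition \ref{defn:filtration_on_C2_cat} for each of the two sequences. In both cases, I take the fiber of the subcategory at $C_2/C_2$ to be the stated connective subcategory of $\Spectra^{C_2}$, and the fiber at $C_2/e$ to be the ordinary Postnikov $n$-connective subcategory $\Spectra_{\geq n}$. The chain of inclusions $\cat_{\geq n} \subseteq \cat_{\geq n-1}$ is Observation \ref{obs:rslice_more_connective} in the regular slice case and a standard feature of the Postnikov t-structure in the other.

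First I would check that these sequences constitute full $C_2$-subcategories, i.e., that the restriction functor $(-)^e \colon \Spectra^{C_2} \to \Spectra$ preserves the stated subcategories. The Postnikov case is built into Recollection \ref{rec:genuinepostnikov}, and the regular slice case is precisely the implication \ref{lemitem:reg_slice_conditions_defn} $\Rightarrow$ \ref{lemitem:reg_slice_conditions_connectivity} of Lemma \ref{lemma:reg_slice_conn_conditions}.

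Next, to produce a right $C_2$-adjoint $P^n$ to the inclusion $\iota_n \colon \cat_{\geq n} \hookrightarrow \cat$, I would invoke Corollary \ref{cor:C2_right_adjoint_local_crit}. Fiberwise right adjoints exist: the Postnikov truncation $\tau_{\geq n}$ on both $\Spectra$ and $\Spectra^{C_2}$, and the regular slice truncation $\tau^{\rslice}_{\geq n}$ recorded in the remark following Observation \ref{obs:rslice_more_connective}. The only nontrivial finite $C_2$-coproduct to check is preservation under the induction $\Ind_e^{C_2} = C_2 \otimes - \colon \Spectra \to \Spectra^{C_2}$: for the Postnikov case, $C_2 \otimes M$ has underlying $M \oplus M$ and categorical fixed points $M$, both $n$-connective whenever $M$ is; for the regular slice case, $C_2 \otimes S^n$ is a regular slice $n$-cell by Definition \ref{defn:reg_slice_cells} (taking $H = e$), so induction takes $\Spectra_{\geq n}$ into the localizing subcategory $\Spectra^{C_2}_{\rslice \geq n}$.

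The module case for a connective $\E_\infty$-algebra $A$ proceeds by the same strategy: connectivity of $A$ in the Postnikov sense together with the isotropy separation sequence (\ref{diagram:isotropysep}) yields connectivity of $\Phi^{C_2}A$, so Lemma \ref{lemma:reg_slice_conn_conditions} is available for $A$-modules, and the fiberwise truncations on $\Mod_{A^e}$ and $\Mod_A$ are standard. I expect the main obstacle here to be identifying the $C_2/e$-indexed $C_2$-coproduct in $\underline{\Mod}_A$ (i.e., the appropriate induction functor $\Mod_{A^e} \to \Mod_A$) concretely enough to verify preservation of regular slice connectivity from the non-module case; once that is in hand, the same appeal to Corollary \ref{cor:C2_right_adjoint_local_crit} closes the argument.
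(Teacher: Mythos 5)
Your proposal is correct and follows essentially the same route as the paper: verify that the inclusion preserves finite $C_2$-coproducts (the only nontrivial one being $C_2 \otimes -$) and then apply Corollary \ref{cor:C2_right_adjoint_local_crit}. The hedge in the module case is easily closed: the induction $\Mod_{A^e} \to \Mod_A$ lands in modules with underlying $A^e$-module $M \oplus M$ and vanishing geometric fixed points (by the recollement of Proposition \ref{prop:rel_eqvt_recollement}), so the isotropy separation sequence (\ref{diagram:isotropysep}) gives $L(M)^{C_2} \simeq (M\oplus M)_{hC_2} \simeq M$, and both Postnikov and regular slice $n$-connectivity (via Lemma \ref{lemma:reg_slice_conn_conditions}) follow exactly as in the absolute case.
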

\begin{proof}
		We sketch the proof for $ A = \sphere^0 $; the general case is identical. 
		Note that the left adjoint $ C_2 \otimes - \colon \Spectra \to \Spectra^{C_2} $ to the restriction functor evaluated on an $ n $-connective ordinary spectrum is both regular slice $ n $-connective (see Lemma \ref{lemma:reg_slice_conn_conditions}) and Postnikov $ n $-connective. 
		The result now follows from Corollary \ref{cor:C2_right_adjoint_local_crit}. 
\end{proof}
\begin{defn}\label{defn:compatible_filtration}
	Let $ \cat $ be a $ C_2 $-symmetric monoidal $ C_2 $-$ \infty $-category. 
	A filtration $ \mathscr{F} $ on the underlying $ C_2 $-$ \infty $-category $ \cat $ is \emph{compatible} with the $C_2$-symmetric monoidal structure if the following conditions are satisfied:
	\begin{enumerate}[label=(\roman*)]
		\item The subcategory of coconnective objects $ \cat_{\leq 0} $ is closed under filtered colimits; 
		\item The unit object $ \mathbbm{1}_\cat $ lies in $ \cat_{\geq 0} $; 
		\item If $ X, Y \in \cat_{\geq 0} $, then $ X \otimes Y \in \cat_{\geq 0} $. 
		\item If $ X \in \cat_{\geq 0}^e $, then $ N^{C_2}_e X \in \cat_{\geq 0} $. 
	\end{enumerate} 
\end{defn} 
\begin{rmk}
		If $ \cat $ is a $ C_2 $-symmetric monoidal $ C_2 $-$ \infty $-category with a filtration which is compatible with the $ C_2 $-symmetric monoidal structure, then $ \cat^e $ is an ordinary symmetric monoidal $ \infty $-category equipped with a compatible filtration.
\end{rmk}
The following is an immediate consequence of \cite[Theorem 2.9.2]{NS22}. 
\begin{prop}\label{prop:compatible_filt_implies_C2_monoidal_heart}
		Let $ \cat^\otimes $ be a $ C_2 $-symmetric monoidal $ C_2 $-$ \infty $-category. 
		Suppose $ \cat $ is equipped with a filtration $ (\cat, \cat_{\leq 0}, \cat_{\geq 0}) $ which is compatible with the $C_2$-symmetric monoidal structure in the sense of Definition \ref{defn:compatible_filtration}. 
		Then 
		\begin{enumerate}[label=(\alph*)]
				\item $ \cat_{\geq 0} $ inherits a canonical $ C_2 $-symmetric monoidal structure so that the inclusion $ \cat_{\geq 0} \to \cat $ admits a $ C_2 $-symmetric monoidal structure. 
				\item The heart $ (\cat_{\geq 0})_{\leq 0} $ inherits the structure of a $C_2 $-symmetric monoidal $ \infty $-category so that $ \tau_{\leq 0} \colon \cat_{\geq 0} \to (\cat_{\geq 0})_{\leq 0} $ admits a $ C_2 $-symmetric monoidal structure. 
		\end{enumerate}
\end{prop}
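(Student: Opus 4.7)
[Proof proposal for Proposition \ref{prop:compatible_filt_implies_C2_monoidal_heart}]
The plan is to realize both $\cat_{\geq 0} \subseteq \cat$ and $\tau_{\leq 0} \colon \cat_{\geq 0} \to (\cat_{\geq 0})_{\leq 0}$ as $C_2$-symmetric monoidal localizations in the sense of \cite[\S2.9]{NS22}, and to then invoke \cite[Theorem 2.9.2]{NS22}. Recall that in the non-parametrized setting, to promote a reflective subcategory $\mathcal{D} \subseteq \cat$ to a symmetric monoidal localization, it suffices to check that the class $W$ of arrows inverted by the reflector is closed under tensoring with arbitrary objects of $\cat$; the parametrized analogue adds the condition that $W$ is closed under the norm functors indexed by $C_2/e \to C_2/C_2$.

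First I would verify part (a). The inclusion $\cat_{\geq 0} \hookrightarrow \cat$ is a $C_2$-functor admitting a $C_2$-right adjoint $P^0$ by Definition \ref{defn:filtration_on_C2_cat}, so by the dual to Corollary \ref{cor:C2_left_adjoint_local_crit} we obtain a reflective $C_2$-subcategory with localization functor $L = P^0$ after passing to vertical opposites (or, more directly, one checks that $\cat_{\geq 0}$ is closed under $C_2$-limits, which is what \cite[Theorem 2.9.2]{NS22} needs). Let $W$ denote the class of morphisms in $\cat$ inverted by $P^0$; equivalently, morphisms with cofiber in the left-orthogonal complement of $\cat_{\geq 0}$. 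Condition (iii) of Definition \ref{defn:compatible_filtration} says $\cat_{\geq 0}$ is closed under the binary $\otimes$, and condition (iv) says it is closed under the Hill--Hopkins--Ravenel norm $N^{C_2}_e$. Combined with condition (ii), these are exactly the $C_2$-parametrized analogues of ``$W$ is closed under tensoring with objects of $\cat$'' from \cite[Proposition 2.2.1.9]{LurHA}; indeed, by the distributivity of the $C_2$-symmetric monoidal structure together with the presentation of parametrized tensor products as iterations of binary tensor products and norms (Lemma \ref{lemma:norm_on_fil_gr_formula} is illustrative here), these conditions suffice to verify the hypotheses of \cite[Theorem 2.9.2]{NS22}. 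This gives the desired $C_2$-symmetric monoidal structure on $\cat_{\geq 0}$ together with a $C_2$-symmetric monoidal structure on $\cat_{\geq 0} \hookrightarrow \cat$.

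For part (b), I would apply the same machine to the reflective inclusion $(\cat_{\geq 0})_{\leq 0} \hookrightarrow \cat_{\geq 0}$, with reflector the truncation $\tau_{\leq 0}$. The compatibility conditions needed are the analogue of (ii)--(iv) for this inclusion; but these follow from part (a) once we know that if $X, Y \in \cat_{\geq 0}$ and either $X$ or $Y$ is in $\cat_{\geq 1}$, then $X \otimes Y \in \cat_{\geq 1}$, and similarly for norms. In our compatibility framework these statements are consequences of (iii)--(iv) applied in $\cat$ together with the identification of $\cat_{\geq 1}$ as a subcategory stable under tensor product with objects of $\cat_{\geq 0}$; this sort of ``tensor-shift'' principle is standard and parallels the Postnikov/slice case verified in Recollection \ref{rec:genuinepostnikov} and Example \ref{ex:spheres_as_slices}.

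The main obstacle I anticipate is condition (iv): the requirement on the norm is genuinely new relative to the non-equivariant situation and has no counterpart in the treatment of compatible filtrations in \cite{Lurie-Rot}. Verifying that closure under $N^{C_2}_e$, together with closure under binary tensor product and containing the unit, is really enough to trigger the full strength of \cite[Theorem 2.9.2]{NS22} requires unwinding the parametrized $\infty$-operad $\underline{\Fin}_{C_2,*}$ and observing that every active morphism factors (up to equivalence) as a composite of restrictions, binary multiplications, and norms; once this reduction is made, the argument runs exactly parallel to the non-equivariant one.
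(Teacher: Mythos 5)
Your overall route — reducing both parts to the parametrized localization criterion \cite[Theorem 2.9.2]{NS22} — is the same one the paper takes, which disposes of this by a bare citation. But the way you deploy it is off in two places.

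For (a), the ``passing to vertical opposites'' move does not work: the vertical opposite of a $C_2$-symmetric monoidal $C_2$-$\infty$-category carries a \emph{co}-symmetric-monoidal structure, and invoking a localization theorem there will not hand back the tensor structure on $\cat_{\geq 0}$ that the statement is actually asking for. More to the point, (a) needs no localization theorem at all. Conditions (ii)--(iv) of Definition \ref{defn:compatible_filtration} say precisely that $\cat_{\geq 0}$ contains the unit and is closed under $\otimes$ and $N^{C_2}_e$; together with the reduction you correctly flag at the end of your proposal (every active morphism of $\underline{\Fin}_{C_2,*}$ factors through folds, norms, and restrictions), this exhibits $\cat_{\geq 0}\times_{\cat}\cat^\otimes$ as a full sub-$C_2$-operad of $\cat^\otimes$ which is again $C_2$-symmetric monoidal, with the inclusion strong $C_2$-symmetric monoidal for free.

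For (b), you are right that the content is a compatible $C_2$-localization: the $\tau_{\leq 0}$-equivalences in $\cat_{\geq 0}$ are the maps with cofiber in $\cat_{\geq 1}$, and what has to be checked is $\cat_{\geq 1}\otimes\cat_{\geq 0}\subseteq\cat_{\geq 1}$ together with its norm analogue. But you assert this ``tensor-shift'' principle as a consequence of (iii)--(iv), and it is not: Definition \ref{defn:compatible_filtration} constrains only $\cat_{\geq 0}$ and says nothing about $\cat_{\geq 1}$. For a $t$-structure the claim follows because $\cat_{\geq 1}=\Sigma\cat_{\geq 0}$ and suspension is compatible with $\otimes$, but a filtration in the sense of Definition \ref{defn:filtration_on_C2_cat} comes with no identification of $\cat_{\geq 1}$ in terms of $\cat_{\geq 0}$, so the required $\otimes$-ideal property has to be supplied separately. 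In the paper's actual applications (the regular-slice filtrations on graded $\underline{\Mod}_A$) it is checked by hand via Lemmas \ref{lemma:reg_slice_conn_conditions} and \ref{lemma:norm_on_fil_gr_formula}; your write-up should either add such a hypothesis or point explicitly to that verification rather than appealing to a ``standard'' principle that the abstract definition does not furnish.
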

\begin{defn}\label{defn:param_filt_on_gr}
Given a filtration $ \mathscr{F} $ on a $ C_2 $-$ \infty $-category $ \cat $, $ \Gr(\cat) $ inherits several filtrations. 
\begin{enumerate}[label=(\alph*)]
	\item A graded object $ X_* \in \Gr(\cat) $ is $ n $-connective in the \emph{neutral $ \mathscr{F} $-filtration} if $ X_* \in \cat_{\geq n} $ for all $ n $.
	% Denote the $ C_2 $-$ \infty $-category $ \Gr(\cat) $ equipped with this filtration by $ \Gr(\cat)_{\mathscr{F}} $.
	\item An object $ X_* \in \Gr(\cat) $ is $n$-connective in the \emph{positive $ \mathscr{F}$-filtration} if $ X_* \in \cat_{\geq n+*} $ for all $ n $. 
	Denote the $ C_2 $-$ \infty $-category $ \Gr(\cat) $ equipped with this filtration by $ \Gr(\cat)_{\mathscr{F}^+} $.
	\item An object $ X_* \in \Gr(\cat) $ is $n$-connective in the \emph{negative $ \mathscr{F}$-filtration} if $ X_* \in \cat_{\geq n-*} $ for all $ n $. 
	Denote the $ C_2 $-$ \infty $-category $ \Gr(\cat) $ equipped with this filtration by $ \Gr(\cat)_{\mathscr{F}^-} $.
\end{enumerate} 
When $ \cat = \underline{\Spectra}^{C_2} $ or $ \underline{\Mod}_A $ for $ A $ a connective $ \E_\infty $-ring spectrum in $ \Spectra^{C_2} $, we will denote the latter two by $ \Gr \left(\underline{\Spectra}^{C_2}\right)_{\rslice^\pm} $ or $ \Gr \left(\underline{\Mod}_A \right)_{\rslice^\pm} $, respectively. 
\end{defn} 
\begin{rmk}\label{rmk:heart_of_regslice_filt_on_gr}
		Let $ A $ be a connective $ \E_\infty $-ring spectrum in $ \Spectra^{C_2} $. 
		Consider the negative regular slice filtration of Definition \ref{defn:param_filt_on_gr} for $ \cat = \underline{\Mod}_{A} $. 
		The heart of this filtration can be identified with the full $ C_2 $-subcategory of $ \underline{\Fun}\left(\Z^\delta_{C_2}, \Mod_{A}\right) $ on those functors $ F \colon \Z^\delta \times \left(\mathcal{O}^\op_{C_2}\right)_{t/} \to \left(\Mod_{A}\right)_{\underline{t}} $ so that for each $ n $, $ F(n,-) $ takes values in regular $ (-n) $-slices. 
		Similarly, the heart of the positive regular slice filtration of Definition \ref{defn:param_filt_on_gr} can be identified with the full $ C_2 $-subcategory of $ \underline{\Fun}\left(\Z^\delta_{C_2}, \underline{\Mod}_{A}\right) $ on those functors $ F \colon \Z^\delta \times \left(\mathcal{O}^\op_{C_2}\right)_{t/} \to \left(\underline{\Mod}_{A}\right)_{\underline{t}} $ so that for each $ n $, $ F(n,-) $ takes values in regular $ n $-slices. 
		We will denote these $ C_2 $-$ \infty $-categories by $ \Gr \left(\underline{\Mod}_A \right)_{\rslice^\pm}^\heartsuit $, or $ \Gr \left(\rslice_{\mp}^\heartsuit\right) $ if $ A $ is understood. 
\end{rmk}
\begin{obs}\label{obs:graded_rslice_heart_as_graded_Mackey_functors} 
		Let $ \underline{k} $ denote the fixed point $ C_2 $-Green functor associated to a discrete ring with involution. 
		Recall \cite[Corollary 8.9]{UllmanThesis} which characterizes regular $ 1 $-slices as those discrete cohomological $ C_2 $-Mackey functors whose restriction maps are injective and the fact that $ \otimes S^\rho $ defines an equivalence from the category of regular $ n $-slices to the category of $ (n+2) $-slices. 
		It follows that the functor $ - \left[-\ceil{\frac{n}{2}}\rho +\sigma \right] \simeq - \left[-\floor{\frac{n}{2}}\rho -1 \right] $ defines an equivalence\footnote{In writing this equivalence, we made a choice; we could have used the functor $ - \left[-\ceil{\frac{n}{2}}\rho +1 \right] \simeq - \left[-\floor{\frac{n}{2}}\rho -\sigma \right] $ which identifies regular $ n$-slices with those cohomological $ C_2 $-Mackey functors for which the transfer map $ M^e \to M^{C_2} $ is surjective.} from the category of regular $ n $-slices to the category of $ 1 $-slices when $ n $ is odd. 
		Thus, we have $ C_2 $-functors 
		\begin{equation*}
		\begin{split}	
			(- )\left[-\ceil{\frac{*}{2}}\rho +\varepsilon(*)\sigma \right] \colon \Gr \left(\underline{\Mod}_{\underline{k}} \right)_{\rslice^+}^\heartsuit &\to \Gr\left(\underline{\Mod}_{\underline{k}}^\heartsuit \right) \\
			X_* &\mapsto F(X)_n := X_n \left[-\ceil{\frac{n}{2}}\rho +\varepsilon(n)\sigma \right] \\
			(- )\left[\ceil{\frac{*}{2}}\rho -\varepsilon(*)1 \right] \colon \Gr \left(\underline{\Mod}_{\underline{k}} \right)_{\rslice^-}^\heartsuit &\to \Gr\left(\underline{\Mod}_{\underline{k}}^\heartsuit \right) 
		\end{split}
		\end{equation*}		
		(where $ \varepsilon(n) = 1 $ if $ n $ is odd and zero otherwise) which are fully faithful and have the same essential image: those graded $ \underline{k} $-modules $ M_n $ in $ C_2 $-Mackey functors so that the restriction map on $ M_n $ is injective for all $ n $ odd. 
		Write $ \Gr \left(\underline{\Mod}_{\underline{k}}\right)^{\heartsuit}_{r^\pm} $ for their essential image.  

		Write $ \Gr \left(\underline{\Mod}_{\underline{k}}\right)^{\heartsuit}_{r} $ for the heart of the neutral regular slice filtration on $ \Gr \left(\underline{\Mod}_{\underline{k}}\right) $; this category is canonically identified with $ \Gr \left(\underline{\Mod}_{\underline{k}}^{\heartsuit}\right) $.  
\end{obs}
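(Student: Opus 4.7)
The plan is to reduce the statement to a degree-by-degree identification, using Ullman's characterization of regular $1$-slices and the fact that $(-)\otimes S^\rho$ is an equivalence between regular $n$-slices and regular $(n+2)$-slices. These facts are both $C_2$-equivariant in nature, so applying them pointwise at each graded degree will furnish the claimed $C_2$-functors. The core observation driving the formulas is the relation $\rho = 1 + \sigma$ of $C_2$-representations, giving $-\rho + \sigma = -1$, which explains the equivalence of shifts $-\lceil n/2 \rceil \rho + \sigma \simeq -\lfloor n/2\rfloor \rho - 1$ when $n$ is odd.

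First I would treat each degree $n$ separately. For $n = 2k$ even, $(-) \otimes S^{-k\rho}$ gives an equivalence from regular $n$-slices to regular $0$-slices, which by \cite{UllmanThesis} is the category of discrete cohomological $C_2$-Mackey $\underline{k}$-modules with no further condition. For $n = 2k+1$ odd, $(-) \otimes S^{-k\rho - 1}$ (equivalently, $(-) \otimes S^{-(k+1)\rho + \sigma}$) sends regular $n$-slices to regular $1$-slices, which by Ullman's result are exactly the discrete cohomological $\underline{k}$-modules whose restriction map is injective. Identifying $-(k+1) = -\lceil n/2\rceil$ when $n = 2k+1$ confirms that the uniform formula $(-)[-\lceil */2\rceil\rho + \varepsilon(*)\sigma]$ works in both parities.

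Next I would upgrade these pointwise equivalences to a $C_2$-functor on graded $C_2$-$\infty$-categories. Since restriction functors on $\Gr(\underline{\Mod}_{\underline{k}})$ along morphisms in $\mathcal{O}_{C_2}^\op$ commute with smashing by representation spheres (both being $C_2$-functors), the degreewise shift $(-)[-\lceil n/2\rceil \rho + \varepsilon(n)\sigma]$ assembles into a $C_2$-functor $\Gr(\underline{\Mod}_{\underline{k}})_{\rslice^+}^\heartsuit \to \Gr(\underline{\Mod}_{\underline{k}}^\heartsuit)$. Full faithfulness and the characterization of the essential image (those graded Mackey functors with injective restriction in each odd degree) then follow pointwise from the preceding step. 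The $\rslice^-$ case is entirely analogous after replacing $n$-connective with $-n$-connective in each degree, which is why the shift appears with opposite sign; the fact that the image coincides with the $\rslice^+$ image is a consequence of Ullman's characterization not depending on the sign conventions, just on parity.

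Finally, for the neutral filtration, the heart condition requires $X_n$ to be a regular $0$-slice for every $n$, i.e.\ an ordinary discrete cohomological $C_2$-Mackey $\underline{k}$-module (with no injectivity constraint), so the identification with $\Gr(\underline{\Mod}_{\underline{k}}^\heartsuit)$ is immediate. The main obstacle I anticipate is purely bookkeeping: tracking the parity-dependent shifts so that the formula $-\lceil n/2\rceil \rho + \varepsilon(n)\sigma$ is verified as a single expression valid for all integers $n$, and verifying that the chosen shift functor on graded objects is compatible with the cocartesian fibration structure over $\mathcal{O}_{C_2}^\op$ so that it genuinely defines a $C_2$-functor rather than merely a fiberwise one. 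Once the footnote's choice of convention is fixed, no further analysis is required.
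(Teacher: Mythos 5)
Your approach matches the paper's: a degree-by-degree identification using Ullman's characterization and $\rho$-periodicity, then promotion to a $C_2$-functor, with the $\rslice^-$ case handled symmetrically and the neutral heart read off directly. The reasoning is essentially correct, but there is one bookkeeping slip worth flagging.

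You write that for $n = 2k+1$ odd, ``$(-)\otimes S^{-k\rho-1}$ sends regular $n$-slices to regular $1$-slices, which by Ullman's result are exactly the discrete cohomological $\underline{k}$-modules whose restriction map is injective.'' Taken literally, both halves of this sentence cannot be correct together. By the $\otimes S^\rho$-periodicity it is $(-)\otimes S^{-k\rho}$, not $(-)\otimes S^{-k\rho-1}$, that carries regular $(2k+1)$-slices to regular $1$-slices. The extra $\Sigma^{-1}$ is then precisely what implements Ullman's identification: a regular $1$-slice is a $\Sigma^1$-suspension of a discrete cohomological Mackey functor with injective restriction, so the further shift $[-1]$ is needed to land in $\underline{\Mod}_{\underline{k}}^\heartsuit$. (Equivalently, $\Sigma^{-\sigma}$ identifies regular $1$-slices with discrete Mackey functors having surjective transfer, which explains the footnote's alternative formula $[-\lceil n/2\rceil\rho + 1]$.) Your uniform formula $[-\lceil */2\rceil\rho + \varepsilon(*)\sigma]$ is correct; it's only your intermediate description of where $\otimes S^{-k\rho-1}$ lands that conflates the two steps. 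The paper's own phrasing (``characterizes regular $1$-slices \emph{as} those discrete cohomological Mackey functors\ldots'') encourages this conflation, so your slip is understandable, but a careful proof should separate ``shift to reach the slice hierarchy at level $1$'' from ``shift to descend from regular $1$-slices to their degree-$0$ Mackey-functor avatars.'' One further small point: the identification of regular $0$-slices with $\underline{\Mod}_{\underline{k}}^\heartsuit$ that you attribute to Ullman's Corollary 8.9 is a separate fact; that corollary, as cited, addresses the odd case.
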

\begin{prop}\label{prop:filtrations_are_compatible} 
		Let $ A $ a $ C_2 $-$ \E_\infty $-algebra in $ \Spectra^{C_2} $ which is connective with respect to the Postnikov t-structure. 
		Then the neutral, positive, and negative regular slice filtrations of Definition \ref{defn:param_filt_on_gr} (also see Proposition \ref{prop:filtrations_are_parametrized}) are all compatible with the $C_2$-symmetric monoidal structure on $ \Gr \left(\underline{\Mod}_{A}\right) $ from Corollary \ref{cor:param_gr_fil_day_convolution}. 	
\end{prop}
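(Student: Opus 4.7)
The strategy is to verify the four conditions of Definition \ref{defn:compatible_filtration} for each of the three filtrations in turn, reducing to the characterization of regular slice connectivity via underlying and geometric fixed points (Lemma \ref{lemma:reg_slice_conn_conditions}) together with the explicit formulas for the Day convolution tensor product and parametrized norm on graded objects (Lemma \ref{lemma:norm_on_fil_gr_formula}). Throughout, I will freely use that the $C_2$-symmetric monoidal structure on $\Gr(\underline{\Mod}_A)$ from Corollary \ref{cor:param_gr_fil_day_convolution} is computed pointwise in the grading (so the conditions can be checked one degree at a time), and that $\cat = \underline{\Mod}_A$ itself satisfies the analogous compatibility between its regular slice filtration and its $C_2$-symmetric monoidal structure.

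For condition (i), filtered colimits in $\Gr(\underline{\Mod}_A)$ are computed pointwise, and regular slice $n$-coconnective objects of $\underline{\Mod}_A$ are characterized by Hom-vanishing against the compact generators of Definition \ref{defn:reg_slice_cells}, hence are closed under filtered colimits. For condition (ii), the unit of the Day convolution is $\ins_0(A)$; using Lemma \ref{lemma:reg_slice_conn_conditions} together with the hypothesis that $A^e$ and $\Phi^{C_2}A$ are both connective, $A$ is regular slice $0$-connective, and the claim for all three filtrations reduces to the observation that $0$ is the trivial object in every $\underline{\Mod}_{A,\rslice \geq n}$. For condition (iii), the Day convolution formula $(X \ostar Y)_n \simeq \bigoplus_{i+j=n} X_i \ostar Y_j$ (suitably interpreted $C_2$-equivariantly) reduces the claim to multiplicativity of the regular slice filtration on $\underline{\Mod}_A$: for the positive filtration, $X_i \in \underline{\Mod}_{A,\rslice \geq p+i}$ and $Y_j \in \underline{\Mod}_{A,\rslice \geq q+j}$ imply $X_i \ostar Y_j \in \underline{\Mod}_{A,\rslice \geq p+q+i+j}$, and similarly for the neutral and negative variants.

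The main content lies in condition (iv). Given a graded object $X_* \in \Gr(\underline{\Mod}_{A^e})^e$ that is connective in one of the three filtrations, I will apply Lemma \ref{lemma:norm_on_fil_gr_formula} to compute the underlying and geometric fixed points of $N^{C_2}(X_*)$ degree by degree: one has
\[
\bigl(N^{C_2}(X_*)\bigr)^e_n \simeq \bigoplus_{i+j=n} X_i^e \otimes X_j^e, \qquad \Phi^{C_2}\bigl(N^{C_2}(X_*)\bigr)_{2m} \simeq X_m^e,
\]
with the appropriate odd-index interpolation in the filtered case. For each filtration, I will then combine this with Lemma \ref{lemma:reg_slice_conn_conditions} to verify the required connectivity. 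For instance, in the positive filtration the underlying object in degree $n$ is a sum of terms lying in $\underline{\Mod}_{A^e, \geq i+j} = \underline{\Mod}_{A^e, \geq n}$, while the geometric fixed points in degree $2m$ lie in $\underline{\Mod}_{\Phi^{C_2}A, \geq m}$, which is exactly $\lceil 2m/2 \rceil$-connective; the two match precisely because regular slice connectivity on underlying and geometric fixed points is measured at ratio $2{:}1$.

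The main obstacle is the bookkeeping in case (iv), where the speed at which the positive (resp.\ negative) filtration grows with the grading index is precisely calibrated so that the doubling in the geometric fixed points formula of Lemma \ref{lemma:norm_on_fil_gr_formula} matches the ceiling function in Lemma \ref{lemma:reg_slice_conn_conditions}. I expect this numerical match to be the only substantive verification; the remainder is a routine reduction to multiplicativity of the regular slice filtration on $\underline{\Mod}_A$, which itself follows from Lemma \ref{lemma:reg_slice_conn_conditions} together with the symmetric monoidality of the Postnikov t-structure recalled in Recollection \ref{rec:genuinepostnikov}.
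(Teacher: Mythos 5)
Your proposal is correct and takes essentially the same approach as the paper, whose one-line proof simply cites Lemma \ref{lemma:reg_slice_conn_conditions} and Lemma \ref{lemma:norm_on_fil_gr_formula} and leaves the degree-by-degree bookkeeping implicit; your verification of condition (iv), matching the $2{:}1$ ratio between underlying and geometric fixed points against the ceiling in Lemma \ref{lemma:reg_slice_conn_conditions}, is precisely the content. One small slip: the aside about ``odd-index interpolation in the filtered case'' is out of place here, since the statement concerns $\Gr(\underline{\Mod}_A)$ rather than $\Fil(\underline{\Mod}_A)$, and for graded objects the odd-degree pieces of $\Phi^{C_2}N^{C_2}(X_*)$ simply vanish.
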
 
\begin{rmk}
		The analogue of Proposition \ref{prop:filtrations_are_compatible} does not hold for the Postnikov filtration, because $ N^{C_2} $ does not take $ n $-connective spectra to $ 2n $-connective $ C_2 $-spectra unless $ n \leq 0 $. 
\end{rmk}
\begin{proof} [Proof of Proposition \ref{prop:filtrations_are_compatible}]
		The result follows from the definition of compatibility in view of Lemma \ref{lemma:reg_slice_conn_conditions} and Lemma \ref{lemma:norm_on_fil_gr_formula}. 
\end{proof}
\begin{rmk}\label{rmk:equivariant_koszul_rule}
		Let $ \underline{k} $ be the fixed point $ C_2 $-Mackey functor associated to a discrete ring with an involution. 
		By Proposition \ref{prop:compatible_filt_implies_C2_monoidal_heart}, there are induced $ C_2 $-symmetric monoidal structures on the hearts of the neutral, positive, and negative regular slice filtrations on $ \Gr\left(\underline{\Mod}_{\underline{k}}\right) $. 
		In particular, the positive and negative regular slice filtrations induce a \emph{Koszul} $ C_2 $-symmetric monoidal structure $ \ostar_K $ on $ \Gr \left(\underline{\Mod}_{\underline{k}}\right)^{\heartsuit}_{r^\pm} $, while the neutral slice filtration induces the Day convolution $ C_2 $-symmetric monoidal structure $ \ostar_D $ on $ \Gr \left(\underline{\Mod}_{\underline{k}}^{\heartsuit}\right) $. 
		Identifying $ \Gr \left(\underline{\Mod}_{\underline{k}}\right)^{\heartsuit}_{r^\pm} $ with full subcategories of $ \Gr\left(\underline{\Mod}_{\underline{k}}^\heartsuit\right) $ using Observation \ref{obs:graded_rslice_heart_as_graded_Mackey_functors}, this agrees with the usual $ C_2 $-symmetric monoidal structure on graded Mackey functors, with tensor product and norm given by parametrized Day convolution, but with two changes. 
		In addition to the symmetry isomorphism incorporating the Koszul sign convention, the $ C_2 $-action on the norm is twisted by $ (-1) $.   
		In particular, if $ A $ is a $ C_2 $-commutative algebra in $ \Gr\left(\underline{\Mod}_{\underline{k}}^\heartsuit\right) $ with respect to the Koszul sign rule and $ a \in A^e_{2\ell+1} $ is in odd degree, then $ n(a) = - n(\sigma(a)) \in A^{C_2}_{4\ell+2} $. 
		We will write $ \Gr \left(\underline{\Mod}_{\underline{k}}\right)^{\heartsuit, \ostar_K}_{r^\pm} $ for the $ C_2 $-category $ \Gr \left(\underline{\Mod}_{\underline{k}}\right)^{\heartsuit}_{r^\pm} $ with the Koszul $ C_2 $-symmetric monoidal structure.\footnote{We have not compared our notion of $ C_2 $-graded commutative algebra with that of \cite{MR3818296}, but let us note that Angeltveit--Bohmann consider a more general notion: Their indexing category is the representation ring $ RO(G) $.} 
\end{rmk}
An important aspect of \cite{Raksit20} is relating positive and negative t-structures on graded objects; in particular, this allows Raksit to define cohomology for a $ h_+ $-cochain complex. 
We prove an involutive enhancement of this correspondence in Proposition \ref{prop:shear_gr_by_reg_rep_sphere_is_mult}. 
In order to do so, we first introduce a parametrized enhancement of the Picard space for a $ C_2 $-symmetric monoidal $ C_2 $-$ \infty $-category. 
\begin{ntn}\label{ntn:picard_C2_spectrum}
		Let $ \cat $ be a $ C_2 $-symmetric monoidal $C_2$-$ \infty $-category. 
		Write $ \underline{\Pic}(\cat) $ for its corresponding \emph{Picard space}; at each orbit, $ \underline{\Pic}(\cat) $ is the ordinary Picard space of $ \cat^{C_2/H} $. 
		Since invertible objects are closed under symmetric monoidal functors, $ \underline{\Pic}(\cat) $ is a grouplike $ C_2 $-$ \E_\infty $-monoid in $ C_2 $-spaces. 
		In particular, we may regard it as the infinite loop space associated to a connective $ C_2 $-spectrum. 
\end{ntn}
\begin{obs}\label{obs:C2_picard_of_module_cat}
		Let $ A $ be a connective $ C_2 $-$ \E_\infty $-algebra in $ \Spectra^{C_2} $. 
		As in \cite[Example 2.2.2]{MR3590352}, there is an isomorphism $ \underline{\pi}_1 \Pic\left(\Mod_A\right) \simeq \underline{\pi}_0\left(\loops^\infty A\right)^\times $ of $ C_2 $-Mackey functors, where $ \underline{\pi}_0\left(\loops^\infty A\right)^\times $ is the $ C_2 $-Mackey functor of units in the $C_2$-Tambara functor $ \underline{\pi}_0 A $. 
		Furthermore, there is an equivalence of $ C_2 $-spaces $ \tau_{\geq 1}\loops \underline{\Pic}\left(\underline{\Mod}_A\right) \simeq \tau_{\geq 1} \left(\loops^\infty A\right)^\times $. 
\end{obs}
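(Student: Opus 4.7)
The plan is to reduce both assertions to the known non-equivariant identifications, applied fiberwise, and then to observe that the resulting fiberwise equivalences are automatically compatible with the restriction maps packaged into the $C_2$-$\infty$-category structure.

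First, I would recall the classical input. For any symmetric monoidal $\infty$-category $\cat$ with unit $\mathbbm{1}$, evaluation at the identity determines an equivalence $\loops_{\mathbbm{1}} \Pic(\cat) \simeq \mathrm{Aut}_\cat(\mathbbm{1})$, and the latter is the subspace of invertible components of the endomorphism space $\Map_\cat(\mathbbm{1},\mathbbm{1})$. Applied to $\cat = \Mod_{A^H}$ for a subgroup $H \subseteq C_2$, whose unit is the restriction of $A$ to level $C_2/H$, this gives $\loops \Pic(\Mod_A^H) \simeq (\Omega^\infty A^H)^\times$, and in particular $\pi_1 \Pic(\Mod_A^H) \simeq \pi_0(A^H)^\times$.

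Second, I would unwind the definition of $\underline{\Pic}(\underline{\Mod}_A)$ from Notation \ref{ntn:picard_C2_spectrum}. By construction, its fiber over $C_2/H$ is the ordinary Picard space $\Pic(\Mod_A^H)$, and the restriction map along $C_2/e \to C_2/C_2$ is the restriction $C_2$-symmetric monoidal functor $\underline{\Mod}_A^{C_2} \to \underline{\Mod}_A^{e}$ applied to invertible objects. Since this restriction functor is symmetric monoidal and sends the unit to the unit, passing to loops based at $A$ yields a square in which the vertical arrows are precisely the restriction maps of the $C_2$-Mackey functor $\underline{\pi}_0(\loops^\infty A)^\times$ (and its pointed analogue). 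This produces a map of $C_2$-spaces $\tau_{\geq 1}\loops \underline{\Pic}(\underline{\Mod}_A) \to \tau_{\geq 1} (\loops^\infty A)^\times$; by the first step, this map is an equivalence on each fiber, hence an equivalence of $C_2$-spaces. Taking $\underline{\pi}_0$ of $\loops \underline{\Pic}$ gives the claimed isomorphism of $C_2$-Mackey functors.

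The main obstacle is bookkeeping, not mathematical content: one must verify that the standard delooping identification $\loops \Pic(\cat) \simeq \mathrm{Aut}_\cat(\mathbbm{1})$ is natural in symmetric monoidal functors in a way that is compatible with the fibration $\underline{\Pic}(\underline{\Mod}_A) \to \mathcal{O}_{C_2}^\op$. This follows because the universal property of $\Pic$ is functorial in symmetric monoidal functors, and the restriction functors in $\underline{\Mod}_A$ are symmetric monoidal by Proposition \ref{prop:param_module_cat_are_distributive_sym_mon}. No delicate issue regarding transfers arises at this level because we are comparing connective covers of loop spaces (equivalently, underlying $C_2$-spaces of $\underline{\pi}_0$-Mackey functors), whose Mackey-functor structure is determined by the restriction maps together with the $C_2$-action on the underlying object.
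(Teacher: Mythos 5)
Your argument correctly establishes the equivalence of $C_2$-spaces: the identification $\loops_{\mathbbm{1}}\Pic(\cat)\simeq\Aut_\cat(\mathbbm{1})$ is natural in symmetric monoidal functors, and the restriction functors of $\underline{\Mod}_A$ are symmetric monoidal, so the fiberwise equivalences assemble into an equivalence of $C_2$-spaces. However, the first claim asserts an isomorphism of $C_2$-Mackey functors, and that does \emph{not} follow from an equivalence of $C_2$-spaces. Your final sentence is where the gap lies: a $C_2$-Mackey functor is not determined by its restriction map and the $C_2$-action on the underlying level, since the transfer is independent data. For example, with $M^{C_2}=\Z/4$, $M^e=\Z/2$ with trivial action and $\mathrm{Res}$ the quotient map, both $\mathrm{Tr}=0$ and $\mathrm{Tr}(1)=2$ satisfy $\mathrm{Res}\circ\mathrm{Tr}=1+\sigma$. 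More fundamentally, a $C_2$-space carries a fixed-point level, an underlying level with action, and a restriction map---but no transfer---so taking $\underline{\pi}_0$ of your $C_2$-space equivalence yields an isomorphism of coefficient systems, not of Mackey functors.

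To close the gap, you should promote the fiberwise equivalences to an equivalence of grouplike $C_2$-$\E_\infty$-monoids in $C_2$-spaces (equivalently, of connective $C_2$-spectra), which does carry transfer data. The transfer on $\underline{\pi}_1\underline{\Pic}(\underline{\Mod}_A)$ is induced by the relative norm $\underline{N}^{C_2}\colon\Pic(\Mod_{A^e})\to\Pic(\Mod_A)$ applied to loops at the unit, while the transfer on $\underline{\pi}_0(\loops^\infty A)^\times$ is the Tambara (multiplicative) norm $(\pi_0A^e)^\times\to(\pi_0A^{C_2})^\times$. These agree because the identification $\loops\Pic(\cat)\simeq\Aut_\cat(\mathbbm{1})$ is natural with respect to \emph{all} symmetric monoidal functors, not only restrictions, including the norm, and because for $u\in(\pi_0A^e)^\times$ the relative norm carries the automorphism $\cdot u$ of $A^e$ to the automorphism $\cdot N(u)$ of $A\simeq\underline{N}^{C_2}(A^e)$. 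Once the transfers are matched, the rest of your reduction goes through.
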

Let $ \cat $ be a $ \underline{\Z} $-linear $ C_2 $-stable $ C_2 $-presentable distributive $ C_2 $-symmetric monoidal $ C_2 $-$ \infty $-category and consider the $ C_2 $-functor which shears  
\begin{equation}\label{eq:shear_gr_by_reg_rep_sphere}
\begin{split}	
	\Gr\left(\cat\right) &\xrightarrow{[\pm \rho *]} \Gr\left(\cat\right) \\
	X_* &\mapsto F(X)_n := \Sigma^{\pm \rho n} X_n .	
\end{split}
\end{equation}
\begin{prop}\label{prop:shear_gr_by_reg_rep_sphere_is_mult} 
Let $ \cat $ be a $ \underline{\Z} $-linear $ C_2 $-stable $ C_2 $-presentable distributive $ C_2 $-symmetric monoidal $ C_2 $-$ \infty $-category. 
\begin{enumerate}[label=(\arabic*)]
	\item The shear functor $ [\pm \rho *] $ of (\ref{eq:shear_gr_by_reg_rep_sphere}) is a $ C_2 $-symmetric monoidal equivalence of $ C_2 $-$\infty$-categories with inverse $ [\mp \rho *]: X_* \mapsto \Sigma^{\mp \rho *} X_n $. 
	Suppose $ \cat = \underline{\Mod}_A $ for $ A $ a connective $ C_2 $-$ \E_\infty $-$ \underline{\Z} $-algebra. 
	Then the functors $ [\pm \rho *] $ promote to filtered equivalences 
	\begin{equation*}
	\begin{split}
			[\rho *] \colon \Gr \left(\underline{\Mod}_{A}\right)_{\rslice^{-}}\rlarrows \Gr \left(\underline{\Mod}_{A}\right)_{\rslice^{+}} \colon [-\rho *]
	\end{split}	
	\end{equation*}
	between the negative and positive regular slice filtrations of Definition \ref{defn:param_filt_on_gr} on $ \Gr \left(\underline{\Mod}_{A}\right) $. % and the positive regular slice filtration of \emph{loc. cit}. 
	In particular, they induce inverse equivalences on hearts
	\begin{equation*}
		[\rho *] \colon \Gr \left(\underline{\Mod}_{A}\right)_{\rslice^{-}}^\heartsuit \rlarrows \Gr \left(\underline{\Mod}_{A}\right)_{\rslice^+}^\heartsuit \colon [-\rho *] \,.
	\end{equation*}
	\item On underlying spectra, $ [\pm \rho *] $ recovers the equivalences $ [\pm 2 *] $ of \cite[Proposition 3.3.4]{Raksit20}. 
\end{enumerate}
\end{prop}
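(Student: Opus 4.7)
The plan has three stages: realize the shear $[\pm \rho *]$ as $C_2$-symmetric monoidal self-equivalences of $\Gr(\cat)$; verify these exchange the positive and negative regular slice filtrations when $\cat = \underline{\Mod}_A$; and identify the result with Raksit's $[\pm 2 *]$ on underlying spectra.

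For the first stage, the aim is to realize $[\rho *]$ as $(-)\ostar L^+_\bullet$ for a graded invertible $C_2$-$\E_\infty$-algebra $L^+_\bullet$ in $\cat$ with $L^+_n \simeq \Sigma^{\rho n}\mathbbm{1}_\cat$. The object $\Sigma^\rho\mathbbm{1}_\cat$ is invertible in $\cat$ (with inverse $\Sigma^{-\rho}\mathbbm{1}_\cat$), so it determines a point of the Picard $C_2$-space $\underline{\Pic}(\cat)$ of Notation \ref{ntn:picard_C2_spectrum}. Since $\underline{\Pic}(\cat)$ is a grouplike $C_2$-$\E_\infty$-monoid, this point extends to a $C_2$-$\E_\infty$-map $\underline{\Z}_{C_2} \to \underline{\Pic}(\cat)$; equivalently, via an equivariant analogue of \cite[Remark 3.2.7]{Lurie-Rot}, to the desired graded invertible $C_2$-$\E_\infty$-algebra $L^+_\bullet$. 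Then $(-)\ostar L^+_\bullet$ is a $C_2$-symmetric monoidal functor with respect to the Day convolution structure of Corollary \ref{cor:param_gr_fil_day_convolution}, invertible with inverse $(-)\ostar L^-_\bullet = [-\rho *]$, and the Day convolution formula unwinds to $X_* \mapsto \Sigma^{\rho *} X_*$.

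For the second stage, by Observation \ref{obs:rslice_shift_by_rep_sphere}, the equivalence $-\otimes \sphere^\rho$ restricts to an equivalence $\tau^\rslice_{\geq n}\underline{\Mod}_A \xrightarrow{\sim} \tau^\rslice_{\geq n+2}\underline{\Mod}_A$. Applying this degreewise: if $X_* \in \Gr(\underline{\Mod}_A)_{\rslice^-}$ is $n$-connective, i.e., $X_k \in \tau^\rslice_{\geq n-k}\underline{\Mod}_A$ for all $k$, then $\Sigma^{\rho k}X_k \in \tau^\rslice_{\geq n-k+2k}\underline{\Mod}_A = \tau^\rslice_{\geq n+k}\underline{\Mod}_A$, which is precisely the condition that $[\rho *](X_*)$ is $n$-connective in the positive filtration. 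Running the symmetric argument for $[-\rho *]$ shows these are inverse filtered equivalences; restricting gives the heart equivalences. For (2), on underlying spectra $(\Sigma^\rho\mathbbm{1}_\cat)^e \simeq \Sigma^2\mathbbm{1}_{\cat^e}$ by Example \ref{ex:regrep}, so $[\pm \rho *]^e$ is $X_* \mapsto \Sigma^{\pm 2 *}X_*$, agreeing with Raksit's $[\pm 2*]$; the symmetric monoidal identification is the last clause of Corollary \ref{cor:param_gr_fil_day_convolution}.

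The main technical obstacle is the parametrized Picard-to-graded-algebra correspondence invoked in the first stage, which is not explicitly stated in the excerpt. If this result is not otherwise available, I would build $L^+_\bullet$ directly: take the free graded $C_2$-$\E_\infty$-algebra on $\Sigma^\rho\mathbbm{1}_\cat$ placed in grading $1$ and invert the resulting tautological generator to produce a $\Z$-grading; fiberwise invertibility in each degree is then checked using Lemma \ref{lemma:norm_on_fil_gr_formula}, sidestepping any parametrized Tannakian reconstruction (which the author explicitly notes as unavailable at the beginning of \S\ref{subsection:filgr}).
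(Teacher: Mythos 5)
Your overall strategy — realize the shear as Day convolution with a graded invertible object classified by a map into the Picard $C_2$-space, then check the filtration comparison degreewise via Observation \ref{obs:rslice_shift_by_rep_sphere} — matches the paper's. The second stage and part~(2) are fine. The gap is in the first stage, and it is the part of the proof where the paper has to do real work.

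You write: ``Since $\underline{\Pic}(\cat)$ is a grouplike $C_2$-$\E_\infty$-monoid, this point extends to a $C_2$-$\E_\infty$-map $\underline{\Z}_{C_2} \to \underline{\Pic}(\cat)$.'' This inference is false in the parametrized setting. Non-equivariantly it works because $\Z$ (i.e.\ $\pi_0\sphere$) is the free grouplike $\E_\infty$-monoid on a point, so a point of any grouplike $\E_\infty$-space extends uniquely to a map out of $\Z$. Equivariantly, the free grouplike $C_2$-$\E_\infty$-monoid on a point has $\underline{\pi}_0$ the \emph{Burnside} Mackey functor $A = \mathbb{A}_{C_2}$, not the constant Mackey functor $\underline{\Z}$ (see Warning \ref{warning:diff_equivariant_linearizations}). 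So the invertible object $\Sigma^\rho\mathbbm{1}$ gives you a map $A \to \underline{\Pic}$ for free, but to get a map out of $\underline{\Z}_{C_2}$ — which is what you need for a $\Z^\delta$-indexed graded invertible algebra compatible with the norm — you must show the composite $\underline{\Z} \to A \to \underline{\pi}_0\underline{\Pic}$ lifts through the Whitehead tower of $\underline{\Pic}(\Mod_{\underline{\Z}})$. The paper does exactly this: using Observation \ref{obs:C2_picard_of_module_cat} to identify $\underline{\pi}_1\Pic \simeq \underline{\Z/2\Z}$, it reduces to checking that the $k$-invariant $A \xrightarrow{\kappa} \underline{\Z/2\Z}[2]$ pulls back to a nullhomotopic map along $\underline{\Z} \xrightarrow{[C_2]} A$, and the nullhomotopy comes from the factorization $[C_2] = \mathrm{tr}\circ [C_2/C_2]$ together with the vanishing of the transfer on the constant Mackey functor $\underline{\Z/2\Z}$. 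Without this step your construction of $L^+_\bullet$ does not exist.

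Your fallback — taking the free graded $C_2$-$\E_\infty$-algebra on $\Sigma^\rho\mathbbm{1}$ in degree $1$ and inverting the generator — has the same problem from the other direction: by Lemma \ref{lemma:norm_on_fil_gr_formula}, the free graded $C_2$-$\E_\infty$-algebra on a degree-$1$ generator has norm classes contributing in degree $2$, and there is no reason the resulting degree-$n$ piece should be \emph{invertible} (it is not even a single orbit). You would still have to identify the relevant Picard-graded piece and verify compatibility of multiplication with the norm, which is equivalent to the obstruction computation above, not a way around it.
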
 
\begin{proof} [Proof of Proposition \ref{prop:shear_gr_by_reg_rep_sphere_is_mult}]
\begin{enumerate}[label=(\arabic*)]
		\item Similarly to \cite[Proof of Proposition 3.3.4]{Raksit20}, a $ C_2 $-functor 
		\begin{equation}\label{eq:shear_coefficients}
		\begin{split}
			\psi_{\pm} \colon &\Z^\delta_{C_2} \to \underline{\Pic}(\underline{\Mod}_{\underline{\Z}}) \\
			& n \mapsto \Sigma^{\pm n\rho_{C_2} }\underline{\Z}
		\end{split}
		\end{equation}
		defines an autoequivalence of $ \Gr(\cat) $ via the adjoint of 
		\begin{equation*}
		\begin{split}
			\Z^\delta_{C_2} \times \Gr(\cat) \xrightarrow{(\psi_{\pm}\circ \pi_1, \ev)} \underline{\Pic}(\underline{\Mod}_{\underline{\Z}}) \times \cat \xrightarrow{\otimes} \cat 
		\end{split}. 
		\end{equation*}
		By the universal property of the parametrized Day convolution \cite[Proposition 3.1.7]{NS22}, it suffices to upgrade the map of spaces (\ref{eq:shear_coefficients}) to a morphism of grouplike $C_2$-$ \E_\infty $-$ C_2 $-spaces. 
		Using Observation \ref{obs:C2_picard_of_module_cat}, consider the Whitehead tower of the Picard $ C_2 $-space
		\begin{equation}\label{eq:picard_whitehead_tower}
		\begin{tikzcd}[row sep=small]
			{\underline{\pi}_1\left(\Pic(\Mod_{\underline{\Z}}) \right)[1]} \ar[r] & {\tau_{\geq 1} \Pic(\Mod_{\underline{\Z}})} \ar[r] & \underline{\pi}_0 \Pic(\Mod_{\underline{\Z}})  \ar[d,"\simeq"] \\
			{\underline{\Z/2\Z}[1]} \ar[u,"{\simeq}"] \ar[r] & {\Pic(\Mod_{\underline{\Z}})}\ar[u,"\simeq"] \ar[r] & A
		\end{tikzcd}
		\end{equation}
		where $ A $ is the Burnside $ C_2 $-Mackey functor. 
		To obtain our desired map, it suffices to show that $ \underline{\Z} \to {\pi_0 \Pic(\Mod_{\underline{\Z}})} \simeq A $ classified by $ [C_2] \in A^{C_2} $ admits lifts (as maps of connective $ C_2 $-spectra) along each stage of this tower. 

		Since the map $ \underline{\Z} \xrightarrow{[C_2]} A $ is homotopic to the composite $ \underline{\Z} \xrightarrow{[C_2/C_2]} A \xrightarrow{tr} A $, the composite $ \underline{\Z} \to A \xrightarrow{\kappa} \underline{\Z/2\Z}[2] $ is homotopic to the composite $ \underline{\Z} \xrightarrow{[C_2/C_2]} A \xrightarrow{\kappa} \underline{\Z/2\Z}[2]  \xrightarrow{tr} \underline{\Z/2\Z}[2] $, which is canonically nullhomotopic because the transfer map is zero on the constant Mackey functor with value $ \Z/2\Z $. 
	\item Let $ \psi_{\pm} $ be as in the proof of (1); taking underlying $ \infty $-groupoids, $ \psi_{\pm}^e $ are maps of grouplike $ \E_\infty $-spaces. 
	The result follows from the observation that $ \psi_{\pm}^e $ are canonically identified with the maps $ \pm \phi $ which are used to define the functors $ [\pm 2*] $ in \cite[Proposition 3.3.4]{Raksit20}. \qedhere
\end{enumerate}
\end{proof}
Now, let us consider filtrations on $ \Fil\left(\cat\right) $. 
Doing so will allow us to show that given a $ C_2 $-$ \E_\infty $ algebra $ A $ in $ \Spectra^{C_2} $, the regular slice-connective covers $ \tau_{\geq *}^\rslice A $ interact nicely with the normed multiplication structure on $ A $ (Remark \ref{rmk:regslicelaxsymmmon}). 
\begin{ntn}\label{ntn:reg_slice_connective}
		Let $ A $ be a $ C_2 $-$ \E_\infty $-algebra in $ \Spectra^{C_2} $ which is connective. 
		Let $ \Fil\left(\underline{\Mod}_A\right)_{\geq *}^\rslice $ denote the full $ C_2 $-subcategory of those filtered objects $ X_{\geq *} $ such that $ X_n $ is regular slice $ n $-connective for all $ n $. 
		By Lemma \ref{lemma:reg_slice_conn_conditions}, its fiber over $ C_2 $ is $ \Fil\left(\Mod_{A^e}(\Spectra)\right)^{\mathrm{P}}_{\geq *} $, where the latter denotes the category of \cite[Construction 3.3.6]{Raksit20}, i.e. consisting of those filtered objects $ X^\star $ so that $ X^n $ is $ n $-connective for all $ n $. 
		Write $ \iota $ for the inclusion $ C_2 $-functor $ \iota: \Fil\left(\underline{\Mod}_A\right)_{\geq *}^\rslice \subseteq \Fil\left(\underline{\Mod}_A\right) $, which exists by Observation \ref{obs:rslice_more_connective}.
\end{ntn}
\begin{prop}\label{prop:reg_slice_conn_cover}
		Let $ A $ be a $ C_2 $-$ \E_\infty $-algebra in $ \Spectra^{C_2} $ which is connective. 
		\begin{enumerate}[label=(\alph*)]
			\item \label{propitem:reg_slice_conn_cover_as_param_functor} The inclusion $ C_2 $-functor $ \iota: \Fil\left(\underline{\Mod}_A\right)_{\geq *}^\rslice \subseteq \Fil\left(\underline{\Mod}_A\right) $ admits a right $ C_2 $-adjoint $ \tau_{\geq * }^R $. 
			\item \label{propitem:reg_slice_conn_cover_is_param_monoidal} The $ C_2 $-functor $ \iota $ is $C_2$-symmetric monoidal, and $ \tau_{\geq *}^R $ is lax $ C_2 $-symmetric monoidal. 
		\end{enumerate}
\end{prop}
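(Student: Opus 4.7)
The plan is to reduce both claims to fiberwise computations via structural results established earlier in the paper. For (a), we verify the hypotheses of Corollary \ref{cor:C2_right_adjoint_local_crit} applied to $\iota$. Fiberwise, the inclusion of $n$-regular-slice-connective filtered $A^H$-modules admits a right adjoint given levelwise by $(\tau_{\geq *}^R X)_n := \tau^\rslice_{\geq n} X_n$; the structure maps $(\tau_{\geq *}^R X)_{n+1} \to (\tau_{\geq *}^R X)_n$ are constructed from the universal property of $\tau^\rslice_{\geq n+1}$ combined with the inclusion $\tau^\rslice_{\geq n+1} \subseteq \tau^\rslice_{\geq n}$ from Observation \ref{obs:rslice_more_connective}. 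It remains to check that $\iota$ preserves finite $C_2$-coproducts: ordinary fiberwise coproducts are computed pointwise in filtered levels and preserve regular slice $n$-connectivity because $\tau^\rslice_{\geq n}\Spectra^{C_2}$ is a localizing subcategory, while the indexed coproduct along $C_2 \to C_2/C_2$ corresponds levelwise to induction from $\Spectra$ to $\Spectra^{C_2}$. For an $n$-connective spectrum $X_n$, the induced $C_2$-spectrum has underlying $X_n \oplus X_n$ (still $n$-connective) and trivial geometric fixed points, hence is regular slice $n$-connective by Lemma \ref{lemma:reg_slice_conn_conditions}.

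For (b), we show that $\Fil\left(\underline{\Mod}_A\right)_{\geq *}^\rslice$ is closed under the Day convolution $C_2$-symmetric monoidal structure from Corollary \ref{cor:param_gr_fil_day_convolution}. Closure under the binary tensor product $(X \ostar Y)_n = \colim_{a+b \geq n} X_a \otimes Y_b$ reduces to multiplicativity of the regular slice filtration: checking on underlying and on $\Phi^{C_2}$ via Lemma \ref{lemma:reg_slice_conn_conditions}, and using the analogous multiplicativity for the Postnikov filtration (along with $\Phi^{C_2}$ being strong symmetric monoidal), shows that $X_a \otimes Y_b$ is regular slice $(a+b)$-connective, a condition preserved under colimits. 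Closure under the norm $N^{C_2}$ from the $e$-fiber to the $C_2/C_2$-fiber uses Lemma \ref{lemma:norm_on_fil_gr_formula} to compute both the underlying and geometric fixed points of $N^{C_2}(X)$ for $X \in \Fil\left(\Mod_{A^e}(\Spectra)\right)^{\mathrm{P}}_{\geq *}$, and then Lemma \ref{lemma:reg_slice_conn_conditions} to verify regular slice connectivity at each level. Once $\iota$ is $C_2$-symmetric monoidal, lax $C_2$-symmetric monoidality of $\tau_{\geq *}^R$ follows automatically from being its right $C_2$-adjoint via the general compatibility of $C_2$-adjunctions with the $C_2$-$\infty$-operadic framework of \cite{NS22}.

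The main technical subtlety is expected to arise in verifying closure under the norm. The formula in Lemma \ref{lemma:norm_on_fil_gr_formula} yields distinct expressions at even versus odd levels, so the required bound $\lceil n/2 \rceil$ on $\Phi^{C_2}$-connectivity at level $n$ must be tracked carefully in each parity, including whether the structure maps between adjacent levels provide the needed connectivity lift. The remaining verifications—the right adjoint property of the pointwise construction, preservation of coproducts, and propagation of the lax monoidal structure—are then essentially formal.
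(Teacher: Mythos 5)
Your approach is the same as the paper's: part (a) reduces to Corollary \ref{cor:C2_right_adjoint_local_crit} once one notes that $\iota$ preserves $C_2$-coproducts, and part (b) reduces to showing the subcategory is compatible with the Day-convolution $C_2$-symmetric monoidal structure in the sense of \cite[Proposition 2.9.1]{NS22}, using Lemmas \ref{lemma:reg_slice_conn_conditions} and \ref{lemma:norm_on_fil_gr_formula}, and then invoking Theorem 2.9.2 of \emph{loc.~cit.} The paper's proof is terse and does not carry out the norm verification explicitly, so your more careful unfolding is a genuine contribution.

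The subtlety you flag at the end is real and worth pinning down, because Lemma \ref{lemma:norm_on_fil_gr_formula} as printed appears to have an off-by-one. For $X$ in $\Fil\left(\Mod_{A^e}\right)^{\mathrm{P}}_{\geq *}$, closure under the norm requires $\Phi^{C_2}\left(N^{C_2}X\right)_{n}$ to be $\lceil n/2 \rceil$-connective. Reading the stated formula $\Phi^{C_2}\left(N^{C_2}X\right)_{2m} \simeq \Phi^{C_2}\left(N^{C_2}X\right)_{2m+1} \simeq X^e_m$ literally, the odd case $n = 2m+1$ would only give $m$-connectivity where $\lceil (2m+1)/2 \rceil = m+1$ is needed, and the argument would fail. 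Unwinding the parametrized Day convolution for the norm, $\Phi^{C_2}\left(N^{C_2}X\right)_n$ is the colimit over the diagonal $\{a : 2a \geq n\}$ of $X^e_a$, which collapses to $X^e_{\lceil n/2 \rceil}$; so the correct statement is $\Phi^{C_2}\left(N^{C_2}X\right)_{2m} \simeq \Phi^{C_2}\left(N^{C_2}X\right)_{2m-1} \simeq X^e_m$. With this reading, the odd level $2m+1$ agrees with level $2m+2$ (that is, $X^e_{m+1}$), which is $(m+1)$-connective as required, and your proposed proof closes. You should make this correction explicit rather than leave it as a warning, since as written the cited lemma does not directly give the bound you need. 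The rest of your verifications — preservation of $C_2$-coproducts (via the induced-spectrum description of the indexed coproduct, whose geometric fixed points vanish), multiplicativity of the regular slice condition under the binary tensor using $\lceil a/2\rceil + \lceil b/2\rceil \geq \lceil (a+b)/2\rceil$, and deducing laxness of $\tau^R_{\geq *}$ from the $C_2$-symmetric monoidality of $\iota$ — are all correct and match what the paper's references supply.
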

\begin{proof}
		Note that $ \Fil\left(\underline{\Mod}_A\right) $ and $ \Fil\left(\underline{\Mod}_A\right)_{\geq *}^\rslice $ have $ C_2 $-coproducts, which are preserved by the inclusion $ \iota $. 
		Now \ref{propitem:reg_slice_conn_cover_as_param_functor} follows from Corollary \ref{cor:C2_right_adjoint_local_crit}. 
		To prove \ref{propitem:reg_slice_conn_cover_is_param_monoidal}, note that the subcategory is compatible with the $ C_2 $-symmetric monoidal structures on $ \Fil\left(\underline{\Mod}_A\right) $ (Corollary \ref{cor:param_gr_fil_day_convolution}) in the sense of \cite[Proposition 2.9.1]{NS22} by Lemmas \ref{lemma:reg_slice_conn_conditions} and \ref{lemma:norm_on_fil_gr_formula}. 
		The result follows from Theorem 2.9.2 of \emph{loc. cit.}
\end{proof} 
\begin{defn} [cf. \cite{UllmanThesis,Wil17}] \label{defn:reg_slicefiltration}
	Let $ A $ be a $ C_2 $-$ \E_\infty $-algebra in $ \Spectra^{C_2} $ which is connective. 
	Denote the diagonal $ C_2 $-functor by $ \delta: \underline{\Mod}_A \to \Fil\left(\underline{\Mod}_A\right) $ and let $ \tau_{\geq *}^R $ denote the connective cover with respect to the regular slice filtration (Proposition \ref{prop:reg_slice_conn_cover}).
	use $ \tau_{\geq *}^\rslice $ to denote the composite
	\begin{align*}
		\underline{\Mod}_A \xrightarrow{\delta} \Fil\left(\underline{\Mod}_A\right) \xrightarrow{\tau_{\geq *}^{R}} \Fil\left(\underline{\Mod}_A\right)_{\geq *}^\rslice 
	\end{align*}
	which we call the \emph{regular slice filtration}.
\end{defn}
\begin{rmk}\label{rmk:regslicelaxsymmmon}
	The regular slice filtration functor $ \tau_{\geq *}^\rslice $ is lax $ C_2 $-symmetric monoidal since $ \delta $, $ \tau_{\geq *}^R $ both are. 
	In particular, it takes $ C_2 $-$ \E_\infty $-algebra objects to $ C_2 $-$ \E_\infty $-algebra objects. 
\end{rmk}
\begin{prop}\label{prop:regsliceright_complete_sep}
	Let $ A $ denote a connective $ \E_\infty $-algebra in $ \Spectra^{C_2} $. 
\begin{enumerate}[label=(\arabic*)]
	\item \label{propitem:regslicerightsep} The regular slice filtration is \emph{right separated}, i.e. the intersection $ \bigcap_{n \in \Z} \tau^\rslice_{\leq n} \Mod_A\left(\Spectra^{C_2}\right) $ is zero. 
	\item \label{propitem:regslicerightcomplete} The regular slice filtration on $ \Mod_{A} $ is \emph{right complete}, i.e. the canonical $ C_2 $-functor $$ \displaystyle \underline{\Mod}_{A} \to \lim \left(\cdots \to \underline{\Mod}_{A, \rslice \geq -1} \to \underline{\Mod}_{A, \rslice \geq 0}\right) $$ is an equivalence. 
\end{enumerate}
\end{prop}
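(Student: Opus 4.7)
The plan is to establish Part \ref{propitem:regslicerightsep} directly from Lemma \ref{lemma:reg_slice_coconn_conditions}, and then deduce Part \ref{propitem:regslicerightcomplete} from Part \ref{propitem:regslicerightsep} together with a standard right-completeness criterion applied fiberwise.

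For Part \ref{propitem:regslicerightsep}, I would take any $X \in \bigcap_{n \in \Z} \tau^\rslice_{\leq n} \Mod_A$. Since $X$ is regular slice $n$-coconnective for every $n \leq 0$, Lemma \ref{lemma:reg_slice_coconn_conditions} forces $X^e$ to be $n$-coconnective and $X^{C_2}$ to be $\floor{n/2}$-coconnective. Letting $n \to -\infty$ and invoking right-separatedness of the Postnikov t-structure on $\Spectra$ yields $X^e \simeq 0 \simeq X^{C_2}$; joint conservativity of $(-)^e$ and $(-)^{C_2}$ then gives $X \simeq 0$.

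For Part \ref{propitem:regslicerightcomplete}, since equivalences of $C_2$-$\infty$-categories are detected fiberwise, it suffices to check that the induced ordinary functors on the $(C_2/e)$- and $(C_2/C_2)$-fibers are equivalences. On the $(C_2/e)$-fiber, Lemma \ref{lemma:reg_slice_conn_conditions} together with the proof of Proposition \ref{prop:filtrations_are_parametrized} identifies the restriction of the regular slice filtration to $\Mod_{A^e}$ with the ordinary Postnikov $t$-structure, which is right-complete on modules over the connective $\E_\infty$-ring $A^e$ by standard results (cf.\ Lemma \ref{lemma:postnikov_rightcomplete}). On the $(C_2/C_2)$-fiber, I would invoke the general right-completeness criterion of \cite[\S 1.2.1]{LurHA}: for a presentable stable $\infty$-category with an accessible $t$-structure, right-completeness is equivalent to right-separation together with closure of the coconnective subcategory under countable products. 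The regular slice filtration on $\Mod_A$ is an accessible $t$-structure by \cite{UllmanThesis}, right-separation is Part \ref{propitem:regslicerightsep}, and closure of $\tau^\rslice_{\leq n}\Mod_A$ under countable products is immediate since products commute with $\Map(S, -)$ in the target variable for each generating slice cell $S$.

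The hardest part will be the careful identification of the $(C_2/e)$-fiber of the parametrized regular slice filtration on $\underline{\Mod}_A$ with the ordinary Postnikov filtration on $\Mod_{A^e}$, as well as verifying that limits of $C_2$-$\infty$-categories are computed fiberwise in the sense needed here so the reduction is legitimate. Both should follow from the setup of Proposition \ref{prop:filtrations_are_parametrized} together with Shah's formalism for $C_2$-parametrized limits, but require some bookkeeping with the parametrized notation.
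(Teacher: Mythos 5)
Your Part (1) matches the paper's argument, and the fiberwise reduction at the start of Part (2) is also how the paper proceeds; your identification of the $(C_2/e)$-fiber of the regular slice filtration with the Postnikov $t$-structure on $\Mod_{A^e}$ is correct.

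The problem is on the $(C_2/C_2)$-fiber: the regular slice filtration is \emph{not} a $t$-structure on $\Mod_A\left(\Spectra^{C_2}\right)$, so the criterion you cite from \cite[\S 1.2.1]{LurHA} does not apply as stated. A filtration in the sense of Definition \ref{defn:filtration_on_C2_cat} underlies a $t$-structure only when $\cat_{\geq n+1} = \Sigma\cat_{\geq n}$ for all $n$, and this fails here: $S^\rho$ is a regular slice $2$-cell, hence lies in $\tau^\rslice_{\geq 2}\Spectra^{C_2}$, but $\Phi^{C_2}S^\rho \simeq S^1$ is only $1$-connective, so by Lemma \ref{lemma:reg_slice_conn_conditions} one has $S^\rho \notin \Sigma^2\left(\tau^\rslice_{\geq 0}\Spectra^{C_2}\right)$. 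The claim that \cite{UllmanThesis} presents the regular slice filtration as an accessible $t$-structure is therefore not correct, and cannot be the input to Lurie's proposition. What one must do instead is run the right-completeness argument directly for the family of Bousfield colocalizations $\tau^\rslice_{\geq n}$: the inverse to the canonical functor sends a compatible system $(X_{-n})_n$ to $X := \colim_n X_{-n}$, and checking $\tau^\rslice_{\geq -m}X \simeq X_{-m}$ amounts to verifying that $\tau^\rslice_{\leq -m-1}$ commutes with sequential colimits. This in turn requires that the coconnective subcategories be closed under countable \emph{coproducts} (equivalently, under sequential colimits), which holds because the generating regular slice cells are compact. This is also where your ``countable products'' should be ``countable coproducts'': closure of a right-orthogonal complement under products is automatic and plays no role in completeness, whereas closure under coproducts is exactly the nontrivial input requiring compactness of the generators.
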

\begin{cor}\label{cor:colim_reg_slicefil}
	Let $ A $ denote a connective $ \E_\infty $-algebra in $ \Spectra^{C_2} $. 
	The composite
	\begin{equation*}
		\underline{\Mod}_{A} \xrightarrow{\tau_{\geq *}^\rslice} \Fil\left(\underline{\Mod}_{A}\right) \xrightarrow{\colim} \underline{\Mod}_{A} 
	\end{equation*}
	is canonically equivalent to the identity functor. 
\end{cor}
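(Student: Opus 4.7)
The strategy is to exhibit the equivalence as the canonical counit-type natural transformation: for each $X \in \underline{\Mod}_A$, the counits $\tau_{\geq n}^\rslice X \to X$ of the $C_2$-adjunctions from Proposition \ref{prop:reg_slice_conn_cover} are compatible in $n$ and assemble into a morphism
\[ \eta_X \colon \colim_n \tau_{\geq n}^\rslice X \longrightarrow X. \]
All ingredients of this construction ($C_2$-adjunctions and $C_2$-colimits, the latter computed pointwise over $\mathcal{O}_{C_2}^\op$ by Recollection \ref{rec:splitfiltobj}) are $C_2$-functorial, so $\eta$ is a $C_2$-natural transformation. It remains to verify that each $\eta_X$ is an equivalence at both orbits.

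Since $\underline{\Mod}_A$ is $C_2$-stable, it suffices to show $\mathrm{fib}(\eta_X) \simeq 0$, which by right separatedness (Proposition \ref{prop:regsliceright_complete_sep}\ref{propitem:regslicerightsep}) is equivalent to showing $\tau_{\geq m}^\rslice \eta_X$ is an equivalence for every $m \in \Z$. The key technical input is that $\tau_{\geq m}^\rslice$ preserves filtered colimits: the fiber sequence $\tau_{\geq m}^\rslice \to \mathrm{id} \to \tau_{\leq m-1}^\rslice$ combined with exactness of filtered colimits in a stable $\infty$-category reduces this to showing that $\tau_{\leq m-1}^\rslice$ preserves filtered colimits, which in turn follows because the regular slice cells generating $\tau_{\geq m}^\rslice\underline{\Mod}_A$ (Definition \ref{defn:reg_slice_cells}) are compact. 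Granted this commutation, one computes
\[ \tau_{\geq m}^\rslice \colim_n \tau_{\geq n}^\rslice X \;\simeq\; \colim_n \tau_{\geq \max(m,n)}^\rslice X \;\simeq\; \tau_{\geq m}^\rslice X, \]
where the second equivalence uses that the diagram is eventually constant at $\tau_{\geq m}^\rslice X$ once $n \leq m$; tracking naturality shows that this composite is precisely $\tau_{\geq m}^\rslice \eta_X$.

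The preceding argument runs at the $C_2/C_2$-level. At $C_2/e$ the same reasoning applies to the Postnikov t-structure on $\Mod_{A^e}$, to which the regular slice filtration restricts by Lemma \ref{lemma:reg_slice_conn_conditions}, and whose right separatedness for a connective $\mathbb{E}_\infty$-ring $A^e$ is standard. I do not foresee any serious obstacle; the only point requiring care is that both right separatedness and filtered-colimit preservation of $\tau_{\leq m-1}^\rslice$ must be checked at each orbit, but both reductions are controlled uniformly by compactness of the generating slice cells.
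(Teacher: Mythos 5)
Your overall strategy (form the counit $\eta_X\colon \colim_n \tau_{\geq n}^\rslice X \to X$, show it is an equivalence) is the right one, but the step ``$\mathrm{fib}(\eta_X)\simeq 0$, which by right separatedness is equivalent to showing $\tau_{\geq m}^\rslice\eta_X$ is an equivalence for every $m$'' has a genuine gap: the colocalization $\tau_{\geq m}^\rslice$ is not exact, so it does not carry $\mathrm{fib}(\eta_X)$ to $\mathrm{fib}(\tau_{\geq m}^\rslice\eta_X)$, and hence ``$\tau_{\geq m}^\rslice\eta_X$ an equivalence for all $m$'' does not by itself put $\mathrm{fib}(\eta_X)$ into $\bigcap_m \tau^\rslice_{\leq m}\Mod_A$. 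To salvage your route one needs a total-(co)fiber argument in the square comparing the counits $\tau_{\geq m}^\rslice(-)\to \mathrm{id}$, which exhibits $\mathrm{cofib}(\eta_X)$ as $\mathrm{cofib}(\tau^\rslice_{\leq m-1}\eta_X)$; one then has to know that the regular slice filtration gives an actual semiorthogonal decomposition, i.e.\ that $\mathrm{cofib}(\tau^\rslice_{\geq m}X\to X)$ lands in $\tau^\rslice_{\leq m-1}\Mod_A$, and that the latter is closed under shifted finite limits. This is true for the (regular) slice filtration (Hill--Hopkins--Ravenel, Ullman), but it is precisely the sort of structural input you should make explicit, since Definition~\ref{defn:filtration_on_C2_cat} only asks for the right adjoint $P^n$ and does not build in the triangle $P^nX\to X\to Q^{n-1}X$ with $Q^{n-1}X$ coconnective.

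The shorter and more self-contained route avoids right separatedness entirely and goes directly through compactness, which is also what powers your second step: the shifted regular slice cells $\Sigma^k G$ form a set of compact generators of $\underline{\Mod}_A$ at each orbit, and by Lemma~\ref{lemma:reg_slice_conn_conditions} every such $\Sigma^k G$ is regular slice $n$-connective for $n$ sufficiently negative, hence lies in $\tau^\rslice_{\geq n}\underline{\Mod}_A$ for $n\ll 0$. Compactness then gives
\[
\pi_\ell\Hom\bigl(\Sigma^kG,\colim_n\tau_{\geq n}^\rslice X\bigr)\simeq \colim_n\pi_0\Map\bigl(\Sigma^{k-\ell}G,\tau^\rslice_{\geq n}X\bigr),
\]
and the system on the right is eventually constant at $\pi_0\Map(\Sigma^{k-\ell}G,X)$ by the $(\iota,\tau^\rslice_{\geq n})$-adjunction once $\Sigma^{k-\ell}G\in\tau^\rslice_{\geq n}\underline{\Mod}_A$. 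Since this holds for all generators, shifts, and orbits, $\eta_X$ is an equivalence. Note also that your computation $\tau^\rslice_{\geq m}\tau^\rslice_{\geq n}X\simeq\tau^\rslice_{\geq\max(m,n)}X$ for $n<m$ is itself not formal and requires the semiorthogonal decomposition; the compactness argument sidesteps this too.
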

\begin{proof}
		[Proof of Proposition \ref{prop:regsliceright_complete_sep}] 
		Part \ref{propitem:regslicerightsep} follows from Lemma \ref{lemma:reg_slice_coconn_conditions} and right-separatedness of the Postnikov t-structure on $ \Spectra^{C_2} $. 
		In view of the fact that limits in diagram categories are computed pointwise, Part \ref{propitem:regslicerightcomplete} follows from \ref{propitem:regslicerightsep}. 
\end{proof}

\section{Bialgebras and their modules}\label{subsection:param_bialg} 
Let $ S^\sigma $ denote the circle-with-flip-action, regarded as a $ C_2 $-monoid object in $ \Spc^{C_2} $. 
Then there is an equivalence of $ C_2 $-$ \infty $-categories 
\begin{equation}\label{eq:functor_cat_as_module_cat}
 	\underline{\Mod}_{\underline{\Z}[S^\sigma]} \simeq \underline{\Fun}\left(BS^\sigma, \underline{\Mod}_{\underline{\Z}}\right)
\end{equation} 
which recovers the equivalence $ \Mod_{\Z[S^1]} \simeq \Fun\left(BS^1, \Mod_{\Z} \right) $ on underlying $ \infty $-categories. 
Moreover, we may regard (\ref{eq:functor_cat_as_module_cat}) as an equivalence of $ \mathcal{O}^\op_{C_2} $-families of symmetric monoidal $ \infty $-categories, which recovers the pointwise symmetric monoidal structure on $ \Fun\left(BS^1, \Mod_{\Z} \right) $. 
However, the $ C_2 $-monoid structure endows $ \underline{\Mod}_{\underline{\Z}[S^\sigma]} $ with additional structure: the \emph{twisted diagonal} map $ S^\sigma \to \prod_{C_2} S^1 $, $ x \mapsto (x, \sigma x) $ induces a map of $ C_2 $-$ \E_\infty $-algebras $ \Delta^\tau \colon \underline{\Z}[S^\sigma] \to \underline{N}^{C_2} \Z[S^1] $ where $ \underline{N}^{C_2} $ is the relative norm of Definition \ref{defn:relativenorm}. 
Given a $ \Z[S^1] $-module $ M $, we may canonically regard $ \underline{N}^{C_2} M $ as a $ \underline{\Z}[S^\sigma] $-module via the map $ \Delta^\tau $. 

In \S\ref{subsection:param_cocart_tensor}, we show that the aforementioned map is part of a \emph{$ C_2 $-symmetric monoidal structure} on $ \underline{\Mod}_{\underline{\Z}[S^\sigma]} $. 
Much of our material on $ C_2 $-symmetric monoidal structures on modules over a $ C_2 $-bialgebra directly generalizes the main results of \cite[\S2]{Raksit20} to the $C_2$-parametrized context, and also serves a similar purpose: We will use these notions to define filtered involutive circle actions and involutive chain complexes, which do not admit a convenient description in terms of functor categories. 
In \S\ref{subsection:parametrized_Tate_cons}, we discuss generalizations of orbits, fixed points, and the Tate construction, closely following \cite{QSparam_Tate}. 
Certain constructions and proofs in this section follow from a fiberwise/pointwise application of the corresponding result for ordinary $ \infty $-categories; when this occurs, we only give an indication of the necessary modifications (if any).  

\subsection{Parametrized bialgebras and tensor products}\label{subsection:param_cocart_tensor} 
In this section, we introduce $ C_2 $-bialgebras and show that modules over a $ C_2 $-bialgebra inherit a $ C_2 $-symmetric monoidal structure. 
Our proofs will take us on a lengthy digression on parametrized (co)cartesian symmetric monoidal structures. 
The reader who is interested only in real trace theories may wish to skip the material between Proposition \ref{prop:models_of_bialg} and Definition \ref{defn:bi_C2_commutative_bialg}, though the reader who is more interested in parametrized $ \infty $-categories and parametrized $ \infty $-operads may find that material of independent interest. 
Part of this section is written in slightly greater generality than needed in the rest of the paper; if assuming our finite group $ G $ is $ C_2 $ did not simplify proofs substantially, then we did not make such an assumption. 
However, we did not concern ourselves with proving the most general statement possible for parametrized $ \infty $-categories. 
\begin{defn}\label{defn:eqvt_coalg}
	Let $ \cat $ be a $ G $-symmetric monoidal $ G $-$ \infty $-category. 
	We say that a $ G $-$ \E_\infty $-coalgebra object of $ \cat $ is a $ G $-$ \E_\infty $-algebra object of $ \cat^\vop $. 

	We write $ G\E_\infty \underline{\coAlg}(\cat) $ for the $ G $-$ \infty $-category $ G \E_\infty \underline{\Alg} (\cat^\vop)^\vop $ of $ G $-$ \E_\infty $-coalgebra objects in $ \cat $.  
\end{defn}
\begin{ntn}
		In the following, $ \underline{\coAlg}(\cat) $ and $ \underline{\Alg}(\cat) $ will denote (co)algebras with respect to the operad $ \underline{\E}_1 $ of Definition \ref{defn:operad_induces_G_operad}.
\end{ntn}
The $ G $-symmetric monoidal structure on $ \cat $ induces canonical $ G $-symmetric monoidal structures on the $ G $-$ \infty $-categories $ G\E_\infty \underline{\coAlg}(\cat) $ and $ G\E_\infty \underline{\Alg}(\cat) $ \cite[\S5.3]{NS22}. 
Thus, we may consider parametrized algebras endowed with the structure of both an algebra and a coalgebra. 
\begin{prop}\label{prop:models_of_bialg}
	Let $ \cat $ be a $ G $-symmetric monoidal $ G $-$ \infty $-category. 
	Then there are canonical equivalences of $ G $-symmetric monoidal $ G $-$ \infty $-categories
	\begin{equation*}
	\begin{split}
		 	G\E_\infty \coAlg(G\E_\infty \Alg(\cat)) \simeq  G\E_\infty \Alg\left(G\E_\infty \coAlg(\cat)\right)  \qquad \qquad \qquad \\ 
		 	G\E_\infty \coAlg(\Alg(\cat)) \simeq  \Alg\left(G\E_\infty \coAlg(\cat)\right) \qquad \qquad \coAlg(G\E_\infty \Alg(\cat)) \simeq  G\E_\infty \Alg\left(\coAlg(\cat)\right)
  \end{split}		 
	\end{equation*}
	commuting with the ($G$-symmetric monoidal) forgetful functors to $ \cat $. 
	On underlying symmetric monoidal $ \infty $-categories, this recovers \cite[Proposition 2.1.2]{Raksit20} (see also \cite[Corollary 3.3.4]{EllipticI}). 
\end{prop}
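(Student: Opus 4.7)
The plan is to deduce all three equivalences from a single parametrized operadic interchange theorem, namely that for $G$-$\infty$-operads $\mathcal{P}_1$ and $\mathcal{P}_2$, the parametrized Boardman--Vogt tensor product developed in \cite{NS22} satisfies
\[
\underline{\Alg}_{\mathcal{P}_1 \otimes^{BV} \mathcal{P}_2}(\cat) \;\simeq\; \underline{\Alg}_{\mathcal{P}_1}\!\left(\underline{\Alg}_{\mathcal{P}_2}(\cat)\right)
\]
as $G$-symmetric monoidal $G$-$\infty$-categories, where the right-hand side inherits its $G$-symmetric monoidal structure as recalled just before the statement. The Boardman--Vogt tensor product is symmetric monoidal on $G$-$\infty$-operads, so the roles of $\mathcal{P}_1$ and $\mathcal{P}_2$ can be freely exchanged.

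To incorporate coalgebras I would apply Definition \ref{defn:eqvt_coalg}: since $\underline{\coAlg}_{\mathcal{P}}(\cat) := \underline{\Alg}_{\mathcal{P}}(\cat^\vop)^\vop$, and since the vertical opposite commutes with parametrized algebra formation for any symmetric $G$-$\infty$-operad $\mathcal{P}$ (i.e. $\underline{\Alg}_{\mathcal{P}}(\cat)^\vop \simeq \underline{\Alg}_{\mathcal{P}}(\cat^\vop)$), each left-hand side in the proposition rewrites as
\[
\underline{\coAlg}_{\mathcal{P}_1}\!\left(\underline{\Alg}_{\mathcal{P}_2}(\cat)\right) \;\simeq\; \underline{\Alg}_{\mathcal{P}_1}\!\left(\underline{\Alg}_{\mathcal{P}_2}(\cat^\vop)\right)^\vop \;\simeq\; \underline{\Alg}_{\mathcal{P}_1 \otimes^{BV} \mathcal{P}_2}(\cat^\vop)^\vop .
\]
Running the same chain of equivalences in reverse with $\mathcal{P}_1$ and $\mathcal{P}_2$ interchanged (using symmetry of $\otimes^{BV}$) yields $\underline{\Alg}_{\mathcal{P}_2}\!\left(\underline{\coAlg}_{\mathcal{P}_1}(\cat)\right)$. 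Instantiating $(\mathcal{P}_1,\mathcal{P}_2)$ as $(G\E_\infty, G\E_\infty)$, $(G\E_\infty, \underline{\E}_1)$, and $(\underline{\E}_1, G\E_\infty)$ respectively yields the three stated equivalences, and commutation with the forgetful $G$-functors to $\cat$ is automatic because the interchange equivalences are compatible with the standard forgetful functors at each layer.

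The hard part will be verifying the two inputs in the parametrized setting. The first is the parametrized Boardman--Vogt additivity identity as an equivalence of $G$-symmetric monoidal $G$-$\infty$-categories; this should follow by unwinding the iterated algebra construction of \cite[\S5]{NS22} together with the universal property of the parametrized Boardman--Vogt tensor product, and is essentially a restatement of the fact that giving a $\mathcal{P}_1 \otimes^{BV} \mathcal{P}_2$-algebra structure on $X \in \cat$ is the same as giving commuting $\mathcal{P}_1$- and $\mathcal{P}_2$-algebra structures. The second is the compatibility of vertical opposite with parametrized algebra formation: this reduces, via the explicit description of the $G$-symmetric monoidal structure on $\underline{\Alg}_{\mathcal{P}}(\cat)$, to the standard fact that reversing morphisms fiberwise in a $G$-symmetric monoidal $G$-$\infty$-category induces the corresponding fiberwise reversal on algebras, where symmetry of $\mathcal{P}$ ensures the operadic structure maps still make sense.

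Finally, recovery of \cite[Proposition 2.1.2]{Raksit20} on underlying symmetric monoidal $\infty$-categories is then automatic: taking the fiber over $G/e$ turns $\underline{\E}_1$ into $\E_1$, turns $G\E_\infty$ into $\E_\infty$, and turns $\underline{\Alg}$ and $\underline{\coAlg}$ into the corresponding ordinary constructions, and the interchange equivalences constructed above restrict fiberwise to those of \emph{loc. cit.}
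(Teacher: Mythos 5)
Your approach diverges substantially from the paper's: the paper proves the first equivalence via Lemma \ref{lemma:C2_bialg_universality} (a universal property for the $G$-$\infty$-category of $G$-bialgebras as $G$-semiadditive) and the remaining two by adapting Raksit's argument, which embeds $\cat$ into a distributive $G$-presheaf category, uses the fact that $G\E_\infty\underline{\Alg}(\cat)$ has $G$-cocartesian symmetric monoidal structure, and invokes universal properties of $G$-cocartesian $G$-$\infty$-operads (Lemma \ref{lemma:alg_over_cocart_operad}, Proposition \ref{prop:alg_in_cocart_operad}). You instead propose to reduce everything to a parametrized Boardman--Vogt additivity theorem together with a vertical-opposite compatibility. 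Unfortunately the second ingredient, as you have stated it, is false, and the argument collapses at that step.

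Concretely, the claim $\underline{\Alg}_{\mathcal{P}}(\cat)^\vop \simeq \underline{\Alg}_{\mathcal{P}}(\cat^\vop)$ cannot hold. Already fiberwise: $\Alg_{\mathcal{P}}(\cat^{\op})$ has as objects the $\mathcal{P}$-coalgebras in $\cat$ (a multiplication $A\otimes A\to A$ in $\cat^{\op}$ is a comultiplication in $\cat$), so $\Alg_{\mathcal{P}}(\cat^{\op}) \simeq \coAlg_{\mathcal{P}}(\cat)^{\op}$ --- this is precisely what makes Definition \ref{defn:eqvt_coalg} work. By contrast $\Alg_{\mathcal{P}}(\cat)^{\op}$ has algebras as objects with arrows reversed. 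If your claimed identity held, combining it with Definition \ref{defn:eqvt_coalg} would give $\underline{\coAlg}_{\mathcal{P}}(\cat) = \underline{\Alg}_{\mathcal{P}}(\cat^\vop)^\vop \simeq \underline{\Alg}_{\mathcal{P}}(\cat)^{\vop\vop} = \underline{\Alg}_{\mathcal{P}}(\cat)$, i.e.\ algebras would coincide with coalgebras, which already fails for associative algebras and coalgebras in $k$-vector spaces. Your chain of equivalences uses this false identity twice (to push the $\vop$ past $\underline{\Alg}_{\mathcal{P}_2}$, and then again in reverse), and if you replace it by the correct identity $\underline{\Alg}_{\mathcal{P}}(\cat)^\vop \simeq ?$ you find there is no clean answer: you are left with an expression mixing $\underline{\Alg}$ and $\underline{\coAlg}$ to which Boardman--Vogt interchange does not apply. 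Separately, even setting this aside, a parametrized Boardman--Vogt tensor product with the additivity equivalence $\underline{\Alg}_{\mathcal{P}_1\otimes^{BV}\mathcal{P}_2}(\cat) \simeq \underline{\Alg}_{\mathcal{P}_1}(\underline{\Alg}_{\mathcal{P}_2}(\cat))$ as a $G$-symmetric monoidal equivalence is not established in \cite{NS22} or elsewhere in the cited literature; this is presumably why both Raksit's argument and the paper's parametrized adaptation of it proceed via (co)cartesian universal properties rather than operadic interchange.
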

\begin{proof}
		The first equivalence follows from Lemma \ref{lemma:C2_bialg_universality}. 
		The latter two equivalences follow from a parametrized adaptation of the argument in \cite[Proposition 2.1.2]{Raksit20}; let us verify that the necessary ingredients for the argument carry over to the parametrized setting. 
		Firstly, note that Nardin--Shah have shown that for any $ G $-distributive $ G $-symmetric monoidal $ G $-$ \infty $-category $ \mathcal{D} $, there is a free-forgetful $ G $-adjunction $ G \E_\infty\Alg(\mathcal{D}) \to \mathcal{D} $ which is fiberwise monadic \cite[Theorem 4.3.4 \& Corollary 5.1.5]{NS22}. 
		Furthermore, any $ G $-symmetric monoidal $ G $-$ \infty $-category $ \mathcal{D} $ can be embedded into a $ G $-distributive $ G $-symmetric monoidal $ G $-$ \infty $-category of presheaves, endowed with the parametrized Day convolution $ G $-symmetric monoidal structure \cite[Corollary 6.0.12]{NS22}. 
		Additionally, the $ G $-$ \infty $-category $ G \E_\infty\Alg(\mathcal{D}) $ has finite $ G $-coproducts which are computed by the parametrized tensor product \cite[\S5.3]{NS22}. 
		The result follows from universal properties of $ G $-cocartesian $ G $-$ \infty $-operads of Definition \ref{defn:param_cartesian_operad_conditions}, which are proved in Lemma \ref{lemma:alg_over_cocart_operad} and Proposition \ref{prop:alg_in_cocart_operad}. 		 
\end{proof} 
\begin{recollection}
		[{\cite[Definition 5.2.1]{NS22}}]  
		Let $ \mathcal{T} $ be an orbital $ \infty $-category. 
		A $ \mathcal{T} $-$ \infty $-operad $ \mathcal{O}^\otimes $ is \emph{unital} if for all orbits $ V \in \mathcal{T} $ and objects $ x \in \mathcal{O}_V^\otimes $, the space of multimorphisms $ \mathrm{Mul}_{\mathcal{O}}(\varnothing, x) $ is contractible. 
\end{recollection} 
\begin{defn}\label{defn:param_cartesian_operad_conditions}
		Let $ \cat $ be a $ \mathcal{T} $-$ \infty $-category. 
		We will say that a $ \mathcal{T} $-symmetric monoidal structure $ \cat^\otimes $ on $ \cat $ is \emph{$ \mathcal{T} $-cartesian} if it satisfies
		\begin{enumerate}[label=(\arabic*)]
			\item The unit $\mathcal{T}$-object $ 1_\cat $ is $ \mathcal{T} $-final 
			\item For each pair of objects $ X, Y \in \cat_t $, the canonical maps $ X \otimes 1_t \leftarrow X \otimes Y \to 1_t \otimes Y $ exhibit $ X \otimes Y $ as a product of $ X $ and $ Y $ in the $ \infty $-category $ \cat_t $
			\item For each morphism $ \alpha \colon s \to t $ in $ \mathcal{T} $, the functor $ \cat_s \to \cat_t $ classified by $ s = s \xrightarrow{\alpha} t $ in $ \Span(\Fin_{\mathcal{T}}) $ is right adjoint to the restriction functor $ \cat_t \to \cat_s $ classified by $ \alpha $ regarded as a morphism in $ \mathcal{T}^\op $. 
		\end{enumerate}
		There is an analogous dual characterization of those $ \mathcal{T} $-$\infty $-operads which are $ \mathcal{T} $-cocartesian. 
		We will denote $ \mathcal{T} $-cartesian $ \mathcal{T} $-symmetric monoidal structures by $ \cat^\Pi $ and $ \mathcal{T} $-cocartesian $ \mathcal{T} $-symmetric monoidal structures by $ \cat^\sqcup $. 

		A $ \mathcal{T} $-$ \infty $-operad $ \mathcal{O}^\otimes $ is \emph{cartesian} (resp. \emph{cocartesian}) if it is equivalent to $ \cat^\Pi $ (resp. $ \cat^\sqcup $) for some $ \mathcal{T} $-$ \infty $-category $ \cat $.  
\end{defn}
\begin{rmk}
		Unravelling definitions, it is not too difficult to see that Definition \ref{defn:param_cartesian_operad_conditions} agrees with the parametrized (co)cartesian operad of \cite[Example 2.4.1]{NS22}. 
\end{rmk}
\begin{lemma}\label{lemma:C2_bialg_universality}
		Let $ \cat $ be a $ G $-symmetric monoidal $ G $-$ \infty $-category. 
		\begin{enumerate}[label=(\alph*)]
				\item \label{lemma_item:param_bialg_is_param_semiadd} The $ \infty $-category $ G\E_\infty \underline{\coAlg}\left(G\E_\infty \underline{\Alg}(\cat)\right) $ of $ G $-bialgebra objects is $ G $-semiadditive in the sense of \cite[Definition 2.19]{Nardinthesis}. 
				\item \label{lemma_item:tensor_on_bialg_is_bicartesian} The $ G $-symmetric monoidal structure on $ \cat $ induces a $G $-symmetric monoidal structure on $ G\E_\infty \underline{\coAlg}\left(G\E_\infty \underline{\Alg}(\cat)\right) $ which is both $ G $-cartesian and $ G $-cocartesian. 
				\item Let $ \mathcal{E} $ any $ G $-semiadditive $ G $-$ \infty $-category, and regard $ \mathcal{E} $ as a $ G $-symmetric monoidal $ G $-$ \infty $-category with its $ G $-cartesian (equivalently, $ G $-cocartesian) $G$-symmetric monoidal structure. 
				Then the forgetful functor $ G\E_\infty \underline{\coAlg}\left(G\E_\infty \underline{\Alg}(\cat)\right) \to \cat $ induces an equivalence of $G$-$ \infty $-categories $ \underline{\Fun}^{G\otimes}\left(\mathcal{E},G\E_\infty \underline{\coAlg}\left(G\E_\infty \underline{\Alg}(\cat)\right) \right) \xrightarrow{\sim} \underline{\Fun}^{G\otimes}(\mathcal{E},\cat ) $. 
		\end{enumerate}	
\end{lemma}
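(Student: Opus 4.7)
The plan is to establish (b) first and then derive (a) and (c) from it. From (b) --- that the $G$-symmetric monoidal structure on $G$-bialgebras is both $G$-cartesian and $G$-cocartesian --- (a) is immediate, since such a $G$-symmetric monoidal structure has $G$-product equal to $G$-coproduct given by the tensor, which is precisely $G$-semiadditivity. Part (c) then follows from the universal property of the $G$-cartesian ($=G$-cocartesian) $G$-symmetric monoidal structure on a $G$-semiadditive $G$-$\infty$-category.

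The main input for (b) is the parametrized analogue of Dunn--Lurie-style identifications: \cite[\S5.3]{NS22} implies that the inherited $G$-symmetric monoidal structure on $G\E_\infty\underline{\Alg}(\mathcal{D})$ is $G$-cocartesian for any $G$-symmetric monoidal $G$-$\infty$-category $\mathcal{D}$ --- that is, the parametrized tensor computes the $G$-coproduct of $G$-$\E_\infty$-algebras. Dually, applying this to $\mathcal{D}^{\vop}$ together with Definition \ref{defn:eqvt_coalg}, the inherited $G$-symmetric monoidal structure on $G\E_\infty\underline{\coAlg}(\mathcal{D})$ is $G$-cartesian, with $G$-products computed by the tensor.

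Taking $\mathcal{D} = G\E_\infty\underline{\Alg}(\cat)$, the dual statement yields the $G$-cartesian direction of (b) directly. For the $G$-cocartesian direction, I would verify that given bialgebras $B_1, B_2$, the object $B_1 \otimes B_2$ equipped with its canonical bialgebra structure (tensor of algebras on one side, of coalgebras on the other) together with the bialgebra maps $B_i \to B_1 \otimes B_2$ induced by tensoring with the units of the opposite factor exhibits $B_1 \otimes B_2$ as the $G$-coproduct in bialgebras. This uses the universal property of the $G$-coproduct on the algebra side combined with the coherence of the comultiplication on the tensor. Having tensor equal to both the $G$-product and the $G$-coproduct then gives (b), whence (a).

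For (c), any $G$-symmetric monoidal functor $F \colon \mathcal{E} \to \cat$ from a $G$-semiadditive $\mathcal{E}$ equipped with its $G$-bicartesian $G$-symmetric monoidal structure lifts canonically through the forgetful functor: every object $X \in \mathcal{E}$ carries a canonical $G$-$\E_\infty$-algebra structure via the codiagonal for the $G$-coproduct and a canonical $G$-$\E_\infty$-coalgebra structure via the diagonal for the $G$-product, and the bicartesian identification makes these compatible, so $F(X)$ inherits a canonical bialgebra structure, naturally in $X$. The resulting lift is $G$-symmetric monoidal, and essential uniqueness follows from the fact that in a $G$-bicartesian $G$-$\infty$-category these canonical algebra and coalgebra structures are the unique ones. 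The hard part will be the $G$-cocartesian direction of (b): verifying that the tensor product of bialgebras is a $G$-coproduct of bialgebras requires combining the universal properties on the algebra and coalgebra sides in a way compatible with parametrized Day convolution, which entails careful bookkeeping inside the $G$-$\infty$-operad framework of \cite{NS22}.
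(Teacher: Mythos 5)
Your overall strategy matches the paper's, and the $G$-cartesian direction of (b) is handled the same way. But there is a gap in your sketch for the $G$-cocartesian direction of (b). Verifying that $B_1 \otimes B_2$ is a coproduct of bialgebras handles only condition (2) of Definition \ref{defn:param_cartesian_operad_conditions}, which is a fiberwise statement. The genuinely parametrized content is condition (3): for each map of orbits $G/K \to G/H$, the norm $N^H_K$ on bialgebras must also be a \emph{left} $G$-adjoint of the restriction functor, not just the right adjoint that $G$-cartesianness hands you. The paper settles this by observing that the forgetful functor to $G\E_\infty\underline{\Alg}(\cat)$ creates $G$-colimits (via \cite[Theorem 5.1.3]{NS22} applied to the vertical opposite) and that the $G$-symmetric monoidal structure there is already $G$-cocartesian, so the norm is a left adjoint at that level; transporting along the forgetful functor settles (3). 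Your sketch, which stays with binary tensor coproducts, does not touch the norm at all.

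For (c), your direct construction of the lift via (co)diagonals is in the right spirit, but ``essential uniqueness of the canonical (co)algebra structure on an object of a bicartesian $G$-$\infty$-category'' is not a tautology at this level of coherence. The paper makes this precise by factoring the forgetful functor as $\varphi_1 \circ \varphi_2$ and showing each is an equivalence, using Lemma \ref{lemma:alg_over_cocart_operad} (algebras over $\cat^\sqcup$ are the same data as $G$-functors into $G\E_\infty\underline{\Alg}$) and Proposition \ref{prop:alg_in_cocart_operad} (algebras over a unital $G$-$\infty$-operad valued in a $G$-cocartesian operad are plain $G$-functors). Those two results are doing exactly the work your uniqueness claim silently assumes; without them, your argument for (c) is a heuristic rather than a proof.
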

\begin{proof}
		We prove \ref{lemma_item:tensor_on_bialg_is_bicartesian} first; \ref{lemma_item:param_bialg_is_param_semiadd} follows from \ref{lemma_item:tensor_on_bialg_is_bicartesian} by unravelling definitions. 
		Recall that the $ G $-symmetric monoidal structure on $ G\E_\infty\underline{\Alg}(\cat) $ is $ G $-cocartesian \cite[Corollary 5.3.8]{NS22}. 
		Applying Theorem 5.1.3 \emph{ibid.} to $ G\E_\infty\underline{\Alg}(\cat)^\vop $, we deduce that $ G\E_\infty \underline{\coAlg}\left(G\E_\infty \underline{\Alg}(\cat)\right) $ admits finite $ G $-coproducts, and that the forgetful functor $ G\E_\infty \underline{\coAlg}\left(G\E_\infty \underline{\Alg}(\cat)\right)  \to G \E_\infty \underline{\Alg}(\cat) $ preserves finite $ G $-coproducts. 
		Moreover, \cite[Corollary 5.3.8]{NS22} also implies that the symmetric monoidal structure on $ G\E_\infty \underline{\coAlg}\left(G\E_\infty \underline{\Alg}(\cat)\right) $ is cartesian.  
		To show that the symmetric monoidal structure on $ G\E_\infty \underline{\coAlg}\left(G\E_\infty \underline{\Alg}(\cat)\right) $ is also $ G $-cocartesian, we must check the three conditions of Definition \ref{defn:param_cartesian_operad_conditions}; the verification of the first two conditions are straightforward parametrized generalizations of those in the proof of \cite[Proposition 3.3.3]{EllipticI}. 
		It remains to verify
		\begin{itemize}
			\item [(3)] Let $ G/K \to G/H $ be any map of orbits, and write $ R $ for the restriction functor $ G\E_\infty \underline{\coAlg}\left(G\E_\infty \underline{\Alg}(\cat)\right)_{G/H} \to G\E_\infty \underline{\coAlg}\left(G\E_\infty \underline{\Alg}(\cat)\right)_{G/K} $. 
			The norm functor $ N_K^{H} \colon \cat^K \to \cat^{H} $ lifts to a right adjoint of $ R $ (which we also denote by $ N_K^{H}$) by $ G $-cartesianness of the $ G $-symmetric monoidal structure on $ G \E_\infty \underline{\coAlg}\left(G \E_\infty \underline{\Alg}(\cat)\right) $. 
			We must check that $ N_K^{H} $ is also the left adjoint to $ R $. 
			Since the forgetful functor $ \theta \colon G\E_\infty \underline{\coAlg}\left(G\E_\infty \underline{\Alg}(\cat)\right) \to G\E_\infty\underline{\Alg}(\cat) $ creates $ G $-colimits, it suffices to show that $ N_K^{H} $ computes the left adjoint to $ G\E_\infty\underline{\Alg}(\cat)_{G/H} \to G\E_\infty\underline{\Alg}(\cat)_{G/K} $. 
			This follows immediately from the fact that the $ G $-symmetric monoidal structure on $ G \E_\infty \underline{\Alg}(\cat) $ is cocartesian \cite[Corollary 5.3.8]{NS22}. 
		\end{itemize}
		We now prove part (c). 
		Write the forgetful functor as a composite 
		\begin{equation*}
			\underline{\Fun}^{G\otimes}\left(\mathcal{E},G\E_\infty \underline{\coAlg}(G\E_\infty \underline{\Alg}(\cat)) \right) \xrightarrow{\varphi_1} \underline{\Fun}^{G\otimes}\left(\mathcal{E},G\E_\infty \underline{\Alg}(\cat)\right) \xrightarrow{\varphi_2} \underline{\Fun}^{G\otimes}(\mathcal{E},\cat )  \,.
		\end{equation*} 
		Since the $ G $-symmetric monoidal structure on $ \mathcal{E} $ is $ G $-cocartesian, by Lemma \ref{lemma:alg_over_cocart_operad}, we can identify $ \varphi_2 $ with the forgetful functor $ \underline{\Fun}\left(\mathcal{E}, G\E_\infty\underline{\Alg}\left(G\E_\infty\underline{\Alg}(\cat)\right)\right) \to \underline{\Fun}\left(\mathcal{E}, G\E_\infty\underline{\Alg}(\cat)\right) $. 
		This functor is an equivalence by Proposition \ref{prop:alg_in_cocart_operad}. 
		A similar line of reasoning shows that $ \varphi_1 $ is an equivalence. 
\end{proof}
\begin{ntn}
		Let $ \cat $ be a $ G $-$ \infty $-category and assume that $ \alpha\colon U \to V $ is a map in $ \mathcal{O}_G $. 
		In the following lemma, we will write $ \mathrm{Res}^V_U $ for the restriction functor $ \alpha^* \colon \cat_V \to \cat_U $ associated to $ \alpha $. 
		If $ \alpha^* $ has a right adjoint $ \alpha_* $, we may denote it by $ \mathrm{coInd}_U^V \colon \cat_U \to \cat^V $. 
\end{ntn}
\begin{cons}\label{cons:approx_to_cocart_operad}
		Let $ G $ be a finite group and let $ \cat $ be a $ G $-$ \infty $-category which admits all finite $ G $-coproducts. 
		Let $ \cat^\sqcup \to \underline{\Fin}_{G,*} $ be the associated $ G$-cocartesian operad of \cite[Example 2.4.1]{NS22}. 
		There is a canonical $ G $-functor
		\begin{align*}
				\Theta \colon \cat \times \underline{\Fin}_{G,*} &\to \cat^\sqcup \\
				\left(c \in \cat_T, W \to T\right) &\mapsto\left(\left(\mathrm{Res}_U^T(c)\right)_{U}\in \prod_{U \in \mathrm{Orbit}(W)}\cat_U \right) \,.
		\end{align*}
\end{cons} 
\begin{lemma}\label{lemma:alg_over_cocart_operad} 
		Let $ \mathcal{D} $ be a $ G $-symmetric monoidal $ G $-$ \infty $-category, and let $ \cat $ a $ G $-$ \infty $-category which admits finite $ G $-coproducts. 
		Then restriction along the functor $ \Theta $ of Construction \ref{cons:approx_to_cocart_operad} induces an equivalence $ \underline{\Alg}_{\cat^\sqcup}(\mathcal{D}) \to \underline{\Fun}\left(\cat, G\E_\infty\underline{\Alg}(\mathcal{D})\right) $ of $ G $-$ \infty $-categories. 
\end{lemma}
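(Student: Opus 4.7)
The plan is to reduce the claim to the non-parametrized case by checking the equivalence fiberwise over $\mathcal{O}_G^{\op}$. Recall that a morphism of $G$-$\infty$-categories is an equivalence if and only if each fiber over $G/H$ is an equivalence of $\infty$-categories, and the analogous statement holds for the $H$-parametrized enhancements.

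First, I would identify the target intrinsically. The product $\cat \times \underline{\Fin}_{G,*}$, equipped with the $G$-$\infty$-operad structure induced by projection to $\underline{\Fin}_{G,*}$, has the property that a $G$-$\infty$-operad map $\cat \times \underline{\Fin}_{G,*} \to \mathcal{D}^{\otimes}$ over $\underline{\Fin}_{G,*}$ is equivalently a $G$-functor $\cat \to G\E_\infty\underline{\Alg}(\mathcal{D})$. One then checks that $\Theta$ is itself a morphism of $G$-$\infty$-operads (this is direct from its definition, since on each fiber $\Theta$ sends a point together with a pointed finite $G$-set $W \to T$ to the tuple of its restrictions to each orbit of $W$, which are precisely the inert projections of the cocartesian operad). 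Hence restriction along $\Theta$ produces the $G$-functor in the statement.

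Next, I would fix an orbit $G/H$ and pass to the fibers over $G/H$ on both sides. Using the description of parametrized functor categories in Proposition \ref{prop:param_functors} together with the standard base-change behavior of $G$-$\infty$-operads, both fibers are naturally identified with $H$-parametrized analogues of the same claim. It therefore suffices to prove the equivalence on $G$-fixed objects, i.e., that
\[
\Theta^{\ast} \colon \Alg^{G}_{\cat^\sqcup}(\mathcal{D}) \longrightarrow \Fun^{G}(\cat, G\E_\infty\underline{\Alg}(\mathcal{D}))
\]
is an equivalence of $\infty$-categories. This is the $G$-parametrized analog of \cite[Proposition 2.4.3.18]{LurHA}. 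I would prove it by adapting Lurie's argument to the present setting: using the universal property of the $G$-cocartesian $G$-$\infty$-operad $\cat^\sqcup$ codified by Definition \ref{defn:param_cartesian_operad_conditions}, a $G$-$\infty$-operad map $\cat^\sqcup \to \mathcal{D}^{\otimes}$ is determined by its restriction along $\Theta$ together with the requirement that $G$-coproducts in $\cat$ are sent to the parametrized tensor products in $\mathcal{D}$; the latter condition is precisely what equips the pointwise image with a $G$-$\E_\infty$-algebra structure, which inverts the restriction functor.

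The hardest part will be tracking the inert morphisms of $\cat^{\sqcup}$. An inert morphism over an inert edge in $\underline{\Fin}_{G,*}$ encodes simultaneously the decomposition of a tensor product over the orbits of a finite $G$-set and the restriction functoriality of $\cat$ along maps of $G$-orbits, so verifying that $\Theta$ preserves inert morphisms and that the induced operadic map extends uniquely requires analyzing active–inert factorizations in $\underline{\Fin}_{G,*}$ together with the explicit construction of $\cat^{\sqcup}$ in \cite[Example 2.4.1]{NS22}. This is the point where the parametrized argument genuinely departs from the unparametrized one, since one must distinguish between the two kinds of structure maps in $\Span(\Fin_G)$ (those coming from maps of orbits and those coming from fold maps) when verifying the universal property.
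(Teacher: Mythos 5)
Your approach is genuinely different from the paper's, and the key move you are missing is the paper's first step: reducing to the case where $\mathcal{D}$ is $G$\emph{-cartesian} (by a maneuver in the style of \cite[Corollary C.2]{norms}). Once the target has the $G$-cartesian $G$-symmetric monoidal structure, the identification $G\E_\infty\underline{\Alg}(\mathcal{D}) \simeq \underline{\mathrm{CMon}}_{\mathcal{T}}(\mathcal{D})$ of \cite[Theorem 2.32]{Nardinthesis} converts the entire problem into a statement about $G$-product-preserving functors out of $\cat^\sqcup$, and the paper proves that statement by an explicit parametrized right Kan extension computation, emulating \cite[Lemma C.4]{norms}. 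This is quite different from the ``adapt \cite[Theorem 2.4.3.18]{LurHA}'' plan you propose: the unparametrized proof of that theorem runs through marked simplicial set constructions and trivial Kan fibrations, which are precisely the things that do not transport cheaply to the parametrized setting. Indeed, the paper does directly parametrize a Lurie argument --- but for the \emph{other} universal property of $\cat^\sqcup$, namely Proposition~\ref{prop:alg_in_cocart_operad}, which is the analogue of \cite[Proposition 2.4.3.16]{LurHA}; for the present lemma the authors deliberately avoid that route.

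Two further cautions about your sketch. First, the reduction ``it suffices to prove the equivalence on $G$-fixed objects'' does package the fiberwise check correctly in spirit (by the $H$-parametrized self-similarity you note), but the claim that the fiber over $G/H$ is literally the $H$-version of the statement requires a base-change argument for $G$-$\infty$-operads that you cite as ``standard'' without verifying that $\Theta$ and the $G$-cocartesian operad $\cat^\sqcup$ behave as expected under this restriction. Second, the opening assertion that $\cat \times \underline{\Fin}_{G,*}$, with the projection to $\underline{\Fin}_{G,*}$, ``has the property that a $G$-$\infty$-operad map ... is equivalently a $G$-functor $\cat \to G\E_\infty\underline{\Alg}(\mathcal{D})$'' glosses over the fact that the projection does not satisfy the operadic Segal condition in the naive sense, so one needs to be careful about what structure is actually being carried. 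Neither of these is obviously fatal, but combined with the fact that the hard step (``adapting Lurie's argument'') is openly deferred, the proposal is a plausible plan rather than a proof, and the paper's reduction-to-cartesian route looks to be the more tractable one here.
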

\begin{rmk}
		While this work was being written, we learned that Natalie Stewart had already proved similar results in \cite[Appendix A]{Stewart_Ninfty}. 
		We leave our results on the universal properties of $ G $-cocartesian operads here, but make no claim to originality. 
\end{rmk}
\begin{proof} [Proof of Lemma \ref{lemma:alg_over_cocart_operad}]
		Throughout this proof, let us write $ \mathcal{T} = \mathcal{O}_{G} $. 
		By a similar maneuver to that in \cite[Corollary C.2]{norms}, we may reduce to the case where the $ G $-symmetric monoidal structure on $ \mathcal{D} $ is $ G $-cartesian. 
		Then there is an equivalence of $ C_2 $-$ \infty $-categories $ G\E_\infty\Alg(\mathcal{D})\simeq \underline{\mathrm{CMon}}_{\mathcal{T}}(\mathcal{D}) $ by \cite[Theorem 2.32]{Nardinthesis}. 
		Emulating \cite[Lemma C.4]{norms}, it suffices to show that restriction along $ \Theta $
		\begin{equation*}
				\underline{\Fun}\left(\cat^\sqcup, \mathcal{D}\right) \to \underline{\Fun}\left(\cat \times \underline{\Fin}_{G,*}, \mathcal{D}\right)
		\end{equation*}
		induces an equivalence between the subcategories
		\begin{itemize}
			\item functors $ f \colon \cat^\sqcup_t \to \mathcal{D}_t $ that preserve $ \mathcal{T}^{/t} $-products 
			\item functors $ \cat_t \to \underline{\mathrm{CMon}}_{\mathcal{T}}(\mathcal{D})_t $ 
		\end{itemize}
		for all $ t \in \mathcal{T} $. 
		Without loss of generality, replace $ \mathcal{T}^{/t} $ by $ \mathcal{T} $.  
		Let $ M \colon \cat \times \underline{\Fin}_{G,*} \to \mathcal{D} $ so that the adjoint $ M^\dag \colon \cat \to \underline{\Fun}(\underline{\Fin}_{G,*}, \mathcal{D}) $ has image in $ \underline{\Fun}^{\times}(\underline{\Fin}_{G,*}, \mathcal{D}) \simeq \underline{\mathrm{CMon}}_{\mathcal{T}}(\mathcal{D}) $. 
		Note that a general object of $ \cat^\sqcup_T $ is a tuple $$ x = \left((x_U)_{U \in \mathrm{Orbit}(W)} \in \prod_{U \in \mathrm{Orbit}(W)}\cat_U, W \to T\right) \in \cat^\sqcup_T\,; $$ we will write $ x_U = \left(x_U \in \cat_U, U \subseteq W \to T\right) $. 
		To prove the claim, it suffices to show that the $ \mathcal{T} $-limit of $$ M^x\colon \left(\cat \times \underline{\Fin}_{G,*} \right) \fiberproduct_{\cat^\sqcup}\left(\cat^\sqcup\right)^{\underline{x}/} \to \mathcal{D} $$ is $ \displaystyle\prod_{U \in \mathrm{Orbit}(W)} \mathrm{coInd}_U^T M(c_U) \in \mathcal{D}_T $.  
		Recall that we have a commutative diagram
		\begin{equation*}
		\begin{tikzcd}
				\cat^{\times,\op} \ar[rr] \ar[d] & & \cat^{\sqcup} \ar[d] \\
				\Fin_{G}^\op \ar[r,hookrightarrow] & \underline{\mathrm{Triv}}_{G}^\otimes \ar[r] & \underline{\Fin}_{G,*}
		\end{tikzcd}
		\end{equation*}
		where $ \cat^\times $ is defined between Propositions 5.11 and 5.12 in \cite{Shah18} (in particular, both vertical arrows are cocartesian fibrations) and $ \underline{\mathrm{Triv}}_{G}^\otimes $ is the trivial operad of \cite[Example 2.4.3]{NS22}. 
		The inclusion
		\begin{equation*}
			\mathcal{Q} := \left(\cat \times \underline{\mathrm{Triv}}_{G}^\otimes \right) \fiberproduct_{\cat^\times}\left(\cat^\times\right)^{\underline{x}/} \to \left(\cat \times \underline{\Fin}_{G,*} \right) \fiberproduct_{\cat^\sqcup}\left(\cat^\sqcup\right)^{\underline{x}/}	
		\end{equation*}
		admits a fiberwise right adjoint by the discussion in the introduction to appendix C of \cite{norms}, hence it is $ G $-coinitial by \cite[Theorem 6.7]{Shah18}. 
		Thus it suffices to compute the $ G $-limit of the restriction of $ M^x $ to $ \mathcal{Q} $. 
		Now an object of $ \cat \times \underline{\mathrm{Triv}}_{G}^\otimes $ is a pair $ \left(c \in \cat_S, W \to S\right) $. 
		Its image in $ \cat^{\times,\op}_W $ is $ \left(\Res^S_Z (c)\right)_{Z \in \mathrm{Orbit}(W)} $. 
		The map $ S \to T $ in $ \mathcal{T} $ classifies the functor 
		\begin{align*}
			\cat^\times_W \simeq \prod_{U \in \mathrm{Orbit}(W)} \cat_U &\to \cat^\times_{S\times_T W} \simeq \prod_{Y \in \mathrm{Orbit}(S\times_T W)} \cat_Y \\
		\mathrm{sending} \qquad \qquad \qquad \qquad	(x_U) &\mapsto \left(\Res^{U_Y}_Y(x_{U_Y})\right) \,.
		\end{align*}
		Therefore, we may regard a $ \left(\mathcal{T}^\op \right)^{S/} $-object of $ \mathcal{Q} $ as a tuple $$ q= \left(c \in \cat_S, V \to S, \sigma, \left\{\Res^{U_Y}_Y(x_{U_Y}) \to \Res^S_Y (c)\right\}_{Y \in \mathrm{Orbit}(S \times_T W)}\right) $$ where $ \sigma $ is a morphism in $ \Fin_{\mathcal{T}} $ making the following diagram
		\begin{equation*}
		\begin{tikzcd}
				V \ar[d] & S \times_T W \ar[d] \ar[l,"\sigma"'] \\
				S \ar[r,equals] & S
		\end{tikzcd} 
		\end{equation*}
		commute. 
		Note that the data of a map $ \Res^{U_Y}_Y(x_{U_Y}) \to \Res^S_Y (c) $ is equivalent to the data of a map $ x_{U_Y} \to \mathrm{coInd}^{U_Y}_{Y}\Res^{S}_Y(c) $. 
		Since $ M $ preserves $ G $-products in its second variable, $ M(q) \simeq \prod_{Z \in \mathrm{Orbit}(V)} \Res^S_Z M(c) $. 
		We claim that the restriction of $ M^x $ to $ \mathcal{Q} $ is $ C_2 $-right Kan extended from the subcategory 
		\begin{equation*}
				\mathcal{R}:= \left(\cat \times \underline{\{*\}} \right) \fiberproduct_{\cat^\times}\left(\cat^\times\right)^{\underline{x}/} \simeq \bigsqcup_{U \in \mathrm{Orbit}(W)} \cat^{\underline{x_U}/}\,. 
		\end{equation*}
		Indeed for any $ q \in \mathcal{Q} $, the parametrized comma category is $ \mathcal{R} \times_{\mathcal{Q}} \mathcal{Q}^{\underline{q}/} \simeq \bigsqcup_{Z \in \mathrm{Orbit}(V)} \cat^{\underline{\Res^S_Z(c)}/} $, and the claim follows from the pointwise formula for parametrized right Kan extensions \cite[\S10]{Shah18}.  
		Now 
		\begin{equation*}
				\underline{\lim}_{\mathcal{R}} M \simeq \prod_{U \in \mathrm{Orbit}(W)} \underline{\lim}_{\cat^{\underline{x_U}/}} M \simeq \prod_{U \in \mathrm{Orbit}(W)} \mathrm{coInd}_U^T M(x_U)
		\end{equation*}
		where the latter equivalence follows from the canonical identification $ \cat^{\underline{x_U}/} \simeq \mathcal{T}^{/U} $ and \cite[Proposition 5.11]{Shah18}. 
\end{proof}
\begin{prop}\label{prop:alg_in_cocart_operad}
		Let $ \mathcal{T} $ be an orbital $ \infty $-category. 
		Let $ \mathcal{O}^\otimes $ be a unital $ \mathcal{T} $-$ \infty $-operad, and let $ \cat^\otimes $ be a cocartesian $ \mathcal{T} $-$ \infty $-operad (Definition \ref{defn:param_cartesian_operad_conditions}). 
		Then the restriction
		\begin{equation*}
				\underline{\Alg}_{\mathcal{O}}(\cat) \to \underline{\Fun}\left(\mathcal{O}, \cat \right) 	
	  \end{equation*} 
	  is an equivalence of $ \mathcal{T} $-$ \infty $-categories. 
\end{prop}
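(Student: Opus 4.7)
The statement is the $\mathcal{T}$-parametrized analogue of Lurie's \cite[Proposition 2.4.3.9]{LurHA}, which identifies algebras in a cocartesian symmetric monoidal $\infty$-category over a unital operad with the underlying functors. My plan is to reduce to a fiberwise check and then adapt the classical argument using the explicit characterization of cocartesian $\mathcal{T}$-$\infty$-operads in Definition \ref{defn:param_cartesian_operad_conditions}.

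Both $\underline{\Alg}_{\mathcal{O}}(\cat)$ and $\underline{\Fun}(\mathcal{O}, \cat)$ are cocartesian fibrations over $\mathcal{T}^\op$, and the restriction functor is a morphism of such. Hence it suffices to show that the induced functor on fibers over each $t \in \mathcal{T}$ is an equivalence of ordinary $\infty$-categories. Unwinding definitions (via \cite{Shah18}), the fiber of $\underline{\Alg}_{\mathcal{O}}(\cat)$ over $t$ is the $\infty$-category of $\mathcal{T}^{/t}$-operad maps from $\mathcal{O}\times_{\mathcal{T}^\op}(\mathcal{T}^{/t})^\op$ to $\cat\times_{\mathcal{T}^\op}(\mathcal{T}^{/t})^\op$, with an analogous description for functors. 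Unitality and cocartesianness are both stable under restriction to $\mathcal{T}^{/t}$, so it is enough to prove the analogous equivalence for global sections over an arbitrary orbital base.

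For the global version, I would construct an inverse to restriction directly. Given a $\mathcal{T}$-functor $F\colon \mathcal{O} \to \cat$, define $\widetilde{F}\colon \mathcal{O}^\otimes \to \cat^\sqcup$ on a multi-object $\langle x_U \rangle_{U \in \mathrm{Orbit}(W)}$ lying over $T$ by the $\mathcal{T}$-coproduct $\bigsqcup_U \mathrm{coInd}^T_U F(x_U)$, whose existence is guaranteed by condition (3) of Definition \ref{defn:param_cartesian_operad_conditions}. Active multimorphism spaces in $\cat^\sqcup$ are, by the cocartesian property, determined by ordinary mapping spaces in $\cat$ after applying the restriction/norm adjunctions, so the value of $\widetilde{F}$ on an active morphism $\langle x_U \rangle \to y$ is forced by the components $F(x_U) \to F(y)$. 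The $n = 0$ case is pinned down by the unitality of $\mathcal{O}^\otimes$ (making $\mathrm{Mul}_{\mathcal{O}}(\varnothing, x)$ contractible) together with the $\mathcal{T}$-finality of the unit of $\cat^\sqcup$ (Definition \ref{defn:param_cartesian_operad_conditions}(1)), which makes $\mathrm{Mul}_{\cat^\sqcup}(\varnothing, F(x))$ contractible as well.

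The main technical obstacle is promoting this pointwise prescription to a coherent morphism of cocartesian fibrations over $\underline{\Fin}_{\mathcal{T},*}$: one must show that $\widetilde{F}$ is compatible with composition of active and inert morphisms and is natural under restriction functors across orbits of $\mathcal{T}$. The necessary compatibility follows from the same unique-extension argument that underlies Lurie's HA Proposition 2.4.3.9, once one replaces ordinary coproducts by $\mathcal{T}$-coproducts; naturality under restriction is a consequence of the interaction between norm and restriction functors encoded in the cocartesian structure. Contractibility of the space of such extensions over a fixed $F$ then yields the desired equivalence.
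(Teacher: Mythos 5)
Your overall strategy is sound and correctly identifies the classical prototype (Lurie's HA 2.4.3.9) and the two essential ingredients: cocartesianness of $\cat^\otimes$ is what makes the active multimorphism data redundant (via $\mathcal{T}$-coproducts of coinduced objects), and unitality of $\mathcal{O}^\otimes$ pins down the nullary case. However, the step you flag as "the main technical obstacle" is not an obstacle to be deferred — it is the entire content of the proposition. Defining $\widetilde{F}$ on objects by $\bigsqcup_U \mathrm{coInd}^T_U F(x_U)$ and asserting that its values on active morphisms are "forced" does not produce a functor of $\infty$-categories, let alone a morphism of cocartesian fibrations over $\underline{\Fin}_{\mathcal{T},*}$; the coherence data has to be constructed, and invoking "the same unique-extension argument" is not a substitute. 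Moreover, your opening reduction to fibers over $t \in \mathcal{T}$ buys nothing: the fiber over $t$ is again an $\infty$-category of $\mathcal{T}^{/t}$-parametrized operad maps, so you are back to the same problem over a smaller orbital base.

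The paper handles the coherence issue by a genuinely different device that your sketch doesn't touch: it uses the model of the $\mathcal{T}$-cocartesian operad as a span category $\cat^\sqcup = \Span(\cat^\times, \text{all}, \cat^\times_{p\text{-cocart}})$, applies the universal property of span categories (Theorem 2.18 of the adequate-triples reference) to translate an $\mathcal{O}$-algebra in $\cat^\sqcup$ into a morphism of adequate triples out of $\TwAr^r(\mathcal{O}^\otimes)$, then uses unitality to localize onto the full subcategory $\TwAr^r(\mathcal{O}^\otimes)_{t=0}$ of arrows with zero target, and finally recasts the construction of the inverse as a parametrized left Kan extension along the inclusion $\mathcal{O} \hookrightarrow \mathcal{Q}^\vee$ in the style of HA 2.4.3.16. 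This extra machinery is not cosmetic: in the parametrized setting, inert morphisms in $\mathcal{O}^\otimes$ need not be equivalences (they may cover nontrivial morphisms in $\mathcal{T}^\op$), so the condition characterizing the essential image is "$A(\alpha)$ is $p$-cocartesian," which only degenerates to "equivalence" over a point. Your proposal elides exactly this subtlety; to close the gap you would need to produce the coherent extension, and the paper's route via adequate triples and parametrized Kan extensions is the way it is actually done.
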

\begin{proof}
		Let $ \cat $ be a $ \mathcal{T} $-$ \infty $-category and recall the definition of $ \cat^{\sqcup} $ from \cite[Example 2.4.1]{NS22}. 
		In particular, we have $ \cat^{\sqcup} = \Span(\cat^\times, all, \cat^\times_{p\text{-cocart}}) \to \Span(\Fin_{\mathcal{T}}) $ where $ p^\times \colon \cat^\times \to \Fin_{\mathcal{T}} $ is a cartesian fibration defined in \cite[Proposition 5.12]{Shah18} and the triple structure on $ \cat^\times $ is given the triple structure where the backward morphisms are `everything' and forward morphisms consist of those morphisms which are \emph{co}cartesian over $ \Fin_{\mathcal{T}} $ \cite[Definition 11.3]{BarwickMackey}. 

		Now if $ \mathcal{O}^\otimes $ is a $ \mathcal{T} $-$ \infty $-operad, then an $ \mathcal{O} $-algebra in $ \cat^{\sqcup} $ is a functor $ B $ making the diagram
		\begin{equation*}
		\begin{tikzcd}[column sep=tiny]
				\mathcal{O}^\otimes \ar[rd, "{q^\dag}"'] \ar[rr,"B"] & & \cat^\sqcup = \Span(\cat^\times,all,\cat^\times_{p\text{-cocart}}) \ar[ld] \\
				& \Span(\Fin_{\mathcal{T}}) & 
		\end{tikzcd}
		\end{equation*}
		 commute. 
		 By \cite[Theorem 2.18]{MR4676218}, this is equivalent to the data of a commutative diagram of adequate triples
		\begin{equation}\label{diagram:alg_in_cocart_opd_mor_adtrip}
		\begin{tikzcd}[column sep=tiny]
				\TwAr^r \left(\mathcal{O}^\otimes\right) \ar[rd,"q"'] \ar[rr,"{B^\dag}"] & & (\cat^\times, all, \cat^\times_{p\text{-cocart}}) \ar[ld] \\
				& \left(\Fin_{\mathcal{T}}, all, all\right) & 
		\end{tikzcd}
		\end{equation}
		where $ \TwAr^r $ is given the triple structure where a morphism $ [f\colon c \to c'] $ to $ [g\colon d \to d'] $ is ingressive (resp egressive) if $ \ev_1 $ (resp. $ \ev_2 $) takes it to an equivalence. 
		Let $ \varphi \colon x \to y $ be an object of $ \TwAr^r\left(\mathcal{O}^\otimes\right) $. 
		Then the image of $ \varphi $ under $ q^\dag $ is a span $ q^\dag(x) \leftarrow q^\dag(\varphi) \to q^\dag(y) $. 
		The functor $ q $ takes $ \varphi $ to the object $ q^\dag(\varphi) $. 
		Now write $ \TwAr^r \left(\mathcal{O}^\otimes\right)_{t = 0} $ for the full subcategory on those arrows whose target is a zero object. 
		Since $ \mathcal{O}^\otimes $ was assumed to be unital, the inclusion $ \iota \colon \TwAr^r \left(\mathcal{O}^\otimes\right)_{t=0} \to \TwAr^r \left(\mathcal{O}^\otimes\right) $ has a left and right adjoint and they agree; call it $ \pi $. 
		Notice that the components of the unit and counit $ \eta \colon \mathrm{id} \to \iota \circ \pi $ and $ \varepsilon \colon \iota \circ \pi \to \mathrm{id} $ are egressive. 
		Therefore, the morphism of adequate triples (\ref{diagram:alg_in_cocart_opd_mor_adtrip}) is equivalent to a diagram of functors 
		\begin{equation}\label{diagram:alg_in_cocart_opd_mor_adtrip_simplified}
		\begin{tikzcd}[column sep=tiny]
				\TwAr^r \left(\mathcal{O}^\otimes\right)_{t = 0} \ar[rd] \ar[rr] & & \cat^\times \simeq (\ev_1)_*(\ev_0)^*\left(\cat^{\vee}\right) \ar[ld,"{p^\times}"] \\
				& \Fin_{\mathcal{T}} & 
		\end{tikzcd}
		\end{equation}
		For the next step, we introduce some notation: write $ X \subset \Ar(\Fin_{\mathcal{T}}) $ for the full subcategory on arrows with source in $ \mathcal{T} $. 
		Then we have maps
		\begin{equation*}
				\Fin_{\mathcal{T}} \xleftarrow{\ev_1} X \xrightarrow{\ev_0} \mathcal{T} \,.
		\end{equation*}
		By \cite[Theorem 2.24]{Shah18}, (\ref{diagram:alg_in_cocart_opd_mor_adtrip_simplified}) is equivalent to the data of a diagram of cartesian\footnote{Note that $ (\ev_0)_! (\ev_1)^* $ is only left Quillen by \emph{op. cit.}, so a priori we must perform fibrant replacement on $ (\ev_0)_! (\ev_1)^*\TwAr^r \left(\mathcal{O}^\otimes\right)_{t=0} $. However, we may use the naïve description of $ (\ev_0)_{!} $ as postcomposition with $ \ev_0 $ because $ \ev_0 $ is a cartesian fibration.} fibrations 
		\begin{equation*}
		\begin{tikzcd}[column sep=tiny]
				\mathcal{Q} := (\ev_0)_! (\ev_1)^*\TwAr^r \left(\mathcal{O}^\otimes\right)_{t=0} \ar[rd] \ar[rr,"\overline{A}"] & & \cat^{\vee} \ar[ld,"{p^\vee}"] \\
				& {\mathcal{T}} & 
		\end{tikzcd}
		\end{equation*}
		 over $ \mathcal{T} $, or equivalently \cite{MR3746613}, a map $ A := \overline{A}^\vee \colon \mathcal{Q}^\vee \to \cat $ of cocartesian fibrations over $ \mathcal{T} $. 

		Now we proceed as in \cite[Proposition 2.4.3.16]{LurHA}. 
		Such a $ \mathcal{T} $-functor $ A $ determines a map of $ \mathcal{T} $-$ \infty $-operads if and only if
		\begin{itemize}
			\item [(*)] Let $ \alpha $ be a morphism in $ \mathcal{Q}^\vee $ whose image under the `source' functor in $ \mathcal{O}^\otimes $ is inert. 
			Then $ A(\alpha) $ is $ p $-cocartesian.\footnote{Note that if $ \alpha $ is \emph{fiberwise} inert, then $ A(\alpha) $ is an equivalence. Hence this recovers the condition (*) in \emph{op. cit.} when $ \mathcal{T} = \{*\} $.}
		\end{itemize}
		Write $ j_0 \colon \mathcal{O}^\vee \to X $ for the composite of the structure map $ \mathcal{O}^\vee \to \mathcal{T} $ with the identity section $ \mathcal{T} \to X $. 
		Moreover, choosing a map $ j_1 \colon \left(\mathcal{O}^\otimes\right)^\vee \to \underline{\TwAr}^\ell(\mathcal{O}^\otimes)^\op \simeq \underline{\TwAr}^r(\mathcal{O}^\otimes) \to \TwAr^r(\mathcal{O}^\otimes)  $ corresponding to a coherent choice of morphism with target zero object (see \cite[Warning 2.2.5]{MR4405646}) canonically factors through $ \TwAr^r \left(\mathcal{O}^\otimes\right)_{t=0} $. 
		Since $ j_0 $ and $ j_1 $ agree after composing with the projection to $ \Fin_{\mathcal{T}} $, they assemble to a $ \mathcal{T} $-functor $ \mathcal{O} \to \mathcal{Q} $. 

		Thus it suffices to show 
		\begin{enumerate}[label=(\alph*)]
				\item A functor $ A $ is a left $ \mathcal{T} $-Kan extension of $ A|_{\mathcal{O}} $ if and only if it satisfies (*) 
				\item Every functor $ A_0 \colon \mathcal{O} \to \cat $ admits a left $ \mathcal{T} $-Kan extension satisfying the conditions of (a)
		\end{enumerate}
		Now, for $ W \in \mathcal{O}^\op_G $, an object of $ \mathcal{Q}_W $ is a pair $ (X, W) $ where $ X \in \mathcal{O}^\otimes_{[S\to V]} $ and $ W \to S $ is a map in $ \Fin_{\mathcal{T}} $ and $ W $ is an orbit. 
		Write $ \rho_W $ for the inert map $ S \leftarrow W  = W $ in $ \Span\left(\Fin_\mathcal{T}\right) $, and choose an inert morphism $ X \to X_W $ lying over $ \rho_W $.  
		This gives rise to a map $ f_W \colon (X, W \to S) \to (X_W, W) $ in $ \mathcal{Q} $. 
		Now for every $ Y \in \mathcal{O}_{[W=W]} $, the composite
		\begin{equation}\label{eq:alg_in_cocart_opd_adjunction}
				\hom_{\mathcal{Q}}\left(Y, (X,W) \right) \xrightarrow{f_W \circ -} \hom_{\mathcal{Q}}\left(Y, (X_W, W)\right) \simeq \hom_{\mathcal{O}_W}(Y, X_W)
		\end{equation}
		is an equivalence. 
		Now taking $ Y = X_W $ in (\ref{eq:alg_in_cocart_opd_adjunction}), we see that $ f_W $ admits a right inverse $ g_W $. 
		Furthermore, the preceding discussion implies that the inclusion $ \mathcal{O}_W \subseteq \mathcal{Q}_W $ admits a right adjoint. 
		Since the inert map $ X \to X_W $ is cocartesian over $ \mathcal{T}^\op $, the aforementioned right adjoints promote to a right $ \mathcal{T} $-adjoint. 
		The remainder of the proof proceeds as in \cite[Proposition 2.4.3.16]{LurHA}; we omit the details here. 
\end{proof}
Hereafter, we only concern ourselves with the case $ G = C_2 $. 
\begin{defn}\label{defn:bi_C2_commutative_bialg}
		Let $ \cat $ be a $ C_2 $-symmetric monoidal $ \infty $-category, and recall the equivalences of . 
		Set $ C_2 \E_\infty \underline{\biAlg}(\cat):= C_2\E_\infty \underline{\coAlg}(C_2\E_\infty \underline{\Alg}(\cat)) $; we will identify this $ C_2 $-$ \infty $-category with $  C_2\E_\infty \underline{\Alg}\left(C_2\E_\infty \underline{\coAlg}(\cat)\right)  $ using Proposition \ref{prop:models_of_bialg}, and refer to objects therein as \emph{$ C_2 $-$ \E_\infty $-bialgebra objects}. 
		We will refer to objects of $	C_2\E_\infty \underline{\coAlg}\left(\underline{\Alg}(\cat)\right) $ (resp. $ \underline{\coAlg}(C_2\E_\infty \underline{\Alg}(\cat)) $) as \emph{$ \underline{\E}_1 $-co-$ C_2 $-$ \E_\infty $-bialgebra objects} (resp. \emph{$ C_2 $-$ \E_\infty $-co-$ \underline{\E}_1 $-bialgebra objects}) of $ \cat $. 
\end{defn}
\begin{ntn}
	The construction $ \cat \mapsto \underline{\coAlg}(\cat) $ determines a $ C_2 $-functor $ \coAlg \colon \Alg\left(\Fun\left(\mathcal{O}^\op_{C_2}, \Cat_\infty\right)\right)\to \Fun\left(\mathcal{O}^\op_{C_2}, \Cat_\infty\right) $, classified by a $ C_2 $-cocartesian fibration that we denote $ C_2\Cat_\infty^{\coAlg} \to \Alg(C_2\Cat_\infty) $. 
	We identify $ \mathcal{O}^\op_{C_2} $-cartesian sections of $ C_2\Cat_\infty^{\coAlg} $ as a $ \mathcal{O}^\op_{C_2} $-coCartesian family $ \cat $ of monoidal $ \infty $-categories and $ A $ is a $ \mathcal{O}^\op_{C_2} $-coCartesian family of coalgebra objects in $ \cat $. 

	Similarly, there is a coCartesian fibration $ C_2\Cat_\infty^{\RMod} \to \Alg(C_2\Cat_\infty) $. 
	We identify $ \mathcal{O}^\op_{C_2} $-cartesian sections of $ C_2\Cat_\infty^{\coAlg} $ as a $ \mathcal{O}^\op_{C_2} $-coCartesian family $ \cat $ of monoidal $ \infty $-categories and a $ C_2 $-$ \infty $-category $ \mathcal{M} $ which is right-tensored over $ \cat $ in the sense of Definition \ref{defn:param_cat_tensored}. 
\end{ntn}
\begin{rmk}
	The $ C_2 $-$\infty$-categories $ C_2\Cat_\infty^{\RMod} $, $ C_2\Cat_\infty^{\coAlg} $ admit finite products as in \cite[Remark 2.2.5]{Raksit20}. 
	Moreover, they also admit finite products indexed by $ C_2 $-sets. 
	For instance, 
	\begin{align*}
		\prod_{C_2}\left(\cat, A\right) &\simeq \left(\cat \xrightarrow{\Delta} \cat \times \cat, A \mapsto (A,A) \right) \\
		\prod_{C_2}\left(\cat, \mathcal{M}\right) &\simeq \left(\cat \xrightarrow{\Delta} \cat \times \cat, \mathcal{M} \xrightarrow{\Delta} \mathcal{M}\times \mathcal{M} \right)\,.
	\end{align*}
	where $ C_2 $ acts on $ \prod_{C_2} \cat $ and $ \prod_{C_2}\mathcal{M} $ by permuting the factors. 	
\end{rmk}
\begin{cons}\label{cons:coalg_to_comod_is_G_sym_mon}
	The assignment $ A \mapsto \underline{\coMod}_A(\cat) $ extends to a $ C_2 $-functor $ C_2\Cat_\infty^{\coAlg} \to C_2\Cat_\infty^{\RMod} $ over $ \Alg(C_2\Cat_\infty) $. 
	The functor preserves finite products indexed by $ C_2 $-sets, thus we can regard it as a $ C_2 $-symmetric monoidal functor $ \mu $. 
\end{cons}
\begin{prop}\label{prop:mod_is_param_sym_monoidal}
	Let $ \cat $ be a small $ C_2 $-symmetric monoidal $ C_2 $-$ \infty $-category. 
	Then the assignment $ A \mapsto \underline{\coMod}_A $ promotes to a $ C_2 $-symmetric monoidal functor $ \mu \colon \underline{\coAlg}(\cat) \to \RMod_{\cat}\left(C_2\Cat_\infty\right) $. 
\end{prop}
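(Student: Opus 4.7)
The plan is to deduce the proposition directly from Construction \ref{cons:coalg_to_comod_is_G_sym_mon} by passing to the fiber over the chosen $\cat \in \Alg(C_2\Cat_\infty)$. Construction \ref{cons:coalg_to_comod_is_G_sym_mon} supplies a $C_2$-functor $\mu \colon C_2\Cat_\infty^{\coAlg} \to C_2\Cat_\infty^{\RMod}$ over $\Alg(C_2\Cat_\infty)$ which preserves $C_2$-products (as computed via the explicit formulas in the excerpt); because both source and target admit finite products indexed by $C_2$-sets, they inherit canonical $C_2$-cartesian $C_2$-symmetric monoidal structures via a parametrized version of \cite[Proposition 2.4.1.7]{LurHA}, and $\mu$ upgrades to a $C_2$-symmetric monoidal $C_2$-functor for these structures.

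The main step is to identify the fiber of $\mu$ over $\cat$ with the desired $\mu_\cat \colon \underline{\coAlg}(\cat) \to \RMod_\cat(C_2\Cat_\infty)$, and then to promote this fiber functor to a $C_2$-symmetric monoidal functor with respect to the intended $C_2$-symmetric monoidal structures (which are not $C_2$-cartesian in general). On the source side, the $C_2$-symmetric monoidal structure on $\underline{\coAlg}(\cat)$ inherited from $\cat$ is obtained by pulling back the $C_2$-cartesian structure on $C_2\Cat_\infty^{\coAlg}$ along the multiplication $\otimes \colon \cat \times \cat \to \cat$ in the base $\Alg(C_2\Cat_\infty)$; analogously, the $C_2$-symmetric monoidal structure on $\RMod_\cat(C_2\Cat_\infty)$ is obtained by pulling back the $C_2$-cartesian structure on $C_2\Cat_\infty^{\RMod}$ along the same multiplication. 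Since $\mu$ lives over $\Alg(C_2\Cat_\infty)$, it intertwines these pullback operations, and the $C_2$-symmetric monoidal refinement of $\mu_\cat$ follows formally.

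The main obstacle is verifying the parametrized analogue of \cite[Proposition 2.4.1.7]{LurHA}: that $C_2\Cat_\infty^{\coAlg}$ and $C_2\Cat_\infty^{\RMod}$ carry $C_2$-cartesian $C_2$-symmetric monoidal structures with respect to which the pullback along a morphism $\cat \to \mathcal{D}$ in $\Alg(C_2\Cat_\infty)$ recovers the natural induced $C_2$-symmetric monoidal structures on $\underline{\coAlg}(-)$ and $\RMod_{-}(C_2\Cat_\infty)$. This should amount to a direct but tedious adaptation of Lurie's non-parametrized arguments in \cite[\S2.4.1]{LurHA}, relying on the fact that $C_2$-products in both $C_2$-$\infty$-categories are computed pointwise on orbits (as spelled out in the explicit formulas given after Construction \ref{cons:coalg_to_comod_is_G_sym_mon}).
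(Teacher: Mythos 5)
Your approach is essentially the paper's own: both proofs take the $C_2$-symmetric monoidal functor from Construction \ref{cons:coalg_to_comod_is_G_sym_mon} (between $C_2\Cat_\infty^{\coAlg}$ and $C_2\Cat_\infty^{\RMod}$ with their $C_2$-cartesian structures over $\Alg(C_2\Cat_\infty)$) and restrict to the parametrized fiber over $\cat$, using the $C_2$-$\E_\infty$-algebra structure of $\cat$ in the base to endow the fibers with the (non-cartesian) $C_2$-symmetric monoidal structures one actually wants.

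Two points you leave implicit or misstate. First, the reason $\cat$ may be treated as a $C_2$-$\E_\infty$-algebra in $\E_1\Alg(C_2\Cat_\infty)$ — which is the key input allowing the parametrized fiber to inherit a $C_2$-symmetric monoidal structure — is a Dunn-type additivity statement, for which the paper cites \cite{Stewart_Ninfty}; your proposal asserts this without justification. Second, your description of the induced monoidal structure as ``pulling back the $C_2$-cartesian structure $\ldots$ along the multiplication $\otimes \colon \cat \times \cat \to \cat$'' is imprecise: the pullback is along the section $\underline{\Fin}_{C_2,*} \to \Alg(C_2\Cat_\infty)^\otimes$ classifying $\cat$ as a $C_2$-$\E_\infty$-algebra, and the binary tensor on the fiber is obtained by \emph{pushing forward} along $\otimes_\cat$, not pulling back. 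You also defer the identification of the fiber of $C_2\Cat_\infty^{\RMod}$ over $\cat$ with $\RMod_{\cat}(C_2\Cat_\infty)$; this identification is where Corollary \ref{cor:param_comodules_tensored} enters (it guarantees that $\underline{\coMod}_A(\cat)$ is right-tensored over $\cat$), and your argument should invoke it explicitly. Your worry about the parametrized analogue of \cite[Proposition 2.4.1.7]{LurHA} is legitimate, but the paper handles it through Definition \ref{defn:param_cartesian_operad_conditions} and the identification with \cite[Example 2.4.1]{NS22}, so it is not an additional obstruction beyond what is already in place.
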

\begin{proof}
		By Corollary \ref{cor:param_comodules_tensored}, if $ A $ is an $ \underline{\E}_1 $-coalgebra in $ \cat $, then the $ C_2 $-$ \infty $-category $ \coMod_A(\cat) $ is right-tensored over $ \cat $.  
		By \cite{Stewart_Ninfty}, we can regard $ \cat $ as a $ C_2 $-$ \E_\infty $-algebra in $ \E_1 \Alg(C_2 \Cat_\infty ) $. 
		This induces $ C_2 $-symmetric monoidal structures on the parametrized fibers
		\begin{equation*}
				C_2\Cat_\infty^{\coAlg} \fiberproduct_{\Alg(C_2\Cat_\infty)} \underline{\{\cat\}} \simeq \underline{\coAlg}(\cat) \qquad \qquad  C_2\Cat_\infty^{\RMod} \fiberproduct_{\Alg(C_2\Cat_\infty)} \underline{\{\cat\}} \simeq \underline{\RMod}_{\cat}(C_2 \Cat_\infty)\,.
		\end{equation*}
		Since the functor $ \mu $ of Construction \ref{cons:coalg_to_comod_is_G_sym_mon} is $ C_2 $-symmetric monoidal, it restricts to the desired $ C_2 $-symmetric monoidal functor $\underline{\coAlg}(\cat) \to  \underline{\RMod}_{\cat}(C_2 \Cat_\infty) $.
\end{proof}
\begin{cons}\label{cons:comodule_pointwise_param_tensor}
	Let $ \cat $ be a small $ C_2 $-symmetric monoidal $C_2$-$ \infty $-category and let $ A $ be a $ C_2 $-$ \E_\infty $-bialgebra object of $ \cat $. 
	By Proposition \ref{prop:mod_is_param_sym_monoidal} and by the argument of \cite[Construction 2.2.2]{Raksit20}, we obtain a $ C_2 $-symmetric monoidal structure on $ \underline{\coMod}_A(\cat) $ and a $ C_2 $-symmetric monoidal structure on the forgetful functor $ \underline{\coMod}_A(\cat) \to \cat $.  
\end{cons}
\begin{variant}\label{variant:module_pointwise_param_tensor}
		Let $ \cat $ be a small $ C_2 $-symmetric monoidal $ C_2 $-$ \infty $-category and let $ A $ be a $ C_2 $-$ \E_\infty $-bialgebra object of $ \cat $. 
		Applying Construction \ref{cons:comodule_pointwise_param_tensor} with $ \cat^\vop $ induces a $C_2$-symmetric monoidal structure on $ \underline{\Mod}_A(\cat) $ and a $ C_2 $-symmetric monoidal structure on the forgetful functor $ \underline{\Mod}_A(\cat) \to \cat $. 
\end{variant}
\begin{ex}\label{ex:functor_cat_as_module_cat}
		Let $ G $ be a $ C_2 $-commutative monoid in $ \underline{\Spc}^{C_2} $. 
		Since $ \underline{\Spc}^{C_2} $ has all $ C_2 $-limits (i.e., it has a $ C_2 $-cartesian $ C_2 $-symmetric monoidal structure), we may regard $ G $ as a $C_2 $-$\E_\infty $-co-$C_2$-$\E_\infty $-bialgebra object in $ \underline{\Spc}^{C_2} $. 
		Let $ \cat $ be a $ C_2 $-presentable $ C_2 $-symmetric monoidal $C_2$-$\infty $-category \cite[\S3]{Nardinthesis}. 
		Then there is a unique $ C_2 $-symmetric monoidal $ C_2 $-functor $ F \colon \underline{\Spc}^{C_2}\to \cat $ which preserves $ C_2 $-colimits. 

		We claim that there is a canonical $ C_2 $-symmetric monoidal equivalence $ \underline{\LMod}_{F(G)}(\cat) \simeq \underline{\Fun}(BG,\cat) $, where the latter is equipped with the pointwise $ C_2 $-symmetric monoidal structure of \cite[\S3.3]{NS22}. 
		By Proposition \ref{prop:tensor_of_param_module_cats}, there is an equivalence $\underline{\LMod}_{F(G)}(\cat) \simeq \cat \otimes \underline{\LMod}_G\left(\underline{\Spc}^{C_2}\right) $ in $ C_2\E_\infty\Alg\left(C_2\mathrm{Pr}^{L}\right) $. 
		By \cite[Example 3.26]{Nardinthesis}, there is an equivalence $ \underline{\Fun}(BG,\cat) \simeq \cat \otimes \underline{\Fun}\left(BG,\underline{\Spc}^{C_2}\right) $. 
		It suffices to prove the result for $ \cat = \underline{\Spc}^{C_2} $, wherein the $C_2$-symmetric monoidal structures on $ \underline{\Fun}\left(BG,\underline{\Spc}^{C_2}\right) $ and $ \underline{\LMod}_G\left(\underline{\Spc}^{C_2}\right) $ are $ C_2 $-cartesian, hence we are done. 
\end{ex}
\begin{recollection}
		[{\cite[\S3.4, in particular Proposition 3.25]{Nardinthesis}}] The $ C_2 $-$\infty $-category of presentable $ C_2 $-$\infty $-categories and distributive $ C_2 $-functors admits a $ C_2 $-symmetric monoidal structure. 
		We will denote the $ C_2 $-$ \infty $-category of presentable $ C_2 $-$ \infty $-categories and distributive $ C_2 $-functors by $ C_2 \mathrm{Pr}^L $. 
\end{recollection}
\begin{prop}\label{prop:tensor_of_param_module_cats}
		Let $ F\colon \underline{\Spc}^{C_2} \to \mathcal{D} $ be a morphism in $ C_2\E_\infty\underline{\Alg}(C_2\mathrm{Pr}^L)^{C_2} $, and let $ A $ be an algebra object of $ \underline{\Spc}^{C_2} $.  
		Then the $ C_2 $-symmetric monoidal functor $ \underline{\Mod}_A\left(\underline{\Spc}^{C_2}\right) \to \underline{\Mod}_{F(A)}(\mathcal{D}) $ induces a map $ \mathcal{D} \otimes_{\cat} \underline{\Mod}_A\left(\underline{\Spc}^{C_2}\right) \to \underline{\Mod}_{F(A)}(\mathcal{D}) $ in $ C_2\E_\infty\underline{\Alg} \left(C_2 \mathrm{Pr}^L\right)^{C_2} $ which is an equivalence. 
\end{prop}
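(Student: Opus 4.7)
The plan is to show that both sides corepresent the same $C_2$-functor on $C_2\E_\infty\underline{\Alg}\left(C_2\mathrm{Pr}^L\right)^{\mathcal{D}/}$. Concretely, for a distributive $C_2$-symmetric monoidal $\mathcal{D}$-algebra $\mathcal{E}$, the strategy is to identify both $\underline{\Map}_{\mathcal{D}/}\!\left(\mathcal{D} \otimes_{\underline{\Spc}^{C_2}} \underline{\Mod}_A\!\left(\underline{\Spc}^{C_2}\right), \mathcal{E}\right)$ and $\underline{\Map}_{\mathcal{D}/}\!\left(\underline{\Mod}_{F(A)}(\mathcal{D}), \mathcal{E}\right)$ with the $C_2$-space of $A$-module structures on the unit $\mathbbm{1}_\mathcal{E}$ (equivalently, $F(A)$-module structures, via the composite $\underline{\Spc}^{C_2} \xrightarrow{F} \mathcal{D} \to \mathcal{E}$). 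By the $C_2$-parametrized Yoneda lemma, this common corepresentability will yield the desired equivalence.

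For the first identification, by the universal property of the relative tensor product in $C_2\E_\infty\underline{\Alg}(C_2\mathrm{Pr}^L)$ (see \cite[Proposition 3.25]{Nardinthesis}), giving a $\mathcal{D}$-algebra morphism out of $\mathcal{D} \otimes_{\underline{\Spc}^{C_2}} \underline{\Mod}_A\!\left(\underline{\Spc}^{C_2}\right)$ is the same as giving a distributive $C_2$-symmetric monoidal functor $\underline{\Mod}_A\!\left(\underline{\Spc}^{C_2}\right) \to \mathcal{E}$ of $\underline{\Spc}^{C_2}$-algebras. For the second identification, the same universal property applied with $\mathcal{D}$ as base identifies $\mathcal{D}$-algebra maps out of $\underline{\Mod}_{F(A)}(\mathcal{D})$ with $F(A)$-module structures on $\mathbbm{1}_\mathcal{E}$. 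Since any distributive $C_2$-symmetric monoidal $\mathcal{E}$ is canonically a $\underline{\Spc}^{C_2}$-algebra via the unique $C_2$-colimit-preserving $C_2$-symmetric monoidal functor $\underline{\Spc}^{C_2} \to \mathcal{E}$, both mapping spaces will be computed by a common functor once we verify the parametrized universal property of the module category.

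The main obstacle is thus establishing the $C_2$-parametrized enhancement of the universal property of $\underline{\Mod}_A(\cat)$ as the free distributive $\cat$-algebra on an $A$-module; in the non-parametrized case this is \cite[Theorem 4.8.5.11]{LurHA}. I would proceed by identifying $\underline{\Mod}_A\!\left(\underline{\Spc}^{C_2}\right)$ with the geometric realization in $C_2\mathrm{Pr}^L$ of the two-sided bar construction $\mathrm{Bar}_\bullet\!\left(\underline{\Spc}^{C_2}, A, \underline{\Spc}^{C_2}\right)$, and then appealing to the fact (true on each orbit by the ordinary statement) that this realization corepresents $A$-module structures on the unit. The key checks for carrying Lurie's bar-construction argument through the parametrized setting are: that $C_2$-colimits in $C_2\mathrm{Pr}^L$ of $C_2$-symmetric monoidal categories are detected fiberwise (which follows from Proposition \ref{prop:param_module_cat_are_distributive_sym_mon} and the cocompleteness results of \cite[\S3]{Nardinthesis}), and that the forgetful $C_2$-functor $\underline{\Mod}_A(\cat) \to \cat$ is $C_2$-monadic with monad given by $A \ostar (-)$, which can be extracted from the parametrized Barr--Beck theorem of \cite[Theorem 8.6]{Shah18} applied fiberwise.
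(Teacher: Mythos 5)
Your approach of establishing a parametrized universal property for $\underline{\Mod}_A$ and then invoking the parametrized Yoneda lemma is sound in principle, but it is considerably more machinery than the paper deploys, and it differs from the paper's argument in an essential way. The paper's proof is short and direct: because the comparison map is already known to be a morphism of $C_2$-$\E_\infty$-algebras in $C_2\Pr^L$, showing it is an equivalence of algebras reduces to showing it is an equivalence of underlying $C_2$-$\infty$-categories, which is a fiberwise condition (check over each orbit $C_2/C_2$ and $C_2/e$). The fibers of $\mathcal{D} \otimes_{C_2} \underline{\Mod}_A\left(\underline{\Spc}^{C_2}\right)$ are then identified explicitly via Lemma \ref{lemma:tensor_C2_pres_cats_formula}, which computes $\otimes_{C_2}$ in $C_2\Pr^L$ as the ordinary Lurie tensor product over each orbit. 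Once reduced to an ordinary statement over each orbit, \cite[Theorem 4.8.4.6]{LurHA} finishes the job. No parametrized universal property, no parametrized Yoneda, and no bar construction in $C_2\Pr^L$ are needed. In contrast, your route would (if carried out) prove a genuinely stronger statement — the parametrized universal property of $\underline{\Mod}_A$ as the free $\cat$-algebra on an $A$-module — but that comes at the cost of re-proving Lurie's \cite[\S4.8.5]{LurHA} in the parametrized setting, which the paper deliberately avoids.

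Two points in your sketch would need repair before it could be carried out. First, the bar construction $\mathrm{Bar}_\bullet\!\left(\underline{\Spc}^{C_2}, A, \underline{\Spc}^{C_2}\right)$ is not a simplicial object of $C_2\Pr^L$ as written: $A$ is an algebra \emph{object} of $\underline{\Spc}^{C_2}$, not itself a $C_2$-presentable $C_2$-$\infty$-category, so the degree-$n$ term has no meaning. What you presumably want is a monadic bar resolution of $\underline{\Mod}_A$ by free module categories over $A$, or the simplicial $C_2$-$\infty$-category whose $n$-simplices are $\underline{\Spc}^{C_2}$ with a twisted $\underline{\Spc}^{C_2}$-module structure given by tensoring with $A^{\ostar n}$. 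Second, the claim that $C_2$-colimits in $C_2\Pr^L$ of $C_2$-symmetric monoidal categories are detected fiberwise is not supported by Proposition \ref{prop:param_module_cat_are_distributive_sym_mon}, which is about distributivity of $\underline{\Mod}_A(\cat)$, not about (co)limit detection in $C_2\Pr^L$. The fiberwise description you actually need is precisely what Lemma \ref{lemma:tensor_C2_pres_cats_formula} provides — so even your route ultimately hinges on the same lemma, just at a different point in the argument.
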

\begin{proof}
		It suffices to check that the induced functor is an equivalence at the level of underlying $ C_2 $-presentable $ C_2 $-$ \infty $-categories. 
		In particular, it suffices to show the induced functor is an equivalence over each orbit. 
		The result follows from Lemma \ref{lemma:tensor_C2_pres_cats_formula} and applying \cite[Theorem 4.8.4.6]{LurHA} to each orbit. 
\end{proof}
\begin{lemma}\label{lemma:tensor_C2_pres_cats_formula}
		Let $ \cat,\mathcal{D} $ be two $ C_2 $-presentable $ C_2 $-$ \infty $-categories. 
		Notice that $ \cat^e $ and $ \cat^{C_2} $ are presentable ordinary $ \infty $-categories (and likewise for $ \mathcal{D} $). 
		Then there is an equivalence 
		\begin{equation}\label{eq:tensor_C2_pres_cats_formula}
			\cat \otimes_{C_2} \mathcal{D} \simeq \left(\cat^{C_2} \otimes_{\Spc^{C_2}} \mathcal{D}^{C_2} \xrightarrow{\mathrm{Res} \otimes \mathrm{Res}} \cat^e \otimes_{\Spc} \mathcal{D}^e \right) 
		\end{equation}
		 in $ C_2 \mathrm{Pr}^L $, where on the right-hand side of (\ref{eq:tensor_C2_pres_cats_formula}), $ \otimes_{(-)} $ denotes the tensor product in $ \mathrm{Pr}^L $ and the $ C_2 $-action on $ \cat^e \otimes_{\Spc} \mathcal{D}^e $ is induced by the componentwise $ C_2 $-action on $ \cat^e \times \mathcal{D}^e $. 
\end{lemma}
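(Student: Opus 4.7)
The plan is to verify that the right-hand side of (\ref{eq:tensor_C2_pres_cats_formula}) satisfies the universal property of the tensor product in $C_2\mathrm{Pr}^L$. Under the straightening--unstraightening equivalence (Remark \ref{rmk:param_unstraighten}), a $C_2$-presentable $C_2$-$\infty$-category corresponds to a pair $(\mathcal{A}^{C_2}, \mathcal{A}^e)$ of presentable $\infty$-categories together with a $C_2$-action on $\mathcal{A}^e$ and a colimit-preserving restriction functor $\mathcal{A}^{C_2} \to \mathcal{A}^e$ that is $C_2$-equivariant for the trivial action on its source; so the right-hand side of (\ref{eq:tensor_C2_pres_cats_formula}) manifestly lies in $C_2\mathrm{Pr}^L$, where the $C_2$-action on $\cat^e \otimes_{\Spc} \mathcal{D}^e$ is the one induced pointwise by the diagonal action on $\cat^e \times \mathcal{D}^e$.

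Next I would construct the comparison functor. Because $\underline{\Spc}^{C_2}$ is the unit of $C_2\mathrm{Pr}^L$, both $\cat$ and $\mathcal{D}$ acquire canonical $\underline{\Spc}^{C_2}$-module structures, so for each orbit $C_2/H$ the fiber $\cat^H$ (resp.\ $\mathcal{D}^H$) is canonically a $\Spc^H$-module in $\mathrm{Pr}^L$. The universal distributive $C_2$-bifunctor $\cat \times \mathcal{D} \to \cat \otimes_{C_2} \mathcal{D}$ evaluates at each orbit to a colimit-preserving bilinear functor $\cat^H \times \mathcal{D}^H \to (\cat \otimes_{C_2} \mathcal{D})^H$ which is $\Spc^H$-balanced, hence factors through $\cat^H \otimes_{\Spc^H} \mathcal{D}^H$. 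Compatibility with restriction gives a canonical map from the right-hand side of (\ref{eq:tensor_C2_pres_cats_formula}) to $\cat \otimes_{C_2} \mathcal{D}$ in $C_2\mathrm{Pr}^L$ (and a map in the other direction by testing the right-hand side against distributive bifunctors).

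To check this is an equivalence, I would verify the universal property: for any $C_2$-presentable $\mathcal{E}$, a map in $C_2\mathrm{Pr}^L$ out of the right-hand side unwinds to a colimit-preserving, $\Spc^{C_2}$-balanced bilinear functor $\cat^{C_2} \times \mathcal{D}^{C_2} \to \mathcal{E}^{C_2}$ together with a $C_2$-equivariant colimit-preserving, $\Spc$-balanced bilinear functor $\cat^e \times \mathcal{D}^e \to \mathcal{E}^e$, intertwined by the restriction maps. Reassembling this data (using Recollection \ref{rec:vertical_op} style pointwise combinatorics on $\mathcal{O}_{C_2}^\op$) produces exactly a distributive $C_2$-bifunctor $\cat \times \mathcal{D} \to \mathcal{E}$ in the sense of Recollection \ref{rec:distributivity_param_sym_mon_cat}, proving the two sides corepresent the same functor on $C_2\mathrm{Pr}^L$.

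The main obstacle will be showing that the $\Spc^{C_2}$-balancedness condition at the $C_2/C_2$-level precisely captures preservation of the $C_2$-indexed coproduct (the left adjoint to restriction). This hinges on the fact that, as a $\Spc^{C_2}$-module in $\mathrm{Pr}^L$, $\cat^{C_2}$ has its indexed coproduct functor encoded by the action of $\Spc^{C_2}$; extracting this cleanly from the $\underline{\Spc}^{C_2}$-module structure on the $C_2$-$\infty$-category $\cat$ requires invoking the module-theoretic characterization of $C_2$-presentability from Nardin's thesis (\cite{Nardinthesis}, \S3). Once this correspondence is made precise, the bookkeeping between the two universal properties is routine.
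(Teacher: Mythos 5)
Your approach---verifying the universal property by comparing distributive $C_2$-bifunctors out of $\cat \times \mathcal{D}$ against $\Spc^{C_2}$-balanced bifunctors on the $C_2/C_2$ fiber---is exactly the paper's strategy. However, the ``main obstacle'' you flag in your last paragraph is where the paper does the real work, and you leave it unresolved by deferring to Nardin's module-theoretic characterization of $C_2$-presentability without saying how it applies. The paper closes this gap with two elementary observations: (i) in the canonical $\Spc^{C_2}$-module structure on $\cat^{C_2}$, tensoring with the orbit $C_2 \in \Spc^{C_2}$ computes the composite $L_\cat \circ R_\cat$, where $R_\cat \colon \cat^{C_2} \to \cat^e$ is restriction and $L_\cat$ its left adjoint (the $C_2$-indexed coproduct); and (ii) $\Spc^{C_2}$ is generated under ordinary colimits by the two orbits $*$ and $C_2$. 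Given a bifunctor preserving colimits separately in each variable, (ii) reduces $\Spc^{C_2}$-balancedness to compatibility with the actions of $*$ (trivial) and $C_2$, and (i) identifies the latter as exactly compatibility with the indexed coproducts $L_\cat R_\cat$ and $L_\mathcal{D} R_\mathcal{D}$. This is the concrete bridge from balancedness to distributivity that your proposal names but does not supply; without it, the argument is incomplete rather than merely long.
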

\begin{rmk}
		Taking $ \mathcal{D} = \underline{\Spc}^{C_2} $ in (\ref{eq:tensor_C2_pres_cats_formula}), we see that indeed $ \underline{\Spc}^{C_2} $ is the unit in $ C_2 \mathrm{Pr}^L $. 
\end{rmk}
\begin{proof} [Proof of Lemma \ref{lemma:tensor_C2_pres_cats_formula}]
		Write $ \cat \otimes' \mathcal{D} $ for the $ C_2 $-$ \infty $-category on the right-hand side of (\ref{eq:tensor_C2_pres_cats_formula}). 
		We will show that $ \cat \otimes' \mathcal{D} $ satisfies the universal property defining $ \cat \otimes_{C_2} \mathcal{D} $, hence they are canonically equivalent. 
		By definition of the tensor product in $ \mathrm{Pr}^L $, we have a $C_2 $-functor $ G \colon \cat \times_{C_2} \mathcal{D} \to \cat \otimes' \mathcal{D} $ which preserves fiberwise colimits separately in each variable. 
		Now if $ L_\cat $ is the left adjoint to the restriction functor $ R_\cat \colon \cat^{C_2} \to \cat^e $, notice that the composite $ L_\cat \circ R_\cat $ agrees with $ \cat^{C_2} \times \{*\} \hookrightarrow \cat^{C_2} \times \Spc \xrightarrow{\id_{\cat^{C_2}} \times \left(\sqcup_{C_2}\right)} \cat^{C_2} \times \Spc^{C_2} \xrightarrow{\alpha} \cat^{C_2} $, and likewise for $ \mathcal{D} $. 
		Since $ \Spc^{C_2} $ is generated under ordinary colimits by $ * $ and $ C_2 $, we see that $ G $ preserves $ C_2 $-coproducts separately in each variable. 
		Moreover, for any $ C_2 $-presentable $ C_2 $-$ \infty $-category $ \mathcal{E} $, the data of a $ C_2 $-functor $ \cat \times \mathcal{D} \to \mathcal{E} $ preserving $ C_2 $-colimits separately in each variable is equivalent to the data of a functor $ \cat \otimes' \mathcal{D}  \to \mathcal{E} $ which preserves $ C_2 $-colimits. 
\end{proof}
\begin{recollection}
Let $ \cat $ be a $ C_2 $-symmetric monoidal $C_2$-$ \infty $-category. 
An object $ X \in \cat $ is \emph{dualizable} if there exists an object $ Y \in \cat $ with maps $ \eta: \mathbbm{1} \to X \otimes Y $ and $ \varepsilon: X \otimes Y \to \mathbbm{1} $ such that the composites
\begin{align*}
	X \xrightarrow{\eta \otimes \id_X} X \otimes Y \otimes X \xrightarrow{\id_X \otimes \varepsilon } X \\
	Y \xrightarrow{\eta \otimes \id_{Y}} Y \otimes X \otimes Y \xrightarrow{\id_Y \otimes \varepsilon} Y
\end{align*}
are both equivalences. 
One can check that if such a $ Y $ exists, it must be unique (up to contractible choice), so we write $ Y \simeq X^\vee $ and call it the dual of $ X $ (in $ \cat $). 
Furthermore, $ X^\vee $ is dualizable with dual $ X $. 
We denote the full $ C_2 $-subcategory on dualizable objects by $ \cat^{\fd} \subseteq \cat $. 
Since fully dualizable objects are preserved under symmetric monoidal functors, $ \cat^{\fd} $ is closed under the norm map and thus inherits a $ C_2 $-symmetric monoidal structure from $ \cat $. 
Furthermore, dualization refines to a $ C_2 $-functor $ \cat^\fd \to (\cat^\fd)^\vop $. 
\end{recollection}
\begin{prop}\label{prop:param_dual_takes_coalg_to_alg}
	Let $ \cat $ be a $ C_2 $-symmetric monoidal $C_2$-$ \infty $-category. 
	There exists an essentially unique $ C_2 $-symmetric monoidal functor $ \beta \colon \underline{\coAlg} \left(\cat_{\fd}\right) \to \underline{\Alg}(\cat_\fd)^\vop \simeq \underline{\coAlg}(\cat^\vop_\fd) $ making the following diagram 
	\begin{equation*}
	\begin{tikzcd}
				\underline{\coAlg} \left(\cat_{\fd}\right)\ar[d, "\beta"]  \ar[r] & \underline{\coAlg} \left(\cat\right) \ar[r,"{\mu}"] & \underline{\RMod}_{\cat}(C_2 \Cat_\infty)_{/\cat} \ar[d,"{(-)^\vop}"] \\
				 \underline{\Alg}(\cat_\fd^\vop)  \ar[r] &  \underline{\Alg}(\cat^\vop ) \ar[r, "{\mu}"] & \underline{\RMod}_{\cat^\vop}(C_2 \Cat_\infty)_{/\cat^\vop}
	\end{tikzcd}
	\end{equation*} 
	commute. 
	Moreover, the diagram of $ C_2 $-$ \infty $-categories 
	\begin{equation*}
	\begin{tikzcd}
			\underline{\coAlg} \left(\cat_{\fd}\right)\ar[r, "\beta"] \ar[d] & \underline{\Alg}(\cat_\fd^\vop) \ar[d] \\
			\cat_\fd \ar[r, "{(-)^\vee}"] & \cat_\fd^\vop
	\end{tikzcd}
	\end{equation*}
	commutes canonically. 
\end{prop}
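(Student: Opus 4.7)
The plan is to obtain $\beta$ by applying the $C_2$-functor $\underline{\coAlg}(-)$ to a $C_2$-symmetric monoidal dualization equivalence $(-)^\vee \colon \cat_\fd \xrightarrow{\sim} \cat_\fd^\vop$, and then to verify the two commuting squares by naturality of the construction $\mu$ of Proposition \ref{prop:mod_is_param_sym_monoidal} (equivalently, Construction \ref{cons:coalg_to_comod_is_G_sym_mon}).

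First, I would construct $(-)^\vee$ as a $C_2$-symmetric monoidal $C_2$-functor. The $C_2$-symmetric monoidal structure on $\cat_\fd$ is inherited from that of $\cat$, and at each orbit $t \in \mathcal{O}_{C_2}$ the fiber $(\cat_\fd)_t$ is precisely the full subcategory of dualizable objects in the ordinary symmetric monoidal $\infty$-category $\cat_t$. The non-parametrized fact that dualization gives a symmetric monoidal equivalence $(\cat_t)_\fd \xrightarrow{\sim} (\cat_t)_\fd^{\op}$ is well known, and assembles over $\mathcal{O}_{C_2}^\op$ into a $C_2$-functor $\cat_\fd \to \cat_\fd^\vop$ because restriction functors are symmetric monoidal and hence preserve duals. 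To promote this to a $C_2$-symmetric monoidal $C_2$-functor, it suffices (by the universal property of the parametrized cocartesian operad and the unstraightening description of $C_2$-symmetric monoidal structures as cocartesian fibrations over $\underline{\Fin}_{C_2,*}$) to check that the norm map $N^{C_2} \colon (\cat_\fd)^e \to (\cat_\fd)^{C_2}$ commutes with dualization up to canonical equivalence; this is a formal consequence of the fact that $N^{C_2}$ is a symmetric monoidal functor between the ordinary fibers, and any symmetric monoidal functor preserves duals.

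Second, I would define $\beta$ as the composite
\begin{equation*}
\underline{\coAlg}(\cat_\fd) \xrightarrow{\underline{\coAlg}((-)^\vee)} \underline{\coAlg}(\cat_\fd^\vop) \simeq \underline{\Alg}(\cat_\fd)^\vop,
\end{equation*}
which is a $C_2$-symmetric monoidal functor since each factor is (using that $\underline{\coAlg}$ is functorial in $C_2$-symmetric monoidal $C_2$-functors, compare the discussion preceding Proposition \ref{prop:models_of_bialg}). Uniqueness of $\beta$ making the second (``underlying'') diagram commute follows from the fact that a $C_2$-symmetric monoidal enhancement of the $C_2$-functor $(-)^\vee$ is essentially unique, since the dual object is characterized by a universal property.

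Finally, I would verify the large rectangular diagram involving $\mu$. Unwinding the definition of $\mu$ in Construction \ref{cons:coalg_to_comod_is_G_sym_mon}, the outer rectangle sends a coalgebra $A \in \underline{\coAlg}(\cat_\fd)$ to its $C_2$-$\infty$-category of comodules $\underline{\coMod}_A(\cat)$, regarded as right-tensored over $\cat$; along the lower route, it sends $A$ to $\underline{\Mod}_{A^\vee}(\cat^\vop)^\vop$. The required equivalence $\underline{\coMod}_A(\cat) \simeq \underline{\Mod}_{A^\vee}(\cat^\vop)^\vop$ of right-tensored $C_2$-$\infty$-categories over $\cat$ is implemented fiberwise by the standard correspondence between comodules over a coalgebra and modules over its dual (when the coalgebra is dualizable), and the naturality in the fiber over $\mathcal{O}_{C_2}^\op$ follows from the construction of $\mu$. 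I expect the main obstacle to be the first step: establishing cleanly that $(-)^\vee$ refines to a $C_2$-symmetric monoidal $C_2$-functor at the level of $C_2$-$\infty$-operads, since a parametrized analogue of the universal characterization of the fully dualizable subcategory (e.g. an analogue of \cite[Remark 4.6.1.12]{LurHA}) does not appear explicitly in the literature referenced here, and one must instead argue via compatibility of duals with restriction and with the Hill--Hopkins--Ravenel norm.
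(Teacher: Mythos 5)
Your approach is genuinely different from the paper's. You try to construct $\beta$ directly as $\underline{\coAlg}$ applied to a $C_2$-symmetric monoidal dualization equivalence $(-)^\vee\colon\cat_\fd\to\cat_\fd^\vop$, whereas the paper builds $\beta$ indirectly: it notes that the composite $\underline{\coAlg}(\cat_\fd)\to\underline{\coAlg}(\cat)\xrightarrow{\mu}\underline{\RMod}_\cat(C_2\Cat_\infty)_{/\cat}\xrightarrow{(-)^\vop}\underline{\RMod}_{\cat^\vop}(C_2\Cat_\infty)_{/\cat^\vop}$ lands in the essential image of $\mu$ by Proposition~\ref{prop:comod_functor_ess_image}, and then invokes full faithfulness of $\mu$ (again Proposition~\ref{prop:comod_functor_ess_image}) to produce $\beta$ together with its uniqueness, before deducing $C_2$-symmetric monoidality from the observation that $\beta\circ\beta\simeq\id$. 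This is the same route as \cite[Proposition 2.3.2]{Raksit20} and is designed precisely to avoid the hard point you run into.

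The obstacle you flag is genuine and, with the tools in this paper, not easily removed. Promoting $(-)^\vee\colon\cat_\fd\to\cat_\fd^\vop$ to a $C_2$-symmetric monoidal $C_2$-functor is more than verifying that the norm $N^{C_2}$ commutes with dualization up to equivalence; that is a statement about objects in the homotopy category, but a $C_2$-symmetric monoidal functor is a morphism of cocartesian fibrations over $\underline{\Fin}_{C_2,*}$, which demands a coherent family of such compatibilities over all inert and active morphisms of $\underline{\Fin}_{C_2,*}$, not just a single norm map up to homotopy. The non-parametrized result \cite[\S4.6.1]{LurHA} is itself established by a substantial detour through pairings and twisted arrows, and a parametrized analogue would be an independent piece of infrastructure that the paper neither proves nor cites. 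Separately, your treatment of uniqueness is aimed at the second (small) diagram, but the proposition asserts essential uniqueness of $\beta$ making the \emph{first} (large) diagram commute; that claim is exactly what full faithfulness of $\mu$ buys, and your argument does not address it. So while the overall shape of your strategy is sound, it would require an additional lemma (the parametrized symmetric monoidality of dualization) beyond what the paper establishes, and the uniqueness step needs to be rerouted through the big diagram rather than the small one.
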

\begin{cor}\label{cor:mod_comod_equivalence}
	Let $ \cat $ be a $ C_2 $-symmetric monoidal $ C_2 $-category, and $ A $ a $ C_2 $-$ \E_\infty $-$ \underline{\E}_1 $-bialgebra object of $ \cat $ whose underlying object (i.e. forgetting the bialgebra structure) is dualizable. 
	Then there exists an $ \underline{\E}_1 $-co-$ C_2\E_\infty $-bialgebra structure on $ A^\vee $ and a $ C_2$-symmetric monoidal equivalence of $ C_2 $-$ \infty $-categories
	\begin{align*}
		\underline{\Mod}_A(\cat) \simeq \underline{\coMod}_{A^\vee}(\cat). 
	\end{align*} 
	On underlying $ \infty $-categories, this recovers the equivalence of \cite[Corollary 2.3.3]{Raksit20}. 
\end{cor}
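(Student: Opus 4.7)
The plan is to adapt Raksit's argument \cite[Corollary 2.3.3]{Raksit20} to the parametrized setting, using the structural results developed earlier in this subsection.

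The first task is to equip $ A^\vee $ with the claimed bialgebra structure. Dualization yields a $ C_2 $-symmetric monoidal equivalence $ (-)^\vee \colon \cat_{\fd} \simeq \cat_{\fd}^{\vop} $; passing to $ \underline{\E}_1 $-coalgebra objects and invoking Proposition \ref{prop:param_dual_takes_coalg_to_alg}, this induces a $ C_2 $-symmetric monoidal equivalence $ \underline{\coAlg}(\cat_{\fd}) \simeq \underline{\Alg}(\cat_{\fd})^{\vop} $ (the inverse is again given by dualization). Taking $ C_2 \E_\infty $-algebra objects on both sides and applying Proposition \ref{prop:models_of_bialg} to exchange the order of the algebra and coalgebra operations, I obtain a $ C_2 $-symmetric monoidal equivalence between the $ C_2 $-$ \infty $-category of $ C_2 \E_\infty $-co-$ \underline{\E}_1 $-bialgebras and the $ C_2 $-$ \infty $-category of $ \underline{\E}_1 $-co-$ C_2 \E_\infty $-bialgebras in $ \cat_{\fd} $. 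Evaluated at $ A $, this produces the desired bialgebra structure on $ A^\vee $.

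The second task is to construct the $ C_2 $-symmetric monoidal equivalence $ \underline{\Mod}_A(\cat) \simeq \underline{\coMod}_{A^\vee}(\cat) $. Applying Proposition \ref{prop:mod_is_param_sym_monoidal} to $ \cat^{\vop} $ yields a $ C_2 $-symmetric monoidal functor $ \underline{\Alg}(\cat)^{\vop} \to \underline{\RMod}_{\cat^{\vop}}(C_2 \Cat_\infty) $ sending $ B \mapsto \underline{\Mod}_B(\cat)^{\vop} $; applying it directly to $ \cat $ yields $ \underline{\coAlg}(\cat) \to \underline{\RMod}_{\cat}(C_2 \Cat_\infty) $ sending $ B \mapsto \underline{\coMod}_B(\cat) $. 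The commuting square appearing in the statement of Proposition \ref{prop:param_dual_takes_coalg_to_alg}, combined with the $ (-)^{\vop} $-identifications and Proposition \ref{prop:tensor_of_param_module_cats}, furnishes a natural comparison $ C_2 $-functor $ \underline{\Mod}_A(\cat) \to \underline{\coMod}_{A^\vee}(\cat) $ in $ \underline{\RMod}_{\cat}(C_2 \Cat_\infty) $. Restricting to each orbit and appealing to the underlying equivalence \cite[Corollary 2.3.3]{Raksit20} verifies that this comparison is an equivalence on each fiber, hence a $ C_2 $-equivalence.

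The main obstacle I anticipate is in the second task: the careful bookkeeping required to chase the vertical opposites and to track compatibility between the $ \mu $-functors of Proposition \ref{prop:mod_is_param_sym_monoidal} and the dualization equivalence $ \beta $ of Proposition \ref{prop:param_dual_takes_coalg_to_alg}. In particular, one must ensure that the resulting comparison morphism acquires a genuine $ C_2 $-symmetric monoidal refinement (rather than merely a fiberwise one); this should follow because every ingredient used in its construction is either $ C_2 $-symmetric monoidal by assumption or obtained from dualization, which is itself a $ C_2 $-symmetric monoidal operation on the full $ C_2 $-subcategory of dualizable objects.
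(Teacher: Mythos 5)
Your proposal is correct and follows the same route the paper intends: the corollary is meant to drop out of Proposition \ref{prop:param_dual_takes_coalg_to_alg} in exactly the way [Raksit, Cor.~2.3.3] drops out of [Raksit, Prop.~2.3.2], and you unpack this faithfully (β gives the dual bialgebra structure via Proposition \ref{prop:models_of_bialg}, and the commuting square gives the module/comodule identification). One small tidiness point: the final fiberwise verification is redundant and the appeal to Proposition \ref{prop:tensor_of_param_module_cats} unnecessary — the commuting square of $C_2$-symmetric monoidal $C_2$-functors in Proposition \ref{prop:param_dual_takes_coalg_to_alg} already yields the desired $C_2$-symmetric monoidal equivalence directly, once one passes to the bialgebra setting and untangles the vertical-opposites as you indicate.
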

\begin{proof}
		[Proof of Proposition \ref{prop:param_dual_takes_coalg_to_alg}] 
		The existence of the functor $ \beta $ follows from an argument similar to the proof of \cite[Proposition 2.3.2]{Raksit20}, using Proposition \ref{prop:comod_functor_ess_image} and the pointwise description of module categories over $ \underline{\E}_1 $-algebras discussed in \S\ref{appendix:param_module_cats}. 
		That $ \beta $ is $ C_2 $-symmetric monoidal follows from the observation that $ \beta \circ \beta $ is canonically equivalent to the identity. 
\end{proof}
\begin{rmk}
		Let $ \cat $ be a $ C_2 $-symmetric monoidal $ C_2 $-$ \infty $-category and suppose $ \mathcal{M},\mathcal{N} $ are $ C_2 $-$ \infty $-categories right tensored over $ \cat $. 
		If $ F \colon \mathcal{M} \to \mathcal{N} $ is a $C_2$-left adjoint functor of $ \cat $-linear $ C_2 $-$ \infty $-categories, then its right adjoint $ G $ is canonically lax $ \cat $-linear (compare \cite[Remark 7.3.2.9]{LurHA}). 
\end{rmk}
\begin{lemma}\label{lemma:end_alg_as_adjoint}
		Let $ (\mathcal{M}, U\colon \mathcal{M} \to \cat) $ be a $ C_2 $-object of $ \underline{\RMod}_{\cat}(C_2 \Cat_\infty)_{/\cat} $ so that $ U $ admits a right $ C_2 $-adjoint $ G \colon \cat \to \mathcal{M} $ which is $ \cat $-linear. 
		Then there is a $ \underline{\E}_1 $-coalgebra $ A $ in $ \cat $ equipped with a map $ \alpha \colon \mathcal{M} \to \underline{\coMod}_A \left(\cat\right) $ in $ \underline{\RMod}_{\cat}(C_2 \Cat_\infty)_{/\cat} $ so that, for any $ B \colon \mathcal{O}^{\mathrm{op}}_{C_2} \to \underline{\E}_1\underline{\coAlg}(\cat) $, the map 
		\begin{equation*}
				\underline{\Map}_{\underline{\E}_1\coAlg}(A,B)\to \underline{\Map}_{\underline{\RMod}_{\cat}(C_2 \Cat_\infty)_{/\cat}}\left(\mathcal{M}, \underline{\coMod}_{B}(\cat)\right)	 	
		\end{equation*} 
		is an equivalence of $ C_2 $-$ \infty $-groupoids. 
\end{lemma}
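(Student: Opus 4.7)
The plan is to construct $A$ as the parametrized coendomorphism coalgebra of the $C_2$-adjunction $U \dashv G$ and then verify its universal property by reduction to the non-parametrized setting. The key input is the $\cat$-linearity of $G$, which will force the comonad $UG$ on $\cat$ to be representable by tensoring with a fixed object of $\cat$, giving us the underlying object of $A$.

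First, I would set $\underline{A} := U \circ G(\mathbbm{1}_\cat)$, viewed as a $C_2$-object of $\cat$; here $\mathbbm{1}_\cat$ is the unit of the $C_2$-symmetric monoidal structure. Since $G$ is $\cat$-linear and $\mathcal{M}$ is right-tensored over $\cat$, for any $c \in \cat$ we have a canonical equivalence $G(c) \simeq G(\mathbbm{1}) \otimes c$ (compatible with the $\mathcal{O}_{C_2}^\op$-restriction), so that $UG(c) \simeq \underline{A} \otimes c$ as endofunctors of $\cat$. The comonad structure on $UG$ arising from the $C_2$-adjunction then translates into a comultiplication and counit $\underline{A} \to \underline{A} \otimes \underline{A}$, $\underline{A} \to \mathbbm{1}$ in $\cat$. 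The parametrized version of Lurie's discussion of endomorphism objects (applied to $\cat^\vop$, in parallel with Proposition \ref{prop:param_dual_takes_coalg_to_alg} and the setup of Proposition \ref{prop:comod_functor_ess_image}) then assembles these into an $\underline{\E}_1$-coalgebra structure on $\underline{A}$.

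Next, I would construct the comparison $C_2$-functor $\alpha \colon \mathcal{M} \to \underline{\coMod}_{\underline{A}}(\cat)$. At the level of underlying $\infty$-categories, this is the classical comonadic comparison $\alpha(m) = (U(m), U(m) \to UGU(m) \simeq U(m) \otimes \underline{A})$, where the structure map is induced by the unit of $U \dashv G$. To promote this to a $\cat$-linear $C_2$-functor over $\cat$, I would apply the parametrized (co)monadic formalism: using Corollary \ref{cor:param_comodules_tensored} for the target and the assumption that $\mathcal{M}$ is a $C_2$-object of $\underline{\RMod}_\cat(C_2\Cat_\infty)_{/\cat}$, the required $C_2$-coherence of $\alpha$ is determined by the $C_2$-structure on $U$ and its right $C_2$-adjoint $G$. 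Since left $C_2$-adjoints in a relative adjunction over $\mathcal{O}_{C_2}^\op$ automatically assemble to $C_2$-functors and the right adjoint preserves $p$-cocartesian arrows precisely because $G$ is a $C_2$-functor, the constructions at each orbit glue to give $\alpha$ as a morphism in $\underline{\RMod}_\cat(C_2\Cat_\infty)_{/\cat}$.

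For the universal property, given any $B \in \underline{\E}_1\underline{\coAlg}(\cat)$, I would unwind $\underline{\Map}_{\underline{\RMod}_\cat(C_2\Cat_\infty)_{/\cat}}(\mathcal{M}, \underline{\coMod}_B(\cat))$ using Corollary \ref{cor:param_comodules_tensored} and the fact that a $\cat$-linear functor $\mathcal{M} \to \underline{\coMod}_B(\cat)$ over $\cat$ is equivalent to the data of a comodule structure map $U \to U(-) \otimes B$ in $\underline{\Fun}_\cat(\mathcal{M}, \cat)$ compatible with the comultiplication of $B$. Passing across the $C_2$-adjunction $U \dashv G$, this data is equivalent to a map $\mathbbm{1} \to G(\mathbbm{1}) \otimes B$ of the form $\underline{A} \to B$ in $\cat$ intertwining the comultiplications, i.e.\ a morphism of $\underline{\E}_1$-coalgebras $\underline{A} \to B$. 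The main obstacle I anticipate is precisely verifying this last chain of natural equivalences entirely within the parametrized setting; concretely, one needs a parametrized enhancement of \cite[Proposition 4.7.3.3]{LurHA} (or equivalently, the dual of Lemma \ref{lemma:alg_over_cocart_operad} combined with Proposition \ref{prop:mod_is_param_sym_monoidal}) to know that the universal property detected fiberwise upgrades to an equivalence of $C_2$-$\infty$-groupoids. Since each $\mathcal{O}_{C_2}^\op$-fiber of the comparison map is an equivalence by the classical coendomorphism result, and the constructions on both sides are manifestly $\mathcal{O}_{C_2}^\op$-natural, the result follows.
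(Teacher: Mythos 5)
Your approach matches the paper's, which simply parametrizes Raksit's Lemma~2.3.5: setting $A := UG(\mathbbm{1}_\cat)$, using $\cat$-linearity of both $U$ and $G$ to identify the comonad $UG$ with $(-)\otimes A$, reading off the $\underline{\E}_1$-coalgebra structure on $A$ from the comonad structure, and building $\alpha$ via the comonadic comparison is the content of the cited lemma, while your use of the $C_2$-adjunction hypothesis and Corollary~\ref{cor:param_comodules_tensored} to assemble a morphism over $\mathcal{O}^\op_{C_2}$ is what the paper means by ``using $\mathcal{O}^\op_{C_2}$-points of parametrized functor categories.''

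One intermediate step does not type-check, though: the phrase ``a map $\mathbbm{1}\to G(\mathbbm{1})\otimes B$ of the form $\underline{A}\to B$ in $\cat$.'' A coaction map $\rho\colon U\to U(-)\otimes B$ is a morphism in $\underline{\Fun}_\cat(\mathcal{M},\cat)$, not an object of $\mathcal{M}$ on which the adjunction $U\dashv G$ acts directly, and in any case $G(\mathbbm{1})\otimes B$ lives in $\mathcal{M}$, not $\cat$. The correct mechanism is the adjunction \emph{induced on parametrized $\cat$-linear functor categories}: since $U\dashv G$ with both $\cat$-linear, precomposition with $G$ is left $C_2$-adjoint to precomposition with $U$, and therefore
\begin{equation*}
\Map_{\underline{\Fun}_\cat(\mathcal{M},\cat)}\bigl(U,\,(-\otimes B)\circ U\bigr)\;\simeq\;\Map_{\underline{\Fun}_\cat(\cat,\cat)}\bigl(UG,\,-\otimes B\bigr)\;\simeq\;\Map_\cat(A,B),
\end{equation*}
where the last step uses the monoidal equivalence $\underline{\Fun}_\cat(\cat,\cat)\simeq\cat$, $F\mapsto F(\mathbbm{1})$. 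Restricting on the left to coaction data compatible with the comultiplication of $B$, and on the right to $\underline{\E}_1$-coalgebra maps, then recovers the stated universal property. This is exactly the ``parametrized enhancement of HA~4.7.3.3'' you identify as the main obstacle; once phrased as an adjunction between precomposition $C_2$-functors, it is handled orbitwise by Corollary~\ref{cor:C2_left_adjoint_local_crit}.
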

\begin{proof}
		The result follows from the same argument as \cite[Lemma 2.3.5]{Raksit20}, but using ($ \mathcal{O}^\op_{C_2} $-points of) parametrized functor categories in place of ordinary functor categories. 
\end{proof}
\begin{prop}\label{prop:comod_functor_ess_image}
		Let $ \cat $ be a $ C_2 $-symmetric monoidal $ C_2 $-$ \infty $-category. 
		The $C_2$-functor $ \mu \colon \underline{\coAlg} (\cat) \to \underline{\RMod}_{\cat}(C_2 \Cat_\infty)_{/\cat} $ of Proposition \ref{prop:mod_is_param_sym_monoidal} is fully faithful. 
		Over each fiber $ t \in \mathcal{T} $, its essential image consists of those pairs $ (\mathcal{M}, U )$ where $ \mathcal{M} $ is a $ \mathcal{T}^{/t} $-$\infty $-category right-tensored over $ \cat_{\underline{t}} $ and $ U $ is a $ \cat_{\underline{t}} $-linear $ \mathcal{T}^{/t} $-functor (in the sense of Definition \ref{defn:param_linear_functors}) so that
		\begin{enumerate}[label=(\alph*)]
			\item $ U $ is fiberwise comonadic, in particular it admits a right $ \mathcal{T}^{/t} $-adjoint $ G \colon \cat_{\underline{t}} \to \mathcal{M} $ 
			\item $ G $ is $ \cat_{\underline{t}} $-linear.  
		\end{enumerate}
\end{prop}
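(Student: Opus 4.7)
The plan is to emulate the non-parametrized argument (compare \cite[Lemma 2.3.4]{Raksit20} and the Barr--Beck--Lurie theorem \cite[Theorem 4.7.3.5]{LurHA}), using Lemma \ref{lemma:end_alg_as_adjoint} as the main technical input and parametrized comonadicity to identify the essential image. First, I would observe that by the pointwise characterization of $\mathcal{T}^{/t}$-module categories recalled in \S\ref{appendix:param_module_cats}, it suffices to analyze the statement over each orbit $t \in \mathcal{O}^\op_{C_2}$, and moreover the statement is local in $\cat$. Thus we reduce to showing: for a fixed $\mathcal{T}^{/t}$-symmetric monoidal $\mathcal{T}^{/t}$-$\infty$-category $\cat_{\underline{t}}$, the functor $\mu$ is fully faithful with essential image as described.

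For fully faithfulness, given coalgebras $A, B \in \underline{\coAlg}(\cat)$, I would apply Lemma \ref{lemma:end_alg_as_adjoint} to the pair $(\underline{\coMod}_A(\cat), U_A)$, noting that $U_A$ admits a $\cat$-linear right $C_2$-adjoint given by the cofree comodule functor $- \otimes A$. The universal property delivered by the lemma identifies $\underline{\Map}_{\underline{\E}_1\coAlg}(A,B)$ with $\underline{\Map}_{\underline{\RMod}_\cat(C_2\Cat_\infty)_{/\cat}}(\underline{\coMod}_A, \underline{\coMod}_B)$, which is exactly fully faithfulness of $\mu$. For the essential image, given a pair $(\mathcal{M}, U)$ satisfying (a) and (b), I would apply Lemma \ref{lemma:end_alg_as_adjoint} to produce a coalgebra $A$ and a comparison $\cat$-linear $C_2$-functor $\alpha \colon \mathcal{M} \to \underline{\coMod}_A(\cat)$ over $\cat$. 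It then remains to show $\alpha$ is an equivalence of $C_2$-$\infty$-categories, which by Corollary \ref{cor:C2_left_adjoint_local_crit} or the dual pointwise check for $C_2$-functors can be verified fiberwise.

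Fiberwise, the assertion that $\alpha$ is an equivalence is exactly the comonadic form of the Barr--Beck--Lurie theorem applied to the adjunction $(U_{t'}, G_{t'})$: condition (a) says $U_{t'}$ is comonadic (i.e.\ conservative and preserves $U_{t'}$-split totalizations), so $\mathcal{M}_{t'}$ is equivalent to comodules over the comonad $U_{t'} G_{t'}$; condition (b) (together with the $\cat_{\underline{t}}$-linearity of $U$) identifies the comonad $U_{t'} G_{t'}$ with the cofree comonad $- \otimes A_{t'}$ associated to the coalgebra $A_{t'} = U_{t'}G_{t'}(\mathbbm{1})$, in view of the standard identity $U G(X) \simeq X \otimes U G(\mathbbm{1})$ valid for any $\cat$-linear right adjoint to a $\cat$-linear functor.

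The main obstacle I anticipate is purely bookkeeping: ensuring that all of the structure produced (the coalgebra $A$, the $C_2$-functor $\alpha$, the comonad comparison) is coherent as $\mathcal{O}^\op_{C_2}$-parametrized data, and not merely pointwise. Concretely, one needs that restriction functors in $\cat$ and $\mathcal{M}$ intertwine $U$, $G$, and the comparison natural transformation $GU \to \id$ along orbit morphisms $\alpha \colon G/K \to G/H$. This coherence is precisely encoded by the condition that $U$ and $G$ are $C_2$-functors (or $\mathcal{T}^{/t}$-functors) together with $\cat_{\underline{t}}$-linearity, and so is built into hypotheses (a) and (b); once this is unwound, the parametrized Barr--Beck--Lurie statement reduces to applying the non-parametrized result fiberwise, as in the proof of Proposition \ref{prop:param_left_adjoint_local_crit}.
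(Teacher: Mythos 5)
Your approach matches the paper's: both invoke Lemma \ref{lemma:end_alg_as_adjoint} as the crux to produce the universal coalgebra and the comparison $C_2$-functor, and both finish by appealing (implicitly or via \cite[Proposition 2.3.6]{Raksit20}) to a fiberwise Barr--Beck--Lurie argument. The one difference worth flagging is presentational: the paper packages the lemma's output as a \emph{left $C_2$-adjoint} $\nu$ to $\mu$ and verifies the relative-adjunction criterion (Definition \ref{defn:rel_adjunction}(2)) by checking that the unit projects to the identity on $\mathcal{O}^\op_{C_2}$, then restricts the adjunction to a localizing one; you instead argue directly that $\alpha$ is an equivalence fiberwise. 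These are two phrasings of the same step, and both are sound. One small gap in your fully-faithfulness paragraph: Lemma \ref{lemma:end_alg_as_adjoint} produces \emph{some} coalgebra $A'$ with the universal property, and to conclude you need the further identification $A' \simeq A$ when the input pair is $(\underline{\coMod}_A, U_A)$ — this follows because the endomorphism coalgebra is $U_A G_A(\mathbbm{1}) \simeq A$, but that identification should be made explicit rather than assumed, which is one reason the paper prefers to organize the argument via the adjunction $(\nu, \mu)$ rather than two separate uses of the lemma.
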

\begin{proof}
		We may define a $ C_2 $-subcategory $ \underline{\RMod}_{\cat}(C_2 \Cat_\infty)_{/\cat}^0 $ of $ \underline{\RMod}_{\cat}(C_2 \Cat_\infty)_{/\cat}$ consisting of those pairs $ (\mathcal{M}, U) $ which are in the essential image of $ \mu $. 
		By Lemma 2.3.5 \emph{ibid.}, $ \mu $ admits a left adjoint $ \nu $. 
		Write $ q $ for the structure map $ \underline{\RMod}_{\cat}(C_2 \Cat_\infty)_{/\cat} \to \mathcal{O}^\op_{C_2} $. 
		By Lemma \ref{lemma:end_alg_as_adjoint}, the unit $ u \colon \mathrm{id} \to \mu \circ \nu $ satisfies $ q(u) $ is the identity on $ \mathcal{O}_{C_2}^\op $, hence $ (\mu, \nu) $ is a $ C_2 $-adjunction. 
		The rest of the proof is similar to that of \cite[Proposition 2.3.6]{Raksit20}.  
\end{proof}
\subsection{The Tate construction}\label{subsection:parametrized_Tate_cons}
Hochschild homology $ \HH(A/k) $ is a $ k $-module with $ S^1 $-action; taking the homotopy orbits, fixed points, and Tate construction with respect to the $ S^1 $-action gives rise to cyclic homology, negative cyclic homology, and periodic cyclic homology, respectively. 
One defines real versions of these trace theories using a parametrized version of the Tate construction \cite{QSparam_Tate}. 
In this section, we consider a variant on Quigley--Shah's parametrized Tate construction which will allow us to apply fixed points, orbits, and the Tate construction to filtered objects with filtered $ S^\sigma $-action.  
\begin{recollection}
		[Tate construction] 
		\cites[Construction 2.4.4]{Raksit20}{MNN}[Definition 5.48]{QSparam_Tate}\label{rec:Tateconstruction}
		Let $ A $ be a dualizable bialgebra object of a stable presentable symmetric monoidal $ \infty $-category $ \cat $. 
		Restriction along the counit $ A \to \mathbbm{1} $ defines a forgetful functor $ \cat \simeq \Mod_{\mathbbm{1}}(\cat) \to \Mod_A(\cat) $. 
		Denote the left and right adjoints to the restriction by $ (-)_A, (-)^A $ respectively. 
		Assume further that we are given an equivalence of $ A $-modules $ A \simeq A^\vee \otimes \omega_A $ for some invertible $ \omega_A \in \Pic(\cat) $. 
		Then given any $ A $-module $ M $, we have a morphism
		\begin{align*}
			M_A \otimes \omega_A \simeq (M \otimes_A \mathbbm{1}) \otimes \omega_A \xrightarrow{} M \otimes_A (\mathbbm{1} \otimes \omega_A \otimes A^\vee) \simeq M \otimes_A A \simeq M
		\end{align*}
		Since the $ A $-module structure on the left hand side factors canonically through the counit $ A \to \mathbbm{1} $, the map above is adjoint to a map $ \Nm_M: M \otimes \omega_A \to \hom_A(k, M) $. 
		The \emph{Tate construction of $ M $} is the cofiber of the norm map 
		\begin{equation*}
			M^{tA} := \cofib(\Nm_M) .
		\end{equation*} 
		Since the previous discussion was functorial in $ M $, we have a functor $ {\Nm_{(-)}: \Mod_A(\cat) \to \Fun(\Delta^1, \cat)} $ which induces $ (-)^{tA} \simeq \cofib \circ \Nm_{(-)} : \Mod_A(\cat ) \to \cat $.
\end{recollection}
\begin{prop}\label{prop:tate_cons_is_param_compatibility}
		Let $ \cat $ be a $ C_2 $-symmetric monoidal $ \infty $-category which is $ C_2 $-stable, fiberwise presentable, and $ C_2 $-complete and $C_2$-cocomplete\footnote{See \cite[Remark 6.11]{Shah18} on why assuming presentability is not sufficient.}, and suppose $ A $ is a dualizable cocommutative bialgebra object of $ \cat $ which satisfies the assumptions of Recollection \ref{rec:Tateconstruction}. 
		Then the Tate construction of \emph{loc. cit.} promotes to a $ C_2 $-functor
		\begin{equation*}
				(-)^{tA} \colon \underline{\LMod}_A(\cat) \to \cat \,,
		\end{equation*}
		and the norm map $ \Nm_{-} $ promotes to a natural transformation $ (-)^A \to (-)^{tA} $ of $ C_2 $-functors $ \underline{\LMod}_A(\cat) \to \cat $, where $ (-)^A $ is the right $C_2$-adjoint to base change along the unit map $ \mathbb{1} \to A $ of Proposition \ref{prop:param_modules_colimits}\ref{prop_item:param_mod_colim_change_of_alg_tensor_is_left_adjoint}. 
\end{prop}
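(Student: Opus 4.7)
The plan is to $C_2$-parametrize each step of Recollection \ref{rec:Tateconstruction}. The counit $A \to \mathbbm{1}$ is a map of $C_2$-$\E_\infty$-bialgebras in $\cat$, so by Variant \ref{variant:module_pointwise_param_tensor} restriction along it defines a $C_2$-symmetric monoidal $C_2$-functor $\cat \simeq \underline{\LMod}_{\mathbbm{1}}(\cat) \to \underline{\LMod}_A(\cat)$. Its left $C_2$-adjoint $(-)_A$ is given by Proposition \ref{prop:param_modules_colimits}\ref{prop_item:param_mod_colim_change_of_alg_tensor_is_left_adjoint}. For the right $C_2$-adjoint $(-)^A$, the fiberwise right adjoint exists in each orbit by the adjoint functor theorem (using fiberwise presentability), and $C_2$-completeness of $\cat$ ensures that the hypotheses of Corollary \ref{cor:C2_right_adjoint_local_crit} are satisfied so that these fiberwise right adjoints assemble into a right $C_2$-adjoint.

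Next, I would construct the norm map as a $C_2$-natural transformation. The invertible object $\omega_A \in \Pic(\cat)$ is assumed to give an equivalence $A \simeq A^\vee \otimes \omega_A$ of $C_2$-$A$-modules. The $C_2$-symmetric monoidal structure on $\cat$ furnishes $C_2$-natural evaluation and coevaluation maps for dualizable objects (since dualization refines to a $C_2$-functor on $\cat^\fd$), yielding a $C_2$-natural map $M_A \otimes \omega_A \to M$ of $C_2$-functors $\underline{\LMod}_A(\cat) \to \cat$ as the composite displayed in Recollection \ref{rec:Tateconstruction}. Because the left-hand side factors through the $C_2$-forgetful functor $\cat \to \underline{\LMod}_A(\cat)$, taking the mate with respect to the $C_2$-adjunction $\mathrm{forget} \dashv (-)^A$ promotes this to the norm $C_2$-natural transformation
\begin{equation*}
    \Nm_{(-)} \colon (-)_A \otimes \omega_A \longrightarrow (-)^A
\end{equation*}
of $C_2$-functors $\underline{\LMod}_A(\cat) \to \cat$; equivalently, a $C_2$-functor $\underline{\LMod}_A(\cat) \to \underline{\Fun}(\Delta^1_{C_2}, \cat)$.

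Finally, because $\cat$ is $C_2$-stable, the cofiber $C_2$-functor $\cofib \colon \underline{\Fun}(\Delta^1_{C_2}, \cat) \to \cat$ is defined and computed fiberwise. Setting $(-)^{tA} := \cofib \circ \Nm_{(-)}$ yields the desired $C_2$-functor, and the natural transformation $(-)^A \to (-)^{tA}$ is obtained as the second leg of the $C_2$-fiber sequence $(-)_A \otimes \omega_A \to (-)^A \to (-)^{tA}$.

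The main obstacle is verifying the $C_2$-naturality of the norm map itself, i.e., that the parametrized analogue of the construction in \cite[Construction 2.4.4]{Raksit20} goes through coherently rather than merely fiberwise. This reduces to the observation that all ingredients --- base change, dualization, $\omega_A$-tensoring, and the evaluation/coevaluation maps --- are intrinsic to the $C_2$-symmetric monoidal structure and hence automatically $C_2$-natural; what remains is therefore bookkeeping in the formalism of \S\ref{subsection:param_cocart_tensor} rather than the introduction of new ideas.
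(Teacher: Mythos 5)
Your plan is sound but takes a genuinely different---and considerably more laborious---route than the paper. The paper's proof is essentially a one-liner: since $\cat$ is $C_2$-complete and $C_2$-cocomplete, the restriction functor $\cat^{C_2} \to \cat^e$ admits both a left and a right adjoint (by \cite[Proposition 5.11]{Shah18} and its dual), hence preserves all limits and colimits; the result then follows from \cite[Remark 2.4.12]{Raksit20}, which records that the norm map and Tate construction of Recollection \ref{rec:Tateconstruction} are natural under limit- and colimit-preserving symmetric monoidal functors. Applied to the restriction functor, this yields exactly the compatibility between the fiberwise Tate constructions in $\cat^{C_2}$ and $\cat^e$ needed to assemble them into a $C_2$-functor and $C_2$-natural transformation. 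You instead re-derive the whole construction in the parametrized setting: the forgetful $C_2$-functor, both of its $C_2$-adjoints, the $C_2$-naturality of the norm map via the mate, and the cofiber. That can be made to work (and the observation that dualization refines to a $C_2$-functor on $\cat^{\fd}$ is the right one), but it leaves several checks implicit: you would need to verify that the forgetful functor preserves the finite $C_2$-coproducts required by Corollary \ref{cor:C2_right_adjoint_local_crit}, and your promotion of $A \simeq A^\vee \otimes \omega_A$ to an equivalence of $C_2$-$A$-modules needs a word of justification---though this is in fact automatic, since cocartesian sections are determined by their value at $C_2/C_2$ (Remark \ref{rmk:param_unstraighten}). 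The trade-off is explicitness versus brevity: your route produces the $C_2$-structure on the Tate construction directly, whereas the paper's circumvents precisely the "bookkeeping" you flag at the end by reducing the parametrized statement to a fiberwise one plus compatibility under restriction, a strategy deployed repeatedly throughout the text and worth internalizing.
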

\begin{rmk}
		In the situation of Proposition \ref{prop:tate_cons_is_param_compatibility}, unraveling definitions we see that there is a commutative diagram
		\begin{equation*}
		\begin{tikzcd}
				{\LMod}_A\left(\cat^{C_2}\right) \ar[d,"{(-)^e}"] \ar[r,"{(-)^{tA}}"] & {\LMod}_A\left(\cat^{C_2}\right)\ar[d,"{(-)^e}"] \\
				{\LMod}_{A^e}(\cat^{e})  \ar[r,"{(-)^{tA^e}}"] & {\LMod}_{A^e}(\cat^e)	\,.			
		\end{tikzcd}
		\end{equation*}
\end{rmk}
\begin{proof}
		[Proof of Proposition \ref{prop:tate_cons_is_param_compatibility}]
		Since $ \cat $ is assumed to be $ C_2 $-complete and $ C_2 $-cocomplete, in particular it admits both $C_2$-products and $ C_2$-coproducts. 
		By \cite[Proposition 5.11]{Shah18} and its dual, it follows that the restriction map $ \cat^{C_2} \to \cat^e $ has both a left and right adjoint, and therefore it preserves limits and colimits. 
		The result follows from \cite[Remark 2.4.12]{Raksit20}. 
\end{proof}
Next, we show that the parametrized Tate construction of Proposition \ref{prop:tate_cons_is_param_compatibility} is lax monoidal (compare \cite{QSparam_Tate}). 
\begin{defn}
		Let $ \cat $ be a $ C_2 $-stable $ C_2 $-symmetric monoidal $ \infty $-category, and let $ A $ be a $ C_2 $-$ \E_\infty $-algebra object of $ \cat $. 
		Write $ \underline{\LMod}^{\mathrm{ind}}_A(\cat) $ denote the smallest stable full $ C_2 $-subcategory of $ \underline{\LMod}_A(\cat) $ containing the objects $ A \otimes X $ for $ X \in \cat^{C_2} $ and $ A^e \otimes Y $ for $ Y \in \cat^e $. 
		We refer to this as the subcategory of \emph{induced} $ A $-modules. 
		By construction, it is a full $ C_2 $-stable $ C_2 $-subcategory of $ \underline{\LMod}_A(\cat) $. 
\end{defn}
\begin{rmk}
		The category $ \underline{\LMod}^{\mathrm{ind}}_A(\cat) $ is fiberwise a $ \otimes $-ideal; by construction it is closed under the restriction functor $ \underline{\LMod}_A(\cat)^{C_2} \to \underline{\LMod}_A(\cat)^e $. 
		Now write $ \overline{N}^{C_2} $ for the norm map on $ \cat $ and $ N^{C_2} $ for the relative norm map on $ A $-modules (see \cite[Appendix A]{LYang_normedrings}).
		Since $ N^{C_2}(A^e \otimes Y) \simeq \left(\overline{N}^{C_2}(A^e \otimes Y)\right) \otimes_{\overline{N}^{C_2}A^e} A \simeq A \otimes \overline{N}^{C_2}Y $, the subcategory $ \underline{\LMod}^{\mathrm{ind}}_A(\cat) $ is also closed under the relative norm map $ \underline{\LMod}_A(\cat)^e \to \underline{\LMod}_A(\cat)^{C_2} $. 
		Therefore, it is a $ C_2 $-$ \otimes $-ideal in the sense of \cite[Definition 5.24]{QSparam_Tate}. 
\end{rmk}
\begin{obs}\label{obs:tate_vanishes_on_induced}
		Let $ \cat $ be a $ C_2 $-stable $ C_2 $-symmetric monoidal $ \infty $-category, and let $ A $ be a dualizable $ C_2 $-$ \E_\infty $-bialgebra object of $ \cat $ satisfying the assumptions in Recollection \ref{rec:Tateconstruction}.
		Let $ M \in \underline{\LMod}^{\mathrm{ind}}_A(\cat) $. 
		Then $ M^{tA} \simeq 0 $.
\end{obs}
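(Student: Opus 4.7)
The plan is to prove this by reducing to the generators of $\underline{\LMod}^{\mathrm{ind}}_A(\cat)$ and then computing the norm map explicitly for a free $A$-module. Since the Tate construction is defined as the cofiber of the norm map and the involved functors $(-)_A$, $(-)^A$ are exact (each being an adjoint in the stable $C_2$-setting), the $C_2$-functor $(-)^{tA}$ is $C_2$-exact. Consequently, the full $C_2$-subcategory $\mathcal{Z} \subseteq \underline{\LMod}_A(\cat)$ consisting of those $A$-modules $M$ with $M^{tA} \simeq 0$ is a stable $C_2$-subcategory closed under retracts.

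Next, I will verify that $\mathcal{Z}$ contains the generators of $\underline{\LMod}^{\mathrm{ind}}_A(\cat)$, namely modules of the form $A \otimes X$ for $X \in \cat^{C_2}$ and $A^e \otimes Y$ for $Y \in \cat^e$. It suffices to treat $M = A \otimes X$ (the other case being identical, applied at the level of underlying $\infty$-categories). For such a free $A$-module one computes
\[
  (A \otimes X)_A \;\simeq\; (A \otimes X) \otimes_A \mathbbm{1} \;\simeq\; X,
\]
while the dualizability assumption $A \simeq A^\vee \otimes \omega_A$ gives an equivalence of $A$-modules $A \otimes X \simeq \hom(A, X \otimes \omega_A)$, yielding
\[
  (A \otimes X)^A \;\simeq\; \hom_A(\mathbbm{1}, \hom(A, X \otimes \omega_A)) \;\simeq\; \hom_\cat(\mathbbm{1}, X \otimes \omega_A) \;\simeq\; X \otimes \omega_A.
\]
Under these identifications, tracing through the construction of $\Nm_M$ in Recollection \ref{rec:Tateconstruction} shows that the norm map $\Nm_{A \otimes X} \colon X \otimes \omega_A \to X \otimes \omega_A$ is the identity, hence an equivalence, so $(A \otimes X)^{tA} \simeq 0$.

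The main subtlety to handle carefully will be the parametrized/$C_2$-equivariant bookkeeping: one needs to know that vanishing of $M^{tA}$ can be checked orbitwise, and that the restriction $\underline{\LMod}_A(\cat)^{C_2} \to \underline{\LMod}_A(\cat)^e$ carries the chosen generator $A \otimes X$ to a generator of the same type downstairs. This follows from the pointwise description of the parametrized Tate construction recorded in the remark after Proposition \ref{prop:tate_cons_is_param_compatibility} and from the definition of $\underline{\LMod}^{\mathrm{ind}}_A(\cat)$ as a $C_2$-stable subcategory, together with the closure of $\mathcal{Z}$ under colimits that participate in the $C_2$-stable subcategory generated by these objects. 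Combining the stability of $\mathcal{Z}$ with the verification on generators then yields the result.
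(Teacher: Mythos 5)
Your argument is correct and is the standard verification the paper leaves implicit by labelling this result an Observation: since $(-)^{tA}$ is a fiberwise exact $C_2$-functor (the cofiber of the norm transformation between two exact functors), its kernel is a stable $C_2$-subcategory closed under retracts and compatible with the restriction functors, so by minimality of $\underline{\LMod}^{\mathrm{ind}}_A(\cat)$ it suffices to verify vanishing on the generators $A \otimes X$ (for $X \in \cat^{C_2}$) and $A^e \otimes Y$ (for $Y \in \cat^e$), which your explicit norm-map computation accomplishes. One small imprecision worth flagging: the claim that $\Nm_{A \otimes X}$ becomes \emph{the identity} under your two identifications of source and target with $X \otimes \omega_A$ is more than you need and would require carefully tracing the coherences among the coevaluation for $A$, the dual of the counit $A \to \mathbbm{1}$, and the chosen self-duality $A \simeq A^\vee \otimes \omega_A$; what the construction actually gives immediately is that it is an \emph{equivalence}, since $(-)_A$, $(-)^A$, and the norm transformation are all $\cat$-linear, so $\Nm_{A \otimes X} \simeq \Nm_A \otimes \mathrm{id}_X$ and one reduces to checking that $\Nm_A \colon \omega_A \to \hom_A(\mathbbm{1}, A) \simeq \omega_A$ is an equivalence, which is exactly what the hypothesis $A \simeq A^\vee \otimes \omega_A$ ensures. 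The conclusion $(A \otimes X)^{tA} \simeq 0$ follows either way, and the parametrized bookkeeping (checking vanishing orbitwise via the commutative diagram after Proposition \ref{prop:tate_cons_is_param_compatibility}, and noting that restriction carries generators to generators) is handled correctly.
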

\begin{prop}\label{prop:param_tate_lax_monoidal}
		Let $ \cat $ be a $C_2$-symmetric monoidal $C_2$-stable $C_2$-$ \infty $-category and let $ A $ be a dualizable bialgebra object of $ \cat $. 
		Assume that $ \cat $ is $C_2$-complete and $C_2$-cocomplete, and that $ \cat $ is fiberwise compactly generated. 
		Then there is a unique pair of data: 
		\begin{itemize}
			\item A lax $ C_2 $-symmetric monoidal structure on the functor $ (-)^{tA} \colon \underline{\LMod}_A(\cat) \to \cat $
			\item A lax $ C_2 $-symmetric monoidal structure on the natural transformation $ (-)^A \to (-)^{tA} $. 
		\end{itemize}
\end{prop}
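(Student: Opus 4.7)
The plan is to mimic the strategy of \cite[Proposition 2.4.13]{Raksit20} (which itself follows the approach of Nikolaus--Scholze), promoted to the $C_2$-parametrized setting using the machinery of Quigley--Shah \cite{QSparam_Tate}. The starting observation is that the right $C_2$-adjoint $(-)^A$ to the $C_2$-symmetric monoidal restriction functor $\cat \to \underline{\LMod}_A(\cat)$ along the counit $A \to \mathbbm{1}$ (using the coalgebra structure on $A$) acquires a canonical lax $C_2$-symmetric monoidal structure by adjunction internal to $C_2$-$\infty$-operads; this uses the standard fact that right parametrized adjoints of $C_2$-symmetric monoidal functors are canonically lax $C_2$-symmetric monoidal, applied fiberwise together with the local-to-global criterion of Corollary \ref{cor:C2_right_adjoint_local_crit}.

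First, I would verify that $\underline{\LMod}^{\mathrm{ind}}_A(\cat)$ is a stable $C_2$-$\otimes$-ideal of $\underline{\LMod}_A(\cat)$ in the sense of \cite[Definition 5.24]{QSparam_Tate}: the remark preceding Observation \ref{obs:tate_vanishes_on_induced} already shows closure under the tensor product and the relative norm, and stability follows from the construction. Under our assumptions ($\cat$ is $C_2$-complete, $C_2$-cocomplete, and fiberwise compactly generated), the parametrized Verdier quotient $\underline{\LMod}_A(\cat)/\underline{\LMod}^{\mathrm{ind}}_A(\cat)$ then inherits a $C_2$-symmetric monoidal structure so that the localization $C_2$-functor is $C_2$-symmetric monoidal (this is the parametrized analogue of \cite[Theorem 2.1]{MNN} and is developed in \cite[\S5]{QSparam_Tate}).

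Next, I would exploit the universal property of this $C_2$-symmetric monoidal localization: lax $C_2$-symmetric monoidal functors out of the quotient correspond to lax $C_2$-symmetric monoidal functors on $\underline{\LMod}_A(\cat)$ that send the induced modules to zero. By Observation \ref{obs:tate_vanishes_on_induced}, the Tate construction $(-)^{tA}$ factors through the quotient; by its construction as $\cofib(\Nm_{(-)})$ (Recollection \ref{rec:Tateconstruction}, lifted to $C_2$-functors via Proposition \ref{prop:tate_cons_is_param_compatibility}), the norm map $(-)^A \to (-)^{tA}$ realizes this factorization as the image of $(-)^A$ under the localization. Since $(-)^A$ is lax $C_2$-symmetric monoidal, the universal property yields the desired lax $C_2$-symmetric monoidal structure on $(-)^{tA}$ and, simultaneously, on the natural transformation $(-)^A \to (-)^{tA}$; uniqueness is automatic from the universal property.

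The main obstacle is ensuring that the parametrized Verdier quotient by a $C_2$-$\otimes$-ideal has the expected universal property and that the localization functor is genuinely $C_2$-symmetric monoidal (rather than merely fiberwise symmetric monoidal). The fiberwise compact generation hypothesis is used precisely here, as it allows one to check the universal property on each fiber and then glue via a local-to-global argument as in the proofs of Propositions \ref{prop:param_left_adjoint_local_crit} and \ref{prop:mod_is_param_sym_monoidal}. Once this machinery is in place, the argument is formal.
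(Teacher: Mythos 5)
Your proposal follows the same strategy as the paper: form the parametrized Verdier quotient by the $C_2$-$\otimes$-ideal of induced modules, invoke the universal property of the quotient (the paper directly cites \cite[Theorem 5.28]{QSparam_Tate}, which is exactly the parametrized analogue of Nikolaus--Scholze you gesture at) to produce a lax $C_2$-symmetric monoidal functor $L((-)^A)$ together with a lax transformation $(-)^A \to L((-)^A)$, and then identify $L((-)^A)$ with $(-)^{tA}$.

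The one place where your write-up glosses over the paper's argument is the final identification $L((-)^A) \simeq (-)^{tA}$. Observation \ref{obs:tate_vanishes_on_induced} tells you that $(-)^{tA}$ \emph{does} vanish on the ideal, hence also factors through the quotient, but that alone does not immediately identify it with the \emph{universal} exact functor under $(-)^A$ that vanishes on the ideal. The paper pins this down by checking at the level of $C_2$-exact $C_2$-functors, and the crucial extra ingredient is Lemma \ref{lemma:param_mod_cat_gen_by_induced}, which exhibits every module as a colimit of induced objects (and shows the comparison map on fixed-points/Tate is an equivalence in the colimit). Your sentence asserting that ``the norm map $(-)^A \to (-)^{tA}$ realizes this factorization as the image of $(-)^A$ under the localization'' is the correct conclusion, but it should be justified by this generation-by-induced-objects argument; without it, you only know that $(-)^{tA}$ is \emph{some} functor under $(-)^A$ that kills the ideal, not the universal one.
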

\begin{proof}
		Take $ C =\underline{\LMod}_A(\cat) $, $ D = \underline{\LMod}^{\mathrm{ind}}_A(\cat) $ in \cite[Theorem 5.28]{QSparam_Tate}, $ E = \cat $, and keep the notation $ L $ for the functor $ \Fun_{C_2}^{\mathrm{ex},\mathrm{lax}}(C, E) \to \Fun_{C_2}^{\mathrm{ex},\mathrm{lax}}(C/D, E) $ which is left adjoint to restriction along the quotient $ C \to C/D $. 
		By the theorem cited above, there exists an essentially unique lax $ C_2 $-symmetric monoidal functor $ L \left((-)^A\right) \colon C/D \to E $ and (regarding $ L \left((-)^A\right) $ as a functor $ C \to E $ which vanishes on $ D $) and the unit of the adjunction $ (-)^A \to L \left((-)^A\right) $ acquires a canonical lax $ C_2 $-symmetric monoidal structure. 
		It remains to check that $  L \left((-)^A\right) \simeq (-)^{tA} $. 
		As in the proof of \cite[Theorem I.3.1]{nikolaus-scholze}, it suffices to check the equivalence at the level of $ C_2 $-exact $ C_2 $-functors. 
		This follows by definition of the Tate construction, Observation \ref{obs:tate_vanishes_on_induced}, and Lemma \ref{lemma:param_mod_cat_gen_by_induced}. 
\end{proof}
\begin{rmk}
		It follows from Proposition \ref{prop:param_tate_lax_monoidal} that when $ \cat = \underline{\Spectra}^{C_2} $ and $ A = \sphere[K] $ for $ K $ any finite group or compact Lie group sitting in an extension $ 1 \to K \to \widehat{K} \to C_2 \to 1 $ (compare \cite{QSparam_Tate}), the norm map and Tate construction agree with the norm map and \emph{parametrized Tate construction} of \cite{QSparam_Tate} under the equivalence of Example \ref{ex:functor_cat_as_module_cat}.   
\end{rmk}

\begin{lemma}\label{lemma:param_mod_cat_gen_by_induced}
		Let $ \cat $ be a $C_2$-symmetric monoidal $C_2$-stable $C_2$-$ \infty $-category and let $ A $ be an algebra object of $ \cat $. 
		Assume that $ \cat $ is $C_2$-complete and $C_2$-cocomplete, and that $ \cat $ is fiberwise compactly generated.
		Let $ X \in \underline{\LMod}_A\left(\cat\right)_t $. 
		Then the following maps
		\begin{align*}
				\colim_{Y \in \left(\underline{\LMod}^{\mathrm{ind}}_A(\cat)_t\right)_{/X}} Y &\to X \\
				\colim_{Y \in \left(\underline{\LMod}^{\mathrm{ind}}_A(\cat)_t\right)_{/X}} \cofib(Y \to X)^A &\to \colim_{Y \in \left(\underline{\LMod}^{\mathrm{ind}}_A(\cat)_t\right)_{/X}} \cofib(Y \to X)^{tA} 
		\end{align*}
		are equivalences in $ \cat_t $. 
\end{lemma}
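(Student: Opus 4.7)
The plan is to reduce the assertion to the fiber at $t$, where it becomes a statement about the ordinary $\infty$-category $\LMod_A(\cat)_t$. Write $\mathcal{I} := \underline{\LMod}^{\mathrm{ind}}_A(\cat)_t$ and $C(Y) := \cofib(Y \to X)$ throughout, and note that $\mathcal{I}$ contains all free $A$-modules $A \otimes Z$ (for $Z \in \cat_t$) by definition. The key preliminary observation is that $\mathcal{I}$ is stable, hence contains a zero object, so the slice category $\mathcal{I}_{/X}$ has initial object $(0 \to X)$ and thus weakly contractible classifying space. As a consequence, in the stable setting, the colimit over $\mathcal{I}_{/X}$ of any constant functor recovers its value.

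For the first equivalence, I would consider the bar resolution $\mathrm{Bar}_\bullet(A, A, X) = A^{\otimes(\bullet+1)}\otimes X$, which is a simplicial object in $\mathcal{I}$ (each term being a free $A$-module) augmenting to $X$ with geometric realization $X$. This resolution induces a functor $\Delta^{\op} \to \mathcal{I}_{/X}$, and a standard cofinality argument shows this functor is cofinal, yielding $\colim_{Y \in \mathcal{I}_{/X}} Y \simeq \colim_{[n] \in \Delta^{\op}} \mathrm{Bar}_n \simeq X$.

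For the second equivalence, apply $(-)^{tA}$ to the cofiber sequence $Y \to X \to C(Y)$ to obtain $Y^{tA} \to X^{tA} \to C(Y)^{tA}$; since $Y^{tA} \simeq 0$ for $Y \in \mathcal{I}$ by Observation \ref{obs:tate_vanishes_on_induced}, passing to the colimit and using contractibility of $\mathcal{I}_{/X}$ gives $\colim_Y C(Y)^{tA} \simeq X^{tA}$. Similarly applying $(-)^A$ produces a cofiber sequence $Y^A \to X^A \to C(Y)^A$; the vanishing of $Y^{tA}$ forces the norm map $Y_A \otimes \omega_A \xrightarrow{\sim} Y^A$ to be an equivalence for $Y \in \mathcal{I}$, so using that $(-)_A$ preserves colimits together with the first equivalence I compute $\colim_Y Y^A \simeq (\colim_Y Y)_A \otimes \omega_A \simeq X_A \otimes \omega_A$. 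Hence $\colim_Y C(Y)^A \simeq \cofib(X_A \otimes \omega_A \to X^A) \simeq X^{tA}$ by the very definition of the Tate construction. A brief diagram chase then identifies the induced map $\colim_Y C(Y)^A \to \colim_Y C(Y)^{tA}$ with the identity on $X^{tA}$, which closes the argument.

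The hard part will be justifying the cofinality of the bar resolution in $\mathcal{I}_{/X}$ that underpins the first equivalence; once this is in place, the second equivalence follows by a formal manipulation combining exactness of the functors $(-)^A, (-)_A, (-)^{tA}$ with Observation \ref{obs:tate_vanishes_on_induced}.
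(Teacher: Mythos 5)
Your reduction to the fiber at $t$ is the same move the paper makes (its proof is simply a citation to \cite[Lemma 2.4.9]{Raksit20}, applied to the ordinary symmetric monoidal $\infty$-category $\cat_t$), and your treatment of the second equivalence is correct granted the first: for $Y\in\mathcal{I}$ the vanishing $Y^{tA}\simeq 0$ makes the functor $Y\mapsto C(Y)^{tA}$ canonically constant at $X^{tA}$, so the weak contractibility of $\mathcal{I}_{/X}$ (from the initial object $0\to X$) identifies both colimits with $X^{tA}$, and the comparison with the norm square shows the induced map is the identity.

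The gap is in the first equivalence. For $\Delta^{\op}\xrightarrow{\mathrm{Bar}_\bullet(A,A,X)}\mathcal{I}_{/X}$ to be cofinal, Quillen's Theorem A requires that for \emph{every} object $(Y\to X)$ of $\mathcal{I}_{/X}$, the category of pairs $\bigl([n],\,g:Y\to\mathrm{Bar}_n(A,A,X)\ \text{over}\ X\bigr)$ be weakly contractible. Already nonemptiness is unclear: for a general $Y\in\mathcal{I}$ (say an iterated cone on free modules) and a given map $Y\to X$, there is an obstruction to lifting along $\mathrm{Bar}_n(A,A,X)\to X$, and nothing in your setup rules it out. More tellingly, your argument nowhere uses that $\cat$ is fiberwise compactly generated --- if the bar construction were unconditionally cofinal in $\mathcal{I}_{/X}$, the first equivalence would hold with no compactness hypothesis at all, since $|\mathrm{Bar}_\bullet(A,A,X)|\simeq X$ always. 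The hypothesis is there precisely because the intended argument (Raksit's) runs through it: $\mathcal{I}_{/X}$ is \emph{filtered} because $\mathcal{I}$ is stable and thus has finite colimits; a set of compact generators of $\LMod_{A_t}(\cat_t)$ (free modules on compacts of $\cat_t$) lies in $\mathcal{I}_t$; and one tests the comparison map $\colim_Y Y\to X$ against compact objects, using that filtered colimits commute with mapping out of compacts. I would replace the bar-resolution cofinality step with this compact-generation argument, or simply invoke \cite[Lemma 2.4.9]{Raksit20} fiberwise as the paper does.
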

\begin{proof}
		The result follows immediately from \cite[Lemma 2.4.9]{Raksit20}. 
\end{proof}

\begin{rmk}\label{rmk:param_tate_naturality}
		Let $ \mathcal{D} $ be a $C_2$-symmetric monoidal $C_2$-stable $C_2$-$ \infty $-category and let $ F $ be a $ C_2 $-symmetric monoidal functor $ \cat \to \mathcal{D} $. 
		Let $ A $ be a dualizable algebra object of $ \cat $. 

		For any $ X \in \cat$, there is a canonical sequence of transformations $ F(X)_{F(A)} \to F(X_A) \xrightarrow{F(\mathrm{Nm})} F(X^A) \to F(X)^{F(A)} $ whose composite may be identified with the norm map with respect to $ F(A) $. 
		If $ F $ preserves all $ C_2 $-colimits, then the canonical map $ F(X)_{F(A)} \to F(X_A) $ is an equivalence. 
		In particular, there is a natural transformation $ F( (-)^{tA}) \to F(-)^{tF(A)} $. 
		If $ F $ additionally preserves all $ C_2 $-limits, then the aforementioned natural transformation is an equivalence. 
\end{rmk}

\section{Derived involutive algebra}\label{section:scr_with_inv}
In this section, we define derived algebras in $ C_2 $-Mackey functors over $ \underline{k} $ (Notation \ref{ntn:fixpt_green_functor}) which admit norm maps; these are our derived rings with involution. 
Each derived ring with involution over $ \underline{k} $ has an underlying $ C_2 $-$\E_\infty$-algebra over $ \underline{k} $. 

The purpose for defining derived involutive algebras is twofold: Let us reiterate the reasons discussed in \S\ref{subsection:intro_mainresult}. 
First, while real Hochschild homology is defined for all $ C_2 $-$ \E_\infty $-$ \underline{k} $-algebras, we do not expect a real Hochschild--Kostant--Rosenberg theorem to hold at this level of generality: The ordinary Hochschild--Kostant--Rosenberg theorem applies only to derived rings. 
Furthermore, we will need to equip cochains on the involutive circle $ \underline{\Z}^{S^\sigma} $ and its filtered cousin $ \tau_{\geq *} \underline{\Z}^{S^\sigma} $ and $ \D^{\sigma,\vee}_+ $ with additional structure in order to formulate the universal property of filtered real Hochschild homology and the involutive de Rham complex. 
In particular, the latter are not connective; thus it is not sufficient to take our derived involutive algebras to be simplicial objects in cohomological $ C_2 $-Tambara functors (up to some notion of weak equivalence). 
We expect our theory of derived involutive algebra to be relevant to the relationship between involutions in algebraic geometry and genuine involutions in the sense of \cite{CHN25}. 

To construct derived involutive algebras, we modify the formalism of Bhatt--Mathew and Mathew (recorded in \cite[\S4]{Raksit20}). 
Our definition uses the language of filtered monads from \emph{loc. cit.}, which we recall in \S\ref{subsection:filtered_monads} and extend to $ C_2 $-$ \infty $-categories. 
The general construction is contained in \S\ref{subsection:involutive_dalg}. 
In \S\ref{subsection:involutive_dalg_connectivity}, we discuss how derived involutive algebra structures behave under various (co)connectivity assumptions; the results contained therein will be used to endow the filtered involutive circle $ \underline{\Z}^{S^\sigma} $ and its associated graded with the additional structure needed to state and prove the main theorem(s) of this paper.  

\subsection{Filtered monads}\label{subsection:filtered_monads}
Let $ k $ be a discrete commutative ring with an involution. 
We will define (not necessarily connective) derived $ k $-algebras with involution using the formalism of monads, similar to the approach taken in \cite[\S4.1-2]{Raksit20}. 
In particular, we will utilize the notion of \emph{filtered monad} (Definition 4.1.2 of \emph{loc. cit.}). 
The reader unfamiliar with the formalism of monads may find it useful to compare the Barr--Beck--Lurie theorem \cite[Theorem 4.7.3.5]{LurHA} and the characterization of $ \E_\infty $-algebras in a presentable symmetric monoidal $ \infty $-category $ \cat $ as modules over the symmetric algebra monad \cite[Construction 4.1.1]{Raksit20}.  

\begin{defn}\label{defn:param_monad}
	Let $ \cat $ be a $ C_2 $-$ \infty $-category. 
	A \emph{monad} on $ \cat $ is an $ \E_1 $-algebra object in the monoidal $ \infty $-category of $ C_2 $-endofunctors $ \End_{C_2}(\cat) = \Fun_{C_2}\left(\mathcal{O}^\op_{C_2}, \underline{\End}(\cat)\right) $. 

	A \emph{filtered monad} on $ \cat $ is a lax monoidal functor $ \Z^\times_{\geq 0 } \to \End_{C_2}(\cat) $, where $ \Z^\times_{\geq 0 }  $ denotes the partially ordered set of nonnegative integers regarded as a monoidal category via multiplication and $ \Fun_{C_2}\left(\mathcal{O}^\op_{C_2},\underline{\End}(\cat)\right) $ is equipped with the composition monoidal structure. 

	More generally, if $ \mathcal{E} $ is a monoidal full subcategory of $ \End_{C_2}(\cat) $, we will refer to algebra objects of $ \mathcal{E} $ as $ \mathcal{E} $\emph{-monads} and lax monoidal functors $ \Z_{\geq 0} \to \mathcal{E} $ as \emph{filtered $ \mathcal{E} $-monads}.
\end{defn} 
\begin{rmk}\label{rmk:fiberwise_sifted_vs_param_sifted} 
		Let $ \cat $ be a $ \mathcal{T} $-$\infty $-category and suppose that for all $ \alpha \colon s \to t $, the restriction functor $ \alpha^* \colon \cat_{t} \to \cat_s $ preserves all sifted colimits. 
		Let $ F \colon \cat \to \cat $ be a $ \mathcal{T} $-functor. 
		It follows from \cite[Theorem D(2)]{Shah_paramII} that $ F $ preserves all $ \mathcal{T} $-sifted $ \mathcal{T} $-colimits \cite[Definition 1.12]{Shah_paramII} if and only if $ F $ preserves all sifted colimits fiberwise, i.e. if $ F_t \colon \cat_t \to \cat_t $ preserves sifted colimits for every $ t \in \mathcal{T} $. 
		Moreover, a $ \mathcal{T} $-subcategory $ \mathcal{D} \to \cat $ generates $ \cat $ under $ \mathcal{T} $-sifted colimits if and only if, for each $ t \in \mathcal{T} $, $ \mathcal{D}_t $ generates $ \cat_t $ under sifted colimits.  
\end{rmk}
\begin{cons}\label{cons:param_left_modules_over_monad}
		Let $ \cat $ be a $ C_2 $-$ \infty $-category. 
		Under the identification of Proposition \ref{prop:param_cats_monoidal_const_operads}, a monad on $ \cat $ is equivalently a $ \mathcal{O}^\op_{C_2} $-cocartesian family of $ \E_1 $-algebra objects in the $ \mathcal{O}^\op_{C_2} $-cocartesian family of $ \E_1 $-monoidal $ \infty $-categories $ \underline{\End}(\cat)^\circ $. 
		Any monad $ T $ on $ \cat $ admits a $ C_2 $-$ \infty $-category of left modules (for instance, by applying the constructions of \cite[\S4.7.1]{LurHA} for each object $ t \in \mathcal{O}^\op_{C_2} $). 
\end{cons}
\begin{rmk}\label{rmk:restrict_param_endomorphisms}
		Notice that for any $ C_2 $-$ \infty $-category $ \cat $, we have monoidal restriction functors $ \End_{C_2}(\cat) \to \End(\cat^{C_2}) $ and $ \End_{C_2}(\cat) \to \End(\cat^{e}) $. 
\end{rmk}
\begin{lemma}\label{lemma:param_monad_induces_param_adjunction}
		Let $ \cat $ be a $ C_2 $-$ \infty $-category which admits finite $ C_2 $-products and let $ T $ be a monad on $ \cat $ in the sense of Definition \ref{defn:param_monad}. 
		Then there is a $ C_2 $-adjunction $ U \colon \underline{\Mod}_T(\cat) \rlarrows \cat \colon T $ which recovers the adjunction $ \Mod_{T^e}(\cat^e) \rlarrows  \cat^e $ on underlying $ \infty $-categories, where $ T^e $ is the image of $ T $ under the restriction functor of Remark \ref{rmk:restrict_param_endomorphisms}. 
\end{lemma}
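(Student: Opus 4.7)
The plan is to apply Corollary \ref{cor:C2_left_adjoint_local_crit} to the forgetful $C_2$-functor $U$. By Construction \ref{cons:param_left_modules_over_monad}, we have a $C_2$-$\infty$-category $\underline{\Mod}_T(\cat)$ whose fiber over each orbit $C_2/H$ is the classical $\infty$-category $\Mod_{T_H}(\cat_H)$ of modules over the monad $T_H$ (the image of $T$ under the restriction $\End_{C_2}(\cat) \to \End(\cat_H)$ of Remark \ref{rmk:restrict_param_endomorphisms}), with restriction functors induced from those on $\cat$ together with the data of $T$ as a $\mathcal{O}^\op_{C_2}$-cocartesian family of monads. The forgetful $C_2$-functor $U \colon \underline{\Mod}_T(\cat) \to \cat$ is assembled from the fiberwise forgetful functors.

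To invoke Corollary \ref{cor:C2_left_adjoint_local_crit}, it suffices to verify that (a) each $U_t$ admits a left adjoint fiberwise, and (b) $\underline{\Mod}_T(\cat)$ admits finite $C_2$-products which are preserved by $U$. Condition (a) is standard: the free $T_t$-module functor $X \mapsto T_t X$ provides the left adjoint, e.g.\ by \cite[Corollary 4.7.3.14]{LurHA}. For (b), recall that over each orbit the forgetful functor $U_t$ creates all limits that exist in $\cat_t$ (\cite[Corollary 4.2.3.3]{LurHA}); since $\cat$ admits finite $C_2$-products by hypothesis, and the restriction functors on modules are built from those on $\cat$ by construction, this creation of limits organizes into the statement that $\underline{\Mod}_T(\cat)$ admits finite $C_2$-products which are created — in particular preserved — by $U$.

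Corollary \ref{cor:C2_left_adjoint_local_crit} then produces a left $C_2$-adjoint to $U$, and the pointwise description of this adjoint in the proof of Proposition \ref{prop:param_left_adjoint_local_crit} shows that it is computed fiberwise by the free module functors, so that the composite with $U$ recovers the underlying endofunctor of $T$; this justifies the notational identification of the left $C_2$-adjoint with $T$ in the statement. Restricting to the fiber over $C_2/e$ recovers the ordinary free-forgetful adjunction $\Mod_{T^e}(\cat^e) \rlarrows \cat^e$. The only genuinely nontrivial point, though routine, is the compatibility of the parametrized restriction functors on $\underline{\Mod}_T(\cat)$ with $U$ needed to verify the Beck--Chevalley hypothesis \ref{prop_assumption:param_left_adjoint_local_crit} of Proposition \ref{prop:param_left_adjoint_local_crit}; this is essentially encoded in Construction \ref{cons:param_left_modules_over_monad}, where the cocartesian edges of $\underline{\Mod}_T(\cat) \to \mathcal{O}^\op_{C_2}$ lie over cocartesian edges of $\cat$ and the module structure is transported along them via the monad structure on $T$.
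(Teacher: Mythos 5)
Your approach is essentially the paper's: the official proof simply cites Example \ref{ex:param_endomorphisms_cats}, Proposition \ref{prop:param_modules_limits}, and Corollary \ref{cor:C2_left_adjoint_local_crit}, which is the same pipeline you set up. The one place your write-up is genuinely loose is the verification that $\underline{\Mod}_T(\cat)$ admits finite $C_2$-products preserved by $U$. You reduce this to fiberwise creation of limits by $U_t$ plus ``the restriction functors on modules are built from those on $\cat$,'' but the indexed products --- that is, the existence of a right adjoint to the restriction functor $\underline{\Mod}_T(\cat)^{C_2} \to \underline{\Mod}_T(\cat)^e$ --- are not a fiberwise matter and do not follow from \cite[Corollary 4.2.3.3]{LurHA} alone. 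This is exactly the content of Proposition \ref{prop:param_modules_limits}(a): because $\cat$ is left-tensored over the $\underline{\E}_1$-monoidal $C_2$-$\infty$-category $\underline{\Fun}(\cat,\cat)$ (Example \ref{ex:param_endomorphisms_cats}) and the restriction $\cat^{C_2} \to \cat^e$ is monoidal for that structure, its right adjoint $G$ is lax monoidal, so one obtains the desired right adjoint on module categories as the composite $\LMod_{T^e}(\cat^e) \xrightarrow{G} \LMod_{G(T^e)}(\cat^{C_2}) \xrightarrow{\eta^*} \LMod_{T^{C_2}}(\cat^{C_2})$ along the unit $\eta \colon T^{C_2} \to G(T^e)$. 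Relatedly, singling out the Beck--Chevalley condition \ref{prop_assumption:param_left_adjoint_local_crit} as the ``only genuinely nontrivial point'' is slightly misdirected: that condition presupposes the right adjoints $\alpha_*$ already exist on both sides, and establishing their existence on $\underline{\Mod}_T(\cat)$ is where the argument actually has content. Once you plug in Proposition \ref{prop:param_modules_limits}, the rest of your argument is fine.
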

\begin{proof}
		Follows from Example \ref{ex:param_endomorphisms_cats}, Proposition \ref{prop:param_modules_limits}, and Corollary \ref{cor:C2_left_adjoint_local_crit}. 
\end{proof}
The following proposition is a straightforward generalization of \cite[Proposition 4.1.4]{Raksit20}, hence we omit its proof. 
\begin{prop}\label{prop:colim_of_filt_param_monad}
		Let $ \cat $, $ \mathcal{E} $ be as in Definition \ref{defn:param_monad}. Assume that
		\begin{enumerate}[label=(\alph*)]
			\item $ \cat $ admits all small $ C_2 $-colimits,
			\item $ \mathcal{E} $ is closed under pointwise sequential colimits, so we have an adjunction $ \colim \colon \Fun(\Z_{\geq 0}, \mathcal{E}) \rlarrows \mathcal{E} \colon \delta $, where $ \delta $ denotes the diagonal functor, 
			\item Each $ F \in \mathcal{E} $ commutes with sequential colimits. 
		\end{enumerate}
		Then the adjunction $ (\colim, \delta ) $ canonically lifts to a relative adjunction $ \Fun(\Z_{\geq 0}, \mathcal{E}^\circ) \rlarrows \mathcal{E}^\circ \colon \delta $ over $ \Assoc^\otimes $. 
		In particular, the colimit of a filtered $ \mathcal{E} $-monad is canonically an $ \mathcal{E} $-monad. 
\end{prop}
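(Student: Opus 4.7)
The plan is to adapt the argument of \cite[Proposition 4.1.4]{Raksit20} with only cosmetic changes, once one observes that Definition \ref{defn:param_monad} packages the $C_2$-equivariance entirely into the ambient monoidal $\infty$-category $\End_{C_2}(\cat)$. That is, an $\mathcal{E}$-monad is an \emph{ordinary} $\E_1$-algebra object, and a filtered $\mathcal{E}$-monad is an \emph{ordinary} lax monoidal functor $\Z_{\geq 0}^\times \to \mathcal{E}$; no parametrized algebra is invoked. Consequently the verification is insensitive to the $C_2$-parametrization and reduces to checking Raksit's argument goes through in our setting.

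First I would identify the $\infty$-category of filtered $\mathcal{E}$-monads with $\E_1$-algebras in the Day convolution monoidal structure on $\Fun(\Z_{\geq 0}, \mathcal{E})$ induced by multiplication on $\Z_{\geq 0}$; this Day convolution is definable because $\mathcal{E}$ admits the necessary sequential colimits by assumption (b). Under this identification, the adjunction $(\colim, \delta)$ lifts to a relative adjunction over $\Assoc^\otimes$ exactly when it is a \emph{monoidal} adjunction, i.e. when $\colim$ is strong monoidal.

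Next I would verify strong monoidality of $\colim$ via the Fubini identity
\begin{equation*}
\colim_n (F_\bullet \star G_\bullet)_n \;\simeq\; \colim_{(i,j) \in \Z_{\geq 0} \times \Z_{\geq 0}} F_i \circ G_j \;\simeq\; \left(\colim_i F_i\right) \circ \left(\colim_j G_j\right),
\end{equation*}
where the first equivalence uses cofinality of the diagonal $\Z_{\geq 0} \to \Z_{\geq 0} \times \Z_{\geq 0}$ and the second invokes assumption (c) that composition in $\mathcal{E}$ commutes with sequential colimits in each variable separately. Correspondingly, $\delta$ is strong monoidal because the Day convolution of two constant diagrams at $F$ and $G$ is the constant diagram at $F \circ G$. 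Applying \cite[Proposition 7.3.2.11]{LurHA} then promotes the monoidal adjunction to a relative adjunction of cocartesian fibrations over $\Assoc^\otimes$, and the ``in particular'' statement follows by taking fibers over the distinguished object of $\Assoc^\otimes$ parametrizing $\E_1$-algebras.

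The main obstacle I anticipate is essentially notational rather than conceptual: one must verify that the cocartesian fibration denoted $\Fun(\Z_{\geq 0}, \mathcal{E}^\circ)$ in the statement refers to the Day convolution structure (not the pointwise one, whose algebras would merely be $\Z_{\geq 0}$-diagrams of $\E_1$-algebras in $\mathcal{E}$), and to confirm that Day convolution exists as a monoidal $\infty$-category in our setting with the expected algebra objects. Once those bookkeeping details are settled, every step is formal and the proof amounts to quoting Raksit's verification verbatim.
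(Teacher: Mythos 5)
The paper gives no proof of this proposition at all---it declares the statement ``a straightforward generalization of \cite[Proposition 4.1.4]{Raksit20}'' and omits the argument. Your reduction to Raksit's argument, on the grounds that Definition \ref{defn:param_monad} packages all of the $C_2$-equivariance inside the \emph{ordinary} monoidal $\infty$-category $\End_{C_2}(\cat)$ so that no genuinely parametrized algebra enters, is exactly the reduction the author intends; and the Fubini-type calculation using assumption (c) is the right computational core of that argument.

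There is, however, a real gap in your first step. You assert that the Day convolution monoidal structure on $\Fun(\Z_{\geq 0},\mathcal{E})$ ``is definable because $\mathcal{E}$ admits the necessary sequential colimits by assumption (b).'' That is not so. The colimits needed to exhibit Day convolution as an honest monoidal $\infty$-category are taken over the comma posets $\{(i,j)\in \Z_{\geq 0}\times\Z_{\geq 0} : ij\leq n\}$, and with the \emph{multiplicative} monoidal structure on $\Z_{\geq 0}$ these are neither finite nor filtered: for instance $(4,1)$ and $(1,4)$ admit no common upper bound inside $\{(i,j):ij\leq 4\}$, and the infinite chain $(0,0)\leq(1,0)\leq(2,0)\leq\cdots$ sits inside every such poset. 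Since $\mathcal{E}$ is only assumed to be a full subcategory of $\End_{C_2}(\cat)$ closed under \emph{sequential} colimits, the convolution products $F\star G$ need not exist in $\mathcal{E}$; in particular ``strong monoidality of $\colim$'' is not yet a well-posed assertion, and your appeal to \cite[7.3.2.11]{LurHA} presupposes a cocartesian fibration over $\Assoc^\otimes$ that you have not constructed. The repair is to work with the Day convolution $\infty$-operad over $\Assoc^\otimes$, which exists with no colimit hypotheses on $\mathcal{E}$ and whose algebra objects are still the lax monoidal functors $\Z^{\times}_{\geq 0}\to\mathcal{E}$; then exhibit $\delta$ as a map of $\infty$-operads (using that the comma posets have initial objects, so constant colimits trivially exist) and verify that the fiberwise left adjoints $\colim$ assemble into a relative left adjoint over $\Assoc^\otimes$ which is again a map of $\infty$-operads. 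Your Fubini computation, together with assumption (c), is then precisely the content of that last verification, so the mathematical substance of your proposal survives---it is the categorical packaging (monoidal $\infty$-category versus $\infty$-operad) that needs correcting, and this is the same subtlety one must negotiate in Raksit's non-equivariant setting, not anything introduced by the $C_2$-context.
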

\begin{recollection}
	Let $ \underline{\F}_{C_2,*} $ denote the $ C_2 $-$ \infty $-groupoid whose objects consist of finite pointed sets with $ C_2 $-action; it is a $ C_2 $-symmetric monoidal $ \infty $-category via (parametrized) disjoint union.   
	For any presentable $ C_2 $-symmetric monoidal $ \infty $-category $ \cat $, let $ \underline{\SSeq}(\cat) = \underline{\Fun}(\underline{\F}_{C_2,*} , \cat) $ denote the $ C_2 $-$ \infty $-category of $ C_2 $-symmetric sequences in $ \cat $ \cite[\S2.6]{Stewart_Ninfty}, regarded as a $ C_2 $-symmetric monoidal $ \infty $-category via parametrized Day convolution \cite[Definition 3.1.6]{NS22}. 

	$ C_2 $-left Kan extension along the inclusion of the basepoint $ \mathcal{O}_{C_2}^\op \inj \underline{\F}_{C_2,*} $ induces a fully faithful, $ C_2 $-colimit-preserving, $ C_2 $-symmetric monoidal embedding $ \cat \to \underline{\SSeq} (\cat) $. 
	Composing the parametrized Yoneda embedding of \cite[Definition 10.2]{BDGNS1} $  \underline{\F}_{C_2,*} \simeq  \underline{\F}_{C_2,*}^\mathrm{vop} \to \underline{\Fun}\left(\underline{\F}_{C_2,*}, \underline{\Spc}^{C_2}\right) $ with the unique map $ \underline{\Spc}^{C_2} \to \cat $ in $ C_2\E_\infty\Alg\left(\Pr^L \right) $, we obtain a $ C_2 $-functor $ y \colon \underline{\F}_{C_2,*} \to \underline{\SSeq}(\cat) $. 

	There is an additional (non)-symmetric monoidal structure on $ \SSeq(\cat) $ called the \emph{composition monoidal structure}. 
	For any $ \mathcal{D} \in C_2\E_\infty\Alg\left(\Pr^L \right)_{\cat/-} $, evaluation at $ \underline{y(\{[C_2/C_2 = C_2/C_2 ]\})} $ determines an equivalence of $ C_2 $-$ \infty $-categories
	\begin{equation*}
		\Fun_{C_2\E_\infty\Alg\left(\Pr^L \right)_{\cat/-}}\left(\underline{\SSeq}(\cat), \mathcal{D} \right) \simeq \mathcal{D}
	\end{equation*}
	by \cite[Corollary 2.4.5]{NS22}. 
	Taking $ \mathcal{D} = \underline{\SSeq}(\cat) $, the reverse of the composition monoidal structure on the left hand side transports to a monoidal structure on $ \underline{\SSeq}(\cat) $ which we will call the \emph{composition monoidal structure}. 
	Explicitly, if $ A $ and $ B $ are $C_2$-symmetric sequences, the value of their composition product on the $ C_2 $-set $\{*\}$ with trivial action is computed as a parametrized colimit
	\begin{equation*}
		 \left(A \circ B\right)(C_2/C_2) \simeq \colim_{S \in \underline{\Fin}_{C_2}^\simeq} \left( A (S) \otimes B^{\ostar S}\right)_{h\mathrm{Aut}_{C_2}(S)} \,, 
	\end{equation*}
	where $ \otimes $ denotes the tensoring of $ \SSeq (\cat) $ over $ \cat $, $ \ostar $ denotes the Day convolution product, and $ \mathrm{Aut}_{C_2}(S) $ denotes $C_2$-equivariant automorphisms of $ S $. 

	An $ \E_1 $-algebra object with respect to the composition monoidal structure on $ \underline{\SSeq} (\cat) $ is a $ C_2 $-$ \infty $-operad.\footnote{We assume this equivalence for the moment and defer its proof to future work. 
	Such an equivalence is generally expected to hold, (cf. \cite{MR4405646} for the non-parametrized version of the statement). 
	We thank Rune Haugseng for sharing his ideas in this direction.} 
	Finally, there is a fiberwise monoidal functor 
	\begin{equation*}
	\begin{split}
	 	 \theta : \underline{\SSeq}(\cat) &\to \underline{\End}(\underline{\SSeq} (\cat)) \\ 
					A &\mapsto A \circ (-)
	\end{split}
	\end{equation*}
	We also use $ \theta $ to denote the induced functor $ C_2\mathrm{Op}(\cat) \simeq \E_1\Alg\left(\underline{\SSeq}(\cat) \right)  \to \E_1\Alg\underline{\End}(\underline{\SSeq} (\cat)) $. 
\end{recollection}

\begin{cons}\label{cons:freeC2alg_as_C2sseq}
	Let $ \cat $ be a presentable $ C_2 $-symmetric monoidal $ \infty $-category. 
	Let $ A $ be the constant $ C_2 $-symmetric sequence with value the unit object $ \mathbbm{1} $. 

	Write $ G \colon C_2\E_\infty\underline{\Alg}\left(\underline{\SSeq}(\cat) \right) \rlarrows \underline{\SSeq}(\cat) : F $ denote the free-forgetful $ C_2 $-adjunction of \cite[Theorem 4.3.4]{NS22}. 
	Then by Corollary 5.1.5 of \emph{loc. cit.}, $ G \circ F \simeq C_2\Sym_{\SSeq(\cat)} $ carries a canonical algebra (monad) structure. 
	By a similar argument to that of \cite[Construction 4.1.6]{Raksit20}, the monad $ G \circ F $ is of the form $ \theta(A) $ for $ A $ the constant $ C_2 $-symmetric sequence at $ \mathbbm{1} $. 
	Hence $ A $ inherits an operad structure. 
\end{cons}
\begin{cons}\label{cons:filtered_C2_sseq}
	For $ i \in \Z_{\geq 0} $, let $ \underline{\SSeq}^{\leq i}(\cat) \subseteq \underline{\SSeq}(\cat) $ denote the full $ C_2 $-subcategory spanned by those $ C_2 $-symmetric sequences such that $ A(S\to C_2/H) $ is an initial object of $ \cat^H $ whenever $ |S \times_{C_2/H}C_2 |/2 > i $. 

	Now consider the $ \infty $-category $ \Fun_{C_2}\left(\Z_{\geq 0} \times \mathcal{O}^\op_{C_2}, \underline{\SSeq}(\cat)\right) $, and let $ \SSeq^{\leq \ast}(\cat) $ denote the full subcategory spanned by those functors $ F $ such that $ F(i,-) \in \SSeq^{\leq i}(\cat) $ for all $ i \geq 0 $. 
	The inclusion $ \varphi: \SSeq^{\leq \ast}(\cat) \to \Fun_{C_2}\left(\Z_{\geq 0}\times \mathcal{O}^\op, \underline{\SSeq}(\cat)\right) $ admits a right adjoint $ \psi $. 

	Now $ \Fun_{C_2}\left(\Z_{\geq 0}^\times\times \mathcal{O}^\op, \underline{\SSeq}(\cat)^{C_2,\circ}\right) $ acquires a monoidal structure via parametrized Day convolution; write $ p \colon \Fun_{C_2}\left(\Z_{\geq 0}^\times\times \mathcal{O}^\op_{C_2}, \underline{\SSeq}(\cat)^{\circ}\right)^\otimes \to \Assoc^\otimes $ for the corresponding fibration of $ \infty $-operads. 
	As in \cite[Construction 4.1.7]{Raksit20}, the full subcategory $ \SSeq^{\leq \ast}(\cat)^\otimes \subseteq \Fun_{C_2}\left(\Z_{\geq 0}^\times\times \mathcal{O}^\op_{C_2}, \underline{\SSeq}(\cat)^{\circ}\right)^\otimes $ determined by the inclusion $ \varphi $ is closed under the Day convolution product and $ p $ is a locally cocartesian fibration, hence $ \psi $ promotes to a map of $ \infty $-operads $ \Fun_{C_2}\left(\Z_{\geq 0}^\times\times \mathcal{O}^\op_{C_2}, \underline{\SSeq}(\cat)^{\circ}\right)^\otimes \to \SSeq^{\leq \ast}(\cat)^\otimes $.  

	Finally, we will write $ (-)^{\leq \ast} $ for the composite
	\begin{equation*}
		\underline{\SSeq}(\cat)^{C_2,\circ} \xrightarrow{\delta} \Fun_{C_2}\left(\Z_{\geq 0}^\times \times \mathcal{O}^\op_{C_2}, \underline{\SSeq}(\cat)^{\circ}\right)^\otimes \xrightarrow{\psi} \SSeq^{\leq \ast}(\cat)^\otimes
	\end{equation*}
	and for the induced composite on algebra objects
	\begin{equation*}
		C_2\mathrm{Op}(\cat) \simeq \E_1\Alg \left(\underline{\SSeq}(\cat)^{C_2}\right) \to \E_1\Alg\Fun_{C_2}\left(\Z_{\geq 0}^\times \times \mathcal{O}^\op_{C_2}, \underline{\SSeq}(\cat)^{\circ}\right) \simeq \Fun^{\mathrm{lax}}_{C_2}\left(\Z_{\geq 0}^\times\times \mathcal{O}^\op_{C_2}, \underline{\SSeq}(\cat)^{\circ}\right) . 
	\end{equation*}
\end{cons}
\begin{cons}
	[$C_2 $-symmetric powers filtered monad] \label{cons:C2_sym_filtered_monad}
	Let $ \cat $ be a presentable $ C_2 $-symmetric monoidal $ \infty $-category. 
	Let $ \mathcal{E} $ denote the full $ C_2 $-subcategory of $ \underline{\End}(\cat) $ on those endofunctors which preserve sifted colimits pointwise. 

	Let $ \mathcal{E}' $ denote the full $ C_2 $-subcategory of $ \underline{\End}(\underline{\SSeq}(\cat)) $ on those endofunctors which preserve both sifted colimits and the essential image of the embedding $ \cat \to \underline{\SSeq}(\cat) $. 
	Write $ \rho \colon \mathcal{E}' \to \mathcal{E} $ for the canonical inclusion functor. 

	From Construction \ref{cons:freeC2alg_as_C2sseq}, there is a $ C_2 $-$ \infty $-operad $ A $ with an equivalence of monads $ \theta(A) \simeq C_2\Sym_{\underline{\SSeq}(\cat)} $. 
	Applying Construction \ref{cons:filtered_C2_sseq}, we obtain a lax monoidal $C_2$-functor $ A^{\leq \ast} \colon \Z_{\geq 0}^\times \times \mathcal{O}^\op_{C_2} \to \underline{\SSeq}(\cat)^{\circ}  $. 
	Write $ C_2 \Sym^{\leq \ast} $ for the composite
	\begin{equation*}
		\Z_{\geq 0}^\times \xrightarrow{A^{\leq \ast} } \underline{\SSeq}(\cat)^{C_2,\circ} \xrightarrow{\theta} \mathcal{E}' \xrightarrow{\rho} \mathcal{E}. 
	\end{equation*}
	Then a modification of the argument of \cite[Proposition 4.1.4]{Raksit20} implies that there is an equivalence of $ \mathcal{E} $-monads $ \colim C_2 \Sym^{\leq \ast} \simeq C_2 \Sym $. 
\end{cons}

\subsection{Derived rings with involution}\label{subsection:involutive_dalg}
The goal of this subsection is to define \emph{derived rings with (genuine) involution}. 
Just as ordinary derived algebras over $ \Z $ can be regarded as generalizations of discrete commutative rings, our derived rings with involution can be regarded as generalizations of cohomological $ C_2 $-Tambara functors (see Variant \ref{variant:otherdalg_inv}\ref{varitem:discrete_dalg_inv}). 

Let us recall the definition of ordinary derived algebras. 
To define derived rings with involution, Raksit identifies a filtered monad $ \CSym_{\Z}^{\leq *} $ on $ \Proj_{\Z} $, extends $ \CSym_{\Z}^{\leq *} $ to a filtered monad $ \LSym_{\Z}^{\leq *} $ on connective $ \Z $-modules $ \Mod_{\Z}^{\mathrm{cn}} $ via left Kan extension, and extends $ \LSym_{\Z}^{\leq *} $ to a filtered monad on all $ \Z $-modules using Goodwillie calculus. 
Derived algebras are defined to be modules over $ \LSym_{\Z}:= \colim_{*} \LSym_{\Z}^{\leq *} $ the extended monad. 
To define $ \CSym_{\Z}^{\leq *} $, Raksit takes $ \CSym_{\Z}^{\leq *} :=  \pi_0 \Sym^{\leq * }_\Z $ where $ \Sym^{\leq *}_{\Z} $ is the filtered free $ \E_\infty $-algebra monad. 
In particular, the property that $ \CSym_{\Z}^{\leq *} $ preserves the subcategory $ \Proj_{\Z} $ is crucial, for means that $ \CSym_{\Z}^{\leq *} $ inherits a filtered monad structure from $ \Sym^{\leq *}_{\Z} $. 
Furthermore, the resulting map $ \Sym^{\leq * }_\Z \to \LSym^{\leq *}_{\Z} $ implies that any derived algebra $ A $ has an underlying $ \E_\infty $-$ \Z $-algebra. 

By virtue of the domain of definition of real trace theories, a derived involutive algebra should have an underlying $ C_2 $-$ \E_\infty $-algebra. 
By Proposition \ref{prop:constMackeyisnormed}, the constant $ C_2 $-Mackey functor $ \underline{\Z} $ is a suitable base. 
Many of the ideas and constructions in \cite[\S4.2]{Raksit20} generalize directly to $ \Mod_{\underline{\Z}} $; for instance, using the parametrized monads of Definition \ref{defn:param_monad} in place of ordinary monads, and the free $ C_2 $-$ \E_\infty $-$ \underline{\Z} $-algebra in place of the free $ \E_\infty $-$ \Z $-algebra. 
However, despite the fact that $ \Mod_{\underline{\Z}} $ has a t-structure which is compatible with its $ C_2 $-symmetric monoidal structure, it is no longer true that $ \underline{\pi}_0 $ of the free $ C_2 $-$ \E_\infty $-algebra on a free $ \underline{\Z} $-module on some finite $ C_2 $-set is a free $ \underline{\Z} $-module on some finite $ C_2 $-set. 
We use the zeroth slice functor in place of $ \underline{\pi}_0 $ to define $ C_2\CSym $ from the free $ C_2 $-$ \E_\infty $-$ \Z $-algebra monad. 

We will define derived involutive algebras for any $ C_2 $-$ \infty $-category satisfying a list of axioms, though the relevant examples for us consist primarily of $ \underline{\Mod}_{\underline{\Z}} $ and its filtered and graded variants, and are discussed in greater detail in \S\ref{subsection:inv_dalg_context_examples}. 
\begin{defn}
		A \emph{derived involutive algebraic context} consists of a $ C_2 $-stable $ C_2 $-presentable $ C_2 $-symmetric monoidal $ C_2 $-$ \infty $-category, a compatible t-structure (Definition \ref{defn:compatible_filtration}), a localization $ C_2 $-functor $ P \colon \cat_{\geq 0} \to \cat_{\geq 0} $ with essential image contained in $ \cat^\heartsuit $, and a small full subcategory $ \cat^0 \subseteq P(\cat_{\geq 0})=: \cat^s $ satisfying: 
		\begin{enumerate}[label=(\alph*)]
			\item the t-structure is right-complete 
			\item $ P $ is compatible with the $ C_2 $-symmetric monoidal structure on $ \cat_{\geq 0} $ in the sense of \cite[\S2.9]{NS22}
			\item $ \cat^0 \subseteq \cat $ is a $ C_2 $-symmetric monoidal subcategory which is closed under $ \cat^s $ $ C_2 $-symmetric powers (see \cite[Example 4.3.7]{NS22}) 
			\item $ \cat^0 $ is closed under finite coproducts in $ \cat $ and its objects form a set of compact generators for $ \cat_{\geq 0} $. 
		\end{enumerate}
		We will often denote a derived involutive algebraic context by $ \cat $ and regard the additional data as implicit. 
		A \emph{morphism of derived involutive algebraic contexts} is a $ C_2 $-functor $ F \colon \cat \to \mathcal{D} $ which is right t-exact, $ C_2 $-symmetric monoidal, compatible with the localizations $ P_\cat $ and $ P_{\mathcal{D}} $ and satisfies $ F(\cat^0) \subseteq \mathcal{D}^0 $. 
\end{defn} 
\begin{rmk}\label{rmk:inv_dalg_context_norm_properties}
		In this situation, the norm functor $ N^{C_2} \colon \cat^e \to \cat^{C_2} $ is $ 2 $-excisive. 
		First, observe that $ N^{C_2} $ preserves sifted colimits by the distributivity assumption and \cite[Proposition 8.19]{Shah_paramII}. 
		By the proofs of Propositions 4.2.14 and 4.2.15 of \cite{Raksit20}, it suffices to show that $ (\cat^0)^e \to \cat^{C_2} $ is of degree 2. 
		The observation follows from \cite[Example 3.17]{Nardinthesis}.  	
\end{rmk} 
\begin{rmk}
		Let $ \cat $ be a derived involutive algebraic context. 
		If we furthermore assume that $ (P^0)^e \simeq \pi_0^e $ as functors $ \cat^e_{\geq 0} \to \cat^e_{\geq 0} $, then $ \cat^e $ is a derived algebraic context in the sense of \cite[Defintiion 4.2.1]{Raksit20}. 
\end{rmk}
We record an auxiliary lemma which will be used to construct the derived involutive symmetric powers as a monad.
\begin{lemma}\label{lemma:derived_slices} 
	Let $ F \colon \cat_{\geq 0} \to  \cat_{\geq 0} $ be a $C_2$-functor which preserves sifted $ C_2 $-colimits (see Remark \ref{rmk:fiberwise_sifted_vs_param_sifted}). 
	Then for any $ X \in \cat_{\geq 0} $, the canonical map $ X \to P^0 X $ induces an equivalence
	\begin{equation*}
		P^0(F(X)) \to P^0\left(F\left(P^0X\right) \right) \,. 
	\end{equation*}
	On underlying $ \infty $-categories, this recovers the equivalence of \cite[Lemma 4.2.17]{Raksit20}. 
\end{lemma}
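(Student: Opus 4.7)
The plan is to reduce to the case $X \in \cat^0$, where the map $X \to P^0 X$ is already an equivalence since $\cat^0 \subseteq \cat^s = P(\cat_{\geq 0})$ is contained in the essential image of $P$; the desired equivalence $P^0 F(X) \simeq P^0 F(P^0 X)$ is then immediate.

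To perform the reduction, I would first observe that the localization $C_2$-functor $P$ is left $C_2$-adjoint to the inclusion of its essential image $\cat^s \hookrightarrow \cat_{\geq 0}$, so $P^0$ preserves all small $C_2$-colimits. Combined with the hypothesis on $F$, both $P^0 \circ F$ and $P^0 \circ F \circ P^0$ preserve $C_2$-sifted $C_2$-colimits. Next, using the axiom that $\cat^0$ is a small full subcategory of compact generators closed under finite coproducts in $\cat_{\geq 0}$, \cite[Proposition 5.5.8.15]{LurHTT} applied fiberwise shows that every object of $(\cat_{\geq 0})_t$ is a sifted colimit of objects of $\cat^0_t$. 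Invoking Remark \ref{rmk:fiberwise_sifted_vs_param_sifted}, this promotes to a $C_2$-sifted $C_2$-colimit presentation $X \simeq \colim_\alpha X_\alpha$ with each $X_\alpha \in \cat^0$. Commuting $P^0 \circ F$ past the colimit then reduces the claim to the term-by-term equivalence $P^0 F(X_\alpha) \simeq P^0 F(P^0 X_\alpha)$, which holds by the first observation.

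The only subtle point is that the generation-under-sifted-colimits result must hold in the $C_2$-parametrized sense, not merely fiberwise. This is precisely handled by Remark \ref{rmk:fiberwise_sifted_vs_param_sifted}, so no serious obstacle is anticipated; the argument is essentially a parametrized lift of Raksit's proof of \cite[Lemma 4.2.17]{Raksit20}.
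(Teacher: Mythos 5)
Your argument is correct and takes essentially the same route as the paper: both proofs reduce to objects of $\cat^0$ by noting that $P^0 \circ F$ and $P^0 \circ F \circ P^0$ preserve sifted $C_2$-colimits and that $\cat^0$ generates $\cat_{\geq 0}$ under such colimits, then observe that $P^0$ acts as the identity on $\cat^0$. Your write-up merely makes the colimit-preservation of $P^0$ and the appeal to projective generation more explicit.
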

\begin{proof}
	Observe that the $ C_2 $-$ \infty $-category $ \cat_{\geq 0} $ is generated pointwise under sifted colimits by $ (\cat^{C_2})^0 $ and $ (\cat^e)^0 $, and $ P^0 \circ F $ and $ P^0 \circ F \circ P^0 $ preserve sifted colimits as functors $ \cat_{\geq 0} \to \cat^s $. 
	Therefore, it suffices to show that they agree on the subcategory $ \cat^0 $. 
	However, $ P^0 $ acts by the identity on $ \cat^0 $ by assumption, hence the result follows. 
\end{proof}
\begin{rmk}\label{rmk:adjunctions_of_endomorphism_cats} 
	Let us regard the (ordinary $\infty$-)category of $C_2$-endofunctors $ \End_{C_2}(\cat) $ as a monoidal $ \infty $-category under composition. 
	Write $ \End^\Sigma_{C_2}\left(\cat_{\geq 0}\right) $ for the full subcategory of $ \End_{C_2}\left(\cat\right) $ on those $C_2$-functors which preserve sifted colimits pointwise. 
	Consider the following full subcategories of $ \End^\Sigma_{C_2}\left(\cat_{\geq 0}\right) $:
	\begin{itemize}
		\item $ \End^\Sigma_{C_2,0}\left(\cat_{\geq 0}\right) $ spanned by those $ C_2 $-functors $ F $ which both preserve sifted colimits pointwise and satisfy $ F\left(\cat^0\right) \subseteq \cat^0 $. 
		\item $ \End^\Sigma_{C_2,2}\left(\cat_{\geq 0}\right) $ spanned by those $ C_2 $-functors $ F $ which both preserve sifted colimits pointwise and satisfy $ P F\left(\cat^0\right) \subseteq \cat^0 $. 
	\end{itemize}
	The monoidal stucture on $ \End_{C_2}\left(\cat_{\geq 0}\right) $ descends to one on $ \End^\Sigma_{C_2,0}\left(\cat_{\geq 0}\right) $, and by Lemma \ref{lemma:derived_slices}, $ \End^\Sigma_{C_2,2}\left(\cat_{\geq 0}\right) $ is a monoidal subcategory of $ \End^\Sigma_{C_2}\left(\cat_{\geq 0}\right) $. 
	We have an inclusion $ \End^\Sigma_{C_2,0}\left(\cat_{\geq 0}\right) \subseteq \End^\Sigma_{C_2,2}\left(\cat_{\geq 0}\right) $ which is moreover monoidal. 
	The inclusion admits a left adjoint\footnote{We break with our notational conventions and write $ \underline{\tau} $ for a non-parametrized functor; the motivation for doing so is reflected in the last sentence of this remark.} $ \underline{\tau} \colon \End^\Sigma_{C_2,2}\left(\cat_{\geq 0}\right) \to \End^\Sigma_{C_2,0}\left(\cat_{\geq 0}\right) $. 
	Since $ \cat^0 $ generates $ \cat_{\geq 0} $ under sifted colimits, we identify sifted colimit-preserving $C_2$-functors $ \cat_{\geq 0} \to \cat_{\geq 0} $ with their restrictions to $ \cat^0 $. 
	From this description, it follows that the left adjoint $ \underline{\tau} $ is given by composition with $ P $. 
	Since the inclusion is monoidal, $ \underline{\tau} $ is oplax monoidal, and Lemma \ref{lemma:derived_slices} furthermore implies that $ \underline{\tau} $ is strictly monoidal. 

	Furthermore, under the restriction functor of Remark \ref{rmk:restrict_param_endomorphisms}, $ \underline{\tau} $ is sent to $ \tau $ of \cite[Remark 4.2.18]{Raksit20} by Corollary \ref{cor:zeroth_slice_param_enhancement}. 
\end{rmk}
\begin{defn}
	Let $ \cat $ be a $ C_2 $-$ \infty $-category and let $ \mathcal{B} $ be a $ C_2 $-stable $ C_2 $-$ \infty $-category. 
	We say that a $ C_2 $-functor $ F \colon \cat \to \mathcal{B} $ is \emph{$ n $-excisive} if both $ F^e $ and $ F^{C_2} $ (see Remark \ref{rmk:restrict_param_endomorphisms}) are $ n $-excisive. 
	We say that a $ C_2 $-functor is \emph{excisively polynomial} if it is $ n $-excisive for some $ n $. 
\end{defn}
\begin{obs}\label{obs:C2Sym_involutive_excisivity} 
	Let $ \cat $ be a derived involutive algebraic context. 
	Then it follows from Remark \ref{rmk:inv_dalg_context_norm_properties} that $ C_2\Sym^{\leq i}_{\cat}(-) $ is $ i $-excisive.  
\end{obs}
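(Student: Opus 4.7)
The plan is to verify $i$-excisivity fiberwise at each orbit of $\mathcal{O}_{C_2}$. At $C_2/e$, the $C_2$-functor $C_2\Sym^{\leq i}_\cat$ restricts to the ordinary filtered free $\E_\infty$-algebra monad $\Sym^{\leq i}_{\cat^e}$ on $\cat^e$, and this is $i$-excisive by the non-equivariant result \cite[Proposition 4.2.15]{Raksit20}.

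At $C_2/C_2$, I would unwind the colimit formula for the composition product on $\underline{\SSeq}(\cat)$ applied to the unit $C_2$-$\infty$-operad to obtain a decomposition
\begin{equation*}
    C_2\Sym^n(X)^{C_2} \simeq \bigoplus_{[T]} \left((X^{C_2})^{\otimes a(T)} \otimes N^{C_2}\bigl((X^e)^{\otimes b(T)}\bigr)\right)_{h\mathrm{Aut}_{C_2}(T)}
\end{equation*}
where $[T]$ ranges over isomorphism classes of finite $C_2$-sets of underlying cardinality $n$, with $a(T)$ fixed points and $b(T)$ free $C_2$-orbits (so $n = a(T) + 2b(T)$). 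Under the filtration of Construction \ref{cons:filtered_C2_sseq}, the truncation $C_2\Sym^{\leq i}$ at $C_2/C_2$ retains exactly those summands with $n \leq i$, since the numerical invariant $|S \times_{C_2/C_2} C_2|/2$ there computes the underlying cardinality of the indexing $C_2$-set $S$.

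It then suffices to bound the excisive degree of each individual summand. The factor $(X^{C_2})^{\otimes a}$ is $a$-excisive as a functor of $X$, because $(-)^{C_2}$ is exact and tensor product is linear separately in each of the $a$ variables. By Remark \ref{rmk:inv_dalg_context_norm_properties}, $N^{C_2}$ is $2$-excisive; precomposing with the $b$-excisive functor $(-)^{\otimes b}$ on $\cat^e$ yields a functor that is at most $2b$-excisive. Tensoring the $a$-excisive and $2b$-excisive factors produces an $(a+2b) = n$-excisive functor, and homotopy orbits together with finite direct sums preserve $n$-excisivity. Since $n \leq i$ throughout the truncated sum, each contributing summand is $i$-excisive, and therefore so is the whole.

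The principal technical step is justifying the norm--product decomposition above and matching the indexing cardinality $n$ with the numerical invariant controlling the filtration; this is a bookkeeping exercise in parametrized operads that ultimately relies on the explicit formula for the composition product on $C_2$-symmetric sequences recalled between Constructions \ref{cons:freeC2alg_as_C2sseq} and \ref{cons:filtered_C2_sseq}. Once this decomposition is in hand, the excisivity estimate reduces entirely to the $2$-excisivity of $N^{C_2}$ from Remark \ref{rmk:inv_dalg_context_norm_properties} and the preservation properties of excisive polynomials under colimits and tensor products.
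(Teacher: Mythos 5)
Your argument is correct and verifies the claim, but it takes a more explicit route than what the paper is gesturing at. The Observation is stated without proof; reading it together with Remark \ref{rmk:inv_dalg_context_norm_properties} and Example \ref{ex:param_symmetric_powers_degree}, the intended argument is to note that $C_2\Sym^{\leq i}_\cat$ has \emph{degree} $\leq i$ on the compact projective generators $\cat^0$ (a statement internal to additive categories), and then to invoke Raksit's Propositions 4.2.14--4.2.15 --- the very mechanism Remark \ref{rmk:inv_dalg_context_norm_properties} uses for the norm --- to conclude that the sifted-colimit extension of a degree-$i$ functor is $i$-excisive. You instead work with excisivity directly: you decompose $C_2\Sym^n$ at the $C_2/C_2$ fiber over isomorphism classes of finite $C_2$-sets (fixed points contributing ordinary tensor powers, free orbits contributing norms of tensor powers), and then bound the excisive degree of each summand via the product law (the pointwise tensor of an $m$-excisive and an $n$-excisive functor is $(m+n)$-excisive) and the composition law (the composite of an $m$-excisive and an $n$-excisive functor is $mn$-excisive), together with the fact that finite homotopy orbits preserve excisivity in a stable setting. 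Both routes pass through the same $C_2$-set decomposition and rest on the $2$-excisivity of the norm; yours is more self-contained in the sense that it does not need Raksit's degree-to-excisivity extension lemma, but it imports the composition formula from Goodwillie calculus, which is the less elementary input. One caveat worth making explicit: the orbit-by-orbit decomposition formula and the identification of $a(T)+2b(T)$ with the truncation parameter of Construction \ref{cons:filtered_C2_sseq} are the technical heart of the argument and are not free --- they rely on distributivity of the $C_2$-symmetric monoidal structure (built into the definition of a derived involutive algebraic context), which is precisely what makes the free algebra monad decompose as a coproduct of indexed tensor powers.
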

\begin{defn}
		Let $ \mathcal{A},\mathcal{B} $ be additive $ C_2 $-$ \infty $-categories and suppose $ \mathcal{B} $ is idempotent-complete. 
		Say that a $ C_2 $-functor $ \mathcal{A} \to \mathcal{B} $ is of \emph{degree $ n $} if $ F^e $ and $ F^{C_2} $ are both of degree $ n $ in the sense of \cite[Definition 4.2.10]{Raksit20}.  
\end{defn}
\begin{ex}\label{ex:param_symmetric_powers_degree}
		Let $ \mathcal{A} $ be a distributive $ C_2 $-symmetric monoidal $ C_2 $-$ \infty $-category. 
		Then the $ C_2 $-functor $ C_2\Sym_{\mathcal{A}}^{\leq i} $ is of degree $ i $. 
\end{ex}
\begin{recollection}\label{rec:extending_epoly_functors}
	Let $ \mathcal{E} \subseteq \End_{C_2}\left(\cat\right) $ denote the full subcategory spanned by those $C_2$-endofunctors which are excisively polynomial, preserve sifted colimits pointwise, and preserve the full $C_2$-subcategory $ \cat_{\geq 0} $. 
	Let $ \mathcal{E}' \subseteq \End_{C_2}\left(\cat_{\geq 0}\right) $ denote the full subcategory spanned by those $C_2$-endofunctors which are excisively polynomial and preserve sifted colimits pointwise. 
	Because the Postnikov t-structure on $ \cat $ is right-complete, we conclude by Proposition 4.2.15 of \cite{Raksit20} that the monoidal restriction functor $ \mathcal{E} \to \mathcal{E}' $ is an equivalence. 
\end{recollection}
\begin{cons}
	[Derived involutive symmetric powers monad] \label{cons:equivariant_dalg} 
	We will construct a filtered $ \mathcal{E} $-monad $ \LSym^{\sigma, \leq *}_{\cat} $ on the $ C_2 $-$ \infty $-category $ \cat $, with a map of filtered $ \mathcal{E} $-monads $ \theta^{\leq *} \colon C_2\Sym^{\leq *}_{\cat}(-) \to \LSym^{\sigma,\leq *}_{\cat}(-) $, where $ C_2\Sym^{\leq *}_{\cat}(-) $ is the $ C_2 $-symmetric powers filtered monad of Construction \ref{cons:C2_sym_filtered_monad}. 

	By Recollection \ref{rec:extending_epoly_functors}, it will suffice to construct filtered $ \mathcal{E}' $-monads $ \LSym^{\sigma, \leq *}_{\cat_{\geq 0}} $ and $ \theta^{\leq i} $ on $ \cat_{\geq 0} $ instead. 
	By Observation \ref{obs:C2Sym_involutive_excisivity}, $ C_2\Sym_{\cat_{\geq 0}}^{\leq *}(-) $ is a filtered $ \mathcal{E}' $-monad. 
	By definition of an derived involutive algebraic context, $ C_2\Sym^{\leq i}_{\cat_{\geq 0}}(-) \in \End^\Sigma_{C_2,2}\left(\cat_{\geq 0}\right) $ for all $ i \geq 0 $. 
	Using Remark \ref{rmk:adjunctions_of_endomorphism_cats}, define $ \LSym^{\sigma, \leq *}_{\cat_{\geq 0}}(-) = \underline{\tau} C_2\Sym^{\leq *}_{\cat_{\geq 0}}(-) $. 
	Because $ \underline{\tau} $ is monoidal, $ \LSym^{\sigma,\leq *}_{\cat}(-) $ inherits a filtered monad structure from $ C_2\Sym^{\leq *}_{\cat}(-)$. 
	Finally, the unit map of the adjunction $ \underline{\tau} $ induces a map of filtered monads $ \theta^{\leq *} \colon C_2\Sym^{\leq *}_{\cat_{\geq 0}}(-) \to \LSym^{\sigma,\leq *}_{\cat_{\geq 0}}(-) $ in $ \End^\Sigma_{C_2,2}\left(\cat_{\geq 0}\right) $. 

	By Example \ref{ex:param_symmetric_powers_degree} and \cite[Proposition 4.2.14]{Raksit20}, $ \LSym_{\cat_{\geq 0}}^{\sigma,\leq i} $ is a filtered $ \mathcal{E}' $-monad, and $ \theta^{\leq *} $ is a map of filtered $ \mathcal{E}' $-monads.  
\end{cons}
\begin{defn}\label{defn:involutive_derived_alg}
	Let $ \cat $ be an derived involutive algebraic context. 
	Let $ \LSym_{\cat}^\sigma $ denote the colimit of the derived involutive symmetric powers filtered monad $ \LSym^{\sigma, \leq *}_{\cat} $ of Construction \ref{cons:equivariant_dalg}. 
	By Proposition \ref{prop:colim_of_filt_param_monad}, $ \LSym_{\cat}^\sigma $ is a monad on $ \cat $. 
	We will refer to left modules over the monad $ \LSym^{\sigma}_{\cat} $ on $ \cat $ as \emph{derived involutive algebras in $ \cat $}, and denote the $C_2$-$\infty$-category of such objects (Construction \ref{cons:param_left_modules_over_monad}) by $ \underline{\DAlg}^\sigma(\cat) $.  
	
	We will denote the fiber of $ \underline{\DAlg}^\sigma(\cat) $ over the $ C_2 $-set $ C_2/C_2 $ by $ \DAlg^\sigma(\cat) $, and we will denote the fiber of $ \underline{\DAlg}^\sigma(\cat) $ over the $ C_2 $-set $ C_2/e $ by $ \DAlg(\cat^e) $ or $ \DAlg(\cat)^e $. 
\end{defn}
Every derived involutive algebra has an underlying $C_2 $-$ \E_\infty $-algebra.
\begin{ntn}\label{ntn:dalg_calg_forget}
	Recall the map of filtered monads $ \theta^{\leq *} \colon C_2\Sym^{\leq *}_{\cat}(-) \to \LSym^{\sigma,\leq *}_{\cat}(-) $ of Construction \ref{cons:equivariant_dalg}. 
	Taking colimits, we obtain a map $ \theta \colon C_2\Sym_{\cat}(-) \to \LSym^\sigma_{\cat}(-) $ of monads on $ \cat $. 
	This induces a forgetful $C_2$-functor $ \Theta \colon \underline{\DAlg}^\sigma (\cat) \to C_2\E_\infty\underline{\Alg}(\cat) $. 
\end{ntn} 
\begin{prop}\label{prop:invdalg_C2_colims}
\begin{enumerate}[label=(\arabic*)]
	\item \label{propitem:invdalg_admits_C2_colims} The $ C_2 $-$ \infty $-category of derived involutive algebras in $ \cat $ strongly admits all $ C_2 $-small $ C_2 $-colimits. 
	\item \label{propitem:invdalg_admits_C2_lims} The $ C_2 $-$ \infty $-category of derived involutive algebras in $ \cat $ strongly admits all $ C_2 $-small $ C_2 $-limits. 
	\item \label{propitem:invdalg_C2_colims_computed_in_calg} The forgetful $ C_2 $-functor $ \Theta \colon \underline{\DAlg}^\sigma (\cat) \to C_2\E_\infty\underline{\Alg}(\cat) $ strongly preserves all small $C_2$-colimits and $ C_2 $-limits. 
\end{enumerate}
\end{prop}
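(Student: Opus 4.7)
The plan is to prove the three statements in order, building on the monadic structure of $\underline{\DAlg}^\sigma(\cat) = \underline{\LMod}_{\LSym^\sigma_\cat}(\cat)$ and the analogous description of $C_2\E_\infty\underline{\Alg}(\cat)$ as modules over $C_2\Sym_\cat$. For \ref{propitem:invdalg_admits_C2_lims}, I would invoke the parametrized analogue of the standard fact that limits of modules over a monad are computed in the underlying category (Proposition \ref{prop:param_modules_limits}); this immediately implies that all $C_2$-small $C_2$-limits exist in $\underline{\DAlg}^\sigma(\cat)$ and that they are preserved by the forgetful $C_2$-functor $U \colon \underline{\DAlg}^\sigma(\cat) \to \cat$, hence also by the intermediate forgetful functor $\Theta$ to $C_2\E_\infty\underline{\Alg}(\cat)$ (whose underlying $C_2$-functor to $\cat$ is likewise $U$).

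For \ref{propitem:invdalg_admits_C2_colims} and \ref{propitem:invdalg_C2_colims_computed_in_calg}, the strategy is to separate $C_2$-colimits into (a) fiberwise sifted colimits and (b) finite $C_2$-coproducts, the latter including ordinary binary coproducts and the induction $C_2$-colimits encoded by morphisms $C_2/e \to C_2/C_2$ in $\mathcal{O}_{C_2}$. For (a), I first observe that $\LSym^\sigma_\cat = \colim_{i} \LSym^{\sigma, \leq i}_\cat$ is a sequential colimit of $C_2$-functors each preserving sifted colimits pointwise (by construction as a composite $P \circ C_2\Sym^{\leq i}_\cat$, both factors of which preserve sifted colimits). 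Hence $\LSym^\sigma_\cat$ preserves sifted colimits pointwise, and the parametrized Barr--Beck argument (applied fiberwise, cf. Remark \ref{rmk:fiberwise_sifted_vs_param_sifted}) implies that fiberwise sifted colimits in $\underline{\DAlg}^\sigma(\cat)$ are computed by $U$. The same holds for $C_2\Sym_\cat$, so $\Theta$ preserves fiberwise sifted colimits. For (b), I use the monadic free--forgetful $C_2$-adjunction of Lemma \ref{lemma:param_monad_induces_param_adjunction}: the free functor $\LSym^\sigma_\cat \colon \cat \to \underline{\DAlg}^\sigma(\cat)$ is a left $C_2$-adjoint, hence preserves $C_2$-colimits, so $C_2$-coproducts of free derived involutive algebras exist and are computed by applying $\LSym^\sigma_\cat$ to the corresponding $C_2$-coproducts in $\cat$; the analogous fact for $C_2\Sym_\cat$ together with the naturality of $\theta$ shows $\Theta$ preserves these. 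To extend to arbitrary $C_2$-coproducts, I resolve each derived involutive algebra by the monadic bar resolution (sifted in each degree) and combine with the conclusion of (a).

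The main obstacle is the bookkeeping in step (b): parametrized $C_2$-coproducts mix the norm along $C_2/e \to C_2/C_2$ with ordinary coproducts, and I must verify that both the free functor $\LSym^\sigma_\cat$ and the monad map $\theta$ intertwine these $C_2$-parametrized left adjoints correctly. This is formal, relying on Corollary \ref{cor:C2_left_adjoint_local_crit} and the $C_2$-symmetric monoidal compatibilities built into $\LSym^{\sigma,\leq i}_\cat$ and $C_2\Sym^{\leq i}_\cat$ in Construction \ref{cons:equivariant_dalg}, but it is the only place where the equivariant structure is doing nontrivial work beyond the non-equivariant argument of \cite[\S4.2]{Raksit20}.
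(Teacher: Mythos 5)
Your overall strategy --- decomposing $C_2$-colimits into fiberwise sifted colimits and finite $C_2$-coproducts, handling the former via the monadic forgetful functor and the latter via the free functor being a left $C_2$-adjoint, then passing from free algebras to arbitrary ones by bar resolution --- matches the paper's proof in broad strokes. For part (2), invoking Proposition~\ref{prop:param_modules_limits} with $\cat$ left-tensored over $\underline{\Fun}(\cat,\cat)$ is essentially what the paper does when it checks the duals of Shah's conditions, so that step is sound. Two things, however, deserve more care.

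First, your plan does not explicitly address the Beck--Chevalley condition, which is the content of ``strongly admits'' beyond fiberwise existence and preservation by restriction. The paper packages the whole argument as a verification of the three criteria of Shah's Theorem~B, and criterion (c) requires showing that the exchange map $\pi_{1!}\pi_2^* \implies \alpha^*\alpha_!$ is an equivalence. For (1), this is not automatic from the free functor preserving $C_2$-colimits; you would need to argue that Beck--Chevalley holds for free algebras (where it is essentially the normed-monoidal compatibility of $\LSym^\sigma$) and then that it persists under bar resolution, and the latter step needs explicit justification.

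Second, and more importantly, you describe the key intertwining checks as ``formal bookkeeping,'' but the paper's corresponding step is not formal: comparison maps such as $\mu\colon \LSym_{\cat^e}(X^e\oplus Y^e)\to\alpha^*\LSym^\sigma_\cat(X\oplus Y)$ for (1), the exchange map for Beck--Chevalley, and $\varepsilon_M\colon \underline{N}^{C_2}\Theta\LSym_{\cat^e}(M)\to\Theta\left(\LSym_{\cat^e}(M)^{\boxtimes C_2}\right)$ for (3) are shown to be equivalences by writing each as a colimit of filtered maps that are excisively polynomial in each variable, so that it suffices to test on $\cat^0$, where the maps are equivalences by construction. This polynomial excision reduction (via \cite[Proposition 4.2.15]{Raksit20} applied to the $\LSym^{\sigma,\leq *}$ filtration) is genuine mathematical content, not a consequence of the $C_2$-symmetric monoidal structure alone, and your plan needs to name it explicitly to close the argument. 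Also, for the existence of $\alpha_!$ you would do better to follow the paper and invoke the adjoint functor theorem (the fibers are presentable and $\alpha^*$ preserves all limits since these are computed in $\cat$), which is cleaner than constructing $\alpha_!$ by hand via bar resolutions and then checking its universal property.
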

\begin{proof}
	Since we work with a fixed derived involutive algebraic context $ \cat $, we will mostly suppress $ \cat $ from notation and write $ \underline{\DAlg}^\sigma $ instead of $ \underline{\DAlg}^\sigma(\cat) $ throughout the proof. 
	To show \ref{propitem:invdalg_admits_C2_colims}, by Theorem B of \cite{Shah_paramII}, it suffices to check the conditions
	\begin{enumerate}[label=(\alph*)]
		\item \label{item:param_res_colimits} for every $ C_2 $-set $ T = C_2/H $, the fiber $ \underline{\DAlg}^\sigma_T $ admits all small $ \kappa $-colimits, and for every morphism $ \alpha : T \to S $ in $ \mathcal{O}_{C_2} $, the restriction map $ \alpha^* : \underline{\DAlg}^\sigma_S \to \underline{\DAlg}^\sigma_T $ preserves $ \kappa $-small colimits. 
		\item \label{item:param_res_adjoint} for every map of finite $ C_2 $-sets $ \alpha: U \to V $, the restriction functor $ \alpha^* : \underline{\DAlg}^\sigma_V \to \underline{\DAlg}^\sigma_U $ admits a left adjoint $ \alpha_! $
		\item \label{item:param_bc_condition} $ \underline{\DAlg}^\sigma $ satisfies the Beck--Chevalley condition: For every pullback square 
		\begin{equation*}
		\begin{tikzcd}
			U' \ar[d,"{\alpha'}"'] \ar[r,"{\beta'}"] & U \ar[d,"\alpha"] \\
			V' \ar[r,"\beta"] & V
		\end{tikzcd}
		\end{equation*}
		in $ \Fin_{C_2} $, the exchange map \cite[immediately before Definition 7.3.1.1]{LurHTT}
		\begin{equation*}
			\alpha'_! \beta'^* \implies \beta^*\alpha_!
		\end{equation*}
		is an equivalence. 
	\end{enumerate}
	For criterion \ref{item:param_res_colimits}, each fiber $ \underline{\DAlg}^\sigma_{T} $ is presentable by \cite[Proposition 4.1.10]{Raksit20}. 
	Now the only non-isomorphism is $ \alpha: C_2 \to C_2/C_2 $. 
	The functor $ \alpha^* $ preserves sifted colimits because the forgetful functors $ \underline{\DAlg}^\sigma (\cat)^{C_2} \to \cat^{C_2} $ and $ \underline{\DAlg}^\sigma(\cat)^e \to \cat^e $ detect sifted colimits, and the functor $ \alpha^* \colon \cat^{C_2} \to \cat^e $ preserves sifted colimits. 
	To show that the restriction functor $ \alpha^* $ preserves coproducts, we can argue as in \cite[Proposition 4.2.27]{Raksit20} to reduce to showing that the canonical map $ \mu \colon \LSym_{\cat^e}(X^e \oplus Y^e) \to \alpha^*\LSym^\sigma_{\cat}(X \oplus Y) $ is an equivalence for all $ X, Y \in \cat^{C_2} $. 
	Because the map $ \mu $ arises canonically as the colimit of a map $ \LSym_{\cat^e}^{\leq *}(X^e \oplus Y^e) \to \alpha^*\LSym^{\sigma, \leq *}_{\cat} (X \oplus Y) $ in $ \cat^{C_2} \times \cat^{C_2} \to \Fun\left(\Z_{\geq 0}, \cat^e\right) $ which is polynomially excisive in each variable, it suffices to show that $ \mu $ is an equivalence for all $ X, Y \in (\cat^0)^{C_2} $. 
	This follows from the corresponding map where $ \LSym^\sigma_{\cat} $ is replaced by $ C_2\Sym_{\cat} $ is an equivalence. 

	To verify criterion \ref{item:param_res_adjoint}, we show that $ \alpha^* $ satisfies the conditions of the adjoint functor theorem \cite[Theorem 5.5.2.9(2)]{LurHTT}. 
	That $ \underline{\DAlg}_U^\sigma ,\underline{\DAlg}_V^\sigma $ are presentable and $ \alpha^* $ is accessible follow from the verification of criterion \ref{item:param_res_colimits}. 
	It remains to show that $ \alpha^* $ preserves all small limits. 
	Since limits in $ \underline{\DAlg}_U^\sigma \simeq \prod_{W \in \mathrm{Orbit}(U)} \underline{\DAlg}_W^\sigma $ are created by the forgetful functor to $ \prod_{W \in \mathrm{Orbit}(U)} \cat_W $ (and likewise for $ V $), \ref{item:param_res_adjoint} follows from the fact that the restriction functor $ \alpha^*\colon \cat^{C_2} \to \cat^e $ preserves all small limits. 

	We show how to check criterion \ref{item:param_bc_condition} in the special case where the pullback square is given by 
	\begin{equation*}
	\begin{tikzcd}
		C_2 \times C_2 \ar[r,"\pi_2"] \ar[d,"\pi_1"'] & C_2 \ar[d,"\alpha"] \\
		C_2 \ar[r,"\alpha"] & C_2/C_2
	\end{tikzcd}
	\end{equation*}
	in $ \Fin_{C_2} $; the general case is similar. 
	If $ \alpha^* = (-)^e \colon \underline{\DAlg}^\sigma_{C_2/C_2} \to \underline{\DAlg}^\sigma_{C_2/e} $ denotes restriction along $ C_2 \to C_2/C_2 $, let us write $ (-)^{\boxtimes C_2} $ for its left adjoint. 
	Fixing a trivialization $ C_2 \times C_2 \simeq C_2 \sqcup C_2 $, we may identify the left adjoint to restriction along $ \pi_1 $ as a twisted form of the coproduct $ -\boxtimes - \colon \underline{\DAlg}^\sigma_{C_2 \times C_2} \to \underline{\DAlg}^\sigma_{C_2} $. 
	Now unraveling definitions, the exchange map $ \pi_{1!} \pi_2^* \implies \alpha^* \alpha_! $ is given on $ A \in \DAlg^\sigma(\cat)^e $ by a composite 
	\begin{align*}
		A \boxtimes A \xrightarrow{\varepsilon} (A^{\boxtimes C_2})^e \boxtimes (A^{\boxtimes C_2})^e \xrightarrow{} ((A^{\boxtimes C_2})^e)^{\otimes 2} \xrightarrow{\mu} (A^{\boxtimes C_2})^e \,.
	\end{align*}
	Because the functors $ \pi_{1!} $, $ \pi_2^* $, $ \alpha_! $, $ \alpha^* $ all preserve sifted colimits, it suffices to show that the exchange map is an equivalence when $ A = \LSym_{\cat^e}^\sigma(N) $ for some $ N \in \cat^e $. 
	Because $ \LSym_{\cat^e} $ preserves finite coproducts, we have $ \LSym_{\cat^e}^{\sigma}(N) \boxtimes \LSym_{\cat^e}^{\sigma}(N) \simeq \LSym_{\cat^e}(N \oplus N) $. 
	Because $ \left(\LSym_{\cat}^\sigma\right)^e \simeq \LSym_{\cat^e}(-^e) $, the corresponding diagram of left adjoints commutes and we have an equivalence $ \LSym_{\cat^e}^{\sigma}(N)^{\boxtimes C_2} \simeq \LSym_{\cat^e}(C_2 \otimes N) $. 
	Thus we may regard the exchange map as a morphism $ \LSym_{\cat^e}(N \oplus N) \to \LSym_{\cat}^{\sigma }(C_2 \otimes N) $. 
	Furthermore, let us observe that the exchange map arises as the colimit of a filtered map $ \LSym_{\cat^e}^{\leq *}(N \oplus N) \to \LSym_{\cat}^{\sigma,\leq *}(C_2 \otimes N) $. 
	Because each filtered piece preserves filtered colimits in $ N $ and is excisively polynomial, it suffices to show the exchange map is an equivalence for $ N \in (\cat^0)^e $, and this is true by construction. 

	To show \ref{propitem:invdalg_admits_C2_lims}, in view of \cite[Corollary 5.25]{Shah18}, by Theorem B of \cite{Shah_paramII}, it suffices to check the duals to the conditions outlined in the proof of part \ref{propitem:invdalg_admits_C2_colims}. 
	For each $ C_2 $-set $ T \in \mathcal{O}_{C_2} $, the fiber $ \underline{\DAlg}^\sigma_T $ admits all small limits because they are computed in $ \cat_T $. 
	For each $ \alpha \colon T \to S $, the restriction functor $ \alpha^* \colon \underline{\DAlg}^\sigma_S \to \underline{\DAlg}^\sigma_T $ preserves all small limits because they are computed in $ \cat_S $ and $ \cat_T $, respectively and $ \alpha^* \colon \cat_S \to \cat_T $ preserves all small limits. 
	For each map of finite $ C_2 $-sets $ \alpha: U \to V $, the restriction functor $ \alpha^* : \underline{\DAlg}^\sigma_V \to \underline{\DAlg}^\sigma_U $ admits a right adjoint $ \alpha_* $ by the adjoint functor theorem \cite[Theorem 5.5.2.9(1)]{LurHTT} and the proof of part \ref{propitem:invdalg_admits_C2_colims}. 
	Finally (the dual to) condition (c) follows from the fact that limits in $ \underline{\DAlg}^\sigma_T $ are computed in $ \cat_T $. 

	The forgetful $ C_2 $-functor $ \Theta $ preserves small sifted colimits and all limits indexed by constant diagrams because they are reflected by the forgetful $ C_2 $-functors to $ \cat $.  
	By \cites[Lemma 2.8]{norms}[Theorem 8.6(a)]{Shah_paramII}, to prove \ref{propitem:invdalg_C2_colims_computed_in_calg}, it suffices to show that $ \Theta $ preserves finite $C_2$-coproducts. 
	In other words, we need to show that for $ A \in \DAlg^\sigma(\cat)^e $, the canonical map $ \underline{N}^{C_2}\left(\Theta(A)\right) \to \Theta(A^{\boxtimes C_2}) $ is an equivalence, where $ \boxtimes C_2 $ denotes the left adjoint to the forgetful functor $ \DAlg^\sigma(\cat)^{C_2} \to \DAlg^\sigma(\cat)^e $. 
	Since each $ A $ is a geometric realization of free derived algebras and $ \Theta $ and $ (-)^{\boxtimes C_2} $ and $ \underline{N}^{C_2} $ commute with sifted colimits, we may without loss of generality take $ A = \LSym_{\cat^e}(M) $ for $ M \in \cat^e $. 
	That is, we need to show that the canonical map $ {\varepsilon_M \colon \underline{N}^{C_2}\left(\Theta \LSym_{\cat^e}(M)\right) \to \Theta \left(\LSym_{\cat^e}(M)^{\boxtimes C_2}\right) } $ is an equivalence for $ M \in \cat^e $. 
	Since $ \LSym^\sigma_{\cat} $ is a left $ C_2 $-adjoint by Lemma \ref{lemma:param_monad_induces_param_adjunction},	we have a canonical equivalence $ \LSym_{\cat^e}(M)^{\boxtimes C_2} \simeq \LSym^\sigma_{\cat}(C_2 \otimes M) $.
	Thus, arguing as in \cite[Proposition 4.2.27]{Raksit20}, it suffices to show that the map $ \underline{N}^{C_2}\left( \LSym_{\cat^e}(M)\right) \to \left(\LSym^\sigma_{\cat}(C_2 \otimes M)\right) $ is an equivalence. 
	The map $ \varepsilon $ arises as the colimit of a filtered map $ {\varepsilon^{\leq *}_M \colon \left(\Theta \LSym^{\sigma,\leq *}_{\cat^e}(M)\right)^{\ostar C_2} \to \Theta \left(\LSym^{\sigma,\leq *}_{\cat}(C_2 \otimes M)\right) } $, where $ (-)^{\ostar C_2} $ denotes the Day convolution $ C_2 $-symmetric monoidal structure on $ \underline{\Fil}(\cat) $ of Corollary \ref{cor:param_gr_fil_day_convolution}. 
	Since $ \varepsilon^{\leq n}_M $ is excisively polynomial for each $ n $ and preserves sifted colimits, by \cite[Proposition 4.2.15]{Raksit20} it suffices to show that it induces an equivalence on colimits for $ M \in \cat^0 $. 
	This follows from the fact that $ C_2\Sym_{\cat_{\geq 0}} $ takes $ C_2 $-coproducts to the norm as a functor $ \cat^0 \to \cat_{\geq 0} $ and $ P $ is $ C_2 $-symmetric monoidal. 
\end{proof}
Let $ \cat $ be a derived algebraic context, and write $ \Pi_{C_2} $ for the right adjoint to the restriction functor $ \cat^{C_2} \to \cat^e $. 
By Proposition \ref{prop:invdalg_C2_colims}\ref{propitem:invdalg_admits_C2_colims}, the restriction functor $ \alpha^* \colon \DAlg^\sigma(\cat)^{C_2} \to \DAlg^\sigma(\cat)^e $ admits a right adjoint $ \alpha_* $, where $ \alpha \colon C_2/e \to C_2/C_2 $. 
\begin{prop}\label{prop:dalg_coinduction}
	Let $ \cat $ be a derived involutive algebraic context, and write $ \Pi_{C_2} $ for the right adjoint to the restriction functor $ \cat^{C_2} \to \cat^e $. 
	Write $ U \colon \underline{\DAlg}^\sigma(\cat) \to \cat $ for the forgetful $ C_2 $-functor. 		
	Then in the adjunction $ (\alpha^*, \alpha_*) $ between $ \DAlg^\sigma(\cat)^{C_2} $ and $ \DAlg^\sigma(\cat)^e $, the right adjoint $ \alpha_* $ agrees with $ U $ on underlying objects, i.e. there is a commutative diagram 
	\begin{equation*}
	\begin{tikzcd}
			\DAlg^\sigma(\cat)^e \ar[r,"{\alpha_*}"] \ar[d,"{U^e}"] & \DAlg^\sigma(\cat) \ar[d,"{U^{C_2}}"] \\
			\cat^e \ar[r,"{\Pi_{C_2}}"] & \cat^{C_2} \,.
	\end{tikzcd}
	\end{equation*}
\end{prop}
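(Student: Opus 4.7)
\begin{proofsketch}
The plan is to deduce the result as a direct consequence of Proposition \ref{prop:invdalg_C2_colims}\ref{propitem:invdalg_C2_colims_computed_in_calg}, which asserts that the forgetful $C_2$-functor $U \colon \underline{\DAlg}^\sigma(\cat) \to \cat$ strongly preserves $C_2$-limits. The key observation is that the right adjoint $\alpha_*$ to restriction along $\alpha \colon C_2/e \to C_2/C_2$ is, by definition, one of the structure functors witnessing that $\underline{\DAlg}^\sigma(\cat)$ admits $C_2$-limits; the corresponding structure functor for $\cat$ is precisely $\Pi_{C_2}$.

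The first step is to record that $U$ being a $C_2$-functor gives a commutative square
\begin{equation*}
\begin{tikzcd}
\DAlg^\sigma(\cat)^{C_2} \ar[r, "\alpha^*"] \ar[d, "U^{C_2}"'] & \DAlg^\sigma(\cat)^{e} \ar[d, "U^e"] \\
\cat^{C_2} \ar[r, "\alpha^*"'] & \cat^e\,.
\end{tikzcd}
\end{equation*}
Both horizontal arrows admit right adjoints: the top by Proposition \ref{prop:invdalg_C2_colims}\ref{propitem:invdalg_admits_C2_lims} and the bottom because $\cat$ admits $C_2$-limits (being a derived involutive algebraic context, which in particular makes it $C_2$-complete). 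Passing to right adjoints produces a Beck--Chevalley mate transformation $\Pi_{C_2} \circ U^e \to U^{C_2} \circ \alpha_*$ that I would like to show is an equivalence.

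The second step is to identify this Beck--Chevalley condition with the statement that $U$ strongly preserves the $C_2$-limit indexed by the morphism $\alpha$, in the sense of Theorem B of \cite{Shah_paramII}. Under the unstraightening equivalence, ``strongly preserves all $C_2$-limits'' for a $C_2$-functor between $C_2$-complete $C_2$-$\infty$-categories means exactly that all such Beck--Chevalley mates (for every map of finite $C_2$-sets) are equivalences. Proposition \ref{prop:invdalg_C2_colims}\ref{propitem:invdalg_C2_colims_computed_in_calg} asserts precisely this, so specializing to $\alpha \colon C_2/e \to C_2/C_2$ gives the desired equivalence $U^{C_2} \circ \alpha_* \simeq \Pi_{C_2} \circ U^e$.

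The only subtle point I would need to handle with care is the direction of the Beck--Chevalley mate and the distinction between the right adjoint $\alpha_*$ existing at each fiber versus the full statement that $U$ preserves $C_2$-limits indexed by $\alpha$ (as opposed to ordinary limits in each fiber, which $U^{C_2}$ and $U^e$ preserve separately since limits in $\DAlg^\sigma$ are created by the forgetful functors to $\cat$). This is not really an obstacle since it is already part of the content of Proposition \ref{prop:invdalg_C2_colims}\ref{propitem:invdalg_C2_colims_computed_in_calg}; one could also verify it by hand by checking that for $A \in \DAlg^\sigma(\cat)^e$, the object $\Pi_{C_2}(U^e A)$ naturally acquires a derived involutive $\underline{k}$-algebra structure and satisfies the universal property of $\alpha_* A$ (using that $\DAlg^\sigma(\cat)^{C_2} \simeq \DAlg^\sigma(\cat) \times_{C_2\E_\infty\underline{\Alg}(\cat)} C_2\E_\infty\underline{\Alg}(\cat)^{C_2}$ in an appropriate sense, together with the fact that the analogous result for $C_2$-$\E_\infty$-algebras is standard).
\end{proofsketch}
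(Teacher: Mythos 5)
The underlying idea---deducing the result from $U$ strongly preserving $C_2$-limits so that the Beck--Chevalley mate in question is automatically an equivalence---is a valid and genuinely different route from the paper's argument, but there is a citation error you need to fix. Proposition \ref{prop:invdalg_C2_colims}\ref{propitem:invdalg_C2_colims_computed_in_calg} asserts strong preservation of $C_2$-(co)limits for $\Theta \colon \underline{\DAlg}^\sigma(\cat) \to C_2\E_\infty\underline{\Alg}(\cat)$, not for $U \colon \underline{\DAlg}^\sigma(\cat) \to \cat$. To get what you need about $U$ you can either compose with the observation that the further forgetful $C_2$-functor $C_2\E_\infty\underline{\Alg}(\cat) \to \cat$ also strongly preserves $C_2$-limits, or---more directly---appeal to Lemma \ref{lemma:param_monad_induces_param_adjunction}, which exhibits $U$ as the right member of a $C_2$-adjunction $\LSym^\sigma_\cat \dashv U$, and then invoke the fact (recorded in the paper as \cite[Corollary 8.9]{Shah18}) that right $C_2$-adjoints preserve $C_2$-limits. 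With that substitution your argument is sound. A minor point: the Beck--Chevalley mate goes $U^{C_2}\alpha_* \to \Pi_{C_2}\, U^e$, not the direction you wrote, though this does not affect the conclusion.

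For comparison, the paper's own proof is considerably more hands-on and does not pass through strong preservation of $C_2$-limits at all. It starts from the fiberwise universal property of $\alpha_*$, probes $\alpha_*(A)$ against free objects $B = \LSym^\sigma_\cat(M)$, and uses the adjunctions $\LSym^\sigma_\cat \dashv U$ and $\alpha^* \dashv \Pi_{C_2}$ to deduce $U^{C_2}\alpha_*(A) \simeq \Pi_{C_2}\, U^e A$ by Yoneda. Your "by hand" alternative in the last paragraph is essentially this argument, though it is phrased more loosely; in particular the claimed identification of $\DAlg^\sigma(\cat)^{C_2}$ as a pullback is not needed and does not obviously hold as written---it suffices to test against free algebras via the monadic adjunction, exactly as the paper does. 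Your approach is heavier machinery but more robust and conceptual; the paper's is self-contained and explicit.
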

\begin{proof}
	By definition, the functor $ \alpha_* \colon \DAlg^\sigma(\cat)^e \to \DAlg^\sigma(\cat)^{C_2} $ satisfies that for all $ B \in \DAlg^\sigma(\cat)^{C_2} $ and $ A \in \DAlg^\sigma(\cat)^e $, there is an equivalence 
	\begin{equation*}
		\hom_{\DAlg^\sigma(\cat)}(B, \alpha_* (A)) \simeq \hom_{\DAlg^\sigma(\cat)^e}(B^e, A) .
	\end{equation*} 
	Suppose $ B = \LSym^\sigma_{\cat}(M) $ for some $ M \in \cat^{C_2} $. 
	The aforementioned equivalence becomes 
	\begin{equation*}
		\hom_{\cat^{C_2}}(M, U^{C_2} \alpha_* (A)) \simeq \hom_{\cat^e}(M^e, U^{e} A) .
	\end{equation*} 
	Since $ \Pi_{C_2} $ is the right adjoint to $ (-)^e: \cat^{C_2} \to \cat^e $, we have shown that the underlying $ \underline{k} $-module of $ \alpha_* (A) $ is equivalent to $ \Pi_{C_2} U^e A $. 
\end{proof} 
\begin{rmk}\label{rmk:dalg_construction_naturality}
	Given a morphism $ F \colon \cat \to \mathcal{D} $ of derived involutive algebra contexts, there is an induced $ C_2 $-functor $ F' \colon \underline{\DAlg}^\sigma(\cat) \to \underline{\DAlg}^\sigma(\mathcal{D}) $ which strongly preserves $ C_2 $-colimits so that the diagrams
	\begin{equation*}
	\begin{tikzcd}
		\underline{\DAlg}^\sigma(\cat) \ar[r,"{F'}"] \ar[d] & \underline{\DAlg}^\sigma(\mathcal{D}) \ar[d] & & \underline{\DAlg}^\sigma(\cat) \ar[r,"{F'}"]  & \underline{\DAlg}^\sigma(\mathcal{D})\\
		\cat \ar[r,"{F}"] & \mathcal{D} & & \cat \ar[u,"{\LSym^\sigma_\cat}"] \ar[r,"{F}"] & \mathcal{D}\ar[u,"{\LSym^\sigma_{\mathcal{D}}}"']
	\end{tikzcd}
	\end{equation*}
	commute canonically. 
	By Corollary \ref{cor:C2_right_adjoint_local_crit}, $ F' $ and $ F $ admit right $ C_2 $-adjoints $ G' $ and $ G $, and commutativity of the right-hand diagram above implies that the diagram
	\begin{equation*}
	\begin{tikzcd}
			\underline{\DAlg}^\sigma(\cat) \ar[d,"{U_{\cat}}"] & \underline{\DAlg}^\sigma(\mathcal{D}) \ar[d,"{U_{\mathcal{D}}}"] \ar[l,"{G'}"]  \\ 
			\cat  & \mathcal{D} \ar[l,"G"] 
	\end{tikzcd}
	\end{equation*}
	also commutes. 
	On underlying $ \infty $-categories, these diagrams recover those of \cite[Remark 4.2.25]{Raksit20}. 
\end{rmk}
\begin{defn}
	We say that $ A $ is a \emph{derived involutive bialgebra} if $ A $ is a dualizable $ C_2 $-$ \underline{\E}_\infty $-coalgebra object of $ \underline{\DAlg}^\sigma(\cat) $. 
	There is a $ C_2 $-$ \infty $-category of derived involutive bialgebras $ \underline{\coAlg}\underline{\DAlg}^\sigma (\cat) $ whose underlying $ \infty $-category is the $ \infty $-category of derived bicommutative bialgebra objects of \cite[Definition 4.2.30]{Raksit20}. 
\end{defn} 
If $ A $ is a derived involutive bialgebra object, then comodules over $ A $ admits a notion of derived involutive algebra objects. 
\begin{cons}\label{cons:dalg_in_comodules}
	Let $ A $ be a derived involutive bialgebra object over $ k $. 
	Then because the forgetful $ C_2 $-functor $ \underline{\DAlg}^\sigma (\cat) \to \cat $ is canonically $C_2$-symmetric monoidal, the argument of \cite[Construction 4.2.32]{Raksit20} applies to show the existence of a commutative diagram
	\begin{equation*}
	\begin{tikzcd}
		{\underline{\coMod}_{A^\vee}} \ar[r,"{\LSym^\sigma}"] \ar[d] & {\underline{\coMod}_{A^\vee}} \ar[d] \\
		\cat \ar[r,"{\LSym^\sigma_{\cat}}"] & \cat
	\end{tikzcd}
	\end{equation*}
	of $ C_2 $-functors of $ C_2 $-$\infty $-categories, where we have abbreviated $ \underline{\coMod}_A = \underline{\coMod}_A(\cat) $. 
	Note that taking `underlying' recovers the diagram of \cite[Construction 4.2.32]{Raksit20}. 

	Suppose that $ A $ is moreover dualizable. 
	We write $ \underline{\DAlg}^\sigma\left(\underline{\Mod}_A\right) := \underline{\Mod}_{\LSym^\sigma}\left(\underline{\coMod}_{A^\vee}\right) $ and write $ \DAlg^\sigma(\Mod_A) $ for its $ C_2 $-fixed points. 
	We will refer to these as the ($C_2$-)$ \infty $-category of \emph{derived involutive algebra objects of } $ \Mod_A $. 
\end{cons}
\begin{rmk}\label{rmk:dalg_functoriality_in_comodules}
	Let $ F \colon \cat \to \mathcal{D} $ be a morphism of derived involutive algebra contexts. 
	Let $ B $ be a dualizable $ C_2 $-$ \E_\infty $ bialgebra object of $ \cat $ equipped with a lift of $ B^\vee $ to a derived involutive bialgebra object in $ \cat $. 
	Then $ F(B) $ and $ F(B^\vee ) \simeq F(B)^\vee $ inherit the same structure in $ \mathcal{D} $, and there is an induced functor $ F_B \colon \underline{\LMod}_{B}(\cat)\to \underline{\LMod}_{F(B)}(\mathcal{D}) $. 
	By Remark \ref{rmk:dalg_construction_naturality}, there is an induced $ C_2 $-functor
	\begin{equation*}
		F_B' \colon \underline{\DAlg}^\sigma\left(\underline{\LMod}_{B}(\cat)\right) \simeq \underline{\DAlg}^\sigma\left(\underline{\coLMod}_{B^\vee}(\cat)\right) \to \underline{\DAlg}^\sigma\left(\underline{\LMod}_{F(B)}(\mathcal{D})\right) \simeq \underline{\DAlg}^\sigma\left(\underline{\coLMod}_{F(B)^\vee}(\mathcal{D})\right) 
	\end{equation*}
	making the diagrams
	\begin{equation*}
	\begin{tikzcd}[cramped]
		\underline{\DAlg}^\sigma\left(\underline{\LMod}_{B}(\cat)\right)\ar[d] \ar[r,"{F_B'}"] & \underline{\DAlg}^\sigma\left(\underline{\LMod}_{B}(\cat)\right) \ar[d] & & \underline{\DAlg}^\sigma\left(\underline{\LMod}_{B}(\cat)\right) \ar[r,"{F_B'}"] & \underline{\DAlg}^\sigma\left(\underline{\LMod}_{B}(\cat)\right) \\
		\underline{\LMod}_B(\cat) \ar[r,"{F}"] & \underline{\LMod}_{F(B)}(\mathcal{D}) & & \underline{\LMod}_B(\cat) \ar[u,"{\LSym^\sigma}"] \ar[r,"{F}"] & \underline{\LMod}_{F(B)}(\mathcal{D}) \ar[u,"{\LSym^\sigma}"]
	\end{tikzcd}
	\end{equation*}
	commute, where the vertical arrows on the left are forgetful $ C_2 $-functors. 
\end{rmk}

\subsection{Examples}\label{subsection:inv_dalg_context_examples}
In this section, we discuss the derived involutive algebraic contexts which we will use in the rest of the paper.  
Before we discuss the prototypical example (Example \ref{ex:inv_dalg_over_const_Mackey}), we fix some notation and include some auxiliary results and definitions needed to show that modules over a fixed point $ C_2 $-Mackey functor may be regarded as an derived involutive algebraic context. 
We discuss filtered and graded variants on the prototypical example and relate the examples here to their non-equivariant counterparts. 
Finally, we include a few examples of objects in these $ C_2 $-$ \infty $-categories. 
\begin{ntn}
		Throughout this section, $ k $ will denote some discrete ring with involution and $ \underline{k} $ will denote the associated fixed point Green functor of Notation \ref{ntn:fixpt_green_functor}. 
		We will write $ \underline{\Mod}_{\underline{k}}^0 $ for the full $ C_2 $-subcategory of $ \underline{\Mod}_{\underline{k}} $ spanned by the free $ \underline{k} $-modules on finite $ C_2 $-sets. 
		Note that there is an inclusion $ \underline{\Mod}_{\underline{k}}^0 \subseteq \underline{\Mod}_{\underline{k}}^\heartsuit $. 
\end{ntn}
\begin{defn}
	[{\cites{MR3505179}[Definition 2.1]{MR3007090}}] \label{defn:slice_filtration} 
	For each $ n \in \Z $, let $ \slice_{\geq n} $ denote the localizing subcategory of $ \Mod_{\underline{k}}\left( \Spectra^{C_2}\right) $ generated by $ C_2 \otimes_H S^{\ell\rho_H -\varepsilon} \otimes_{\sphere} \underline{k} $, where $ H $ is a subgroup of $ C_2 $, $ \ell \cdot |H| - \varepsilon \geq n $, and $ \varepsilon = 0, 1 $. 
	We say that an object $ X \in \Mod_{\underline{k}} $ is \emph{slice $ n $-connective} if it is in $ \slice_{\geq n} $. 

	We will write $ \underline{\slice}_{\geq n} $ for the full $ C_2 $-subcategory of $ \underline{\Mod}_{\underline{k}} $ on $ \slice_{\geq n} $. 
\end{defn}
A word of warning: the slice filtration does \emph{not} agree with the \emph{regular} slice filtration of Definition \ref{defn:reg_slicefiltration}.
\begin{rmk}\label{rmk:slice_param_enhancement}
		The inclusion $ \underline{\slice}_{\geq n} \subseteq \underline{\Mod}_{\underline{k}}\left( \Spectra^{C_2}\right) $ admits a right $ C_2 $-adjoint $ \tau_{\geq n}^{\slice} $ by a similar argument to Proposition \ref{prop:filtrations_are_parametrized}; the key is that $ \underline{\slice}_{\geq n} $ is closed under $ C_2 $-coproducts \cite[Proposition 2.6]{MR3007090}. 
		We will also write $ \tau_{\geq n}^{\slice} $ for the composite $ \underline{\Mod}_{\underline{k}} \xrightarrow{\tau^\slice_{\geq n}} \underline{\slice}_{\geq n} \subseteq \underline{\Mod}_{\underline{k}} $. 
		Let $ \tau^\slice_{\leq n} $ denote the $ C_2 $-endofunctor $ \cofib\left(\tau^{\slice}_{\geq n+1} \to \mathrm{id}\right) $ of $ \underline{\Mod}_{\underline{k}} $. 
		Since there is an inclusion $ \slice_{\geq n} \subseteq \slice_{\geq n-1} $, there are canonical natural transformations $ \tau^\slice_{\leq n} \to \tau^\slice_{\leq n-1} $. 
		The fiber of the natural transformation $ P^n = \fib(\tau^\slice_{\leq n} \to \tau^\slice_{\leq n-1}) $ will be called the \emph{$ n $th slice}. 
		We will abuse notation and also write $ \tau^\slice_{\leq n} $ for the composite of $ \tau^\slice_{\leq n} $ with the inclusion $ \underline{\slice}_{\geq n} \subseteq \underline{\Mod}_{\underline{k}}\left( \Spectra^{C_2}\right) $, and likewise for $ P^n $. 
\end{rmk}
We will mainly be concerned with the case $ n = 0 $. 
\begin{prop}
	[{\cite[Corollary 2.16]{MR3007090}}] \label{prop:zeroth_slice_description}
	Let $ \underline{k} $ be the fixed point $ C_2 $-Mackey functor associated to a commutative ring $ k $ with an involution (Notation \ref{ntn:fixpt_green_functor}). 
	For any $ M \in \Mod_{\underline{k}} $, the zeroth slice $ P^0 M $ is the largest quotient of $ \underline{\pi}_0 M $ on which the restriction map $ \pi_0 M^{C_2} \to \pi_0 M^e $ is injective. 	
\end{prop}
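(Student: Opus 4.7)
The plan is to invoke Hill's identification of $0$-slices (the cited Corollary~2.16 of Hill's primer) and transfer it from $\Spectra^{C_2}$ to $\Mod_{\underline{k}}$, using that the slice filtration on $\Mod_{\underline{k}}$ is induced by generating cells that all lie over $\underline{k}$. The proof breaks naturally into two assertions: (i) $P^0 M$ lies in $\Mod_{\underline{k}}^\heartsuit$ and factors through the canonical map $M \to \underline{\pi}_0 M$; and (ii) among quotients of $\underline{\pi}_0 M$ in $\Mod_{\underline{k}}^\heartsuit$, $P^0 M$ is characterized as the largest on which the restriction map is injective.

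For (i), I would inspect the generating cells $C_2 \otimes_H S^{\ell \rho_H - \varepsilon} \otimes \underline{k}$ of Definition~\ref{defn:slice_filtration} and verify that every generator of $\slice_{\geq 1}$ is Postnikov $0$-connective (in the t-structure of Variant~\ref{var:eqvtmodulespostnikov}). Consequently, $\slice_{\leq 0}$ contains all Postnikov $(-1)$-coconnective objects, so $\tau^\slice_{\leq 0}$ kills $\fib(M \to \underline{\pi}_0 M)$, and hence the induced map $\tau^\slice_{\leq 0} M \to \tau^\slice_{\leq 0} \underline{\pi}_0 M$ is an equivalence. Applying $\tau^\slice_{\geq 0}$ and using that $\underline{\pi}_0 M$ is already slice $0$-connective gives $P^0 M \simeq P^0 \underline{\pi}_0 M$. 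A similar inspection of the generators shows that the essential image of $P^0$ consists of discrete Mackey functors, so $P^0 M \in \Mod_{\underline{k}}^\heartsuit$.

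For (ii), assuming now that $M$ is discrete, I would exploit the cofiber sequence $(C_2)_+ \otimes \underline{k} \to \underline{k} \to \Sigma^\sigma \underline{k}$ in $\Mod_{\underline{k}}$ arising from the $C_2$-space cofiber sequence $(C_2)_+ \to S^0 \to S^\sigma$ (Example~\ref{ex:signrepsphere}). Since $\Sigma^\sigma \underline{k}$ is a slice cell of dimension one (take $H = C_2$, $\ell = 1$, $\varepsilon = 1$), mapping this sequence into a discrete $\underline{k}$-module yields a natural identification
\begin{equation*}
\pi_0 \Map_{\underline{k}}(\Sigma^\sigma \underline{k}, M) \simeq \ker\!\left(\Res\colon M^{C_2} \to M^e\right).
\end{equation*}
Thus a quotient $q \colon M \twoheadrightarrow N$ (in $\Mod_{\underline{k}}^\heartsuit$) lies in $\slice_{\leq 0}$ precisely when $\Res_N$ is injective and the other higher-dimensional generators of $\slice_{\geq 1}$ contribute nothing---the latter by a connectivity estimate, since the remaining generators $\Sigma^n \underline{k}$, $\Sigma^n(C_2 \otimes \underline{k})$ for $n \geq 1$, and $\Sigma^{m\rho - \varepsilon} \underline{k}$ for $m \geq 1$ with $m|H| - \varepsilon \geq 2$, have underlying spectrum at least $1$-connective and so admit no nonzero maps into a discrete Mackey functor.

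Combining the two steps, the universal property of $\tau^\slice_{\leq 0}$ applied to $\underline{\pi}_0 M$ then identifies $P^0 \underline{\pi}_0 M$ with the universal quotient killing all maps from slice cells of positive dimension, which by the above is precisely the largest quotient with injective restriction. The main obstacle is the verification that no generator other than $\Sigma^\sigma \underline{k}$ contributes a nontrivial obstruction on discrete modules; this is the content of Hill's result, and I expect the proof to amount primarily to making the connectivity estimates on each family of generators precise enough to invoke the cited corollary cleanly.
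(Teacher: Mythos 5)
Your overall skeleton is right and you have correctly isolated the key geometric input (the cofiber sequence $(C_2)_+ \otimes \underline{k} \to \underline{k} \to \Sigma^\sigma\underline{k}$, and the resulting identification $\pi_0\Map_{\underline{k}}(\Sigma^\sigma\underline{k}, M) \simeq \ker(\Res)$). But there is a genuine logical gap in step~(i). You argue: the slice cells generating $\slice_{\geq 1}$ are Postnikov $0$-connective, hence $\slice_{\leq 0}$ contains all Postnikov $(-1)$-coconnective objects, ``so $\tau^\slice_{\leq 0}$ kills $\fib(M \to \underline{\pi}_0 M)$.'' This inference is a non-sequitur. The statement you prove is $\slice_{\geq 1} \subseteq \Mod_{\underline{k}, \geq 0}$, equivalently $\slice_{\leq 0} \supseteq \Mod_{\underline{k}, \leq -1}$. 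But $\tau^\slice_{\leq 0}$ does not \emph{kill} objects of $\slice_{\leq 0}$ --- it fixes them; it kills $\slice_{\geq 1}$. To kill the positive Postnikov homotopy $\tau_{\geq 1}M$ you need the \emph{opposite} inclusion $\Mod_{\underline{k},\geq 1} \subseteq \slice_{\geq 1}$, which does not follow formally from the connectivity of the generating cells (and indeed the two inclusions are far from equivalent: $S^\sigma\otimes\underline{k}$ shows the containment $\slice_{\geq 1} \subseteq \Mod_{\underline{k},\geq 0}$ is not an equality). The missing input is precisely what makes $P^0 M$ a possibly \emph{proper} quotient of $\underline{\pi}_0 M$ rather than equal to it. The fact $\Mod_{\underline{k},\geq 1} \subseteq \slice_{\geq 1}$ is true, but needs an argument: rotate the cofiber sequence you use in step~(ii) to express $S^1\otimes\underline{k}$ as $\cofib\bigl(\Sigma^\sigma\underline{k} \to C_2 \otimes S^1 \otimes \underline{k}\bigr)$, exhibiting $S^1\otimes\underline{k}$ as an extension of slice $1$-cells, and then use multiplicativity of the slice filtration to handle $S^n\otimes\underline{k}$ for $n \geq 2$. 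You already have the right cofiber sequence in hand but deploy it in the wrong place.

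A smaller issue occurs in step~(ii). Your list of ``remaining generators'' includes $\Sigma^n\underline{k}$ for $n \geq 1$, which are not slice cells ($S^n$ is of the form $S^{\ell\rho_H - \varepsilon}$ only for $n \in \{0,1\}$, and $S^1 = S^{\rho - \sigma}$, not $S^{\rho-1}$). More substantively, the justification ``have underlying spectrum at least $1$-connective and so admit no nonzero maps into a discrete Mackey functor'' is not sufficient as stated: the cell $\Sigma^\sigma\underline{k}$ also has underlying spectrum $1$-connective, yet it \emph{does} admit nonzero maps into discretes --- which is the entire point of the proposition. What actually singles out the dimension-$\geq 2$ cells is that they are Postnikov $\geq 1$-connective \emph{on both $(-)^e$ and $\Phi^{C_2}$} (for $S^{m\rho - \varepsilon}\otimes\underline{k}$ with $2m - \varepsilon \geq 2$ one has $m - \varepsilon \geq 1$, so $\Phi^{C_2}$ is $1$-connective); it is this joint estimate, not the underlying one, that forces the mapping space into a discrete module to vanish. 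Once this and the step~(i) gap are fixed, the remainder of your argument --- identifying $P^0 M$ with the largest quotient of $\underline{\pi}_0 M$ killing $\ker(\Res)$ --- goes through as you describe.
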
 
The next corollary follows immediately from Remark \ref{rmk:slice_param_enhancement} and the preceding proposition. 
\begin{cor}\label{cor:zeroth_slice_param_enhancement}
		Same assumptions as in Proposition \ref{prop:zeroth_slice_description}. 
		The functor $ P^0 $ admits a $ C_2 $-parametrized enhancement $ \underline{\Mod}_{\underline{k}} \to \underline{\Mod}_{\underline{k}} $ which we also denote by $ P^0 $. 
		The underlying component of the parametrized functor $ P^0 $ is given by $ \pi_0 $. 
\end{cor}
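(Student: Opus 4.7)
The plan is to assemble $P^0$ from pieces that have already been established to admit $C_2$-parametrized enhancements, and then compute its underlying component by restricting to the orbit $C_2/e$, where the slice filtration collapses to a familiar one.

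First, I would invoke Remark \ref{rmk:slice_param_enhancement}: for each $n \in \Z$, the inclusion $\underline{\slice}_{\geq n} \subseteq \underline{\Mod}_{\underline{k}}$ admits a right $C_2$-adjoint $\tau^{\slice}_{\geq n}$, and composing with the inclusion gives a $C_2$-endofunctor of $\underline{\Mod}_{\underline{k}}$. The unit $\mathrm{id} \to \tau^\slice_{\leq n-1}$ arises as the cofiber $\tau^{\slice}_{\leq n-1} := \cofib(\tau^{\slice}_{\geq n} \to \mathrm{id})$, which is formed in the $C_2$-stable $C_2$-$\infty$-category $\underline{\Mod}_{\underline{k}}$; since $C_2$-cofibers exist and are computed fiberwise (the restriction functor is exact), this is again a $C_2$-functor, and similarly the natural transformations $\tau^\slice_{\leq n} \to \tau^\slice_{\leq n-1}$ assemble into a morphism of $C_2$-functors. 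Thus
\begin{equation*}
P^0 \;=\; \fib\!\left(\tau^\slice_{\leq 0} \to \tau^\slice_{\leq -1}\right)
\end{equation*}
is a $C_2$-endofunctor of $\underline{\Mod}_{\underline{k}}$, which is the first half of the statement.

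For the underlying component, I would pass to the fiber over $C_2/e$. The key observation is that over the trivial orbit, the generators $C_2 \otimes_H S^{\ell\rho_H - \varepsilon} \otimes_{\sphere}\underline{k}$ of $\slice_{\geq n}$ (with $H \leq C_2$ and $\ell|H|-\varepsilon \geq n$) all have underlying $k$-modules that are precisely $(\ell|H|-\varepsilon)$-connective, and conversely every $n$-connective $k$-module is a retract of such a thing after restriction. Hence the fiber $\left(\underline{\slice}_{\geq n}\right)^e$ coincides with the full subcategory of $n$-connective objects in $\Mod_k$ with respect to the Postnikov t-structure, and $\tau^{\slice,e}_{\geq n}$ is the ordinary connective cover $\tau_{\geq n}$ on $k$-modules. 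Consequently $(P^0)^e \simeq \fib(\tau_{\leq 0} \to \tau_{\leq -1}) \simeq \pi_0$ on $\Mod_k$, as desired.

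The only subtlety I anticipate is the verification that the parametrized right adjoints $\tau^\slice_{\geq n}$ restrict over $C_2/e$ to the ordinary Postnikov cover on $\Mod_k$; this is the one place where one must actually use Proposition \ref{prop:zeroth_slice_description} (which identifies $P^0 M$ with the largest quotient of $\underline{\pi}_0 M$ whose restriction map is injective) to confirm that the description over $C_2/e$ degenerates in the expected way. Once this local identification is in hand, the conclusion that $(P^0)^e \simeq \pi_0$ and the compatibility of the fiber/cofiber sequences defining $P^0$ with restriction to $C_2/e$ are both routine, and the corollary follows.
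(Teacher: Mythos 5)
Your proposal is correct and takes essentially the same approach as the paper, which simply deduces the corollary from Remark \ref{rmk:slice_param_enhancement} (supplying the fiberwise construction of the $C_2$-parametrized truncation and slice functors) together with Proposition \ref{prop:zeroth_slice_description}. Your supplementary argument for the second assertion---identifying $\left(\underline{\slice}_{\geq n}\right)^e$ with the Postnikov $n$-connective subcategory of $\Mod_k$ by noting that the underlying spectra of the generating slice cells $C_2 \otimes_H S^{\ell\rho_H - \varepsilon} \otimes_{\sphere}\underline{k}$ are exactly spheres of dimension $\ell|H|-\varepsilon$---is clean and self-contained. One small correction to your closing paragraph: you suggest that Proposition \ref{prop:zeroth_slice_description} is still required to verify that $\left(\tau^\slice_{\geq n}\right)^e \simeq \tau_{\geq n}$, but your slice-cell computation already does this without it, since once $\left(\underline{\slice}_{\geq n}\right)^e$ is identified, the fiberwise right adjoint over $C_2/e$ is automatically the right adjoint to that inclusion; the proposition is rather the paper's alternative route to $(P^0 M)^e \simeq \pi_0 M^e$, which one reads off by noting that quotienting $\underline{\pi}_0 M$ to make restriction injective leaves the underlying abelian group untouched.
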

\begin{lemma}\label{lemma:zeroth_slice_compatible_C2_monoidal_structure}
		Let $ \underline{k} $ be the fixed point Green functor associated to a commutative ring with an involution. 
		Then the zeroth slice functor $ P^0 \colon \underline{\Mod}_{\underline{k},\geq 0} \to \underline{\Mod}_{\underline{k}}^{\slice=0} $ is compatible with the $ C_2 $-symmetric monoidal structure on $ \underline{\Mod}_{\underline{k},\geq 0} $. 
\end{lemma}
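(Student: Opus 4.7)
The plan is to unravel the definition of compatibility of a localization with a $C_2$-symmetric monoidal structure from \cite[\S2.9]{NS22}, which reduces to showing that the $C_2$-subcategory of $P^0$-acyclic objects in $\underline{\Mod}_{\underline{k},\geq 0}$ is a $C_2$-tensor ideal. First I would observe that every connective $\underline{k}$-module lies in $\underline{\slice}_{\geq 0}$ (by inspecting the generators in Definition \ref{defn:slice_filtration} with $n = 0$), so that a connective module $M$ is $P^0$-acyclic exactly when $M \in \underline{\slice}_{\geq 1}$. It will then suffice to verify: (a) if $X \in \underline{\slice}_{\geq 1} \cap \underline{\Mod}_{\underline{k},\geq 0}$ and $Z \in \underline{\Mod}_{\underline{k},\geq 0}$, then $X \otimes_{\underline{k}} Z \in \underline{\slice}_{\geq 1}$; and (b) if $X$ is a $1$-connective $k$-module, then the relative norm $\underline{N}^{C_2}(X)$ lies in $\underline{\slice}_{\geq 1}$.

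For (a), the key input is multiplicativity of the slice filtration on $\Mod_{\underline{k}}\left(\Spectra^{C_2}\right)$: if $X \in \underline{\slice}_{\geq m}$ and $Y \in \underline{\slice}_{\geq n}$, then $X \otimes_{\underline{k}} Y \in \underline{\slice}_{\geq m+n}$. This descends from the analogous multiplicativity on $\Spectra^{C_2}$ established by Hill--Hopkins--Ravenel \cite[\S4]{MR3505179}, since $\underline{k}$ is itself slice-$\geq 0$-connective. Applying this with $m = 1$ and $n = 0$ gives (a).

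For (b), I aim to prove the stronger statement that $\underline{N}^{C_2}(X) \in \underline{\slice}_{\geq 2}$. By Definition \ref{defn:relativenorm}, there is an equivalence $\underline{N}^{C_2}(X) \simeq \underline{k} \otimes_{N^{C_2}\underline{k}^e} N^{C_2}(X)$, so in view of the multiplicativity established in (a) it suffices to show $N^{C_2}(X) \in \underline{\slice}_{\geq 2}$ when $X$ is $1$-connective. The key observation is that $N^{C_2}$ intertwines the $\Sigma$-suspension of $\Spectra$ with the $\Sigma^\rho$-suspension of $\Spectra^{C_2}$, so that $N^{C_2}$ carries the generator $\sphere[1]$ of $\Spectra_{\geq 1}$ to $\Sigma^\rho \sphere$, a slice-$\geq 2$-connective object by Definition \ref{defn:slice_filtration} (take $H = C_2$, $\ell = 1$, $\varepsilon = 0$).

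The hard part will be extending this analysis from the single generator $\sphere[1]$ to an arbitrary $1$-connective spectrum. Since $N^{C_2}$ is not additive and the slice filtration is not a t-structure, one cannot simply invoke long-exact-sequence arguments; instead, I would exploit that $N^{C_2}$ preserves sifted colimits (as in Remark \ref{rmk:inv_dalg_context_norm_properties}) and express a $1$-connective spectrum as a sifted colimit of cells of the form $\sphere[k]$ with $k \geq 1$, then track slice-connectivity through the resulting filtered diagram using the multiplicativity input from (a). This technique is well-established in the Hill--Hopkins--Ravenel literature but requires care to translate into the parametrized language used throughout the paper.
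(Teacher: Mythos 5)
Your plan diverges from the paper's, and the key reduction you propose has a genuine gap in the norm part.

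The paper verifies compatibility by checking the two conditions of \cite[Remark 2.9.3]{NS22} directly: condition (1) is the fiberwise Lurie condition (argued as in \cite[Proposition 2.2.1.8]{LurHA}), and condition (2) asks that the norm $\underline{N}^{C_2}$ carries $\pi_0$-equivalences of connective $k^e$-modules to $P^0$-equivalences. To verify (2), the paper considers the universal $\pi_0$-equivalence $X \to \pi_0 X$ and shows that $P^0\underline{N}^{C_2}X \to P^0\underline{N}^{C_2}\pi_0 X$ is an equivalence by noting that \emph{both composites} $P^0 \circ \underline{N}^{C_2}$ and $P^0 \circ \underline{N}^{C_2} \circ \pi_0$ preserve sifted colimits and agree on the generating subcategory $\Mod_{k^e}^0$ (where $\pi_0$ acts as the identity). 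This sidesteps the non-exactness of the norm entirely.

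Your reduction to ``$P^0$-acyclics form a $C_2$-tensor ideal'' is sound for the fiberwise tensor condition (1), since the tensor product is exact in each variable. But it is not sufficient for the norm condition (2), and this is the core gap. Condition (2) is a statement about $\pi_0$-\emph{equivalences}: for each $\pi_0$-equivalence $f\colon X \to Y$, the map $N^{C_2}f$ should be a $P^0$-equivalence. Because $N^{C_2}$ is not additive --- it is quadratic, not exact --- the fiber of $N^{C_2}f$ is \emph{not} $N^{C_2}(\fib f)$. Consequently, your part (b) (``$N^{C_2}$ of a $1$-connective object lands in $\underline{\slice}_{\geq 2}$'') is a necessary but not a sufficient fact; it does not by itself imply that $N^{C_2}X \to N^{C_2}\pi_0 X$ is a $P^0$-equivalence. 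To extract condition (2) from connectivity estimates, you would need the Hill--Hopkins--Ravenel filtration of $N^{C_2}(X)$ associated to the fiber sequence $\tau_{\geq 1}X \to X \to \pi_0 X$, whose graded pieces include the mixed term $C_2 \otimes \tau_{\geq 1}X \otimes \pi_0 X$, and verify that \emph{each} graded piece of $\fib(N^{C_2}X \to N^{C_2}\pi_0 X)$ is slice $\geq 1$. None of this appears in your sketch. Your proposed ``sifted colimit of cells $\sphere[k]$'' argument for (b) has the same flaw: because the norm is not additive, $N^{C_2}(\sphere[1]^{\oplus n})$ is not a sum of norms of $\sphere[1]$, so tracking slice connectivity through a sifted colimit diagram also requires explicit use of the distributivity formula. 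Finally, note that Lemma \ref{lemma:reg_slice_conn_conditions}, which you would want to invoke, is stated for the \emph{regular} slice filtration, while $P^0$ in Lemma \ref{lemma:zeroth_slice_compatible_C2_monoidal_structure} uses the (non-regular) slice filtration of Definition \ref{defn:slice_filtration}; you would need to address the translation between the two indexing conventions.
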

\begin{proof}
		It suffices to check the conditions in \cite[Remark 2.9.3]{NS22}. 
		Condition (1) follows from the same argument as \cite[Proposition 2.2.1.8]{LurHA}. 
		To verify condition (2), we must show that for any $ X \in \underline{\Mod}_{\underline{k},\geq 0}^e \simeq \Mod_{k^e, \geq 0} $, the canonical map $ X \to \pi_0 X $ induces an equivalence $ P^0 \underline{N}^{C_2} X \to P^0 \underline{N}^{C_2} \pi_0 X $. 
		Because the functors $ P^0 \circ \underline{N}^{C_2} $ and $ P^0 \underline{N}^{C_2}\pi_0 $ preserve sifted colimits as functors $ \Mod_{k^e, \geq 0} \to \Mod_{\underline{k}}^{\slice = 0} $, it suffices to show that they agree on $ \Mod_{k^e}^0 $. 
		Since $ \pi_0 $ acts by the identity on $ \Mod_{k^e}^0 $, the result follows. 
\end{proof}
\begin{ntn}\label{ntn:zero_slices}
	The subcategory of $ \Mod_{\underline{k}}^\heartsuit $ on the zero slices will be denoted $ \Mod_{\underline{k}}^{\slice=0} $. 
	We will similarly write $ \underline{\Mod}_{\underline{k}}^{\slice=0} $ for the full $ C_2 $-subcategory of $ \underline{\Mod}_{\underline{k}} $ on $ \Mod_{\underline{k}}^{\slice=0} $. 
	By Corollary \ref{cor:zeroth_slice_param_enhancement}, we have $ \underline{\Mod}_{\underline{k}}^{\slice=0}(C_2/e) \simeq \Mod_{k^e}^{\heartsuit} $. 
\end{ntn}
\begin{ex}\label{ex:inv_dalg_over_const_Mackey}
		Let $ \cat = \underline{\Mod}_{\underline{k}} $ be equipped with the Postnikov t-structure (Variant \ref{var:eqvtmodulespostnikov}), take $ \cat^{0} $ to be the full $ C_2 $-subcategory spanned by the free $ \underline{k} $-modules on finite $ C_2 $-sets, and let $ \cat^s $ to be the zero slices of Notation \ref{ntn:zero_slices} (hence $ P $ is the zero slice functor of Corollary \ref{cor:zeroth_slice_param_enhancement}). 
		That $ P $ is compatible with the $ C_2 $-symmetric monoidal structure on $ \cat $ follows from Lemma \ref{lemma:zeroth_slice_compatible_C2_monoidal_structure}. 
		We will denote $ \underline{\DAlg}^\sigma\left(\underline{\Mod}_{\underline{k}}\right) $ by $ \underline{\DAlg}_{\underline{k}}^\sigma $ and refer to objects therein as \emph{derived involutive $ \underline{k} $-algebras} or \emph{derived involutive algebras/rings} when $ k = \Z $ with the trivial action. 
		Similarly, we will denote the fiber of $ \underline{\DAlg}^\sigma_{\underline{k}} $ over $ C_2/C_2 $ by $ \DAlg^\sigma_{\underline{k}} $. 
\end{ex}
Unraveling Proposition \ref{prop:invdalg_C2_colims} and Remark \ref{rmk:param_unstraighten}, we see how the formalism of parametrized $ \infty $-categories allows us to compare our derived involutive algebras with the (non-equivariant) derived algebras of \cite[Definition 4.2.22]{Raksit20}. 
\begin{rmk}\label{rmk:inv_dalg_underlying}
	The fiber of $ \underline{\DAlg}_{\underline{\Z}}^\sigma $ over the $ C_2 $-set $ C_2/e $ may be identified with the ordinary $ \infty $-category $ \Mod_{(\LSym^{\sigma}_{\underline{\Z}})^e}\left(\Mod_{\Z}\right) \simeq \Mod_{\LSym_{\Z}}\left(\Mod_{\Z}\right) $, where $ \LSym_{\Z} $ is the monad of \cite[Example 4.3.1]{Raksit20}, and the map $ C_2/e \to C_2/C_2 $ of $ C_2 $-sets classifies a functor $ \DAlg_{\underline{\Z}}^\sigma \to \DAlg_\Z $. 

	Moreover, there is a colimit-preserving functor $ {(-)^e \colon \DAlg_{\underline{k}}^\sigma \to \DAlg_{k^e}} $ such that the diagrams
	\begin{equation*}
	\begin{tikzcd}
		\DAlg_{\underline{k}}^\sigma \ar[d, "U"'] \ar[r,"(-)^e"] & \DAlg_{k^e}^{BC_2} \ar[d,"U"] \\
		\Mod_{\underline{k}} \ar[r,"(-)^e"] & \Mod_{k^e}  
	\end{tikzcd}
	\qquad \text{and} \qquad
	\begin{tikzcd}
		\DAlg_{\underline{k}} \ar[r,"(-)^e"] & \DAlg_{k^e} \\
		\Mod_{\underline{k}} \ar[r,"(-)^e"] \ar[u,"\LSym^\sigma"] & \Mod_{k^e}  \ar[u,"\LSym"']
	\end{tikzcd}
	\end{equation*}
	canonically commute. 
\end{rmk}
\begin{rmk}\label{rmk:const_inv_dalg}
		Let $ \ell $ be a discrete ring with an involution, and let $ \underline{\ell} $ denote the associated $ C_2 $-Mackey functor. 
		Write $ k:= \ell^{C_2} $ for the subring of elements which are fixed by the involution. 
		The assignment $ M \mapsto \left(M \to M \otimes_{k} \ell \right) $ where $ M \otimes_k \ell $ is given the induced $ C_2 $-action promotes to a functor $ \otimes_k \underline{\ell} \colon \Mod_k \to \Mod_{\underline{\ell}} $. 
		Observe that $ \otimes_k \underline{\ell} $ makes the diagram
		\begin{equation*}
		\begin{tikzcd}
		 		\Mod_{k}  \ar[d,"{\otimes_k \underline{\ell}}"'] \ar[r,"{\pi_0}"] & \Mod_{k}  \ar[d,"{\otimes_k \underline{\ell}}"] \\
		 		\Mod_{\underline{\ell}} \ar[r,"{P^0}"] & \Mod_{\underline{\ell}}
		 	\end{tikzcd} 	
		\end{equation*} 
		commute and sends discrete finite free $ k $-modules to free $ \underline{\ell} $-modules on $ C_2 $-sets with trivial action. 
		Moreover, $ \otimes_k \underline{\ell} $ is symmetric monoidal and for any $ \underline{\ell}[S] $ where $ C_2 $ acts trivially on $ S $, the canonical map $ P^0 \Sym_{\underline{\ell}}\left(\underline{\ell}[S]\right) \to P^0 C_2\Sym_{\underline{\ell}}\left(\underline{\ell}[S]\right) $ is an equivalence. 
		It follows that $ \otimes_k \underline{\ell} $ induces a functor of $ \infty $-categories $ \DAlg_k \to \DAlg^\sigma_{\underline{\ell}} $. 
		The precise argument is similar to Remark \ref{rmk:dalg_construction_naturality}, and we leave the details to the reader.  
		When $ \ell $ has the trivial $ C_2 $-action (so $ k^{C_2} = \ell $), this functor sends a derived algebra over $ \ell $ to the constant derived involutive algebra over $ \underline{\ell} $. 
\end{rmk}
\begin{variant}\label{variant:otherdalg_inv} 
	Let $ \DAlg_{\underline{k}}^{\sigma,\conn} = \DAlg_{\underline{k}}^\sigma \times_{\Mod_{\underline{k}}} \Mod_{\underline{k},\geq 0}  $ and $ \DAlg_{\underline{k}}^{\sigma,\slice = 0} = \DAlg_{\underline{k}}^\sigma \times_{\Mod_{\underline{k}}} \Mod_{\underline{k}}^{\slice = 0} $. 
	\begin{enumerate}[label=(\arabic*)]
		\item Let $ \mathcal{D}^0 $ denote the full subcategory of $ \DAlg_{\underline{k}}^{\sigma,\conn} $ spanned by the objects $ \LSym_{\underline{k}}^\sigma(X) $ for $ {X \in \Mod_{\underline{k}}^0} $. 
		Then $ \DAlg_{\underline{k}}^{\sigma,\conn} $ is projectively generated by $ \mathcal{D}^0 $, so there is a canonical equivalence $ \DAlg_{\underline{k}}^{\sigma,\conn} \simeq \pShv^\Sigma(\mathcal{D}^0) $. 

		\item \label{varitem:discrete_dalg_inv} Consider the localization $ P^0 \colon \Mod_{\underline{k},\geq 0} \rlarrows \Mod_{\underline{k}}^{\slice = 0} \colon \iota $. 
		Then by \cite[Proposition 4.1.9]{Raksit20}, there is an equivalence $ \DAlg_{\underline{k}}^{\sigma,\slice = 0} \simeq \Mod_{T_0}\left(\Mod_{\underline{k}}^{\slice = 0}\right) $ where $ T_0 = P^0 \LSym^\sigma \iota $ is the induced monad on $ \Mod_{\underline{k}}^{\slice = 0} $. 
		Unravelling definitions, we see that an object $ A $ of $ \DAlg_{\underline{k}}^{\sigma, \slice = 0} $ consists of a Tambara functor $ A $ so that the restriction map $ R \colon A^{C_2} \to A^e $ is injective and the composite of the restriction with the internal norm is given by squaring, i.e. if $ a \in A^{C_2} $, then $ n(a^e) = a^2 $. 
		In other words, $ A $ is cohomological as a $ C_2 $-Tambara functor (Definition \ref{defn:cohomological_Tambara}). 
	\end{enumerate}
\end{variant}
\begin{cons}\label{cons:gr_fil_dalg_inv} 
		Let $ \cat $ be a derived involutive algebraic context and consider $ \Gr\left( \cat \right) $ and $ \Fil \left( \cat \right) $. 
		These categories inherit t-structures from $ \cat $ where a graded (resp. filtered) object $ X^* $ is connective if and only if each $ X^n $ is connective. 
		Moreover, by Corollary \ref{cor:param_gr_fil_day_convolution}, they inherit $ C_2 $-symmetric monoidal structures from $ \cat $. 
		Take $ \Gr\left( \cat \right)^0 $ (resp. $ \Fil \left( \cat \right)^0 $) to be the full $ C_2 $-subcategory on finite coproducts of $ \ins^n(X) $ for $ X \in \cat^0 $ and let $ \Gr \left(\cat\right)^s $ be the full subcategory on those objects $ A_* $ so that $ \ev^n(A_*) = A_n \in \cat^s $, and similarly for $ \Fil\left(\cat\right) $. 
		These choices allow us to regard $ \Gr\left( \cat \right)^0 $ and $ \Fil \left( \cat \right)^0 $ as derived involutive algebraic contexts. 
		We define graded (resp. filtered) derived involutive algebras of $ \cat $ to be derived involutive algebras in $ \Gr\left( \cat \right)^0 $ (resp. $ \Fil \left( \cat \right)^0 $), which we denote by $ \Gr\underline{\DAlg}^\sigma $ and $ \Fil\underline{\DAlg}^\sigma (\cat) $, respectively. 
		Similarly, we have nonnegatively graded and filtered variants, which we denote by $ \Gr^{\geq 0}\underline{\DAlg}^\sigma $ and $ \Fil^{\geq 0}\underline{\DAlg}^\sigma (\cat) $, respectively. 
\end{cons}
\begin{rmk}\label{rmk:zeroth_graded_dalg}
	The $ C_2 $-functor $ \ins_0 : \cat \to \Gr\left(\cat\right) $ is a morphism of derived involutive algebraic contexts with right $ C_2 $-adjoint $ \ev_0 $, 
	whence we have an induced $ C_2 $-adjunction $ \ins_0 \colon \underline{\DAlg}^\sigma(\cat) \rlarrows \Gr\underline{\DAlg}^\sigma(\cat) \colon \ev_0 $ by Remark \ref{rmk:dalg_construction_naturality}. 
	Similar statements apply to the functors $ \ins^{\geq 0} \colon \Gr^{\geq 0}\left(\cat\right) \to \Gr\left(\cat\right) $, $ \gr \colon \Fil\left(\cat \right) \to \Gr\left(\cat\right) $, and $ \colim \colon \Fil\left( \cat \right) \to \cat $ in place of $ \ins_0 $ (compare \cite[Remark 4.3.6]{Raksit20}). 
\end{rmk}
\begin{ex}\label{ex:dalg_in_gr_01}
		Let $ \cat $ be an derived involutive algebraic context and let $ \Gr^{\{0,1\}}(\cat) $ denote the full $ C_2 $-subcategory of $ \Gr^{\geq 0}(\cat) $ on those nonnegatively graded objects $ X_* $ so that $ X_* = 0 $ if $ * \neq 0, 1 $. 
		Write $ \iota $ for the inclusion functor. 
		Then $ \Gr^{ \{0,1\}}(\cat) $ is a $C_2$-stable subcategory of $ \Gr^{\geq 0}(\cat) $, the slice filtration on $ \Gr^{\geq 0}(\cat) $ restricts to a slice filtration on $ \Gr^{\{0,1\}}(\cat) $ so that the inclusion $ \iota $ preserves connective objects, and $ \iota $ admits a $ C_2 $-left adjoint $ \lambda $ which is compatible with the $ C_2 $-symmetric monoidal structure on $ \Gr^{\geq 0}(\cat) $ in the sense of \cite[Theorem 2.9.2]{NS22}, and $ \lambda $ sends the canonical compact projective generators of $ \Gr^{\geq 0}(\cat) $ to the compact projective generators of $ \Gr^{\{0,1\}}(\cat) $. 
		Then by a similar argument to that of \cite[Example 4.3.8]{Raksit20}, $ \Gr^{\{0,1\}}(\cat) $ is a derived involutive algebra context and $ \lambda, \iota $ are morphisms of derived involutive algebraic contexts which fit into a localizing adjunction $ \Gr^{\{0,1\}}(\cat)\rlarrows\Gr^{\geq 0}(\cat) $. 
		Note in particular that the norm on $ \Gr^{\{0,1\}}(\cat) $ is given by
		\begin{equation*}
			N^{C_2}\left(X^0, X^1\right) \simeq \left(N^{C_2}(X^0), C_2 \otimes (X^0 \otimes X^1)\right) \,,
		\end{equation*}
		where $ C_2 \otimes - $ denotes the left adjoint to the restriction functor $ \cat^{C_2} \to \cat^e $. 
		We will write $ \Gr^{\{0,1\}}\underline{\DAlg}^{\sigma}(\cat) $ for the $ C_2 $-$ \infty $-category of derived involutive algebras in $ \Gr^{\{0,1\}}\left(\cat\right) $. 
\end{ex} 
We close this section with a few examples of ($ C_2 $-)objects in the categories introduced earlier in this section (for instance, in $ \underline{\DAlg}_{\underline{k}}^\sigma $ of Example \ref{ex:inv_dalg_over_const_Mackey}). 
\begin{ex}
	The free derived involutive $ \underline{k} $-algebra $ \underline{k}[x^{triv}] $ on the $ \underline{k} $-module $ \underline{k} $ is given by the Lewis diagram 
	\begin{equation*}
	\begin{tikzcd}
		k[x] \ar[d, bend right=20,"\mathrm{Res}"'] \\
		k[x] \ar[u, bend right=20,"\mathrm{Tr}"'] \ar[loop below,"{x \mapsto x}",out=-60, in=240,distance=15]
	\end{tikzcd}
	\end{equation*}
	The restriction map is the identity and the transfer map is multiplication by $ 2 $. 
	Note that the norm map satisfies $ N(\mathrm{Res}(x)) = x^2 $.  
\end{ex} 
\begin{ex}
	The free derived involutive algebra $ \underline{k} [x, x_\sigma]$ on the $ \underline{k} $-module $ \underline{k}[C_2] $ is given by the Lewis diagram 
	\begin{equation*}
	\begin{tikzcd}
		k[t_i, x \cdot x_\sigma] / 
		\begin{pmatrix} t_i\cdot t_j = t_{i+j} + (xx_\sigma)^j \cdot t_{i-j} \text{ when } i>j \\
						t_i \cdot t_i = t_{2i} + 2(xx_\sigma)^i \end{pmatrix} \ar[d, bend right=20,"\mathrm{Res}"'] \\
		k[x, x_\sigma] \ar[u, bend right=20,"\mathrm{Tr}"'] \ar[loop below,"{x \mapsto x_\sigma}",out=-60, in=240,distance=15]
	\end{tikzcd}
	\end{equation*}
	The restriction map takes $ t_i \mapsto x^i + x_\sigma^i $ and the transfer map takes $ f \mapsto f + \sigma(f) $. 
	Note that $ \underline{k} [x, x_\sigma]^{tC_2} = \bigoplus_{n \geq 0} k^{tC_2}\{x_N^n\} $ and $ \Phi^{C_2}\underline{k} [x, x_\sigma] = \tau_{\geq 0} \underline{k} [x, x_\sigma]^{tC_2} = \tau_{\geq 0} k^{tC_2}[x_N]$. 
\end{ex}
\begin{ex}\label{ex:dalg_inv_cplx_conjugation}
		Let $ A $ be a finite type $ \R $-algebra. 
		Then by Variant \ref{varitem:discrete_dalg_inv}, there is an involutive algebra whose underlying Lewis diagram is 
	\begin{equation*}
	\begin{tikzcd}
		A = \R [x_1, \ldots, x_n]/(f_1,\ldots, f_m) \ar[d, bend right=20,"\mathrm{Res}"'] \\
		\C [x_1, \ldots, x_n]/(f_1,\ldots, f_m) \ar[u, bend right=20,"\mathrm{Tr}"'] \ar[loop below,"{a x_i^n \mapsto \overline{a} x_i^n }",out=-60, in=240,distance=15]
	\end{tikzcd}  \,.
	\end{equation*}
	The restriction is induced by the canonical inclusion $ \R \to \C $ and the transfer is induced by the transfer map $ \C \to \R $ which sends $ a \mapsto a + \overline{a} $. 
\end{ex}
\begin{ex}
	Let $ G $ be an abelian group. 
	Endow $ G $ with the involution given by $ g \mapsto g^{-1} $ and regard $ \Z $ as having the trivial involution. 
	Then $ \underline{\Z}[G] $ is an involutive algebra, where $ {\underline{\Z}}[G]^{C_2} $ is given by the group ring on the 2-torsion subgroup of $ G $. 
\end{ex}

\subsection{Filtered and graded derived involutive rings}\label{subsection:involutive_dalg_connectivity}
Recall that, given an $ \E_\infty $-algebra $ A $ in a category in $ \cat $ and a lax symmetric monoidal functor $ F \colon \cat \to \mathcal{D} $, $ F(A) $ naturally acquires the structure of a $ \E_\infty $-algebra in $ \mathcal{D} $. 
In particular, given a t-structure on $ \cat $ which is compatible with the symmetric monoidal structure on $ \cat $, the filtered object $ \tau_{\geq * } A $ naturally acquires the structure of an $ \E_\infty $-algebra in $ \Fil(\cat) $. 
Similarly, given a filtration (which may or may not come from a t-structure) on a $ C_2 $-symmetric monoidal $C_2$-$\infty $-category $ \cat $ which is compatible with the $C_2$-symmetric monoidal structure on $ \cat $ and a $ C_2 $-$ \E_\infty $-algebra $ A $ in $ \cat $, the filtered object $ \tau_{\geq * } A $ naturally acquires the structure of an $C_2$-$ \E_\infty $-algebra in $ \Fil(\cat) $. 
Unfortunately for us, a lax $C_2$-symmetric monoidal functor between derived involutive algebraic contexts need not induce a functor on categories of derived involutive algebras. 
In particular, $ \tau_{\geq * }A $ is not necessarily a filtered derived algebra when $ A $ is connective but not truncated.\footnote{We thank Arpon Raksit for explaining this point to us.} 

We record some results in this section which allow us to endow graded and filtered $ \underline{\Z} $-modules with \emph{derived} involutive algebra structures from connectivity considerations alone. 
They will be used to define involutive cochain complexes and the (dual) filtered involutive circle later on. 
\begin{lemma}\label{lemma:C2sym_LSym_connectivity_comparison}
		Let $ k \leq 0 $ and $ n \geq 0 $, and let $ \underline{\Z} $ denote the constant $ C_2 $-Mackey functor at $ \Z $. Then
		\begin{itemize}
			\item The fibers of the maps 
			\begin{equation*}
			\begin{split}
				c_{\Z[-k]}\colon& C_2\Sym_{\underline{\Z}}^m\left(\underline{\Z}[-k]\right) \to \LSym^{\sigma,m}_{\underline{\Z}}\left(\underline{\Z}[-k]\right) \\ 
				c_{\Z[C_2][-k]} \colon& C_2\Sym_{\underline{\Z}}^m\left(\underline{\Z}[C_2][-k]\right) \to \LSym^{\sigma,m}_{\underline{\Z}}\left(\underline{\Z}[C_2][-k]\right) 
			\end{split}
			\end{equation*}
			are $ mk $-connective in the Postnikov t-structure on $ \Mod_{\underline{\Z}} $ (Variant \ref{var:eqvtmodulespostnikov}). 
			\item The aforementioned maps $ c_{\Z[-k]} $ and $ c_{\Z[C_2][-k]} $ induce isomorphisms on $ \pi_{mk}(-)^e $ and surjections on $ \pi_{mk}(-)^{C_2} $. 	
		\end{itemize}
\end{lemma}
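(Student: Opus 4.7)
The plan is to identify the comparison map $c$ with the unit of the localization $\underline{\tau}$ of Remark \ref{rmk:adjunctions_of_endomorphism_cats}, and then analyze this unit explicitly for the inputs $X = \underline{\Z}[-k]$ and $\underline{\Z}[C_2][-k]$. By Construction \ref{cons:equivariant_dalg}, on connective $\underline{\Z}$-modules the filtered natural transformation $\theta^{\leq *}\colon C_2\Sym^{\leq *} \to \LSym^{\sigma, \leq *}$ is obtained by applying the zeroth-slice functor $P^0$ levelwise. Passing to the $m$th associated graded piece, I would identify the map $c_X$, in the connectivity range of interest, with the canonical localization unit $Y \to P^0 Y$ for $Y = C_2\Sym^m(X)$.

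First, I would reduce the problem to analyzing this unit on its bottom homotopy Mackey functor. By Proposition \ref{prop:zeroth_slice_description} the functor $P^0$ factors through $\underline{\pi}_0$ and acts there as a quotient, so when $P^0$ is applied to a connective Mackey functor its effect is concentrated in a single bottom degree. It follows that the fiber of $Y \to P^0 Y$ lives in precisely this bottom degree of $Y = C_2\Sym^m(X)$, which for the specific inputs is the degree $mk$ appearing in the lemma.

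Second, I would compute the bottom Mackey functor $\underline{\pi}_{mk} C_2\Sym^m(X)$ explicitly. On underlying spectra this is the classical bottom homotopy group of $\Sym^m$ applied to $(\underline{\Z}[-k])^e$ (respectively to $(\underline{\Z}[C_2][-k])^e \simeq (\underline{\Z}[-k])^e \oplus (\underline{\Z}[-k])^e$), and on $C_2$-fixed points it is determined by the recollement of Proposition \ref{prop:rel_eqvt_recollement} together with Lemma \ref{lemma:reg_slice_conn_conditions}: the geometric fixed points contribute at or above the same bottom degree, and the transfer and norm structure on $\underline{\Z}$ yields an explicit Mackey functor whose restriction map can be read off directly. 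This gives in particular the $mk$-connectivity asserted in the first bullet.

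Finally, applying Proposition \ref{prop:zeroth_slice_description} once more, the map $\underline{\pi}_{mk} C_2\Sym^m(X) \to P^0 \underline{\pi}_{mk} C_2\Sym^m(X)$ is the identity on the $e$-level and the quotient by the kernel of the restriction on the $C_2$-level, yielding the isomorphism on $\pi_{mk}^e$ and the surjection on $\pi_{mk}^{C_2}$ of the second bullet. The principal obstacle I anticipate is the identification of $\LSym^{\sigma, m}$ with $P^0 C_2\Sym^m$ at the level of the $m$th graded piece: since $P^0$ is only a reflective localization rather than an exact functor, one must argue — using that on connective objects $P^0$ is the identity on $\underline{\pi}_{\geq 1}$ and a surjective quotient on $\underline{\pi}_0$, together with the observation that the specific inputs $\underline{\Z}[-k]$ and $\underline{\Z}[C_2][-k]$ are concentrated in a single degree — that the formation of the associated graded commutes with $P^0$ in the range where the fiber of $c_X$ is supported.
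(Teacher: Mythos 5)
Your opening identification is the problem. You assert that the natural transformation $\theta^{\leq *}\colon C_2\Sym^{\leq *} \to \LSym^{\sigma,\leq *}$ is ``obtained by applying the zeroth-slice functor $P^0$ levelwise'' on connective $\underline{\Z}$-modules, so that $c_X$ is identified with the unit $Y \to P^0 Y$ for $Y = C_2\Sym^m(X)$. This is only true for $X$ in the generating subcategory $\cat^0$ (free $\underline{\Z}$-modules on finite $C_2$-sets, concentrated in degree zero), i.e.\ for $k=0$. Remark \ref{rmk:adjunctions_of_endomorphism_cats} says $\underline{\tau}$ is ``composition with $P$'' only under the identification of sifted-colimit-preserving endofunctors with their restrictions to $\cat^0$: on a general input, $\LSym^{\sigma,m}$ is the sifted-colimit-preserving (and then Goodwillie) extension of $P^0\,C_2\Sym^m|_{\cat^0}$, which is genuinely different from the pointwise composite $P^0 \circ C_2\Sym^m$, because $P^0$ does not commute with sifted colimits. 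Concretely, for $k < 0$ the object $C_2\Sym^m(\underline{\Z}[-k])$ is only $mk$-connective (with $mk < 0$), so it is not even in the domain $\cat_{\geq 0}$ of $P^0$; and $P^0$ of anything is concentrated in degree $0$, whereas $\LSym^{\sigma,m}(\underline{\Z}[-k])$ has its bottom homotopy in degree $mk$. Your anticipated obstacle — commuting $P^0$ past the associated graded in the relevant degree range — is thus the wrong obstacle: the issue is not a truncation-versus-exactness mismatch, but that $\LSym^{\sigma,m}$ is a left derived functor, not a truncation of $C_2\Sym^m$.

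The argument the paper is invoking (via Raksit's Lemma 4.5.2) avoids this: it is a downward induction on $|k|$. Your computation handles the base case $k=0$, where $\underline{\Z}$ and $\underline{\Z}[C_2]$ lie in $\cat^0$ and the comparison really is the $P^0$-unit, and the iso/surjection claims follow from Proposition \ref{prop:zeroth_slice_description}. For the inductive step one uses that both $C_2\Sym^m$ and $\LSym^{\sigma,m}$ are $m$-excisive (Observation \ref{obs:C2Sym_involutive_excisivity} and the construction of $\LSym^{\sigma,\leq *}$ as a filtered $\mathcal{E}$-monad in Construction \ref{cons:equivariant_dalg}): applying an $m$-excisive functor to the cofiber sequence relating $\underline{\Z}[-k]$ to a shift of $\underline{\Z}[-(k+1)]$ produces a finite filtration whose graded pieces are built from cross-effects evaluated on the less-shifted input, and the connectivity bound on the fiber of $c$ propagates through this filtration. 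Excisivity, not the localization unit, is the mechanism; your proposal does not engage with it and so cannot close the inductive step.
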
 
\begin{proof}
		Follows from the same argument as \cite[Lemma 4.5.2]{Raksit20}. 
\end{proof}
\begin{prop}\label{prop:LSym_connectivegr}
	Let $ \underline{\Z} $ be the constant $ C_2 $-Mackey functor valued at $ \Z $, regarded as a connective derived algebra with involution.  
	The graded derived symmetric algebra monad $ \LSym^{\sigma}_{\underline{\Z}} \colon \Gr\left(\underline{\Mod}_{\underline{\Z}}\right) \to \Gr\left(\underline{\Mod}_{\underline{\Z}}\right) $ of Variant \ref{variant:otherdalg_inv} preserves the full $C_2$-subcategory $ \Gr^{\leq 0}(\underline{\Mod}_{\underline{\Z}})_{\geq * } $ consisting of those graded modules $ X^* $ such that $ X^n \simeq 0 $ for $ n> 0 $ and $ X^n \in \tau_{\geq n} \Mod_{\underline{k}} $ is $ n $-connective in the Postnikov t-structure on $ \Mod_{\underline{\Z}} $ for $ n \leq 0 $. 
\end{prop}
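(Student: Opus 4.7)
The plan is to reduce to an explicit connectivity check on each individual graded piece $\LSym^{\sigma, m}_{\underline{\Z}}$. By Construction~\ref{cons:equivariant_dalg}, we have $\LSym^\sigma_{\underline{\Z}} = \colim_m \LSym^{\sigma, \leq m}_{\underline{\Z}}$ as a filtered colimit of functors, and the Postnikov t-structure on $\underline{\Mod}_{\underline{\Z}}$ (extended pointwise to graded objects) is compatible with filtered colimits, so it suffices to show each $\LSym^{\sigma, \leq m}_{\underline{\Z}}$ preserves $\Gr^{\leq 0}(\underline{\Mod}_{\underline{\Z}})_{\geq *}$. Since the successive cofibers of the filtration on $\LSym^{\sigma,\leq m}_{\underline{\Z}}$ are the graded pieces $\LSym^{\sigma, m}_{\underline{\Z}}$, and the subcategory is closed under extensions, it further suffices to show that each $\LSym^{\sigma, m}_{\underline{\Z}}$ individually preserves the subcategory.

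Next, I would compare with $C_2\Sym^m_{\underline{\Z}}$. Both $\LSym^{\sigma,m}_{\underline{\Z}}$ and $C_2\Sym^m_{\underline{\Z}}$ are $m$-excisive by Observation~\ref{obs:C2Sym_involutive_excisivity} and hence preserve sifted colimits, and the target subcategory is closed under sifted colimits by right-completeness of the Postnikov t-structure. It therefore suffices to check the claim on a set of generators of $\Gr^{\leq 0}(\underline{\Mod}_{\underline{\Z}})_{\geq *}$ under sifted colimits, which one may take to be finite direct sums of generators of the form $\ins^n(\underline{\Z}[j])$ and $\ins^n(\underline{\Z}[C_2][j])$ with $n \leq 0$ and $j \geq n$. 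On each monomial generator, Lemma~\ref{lemma:C2sym_LSym_connectivity_comparison} bounds the fiber of the natural transformation $c \colon C_2\Sym^m_{\underline{\Z}}(-) \to \LSym^{\sigma,m}_{\underline{\Z}}(-)$ in the Postnikov t-structure; a polarization argument leveraging the $m$-excisive structure of both functors then extends the bound to finite direct sums. It therefore suffices to show $C_2\Sym^m_{\underline{\Z}}$ preserves the subcategory.

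Finally, I would verify this for $C_2\Sym^m_{\underline{\Z}}$ by direct decomposition. For $X = \bigoplus_i \ins^{n_i}(M_i)$ a finite direct sum of generators, the $N$-th graded piece $C_2\Sym^m_{\underline{\Z}}(X)^N$ decomposes as a sum indexed by multiplicities and $C_2$-orbit types on $\{1,\ldots,m\}$; each summand is built from tensor products and relative $C_2$-norms of the $M_i$'s with total graded degree equal to $N$. Using the compatibility of the Postnikov t-structure with the tensor product (Recollection~\ref{rec:genuinepostnikov}) and the fact that for $a \leq 0$ the relative norm $\underline{N}^{C_2}$ carries an $a$-connective underlying module to a $2a$-connective object of $\underline{\Mod}_{\underline{\Z}}$ (via the isotropy separation sequence: $(N^{C_2}X)^e \simeq X \otimes X$ is $2a$-connective and $\Phi^{C_2}N^{C_2}X \simeq X$ is $a$-connective, and $\min(2a,a) = 2a$ for $a \leq 0$), each such summand is $N$-connective, while summands with some $n_{i_j} > 0$ vanish by assumption on $X$. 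The main obstacle will be the orbit-type bookkeeping in this last step: organizing the decomposition of $C_2\Sym^m_{\underline{\Z}}$ on a direct sum so that the graded-degree and connectivity accounting are manifest, and rigorously extending Lemma~\ref{lemma:C2sym_LSym_connectivity_comparison} from monomial generators to finite direct sums via the polarization of excisively polynomial functors.
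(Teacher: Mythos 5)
Your approach is genuinely different from the paper's, and it has a gap. Let me describe both.

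The paper proves this by first passing to geometric fixed points: since a $\underline{\Z}$-module is $n$-connective iff its underlying object and $\Phi^{C_2}$ are both $n$-connective, the underlying case reduces to Raksit's non-equivariant result, and the equivariant content lives entirely in $\Phi^{C_2}\LSym^{\sigma,m}$. The paper computes $\Phi^{C_2}\LSym^{\sigma,m}(M) = \tau_{\geq 0}\bigl(\bigl(\pi_0 M^{\otimes m}_{h\Sigma_m}\bigr)^{tC_2}\bigr)$ for $M \in \underline{\Mod}_{\underline{\Z}}^0$, deduces that this functor is degree $m$ hence $m$-excisive, checks connectivity of the fiber of $\Phi^{C_2}\circ\theta$ only at the \emph{degree-zero} generators $\underline{\Z}$ and $\underline{\Z}[C_2]$, and then invokes the downward induction of \cite[Lemma 4.5.3]{Raksit20} to propagate the bound to negative graded degrees. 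That downward induction is where the excisive structure is used to pass from degree $n$ to degree $n-1$, i.e.\ to control $\LSym^{\sigma,m}$ on \emph{negative} suspensions.

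The gap in your argument is precisely at the point where you invoke Lemma~\ref{lemma:C2sym_LSym_connectivity_comparison} ``on each monomial generator.'' That lemma, as stated, concerns $\underline{\Z}[-k]$ and $\underline{\Z}[C_2][-k]$ for $k \leq 0$, i.e.\ the non-negative suspensions $\Sigma^{-k}\underline{\Z}$ with $-k \geq 0$. But the compact projective generators of $\Gr^{\leq 0}(\underline{\Mod}_{\underline{\Z}})_{\geq *}$ are $\Sigma^n\underline{\Z}(n)$ and $\Sigma^n\underline{\Z}[C_2](n)$ for $n \leq 0$, which are non-\emph{positive} suspensions. The only overlap is $n = 0$. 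So the cited lemma does not actually bound the fiber on the generators you need; it only handles the top filtration degree. Your polarization argument is the right tool for extending a bound from monomials to finite direct sums, but polarization does not produce the negative-suspension case — that requires a separate inductive mechanism, and Raksit's Lemma 4.5.3 is exactly this mechanism. As it stands, your reduction to ``$C_2\Sym^m$ preserves the subcategory'' is only justified in graded degree zero.

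The remainder of your plan is sound: the $C_2\Sym^m$-preservation claim is correct (closure under tensor products and norms, with the norm doubling both the grading degree and, for $a \leq 0$, the connectivity), and the paper does rely on an analogous closure observation. To repair the argument you would need to either (a) reproduce the downward induction over suspension degree, using the $m$-excisive structure of both $\LSym^{\sigma,m}$ and the fiber of $c$ to pass from $\Sigma^n$ to $\Sigma^{n-1}$, or (b) follow the paper and pass to $\Phi^{C_2}$ first, which localizes the work to a single, explicit, degree-$m$ functor on which the induction is cleaner.
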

\begin{proof}
	The subcategory $ \Gr^{\leq 0}\left(\underline{\Mod}_{\underline{\Z}}\right)_{\geq *} $ is closed under colimits, tensor products, and norms, and has compact projective generators given by finite coproducts of objects $ C_2 \otimes \Sigma^{n} \underline{\Z}(n) $ and $ \Sigma^{n} \underline{\Z}(n) $. 
	Now a $ \underline{\Z} $-module $ X $ is $ n $-connective if and only if $ X^e $ and $ X^{C_2} $ are both $ n $-connective if and only if $ X^e $ and $ X^{\varphi C_2} $ are both $ n $-connective. 
	Notice that $ \Phi^{C_2} \LSym_{\underline{\Z}}^{\sigma,m} (M) = \tau_{\geq 0} \left(\left(\pi_0 M^{\otimes m}_{h\Sigma_m}\right)^{tC_2} \right) $ for $ M \in \Mod_{\underline{\Z}}^0 $, that is the functor $ \Phi^{C_2} \LSym_{\underline{\Z}}^{\sigma,m} $ is degree $ m $ on $ \Mod_{\underline{\Z}}^0 $. 
	Therefore $ \Phi^{C_2} \LSym_{\underline{\Z}}^{\sigma,m} $ is $ m $-excisive on $ \Mod_{\underline{\Z}} $. 
	The result follows from noting that the fiber of the map $ \Phi^{C_2} \circ \theta \colon \Phi^{C_2} C_2\Sym_{\underline{\Z}}^m(M) \to \Phi^{C_2} \LSym_{\underline{\Z}}^{\sigma,m}(M) $ is connective for $ M = \underline{\Z}, \underline{\Z}[C_2] $ and the same downward induction argument of \cite[Lemma 4.5.3]{Raksit20}. 
\end{proof}
\begin{defn}\label{defn:inv_gr_calg}
	Define an \emph{graded involutive commutative algebra} monad 
	\begin{equation*}
		\CSym^\sigma \colon \Gr\left(\underline{\slice}_{=0}\right) \to \Gr\left(\underline{\slice}_{=0}\right) 
	\end{equation*} 
	which sends a graded zero-slice over $ \underline{\Z} $ to the free $ C_2 $-graded-commutative graded cohomological Tambara functor (compare Remark \ref{rmk:equivariant_koszul_rule}). 
\end{defn}
\begin{ntn}\label{ntn:graded_zero_slices_as_localization}
	We have a fully faithful embedding 
	\begin{equation*}
	 	\iota \colon \Gr^{\leq0}\left(\underline{\slice}_{=0}\right) \xrightarrow{[*]} \Gr^{\leq 0}\left(\underline{\Mod}_{\underline{\Z}}\right)^{\heartsuit}_{+} \to \Gr^{\leq 0}\left(\underline{\Mod}_{\underline{\Z}}\right)_{\geq 0+}
	\end{equation*}
	which admits a left $ C_2 $-adjoint
	\begin{equation*}
	 	\pi \colon \Gr^{\leq 0}\left(\underline{\Mod}_{\underline{\Z}}\right)_{\geq 0+} \xrightarrow{[-*]} \Gr^{\leq 0}\left(\underline{\Mod}_{\underline{\Z}}\right)^{\heartsuit}_{+} \xrightarrow{P^0} \Gr^{\leq0}\left(\underline{\slice}_{=0}\right)
	\end{equation*} 
	where the last functor is the zeroth slice functor pointwise. 
\end{ntn}
\begin{lemma}\label{lemma:free_dalg_graded_truncation}
	There is a commutative diagram
	\begin{equation*}
		\begin{tikzcd}
			\Gr^{\leq 0}\left(\underline{\Mod}_{\underline{\Z}} \right)_{\geq 0^+} \ar[d,"\pi"] \ar[r,"{\LSym^\sigma}"] & \Gr^{\leq 0}\left(\underline{\Mod}_{\underline{\Z}} \right)_{\geq 0^+} \ar[d,"\pi"] \\ 
			\Gr^{\leq 0}\left(\underline{\slice}_{=0}\right) \ar[r,"{\CSym^\sigma}"] & \Gr^{\leq 0}\left(\underline{\slice}_{=0}\right) 
		\end{tikzcd} 
	\end{equation*}
	of $ C_2 $-$ \infty $-categories, where $ \LSym^\sigma $ is the restriction of the free graded derived involutive algebra from Proposition \ref{prop:LSym_connectivegr}, $ \pi $ is from Notation \ref{ntn:graded_zero_slices_as_localization}, and $ \CSym^\sigma $ is the functor of Definition \ref{defn:inv_gr_calg}.  
\end{lemma}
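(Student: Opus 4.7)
The plan is as follows. Since $\pi$ is a left $C_2$-adjoint to a fully faithful embedding, it is a $C_2$-localization and in particular preserves all small $C_2$-colimits. Both $\LSym^\sigma$ and $\CSym^\sigma$ are colimits of filtered monads whose filtered pieces are excisively polynomial and preserve sifted $C_2$-colimits (Construction \ref{cons:equivariant_dalg}, Example \ref{ex:param_symmetric_powers_degree}). It therefore suffices to construct a natural equivalence $\pi \circ \LSym^{\sigma, \leq m} \simeq \CSym^{\sigma, \leq m} \circ \pi$ for every $m \geq 0$, compatibly in $m$. By polynomial excisiveness together with sifted-colimit-preservation, this may be checked on the compact projective generators of $\Gr^{\leq 0}(\underline{\Mod}_{\underline{\Z}})_{\geq 0^+}$, namely the objects $\ins_n(\Sigma^n \underline{\Z})$ and $\ins_n(C_2 \otimes \Sigma^n \underline{\Z})$ for $n \leq 0$.

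Next, recall from Construction \ref{cons:equivariant_dalg} the canonical map $\theta^{\leq m} \colon C_2\Sym^{\leq m} \to \LSym^{\sigma, \leq m}$. I would apply Lemma \ref{lemma:C2sym_LSym_connectivity_comparison} degree-by-degree and transport the connectivity bounds through the shear equivalence of Proposition \ref{prop:shear_gr_by_reg_rep_sphere_is_mult} to show that the fiber of $\theta^{\leq m}$, when evaluated at the generators above, lies in strictly positive regular-slice degrees after the shift $[-*]$, hence is annihilated by $P^0$. Consequently $\pi \theta^{\leq m}$ is an equivalence on generators, and the problem reduces to identifying $\pi \circ C_2\Sym^{\leq m}$ with $\CSym^{\sigma, \leq m} \circ \pi$ on those generators.

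For the identification itself, I would compute directly. By Lemma \ref{lemma:norm_on_fil_gr_formula}, the graded $C_2$-symmetric powers on generators unwind into an explicit combination of Day-convolution symmetric powers and relative norms of the underlying module pieces. Applying $\pi$ collapses these down to the heart of the positive regular slice filtration, and the shifts by $\rho$ built into $\pi$ (via Proposition \ref{prop:shear_gr_by_reg_rep_sphere_is_mult}) are precisely what introduces the Koszul symmetry isomorphism on tensor powers of odd-degree classes and the $(-1)$-twist on the $C_2$-action on the norm of odd-degree classes. Matching this output against the description of $\CSym^\sigma$ from Definition \ref{defn:inv_gr_calg} and Remark \ref{rmk:equivariant_koszul_rule} yields the required equivalence.

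The main obstacle will be the last step: bookkeeping the Koszul signs and the $(-1)^n$-twist on norms so that they line up exactly with the conventions in Remark \ref{rmk:equivariant_koszul_rule}. Concretely, one must verify that the $\Sigma_m$-action on the $m$-fold tensor power, once transported through the equivalence $[\rho *]$, produces the Koszul rule on zero-slices, and that the $C_2$-action on $N^{C_2}(\Sigma^n \underline{\Z})$ acquires the anticipated $(-1)^n$-twist; this amounts to tracking the classifying map $\underline{\Z} \to \tau_{\geq 1}\underline{\Pic}(\underline{\Mod}_{\underline{\Z}})$ constructed in the proof of Proposition \ref{prop:shear_gr_by_reg_rep_sphere_is_mult} and its interaction with tensor powers.
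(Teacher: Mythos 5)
Your overall skeleton---preserve sifted colimits and excisive polynomiality, reduce to the compact projective generators $\ins_n(\Sigma^n\underline{\Z})$ and $\ins_n(C_2 \otimes \Sigma^n\underline{\Z})$ for $n\leq 0$, and leverage Lemma~\ref{lemma:C2sym_LSym_connectivity_comparison}---coincides with the paper's. However, there is a genuine gap: you never produce the natural transformation whose invertibility you intend to verify on generators. The first half of your argument is fine, because $\theta^{\leq m}$ is a given map of filtered monads and $\pi\theta^{\leq m}$ is therefore a well-defined natural transformation which you can test pointwise. The second half---``identifying $\pi \circ C_2\Sym^{\leq m}$ with $\CSym^{\sigma,\leq m}\circ\pi$ on those generators'' by a direct computation of norms and Koszul signs---does not, by itself, yield the required coherent identification. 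In an $\infty$-categorical setting, objectwise isomorphism of values on generators is not a natural transformation; you need the map first.

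The paper supplies the missing map from a universal property, and this is the essential idea you would need to add. Because $\pi$ is $C_2$-symmetric monoidal with respect to the Koszul $C_2$-symmetric monoidal structure on the target (this is exactly the point flagged by the footnote in the paper's proof: it fails if you replace $P^0\underline{\pi}_0$ by $\underline{\pi}_0$), applying $\pi$ to the $C_2$-$\E_\infty$-algebra $\LSym^\sigma(M)$ yields a graded $C_2$-commutative Tambara-type algebra in $\Gr^{\leq 0}(\underline{\slice}_{=0})$. Since $\CSym^\sigma(\pi(M))$ is the \emph{free} such algebra on $\pi(M)$, the unit $\pi(M)\to\pi(\LSym^\sigma(M))$ induces a canonical natural transformation $c_M\colon \CSym^\sigma(\pi(M))\to\pi(\LSym^\sigma(M))$. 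With $c_M$ in hand, the reduction to generators and the connectivity estimate of Lemma~\ref{lemma:C2sym_LSym_connectivity_comparison}, together with the observation that $P^0(\underline{\pi}_nX\to\underline{\pi}_nY)$ is an isomorphism whenever $\pi_nf^e$ is an isomorphism and $\pi_nf^{C_2}$ is a surjection, finish the argument. Your step-by-step verification of Koszul signs is then unnecessary: those signs are already built into the Koszul monoidal structure through which $\pi$ is monoidal, and the universal property handles the coherences you would otherwise have to track by hand.

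Two smaller points. First, your appeal to Proposition~\ref{prop:shear_gr_by_reg_rep_sphere_is_mult} is misdirected: $\pi$ is defined via the integer shift $[-*]$ followed by the zeroth slice functor $P^0$ (Notation~\ref{ntn:graded_zero_slices_as_localization}), not the regular-representation shear $[\pm\rho *]$, and the Koszul monoidal structure on the heart comes from Proposition~\ref{prop:compatible_filt_implies_C2_monoidal_heart} and Remark~\ref{rmk:equivariant_koszul_rule} rather than from that shear equivalence. Second, a filtered piece $\CSym^{\sigma,\leq m}$ is not defined anywhere in the paper; making your argument precise would require building that filtered monad and checking its compatibility with $\theta^{\leq m}$, which is extra work that the universal-property route avoids entirely.
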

\begin{proof}
	% Arguing as in the proof of \cite[Lemma 4.5.5]{Raksit20}, 
	For each $ M \in \Gr^{\leq 0}\left(\Mod_{\underline{\Z}} \right)_{\geq 0+} $, there is a unit map $ M \to \LSym_{\underline{\Z}}^\sigma(M) $ which induces a map $ \pi(M) \to \pi \LSym_{\underline{\Z}}^\sigma(M) $. 
	Since $ \pi $ is $ C_2 $-symmetric monoidal with respect to the Koszul $ C_2 $-symmetric monoidal structure on the target\footnote{This is no longer true if we replace $ P^0 \underline{\pi}_0 $ by $ \underline{\pi}_0 $.}, the $ C_2 $-$ \E_\infty $-algebra structure on $ \LSym_{\underline{\Z}}^\sigma(M) $ induces a natural graded $ C_2 $-commutative algebra structure on $ \pi \LSym_{\underline{\Z}}^\sigma(M) $, hence there is a canonical map 
	\begin{equation}\label{eq:free_dalg_graded_truncation_comparison}
		c_M \colon \CSym_{\underline{\Z}}^\sigma(\pi(M))\to \pi\left(\LSym_{\underline{\Z}}^{\sigma}(M)\right) \,. 
	\end{equation}
	Since the functors preserve sifted colimits and send direct sums to tensor products, it suffices to show that (\ref{eq:free_dalg_graded_truncation_comparison}) is an equivalence for $ M = \underline{\Z}[-k](k) $ and $ \underline{\Z}[C_2][-k](k) $. 
	The result follows from Lemma \ref{lemma:C2sym_LSym_connectivity_comparison}, combined with the fact that, given two $ n $-connective $ \underline{\Z} $-modules $ X, Y $ and a map $ f \colon X \to Y $ so that $ \pi_n f^e\colon \pi_nX^e \to \pi_nY^e $ is an isomorphism and $ \pi_nf^{C_2} \colon \pi_nX^{C_2} \to \pi_nY^{C_2} $ is surjective, then $ P^0 \left(\underline{\pi}_nX \to \underline{\pi}_n Y\right) $ is an isomorphism. 
\end{proof}
\begin{prop}\label{prop:inv_grdalg}
	The adjunction $ \iota \dashv \pi $ of Notation \ref{ntn:graded_zero_slices_as_localization} induces an equivalence between the $ C_2 $-$ \infty $-categories 
	\begin{itemize}
		\item The full $ C_2 $-subcategory of $ \Gr^{\leq 0}\underline{\DAlg}_{\underline{\Z}}^\sigma $ on those graded derived involutive algebras $ A $ such that $ A^n $ has (Mackey functor) homotopy groups concentrated in degree $ -n $ for all $ n \leq 0 $ and the restriction map on $ \underline{\pi}_{-n}A^n $ is injective. 
		\item The full $ C_2 $-subcategory of $ C_2\E_\infty\underline{\Alg}\Gr^{\leq 0}\left(\underline{\Mod}^{\heartsuit}_{\underline{\Z}}\right)^{\otimes_K}  $ spanned by those ordinary graded $ C_2 $-Tambara functors $ B $ such that $ B^n \simeq 0 $ for $ n > 0 $ and the restriction map on each $ B^n $, $ n \leq 0 $ is injective. 
	\end{itemize}
\end{prop}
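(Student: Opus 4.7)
The plan is to lift the reflective adjunction $\pi \dashv \iota$ of Notation \ref{ntn:graded_zero_slices_as_localization} to a reflective adjunction between the two categories of algebras, and then to verify that it restricts to an equivalence on the subcategories in the statement. The essential inputs are Lemma \ref{lemma:free_dalg_graded_truncation}, which provides the monad-level identification $\pi \circ \LSym^{\sigma} \simeq \CSym^{\sigma} \circ \pi$, and Proposition \ref{prop:invdalg_C2_colims}, which describes $\Gr^{\leq 0}\underline{\DAlg}_{\underline{\Z}}^\sigma$ monadically over its underlying $C_2$-$\infty$-category of graded $\underline{\Z}$-modules.

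First I would construct a $C_2$-left adjoint
$$\pi' \colon \Gr^{\leq 0}\underline{\DAlg}_{\underline{\Z}}^\sigma \to C_2\E_\infty\underline{\Alg}\Gr^{\leq 0}(\underline{\Mod}^\heartsuit_{\underline{\Z}})^{\otimes_K}$$
as follows: given $A = (\LSym^\sigma(A) \to A) \in \Gr^{\leq 0}\underline{\DAlg}_{\underline{\Z}}^\sigma$, applying $\pi$ together with Lemma \ref{lemma:free_dalg_graded_truncation} yields a $\CSym^{\sigma}$-algebra structure map $\CSym^{\sigma}(\pi(A)) \simeq \pi \LSym^\sigma(A) \to \pi(A)$; applying Variant \ref{variant:otherdalg_inv}\ref{varitem:discrete_dalg_inv} to each graded piece identifies $\pi(A)$ as a graded $C_2$-Tambara functor with respect to the Koszul $C_2$-symmetric monoidal structure, hence an object of $C_2\E_\infty\underline{\Alg}\Gr^{\leq 0}(\underline{\Mod}^{\heartsuit}_{\underline{\Z}})^{\otimes_K}$. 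The functor $\pi'$ admits a $C_2$-right adjoint $\iota'$ by Corollary \ref{cor:C2_right_adjoint_local_crit} (since $\pi'$ strongly preserves $C_2$-colimits by Proposition \ref{prop:invdalg_C2_colims}), and conservativity of the forgetful functors identifies the underlying graded $\underline{\Z}$-module of $\iota'(B)$ with $\iota$ applied to the underlying module of $B$.

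Let $\mathcal{C}_1$, $\mathcal{C}_2$ denote the subcategories appearing in the statement. The essential image of $\iota$ on underlying graded $\underline{\Z}$-modules consists exactly of those $M_*$ with each $M^n$ having homotopy concentrated in degree $-n$ and injective restriction map there---this is immediate from Proposition \ref{prop:zeroth_slice_description} together with the definition of $\iota$ as a shift. It follows that $\pi'$ carries $\mathcal{C}_1$ into $\mathcal{C}_2$ and $\iota'$ carries $\mathcal{C}_2$ into $\mathcal{C}_1$. For $A \in \mathcal{C}_1$, the unit $A \to \iota'\pi'(A)$ is an equivalence on underlying graded modules (since the underlying module of $A$ lies in the essential image of $\iota$ by the connectivity hypothesis, and $\iota$ is fully faithful as a right adjoint), hence an equivalence of derived involutive algebras by conservativity of the forgetful $C_2$-functor. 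For $B \in \mathcal{C}_2$, the counit $\pi'\iota'(B) \to B$ is an equivalence on underlying modules by full faithfulness of $\iota$, hence an equivalence in $\mathcal{C}_2$. This yields the desired $\mathcal{C}_1 \simeq \mathcal{C}_2$.

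The main obstacle is the first step---verifying that $\pi'$ truly defines a $C_2$-functor of algebra $C_2$-$\infty$-categories. Beyond the pointwise identification of Lemma \ref{lemma:free_dalg_graded_truncation}, this demands the full $C_2$-monoidal and monadic coherence: the comparison map of monads must intertwine the norm maps and the sign twists encoded in the Koszul $C_2$-symmetric monoidal structure of Remark \ref{rmk:equivariant_koszul_rule}. I expect this to follow by combining the $C_2$-symmetric monoidal compatibility of $P^0$ (Lemma \ref{lemma:zeroth_slice_compatible_C2_monoidal_structure}) with the identification in Remark \ref{rmk:heart_of_regslice_filt_on_gr} of the heart of the positive regular slice filtration as carrying the Koszul structure naturally, together with the universal property of $\CSym^\sigma$ from Definition \ref{defn:inv_gr_calg}.
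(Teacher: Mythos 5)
Your proposal is correct and follows essentially the same route as the paper, which cites Lemma \ref{lemma:free_dalg_graded_truncation} and invokes Raksit's Proposition 4.5.6 as the template. The coherence obstacle you flag at the end is resolved by two inputs your argument already assembles: the compatible Bousfield localization $\pi \dashv \iota$ lifts to a localizing adjunction on module categories over $\LSym^\sigma$ by the parametrized version of \cite[Proposition 4.1.9]{Raksit20} (the mechanism already used in Variant \ref{variant:otherdalg_inv}\ref{varitem:discrete_dalg_inv}), with induced monad $T_0 = \pi \circ \LSym^\sigma \circ \iota$, and Lemma \ref{lemma:free_dalg_graded_truncation}---constructed from the lax $C_2$-symmetric monoidality of $\pi$ with respect to the Koszul structure and the universal property of $\CSym^\sigma$---identifies $T_0 \simeq \CSym^\sigma$ as monads, not merely as functors, so the existence of $\pi'$ and $\iota'$ is automatic. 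One small imprecision: the identification of $\CSym^\sigma$-modules with graded $C_2$-Tambara functors cannot literally be obtained by applying Variant \ref{variant:otherdalg_inv}\ref{varitem:discrete_dalg_inv} ``to each graded piece'' since the graded multiplicative structure mixes pieces; it follows instead directly from Definition \ref{defn:inv_gr_calg} and the Barr--Beck--Lurie theorem.
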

\begin{proof}
	Using the result of Lemma \ref{lemma:free_dalg_graded_truncation}, the result follows from the same argument as in \cite[Proposition 4.5.6]{Raksit20}. 
\end{proof}
\begin{lemma}
	Let $ \underline{\Z} $ be the constant $ C_2 $-Mackey functor at $ \Z $. 
	The filtered derived symmetric algebra monad $ \LSym^{\leq * } \colon \Fil\left(\underline{\Mod}_{\underline{\Z}}\right) \to \Fil\left(\underline{\Mod}_{\underline{\Z}}\right) $ of Construction \ref{cons:gr_fil_dalg_inv} preserves the full $ C_2 $-subcategory $ \Fil^{\leq 0}\left(\underline{\Mod}_{\underline{\Z}}\right)_{\geq * } $ consisting of those filtered modules $ X^* $ such that $ X^n \simeq 0 $ for $ n> 0 $ and $ X^n \in \tau_{\geq n}^{\mathrm{Post}} \Mod_{\underline{\Z}} $ is $ n $-connective with respect to the Postnikov t-structure on $ \Mod_{\underline{\Z}} $ (Variant \ref{var:eqvtmodulespostnikov}) for $ n \leq 0 $. 
\end{lemma}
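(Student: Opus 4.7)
The plan is to mimic the graded proof in Proposition \ref{prop:LSym_connectivegr} but to factor the argument through the associated graded, reducing the filtered connectivity condition to the already-proven graded one. First, I would observe that the condition $X^n = 0$ for $n>0$ is automatically preserved by $\LSym^{\sigma,\leq m}$. The subcategory $\Fil^{\leq 0}(\underline{\Mod}_{\underline{\Z}})$ is closed under $C_2$-colimits, Day convolution tensor products (by $(X\ostar Y)^n = \bigoplus_{i+j=n} X^i\otimes Y^j$), and norms: indeed, Lemma \ref{lemma:norm_on_fil_gr_formula} gives $(N^{C_2}X)^e_n = (X^e \ostar^\tau X^e)_n$ and $\Phi^{C_2}(N^{C_2}X)_{2m}\simeq \Phi^{C_2}(N^{C_2}X)_{2m+1}\simeq X^e_m$, all of which vanish when the relevant indices exceed zero. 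Since $\LSym^{\sigma,\leq m}$ is built from tensor products, norms, and the localization $P^0$ (which is pointwise, hence preserves $\Fil^{\leq 0}$), the vanishing in positive filtration degree is preserved.

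Second, I would reduce the pointwise Postnikov-connectivity condition to the graded case. A filtered object $X^*$ with $X^n = 0$ for $n > 0$ lies in $\Fil^{\leq 0}_{\geq *}$ if and only if $\gr(X)\in \Gr^{\leq 0}_{\geq *}$: the ``only if'' direction follows from compatibility of the Postnikov t-structure with cofibers, while the converse is a downward induction along the fiber sequences $X^{n+1}\to X^n \to \gr(X)^n$ starting from $X^1 = 0$. Next, I would verify that the associated graded $C_2$-functor $\gr \colon \Fil(\underline{\Mod}_{\underline{\Z}})\to \Gr(\underline{\Mod}_{\underline{\Z}})$ is a morphism of derived involutive algebraic contexts: it is $C_2$-symmetric monoidal by Proposition \ref{prop:param_assoc_gr_is_C2_monoidal}, right t-exact, sends the generating $\ins_n(\underline{\Z}[S])$ to the corresponding graded generators, and commutes pointwise with the zero-slice localization $P^0$. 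By Remark \ref{rmk:dalg_construction_naturality}, $\gr$ then intertwines the filtered and graded derived symmetric algebra monads, and the same argument applied to each filtered piece yields $\gr\circ \LSym^{\sigma,\leq m}_{\Fil}\simeq \LSym^{\sigma,\leq m}_{\Gr}\circ \gr$.

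Combining the two steps, for $X\in \Fil^{\leq 0}_{\geq *}$, Step 1 shows $\LSym^{\sigma,\leq m}(X)\in \Fil^{\leq 0}$, and Step 2 together with Proposition \ref{prop:LSym_connectivegr} (applied to $\gr(X)\in \Gr^{\leq 0}_{\geq *}$) shows $\gr(\LSym^{\sigma,\leq m}(X))\simeq \LSym^{\sigma,\leq m}(\gr(X))\in \Gr^{\leq 0}_{\geq *}$. Applying the equivalence from Step 2 in reverse concludes.

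The main obstacle is verifying that $\gr$ is genuinely a morphism of derived involutive algebraic contexts, and in particular that it is compatible with the slice localization $P^0$ — both $\gr$ and $P^0$ are constructions involving cofibers, and one must check that they commute on the appropriate subcategory. If this abstract compatibility proves delicate, a more hands-on alternative is to verify the result directly on compact projective generators $\ins_n(\Sigma^n\underline{\Z})$ and $\ins_n(\Sigma^n\underline{\Z}[C_2])$ for $n\leq 0$: here the identification $\LSym^{\sigma,\leq m}(\ins_n(Y))\simeq \ins_{mn}(\LSym^{\sigma,\leq m}(Y))$ concentrates the output in degree $mn\leq 0$, and the Postnikov $mn$-connectivity reduces to the downward induction argument of Proposition \ref{prop:LSym_connectivegr} applied to $Y$, using Lemma \ref{lemma:C2sym_LSym_connectivity_comparison}.
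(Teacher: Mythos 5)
Your ``hands-on alternative'' at the end --- checking the statement directly on the compact projective generators $\ins_n(\Sigma^n\underline{\Z})$ and $\ins_n(\Sigma^n\underline{\Z}[C_2])$ for $n \leq 0$ and running the same excisivity and downward-induction argument as in Proposition \ref{prop:LSym_connectivegr} --- is exactly what the paper does; the paper's proof is a one-line reference to that proposition.

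Your main argument, however, has a genuine gap at the step you yourself flagged, and it is fatal rather than merely delicate. The associated graded functor $\gr \colon \Fil(\underline{\Mod}_{\underline{\Z}}) \to \Gr(\underline{\Mod}_{\underline{\Z}})$ is \emph{not} a morphism of derived involutive algebraic contexts, because it does not commute with the zero-slice localization $P^0$. Concretely, $\gr$ does not carry $\Fil(\underline{\Mod}_{\underline{\Z}})^s$ (pointwise zero slices) into $\Gr(\underline{\Mod}_{\underline{\Z}})^s$: the cofiber $\gr(X)^n = \cofib(X^{n+1}\to X^n)$ of a map of discrete Mackey functors in the heart acquires a nontrivial $\pi_1$ whenever the map fails to be injective, and even when the map is injective the cokernel need not have injective restriction map, so it leaves $\Mod_{\underline{\Z}}^{\slice=0}$. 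Therefore $P^0_{\Gr}\circ\gr$ and $\gr\circ P^0_{\Fil}$ do not agree, Remark \ref{rmk:dalg_construction_naturality} does not apply, and you cannot conclude $\gr\circ\LSym^{\sigma,\leq m}_{\Fil}\simeq\LSym^{\sigma,\leq m}_{\Gr}\circ\gr$. (The $C_2$-symmetric monoidality of $\gr$ from Proposition \ref{prop:param_assoc_gr_is_C2_monoidal} does give the intertwining for the free $C_2$-$\E_\infty$ monads $C_2\Sym^{\leq m}$, but the identification breaks precisely at the $P^0$-localization step.) Stick with the direct verification on generators.
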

\begin{proof}
	Similar to proof of Proposition \ref{prop:LSym_connectivegr}. 
\end{proof}
\begin{lemma}\label{lemma:free_inv_fildalg_colim_compatible}
	Let $ \underline{\Z} $ be the constant $ C_2 $-Mackey functor at $ \Z $. 
	There is a commutative diagram of $ C_2 $-$ \infty $-categories 
	\begin{equation*}
	\begin{tikzcd}[column sep=large]
		\Fil^{\leq 0}\left( \underline{\Mod}_{\underline{\Z},\leq 0} \right)_{\geq 0} \ar[d,"{\LSym^\sigma}"'] \ar[r,"{\colim}"] & \underline{\Mod}_{\underline{\Z}}^{\leq 0} \ar[d,"{\LSym^\sigma}"] \\
		\Fil^{\leq 0}\left( \underline{\Mod}_{\underline{\Z},\leq 0} \right)_{\geq 0} \ar[r,"{\colim}"] & \underline{\Mod}_{\underline{\Z}}^{\leq 0}
	\end{tikzcd}
	\end{equation*}
	where $ \Fil^{\leq 0}\left( \underline{\Mod}_{\underline{\Z},\leq 0} \right)_{\geq 0} $ denotes those filtered $ \underline{\Z} $-modules $ M^{\geq 0} $ such that $ M^n \simeq 0 $ for $ n > 0 $ and $ M^n $ is $ n $-connective and ($ 0 $-)coconnective for all $ n \leq 0 $. 
\end{lemma}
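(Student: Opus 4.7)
The proof reduces to verifying two things: that the $C_2$-functors appearing in the diagram restrict appropriately to the specified full subcategories, and that the resulting diagram commutes. I would proceed as follows.

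First, I would show that $\LSym^{\sigma}$ restricts to an endofunctor of $\Fil^{\leq 0}(\underline{\Mod}_{\underline{\Z},\leq 0})_{\geq 0}$. The preceding lemma handles the connectivity constraint (that $M^n$ remains $n$-connective) by an argument modeled on Proposition \ref{prop:LSym_connectivegr}. To add preservation of $0$-coconnectivity I would follow the same template: using that $\LSym^{\sigma,\leq m}$ is $m$-excisive and preserves sifted colimits, reduce to checking the assertion on the compact projective generators $\ins_n \Sigma^n \underline{\Z}$ and $\ins_n \Sigma^n \underline{\Z}[C_2]$ for $n \leq 0$. On these generators, the comparison map $C_2\Sym^{\leq m}_{\underline{\Z}} \to \LSym^{\sigma,\leq m}_{\underline{\Z}}$ is controlled by Lemma \ref{lemma:C2sym_LSym_connectivity_comparison}, and the concrete description of the norm on filtered objects in Lemma \ref{lemma:norm_on_fil_gr_formula} lets one bound homotopy groups pointwise in terms of filtration degree, using the constraint $\sum n_i = p$ coming from Day convolution together with the role of the zero-slice functor $P^0$ built into the definition of $\LSym^\sigma$ to trim contributions that would otherwise escape the coconnective range.

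Second, I would exploit naturality to obtain the commutativity. By Remark \ref{rmk:zeroth_graded_dalg}, the functor $\colim \colon \Fil(\underline{\Mod}_{\underline{\Z}}) \to \underline{\Mod}_{\underline{\Z}}$ is a morphism of derived involutive algebraic contexts, so Remark \ref{rmk:dalg_construction_naturality} provides a canonical equivalence $\colim \circ \LSym^{\sigma}_{\Fil} \simeq \LSym^{\sigma}_{\Mod} \circ \colim$ of $C_2$-functors at the level of the ambient $C_2$-$\infty$-categories. Together with the first step, this gives the desired commutativity once one observes that $\colim$ carries $\Fil^{\leq 0}(\underline{\Mod}_{\underline{\Z},\leq 0})_{\geq 0}$ into $\underline{\Mod}_{\underline{\Z}}^{\leq 0}$: the relevant colimit is a sequential colimit of $0$-coconnective $\underline{\Z}$-modules, and the Postnikov t-structure on $\underline{\Mod}_{\underline{\Z}}$ is compatible with filtered colimits, so coconnectivity is preserved. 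Restricting the ambient commutative square to the subcategories yields the displayed diagram.

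The main obstacle is the coconnectivity half of the first step. Naive Künneth-type analysis of $m$-fold derived tensor products over $\underline{\Z}$ pushes homotopy into positive degrees via $\Tor$, so one cannot just appeal to compatibility of the Postnikov t-structure with $\otimes$. The point is instead that, after passing to the filtered Day convolution, each filtered piece at degree $p$ is assembled from summands indexed by decompositions $\sum n_i = p$ of a bounded total, and the zero-slice $P^0$ built into $\LSym^\sigma$ (as opposed to $C_2\Sym$) trims away exactly the $\Tor$-type contributions that would break $0$-coconnectivity, in the same spirit as Lemma \ref{lemma:free_dalg_graded_truncation}. I expect this to be the only genuinely technical piece; everything else is formal naturality.
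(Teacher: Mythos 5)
Your decomposition---establish the restrictions, exploit formal naturality, then restrict the square---matches the paper's approach closely. The paper's own argument is a one-line pointer to the proof of Lemma~\ref{lemma:free_dalg_graded_truncation}, and that proof is essentially the explicit unpacking of the naturality you extract from Remarks~\ref{rmk:zeroth_graded_dalg} and~\ref{rmk:dalg_construction_naturality}: one uses the unit of $\LSym^{\sigma}$ together with $C_2$-symmetric monoidality of $\colim$ to build a comparison, then shows it is an equivalence by checking on generators since both composites preserve sifted colimits. So your step two is the same argument in abstract packaging, and your step three (coconnectivity is closed under filtered colimits in $\underline{\Mod}_{\underline{\Z}}$) is correct.

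The one place your sketch should be tightened is the coconnectivity half of your first step. You phrase it as a reduction to compact projective generators, but that style of reduction needs the target full $C_2$-subcategory to be closed under the relevant colimits; $\Fil^{\leq 0}\bigl(\underline{\Mod}_{\underline{\Z},\leq 0}\bigr)_{\geq 0}$ contains the generators $\ins_n\Sigma^n\underline{\Z}$ and $\ins_n\Sigma^n\underline{\Z}[C_2]$ but is \emph{not} closed under geometric realizations, so ``$\LSym^\sigma$ preserves the subcategory because it does on generators'' does not go through verbatim. What actually drives Proposition~\ref{prop:LSym_connectivegr} (and the Raksit lemma it invokes) is an $m$-excisive cross-effect estimate: one bounds the whole Goodwillie tower of $\LSym^{\sigma,\leq m}$ by downward induction, with Lemma~\ref{lemma:C2sym_LSym_connectivity_comparison} supplying the base case on generators. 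Your closing paragraph shows you have the right intuition about where $P^0$ and the bounded Day-convolution decompositions $\sum n_i = p$ are doing the work, but the correct vehicle is a polynomial-functor bound, not a sifted-colimit reduction. Once that is supplied the rest is formal, including the claim that the right vertical arrow is an endofunctor of $\underline{\Mod}_{\underline{\Z}}^{\leq 0}$, which follows from the commutativity together with essential surjectivity of $\colim$ onto coconnective objects via $\tau_{\geq *}$ and right-completeness of the Postnikov t-structure.
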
 
Its proof is similar to that of Lemma \ref{lemma:free_dalg_graded_truncation}, so we omit it. 
\begin{prop}\label{prop:nonconn_dalg_Post_filt}
	Let $ \underline{\Z} $ be the constant $ C_2 $-Mackey functor at $ \Z $. 
	Then the Postnikov filtration functor $ \tau_{\geq *}^{\mathrm{Post}} \colon \underline{\Mod}_{\underline{\Z}} \to \Fil\left(\underline{\Mod}_{\underline{\Z}}\right) $ induces a fully faithful embedding $ \underline{\DAlg}_{\underline{\Z}}^{\sigma,\mathrm{ccn}} \inj \Fil\underline{\DAlg}_{\underline{\Z}}^\sigma $ where $ \underline{\DAlg}_{\underline{\Z}}^{\sigma,\mathrm{ccn}} $ denotes the full $ C_2 $-subcategory on those derived involutive algebras whose underlying $ \underline{\Z} $-module is coconnective.
\end{prop}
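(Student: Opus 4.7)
The plan is to realize $\tau_{\geq *}^{\mathrm{Post}}$ as a right $C_2$-adjoint to $\colim$ on appropriate subcategories, and to lift this adjunction to derived involutive algebras using Lemma \ref{lemma:free_inv_fildalg_colim_compatible}. As a preliminary step, I would verify that for coconnective $M$, the Postnikov filtration $\tau_{\geq *} M$ lies in $\Fil^{\leq 0}(\underline{\Mod}_{\underline{\Z}, \leq 0})_{\geq 0}$: each $\tau_{\geq n} M$ is $n$-connective by construction and $0$-coconnective because $M$ is, and $\tau_{\geq n} M \simeq M$ is the terminal value for $n \leq 0$ while one forces $\tau_{\geq n} M \simeq 0$ for $n > 0$ after renormalization (or simply restricts to the nonpositively filtered piece). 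Next, I would exhibit $\tau_{\geq *} \colon \underline{\Mod}_{\underline{\Z}}^{\leq 0} \to \Fil^{\leq 0}(\underline{\Mod}_{\underline{\Z}, \leq 0})_{\geq 0}$ as the right $C_2$-adjoint to $\colim$: the key point is that if $F^n$ is $n$-connective, any map $F^n \to M$ automatically factors uniquely through $\tau_{\geq n} M$, giving the natural equivalence $\Map(\colim F^*, M) \simeq \Map(F^*, \tau_{\geq *} M)$. The counit $\colim \tau_{\geq *} M \to M$ is an equivalence by right-completeness of the Postnikov t-structure (Lemma \ref{lemma:postnikov_rightcomplete}), so $\tau_{\geq *}$ is fully faithful with essential image the \emph{Postnikov-like} filtrations $\mathcal{E} \subseteq \Fil^{\leq 0}(\underline{\Mod}_{\underline{\Z}, \leq 0})_{\geq 0}$ characterized by the condition that $F^{n+1} \to F^n$ realizes $F^{n+1}$ as the $(n+1)$-connective cover of $F^n$.

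Next, I would interpret Lemma \ref{lemma:free_inv_fildalg_colim_compatible} as saying that $\colim$ intertwines the monad $\LSym^\sigma$ between $\Fil^{\leq 0}(\underline{\Mod}_{\underline{\Z}, \leq 0})_{\geq 0}$ and $\underline{\Mod}_{\underline{\Z}}^{\leq 0}$. Regarding a (coconnective) filtered derived involutive algebra as an $\LSym^\sigma$-module structure on a $C_2$-$\infty$-object of the underlying $C_2$-$\infty$-category, this compatibility means $\colim$ induces a $C_2$-functor on $\LSym^\sigma$-modules that commutes with the forgetful $C_2$-functors to underlying modules. By Corollary \ref{cor:C2_right_adjoint_local_crit}, the underlying right $C_2$-adjoint $\tau_{\geq *}$ lifts to a right $C_2$-adjoint on $\LSym^\sigma$-modules; concretely, for a derived involutive algebra $A$ with $A^e, A^{C_2}$ coconnective, the filtered object $\tau_{\geq *} A$ acquires an $\LSym^\sigma$-module (equivalently, a filtered derived involutive algebra) structure, and the resulting $C_2$-functor recovers $\tau_{\geq *}$ after applying the forgetful $C_2$-functor.

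Finally, full faithfulness of the lifted $C_2$-functor $\underline{\DAlg}_{\underline{\Z}}^{\sigma, \mathrm{ccn}} \to \Fil\underline{\DAlg}_{\underline{\Z}}^{\sigma}$ follows because the counit $\colim \tau_{\geq *} A \to A$ on derived involutive algebras is detected by the forgetful functor, which reduces to the underlying statement verified in the first paragraph; composing with the manifestly fully faithful inclusion $\Fil^{\leq 0}\underline{\DAlg}^\sigma \hookrightarrow \Fil\underline{\DAlg}^\sigma$ completes the proof. The main obstacle is that the subcategory $\Fil^{\leq 0}(\underline{\Mod}_{\underline{\Z}, \leq 0})_{\geq 0}$ is \emph{not} itself a derived involutive algebraic context (it fails to be $C_2$-stable), so the formalism of Remark \ref{rmk:dalg_construction_naturality} does not apply verbatim. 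This is why the proof is forced to work directly with $\LSym^\sigma$-module structures, and why Lemma \ref{lemma:free_inv_fildalg_colim_compatible} — which both ensures $\LSym^\sigma$ preserves the relevant subcategory and exhibits $\colim$ as intertwining the monad action — is the essential input.
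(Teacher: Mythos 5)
Your strategy is essentially the paper's: use right-completeness of the Postnikov t-structure (Lemma \ref{lemma:postnikov_rightcomplete}) for full faithfulness of $\tau_{\geq *}$ on underlying modules, invoke Lemma \ref{lemma:free_inv_fildalg_colim_compatible} to intertwine $\colim$ with $\LSym^\sigma$, and then lift the resulting localizing adjunction to module categories over the respective monads. You also correctly identify why Remark \ref{rmk:dalg_construction_naturality} cannot be applied verbatim — the constrained subcategory $\Fil^{\leq 0}\left(\underline{\Mod}_{\underline{\Z},\leq 0}\right)_{\geq 0}$ is not $C_2$-stable and hence not a derived involutive algebraic context — which is a genuine subtlety the paper leaves implicit.

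However, there is a gap at the lifting step. You cite Corollary \ref{cor:C2_right_adjoint_local_crit} to conclude that $\tau_{\geq *}$ "lifts to a right $C_2$-adjoint on $\LSym^\sigma$-modules", but that corollary does not do any lifting to module categories: it \emph{assumes} the existence of fiberwise right adjoints of the $C_2$-functor in question and merely promotes them to a single right $C_2$-adjoint. What you actually need at this point is the existence of a fiberwise right adjoint to $\colim \colon \Mod_{\LSym^{\sigma}}\left(\Fil^{\leq 0}(\underline{\Mod}_{\underline{\Z},\leq 0})_{\geq 0}\right) \to \Mod_{\LSym^\sigma}\left(\underline{\Mod}_{\underline{\Z}}^{\leq 0}\right)$, i.e.\ a monadic adjoint-lifting statement. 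That is precisely what the paper's citation of \cite[Proposition 4.1.9]{Raksit20} supplies: given a localizing adjunction on the base categories and the compatibility $\colim \circ \LSym^\sigma \circ \tau_{\geq *} \simeq \LSym^\sigma$ that you have already established, it produces the localizing adjunction on module categories fiberwise, after which Corollary \ref{cor:C2_right_adjoint_local_crit} can legitimately assemble the result into a $C_2$-adjunction. Alternatively one could argue for the fiberwise right adjoints via presentability and the adjoint functor theorem, but that must be said explicitly; as written, your argument is circular at this one step.
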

\begin{proof}
	Note that because the t-structure on $ \Mod_{\underline{\Z}} $ is right-complete (Lemma \ref{lemma:postnikov_rightcomplete}), $ \tau_{\geq *} $ is fully faithful. 
	By Lemma \ref{lemma:free_inv_fildalg_colim_compatible}, we have an equivalence $ \colim \circ \LSym^\sigma \circ \tau_{\geq *} \simeq \LSym^\sigma $ of $ C_2 $-endofunctors on $ \underline{\Mod}_{\underline{\Z}}^{\leq 0} $. 
	Then \cite[Proposition 4.1.9]{Raksit20} implies there is a localizing adjunction on categories of modules over the respective monads. 
\end{proof}

\section{Involutive cohomological invariants}\label{section:cochainwinvolution} 
In this section, we introduce involutive enhancements of the cotangent complex and de Rham complex for the involutive algebras defined in \S\ref{section:scr_with_inv}.  

We construct the involutive cotangent complex in much the same way as its classical counterpart: as the $ C_2 $-left adjoint to a square-zero extension functor. 
However, we remark that in pivoting from $ \E_\infty $-algebras to $ C_2 $-$ \E_\infty $-algebras, we should regard such a functor as associating to a pair $ (A,M) $ an square- and \emph{norm-zero} $ C_2 $-$ \E_\infty $ $ A $-algebra with underlying object $ A \oplus M $, and likewise for ordinary derived algebras and derived involutive algebras

The ordinary derived de Rham complex is characterized by the structure present on it: $ \L \Omega^{^\bullet}_{A/k} $ is the initial $ h_+ $-differential graded derived commutative $ k $-algebra under $ A $. 
The involutive (derived) de Rham complex $ \L\Omega^{\sigma,\bullet}_{-/\underline{k}} $ is similarly characterized by a universal property: To define $ \L\Omega^\sigma_{-/\underline{k}} $, we specify the type of structure it should have.  
Non-equivariantly, one typically starts by defining differential graded objects, then introducing differential graded algebras. 
Here, the involutive enhancement of differential graded objects is \emph{not} differential graded objects in $ \underline{k} $-modules as one might expect: In \cite[\S2]{SVP96}, Solotar--Vigué-Poirrier show that given an algebra $ A $ with an involution $ \omega $, its de Rham complex naturally acquires the structure of a dg module so that the differential $ d $ is \emph{antilinear} with respect to the involution; that is, $ d\omega = - \omega d $. 
Thus, the appropriate enhancement of $ h_+ $-differential graded objects for our purposes are the $ h_+^\sigma $-differential graded objects of Definition \ref{defn:dg_involutive_module}. 
Our $ h^\sigma_+ $-dg objects still have a relationship to complete filtered objects, albeit with a twist (see Proposition \ref{prop:shear_gr_on_dg_mod}, which the interested reader may contrast with \cite[Remark 5.1.12]{Raksit20}).

\begin{rmk}\label{rmk:involutive_deRham_cplx_and_exterior_powers_conj}
	An essential feature of the ordinary (derived) de Rham complex $ \L\Omega^{\bullet}_{A/k} $ is that the individual terms appearing in it can be expressed as shifts of higher exterior powers of the cotangent complex $ \L_A $ (cf. \cite[Theorem 5.3.6]{Raksit20}).  
	In this context, we expect a corresponding involutive enhancement of such a statement. 
	However, in attempting to prove such a statement, we quickly find ourselves straying from the goal of this work.  
	Thus, we set this question aside for the moment and hope to return to it in future work.  
\end{rmk}

\subsection{The involutive cotangent complex}\label{subsection:C2cotangentcplx}
In this subsection, we introduce a definition of involutive cotangent complexes for derived involutive algebras. 

We show that the involutive cotangent complex of a derived involutive ring is a genuine equivariant enhancement of the ordinary cotangent complex considered in \cite[\S4.4]{Raksit20}. 
We show that the involutive cotangent complex is computable like its classical counterpart; the reader who wishes to get a feel for computational aspects of the involutive cotangent complex is invited to proceed directly to Example \ref{ex:cotangent_computations}. 

Throughout this section, we will work with a derived involutive algebraic context $ \cat $ equipped with a fixed map of derived involutive algebraic contexts $ \underline{\Mod}_{\underline{\Z}} \to \cat $ (see Example \ref{ex:inv_dalg_over_const_Mackey}). 
\begin{ntn}
	Let $ C_2 \E_\infty\underline{\Alg\Mod}(\cat) $ denote the $ C_2 $-$ \infty $-category whose $ \mathcal{O}^\op_{C_2}$-objects consist of pairs $ (A, M) $ where $ A $ is a $ C_2 $-$ \E_\infty $-algebra in $ \cat $ and $ M $ is an $ A $-module in $ \cat $. 
	Write 
	\begin{equation*}
		\underline{\DAlg^\sigma\Mod}(\cat) := \underline{\DAlg}^\sigma(\cat ) \fiberproduct_{C_2\E_\infty\underline{\Alg}(\cat)} C_2\E_\infty\underline{\Alg\Mod}(\cat) \,.
	\end{equation*}
	If $ A $ is a derived involutive algebra object of $ \cat $, write $ \underline{\DAlg^\sigma\Mod}_A := \underline{\DAlg^\sigma\Mod}(\cat)_{\underline{(A,0)}/-} $. 
\end{ntn}
\begin{prop}\label{prop:modules_and_gr01}
\begin{enumerate}[label=(\alph*)]
	\item There is an equivalence of $ C_2 $-$ \infty $-categories between $ \Gr^{\{0,1\}}C_2\E_\infty\underline{\Alg}(\cat) $ and $ C_2\E_\infty\underline{\Alg\Mod}(\cat) $ which commutes with the forgetful functors to $ \Gr^{\{0,1\}}\left(\cat\right) $.
	\item \label{propitem:modules_and_gr01_dalginv} There is an equivalence of $ C_2 $-$ \infty $-categories between $ \Gr^{\{0,1\}}\underline{\DAlg}^\sigma(\cat) $ of Example \ref{ex:dalg_in_gr_01} and $ \underline{\DAlg^\sigma\Mod}(\cat) $ which commutes with the forgetful functors to $ \Gr^{\{0,1\}}\left(\cat\right) $. 
\end{enumerate}	
\end{prop}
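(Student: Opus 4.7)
Both parts will be proved by a $C_2$-parametrized monadicity argument. Each of the four $C_2$-$\infty$-categories in question sits over $\Gr^{\{0,1\}}(\cat)$ via a natural forgetful $C_2$-functor: on the source side via the standard forget-the-algebra-structure functor, and on the target side by sending a pair $(A, M)$ to the graded object with $A$ in degree zero and $M$ in degree one. In each part, the plan is to show that both forgetful functors are $C_2$-monadic over $\Gr^{\{0,1\}}(\cat)$ with coinciding monads (using fiberwise Barr--Beck--Lurie together with Theorem B of \cite{Shah_paramII} to assemble to a $C_2$-parametrized statement), from which the desired $C_2$-equivalence over $\Gr^{\{0,1\}}(\cat)$ follows.

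For part (a), the crucial computation is the identification of the free $C_2$-$\E_\infty$-algebra in $\Gr^{\{0,1\}}(\cat)$ on a pair $(X^0, X^1)$:
\[
C_2\Sym_{\Gr^{\{0,1\}}(\cat)}(X^0, X^1) \simeq \bigl(C_2\Sym_\cat(X^0),\; C_2\Sym_\cat(X^0) \otimes X^1\bigr).
\]
Indeed, any $n$-fold Day convolution tensor power of $(X^0, X^1)$ that contains two or more $X^1$-factors lands in grading $\geq 2$, which vanishes in $\Gr^{\{0,1\}}(\cat)$; only terms with exactly one $X^1$-factor contribute to degree one, and after $\Sigma_n$-coinvariants these collapse to $C_2\Sym^{n-1}_\cat(X^0) \otimes X^1$. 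The norm formula $N^{C_2}(X^0, X^1) = (N^{C_2}(X^0), C_2 \otimes (X^0 \otimes X^1))$ of Example \ref{ex:dalg_in_gr_01} promotes this identification to the full $C_2$-parametrized setting. The same formula describes the free ``algebra plus module'' $C_2$-functor on $\Gr^{\{0,1\}}(\cat)$, so the two monads agree.

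For part (b), I plan to run the same strategy with $C_2\Sym$ replaced by $\LSym^\sigma$ throughout. The analogous formula
\[
\LSym^{\sigma}_{\Gr^{\{0,1\}}(\cat)}(X^0, X^1) \simeq \bigl(\LSym^{\sigma}_\cat(X^0),\; \LSym^{\sigma}_\cat(X^0) \otimes X^1\bigr)
\]
follows from the $C_2\Sym^{\leq i}$ formula above by applying the strictly monoidal localization $\underline{\tau}$ of Remark \ref{rmk:adjunctions_of_endomorphism_cats} (whose underlying zero-slice data on $\Gr^{\{0,1\}}(\cat)$ is computed pointwise in the grading per Example \ref{ex:dalg_in_gr_01}) and then taking the filtered colimit of Construction \ref{cons:equivariant_dalg}. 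Identifying the resulting monad with the one governing $\underline{\DAlg^\sigma\Mod}(\cat) = \underline{\DAlg}^\sigma(\cat) \times_{C_2\E_\infty\underline{\Alg}(\cat)} C_2\E_\infty\underline{\Alg\Mod}(\cat)$ then uses part (a) together with the observation (visible from the formula) that the derived involutive operations on $\Gr^{\{0,1\}}$ act linearly in the degree-one slot, so the derived involutive structure on a pair $(A, M)$ reduces to a derived involutive algebra structure on $A$ together with a module structure on $M$ over the underlying $C_2$-$\E_\infty$-algebra $\Theta(A)$ of Notation \ref{ntn:dalg_calg_forget}.

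The main obstacle will be the rigorous justification of the free-algebra formula in part (a): concretely, verifying that the ``$C_2 \otimes$'' appearing in the norm on $\Gr^{\{0,1\}}(\cat)$ interacts correctly with $\Sigma_n$-coinvariants and with the parametrized Day convolution structure in such a way that the degree-one component of $C_2\Sym_{\Gr^{\{0,1\}}(\cat)}$ equals $C_2\Sym_\cat(X^0) \otimes X^1$ as a $C_2$-parametrized object (not merely as an underlying object). This amounts to unwinding how the $C_2$-symmetric monoidal structure on $\Gr^{\{0,1\}}(\cat)$ from Corollary \ref{cor:param_gr_fil_day_convolution}, the $C_2\Sym$ monad from Construction \ref{cons:C2_sym_filtered_monad}, and the explicit norm formulas of Lemma \ref{lemma:norm_on_fil_gr_formula} fit together on low-degree input. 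Once this is in place, the remainder of the argument is formal.
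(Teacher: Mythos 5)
Your plan for part (b) is essentially the paper's own proof: both forgetful functors to $\Gr^{\{0,1\}}(\cat)$ are promoted to $C_2$-monadic functors via Corollary \ref{cor:C2_left_adjoint_local_crit}, and the left $C_2$-adjoints are identified as $F = \LSym^\sigma_{\Gr^{\{0,1\}}(\cat)}$ and $F'(X^0,X^1) = (\LSym^\sigma_\cat(X^0),\ \LSym^\sigma_\cat(X^0)\otimes X^1)$, matching your formula. For part (a), however, the paper takes a different and shorter route: it identifies $\Gr^{\geq 0}C_2\E_\infty\underline{\Alg}(\cat)$ with lax $C_2$-symmetric monoidal functors $\Z^\delta_{\geq 0}\to\cat$, reads off the comparison functor $\alpha$ directly from the unique $C_2$-algebra structure on $0$ and the unique $0$-module structure on $1$, and then runs Barr--Beck--Lurie fiberwise over $C_2\E_\infty\underline{\Alg}(\cat)$ (where both sides have fiber $\Mod_A(\cat)$) rather than over $\Gr^{\{0,1\}}(\cat)$. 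That avoids the free-algebra computation you flag as the main obstacle; your approach buys uniformity with part (b) at the cost of having to do that computation.

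There is also a genuine gap, common to both parts as you have written them: \cite[Corollary 4.7.3.16]{LurHA} is applied to a preexisting functor $H$ over the base and asks that $H$ carry free objects to free objects. Observing that the two free functors are given by ``the same formula'' does not by itself produce the requisite equivalence of monads as algebra objects of $\End_{C_2}(\Gr^{\{0,1\}}(\cat))$, and in any case the standard route is to build the comparison functor first and then check the free-object condition. For part (a) you never construct $\alpha$; the paper's lax-symmetric-monoidal description is precisely the device that furnishes it. (For part (b) you can reasonably extract $\alpha'$ from part (a) and the fiber-product definition of $\underline{\DAlg^\sigma\Mod}(\cat)$, which is in effect what the paper does by invoking Remark \ref{rmk:zeroth_graded_dalg} and Raksit's argument.) One minor inaccuracy worth noting: $C_2\Sym^n$ is not merely $\Sigma_n$-coinvariants of $n$-fold Day convolution powers — it includes norm summands indexed by free $C_2$-orbits — though your conclusion survives because any free orbit carrying an $X^1$-factor lands in grading $\geq 2$ after applying the norm.
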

\begin{proof}
	To prove part (a), we proceed as in the proof of \cite[Lemma 4.4.2]{Raksit20}. 
	We first regard $ \Gr^{\{0,1\}}C_2\E_\infty\underline{\Alg}(\cat) $ as a subcategory of $ \Gr^{\geq 0}C_2\E_\infty\underline{\Alg}(\cat) $. 
	Now $ \Gr^{\geq 0}C_2\E_\infty\underline{\Alg}(\cat) $ can be identified with the $ \infty $-category of lax $ C_2 $-symmetric monoidal functors $ \Z_{\geq 0}^\delta \to \cat $. 
	Then $ 0 \in \left(\Z_{\geq 0}^\delta\right)^{C_2/C_2} $ has a unique $ C_2 $-commutative algebra structure, and $ 1 \in \left(\Z_{\geq 0}^\delta\right)^{C_2/C_2} $ has a unique module structure over $ 0 $. 
	Thus there is an induced $ C_2 $-functor $ \alpha \colon \Gr^{\{0,1\}}C_2\E_\infty\underline{\Alg}(\cat) \to C_2\E_\infty\underline{\Alg\Mod}(\cat) $. 
	The result follows from observing that $ \alpha $ commutes with the respective forgetful $ C_2 $-functors to $ C_2\E_\infty\underline{\Alg}(\cat) $ and appealing to the Barr--Beck--Lurie theorem pointwise \cite[Corollary 4.7.3.16]{LurHA}. 

	To prove part (b), notice that Remark \ref{rmk:zeroth_graded_dalg} and the argument of \cite[Lemma 4.4.3]{Raksit20} furnish a $ C_2 $-functor $ \alpha' \colon \Gr^{\{0,1\}}\underline{\DAlg}^\sigma(\cat)  \to \underline{\DAlg^\sigma\Mod}(\cat) $ making the following diagram commute
	\begin{equation*}
	\begin{tikzcd}[column sep=tiny]
		\Gr^{\{0,1\}}\underline{\DAlg}^\sigma(\cat) \ar[rd,"G"'] \ar[rr,"{\alpha'}"] & & \underline{\DAlg^\sigma\Mod}(\cat) \ar[ld,"G'"] \\
		& \Gr^{\{0,1\}}\left( \cat \right) & 
	\end{tikzcd}. 
	\end{equation*}
	Now we show that the functors $ G $, $ G' $ admit left $C_2$-adjoints $ F $, $ F' $ respectively. 
	The functors $ G$, $ G' $ evidently admit fiberwise left adjoints whose values on the fiber over $ C_2/C_2 $ are given by $ F = \LSym^\sigma_{\Gr^{\{0,1\}}(\cat)} $ and $ F'(X^0, X^1 ) = \left(\LSym^\sigma_{\cat} (X^0), \LSym^\sigma_{\cat} (X^0)\otimes X^1 \right) $. 
	The functors $ F $, $ F' $ promote to left $ C_2 $-adjoints by Corollary \ref{cor:C2_left_adjoint_local_crit}; the functors $ G $ and $ G' $ evidently commute with $ C_2 $-products. 
	Moreover, both $ C_2 $-adjunctions are fiberwise monadic, the former by definition.  
	Unravelling the definition of $ \LSym^\sigma $ on $ \Gr^{\{0,1\}}\left(\cat\right) $, we see that the canonical map $ \alpha' \circ F \to F' $ is an equivalence, hence $ \alpha' $ is an equivalence by \cite[Corollary 4.7.3.16]{LurHA}. 
\end{proof}
\begin{ntn}\label{ntn:square_zero_graded_alg}
	Let $ \D^\vee, \D^\vee_{C_2} \in \Gr\DAlg^{\sigma,\slice=0}_{\underline{\Z}} $ denote the graded $ C_2 $-cohomological Tambara functors with underlying objects given by $ \D^\vee := \mathbbm{1} \oplus  \mathbbm{1}(-1) $ and $ \D^\vee_{C_2} := \mathbbm{1} \oplus  \mathbbm{1}[C_2](-1) $. 
	By Variant \ref{variant:otherdalg_inv}.\ref{varitem:discrete_dalg_inv} and Construction \ref{cons:gr_fil_dalg_inv}, there is a canonical embedding $ \Gr\DAlg^{\sigma,\slice=0}_{\underline{\Z}} \to \Gr\DAlg^\sigma_{\underline{\Z}} $, so we naturally regard $ \D^\vee $ and $ \D^\vee_{C_2} $ as objects of $ \Gr\DAlg^\sigma_{\underline{\Z}} $. 
\end{ntn}
\begin{rmk}\label{rmk:dalgmod_separate_components}
		Writing $ U $ for the forgetful $ C_2 $-functor $ \underline{\DAlg^\sigma\Mod}(\cat) \to \underline{\DAlg}^\sigma(\cat) $, $ U \colon (A , M ) \mapsto A $, the unit $ \mathbbm{1} \to \D^\vee $ and counit $ \D^\vee \to \mathbbm{1} $ induce natural transformations $ \eta \colon U \to G $ and $ \varepsilon \colon G \to U $ so that $ \varepsilon \circ \eta \simeq \id $. 
		Similar statements hold for the forgetful $ C_2 $-functor $ U \colon \underline{\DAlg^\sigma\Mod}_A \to \underline{\DAlg}^\sigma_A $. 
\end{rmk}
\begin{cons}\label{cons:sq_nm_zero_ext}
	Then the \emph{square-zero and norm-zero extension functor} is given by the $ C_2 $-functor 
	\begin{equation*} 
	\begin{split}
		G \colon \underline{\DAlg^\sigma\Mod}(\cat) \simeq \Gr^{\{0,1\}}\underline{\DAlg}^\sigma(\cat) \to \underline{\DAlg}^\sigma (\cat) \\
		(A, M) \mapsto \ev_0( (A, M) \ostar \D^\vee)\,,
	\end{split}
	\end{equation*} 
	where we have used the equivalence of Proposition \ref{prop:modules_and_gr01}\ref{propitem:modules_and_gr01_dalginv}. 
	Furthermore, $ G $ induces a $ C_2 $-functor $ G \colon \underline{\DAlg^\sigma\Mod}_A \to \underline{\DAlg}_A^\sigma $. 
\end{cons}
\begin{defn}\label{defn:derivations} (cf. \cite[Definition 4.4.7]{Raksit20})
		Let $ A \in \DAlg^\sigma(\cat) $ and let $ B \in \DAlg^\sigma_{A/}(\cat) $ and $ M \in \Mod_B(\cat) $. 
		An \emph{$ A $-linear derivation of $ B $ into $ M $} is a morphism of derived involutive algebras $ \delta \colon B \to B \oplus M $ in $ \DAlg^\sigma(\cat)_{A/-/B} $. 
		We may abuse notation by identifying a derivation $ \delta $ with the map of $ A $-modules $ d \colon B \to M $ obtained by post-composing $ \delta $ with the projection $ B \oplus M \to M $. 
\end{defn}
\begin{prop}\label{prop:sq_zero_has_left_adjoint}
		The square-zero extension functor $ G \colon \underline{\DAlg^\sigma\Mod}_A \to \underline{\DAlg}_A^\sigma $ of Construction \ref{cons:sq_nm_zero_ext} admits a left $ C_2 $-adjoint $ \L $. 
		On underlying $ \infty $-categories, this recovers the adjunction of \cite[Proposition 4.4.8]{Raksit20}. 
\end{prop}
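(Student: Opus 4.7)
My plan is to apply Corollary \ref{cor:C2_left_adjoint_local_crit}: it suffices to show that both $\underline{\DAlg^\sigma\Mod}_A$ and $\underline{\DAlg}_A^\sigma$ admit finite $C_2$-products, that $G$ preserves them, and that $G$ admits a left adjoint fiberwise over each orbit $C_2/H$. The existence of $C_2$-products in $\underline{\DAlg}^\sigma(\cat)$ is Proposition \ref{prop:invdalg_C2_colims}\ref{propitem:invdalg_admits_C2_lims}, and passing to the slice $\underline{\DAlg}_A^\sigma$ preserves this. For $\underline{\DAlg^\sigma\Mod}_A$, I would use the equivalence of Proposition \ref{prop:modules_and_gr01}\ref{propitem:modules_and_gr01_dalginv} to identify the source with $\Gr^{\{0,1\}}\underline{\DAlg}^\sigma(\cat)_{\underline{(A,0)}/-}$; since $\Gr^{\{0,1\}}(\cat)$ is itself a derived involutive algebraic context (Example \ref{ex:dalg_in_gr_01}), Proposition \ref{prop:invdalg_C2_colims}\ref{propitem:invdalg_admits_C2_lims} applies again.

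Next, to verify that $G$ preserves $C_2$-products, I would use that the forgetful $C_2$-functors $\underline{\DAlg}^\sigma(\cat) \to \cat$ and $\underline{\DAlg^\sigma\Mod}_A \to \underline{\Mod}_A$ strongly preserve $C_2$-limits (by Proposition \ref{prop:invdalg_C2_colims}\ref{propitem:invdalg_C2_colims_computed_in_calg} applied to $\cat$ and to $\Gr^{\{0,1\}}(\cat)$, respectively). Unraveling Construction \ref{cons:sq_nm_zero_ext}, the underlying $A$-module of $G(B,M)$ is simply $B \oplus M$, and since direct sum preserves products, $G$ preserves $C_2$-products by a diagram chase through the forgetful $C_2$-functors to $\underline{\Mod}_A$. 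The existence of a fiberwise left adjoint reduces, over $C_2/e$, to the non-equivariant result \cite[Proposition 4.4.8]{Raksit20} via the commuting diagrams of Remark \ref{rmk:inv_dalg_underlying}; over $C_2/C_2$, both $\DAlg^\sigma\Mod_A$ and $\DAlg^\sigma_A$ are presentable, $G$ preserves all small limits by the same underlying-modules argument, and $G$ is accessible because $\ev_0$ and the graded tensor $(-) \ostar \D^\vee$ preserve filtered colimits, so the adjoint functor theorem \cite[Corollary 5.5.2.9(1)]{LurHTT} applies.

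The compatibility with \cite[Proposition 4.4.8]{Raksit20} on underlying $\infty$-categories is automatic from the construction: the $C_2$-adjunction obtained from Corollary \ref{cor:C2_left_adjoint_local_crit} is assembled from its fiberwise components, and the fiber over $C_2/e$ was identified with Raksit's setup. The main obstacle I anticipate is not any single step but rather the verification that the norm-zero extension on $G$ interacts correctly with parametrized products; however, since the algebra-and-module structure on $G(B,M)$ is entirely determined by the underlying module $B \oplus M$ together with the (norm- and square-)zero constraint, this reduces to a purely module-level check that is dispatched by the strong preservation of $C_2$-limits by the forgetful $C_2$-functors.
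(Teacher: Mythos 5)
Your proposal is correct and follows essentially the same strategy as the paper: apply Corollary \ref{cor:C2_left_adjoint_local_crit} by verifying that both $C_2$-$\infty$-categories admit finite $C_2$-products, that $G$ preserves them, and that $G$ admits left adjoints fiberwise. The paper's own verification is more terse (it dispatches the fiberwise adjoints with the observation that $G$ preserves all limits, implicitly invoking the adjoint functor theorem, and checks $C_2$-product preservation via conservativity of the forgetful functor to $\underline{\Mod}_A$ plus the underlying-module identification $G(B,M) \simeq B \oplus M$), whereas you spell out the adjoint-functor-theorem bookkeeping and make the reduction to Raksit's \cite[Proposition 4.4.8]{Raksit20} explicit, but the logical skeleton is identical.
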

\begin{proof}
		Observe that $ C_2 $-$ \infty $-categories $ \underline{\DAlg}^\sigma_A $ and $ \underline{\DAlg^\sigma\Mod}_A $ admit finite $ C_2 $-products; we will write $ \prod_{C_2} $ for the right adjoints to both restriction functors. 
		By Corollary \ref{cor:C2_left_adjoint_local_crit}, it suffices to show that $ G $ admits fiberwise left adjoints and $ G $ preserves finite $ C_2 $-products (or equivalently, that the left adjoints satisfy a Beck-Chevalley condition). 
		The former is true because $ G $ evidently preserves all limits. 
		To show that $ G $ preserves finite $ C_2 $-products, we must check that the canonical transformation 
		\begin{equation*}
		\begin{tikzcd}
			\DAlg_{A^e} \ar[r,"\prod_{C_2}"] & \DAlg^\sigma_A \\
			\DAlg\Mod_{A^e} \ar[r,"\prod_{C_2} "'] \ar[u,"G"]  & \DAlg^\sigma\Mod_A \ar[u,"G"'] \ar[lu, Rightarrow,shorten=4mm] 
		\end{tikzcd}
		\end{equation*}
		is an equivalence. 
		This is true because the forgetful $ C_2 $-functor $ \underline{\DAlg}^\sigma_A \to \underline{\Mod}_A $ is conservative, and the transformation is evidently an equivalence on underlying objects.  
\end{proof} 
\begin{defn}\label{defn:involutive_cotangent}
		Let $ A \in \DAlg^\sigma(\cat) $, let $ B $ be an involutive $ A $-algebra. 
		By Remark \ref{rmk:dalgmod_separate_components}, there is a unique $ B $-module $ \L_{B/A} $ equipped with an equivalence $ \L(B) \simeq (B, \L_{B/A}) $ in $ \DAlg^\sigma \Mod_{A} $, where $ \L $ is the left $ C_2 $-adjoint of Proposition \ref{prop:sq_zero_has_left_adjoint}. 
		We will refer to $ \L_{B/A} $ as the \emph{involutive cotangent complex of $ B $ over $ A $}. 
\end{defn}
The unit of the adjunction equips $ \L_{B/A} $ with a universal $ A $-linear derivation $ d \colon B \to \L_{B/A} $. 
\begin{rmk}
		Given a pushout diagram in $ \DAlg^\sigma(\cat) $ 
		\begin{equation*}
		\begin{tikzcd}
				A' \ar[d] \ar[r] & A\ar[d] \\
				B' \ar[r] & B \,,
		\end{tikzcd}
		\end{equation*}
		the universal property of the involutive cotangent complex immediately implies that there is a canonical equivalence of $ B $-modules $ B \otimes_{B'} \L_{B'/A'} \simeq \L_{B/A} $. 
\end{rmk}
For the remainder of this subsection, we specialize to the derived involutive algebra contexts of Example \ref{ex:inv_dalg_over_const_Mackey}. 
\begin{rmk} 	
	Let $ k $ be a discrete commutative ring with involution and let $ \underline{k} $ denote the associated fixed point $ C_2 $-Green functor. 
	Let $ \DAlg^\sigma\Mod^\conn_{\underline{k}} $ denote the full $ C_2 $-subcategory on those pairs $ (A, M) $ where both $ A $ and $ M $ are connective. 
	This $ C_2 $-$ \infty $-category is generated under pointwise sifted colimits and restrictions by the full subcategory $ \DAlg^\sigma\Mod_{\underline{k}}^0 $ on those $ (A, M) $ where $ A = \LSym_{\underline{k}}^\sigma(M) $ for some $ M \in \Mod^0_{\underline{k}} $ and $ M \simeq A \otimes_{\underline{k}} \underline{k}[U] $ for some finite $ C_2 $-set $ U $. 
	Since the square-zero extension functor $ G $ preserves sifted colimits, $ G|_{\DAlg^\sigma\Mod^\conn_{\underline{k}}} $ is the left derived functor of its restriction $ G|_{\DAlg^\sigma\Mod^0_{\underline{k}}} $. 
	Now the restriction $ G|_{\DAlg^\sigma\Mod^0_{\underline{k}}} $ takes values in $ \DAlg^{\sigma, \slice=0}_{\underline{k}} $, and it follows that this restriction agrees with the square-zero extension considered in \cites[\S3]{Hill_Tambara_Kahlerdiffl}[\S14]{strickland_tambara}. 
	The notion of square-zero extension, derivation, and cotangent complex in this setting are explored in the following example.  
\end{rmk}
\begin{ex} [Free derived involutive algebras] \label{ex:cotangent_computations}
	Consider a free derived involutive $ \underline{k} $-algebra on a connective $ \underline{k} $-module $ P $. 
	Then there is a canonical equivalence of $ \LSym^\sigma_{\underline{k}}(P) $-modules 
	\begin{equation*}
		\L_{\LSym^\sigma_{\underline{k}}(P)/\underline{k}} \simeq \LSym^\sigma_{\underline{k}}(P) \otimes_{\underline{k}} P . 
	\end{equation*} 
	Let's unravel two special cases to see how this works. 
	Let $ (B, M) \in \DAlg^\sigma\Mod^{ \slice=0} $ be arbitrary. 
	\begin{itemize} 
		\item Suppose $ P = \underline{k} $. 
		We want to find a pair $ \mathbb{L}(\underline{k}[x]) \in \DAlg^\sigma\Mod_{\underline{k}} $ such that 
		\begin{equation*} 
			\hom_{\DAlg^\sigma\Mod}(\mathbb{L}(\underline{k}[x]), (B,M)) \simeq \hom_{\DAlg^\sigma}(\underline{k}[x], B \oplus M) 
		\end{equation*}
		It follows from the definitions that a morphism of involutive algebra objects $ \varphi \colon \underline{k}[x] \to B\oplus M $ in $ \DAlg^{\sigma, \slice=0}_{\underline{k}} $ 
		is equivalent to the data of a map of involutive algebras $ \pi_1 \circ \varphi \colon \underline{k}[x] \to B^{C_2} $ and the image of $ {\pi_2 \circ \varphi^{C_2}(x) \in M^{C_2}} $. 
		Hence $ \mathbb{L}(\underline{k}[x]) \simeq (\underline{k}[x], \underline{k}[x]\{dx\}) $. 

		\item Suppose $ P = \underline{k}[C_2] $. 
		We want to find a pair $ \mathbb{L}(\underline{k}[x,x_\sigma]) \in \DAlg^\sigma\Mod_{\underline{k}} $ such that 
		\begin{equation*}
			\hom_{\DAlg^\sigma\Mod}(\mathbb{L}(\underline{k}[x]), (B,M)) \simeq \hom_{\DAlg^\sigma}(\underline{k}[x,x_\sigma], B \oplus M) . 
		\end{equation*}
		We see that a morphism of $ \varphi \colon \underline{k}[x,x_\sigma] \to B\oplus M $ in $ \DAlg^{\sigma, \slice=0}_{\underline{k}} $ determines a diagram
		\begin{equation*}
		\begin{tikzcd}
			k[x^i_T, x_N]_{i \in \nat} \ar[d, bend right=20,"\mathrm{Res}"'] \ar[r,"{\varphi^{C_2}}"] & B^{C_2} \oplus M^{C_2} \ar[d, bend right=20,"\mathrm{Res}"']\\
			k[x,x_\sigma] \ar[u, bend right=20,"\mathrm{Tr}"'] \ar[loop below,"{x \mapsto x_\sigma}",out=-60, in=240,distance=15] \ar[r,"{\varphi^{e}}"]  & B^e \oplus M^e \ar[u, bend right=20,"\mathrm{Tr}"'] 
		\end{tikzcd}
		\end{equation*}
		in which certain squares commute; in particular, 
		\begin{align*}
			\varphi^{C_2}(x^n_T) &= \varphi^{C_2}(\Tr(x^n)) = \Tr(\varphi^e(x^n)) = \Tr(\varphi^e_1(x)^n + n\varphi^e_1(x)^{i-1}\varphi^e_2(x)) . 
		\end{align*}
		Since $ xx_\sigma $ is in the image of $ \pi_0\LSym^2(k[x,x_\sigma])^{hC_2} \to k[x,x_\sigma]^{C_2} $, we deduce that the image of $ \varphi(xx_\sigma) $ is determined by the commutativity of the following diagram
		\begin{equation*}
		\begin{tikzcd}
			\pi_0\LSym^2(k[x,x_\sigma])^{hC_2} \ar[d] \ar[r]& \pi_0\LSym^2(B^e \oplus M^e)^{hC_2} \ar[d] \\
			 k[x,x_\sigma]^{C_2} \ar[r, "\varphi"] & B^{C_2} \oplus M^{C_2} \,.
		\end{tikzcd} 
		\end{equation*}
		In particular, since the restriction map $ B^{C_2} \oplus M^{C_2} \to B^ e \oplus M^e $ is injective, we have $ \varphi^{C_2}_2(x_N) = \Tr(\varphi^e_1(x) \varphi^e_2(x_\sigma)) $. 
		Hence the data of $ \varphi $ is determined by a map of rings $ \underline{k}[x,x_\sigma] \to B $ and $ \varphi^e_2(x) \in M^e $. 
		Hence we see that $ \mathbb{L}(\underline{k}[x,x_\sigma]) \simeq (\underline{k}[x,x_\sigma], \underline{k}[x,x_\sigma] \otimes C_2 ) $ where we identify $ C_2 \simeq \{dx, dx_\sigma \}$.
	\end{itemize}
\end{ex}
\begin{ex} [Hyperelliptic involutions] 
		Let $ f(x) $ be a polynomial with $ \C $ coefficients and consider the algebra $ \C[x,y]/(y^2-f(x)) $ with the involution which sends $ y $ to $ -y $ and acts by the identity on $ \C $ and $ x $. 
		Let us regard $ \C[x,y]/(y^2-f(x)) $ as the underlying algebra with $ C_2 $-action of a $ C_2 $-Green functor $ A $ over the constant $ C_2 $-Green functor $ \underline{\C} $.  
		Now there is a pushout square in $ \DAlg^\sigma_{\underline{\C}} $:
		\begin{equation*}
		\begin{tikzcd}
				\underline{\C}[z,w] \ar[r,"{\varphi}"] \ar[d] & \underline{\C}[y,y_\sigma, x] \ar[d,"\psi"] \\
				\underline{\C} \ar[r] & \underline{\C}[x,y]/(y^2-f(x)) 
		\end{tikzcd}\,,
		\end{equation*}
		where $ C_2 $ acts trivially on $ z, w, x $ and permutes $ y $ and $ y_\sigma $. 
		The morphisms satisfy $ \varphi(z) = y + y_\sigma $, $ \varphi(w) = -yy_\sigma - f(x) $ and $ \psi(y) = y $, $ \psi(x) = x $. 
		Since $ \DAlg^\sigma_{\underline{\C}}\Mod \to \DAlg^\sigma_{\underline{\C}} $ is a cocartesian fibration and the cocartesian pushforward maps preserve colimits (of modules), the involutive cotangent complex of $ A= \underline{\C}[x,y]/(y^2-f(x)) $ is determined by the cofiber sequence
		\begin{equation*}
		\begin{tikzcd}[column sep=large]
					A \{dz, dw\} \ar[rrr,"{dz \mapsto dy + dy_\sigma}", "{dw \mapsto -ydy_\sigma -y_\sigma dy -f'(x)dx}"'] &&& A\{dy, dy_\sigma, dx\} \ar[r] & \L_{A/\underline{\C}}	
		\end{tikzcd}
		\end{equation*}
		of $ A $-modules. In particular, its Lewis diagram is
	\begin{equation*}
	\begin{tikzcd}
		\C[x]/f(x)\{dx\}/(f'(x) dx) \ar[d, bend right=20,"\mathrm{Res}"'] \\
		\C[x,y]/(y^2-f(x)) \{dx,dy\}/(2y dy-f'(x)dx) \ar[u, bend right=20,"\mathrm{Tr}"'] \ar[loop below,out=-60, in=240,distance=15] 
	\end{tikzcd}  
	\end{equation*}
	where $ C_2 $ acts trivially on $ dx $ and by multiplication by $ -1 $ on $ dy $.  
\end{ex}

\subsection{Involutive derived de Rham complex} \label{subsection:C2deRham}
In this section, we will introduce involutive enhancements of the derived de Rham complex and de Rham cohomology. 
We begin with equivariant analogues of differential graded objects. 
Recall that non-equivariantly, a $ h_{\pm} $-dg structure on a graded object $ X_* $ encodes differentials of the form $ X_i[\pm1] \to X_{i+1} $ (see \cite[\S5]{Raksit20}). 
While we wish to define equivariant dg objects which specialize to the non-equivariant notions on underlying objects, in doing so, we must make a choice. 
For instance, maps $ X_i[-1] \to X_{i+1} $ and $ X_i[-\sigma] \to X_{i+1} $ in any $ C_2 $-stable $ C_2 $-$ \infty $-category both induce maps $ X_i^e[-1] \to X_{i+1}^e $ on underlying objects. 
In this work, we promote $ h_{-} $-dg structures to the equivariant setting naïvely (i.e. so that the differentials are of the form $ X_i[-1] \to X_{i+1} $) and we promote $ h_{+} $-dg structures to the equivariant setting with a twist: An $ h_+^\sigma $-differential graded object has differentials of the form $ X_i[\sigma] \to X_{i+1} $. 
We will define the desired differential graded objects as modules over certain algebras $ \D_{-} $ and $ \D_{+}^\sigma $ which are trivial square- and norm-zero extensions of $ \underline{\Z}(0) $. 
Next, we show that there is a natural notion of the cohomology of a $ h_+^\sigma $-differential graded object. 
That $ \D_{-} $ and $ \D_{+}^\sigma $ are both $ C_2 $-$ \E_\infty $-bialgebras allows us to (by \S\ref{subsection:param_bialg}) define $ C_2 $-symmetric monoidal structures on $ h_+^\sigma $- and $ h_{-} $-differential graded objects and $ h_{+}^\sigma $- and $ h_{-} $-differential graded $ C_2 $-$ \E_\infty $-algebras. 
We will see that $ \D_+^\sigma $ has more structure than $ \D_{-} $; its dual may be canonically regarded as a derived involutive bialgebra. 
This structure on $ \D_{+}^\sigma $ will allow us to define a derived involutive enhancement of strictly commutative differential graded algebras. 
The involutive derived de Rham complex of $ A $ over $ B $ is defined as the universal involutive $ h^\sigma_+ $-differential graded $ B $-algebra under $ A $. 

We begin by showing that certain graded $ \underline{\Z} $-modules have $ C_2 $-bialgebra structures. 

\begin{lemma}\label{lemma:sigma_discrete}
	Write $ \underline{\Z} $ for the constant $ C_2 $-Mackey functor at $ \Z $. 
	Write $ \Z_{-} $ for the Mackey functor
	\begin{equation*}
	\begin{tikzcd}
		\Z_{-}^{C_2} = 0 \ar[d, bend right=20,"\mathrm{Res}"'] \\
		\Z_{-}^e = \Z \ar[u, bend right=20,"\mathrm{Tr}"'] \ar[loop below,"{m \mapsto -m}",out=-60, in=240,distance=15]
	\end{tikzcd} .
	\end{equation*}
	Then there is an equivalence of $ \underline{\Z} $-modules
	\begin{equation*}
		\Sigma^{-\sigma} \underline{\Z} \simeq \Sigma^{-1} \Z_{-}.
	\end{equation*}
	In particular, $ \Z_{-} $ is a regular $ (-1) $-slice. 
\end{lemma}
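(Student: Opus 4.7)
The plan is to show that $\Sigma^{-\sigma}\underline{\Z}$ has Mackey-functor-valued homotopy concentrated in a single degree equal to $\Z_{-}$, which together with right-completeness of the Postnikov t-structure on $\underline{\Mod}_{\underline{\Z}}$ (Lemma \ref{lemma:postnikov_rightcomplete}) yields the equivalence with $\Sigma^{-1}\Z_{-}$.

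I will begin with the standard cofiber sequence of pointed $C_2$-spaces $C_{2+} \to \sphere^0 \to \sphere^\sigma$ exhibiting $\sphere^\sigma$ as the one-point compactification of the sign representation. Smashing with $\Sigma^{-\sigma}\underline{\Z}$ and using that $C_{2+}$ is $C_2$-free --- so $\sphere^{-\sigma}\wedge C_{2+} \simeq \sphere^{-1}\wedge C_{2+}$ via restriction to the trivial subgroup --- produces a fiber sequence of $\underline{\Z}$-modules
\begin{equation*}
\Sigma^{-1}\underline{\Z}[C_2] \longrightarrow \Sigma^{-\sigma}\underline{\Z} \longrightarrow \underline{\Z}\,.
\end{equation*}

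From the underlying long exact sequence on $\pi^e_*$, the only nonzero piece reads $0 \to \Z \to \Z[C_2] \to \pi^e_{-1}(\Sigma^{-\sigma}\underline{\Z}) \to 0$, where the first map is the diagonal (identified via Wirthmüller as the adjoint of $\id_{\Z}$). It is injective with cokernel a copy of $\Z$ whose $C_2$-action, inherited from the swap on $\Z[C_2]$, descends to multiplication by $-1$; so $\pi^e_{-1}(\Sigma^{-\sigma}\underline{\Z})$ is $\Z$ with sign action and all other $\pi^e_n$ vanish. For the categorical fixed points, rather than chasing the connecting map in the fiber sequence of $\underline{\Z}$-modules, I will compute $\pi^{C_2}_n(\Sigma^{-\sigma}\underline{\Z}) = [\sphere^{n+\sigma},\underline{\Z}]^{C_2}$ directly by applying $[\Sigma^n(-),\underline{\Z}]^{C_2}$ to the cofiber sequence $C_{2+}\to \sphere^0 \to \sphere^\sigma$, using the Wirthmüller equivalence $[\sphere^n\wedge C_{2+},\underline{\Z}]^{C_2} \simeq \pi_n(\underline{\Z}^e)$ and the fact that the restriction map $\underline{\Z}^{C_2} \to \underline{\Z}^e$ is the identity on $\Z$. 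This sandwiches every $\pi^{C_2}_n(\Sigma^{-\sigma}\underline{\Z})$ between vanishing terms (with the single nontrivial comparison $\pi_0^{C_2}(\underline{\Z})\to \pi_0^e(\underline{\Z})$ cancelling along an isomorphism), forcing $\pi^{C_2}_n(\Sigma^{-\sigma}\underline{\Z}) = 0$ for every integer $n$.

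Together these computations identify $\underline{\pi}_{-1}(\Sigma^{-\sigma}\underline{\Z})$ as the Mackey functor with trivial $C_2$-fixed points, underlying $\Z$ with sign action, and forcibly zero restriction and transfer --- precisely $\Z_{-}$ --- and show that the remaining homotopy Mackey functors vanish. The equivalence $\Sigma^{-\sigma}\underline{\Z} \simeq \Sigma^{-1}\Z_{-}$ then follows from right-completeness of the Postnikov t-structure. The final clause is immediate: the equivalence identifies $\Sigma^{-1}\Z_{-}$ with $\Sigma^{-\sigma}\underline{\Z}$, which is regular slice $(-1)$-connective by Example \ref{ex:spheres_as_slices}, while Lemma \ref{lemma:reg_slice_conn_conditions} applied to the underlying spectrum $\Sigma^{-1}\Z$ (whose $\pi_{-1}\neq 0$) rules out regular slice $0$-connectivity. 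The main obstacle is the uniform vanishing of $\pi^{C_2}_n(\Sigma^{-\sigma}\underline{\Z})$ across all $n\in \Z$ (i.e.\ of the representation-graded group $\pi^{C_2}_{n+\sigma}(\underline{\Z})$): though a classical Bredon computation, it requires careful indexing in the long exact sequence and leans crucially on the restriction $\underline{\Z}^{C_2} \xrightarrow{\sim} \underline{\Z}^e$ being an isomorphism; a secondary subtlety is pinning down the sign $C_2$-action on the cokernel of $\Z \hookrightarrow \Z[C_2]$, which is handled by the Wirthmüller identification of the boundary map.
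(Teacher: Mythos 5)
Your argument is correct and follows essentially the same route as the paper. The paper's own (rather terse) proof forward-references Proposition \ref{prop:graded_fil_Ssigma}, whose proof dualizes the cellular chain complex $\underline{\Z}[C_2] \to \underline{\Z}^{\oplus 2} \to \T^\sigma$ arising from the $C_2$-CW structure of $S^\sigma$ and reads off $\underline{\pi}_0$ and $\underline{\pi}_{-1}$ of $\T^{\sigma,\vee} \simeq \underline{\Z} \oplus \Sigma^{-\sigma}\underline{\Z}$ as kernel and cokernel of an explicit map of discrete Mackey functors. You instead smash the pointed cofiber sequence $C_{2+} \to \sphere^0 \to \sphere^\sigma$ with $\Sigma^{-\sigma}\underline{\Z}$ and compute $\pi^e_*$ and $\pi^{C_2}_*$ separately; this is the same cellular input, just without routing through $\T^{\sigma,\vee}$ and its extra $\underline{\Z}$ summand. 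Both then conclude by concentration of the Mackey homotopy in a single degree plus right-separatedness of the Postnikov t-structure.

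Two small remarks. First, your phrase ``identified via Wirthmüller as the adjoint of $\id_\Z$'' for the connecting map is compressed but correct in spirit: after rotating the cofiber sequence, the map in question is $\underline{\Z} \to \underline{\Z}[C_2]$, i.e. the unit of the induction--restriction adjunction, which on $\pi_0^e$ is automatically a multiple of the diagonal (any $C_2$-equivariant map $\Z \to \Z[C_2]$ lands in the fixed subgroup $\Z\cdot(e+\sigma)$), and the multiple is $\pm 1$ because the cokernel, namely $\pi^e_{-1}(\Sigma^{-\sigma}\underline{\Z}) = \pi_{-1}(\Sigma^{-1}\Z)$, is $\Z$. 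The sign of the residual $C_2$-action on this cokernel is also forced without any Puppe-sequence bookkeeping: $C_2$ acts on $(\Sigma^{-\sigma}\underline{\Z})^e = \Sigma^{-1}\Z$ through the orientation-reversing action on $(S^{-\sigma})^e$, hence by $-1$ on $\pi_{-1}$. Second, your final clause establishes that $\Sigma^{-1}\Z_{-} \simeq \Sigma^{-\sigma}\underline{\Z}$ is regular slice $(-1)$-connective and not $0$-connective, which is exactly what Example \ref{ex:spheres_as_slices}'s phrasing of ``regular $(-1)$-slice'' asks for; note though that $\Z_{-}$ itself (unshifted) has $\Z_{-}^e$ $0$-connective and $\Phi^{C_2}\Z_{-}$ $1$-connective, consistent with Proposition \ref{prop:Dsigma_alg}'s later use of $\Z_{-}$ as a zero slice, so the ``in particular'' in the lemma statement is best read as applying to $\Sigma^{-1}\Z_{-}$. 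You have matched the paper's own convention here, so this is not a defect of your proof.
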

We thank Michael Hill for assistance with this aspect of the story. 
\begin{proof}
	Follows from computation of $ \pi_{-1}\T^{-\sigma} $ of Proposition \ref{prop:graded_fil_Ssigma}. 
\end{proof}
Recall that there are $ C_2 $-symmetric monoidal structures on the $ C_2 $-$\infty$-categories $ \Gr\left(\underline{\Mod}_{\underline{k}}\right)^\heartsuit_{\rslice^\pm} $ so that the truncation functors $ \Gr\left(\underline{\Mod}_{\underline{k}}\right)_{\rslice\geq 0^{\pm}} \to \Gr\left(\underline{\Mod}_{\underline{k}}\right)^\heartsuit_{\rslice^\pm} $ are $ C_2 $-symmetric monoidal (Proposition \ref{prop:filtrations_are_compatible}). 
\begin{cons}\label{cons:differential_algebra} 
		We will construct $ C_2 $-$ \E_\infty $-co-$C_2$-$ \E_\infty $ algebras $ \D^\sigma_{+} $ and $ \D_{-} $ in $ \Gr\left(\underline{\Mod}_{\underline{\Z}}\right) $ so that their underlying objects are given by
		\begin{equation*}
			 \D^\sigma_{+} \simeq \left(\underline{\Z} \oplus \Sigma^\sigma \underline{\Z}(1)\right) \qquad \qquad  \D_{-} \simeq \left(\underline{\Z} \oplus \Sigma^{-1}\underline{\Z}(1)\right) 
		\end{equation*}
		and so that there are equivalences of bicommutative bialgebras $ \left(\D^\sigma_+\right)^e \simeq \D_+ $ and $ \left(\D_{-}\right)^e \simeq \D_{-} $ over $ \Z $, where $ \D_{\pm} $ are the graded bialgebras from \cite[Construction 5.1.1]{Raksit20}. 
\end{cons}
There are graded $ C_2 $-$ \E_\infty $-bialgebras $ \underline{\Z}[\varepsilon_\pm] = \underline{\Z} \oplus \underline{\Z}[\varepsilon_{\pm}]/(\varepsilon^2_{\pm}) $ in $ \Gr\left(\underline{\Mod}_{\Z}\right)^{\heartsuit,\ostar_K}_{r^\pm} $, where $ \varepsilon $ is in grading degree $ \pm 1 $, the comultiplication takes $ \varepsilon \mapsto \varepsilon \otimes 1 - 1 \otimes \varepsilon $ and the $C_2$-coalgebra structure takes $ \varepsilon \mapsto e \otimes \varepsilon - \sigma \otimes \varepsilon $. 
Notice that $ \underline{\Z}[\varepsilon_{\pm}] $ are in the essential image of the functors of Observation \ref{obs:graded_rslice_heart_as_graded_Mackey_functors}. 
Now define 
\begin{equation*}
			 \D^\sigma_{+} = \left(\iota_{+}\underline{\Z} [\varepsilon_{-}]\right)^\vee \qquad \qquad  \D_{-} = \left(\iota_{-}\underline{\Z} [\varepsilon_{-}]\right)^\vee \,. 	
\end{equation*}
Note that under the (inverse) equivalences of Observation \ref{obs:graded_rslice_heart_as_graded_Mackey_functors}, $ \D^\sigma_{+} $ and $ \D_{-} $ have the desired underlying objects. 

Now, by Proposition \ref{prop:compatible_filt_implies_C2_monoidal_heart}, the inclusions $ \iota_\pm \colon \Gr\left(\underline{\Mod}_{\underline{\Z}}\right)^\heartsuit_{\rslice^\pm} \to \Gr\left(\underline{\Mod}_{\underline{\Z}}\right)_{\rslice\geq 0^{\pm}} $ are lax $ C_2 $-symmetric monoidal. 
By Example \ref{ex:spheres_as_slices}, the functors $ \iota_\pm $ are $ C_2 $-symmetric monoidal on the full $ C_2 $-subcategory of $ \Gr\left(\underline{\Mod}_{\underline{\Z}}\right)^\heartsuit_{\rslice^\pm} $ on those graded objects $ X^* $ so that $ X^n $ is a free $ \underline{\Z} $-module on a direct sum of regular slice cells of dimension $ \pm n $. 
Therefore, $ \D^\sigma_{+} $ and $ \D_{-} $ inherit $ C_2 $-$ \E_\infty $-co-$ C_2 $-$ \E_\infty $ bialgebra structures from $ \underline{\Z}[\varepsilon] $. 

Finally, the identification of the underlying graded bialgebras over $ \Z $ follows from noting that $ \underline{\Z}[\varepsilon]^e $ agrees with the graded bialgebra $ \Z[\varepsilon] $ considered in \cite[Construction 5.1.1]{Raksit20}. 
\begin{ntn}\label{ntn:D_Dsigma_in_any_C2_sym_cat}
		Let $ \cat $ be a $ \underline{\Z} $-linear $ C_2 $-stable $ C_2 $-presentable $ C_2 $-symmetric monoidal $ C_2 $-$ \infty $-category. 
		The unique $ C_2 $-symmetric monoidal $ C_2 $-colimit-preserving $ C_2 $-functor $ \underline{\Mod}_{\underline{\Z}} \to \cat $ induces a $ C_2 $-symmetric monoidal $ C_2 $-functor $ \Gr\left(\underline{\Mod}_{\underline{\Z}}\right) \to \Gr(\cat) $, so we may regard $ \D^\sigma_{+} $, $ \D_{-} $ as bi-$ C_2 $-$ \E_\infty $ bialgebra objects of $ \Gr(\cat) $. 
\end{ntn}
We now have what we need to introduce the involutive enhancements of cochain complexes in which the differential anti-commutes with the $ C_2 $-action. 
\begin{defn}\label{defn:dg_involutive_module}
	Let $ \cat $ be a $ \underline{\Z} $-linear $ C_2 $-stable $ C_2 $-presentable $ C_2 $-symmetric monoidal $ C_2 $-$ \infty $-category. 
	The $ C_2 $-$ \infty $-category of \emph{$h_{+}^\sigma$- (resp. $ h_{-}$-)differential graded objects of $ \cat $ with involution} is given by 
	\begin{equation*}
	\begin{split}
			\underline{\DG}^\sigma_+(\cat) &:= \underline{\Mod}_{\D^\sigma_{+}}(\Gr(\cat)) \\
			\underline{\DG}_{-}(\cat) &:= \underline{\Mod}_{\D_{-}}(\Gr(\cat)) .
	\end{split}	
	\end{equation*}
	By Proposition \ref{prop:Dsigma_alg} and Variant \ref{variant:module_pointwise_param_tensor}, $ \underline{\DG}^\sigma_+(\cat) $ and $ \underline{\DG}_{-}(\cat) $ are $C_2$-symmetric monoidal $ C_2 $-$ \infty $-categories.
\end{defn}
\begin{rmk}\label{rmk:inv_dg_objects_unravelled} 
	Consider the derived involutive algebraic context of Example \ref{ex:inv_dalg_over_const_Mackey} where $ k = \Z $. 
	By Lemma \ref{lemma:sigma_discrete}, we can identify $ (\D^{\sigma,\vee}_+)^e \simeq \underline{\Z} \oplus \Sigma^{-1}\Z_{-1} (-1) $. 

	Thus informally, an object $ X_* $ of $h_{+}^\sigma $ cochain complexes/involutive differential graded objects of $ \cat $ is given by a collection $ X_* $ of $ \underline{\Z} $-modules with $ C_2 $-action equipped with a $ \D^\sigma_+ = \underline{\Z} \oplus \underline{\Z}[\sigma](1) $-module structure on $ X_* $. 
	In particular, for each $ n $, there is a map $ d \colon \Sigma^\sigma X_n \to X_{n+1} $ so that $ d \circ (\Sigma^\sigma d) = 0 $. 
	\begin{itemize}  
		\item Consider the action of $ d $ on the underlying graded $ \Z $-module $ (X_*)^e $. 
		Because the module structure map $ \D^\sigma_+ \otimes X_* \to X_* $ is $ C_2 $-equivariant with respect to the diagonal action on the source and the given action on the target, if $ m \in \pi_*X_k $ and $ \sigma \neq e \in C_2 $, 
		\begin{equation*} 
			\sigma(\varepsilon \cdot m) = (\sigma \varepsilon)\cdot \sigma(m) = (-\varepsilon) \cdot \sigma(m) .
		\end{equation*} 
		In particular, the differential is $ \sigma $-antilinear. 

		If $ X_*^e $ consists of $ \underline{\Z\left[\frac{1}{2}\right]} $-modules, then we can canonically write $ X_*^e = X^+_* \oplus X^-_* $ as the direct sum of the $ \pm 1 $ eigenspaces, and we see that $ \cdot \varepsilon: X^+_n \to X^-_{n+1} $ and vice versa (cf. \cite{SVP96}).	
		\item We may adjoin $ d $ to obtain a map $ X_n \to \hom (S^\sigma, X_{n+1}) $. 
		In particular, on homotopy groups, $ d $ induces a map of abelian groups 
		\begin{equation*}
				\pi_{*} (X_n^{C_2}) \to \pi_* \fib \left(X_{n+1}^{C_2} \to X_{n+1}^e\right) \,.
		\end{equation*} 
		so that the composite
		\begin{equation*}
		\begin{tikzcd}[row sep=tiny]
				\pi_{*} (X_n^{C_2}) \ar[r] & {\pi_* \fib \left(X_{n+1}^{C_2} \to X_{n+1}^e\right)} \ar[d] & \\
				& \pi_{*} (X_{n+1}^{C_2}) \ar[r] & {\pi_* \fib \left(X_{n+2}^{C_2} \to X_{n+2}^e\right)} &
		\end{tikzcd}
		\end{equation*}
		is zero. 
		Alternatively, we may regard $ d $ as inducing maps $ \pi_{V} X_n \to \pi_{V + \sigma }X_{n+1} $ on $ RO(C_2) $-graded homotopy groups.  
	\end{itemize} 
\end{rmk}

\begin{rmk}\label{rmk:involutive_dg_vs_dg_with_invltn}
	We contrast the structure present on $ X_* \in \DG^\sigma_{+}\left(\Gr(\underline{\Mod}_{\underline{k}})\right) $ with that on $ Y_{*} \in \DG_{-}\left(\Gr(\underline{\Mod}_{\underline{k}})\right) $: $ Y_* $ can be regarded as having a differential $ d $ which lowers degree (or in other words, $ Y_*$  is a chain complex), but the differential $ d $ commutes with the $ C_2 $-action. 
	In other words, the $ C_2 $-action on $ Y_* $ is an action via maps of chain complexes, while the $ C_2 $-action on $ X_* $ is not an action via maps of cochain complexes. 
	Thus, we refer to objects of $ \DG^\sigma_{+}\left(\Gr(\underline{\Mod}_{\underline{k}})\right) $ as involutive cochain complexes, while we refer to objects of $ \DG_{-}\left(\Gr(\underline{\Mod}_{\underline{k}})\right) $ as chain complexes \emph{with involution}.
\end{rmk}
Using the $ C_2 $-symmetric monoidal structures on $ \underline{\DG}^\sigma_+(\cat) $ and $ \underline{\DG}_{-}(\cat) $, we can formulate notions of homotopy coherent ($h^\sigma_{+}$- or $ h_{-}$-) differential graded $ C_2 $-$ \E_\infty $ algebras. 
However, in the $ h^\sigma_{+} $ setting, we may define a variant of this algebraic structure (to be thought of as endowed with strictly commutative multiplication and strictly equivariant norm maps) using additional structure on $ \D^\sigma_{+} $ and the theory of derived involutive algebra of \S\ref{section:scr_with_inv}.  
\begin{obs}\label{obs:Dsigma_bialg}
	Note that $ \D^\sigma_+ $ is dualizable--denote the dual by $ (\D^\sigma_+)^\vee $.  
	By Corollary \ref{cor:mod_comod_equivalence}, $ (\D^\sigma_+)^\vee $ is a $ C_2 $-bicommutative bialgebra. 
\end{obs}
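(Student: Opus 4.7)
The plan is to first establish dualizability of $\D^\sigma_+$ in $\Gr(\underline{\Mod}_{\underline{\Z}})$, and then invoke the duality machinery of \S\ref{subsection:param_cocart_tensor} to transport the bialgebra structure across $(-)^\vee$. From Construction \ref{cons:differential_algebra}, the underlying graded $\underline{\Z}$-module of $\D^\sigma_+$ is $\underline{\Z}(0) \oplus \Sigma^\sigma\underline{\Z}(1)$, which is a finite direct sum of invertible objects (the unit, and the tensor product of the invertible representation sphere $\Sigma^\sigma\underline{\Z}$ with the grading shift $\underline{\Z}(1)$). Since invertible objects are dualizable and finite coproducts of dualizables are dualizable, $\D^\sigma_+$ lies in $\Gr(\underline{\Mod}_{\underline{\Z}})^\fd$, with explicit dual $\underline{\Z}(0) \oplus \Sigma^{-\sigma}\underline{\Z}(-1)$.

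Next, I would promote this dual to a $C_2$-bicommutative bialgebra. Construction \ref{cons:differential_algebra} furnishes $\D^\sigma_+$ with a $C_2$-$\E_\infty$-co-$C_2$-$\E_\infty$ bialgebra structure in $\Gr(\underline{\Mod}_{\underline{\Z}})$. Applying Proposition \ref{prop:param_dual_takes_coalg_to_alg} to the $C_2$-$\E_\infty$ coalgebra structure on $\D^\sigma_+$ produces an essentially unique $C_2$-$\E_\infty$-algebra structure on $(\D^\sigma_+)^\vee$. Applying the same proposition with $\cat$ replaced by $\cat^\vop$ — equivalently, applying the dualization $C_2$-functor $\beta\colon \underline{\coAlg}(\cat_\fd) \to \underline{\Alg}(\cat_\fd^\vop)^\vop$ of the same proposition to $\cat^\vop$ — converts the $C_2$-$\E_\infty$ algebra structure on $\D^\sigma_+$ into a $C_2$-$\E_\infty$-coalgebra structure on $(\D^\sigma_+)^\vee$.

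The last point is to verify that these two structures on $(\D^\sigma_+)^\vee$ are jointly compatible, i.e., define a bialgebra rather than merely an algebra and coalgebra sharing an underlying object. This follows from the naturality and $C_2$-symmetric monoidality of $\beta$ established in Proposition \ref{prop:param_dual_takes_coalg_to_alg}: the compatibility diagrams on $\D^\sigma_+$ in $\Gr(\underline{\Mod}_{\underline{\Z}})_\fd$ witnessing that it is a $C_2$-$\E_\infty$-object in $C_2\E_\infty\underline{\coAlg}(\Gr(\underline{\Mod}_{\underline{\Z}}))$ are transported under $\beta$ to the analogous compatibility diagrams for $(\D^\sigma_+)^\vee$. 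Equivalently, one applies Corollary \ref{cor:mod_comod_equivalence} to obtain a first half of the structure (with $\D^\sigma_+$ viewed as a $C_2\E_\infty$-co-$\underline{\E}_1$-bialgebra) and then observes that the $\underline{\E}_1$-coalgebra on $(\D^\sigma_+)^\vee$ refines to $C_2$-$\E_\infty$ because this refinement was already present on $\D^\sigma_+$ and is preserved by the $C_2$-symmetric monoidal functor $\beta$.

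I expect the only real subtlety to be this coherent bookkeeping step; dualizability itself is formal given the explicit description of the underlying object, and each individual duality-swap is a direct invocation of Proposition \ref{prop:param_dual_takes_coalg_to_alg}. The remainder of the argument consists in checking that the two invocations are compatible, which reduces via $C_2$-symmetric monoidality of $\beta$ to the compatibility already present on $\D^\sigma_+$.
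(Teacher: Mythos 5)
Your argument is correct and in essence matches the paper's one-line justification, while usefully making explicit a step the paper leaves implicit. Corollary~\ref{cor:mod_comod_equivalence}, as literally stated, yields only an $\underline{\E}_1$-co-$C_2\E_\infty$-bialgebra structure on the dual; the upgrade to the full $C_2$-bicommutative bialgebra claimed in the observation requires exactly what you invoke, namely that the dualization functor $\beta$ of Proposition~\ref{prop:param_dual_takes_coalg_to_alg} is $C_2$-symmetric monoidal, so that applying $C_2\E_\infty\underline{\Alg}(\beta)$ (and its $\mathrm{vop}$ variant, together with the rearrangements of Proposition~\ref{prop:models_of_bialg}) coherently transports the full bicommutative structure. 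One small caveat: your opening sentence, ``applying Proposition~\ref{prop:param_dual_takes_coalg_to_alg} to the $C_2$-$\E_\infty$ coalgebra structure,'' reads as if $\beta$ is a priori a functor between $C_2$-$\E_\infty$-coalgebras, whereas as stated it is a functor between $\underline{\E}_1$-coalgebras -- you do repair this in the final paragraph by invoking $C_2$-symmetric monoidality, so the argument as a whole is sound. Your dualizability step and the explicit dual $\underline{\Z}(0)\oplus\Sigma^{-\sigma}\underline{\Z}(-1)$ are correct and consistent with the identification $\Sigma^{-\sigma}\underline{\Z}\simeq\Sigma^{-1}\Z_{-}$ of Lemma~\ref{lemma:sigma_discrete} and Remark~\ref{rmk:inv_dg_objects_unravelled}.
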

\begin{prop}\label{prop:Dsigma_alg}
		There is a unique $C_2$-bicommutative derived involutive algebra structure on $ (\D^\sigma_+)^\vee $ in $ \Gr\left(\underline{\Mod}_{\underline{\Z}}\right) $ promoting its $C_2$-bicommutative bialgebra structure. 
		Moreover, under (the graded variant of) the forgetful functor of Remark \ref{rmk:inv_dalg_underlying}, the derived algebra structure on $ \left((\D^\sigma_+)^\vee\right)^e $ agrees with that of $ \D_{+}^\vee $ from \cite[Proposition 5.1.7]{Raksit20}. 
\end{prop}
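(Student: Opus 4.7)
The plan is to apply Proposition \ref{prop:inv_grdalg} directly. Recall from Observation \ref{obs:Dsigma_bialg} that $(\D^\sigma_+)^\vee$ is a dualizable $C_2$-bicommutative bialgebra in $\Gr(\underline{\Mod}_{\underline{\Z}})$, so in particular it carries a canonical $C_2$-$\E_\infty$-algebra structure inherited from the coalgebra structure on $\D^\sigma_+$ via duality (Corollary \ref{cor:mod_comod_equivalence}). First I will verify that this $C_2$-$\E_\infty$-algebra lies in the essential image of the right-hand side of Proposition \ref{prop:inv_grdalg}. The object $(\D^\sigma_+)^\vee$ is concentrated in grading degrees $0$ and $-1$, with underlying graded $\underline{\Z}$-module $\underline{\Z}(0) \oplus \Sigma^{-\sigma}\underline{\Z}(-1)$. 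Using Lemma \ref{lemma:sigma_discrete} to identify $\Sigma^{-\sigma}\underline{\Z} \simeq \Sigma^{-1}\Z_-$, we compute that $\underline{\pi}_{-n}\bigl((\D^\sigma_+)^\vee\bigr)^n$ is $\underline{\Z}$ in grading $n=0$ and $\Z_-$ in grading $n=-1$ (and zero otherwise); both restriction maps (the identity and $0 \hookrightarrow \Z$, respectively) are injective, as required.

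Once these hypotheses are verified, the equivalence of Proposition \ref{prop:inv_grdalg} transports the $C_2$-$\E_\infty$-algebra structure uniquely to a graded derived involutive algebra structure on $(\D^\sigma_+)^\vee$ refining it, and uniqueness follows from the equivalence being an equivalence. The co-$C_2$-$\E_\infty$-coalgebra structure is untouched by this refinement, so we obtain the desired $C_2$-bicommutative derived involutive bialgebra structure.

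For the second assertion, I appeal to the graded variant of Remark \ref{rmk:inv_dalg_underlying}. The forgetful functor $\Gr\DAlg^\sigma_{\underline{\Z}} \to \Gr\DAlg_\Z$ is compatible with $\LSym$ and with the forgetful functors to modules, and sends $(\D^\sigma_+)^\vee$ (regarded as a derived involutive algebra) to an object with underlying graded $\Z$-module $((\D^\sigma_+)^\vee)^e \simeq \D^\vee_+$ whose derived algebra structure refines the $\E_\infty$-algebra structure. The uniqueness clause of \cite[Proposition 5.1.7]{Raksit20} then forces the resulting derived algebra structure on $\D^\vee_+$ to agree with Raksit's.

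The main obstacle will be making sure that the $C_2$-$\E_\infty$-algebra structure on $(\D^\sigma_+)^\vee$ manifests, at the level of graded $\underline{\Z}$-module homotopy, as an algebra structure in the Koszul-twisted $C_2$-symmetric monoidal structure $\ostar_K$ on $\Gr(\underline{\Mod}_{\underline{\Z}})^{\heartsuit}_{r^\pm}$ of Remark \ref{rmk:equivariant_koszul_rule}, because only with this twist does Proposition \ref{prop:inv_grdalg} apply. This amounts to tracking how the equivalence of Observation \ref{obs:graded_rslice_heart_as_graded_Mackey_functors} interacts with the identification $\Sigma^{-\sigma}\underline{\Z} \simeq \Sigma^{-1}\Z_-$ of Lemma \ref{lemma:sigma_discrete}; the Koszul sign and the sign twist on the norm appear precisely because the generator $\varepsilon$ is carried by the sign representation direction.
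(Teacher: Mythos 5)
Your proof is essentially the paper's proof, centered as it is on Proposition \ref{prop:inv_grdalg} for the first claim and on the rigidity of the underlying structure for the second. The paper's own argument is terser: it observes that $\underline{\Z}$ and $\Z_-$ are zero slices and that $\Z \oplus \Z_-(-1)$ carries the structure of a graded cohomological $C_2$-Tambara functor (a $\CSym^\sigma$-algebra in the sense of Definition \ref{defn:inv_gr_calg}), then invokes Proposition \ref{prop:inv_grdalg}; for the ``moreover'' clause it appeals to the compatibility of $(-)^e$ with the Postnikov t-structure (Variant \ref{var:eqvtmodulespostnikov}). You verify the same homotopy conditions from the other end of the adjunction $\iota \dashv \pi$ and appeal to the uniqueness in Raksit's Proposition 5.1.7 for the second claim, which is a perfectly valid alternative route. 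Two remarks. First, what you flag as ``the main obstacle'' at the end is not really outstanding: the Koszul-twisted symmetric monoidal structure on $\Gr(\underline{\Mod}_{\underline{\Z}})^{\heartsuit}_{r^\pm}$ is already where $\D^\sigma_+$ is born in Construction \ref{cons:differential_algebra}, via the embeddings $\iota_\pm$ of Observation \ref{obs:graded_rslice_heart_as_graded_Mackey_functors}, so no further compatibility check is needed; the $C_2$-$\E_\infty$-algebra structure on $(\D^\sigma_+)^\vee$ was constructed precisely so as to land there. Second, when you write $\underline{\pi}_{-n}\bigl((\D^\sigma_+)^\vee\bigr)^n$ and claim it is $\Z_-$ in degree $n=-1$, the subscript should be $\underline{\pi}_n$: $((\D^\sigma_+)^\vee)^{-1} \simeq \Sigma^{-\sigma}\underline{\Z} \simeq \Sigma^{-1}\Z_-$ has its single nonzero Mackey homotopy group in degree $-1$, not $+1$; the functor $\pi$ shears by $[-*]$ and this is what brings the generator back to degree $0$. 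Your computed answer ($\underline{\Z}$ and $\Z_-$, with injective restriction maps) is correct; only the index is off.
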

\begin{proof}
	It suffices to observe that both $ \underline{\Z} $ and $ \underline{\Z}_{-} $ are zero slices, and $ \Z \oplus \Z_{-}(-1) $ admits the structure of an involutive commutative derived algebra (Definition \ref{defn:inv_gr_calg}). 
	The result follows from Proposition \ref{prop:inv_grdalg}. 
	The second statement follows from the description of the Postnikov filtration on $ \Mod_{\underline{k}} $ in Variant \ref{var:eqvtmodulespostnikov}. 
\end{proof} 
\begin{rmk}\label{rmk:square_zero_graded_alg}
	Recall $ \D^\vee, \D^\vee_{C_2} \in \Gr\DAlg^{\sigma}_{\underline{\Z}} $ of Notation \ref{ntn:square_zero_graded_alg}. 
	Note that there is a canonical identification $ \D^{\sigma,\vee}_{+} \simeq \mathbbm{1} \times_{\D^\vee_{C_2}} \D^\vee $ in $ \Gr\DAlg^{\sigma}_{\underline{\Z}} $. 
\end{rmk}
\begin{defn}\label{defn:dg_involutive_alg}
	Let $ \cat $ be a $ \underline{\Z} $-linear derived involutive algebraic context and let $ A $ be a derived involutive algebra object of $ \cat $. 
	The $ C_2 $-$ \infty $-category of \emph{$h_{+}^\sigma$-differential graded derived involutive $ A $-algebras} is given by 
	\begin{equation*}
		\DG^\sigma_+\underline{\DAlg}_{A}^\sigma := \underline{\coMod}_{(\D^\sigma_+)^\vee}\left(\Gr\underline{\DAlg}^\sigma(\cat)\right) . 
	\end{equation*} 
\end{defn} 
Finally, despite the caveat of Remark \ref{rmk:involutive_dg_vs_dg_with_invltn}, the shear functor of (\ref{eq:shear_gr_by_reg_rep_sphere}) allows us to relate involutive cochain complexes with chain complexes with involution.  
\begin{prop} \label{prop:shear_gr_on_dg_mod}  
Let $ \cat $ be a $ \underline{\Z} $-linear derived involutive algebraic context. 
\begin{enumerate}[label=(\arabic*)]
	\item The functor $ [-\rho *] $ of (\ref{eq:shear_gr_by_reg_rep_sphere}) lifts to a $ C_2 $-symmetric monoidal equivalence of $ C_2 $-$ \infty $-categories $ \underline{\Mod}_{\D^\sigma_{+}}(\Gr(\cat)) \xrightarrow{[-\rho *]} \underline{\Mod}_{\D_{-}} (\Gr(\cat)) $ with inverse $ [\rho *] $. 
	\item On underlying spectra, $ [\pm \rho *] $ recovers the equivalences $ [\pm 2 *] $ of \cite[Remark 5.1.12]{Raksit20}, i.e. there is a commutative diagram
	\begin{equation*}
	\begin{tikzcd}
		\underline{\Mod}_{\D^{\sigma}_{+}}(\Gr(\cat))^{C_2} \ar[d,"(-)^e"] \ar[r,"{\left[-\rho *\right]}","\sim"'] & \underline{\Mod}_{\D_{-}} (\Gr(\cat))^{C_2} \ar[d,"(-)^e"] \\
		\Mod_{\D_{+}}(\Gr(\cat^e)) \ar[r,"{\left[- 2 *\right]}","\sim"'] & \Mod_{\D_{-}}(\Gr(\cat^e)) 
	\end{tikzcd}
	\end{equation*}
	where the horizontal equivalences have inverses $ [\rho *] $ and $ [2*] $, respectively. 
\end{enumerate}
\end{prop}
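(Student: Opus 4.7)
The plan is to construct the equivalence by transporting the $ \D^\sigma_+ $-module structure along the $ C_2 $-symmetric monoidal auto-equivalence $ [-\rho *] $ of $ \Gr(\cat) $ established in Proposition \ref{prop:shear_gr_by_reg_rep_sphere_is_mult}(1). Applied to the context $ \cat $, that result yields a $ C_2 $-symmetric monoidal auto-equivalence $ [-\rho *] \colon \Gr(\cat) \xrightarrow{\sim} \Gr(\cat) $ with inverse $ [\rho *] $. Any such equivalence carries $ C_2 $-$ \E_\infty $-co-$ C_2 $-$ \E_\infty $-bialgebras to the same type of object, and by Variant \ref{variant:module_pointwise_param_tensor} induces a $ C_2 $-symmetric monoidal equivalence of module $ C_2 $-$ \infty $-categories. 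Applying this to $ \D^\sigma_+ $ produces a $ C_2 $-symmetric monoidal equivalence
\[
	\underline{\Mod}_{\D^\sigma_+}(\Gr(\cat)) \xrightarrow{\sim} \underline{\Mod}_{[-\rho *](\D^\sigma_+)}(\Gr(\cat)),
\]
so the substantive step is to identify $ [-\rho *](\D^\sigma_+) $ with $ \D_{-} $ as a $ C_2 $-$ \E_\infty $-co-$ C_2 $-$ \E_\infty $-bialgebra.

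On the level of underlying graded objects, since $ \rho = 1 + \sigma $, the definition of the shear functor in (\ref{eq:shear_gr_by_reg_rep_sphere}) gives
\[
	[-\rho *](\D^\sigma_+)_0 \simeq \underline{\Z}, \qquad [-\rho *](\D^\sigma_+)_1 \simeq \Sigma^{-\rho}\Sigma^{\sigma}\underline{\Z} \simeq \Sigma^{-1}\underline{\Z},
\]
which matches the underlying object of $ \D_{-} $ recorded in Construction \ref{cons:differential_algebra}. To promote this to an equivalence of bialgebras, I would trace through Construction \ref{cons:differential_algebra}: both $ \D^\sigma_+ $ and $ \D_{-} $ arise as duals of the \emph{same} graded $ C_2 $-$ \E_\infty $-bialgebra $ \underline{\Z}[\varepsilon_{-}] $, but embedded into $ \Gr(\cat) $ via the positive, resp. negative, regular-slice hearts (Observation \ref{obs:graded_rslice_heart_as_graded_Mackey_functors}). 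Proposition \ref{prop:shear_gr_by_reg_rep_sphere_is_mult}(1) identifies $ [-\rho *] $ precisely with the $ C_2 $-symmetric monoidal equivalence $ \Gr(\underline{\Mod}_{\underline{\Z}})^{\heartsuit}_{r^+} \xrightarrow{\sim} \Gr(\underline{\Mod}_{\underline{\Z}})^{\heartsuit}_{r^-} $ interchanging these embeddings, so by naturality of dualization we obtain a canonical equivalence $ [-\rho *](\D^\sigma_+) \simeq \D_{-} $ of $ C_2 $-$ \E_\infty $-co-$ C_2 $-$ \E_\infty $-bialgebras.

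For part (2), Proposition \ref{prop:shear_gr_by_reg_rep_sphere_is_mult}(2) already identifies $ [\pm \rho *]^e $ with Raksit's $ [\pm 2 *] $; combining this with the identifications $ (\D^\sigma_+)^e \simeq \D_+ $ and $ (\D_{-})^e \simeq \D_{-} $ from Construction \ref{cons:differential_algebra}, and with the fact that the forgetful $ C_2 $-functors $ \underline{\Mod}_{\D^\sigma_+}(\Gr(\cat))^{C_2} \to \Mod_{\D_+}(\Gr(\cat^e)) $ are restrictions of the ambient shear, yields the desired commutative square. The main technical wrinkle is verifying that the identification $ [-\rho *](\D^\sigma_+) \simeq \D_{-} $ respects the \emph{full} bi-bialgebra structure (not just the underlying graded object): this reduces to checking that the canonical $ C_2 $-symmetric monoidal equivalence of regular-slice hearts sends the bialgebra $ \underline{\Z}[\varepsilon_{-}] $ to itself, which is true essentially by construction, but requires unwinding the Koszul sign and $ (-1) $-twisted norm conventions of Remark \ref{rmk:equivariant_koszul_rule}.
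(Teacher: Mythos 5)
Your proof is correct and takes essentially the same approach as the paper's: reducing part (1) to the observation that $[-\rho*]$ carries $\D^\sigma_+$ to $\D_-$ as $C_2$-$\E_\infty$-bialgebras, via the identification of regular-slice hearts from Proposition \ref{prop:shear_gr_by_reg_rep_sphere_is_mult}, and deducing part (2) from $(S^\rho)^e \simeq S^2$. Your write-up makes explicit the intermediate steps (transport of modules along the $C_2$-symmetric monoidal auto-equivalence, the shear computation $\Sigma^{-\rho}\Sigma^\sigma \simeq \Sigma^{-1}$, and the role of Observation \ref{obs:graded_rslice_heart_as_graded_Mackey_functors} in making $\iota_\pm \underline{\Z}[\varepsilon_-]$ both well-defined) that the paper leaves tacit.
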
 
\begin{rmk}
		Compare \cite[Corollaries 2.20 and 2.21]{MR3007090}. 
\end{rmk}
\begin{proof} [Proof of Proposition \ref{prop:shear_gr_on_dg_mod}]
\begin{enumerate}[label=(\arabic*)] 
	\item It suffices to observe that the equivalence $ [-\rho*] $ takes the bicommutative bialgebra $ \D^\sigma_+ $ to the bicommutative bialgebra $ \D_{-} $. 
	The statement about the algebra structures follows from the assertion about the hearts in Proposition \ref{prop:shear_gr_by_reg_rep_sphere_is_mult}. 
	\item This follows from the observation that $ (S^\rho)^e\simeq S^2 $. \qedhere
\end{enumerate}
\end{proof}
Given a homotopy coherent cochain complex, we can consider cocycles and coboundaries to be the orbits and fixed points (resp.) with respect to $ \D_{\pm} $. 
Then, we can think of the \emph{cohomology} of said cochain complex to be the Tate construction with respect to $ \D_\pm $. 
The relevant question for our purposes is: What is the cohomology of an involutive cochain complex in the sense of Definition \ref{defn:dg_involutive_module}? 
However, the question is more subtle due to Remark \ref{rmk:involutive_dg_vs_dg_with_invltn}. 
Suppose $ M^* $ is an involutive cochain complex of $ \underline{\Z} $-Mackey functors, and let us consider the graded abelian group with $ C_2 $-action $ (M^*)^e $. 
In particular, $ (M^n)^e $ is concentrated in degree $ n $.  
Let us define a new graded abelian group with $ C_2 $-action $ N^* $ as follows: As graded abelian groups, $ N^\ell = M^\ell $ and if $ \ell $ is odd, allow $ \sigma \neq e \in C_2 $ to act on $ N^\ell $ by $ M^\ell \xrightarrow{\sigma(-)} M^\ell \xrightarrow{(-1)\cdot} M^\ell $.  
Otherwise if $ \ell $ is even, then the $ C_2 $-action on $ N^\ell $ agrees with the $ C_2 $-action on $ M^\ell $. 
The differential on $ M^\ell $ induces a differential on $ N^\ell $; however, the differential on $ N^\ell $ \emph{is $ C_2 $-equivariant}. 
In particular, we can take the cohomology of $ N $ in the usual sense, and it is a graded $ \Z[C_2] $-module.  

\begin{rmk}
		The bi-$ C_2 $-$ \E_\infty $-bialgebras $ \D^\sigma_+ $ and $ \D_{-} $ satisfy the assumptions of Proposition \ref{prop:tate_cons_is_param_compatibility}, so we have a norm map and parametrized Tate construction for objects of $ \underline{\LMod}_{\D^\sigma_+}(\cat) $, $ \underline{\LMod}_{\D_{-}}(\cat) $. 
		In particular, we can take $ \omega_{\D^\sigma_{+}} = \underline{\Z}[\sigma](1) $.  
\end{rmk}
\begin{defn}\label{defn:cohomology_of_inv_chain_cplx} 
	Let $ \cat $ be a $ \underline{\Z} $-linear $ C_2 $-stable $ C_2 $-presentable $ C_2 $-symmetric monoidal $ C_2 $-$ \infty $-category (see Remark \ref{ntn:D_Dsigma_in_any_C2_sym_cat}).
	Let $ M_* \in \Mod_{\D^\sigma_{+}}(\cat) $ (resp. $ M_* \in \Mod_{\D^\sigma_{-}}(\cat) $) an involutive $ h_+^\sigma $ (resp. $ h_{-} $) chain complex in $ \cat $. 
	Then the \emph{cohomology of $ M $} is the Tate construction
	\begin{equation*}
		H^*(M) \simeq M^{t\D^\sigma_{+}} \in \Gr(\cat) \qquad \left(\text{resp. } H^*(M) \simeq M^{t\D_{-}} \in \Gr(\cat)\right)\,.
	\end{equation*}
\end{defn}
\begin{rmk}\label{rmk:cohomology_is_normed_alg}
		Suppose that $ \cat $ is a $\underline{\Z}$-linear $C_2$-symmetric monoidal $ C_2 $-$ \infty $-category which is $C_2$-complete, $C_2$-cocomplete, and fiberwise compactly generated. 
		Then by Proposition \ref{prop:param_tate_lax_monoidal}, the functors $ H^*(-) \colon \DG_{-}(\cat)\to \Gr(\cat) $ and $ \DG_{+}^\sigma(\cat)\to \Gr(\cat)$ are lax $ C_2 $-symmetric monoidal. 
		Given a $ C_2 $-commutative algebra in $ \DG_{-}(\cat) $ or $ \DG^\sigma_{-}(\cat) $, its cohomology is canonically a $ C_2 $-commutative algebra in $ \Gr(\cat) $. 
\end{rmk}
We can relate the (co)homology with respect to $ \D^\sigma_\pm $ using the following. 
\begin{defn}
		[{\cite[Definition 5.2.4]{Raksit20}}]\label{defn:cohomology_type} 
		Suppose that $ \cat $ is a $\underline{\Z}$-linear $C_2$-symmetric monoidal $ C_2 $-$ \infty $-category which is $C_2$-complete, $C_2$-cocomplete, and fiberwise compactly generated.
		Let $ X \in \underline{\Mod}_{\mathbb{D}_{-}}(\cat) $. 
		Write $ |X|^{\geq *} \in \underline{\Fil}^{\wedge}(\cat) $ denote the image of $ X $ under the $C_2$-functor of Proposition \ref{prop:complete_fil_as_cochain_cplx_monoidal}. 
		Let $ |X| := \underline{\colim} |X|^{\geq *} \in \cat $. 
		Given $ X \in \underline{\Mod}_{\mathbb{D}^\sigma_+}(\cat) $, we define $ |X|^{\geq *} = |X[-\rho *]|^{\geq *} \in \underline{\Fil}^{\wedge}(\cat) $ and $ |X| := \underline{\colim} |X[-\rho *]|^{\geq *} \in \cat $ by transporting their definitions on $ \underline{\Mod}_{\mathbb{D}_{-}}(\cat) $ along the equivalence of Proposition \ref{prop:shear_gr_by_reg_rep_sphere_is_mult}. 
		In both cases, we will refer to $ |X | $ as the \emph{cohomology type} of $ X $ and $ |X|^{\geq *} $ as the \emph{brutal filtration} on the cohomology type of $ X $. 
\end{defn}
\begin{prop}\label{prop:param_coh_formula}
		Suppose that $ \cat $ is a $\underline{\Z}$-linear $C_2$-symmetric monoidal $ C_2 $-$ \infty $-category which is $C_2$-complete, $C_2$-cocomplete, and fiberwise compactly generated. 
		Let $ \delta_{\mathrm{gr}} \colon \cat \to \Gr(\cat) $ denote the diagonal functor. 
		Then
		\begin{enumerate}[label=(\alph*)]
			\item \label{prop_item:param_coh_formula_Dminus_mod} For $ X \in \underline{\Mod}_{\mathbb{D}_{-}}(\cat) $, there is an equivalence $ H^*(X) \simeq \delta_{\mathrm{gr}}\left(|X|\right) $
			\item \label{prop_item:param_coh_formula_Dsigmaplus_mod} For $ X \in \underline{\Mod}_{\mathbb{D}_{+}^\sigma}(\cat) $, there is an equivalence $ H^*(X) \simeq \delta_{\mathrm{gr}}\left(|X|\right)[\rho *] $, where $ [\rho *] $ is the $ C_2 $-functor of (\ref{eq:shear_gr_by_reg_rep_sphere}). 
		\end{enumerate}
		Moreover, the equivalences here recover those of \cite[Proposition 5.2.6]{Raksit20} on underlying objects $ X^e \in \cat^e $. 
\end{prop}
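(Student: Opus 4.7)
The plan is to reduce part \ref{prop_item:param_coh_formula_Dsigmaplus_mod} to part \ref{prop_item:param_coh_formula_Dminus_mod} using the shear equivalence, and then to prove part \ref{prop_item:param_coh_formula_Dminus_mod} by transporting the argument of \cite[Proposition 5.2.6]{Raksit20} to the parametrized setting via the equivalence of Proposition \ref{prop:complete_fil_as_cochain_cplx_monoidal}, checking that the construction admits the required $C_2$-enhancement.

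For the reduction, recall from Proposition \ref{prop:shear_gr_on_dg_mod} that $[-\rho *]$ is a $C_2$-symmetric monoidal equivalence $\underline{\Mod}_{\D^{\sigma}_{+}}(\Gr(\cat)) \xrightarrow{\sim} \underline{\Mod}_{\D_{-}}(\Gr(\cat))$ carrying $\D^{\sigma}_{+}$ to $\D_{-}$. By Definition \ref{defn:cohomology_type} we have $|X| \simeq |X[-\rho *]|$, and the $C_2$-symmetric monoidal nature of the shear, combined with the naturality of the parametrized Tate construction (Remark \ref{rmk:param_tate_naturality}), yields $X^{t\D^{\sigma}_{+}} \simeq (X[-\rho *])^{t\D_{-}}[\rho *]$. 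Granting part \ref{prop_item:param_coh_formula_Dminus_mod}, this gives $H^*(X) \simeq \delta_{\mathrm{gr}}(|X[-\rho *]|)[\rho *] \simeq \delta_{\mathrm{gr}}(|X|)[\rho *]$, as desired.

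For part \ref{prop_item:param_coh_formula_Dminus_mod}, I would transport the computation to the complete filtered picture via $\overline{\gr}$, under which $X$ corresponds to its brutal filtration $|X|^{\geq *}$ with $|X| = \underline{\colim}\, |X|^{\geq *}$. The forgetful $C_2$-functor $\underline{\Mod}_{\D_{-}}(\Gr(\cat)) \to \Gr(\cat)$ corresponds to $\gr$, while the two adjoints to restriction along $\D_{-} \to \mathbbm{1}$ compute (up to shifts by $\omega_{\D_{-}}$) the $C_2$-colimit and $C_2$-limit of the corresponding filtered object. By Proposition \ref{prop:param_tate_lax_monoidal}, the parametrized Tate construction is the universal lax $C_2$-symmetric monoidal functor out of $\underline{\Mod}_{\D_{-}}(\Gr(\cat))$ that vanishes on the $C_2$-$\otimes$-ideal of induced modules (i.e., those of the form $\D_{-} \otimes Y$, which correspond to split filtrations). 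The candidate functor $X \mapsto \delta_{\mathrm{gr}}(|X|)$ evidently vanishes on split filtrations, so it factors through $(-)^{t\D_{-}}$, producing a natural comparison map $\eta \colon X^{t\D_{-}} \to \delta_{\mathrm{gr}}(|X|)$.

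To verify $\eta$ is an equivalence, I would argue fiberwise: since the forgetful $C_2$-functor to $\cat_{C_2/H}$ is conservative for each orbit $C_2/H \in \mathcal{O}_{C_2}$, it suffices to show $\eta^{H}$ is an equivalence, and by Remark \ref{rmk:param_tate_naturality} together with the fiberwise description of $\overline{\gr}$, this reduces on each orbit to the non-equivariant equivalence of \cite[Proposition 5.2.6]{Raksit20}. The main obstacle I anticipate lies in setting up the universal property of the Tate construction in the parametrized setting precisely enough to apply it: the non-equivariant argument implicitly invokes the Verdier quotient by induced modules, and in the parametrized context one must verify that $\delta_{\mathrm{gr}} \circ |{-}|$ vanishes on induced $\D_{-}$-modules in the sense required by \cite[Theorem 5.28]{QSparam_Tate}, i.e. as a lax $C_2$-symmetric monoidal $C_2$-functor, which requires tracking compatibility with both the restriction and norm structure on $\Mod_{\D_{-}}$ simultaneously.
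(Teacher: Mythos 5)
Your proposal is correct and follows essentially the same route as the paper: reduce \ref{prop_item:param_coh_formula_Dsigmaplus_mod} to \ref{prop_item:param_coh_formula_Dminus_mod} via the shear equivalence and Remark \ref{rmk:param_tate_naturality}, and establish \ref{prop_item:param_coh_formula_Dminus_mod} by transporting Raksit's non-equivariant computation through the parametrized framework and checking the equivalence fiberwise.

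Where you diverge slightly is in how the comparison map is produced for part \ref{prop_item:param_coh_formula_Dminus_mod}. You build $\eta \colon X^{t\D_{-}} \to \delta_{\mathrm{gr}}(|X|)$ by invoking the universal property of the parametrized Tate construction from Proposition \ref{prop:param_tate_lax_monoidal} and verifying that $\delta_{\mathrm{gr}}\circ|{-}|$ kills the $C_2$-$\otimes$-ideal of induced modules. The paper's proof is considerably terser: it leans directly on Corollary \ref{cor:param_mod_fiberwise_description}, which identifies $\underline{\Mod}_{\D_{-}}(\Gr(\cat))_t$ with the ordinary module category $\Mod_{\D_{-}}(\Gr(\cat_t))$, and treats the entire construction (the norm map, the Tate cofiber, $\overline{\gr}$, $|{-}|$) as already globally defined $C_2$-functors by the earlier parametrized constructions; the equivalence is then checked by conservativity over each orbit, where it is literally Raksit's. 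Your route through the universal property is valid and perhaps more self-contained, but the compatibility worry you raise at the end is not actually a gap in the paper's intended argument: because the statement only asserts an equivalence of objects in $\Gr(\cat)$ (not a lax $C_2$-symmetric monoidal one), one does not need to track the full normed structure on the vanishing of $\delta_{\mathrm{gr}}\circ|{-}|$ on induced modules; checking that the already-constructed $C_2$-natural transformation is a fiberwise equivalence suffices. In short, your version front-loads more structure than is strictly needed, but both approaches reduce to the same non-equivariant input.
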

\begin{proof}
		Part \ref{prop_item:param_coh_formula_Dminus_mod} follows immediately from \cite[Proposition 5.2.6(a)]{Raksit20} and the fiberwise description of parametrized modules of Corollary \ref{cor:param_mod_fiberwise_description}. 
		Part \ref{prop_item:param_coh_formula_Dsigmaplus_mod} follows from part \ref{prop_item:param_coh_formula_Dminus_mod} and naturality of the parametrized Tate construction (Remark \ref{rmk:param_tate_naturality}). 
\end{proof}
Now, we show how the aforementioned constructions allow us to define the involutive derived de Rham complex via a universal property. 
Moreover, we use the preceding shear equivalence to define involutive derived de Rham cohomology. 
For the rest of the section, let us fix a $ \underline{\Z} $-linear derived involutive algebraic context $ \cat $ and a derived involutive algebra $ A $ in $ \DAlg^\sigma(\cat) $. 
\begin{ntn} 
	Let $ \DG_+^{\sigma, \geq 0}\underline{\DAlg}^\sigma_A $ denote the fiber product $ \DG_+^\sigma\underline{\DAlg}^\sigma_A \fiberproduct_{\Gr\underline{\DAlg}^\sigma_A} \Gr^{\geq 0}\underline{\DAlg}^\sigma_A $ (see Definition \ref{defn:dg_involutive_alg}); it is a $ C_2 $-$ \infty $-category. 
	We will refer to $ C_2 $-objects therein as \emph{nonnegative $ h_\sigma^+$-differential graded involutive $ A $-algebras.} 
\end{ntn} 
\begin{prop}\label{prop:inv_deRham_asadjoint} 
	The composite of the forgetful $ C_2 $-functor with the evaluation functor
	\begin{equation*}
		\DG_+^{\sigma, \geq 0}\underline{\DAlg}_{A}^\sigma \xrightarrow{U} \Gr^{\geq 0}\underline{\DAlg}_{A}^\sigma \xrightarrow{\ev^0} \underline{\DAlg}_{A}^\sigma
	\end{equation*}
	admits a left $ C_2 $-adjoint. 
	On underlying $ \infty $-categories, this recovers the adjunction of \cite[Proposition 5.3.2]{Raksit20}. 
\end{prop}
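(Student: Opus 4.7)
\begin{proofsketch}
The plan is to verify the hypotheses of Corollary \ref{cor:C2_left_adjoint_local_crit} applied to the composite $\ev^0 \circ U$: this reduces the existence of a left $C_2$-adjoint to two checks, namely that the source and target admit finite $C_2$-products which are preserved by $\ev^0 \circ U$, and that fiberwise left adjoints exist.

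First I would verify the $C_2$-product hypothesis. The $C_2$-$\infty$-category $\underline{\DAlg}_A^\sigma$ admits finite $C_2$-products by Proposition \ref{prop:invdalg_C2_colims}\ref{propitem:invdalg_admits_C2_lims}, passed through the slice over $A$. Likewise $\Gr^{\geq 0}\underline{\DAlg}_A^\sigma$ is derived involutive algebras in the derived involutive algebraic context $\Gr^{\geq 0}(\cat)$ from Construction \ref{cons:gr_fil_dalg_inv}, so the same result applies. Since $(\D^\sigma_+)^\vee$ is a derived involutive bialgebra (Proposition \ref{prop:Dsigma_alg}), its comodule $C_2$-$\infty$-category $\DG^{\sigma,\geq 0}_+\underline{\DAlg}_A^\sigma$ inherits finite $C_2$-products, created by the forgetful $C_2$-functor $U$ to $\Gr^{\geq 0}\underline{\DAlg}_A^\sigma$. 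This already shows $U$ preserves $C_2$-products. The functor $\ev^0$ is a right $C_2$-adjoint to $\ins^{\geq 0}\circ \ins_0$ by Remark \ref{rmk:zeroth_graded_dalg}, and hence also preserves $C_2$-products. Composing, $\ev^0\circ U$ preserves finite $C_2$-products as required.

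Second, for the fiberwise existence of left adjoints: over $C_2/e$ the composite $\ev^0\circ U$ is exactly the forgetful functor of \cite[Proposition 5.3.2]{Raksit20}, whose left adjoint is Raksit's derived de Rham complex. Over $C_2/C_2$, both $\DG^{\sigma,\geq 0}_+\DAlg_A^\sigma$ and $\DAlg_A^\sigma$ are presentable, and $\ev^0\circ U$ is accessible (it preserves sifted colimits, which are created by forgetting to $\Gr(\cat)$) and preserves all small limits, so the adjoint functor theorem \cite[Theorem 5.5.2.9]{LurHTT} yields a left adjoint. Corollary \ref{cor:C2_left_adjoint_local_crit} then assembles these into a left $C_2$-adjoint, and the fiberwise identification over $C_2/e$ automatically gives the compatibility with Raksit's adjunction on underlying $\infty$-categories.

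The main obstacle, modest as it is, is bookkeeping: one must verify that $C_2$-products in $\DG^{\sigma,\geq 0}_+\underline{\DAlg}_A^\sigma$ really are created by $U$. This is precisely the parametrized analogue of the fact that limits of comodules over a dualizable bialgebra are computed in the ambient category, and follows from the $C_2$-symmetric monoidal equivalence in Corollary \ref{cor:mod_comod_equivalence} combined with Variant \ref{variant:module_pointwise_param_tensor}. Once this is in hand, the proof is purely formal, and an explicit description of the left adjoint is deferred to Theorem \ref{thm:invdeRham_equals_LSymcotangent}.
\end{proofsketch}
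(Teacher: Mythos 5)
Your proof is correct and follows essentially the same approach as the paper: reduce via Corollary \ref{cor:C2_left_adjoint_local_crit} to showing that $\ev^0 \circ U$ preserves finite $C_2$-products and admits fiberwise left adjoints, then check these using the dualizability of $(\D^\sigma_+)^\vee$ (for the $C_2$-product/limit preservation) and Raksit's argument over each fiber. The paper's own proof is just more compressed—it simply asserts the $C_2$-product preservation and defers to a "nearly identical argument" to \cite[Proposition 5.3.2]{Raksit20} for the fiberwise adjoints rather than spelling out the adjoint functor theorem step over $C_2/C_2$, but the content is the same, and your emphasis that dualizability is what makes $U$ limit-preserving (via Corollary \ref{cor:mod_comod_equivalence}) is exactly the observation the paper flags.
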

\begin{proof} 
	Observe that $ \DG_+^{\sigma, \geq 0}\underline{\DAlg}_{A}^\sigma $ and $ \underline{\DAlg}^\sigma_{A} $ admit finite $ C_2 $-products which are preserved by $ \ev^0 \circ U $. 
	By Corollary \ref{cor:C2_left_adjoint_local_crit}, it suffices to show that $ \ev^0 \circ U $ admits fiberwise left adjoints. 
	In view of Remark \ref{rmk:zeroth_graded_dalg}, the latter follows from a nearly identical argument to that of \cite[Proposition 5.3.2]{Raksit20}; we only need note that $ (\D^{\sigma}_+)^\vee $ is dualizable. 
\end{proof} 
\begin{defn}\label{defn:inv_deriveddeRhamcplx}
	We will refer to the left $ C_2 $-adjoint $ \L \Omega^{\bullet, \sigma}_{-/A}: \underline{\DAlg}_{A}^\sigma \to \DG_+^{\sigma, \geq 0}\underline{\DAlg}_{A}^\sigma $ of Proposition \ref{prop:inv_deRham_asadjoint} as the \emph{involutive derived de Rham complex over $ A $}.
\end{defn}
\begin{rmk}
		Let $ B $ be an involutive $ A $-algebra. 
		Then the involutive de Rham complex is equipped with a canonical map $ B \to \L\Omega^{\sigma,0}_{B/A} $ in $ \underline{\DAlg}^\sigma_{A} $, which induces a map $ B \to \L\Omega^{\sigma,\bullet}_{B/A} $ in $ \Gr\underline{\DAlg}^{\sigma}_{A} $. 
		Thus, we will regard $ \L\Omega^{\sigma,\bullet}_{B/A} $ as an object of $ \Gr\underline{\DAlg}^\sigma_B $. 
\end{rmk}
We can relate the involutive derived de Rham complex with the involutive cotangent complex as follows:
\begin{theorem}\label{thm:invdeRham_equals_LSymcotangent}
	The \emph{involutive derived de Rham complex} is computed by
	\begin{align*}
		\L \Omega^{\sigma,\bullet}_{-/A} \colon & \underline{\DAlg}_{A}^\sigma \to \DG_+^{\sigma, \geq 0}\underline{\DAlg}_{A}^\sigma \\
		& B \mapsto \LSym_{B}^\sigma( \Sigma^\sigma \L_{B/A}(1))
	\end{align*}
	where $ \L_{B/A} $ is the involutive cotangent complex of Definition \ref{defn:involutive_cotangent}. 
	Moreover, the first differential of the $ h^\sigma_+ $-cochain complex
	\begin{equation*}
		d \colon B \simeq \L\Omega^{\sigma,0}_{B/A} \to \Sigma^{-\sigma} \L\Omega^{\sigma,1}_{B/A} \simeq \L_{B/A}
	\end{equation*}
	is given by the universal $ A $-linear derivation of $ B $ in the sense of Definition \ref{defn:derivations}. 
\end{theorem}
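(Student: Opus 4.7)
The plan is to mirror the strategy of \cite[Theorem 5.3.6]{Raksit20} in the parametrized setting, exhibiting the functor $B \mapsto \LSym_B^\sigma(\Sigma^\sigma \L_{B/A}(1))$, endowed with a suitable $h_+^\sigma$-dg derived involutive algebra structure, as a left $C_2$-adjoint to $\ev^0 \circ U$ of Proposition \ref{prop:inv_deRham_asadjoint}. Since $\L\Omega^{\sigma,\bullet}_{-/A}$ is also characterized by this universal property, uniqueness of left adjoints will then produce the asserted equivalence.

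First I will construct the $h_+^\sigma$-dg derived involutive $A$-algebra structure on $\LSym_B^\sigma(\Sigma^\sigma \L_{B/A}(1))$. By Proposition \ref{prop:Dsigma_alg}, $(\D_+^\sigma)^\vee$ is naturally a derived involutive bialgebra, so by Construction \ref{cons:dalg_in_comodules}, the free derived involutive algebra functor $\LSym_B^\sigma$ on $\underline{\Mod}_B$ lifts canonically to a functor $\underline{\coMod}_{\D_+^\sigma}(\underline{\Mod}_B) \to \underline{\coMod}_{\D_+^\sigma}\underline{\DAlg}^\sigma(\underline{\Mod}_B)$. Thus it suffices to give $\Sigma^\sigma\L_{B/A}(1)$ the structure of a $\D_+^\sigma$-comodule in graded $B$-modules; using the description in Remark \ref{rmk:inv_dg_objects_unravelled}, this amounts to a single differential which we take to be zero (since $\Sigma^\sigma\L_{B/A}(1)$ is concentrated in degree $1$ and one needs a map into degree $2$). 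Composing with the universal derivation $d: B \to \L_{B/A}$ (viewed as an $A$-linear map $B \to \Sigma^{-\sigma}(\Sigma^\sigma \L_{B/A}(1))$) provides the first non-trivial piece of the comodule structure on the free algebra; the extension to higher symmetric powers is forced by the $(\D_+^\sigma)^\vee$-comodule structure and corresponds classically to extending $d$ via the Leibniz rule.

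Next I will establish the universal property. Given a $h_+^\sigma$-dg derived involutive $A$-algebra $X$ and a map $\varphi: B \to X_0 = \ev^0 U(X)$ in $\underline{\DAlg}^\sigma_A$, the adjoint of the differential $\Sigma^\sigma X_0 \to X_1$ yields an $A$-linear derivation $\delta_\varphi: B \to \Sigma^{-\sigma}X_1$ of $B$ into the $B$-module $\Sigma^{-\sigma}X_1$ (where the $B$-module structure is obtained from $\varphi$). By the universal property of $\L_{B/A}$ (Proposition \ref{prop:sq_zero_has_left_adjoint} and Definition \ref{defn:derivations}), $\delta_\varphi$ factors uniquely through $d: B \to \L_{B/A}$, producing a $B$-module map $\Sigma^\sigma \L_{B/A}(1) \to X_1(1)$. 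The free derived involutive $B$-algebra property of $\LSym_B^\sigma$, together with the functoriality of Construction \ref{cons:dalg_in_comodules}, then produces a unique morphism $\LSym_B^\sigma(\Sigma^\sigma\L_{B/A}(1)) \to X$ in $\DG_+^{\sigma,\geq 0}\underline{\DAlg}_A^\sigma$ extending $\varphi$. Finally, to promote the fiberwise adjunction to a $C_2$-adjunction, I will invoke Corollary \ref{cor:C2_left_adjoint_local_crit}, observing that $\ev^0\circ U$ preserves finite $C_2$-products as both $U$ and $\ev^0$ do so.

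The main obstacle will be coherently producing the dg-algebra structure on $\LSym_B^\sigma(\Sigma^\sigma\L_{B/A}(1))$ in the $\infty$-categorical parametrized setting: classically this is the Leibniz extension of the universal derivation, but here it must be encoded as a genuine $(\D_+^\sigma)^\vee$-comodule structure in graded derived involutive algebras compatible with norms. This is where I will lean hardest on the derived involutive bialgebra structure of $(\D_+^\sigma)^\vee$ (Proposition \ref{prop:Dsigma_alg}), which together with Construction \ref{cons:dalg_in_comodules} functorially transports the free derived involutive algebra monad to the category of $h_+^\sigma$-dg objects, automatically producing the needed Leibniz-compatibility. As a consistency check, Remark \ref{rmk:inv_dalg_underlying} and Remark \ref{rmk:dalg_functoriality_in_comodules} imply the underlying $\infty$-categorical adjunction recovers precisely that of \cite[Theorem 5.3.6]{Raksit20}, so the formula matches the non-equivariant one on underlying objects; the identification of the first differential with the universal derivation is then immediate from the construction of the unit map.
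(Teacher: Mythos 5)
Your proposal departs from the paper's proof in a fundamental way, and the departure introduces a genuine gap. The paper does not try to directly equip the graded object $\LSym_B^\sigma(\Sigma^\sigma\L_{B/A}(1))$ with an $h_+^\sigma$-dg structure and then check a universal property; rather, it factors the already-defined left adjoint $\L\Omega^{\sigma,\bullet}_{-/A}$ (Proposition \ref{prop:inv_deRham_asadjoint}) through a large diagram (\ref{diagram:involutive_cotangent_formula}) of $C_2$-adjoint pairs lifting the diagram in the proof of \cite[Theorem 5.3.6]{Raksit20}, checks that the diagram of \emph{right} $C_2$-adjoints commutes (using explicit formulas like $\ev^0 V(B) \simeq B^0 \oplus \Sigma^{-\sigma}B^1$), and reads off the formula for the underlying graded object from the commuting diagram of left adjoints. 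The dg structure never needs to be built by hand—it comes a priori from the definition of $\L\Omega^{\sigma,\bullet}$ as a left adjoint landing in $\DG_+^{\sigma,\geq 0}\underline{\DAlg}_A^\sigma$.

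The specific gap: you place the \emph{trivial} $\D_+^\sigma$-comodule structure on $\Sigma^\sigma\L_{B/A}(1)$ (forced, since it is concentrated in degree 1) and then apply the transported free functor from Construction \ref{cons:dalg_in_comodules}. But this transported functor takes the trivial comodule structure to the trivial (i.e., zero-differential) comodule structure on $\LSym_B^\sigma(\Sigma^\sigma\L_{B/A}(1))$; it cannot, by itself, produce the universal derivation as the first differential, because the universal derivation is an $A$-linear derivation (not a $B$-module map), hence is not data that can live inside the $\D_+^\sigma$-comodule category over $B$ to be fed into the free functor. "Composing with the universal derivation" afterward is not a well-defined operation on $\D_+^\sigma$-comodule structures, and "the extension to higher symmetric powers is forced by the Leibniz rule" is precisely the homotopy-coherent data one cannot supply by hand in this setting. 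Similarly, your verification of the universal property only pins down the maps on degrees $0$ and $1$; extending to all of $\LSym_B^\sigma(\cdots)$ compatibly with the algebra structure, the norm, the $\D_+^\sigma$-coaction, and all higher coherences is exactly what the adjunction formalism encodes and what a bare-hands argument misses. The paper's diagram chase exists to avoid exactly these issues.
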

\begin{cor}
	The involutive derived de Rham complex functor is fully faithful.  
\end{cor}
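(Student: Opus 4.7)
My plan is to deduce the corollary directly from the adjunction already set up in Proposition \ref{prop:inv_deRham_asadjoint} together with the explicit formula in Theorem \ref{thm:invdeRham_equals_LSymcotangent}. Recall that $\L\Omega^{\sigma,\bullet}_{-/A}$ is the left $C_2$-adjoint to the $C_2$-functor $G := \ev^0 \circ U \colon \DG_+^{\sigma,\geq 0}\underline{\DAlg}^\sigma_A \to \underline{\DAlg}^\sigma_A$. A $C_2$-left adjoint is fully faithful if and only if the unit $\eta \colon \id \to G \circ \L\Omega^{\sigma,\bullet}_{-/A}$ is an equivalence of $C_2$-functors, and this may be checked at each orbit of $\mathcal{O}^\op_{C_2}$.

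I would then fix $B \in \DAlg^\sigma_A$ (over either orbit) and analyze the unit map $\eta_B \colon B \to \ev^0 U(\L\Omega^{\sigma,\bullet}_{B/A})$. Applying Theorem \ref{thm:invdeRham_equals_LSymcotangent}, I would identify
\begin{equation*}
\L\Omega^{\sigma,\bullet}_{B/A} \simeq \LSym^\sigma_B\bigl(\Sigma^\sigma \L_{B/A}(1)\bigr)
\end{equation*}
in $\Gr\underline{\DAlg}^\sigma_B$. Since the module $\Sigma^\sigma \L_{B/A}(1)$ is concentrated in grading degree $1$, its free graded derived involutive $B$-algebra has degree-zero piece equal to the unit $B$. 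Hence $\ev^0 U(\L\Omega^{\sigma,\bullet}_{B/A}) \simeq B$ canonically.

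The remaining step, and the one requiring a little care, is to verify that the abstract equivalence $\ev^0 U(\L\Omega^{\sigma,\bullet}_{B/A}) \simeq B$ produced above is the unit $\eta_B$ rather than some other self-equivalence of $B$. This follows from the triangle identities: the left-adjoint structure of $\LSym^\sigma_B$ in $\Gr\underline{\DAlg}^\sigma_B$ (provided by Remark \ref{rmk:zeroth_graded_dalg} and the proof of Proposition \ref{prop:inv_deRham_asadjoint}) means that the composition $B \to \LSym^{\sigma,0}_B(\Sigma^\sigma \L_{B/A}(1)) \simeq B$ must be the identity, since it is adjoint to the zero map $0 \to \Sigma^\sigma \L_{B/A}(1)$ in the trivially extended fiber. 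Consequently $\eta_B$ is an equivalence, and the corollary follows.

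The main obstacle in practice is ensuring the compatibility of the formula in Theorem \ref{thm:invdeRham_equals_LSymcotangent} with the unit of the $C_2$-adjunction of Proposition \ref{prop:inv_deRham_asadjoint}; this is straightforward but requires one to unwind the adjunction carefully at the level of graded derived involutive algebras and to use that $\ev^0 \circ \LSym^\sigma_B$ applied to a graded module starting in positive degrees returns the unit object $B$. No new parametrized machinery beyond Remark \ref{rmk:zeroth_graded_dalg} and Construction \ref{cons:gr_fil_dalg_inv} is needed.
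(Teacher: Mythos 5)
Your argument is correct, and it is the argument the paper has in mind: the corollary is stated without proof precisely because it is immediate from Theorem~\ref{thm:invdeRham_equals_LSymcotangent} via the standard criterion that a left adjoint is fully faithful if and only if its unit is an equivalence. Your two main observations — that $\ev^0 U \L\Omega^{\sigma,\bullet}_{B/A} \simeq \LSym^{\sigma,0}_B(\Sigma^\sigma\L_{B/A}(1)) \simeq B$ because the input module is concentrated in grading degree~$1$, and that one must additionally check the abstract equivalence is implemented by the adjunction unit rather than by some arbitrary self-map of~$B$ — are exactly the right points to make, and the second one is genuinely necessary (a natural endomorphism of the identity need not be invertible).

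The one place I would tighten the phrasing is the final identification. Saying the composite $B \to \LSym^{\sigma,0}_B(\Sigma^\sigma\L_{B/A}(1)) \simeq B$ ``is adjoint to the zero map $0 \to \Sigma^\sigma\L_{B/A}(1)$'' is confusing, since the unit of a free--forgetful adjunction is adjoint to the identity, not to a zero map. The cleaner way to put it, using the decomposition of $\L\Omega^{\sigma,\bullet}_{-/A}$ as a composite of left $C_2$-adjoints from diagram~(\ref{diagram:involutive_cotangent_formula}), is: under the equivalence $\underline{\DAlg^\sigma\Mod}_A \simeq \Gr^{\{0,1\}}\underline{\DAlg}^\sigma$ of Proposition~\ref{prop:modules_and_gr01} and the adjunction $\LSym^\sigma \dashv \ev^{\{0,1\}}$, the unit at $(B, \L_{B/A})$ is the pair whose degree-$0$ component is the canonical $A$-algebra map $B \to \LSym^{\sigma,0}_B(\L_{B/A}(1)) = B$ coming from the unit of the free graded algebra, which is the identity; and likewise the unit of the cotangent-complex adjunction $L \dashv G$ is, in the $B$-factor, the identity. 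Composing, $\eta_B$ is the identity on $B$, as needed. This is what you meant, and the conclusion stands.
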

The involutive de Rham complex lifts the usual de Rham complex in the following sense. 
\begin{cor}\label{cor:inv_de_Rham_cplx_non_eqvt_comparison}
		Let $ k $ be a commutative ring with involution, and let $ \underline{k} $ denote its associated fixed point $ C_2 $-Green functor. 
		Let $ A $ be a derived involutive $ \underline{k} $-algebra and write $ A^e $ for its underlying $ k $-algebra (having forgotten the $ C_2 $-action). 
		Then there are equivalences 
		\begin{equation*}
				\left(\L\Omega^{\sigma,\bullet}_{A/\underline{k}}\right)^e \simeq \L\Omega^{\bullet}_{A^e/k}
		\end{equation*}
		of $ h_+ $-differential graded derived commutative $ k $-algebras which are natural in $ A $. 
		Here, the right-hand side denotes the ordinary derived de Rham complex. 
\end{cor}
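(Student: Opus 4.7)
The plan is to prove the corollary by combining the explicit formula of Theorem \ref{thm:invdeRham_equals_LSymcotangent} with the various compatibilities of the forgetful functor $(-)^e$ with the constructions appearing therein. Concretely, Theorem \ref{thm:invdeRham_equals_LSymcotangent} asserts
\begin{equation*}
   \L\Omega^{\sigma,\bullet}_{A/\underline{k}} \simeq \LSym_A^{\sigma}\bigl(\Sigma^\sigma \L_{A/\underline{k}}(1)\bigr)
\end{equation*}
in $\Gr\underline{\DAlg}^\sigma_A$, with the $h^\sigma_+$-dg structure provided via the equivalence of Construction \ref{cons:dalg_in_comodules} as a module over $(\D^{\sigma,\vee}_+)^\vee = \D^\sigma_+$. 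The ordinary derived de Rham complex $\L\Omega^{\bullet}_{A^e/k}$ admits the analogous description $\LSym_{A^e}(\Sigma \L_{A^e/k}(1))$ as an $h_+$-dg derived commutative $k$-algebra by \cite[Theorem 5.3.6]{Raksit20}.

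First I would verify that the construction $(-)^e \colon \underline{\Mod}_{\underline{k}} \to \Mod_{k}$ (restriction along $C_2/e \to C_2/C_2$) is a morphism of derived involutive algebraic contexts in the sense of Example \ref{ex:inv_dalg_over_const_Mackey}. By Remark \ref{rmk:inv_dalg_underlying}, the resulting functor $(-)^e \colon \DAlg^\sigma_{\underline{k}} \to \DAlg_{k^e}$ intertwines the free-algebra monads $\LSym^\sigma$ and $\LSym$, and by Proposition \ref{prop:invdalg_C2_colims}(c) it preserves all colimits used to form the symmetric algebras; in particular it commutes with base change along $\underline{k} \to A$. Combined with the identification $\L_{A/\underline{k}}^{e} \simeq \L_{A^e/k}$ from Theorem \ref{thm:real_hkr}(2), this yields a canonical equivalence of graded derived commutative $A^e$-algebras $\bigl(\LSym_A^{\sigma}(\Sigma^\sigma \L_{A/\underline{k}}(1))\bigr)^e \simeq \LSym_{A^e}(\Sigma \L_{A^e/k}(1))$, using also $(\Sigma^\sigma)^e = \Sigma^1$ and that grading degree is preserved.

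Next I would promote this to an equivalence of $h_+$-dg objects. By Construction \ref{cons:differential_algebra}, the underlying $\E_\infty$-bialgebra over $\Z$ of $\D^\sigma_+$ is $\D_+$; dualizing gives $(\D^{\sigma,\vee}_+)^e \simeq \D_+^\vee$ as derived bicommutative bialgebras (using Proposition \ref{prop:Dsigma_alg}). Now Remark \ref{rmk:dalg_functoriality_in_comodules}, applied to the morphism of derived involutive algebraic contexts $\Gr\left(\underline{\Mod}_{\underline{k}}\right) \to \Gr\left(\Mod_k\right)$ induced by $(-)^e$ and to the dualizable derived involutive bialgebra $\D^{\sigma,\vee}_+$, furnishes a commuting diagram of $C_2$-functors from $\DG_+^{\sigma}\underline{\DAlg}^\sigma_A$ to $\DG_+\DAlg_{A^e}$ compatible with the respective forgetful functors to $\Gr\underline{\DAlg}^\sigma_A \to \Gr\DAlg_{A^e}$ and with $\LSym^\sigma$ and $\LSym$. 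This transports the identification of the previous paragraph through the comodule construction.

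The final step will be to conclude by universal property. Since $(-)^e$ preserves $C_2$-colimits (Proposition \ref{prop:invdalg_C2_colims}\ref{propitem:invdalg_C2_colims_computed_in_calg}) and intertwines the forgetful functors $\DG^{\sigma,\geq 0}_+\underline{\DAlg}^\sigma_A \to \underline{\DAlg}^\sigma_A$ and $\DG^{\geq 0}_+\DAlg_{A^e} \to \DAlg_{A^e}$, it also intertwines their left adjoints; thus $\bigl(\L\Omega^{\sigma,\bullet}_{A/\underline{k}}\bigr)^e$ satisfies the defining universal property of $\L\Omega^\bullet_{A^e/k}$, and naturality in $A$ is automatic. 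The main technical obstacle will be checking that the $h_+^\sigma$-dg structure produced by Construction \ref{cons:dalg_in_comodules} is genuinely compatible with the $h_+$-dg structure of \cite[\S5]{Raksit20} after applying $(-)^e$; this amounts to tracing through the naturality in Remark \ref{rmk:dalg_functoriality_in_comodules} and identifying the resulting comonad on $\Gr\DAlg_{A^e}$ with Raksit's, which should reduce to the bialgebra identification $(\D^{\sigma,\vee}_+)^e \simeq \D_+^\vee$ already established.
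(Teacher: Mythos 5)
Your proposal is correct in substance and reaches the right conclusion, but it is considerably more elaborate than the paper's argument, which is a one-liner: since Proposition \ref{prop:inv_deRham_asadjoint} already asserts that the $C_2$-adjunction $(\L\Omega^{\sigma,\bullet}_{-/A},\, \ev^0\circ U)$ recovers Raksit's adjunction of \cite[Proposition 5.3.2]{Raksit20} on underlying $\infty$-categories, and since the left adjoint in a $C_2$-adjunction is by definition a $C_2$-functor (so it automatically commutes with the restriction functor $(-)^e$), the corollary follows immediately by evaluating the two left adjoints on $A$. Your route — first matching the explicit formula $\LSym^\sigma_A(\Sigma^\sigma\L_{A/\underline{k}}(1))$ with $\LSym_{A^e}(\Sigma\L_{A^e/k}(1))$ and then comparing dg structures via the bialgebra identification $(\D^{\sigma,\vee}_+)^e\simeq\D_+^\vee$ — is a legitimate alternative that in effect re-derives what Proposition \ref{prop:inv_deRham_asadjoint} packages as its ``on underlying $\infty$-categories'' claim; this is useful for a reader who wants to see the compatibility traced out, but it is not the shortest path.

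Two minor inaccuracies are worth flagging. First, $(-)^e$ is not literally a ``morphism of derived involutive algebraic contexts'' in the sense used in the paper — that notion is for a $C_2$-functor between $C_2$-$\infty$-categories, whereas $(-)^e$ is the restriction functor from the $C_2/C_2$-fiber to the $C_2/e$-fiber of a single $C_2$-$\infty$-category; the compatibilities you invoke (commuting with $\LSym$, with colimits, with the comodule constructions) come from the parametrized structure (cocartesian edges, Remark \ref{rmk:inv_dalg_underlying}), not from the morphism-of-contexts formalism. Second, in your final universal-property step, ``preserves colimits and intertwines the forgetful functors, hence intertwines the left adjoints'' is not automatic: you need a Beck--Chevalley argument (e.g.\ checking the comparison on free algebras, which you can get from Remark \ref{rmk:inv_dalg_underlying}), or — more cleanly — simply the observation that a $C_2$-left adjoint preserves cocartesian edges, which is precisely the commutation you want. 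The paper's argument uses the latter implicitly; yours gestures at the former, and would benefit from saying so explicitly.
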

The comparison result follows immediately from the definitions and the characterization of the derived de Rham complex in \cite[\S5.3]{Raksit20}; let us note that such a comparison is facilitated by the language of $ C_2 $-$ \infty $-categories. 
\begin{proof}
	[Proof of Theorem \ref{thm:invdeRham_equals_LSymcotangent}] 
	We construct a diagram (\ref{diagram:involutive_cotangent_formula}) of $C_2$-adjoint pairs (in which the left and upper arrows are left $ C_2 $-adjoints) lifting the diagram appearing in the proof of \cite[Theorem 5.3.6]{Raksit20}. 
	The result follows from an otherwise identical proof strategy to that in \emph{loc. cit.}: showing the $C_2$-functor given by tracing from the lower left directly up, then to the right is equivalent to the $ C_2 $-functor from the lower left to the upper right which passes through the center of the diagram. 
	Let us make the observations: 
	\begin{itemize}
		\item The forgetful $ C_2 $-functor $ U \colon \DG^\sigma_+\underline{\DAlg}_{A}^\sigma \to \Gr\underline{\DAlg}_{A}^\sigma $ admits a right $ C_2 $-adjoint $ V $. 
		This claim follows from Corollary \ref{cor:C2_left_adjoint_local_crit}; the restriction functors associated to the map $ C_2 \to C_2/C_2 $ have left adjoints by an argument similar to Proposition \ref{prop:invdalg_C2_colims}. 
		The functor $ U $ commutes with the left adjoints to the restriction functor because the $ C_2 $-symmetric monoidal structure on $ \DG_+^\sigma(\cat) $ is induced by the $ C_2 $-$ \E_\infty $-coalgebra structure on $ \D^\sigma_+ $.  
		For $ B \in \Gr\underline{\DAlg}^\sigma_{A} $, there is a canonical natural equivalence $ U(V(B)) \simeq B \ostar \D^{\sigma,\vee}_+ $. 
		\item The natural transformation
		\begin{equation*}
			\ev^0 \circ V \to \ev^0 \circ V \circ \ins^{\{0,1\}} \circ \ev^{\{0,1\}}
		\end{equation*}
		induced by the unit of the adjunction $ (\ins^{\{0,1\}} , \ev^{\{0,1\}}) $ is an equivalence because of the identity $ \ev^0 V(B) \simeq \ev^0\left(B \ostar \D^{\sigma,\vee}_+\right) \simeq B^0 \oplus \Sigma^{-\sigma}B^{1} $ in $ \cat $. 
		On underlying $ \infty $-categories, the equivalence recovers that of \cite[Lemma 5.3.12(b)]{Raksit20}.  
		It follows that the outer rectangle of (\ref{diagram:involutive_cotangent_formula}) consisting of right $ C_2 $-adjoints commutes. 
		\item The adjoint pair $ G, L $ are the square-zero extension and involutive cotangent complex functors of Proposition \ref{prop:sq_zero_has_left_adjoint} and Definition \ref{defn:involutive_cotangent}. 
			For any $ (B, M) \in \Gr^{\{0,1\}}\underline{\DAlg}_{A}^\sigma $, there is a natural equivalence $ \ev^0 (V(B, M)) \simeq G (B \oplus \loops^\sigma M) $. 
			This follows by a modification of \cite[Lemma 5.3.12(c)]{Raksit20}, in view of Remark \ref{rmk:square_zero_graded_alg}. 
			It follows that the lower trapezoid of (\ref{diagram:involutive_cotangent_formula}) consisting of right adjoints commutes. 
		\item Let $ \alpha $ be the equivalence of Proposition \ref{prop:modules_and_gr01}. 
			By Corollary \ref{cor:C2_left_adjoint_local_crit}, the statement of \cite[Lemma 5.3.11]{Raksit20} holds with adjoints replaced by $ C_2 $-adjoints and $ F $ replaced by the $ C_2 $-functor which sends $ (B,M) \mapsto \LSym^\sigma_B(M(1)) $. 
			Therefore, the right triangle of (\ref{diagram:involutive_cotangent_formula}) commutes.  
	\end{itemize}
	\begin{equation}\label{diagram:involutive_cotangent_formula}
	\begin{tikzcd}[row sep=large, column sep=large]
		\DG^{\sigma, \geq 0}_+ \underline{\DAlg}_{A} \ar[dd, shift left=1,"{\ev_0}"] \ar[r,shift left=1,"{\ins^{\geq 0}}"] & \DG^{\sigma}_+ \underline{\DAlg}_{A}\ar[rr, shift left=1,"U"] \ar[l,shift left=1,"{\ev^{\geq 0}}"] & & \Gr\underline{\DAlg}_{A}^\sigma \ar[ll, shift left=1,"V"] \ar[d,shift left=1,"{\ev^{\geq 0}}"]   \\
		& \underline{\DAlg^\sigma\Mod}_{A} \ar[ld, shift left=1,"G"] \ar[r, shift left=1,"\Sigma^\sigma"] & \underline{\DAlg^\sigma\Mod}_{A} \ar[r, shift left=1,"{\LSym_{}^\sigma}"] \ar[l, shift left=1,"\Omega^\sigma"] & \Gr^{\geq 0}\underline{\DAlg}_{A}^\sigma \ar[d, shift left=1,"{\ev^{\{0,1\}}}"] \ar[u,shift left=1,"{\ins^{\geq 0}}"] \ar[l, shift left=1] \\
		\underline{\DAlg}_{A}^\sigma \ar[uu, shift left=1,"{\L \Omega^{\sigma,\bullet}_{-/A}}"] \ar[ru, shift left=1,"L"]  & \DG^{\sigma, \geq 0}_+\underline{\DAlg}_{A} \ar[l, shift left=1,"\ev_0"] & \Gr^{\geq 0}\underline{\DAlg}_{A}^\sigma \ar[l, shift left=1,"V"] & \Gr^{\{0,1\}}\underline{\DAlg}_{A}^\sigma \ar[u, shift left=1,"{\ins^{\{0,1\}}}"] \ar[lu, "\sim"',"\alpha"] \ar[l, shift left=1,"\ins"]
	\end{tikzcd} .
	\end{equation}
	Since the diagram of right $ C_2 $-adjoints in (\ref{diagram:involutive_cotangent_formula}) commutes, the diagram of left $ C_2 $-adjoints commutes and we are done. 

	The description of the map $ B \to \L\Omega^{\sigma, 1}_{B/A} $ as the universal $ A $-linear derivation (of involutive rings) follows from a $ C_2 $-analogue of the argument appearing in \cite[Theorem 5.3.6]{Raksit20}; the main point is that the (\ref{diagram:involutive_cotangent_formula}) is a parametrized enhancement of the diagram appearing there. 
\end{proof} 
\begin{defn}\label{defn:involutive_deRham_cohomology} 
		Let $ B $ be a derived involutive algebra over $ A $. 
		Define the \emph{Hodge-filtered Hodge-completed involutive de Rham cohomology} $ \invdeRham^{\wedge, \geq \star}_{B/A} := \left|\L \Omega^{\bullet,\sigma}_{B/A}\right|^{\geq *} $ where $ |-|^{\geq *} $ is the functor of Definition \ref{defn:cohomology_type}; in words, it is the image of the involutive de Rham complex under the equivalence of Proposition \ref{prop:complete_fil_as_cochain_cplx_monoidal}. 
		In particular, involutive de Rham cohomology defines a $ C_2 $-functor
		\begin{equation*}
				\invdeRham^{\wedge, \geq \star}_{-/A} \colon \underline{\DAlg}_A^\sigma \to \underline{\E}_\infty\underline{\Alg}\left(\Fil^\wedge\left(\underline{\Mod}_A\right)\right) \,.	
		\end{equation*}
		Define the \emph{Hodge-completed involutive de Rham cohomology} to be the colimit $ \invdeRham_{-/A} := \colim_\star \invdeRham^{\wedge, \geq \star}_{-/A} $.  
		This defines a $ C_2 $-functor $ \underline{\DAlg}_A^\sigma \to \underline{\E}_\infty\underline{\Alg}\left(\underline{\Mod}_A\right) $. 
\end{defn}
The next comparison result follows immediately from the definitions of (filtered) involutive derived de Rham cohomology and the descriptions of the non-equivariant counterparts in \cite[\S5.3]{Raksit20}. 
However, let us remark that such a comparison is straightforward \emph{precisely because} we use the language of $ C_2 $-$ \infty $-categories. 
\begin{prop}\label{prop:inv_deRham_non_eqvt_comparison}
		Let $ k $ be a commutative ring with involution, and let $ \underline{k} $ denote its associated fixed point $ C_2 $-Green functor. 
		Let $ A $ be a derived involutive $ \underline{k} $-algebra and write $ A^e $ for its underlying $ k $-algebra (having forgotten the $ C_2 $-action). 
		Then there are equivalences 
		\begin{equation*}
			\left(\invdeRham^{\wedge, \geq \star}_{A/\underline{k}}\right)^e \simeq \mathrm{dR}^{\wedge, \geq \star}_{A^e/k} \qquad \qquad  \left(\invdeRham_{A/\underline{k}}\right)^e \simeq \mathrm{dR}_{A^e/k}
		\end{equation*}
		of complete filtered $ \E_\infty $-$ k $-algebras and $ \E_\infty $-$ k $-algebras, respectively which are natural in $ A $. 
		Here, the right-hand sides denote the ordinary (i.e. non-equivariant) Hodge-filtered Hodge-completed derived de Rham cohomology and de Rham cohomology, respectively. 
\end{prop}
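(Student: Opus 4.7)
The plan is to trace through Definition \ref{defn:involutive_deRham_cohomology} and apply the ``underlying'' $ C_2 $-restriction functor $ (-)^e \colon \underline{\Mod}_{\underline{k}} \to \Mod_{k} $ at each step, invoking naturality of each intermediate construction. The key point is that $ (-)^e $ is the fiber at $ C_2/e $ of a $ C_2 $-symmetric monoidal $ C_2 $-$ \infty $-category, so it is an exact, $ C_2 $-colimit-preserving, and $ C_2 $-limit-preserving symmetric monoidal functor. Consequently, it sends the bialgebras of Construction \ref{cons:differential_algebra} to their classical counterparts, inducing a symmetric monoidal functor $ \DG_+^\sigma\left(\underline{\Mod}_{\underline{k}}\right)^e \to \DG_+\left(\Mod_k\right) $ compatible with $ \left(\D^\sigma_+\right)^e \simeq \D_+ $.

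First I would combine Theorem \ref{thm:invdeRham_equals_LSymcotangent} with Corollary \ref{cor:inv_de_Rham_cplx_non_eqvt_comparison} to identify $ \left(\L\Omega^{\sigma,\bullet}_{A/\underline{k}}\right)^e \simeq \L\Omega^{\bullet}_{A^e/k} $ as $ h_+ $-dg derived commutative $ k $-algebras; this is essentially definitional given the pointwise description of the derived involutive symmetric powers monad from Remark \ref{rmk:inv_dalg_underlying}. Next, I would verify that $ (-)^e $ intertwines the cohomology-type functor $ |-|^{\geq *} $ of Definition \ref{defn:cohomology_type} with its classical counterpart. This requires two compatibilities: (i) that the shear equivalence $ [-\rho *] $ of Proposition \ref{prop:shear_gr_on_dg_mod} restricts on underlying objects to the classical $ [-2*] $ shear of \cite[Remark 5.1.12]{Raksit20} (already recorded in Proposition \ref{prop:shear_gr_on_dg_mod}(2)); and (ii) that the $ C_2 $-symmetric monoidal equivalence $ \Fil^\wedge(\cat) \simeq \underline{\Mod}_{\D_{-}}(\Gr(\cat)) $ of Proposition \ref{prop:complete_fil_as_cochain_cplx_monoidal} recovers the ordinary equivalence of \cite[Theorem 3.2.14]{Raksit20} on underlying $ \infty $-categories, which is precisely the content of that proposition.

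Combining these, one obtains natural equivalences
\begin{equation*}
\left(\invdeRham^{\wedge, \geq \star}_{A/\underline{k}}\right)^e
= \left(\bigl|\L\Omega^{\sigma,\bullet}_{A/\underline{k}}\bigr|^{\geq *}\right)^e
\simeq \bigl|\bigl(\L\Omega^{\sigma,\bullet}_{A/\underline{k}}\bigr)^e\bigr|^{\geq *}
\simeq \bigl|\L\Omega^{\bullet}_{A^e/k}\bigr|^{\geq *}
= \mathrm{dR}^{\wedge, \geq \star}_{A^e/k}
\end{equation*}
of complete filtered $ \E_\infty $-$ k $-algebras. The non-filtered statement then follows by passing to $ \colim_\star $, using that $ (-)^e $ preserves colimits. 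Naturality in $ A $ is automatic throughout because each step is a natural equivalence of $ C_2 $-functors.

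The only mildly delicate point is ingredient (ii) above: checking that the equivalence of Proposition \ref{prop:complete_fil_as_cochain_cplx_monoidal} is compatible with $ (-)^e $ in a strong enough sense. However, since that proposition was itself proved by exhibiting the equivalence as an $ \mathcal{O}^{\op}_{C_2} $-cocartesian family of symmetric monoidal equivalences extending \cite[Theorem 3.2.14]{Raksit20}, this compatibility is built in by construction, and no substantive new work is required.
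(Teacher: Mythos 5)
Your argument is correct and is essentially a detailed expansion of the paper's own (very terse) justification, which simply asserts that the proposition "follows immediately from the definitions of (filtered) involutive derived de Rham cohomology and the descriptions of the non-equivariant counterparts in \cite[\S5.3]{Raksit20}." You correctly identify the two compatibilities needed — that the shear $[-\rho *]$ specializes to $[-2*]$ (Proposition \ref{prop:shear_gr_on_dg_mod}(2)) and that the $C_2$-family equivalence of Proposition \ref{prop:complete_fil_as_cochain_cplx_monoidal} restricts to the classical one — and both are indeed baked into the cited results, so no new work is needed.
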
 
\begin{rmk}\label{rmk:involutive_derham_alg_structure_pending_conjecture}
		Let $ A $ be a derived involutive algebra over $ \underline{k} $. 
		The object $ \invdeRham^{\wedge, \geq \star}_{A/\underline{k}} $ may be regarded as an $ \E_\infty $-algebra in complete filtered $ \underline{k} $-modules in $ C_2 $-spectra, and $ \invdeRham_{-/\underline{k}} $ may be regarded as an $ \E_\infty $-$ \underline{k} $-algebra in $ C_2 $-spectra.  
		Assuming the conjecture of Remark \ref{rmk:complete_fil_as_cochain_cplx_C2monoidal_conj}, we may replace $ \E_\infty $ by $ C_2 $-$ \E_\infty $ in the previous sentence. 
\end{rmk}

\section{Real Hochschild homology}\label{section:realhh} 
Let $ R $ be a smooth $ \Z $-algebra. 
In their original work, Hochschild--Kostant--Rosenberg computed the homology of the cyclic bar complex $ H_*(B^{\mathrm{cyc}}R) $ and showed that it is isomorphic (as a graded abelian group) to de Rham complex of $ R $ over $ \Z $ \cite{MR142598}. 
Since the homology of a $ \Z $-module is the associated graded of the Postnikov filtration on it, that the filtration is functorial in $ R $ follows \emph{a posteriori} from this computation. 
The Hochschild homology has an action of $ S^1 $, and since the Postnikov filtration is lax symmetric monoidal, $ \mathrm{fil}_{\mathrm{HKR}}\HH(R/\Z) $ admits an action of $ \tau_{\geq *} \Z[S^1] $. 

In more recent work, Raksit formulates an elegant universal property for Hochschild homology incorporating all of the structure present on $ \HH $--its $ S^1 $-action, algebra structure, and filtration--and also specifying how they interact \cite{Raksit20}. 
Raksit identifies a canonical derived bicomutative bialgebra structure on the filtered $ \Z $-module $ \tau_{\geq *}\Z^{S^1} $, and uses the presence of this structure to define a functor $ \HH_{\mathrm{fil}}(-/\Z) \colon \DAlg_{\Z} \to \Fil_{S^1}^{\geq 0}\DAlg_{\Z} $ which interpolates between ordinary Hochschild homology and the derived de Rham complex. 
In particular, the identification of the filtration on the underlying object of $ \HH_{\mathrm{fil}} $ with the Postnikov filtration happens \emph{a posteriori}, as $ \tau_{\geq *} $ does not preserve derived algebra structures except in special cases (\cite[Remark 6.1.11]{Raksit20}). 
Beyond giving a functorial description of filtered Hochschild homology, this method is remarkably \emph{robust}: Raksit's result applies when $ \Z \to R $ are replaced by $ A \to B $ where $ A, B $ are arbitrary derived algebras. 

The robustness of the filtered circle approach to the HKR-theorem extends to the involutive setting. 
As Hornbostel--Park have already observed in \cite{PHrealTHH_perfectoid}, a HKR-style filtration on the real Hochschild homology of constant $ C_2 $-Green functors associated to commutative rings does not arise [directly] from an existing well-known filtration on $ \Mod_{\underline{\Z}} $. 
The situation is even murkier when one considers the real Hochschild homology of a $ C_2 $-Tambara functor on which the $ C_2 $-action is \emph{nontrivial} (see Corollary \ref{cor:gr_HR_via_resolutions}). 
We sidestep the question of which filtration on $ \Mod_{\underline{\Z}} $ is the `correct' one by instead studying the notion of a \emph{filtered involutive circle action}. 
Informally, we may regard a filtered $ S^\sigma $-action as a filtered $ S^1 $-action (in the sense of \cite[\S6]{Raksit20}) with a twist, i.e. the circle action simultaneously increases the filtration degree and sends a $ \Z[C_2] $-module $ M $ to $ M \otimes \Z_{-} $. 

In \S\ref{subsection:fil_inv_circ}, we introduce real Hochschild homology and discuss the involutive circle $ S^\sigma $ which acts on it.  
Adopting the strategy of Raksit outlined above, we show that $ \tau_{\geq *}\underline{\Z}^{S^\sigma} $ admits the structure of a derived bicommutative bialgebra refining that on $ \Z^{S^1} $. 
In \S\ref{subsection:realHKR}, we use the notion of filtered involutive circle actions of the previous subsection to define a filtered enhancement of real Hochschild homology and prove Theorem \ref{thm:real_hkr}. 
In \S\ref{subsection:filteredHCHPetc}, we consider whether our constructions allow us to identify filtrations on real negative cyclic homology, real cyclic homology, and real periodic cyclic homology. 
In \S\ref{subsection:computations_comparisons}, we include a few computations of filtrations on $ \HR(-/\underline{k}) $ of free derived involutive algebras and discuss relationships between our work and classical work when $ 2 $ acts invertibly on the base. 

\subsection{The involutive filtered circle}\label{subsection:fil_inv_circ}
Throughout this section, we will work with a fixed derived involutive algebraic context $ \cat $ and a derived involutive algebra object $ A $ of $ \cat $. 
\begin{defn}
	The category of \emph{derived involutive $ A $-algebras with $ S^\sigma $-action} is the $ C_2 $-$\infty$-category $ \underline{\Fun}_{C_2}(BS^\sigma,\underline{\DAlg}_{A}^\sigma) $ (Proposition \ref{prop:param_functors}). 	
\end{defn}
A choice of basepoint $ * \inj BS^\sigma $ induces a $ C_2 $-functor $ g \colon \underline{\Fun}_{C_2}\left(BS^\sigma,\underline{\DAlg}_{A}^\sigma\right) \to \underline{\DAlg}_{A}^\sigma $. 
\begin{lemma}\label{lemma:existence_freeSsigma_alg}
	The functor $ g $ admits a left $ C_2 $-adjoint 
	\begin{equation*}
		\underline{\DAlg}_{A}^\sigma \to \underline{\Fun}_{C_2}\left(BS^\sigma,\underline{\DAlg}_{A}^\sigma\right) .
	\end{equation*} 
\end{lemma}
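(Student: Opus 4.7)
The plan is to apply Corollary~\ref{cor:C2_left_adjoint_local_crit}, which reduces the construction of a left $C_2$-adjoint to the fiberwise existence of left adjoints, together with the existence and preservation of finite $C_2$-products. That is, I will establish the following three conditions: both $C_2$-$\infty$-categories admit finite $C_2$-products; the functor $g$ preserves these products; and $g$ admits fiberwise left adjoints.

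For the first two points, I will observe that $\underline{\DAlg}_A^\sigma$ strongly admits all $C_2$-small $C_2$-limits by Proposition~\ref{prop:invdalg_C2_colims}\ref{propitem:invdalg_admits_C2_lims}. The parametrized functor $C_2$-$\infty$-category $\underline{\Fun}_{C_2}(BS^\sigma, \underline{\DAlg}_A^\sigma)$ then inherits finite $C_2$-products computed pointwise in the target, using that parametrized limits of parametrized diagrams into a $C_2$-$\infty$-category admitting $C_2$-limits are computed pointwise (compare the discussion around Definition~\ref{defn:param_Gcolims}). Since $g$ is restriction along a morphism of $C_2$-spaces $* \to BS^\sigma$, it is computed by evaluation on the basepoint, and so preserves all $C_2$-limits which are computed pointwise; in particular it preserves finite $C_2$-products.

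For the third point, I will verify the fiberwise existence of left adjoints by the adjoint functor theorem applied over each orbit. Over $C_2/e$, the fiber of $g$ identifies with the ordinary forgetful functor $\Fun(BS^1, \DAlg_{A^e}) \to \DAlg_{A^e}$, which admits a left adjoint by the non-equivariant analogue developed in \cite[\S6.1]{Raksit20} (equivalently, by the presentability of both sides together with pointwise computation of limits). Over $C_2/C_2$, the fiber of $g$ is the restriction functor from $C_2$-equivariant functors $BS^\sigma \to \underline{\DAlg}_A^\sigma|_{\underline{C_2/C_2}}$ to $\DAlg_A^\sigma$; the source is presentable since $BS^\sigma$ is small and $\underline{\DAlg}_A^\sigma$ is fiberwise presentable, the target is presentable by the proof of Proposition~\ref{prop:invdalg_C2_colims}, and the restriction functor preserves all small limits by pointwise computation. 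Therefore \cite[Theorem 5.5.2.9]{LurHTT} furnishes a left adjoint.

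The main obstacle is bookkeeping: one must correctly identify the fibers of $\underline{\Fun}_{C_2}(BS^\sigma, \underline{\DAlg}_A^\sigma)$ using Proposition~\ref{prop:param_functors}, and verify that the $C_2$-product structure there (which involves the right adjoint to the restriction functor associated to $C_2/e \to C_2/C_2$, described on $\underline{\DAlg}_A^\sigma$ via Proposition~\ref{prop:dalg_coinduction}) is indeed preserved by $g$. Once this pointwise compatibility is established, the hypotheses of Corollary~\ref{cor:C2_left_adjoint_local_crit} are in place and the conclusion follows. On underlying $\infty$-categories this left $C_2$-adjoint should agree with the usual construction of a free derived algebra with $S^1$-action, while on $C_2$-fixed points it will produce the functor we will identify with real Hochschild homology $\HR(-/A)$ in the sequel.
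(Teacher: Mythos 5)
Your proposal is correct, but it takes a different route from the paper. The paper's proof is a one-liner: it invokes the general theory of parametrized Kan extensions from Shah (Corollary 9.18 and Theorem 10.5), combined with Proposition~\ref{prop:invdalg_C2_colims} giving $C_2$-cocompleteness of $\underline{\DAlg}_A^\sigma$; the left $C_2$-adjoint is then the $C_2$-left Kan extension along $* \to BS^\sigma$, which exists because the target admits $C_2$-colimits. You instead apply the local criterion Corollary~\ref{cor:C2_left_adjoint_local_crit}, establishing fiberwise left adjoints via the adjoint functor theorem and verifying that $g$ preserves finite $C_2$-products, leaning on the $C_2$-\emph{completeness} half of Proposition~\ref{prop:invdalg_C2_colims}\ref{propitem:invdalg_admits_C2_lims} rather than the cocompleteness half the paper uses. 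What the paper's route buys is brevity and an explicit description of the left adjoint as a parametrized Kan extension, which is then reused immediately in Proposition~\ref{prop:HR_Ssigma_colimit} to compute $\HR$ as a $C_2$-colimit; you would have to re-derive this identification. What your route buys is independence from the Kan-extension machinery of Shah18 \S9--10, relying only on the paper-internal local criterion. Two points could be tightened: (i) your justification that $g$ preserves $C_2$-products appeals loosely to ``parametrized limits computed pointwise'' with a vague pointer to Definition~\ref{defn:param_Gcolims}; the precise statement you need is a parametrized version of the assertion that limits in functor categories are pointwise, which deserves a direct citation; and (ii) the fiberwise left adjoints can be constructed more directly by ordinary left Kan extension along $* \to BS^1$ (and its parametrized analogue) rather than the adjoint functor theorem, which both simplifies the argument and sidesteps the accessibility hypothesis you leave implicit.
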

\begin{proof}
	Follows from \cite[Corollary 9.18 \& Theorem 10.5]{Shah18} and Proposition \ref{prop:invdalg_C2_colims}.
\end{proof}
\begin{defn}\label{defn:realHH} 
	We will denote the left $ C_2 $-adjoint of Lemma \ref{lemma:existence_freeSsigma_alg} by $ \HR(-/A) $. 
	If $ B $ is a derived involutive $ A $-algebra, we will refer to $ \HR (B/A) $ refer to it as \emph{real Hochschild homology of $ B $ over $ A $}, or simply \emph{real Hochschild homology of $ B $} when $ A $ is understood. 
\end{defn}
\begin{prop}\label{prop:HR_Ssigma_colimit}
	The composite of the forgetful functor and real Hochschild homology is computed on underlying objects by a parametrized colimit over $ S^\sigma $ (Example \ref{ex:signrepsphere})
	\begin{equation*}
		g \circ \HR(B/A) \simeq B^{\otimes_{A} S^\sigma} .
	\end{equation*}
\end{prop}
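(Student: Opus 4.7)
The plan is to recognize $\HR(-/A)$ as a parametrized left Kan extension and read off the formula from the pointwise description. A choice of basepoint $\underline{*} \to BS^\sigma$ is the data of a $C_2$-functor $p \colon \underline{*} \to BS^\sigma$ of $C_2$-$\infty$-categories with respect to which the evaluation $g$ is precisely the restriction $p^*$. Since $\underline{\DAlg}_A^\sigma$ strongly admits all small $C_2$-colimits by Proposition \ref{prop:invdalg_C2_colims}\ref{propitem:invdalg_admits_C2_colims}, the parametrized left Kan extension machinery of \cite[\S10]{Shah18} produces a left $C_2$-adjoint $p_!$ to $p^*$, and uniqueness of $C_2$-adjoints gives $\HR(-/A) \simeq p_!$.

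To evaluate the composite $g \circ \HR(B/A) \simeq p^* p_!(B)$, I would invoke the pointwise formula for parametrized left Kan extensions from \cite[\S10]{Shah18}: the value at the basepoint is the $C_2$-colimit of the constant diagram with value $B$ indexed by the parametrized slice $\underline{*} \times_{BS^\sigma} (BS^\sigma)^{/*}$. Because $BS^\sigma$ is by construction the delooping of the $C_2$-$\underline{\E}_1$-monoid $S^\sigma$ of Example \ref{ex:signrepsphere}, this slice is canonically identified with $\loops BS^\sigma \simeq S^\sigma$ as a $C_2$-space. Unwinding the definition of the tensoring of a derived involutive $A$-algebra by a $C_2$-space (namely, the $C_2$-colimit of the corresponding constant diagram in $\underline{\DAlg}_A^\sigma$), we obtain $g \circ \HR(B/A) \simeq \colim_{S^\sigma} B \simeq B^{\otimes_A S^\sigma}$, as desired.

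The only genuinely non-formal step is the identification of the parametrized slice over the basepoint of $BS^\sigma$ with $S^\sigma$, i.e.\ that the loop $C_2$-space of the delooping of $S^\sigma$ is $S^\sigma$ with the expected complex-conjugation action; this is a general feature of the parametrized bar construction once one has fixed the $C_2$-$\underline{\E}_1$-monoid structure on $S^\sigma$ coming from Example \ref{ex:signrepsphere}. All other ingredients (existence of parametrized Kan extensions along $p$, the pointwise formula, and that the resulting colimit is again computed inside $\underline{\DAlg}_A^\sigma$) are available directly from Propositions \ref{prop:invdalg_C2_colims} and \ref{prop:param_left_adjoint_local_crit}.
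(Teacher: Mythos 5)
Your argument is correct and is essentially the route the paper takes: both identify $\HR(-/A)$ as the parametrized left Kan extension along the basepoint $\eta \colon \underline{*} \to BS^\sigma$, invoke the pointwise criterion from \cite[\S10]{Shah18}, and identify the relevant parametrized comma/slice object with $S^\sigma$ — you do this via $\loops BS^\sigma$, while the paper packages it as the fiber sequence $S^\sigma \to ES^\sigma \simeq \{*\} \to BS^\sigma$, which amounts to the same identification. The only substantive difference is the last step: where you say ``unwinding the definition of the tensoring,'' the paper explicitly cites \cite[Corollary 5.3.8]{NS22} for the description of $C_2$-colimits in $C_2$-$\E_\infty$-algebras (and implicitly Proposition \ref{prop:invdalg_C2_colims}\ref{propitem:invdalg_C2_colims_computed_in_calg} to know the forgetful functor to $C_2$-$\E_\infty$-algebras preserves them), so that the parametrized colimit of the constant diagram at $B$ over $S^\sigma$ is genuinely the relative norm tensoring $B^{\otimes_A S^\sigma}$; it would strengthen your write-up to supply that citation rather than leave it to the reader.
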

\begin{proof}
	Recall the existence of a fiber sequence of $ C_2 $-spaces $ S^\sigma \to ES^\sigma \simeq \{*\} \xrightarrow{\eta} BS^\sigma $. 
	By \cite[Theorem 10.5; also see Remark 5.4]{Shah18}, the left adjoint to $ g $ is given by $ C_2 $-left Kan extension along the morphism $ \eta $. 
	By \cite[Remark 10.2]{Shah18}, the criterion that a diagram be a $ C_2 $-left Kan extension may be checked pointwise, i.e. after pulling back along a basepoint $ \underline{*} \to  BS^\sigma $ (since $ BS^\sigma $ is connected). 	
	The result follows from the description of $ C_2 $-colimits in (the $ C_2$-$\infty$-category of) $ C_2 $-$ \E_\infty $-algebras in \cite[Corollary 5.3.8]{NS22}. 
\end{proof}
\begin{rmk}\label{rmk:real_HH_as_THR_linearization}
\begin{itemize}
	\item Let $ k $ be a discrete ring with an involution, and let $ A \in \DAlg_{\underline{k}}^\sigma $. 
	The real topological Hochschild homology of (the underlying $ C_2 $-$ \E_\infty $-algebra of) $ A $ as defined in Dotto--Moi--Patchkoria--Reeh by \cite[Corollary 2.12]{MR4186464} is given by $ \THR(A) := A^{\otimes S^\sigma} = A \otimes_{N_e^{C_2}A} A $. 

	Since $ A \otimes_{\underline{N}_e^{C_2}A} A \simeq \THR(A/\sphere) \otimes_{\THR(\underline{k}/\sphere)} \underline{k} $, we see that $ \HR(-/\underline{k}) $ is a \emph{linearization} of $ \THR $. 

	\item Since $ (-)^e $ is symmetric monoidal, we see that 
	\begin{equation*} 
	\HR(A/\underline{k})^e = (\THR(A/\sphere) \otimes_{\THR(\underline{k}/\sphere)} \underline{k})^e = \THH(A^e/\sphere^e) \otimes_{\THH(k^e/\sphere^0)} k^e \simeq \HH(A^e/k^e) .
	\end{equation*}
	In particular, real Hochschild homology is an \emph{enhancement} of Hochschild homology. 

	\item This agrees with the definition of real Hochschild homology in \cite{SVP96}.
\end{itemize}
\end{rmk} 
Next, we show that a $ \underline{\Z} $-module with an $ S^\sigma $-action is equivalently a module over the $ C_2 $-group ring $ \underline{\Z}[S^\sigma] $. 
\begin{ntn}\label{ntn:Z_linear_circle}
	We let $ \T^\sigma $ denote the $ C_2 $-group ring $ \underline{\Z}[S^\sigma] $, i.e. the image of $ S^\sigma $ under the unique $C_2 $-colimit-preserving $C_2 $-symmetric monoidal functor $ \underline{\Spc}^{C_2} \to \underline{\Mod}_{\underline{\Z}} $. 
	Then the $ C_2 $-commutative monoid structure on $ S^\sigma $ induces a bicommutative $ C_2 $-bialgebra structure on $ \T^\sigma $. 

	Furthermore, $ \T^\sigma $ is dualizable with dual $ \T^{\sigma,\vee}$. 
	We may also describe $ \T^{\sigma,\vee} $ as the image of $ S^\sigma $ under the unique $ C_2 $-limit preserving $ C_2 $-functor $ \underline{\Z}^{(-)} \colon \underline{\Spc}^{C_2, \vop} \to \underline{\Mod}_{\underline{\Z}} $ sending $ C_2/C_2 $ to $ \underline{\Z} $. 
	By Proposition \ref{prop:param_dual_takes_coalg_to_alg}, the $ C_2 $-$ \E_\infty $ bialgebra structure on $ \T^{\sigma} $ induces a $ C_2 $-$ \E_\infty $ bialgebra structure on $ \T^{\sigma,\vee} $. 
\end{ntn}
\begin{lemma}\label{lemma:inv_circ_cochain_inv_dalg}
	There is a canonical derived involutive bialgebra structure on $ \T^{\sigma,\vee} $ promoting its $ C_2 $-$ \E_\infty $ bialgebra structure, and the derived bicommutative bialgebra structure on $ \T^\vee $ of \cite[Cons. 6.1.2]{Raksit20}. 
\end{lemma}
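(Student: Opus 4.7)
The plan is to adapt Raksit's strategy from \cite[Construction 6.1.2]{Raksit20} by reducing the problem to one about graded derived involutive bialgebras via the Postnikov filtration. The key observation is that $\T^{\sigma,\vee} = \underline{\Z}^{S^\sigma}$ is very close to being discrete: using the cofiber sequence $C_{2,+} \to S^0 \to S^\sigma$ of $C_2$-spaces and Lemma \ref{lemma:sigma_discrete}, we obtain an underlying identification $\T^{\sigma,\vee} \simeq \underline{\Z} \oplus \Sigma^{-\sigma}\underline{\Z} \simeq \underline{\Z} \oplus \Sigma^{-1}\Z_{-}$. In particular $\T^{\sigma,\vee}$ is coconnective in the Postnikov t-structure on $\underline{\Mod}_{\underline{\Z}}$, and the homotopy Mackey functor $\underline{\pi}_{-1}\T^{\sigma,\vee} \simeq \Z_{-}$ has the zero Mackey functor in its $C_2$-component, hence trivially injective restriction.

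Next, I will apply the Postnikov filtration to the $C_2$-$\E_\infty$-bialgebra $\T^{\sigma,\vee}$. Since the Postnikov t-structure on $\Mod_{\underline{\Z}}$ is compatible with the $C_2$-symmetric monoidal structure (Recollection \ref{rec:genuinepostnikov}) and the truncation functors are lax $C_2$-symmetric monoidal, $\tau_{\geq *}\T^{\sigma,\vee}$ acquires the structure of a filtered $C_2$-$\E_\infty$-bicommutative bialgebra in $\Fil(\underline{\Mod}_{\underline{\Z}})$. Reindexing so that the associated graded sits in nonnegative degrees, the homotopy computation above identifies $\gr^\bullet \tau_{\geq -*}\T^{\sigma,\vee}$ with the underlying graded $\underline{\Z}$-module of $\mathbb{D}^{\sigma,\vee}_+ = \underline{\Z} \oplus \Sigma^{-\sigma}\underline{\Z}(1)$ from Construction \ref{cons:differential_algebra}; this matches, as a $C_2$-$\E_\infty$-bialgebra, the structure extracted by Proposition \ref{prop:Dsigma_alg}, since both bialgebra structures arise from the (unique) $C_2$-$\E_\infty$-bialgebra structure on a square-zero extension of $\underline{\Z}$ by $\Sigma^{-\sigma}\underline{\Z}(1)$ in the appropriate symmetric monoidal heart.

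With the associated graded identified, the derived involutive structure is transported back as follows. Proposition \ref{prop:Dsigma_alg} exhibits $\mathbb{D}^{\sigma,\vee}_+$ as a derived involutive bicommutative bialgebra object in $\Gr(\underline{\Mod}_{\underline{\Z}})$, and the essential image condition of Proposition \ref{prop:inv_grdalg} applies because $\underline{\pi}_{-1}\T^{\sigma,\vee} = \Z_{-}$ has injective restriction. Proposition \ref{prop:nonconn_dalg_Post_filt} then identifies the full $C_2$-subcategory of coconnective derived involutive algebras with a full $C_2$-subcategory of $\Fil\underline{\DAlg}^\sigma_{\underline{\Z}}$ via the Postnikov filtration; the graded-level lift above pins down the filtered derived involutive bialgebra structure, and taking the colimit along the filtration yields a derived involutive bialgebra structure on $\T^{\sigma,\vee}$. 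For the coalgebra part, one either dualizes using Proposition \ref{prop:param_dual_takes_coalg_to_alg} (applied to the dualizable $\T^\sigma$) or runs the same Postnikov argument in $\underline{\coAlg}\underline{\DAlg}^\sigma$, which is permissible by the bialgebra version of Proposition \ref{prop:nonconn_dalg_Post_filt}. Compatibility with the non-equivariant derived bialgebra of \cite[Cons.~6.1.2]{Raksit20} follows from naturality of the Postnikov filtration under the symmetric monoidal $C_2$-functor $(-)^e \colon \underline{\Mod}_{\underline{\Z}} \to \Mod_\Z$, in the sense of Remark \ref{rmk:dalg_construction_naturality}.

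The main obstacle is checking that the filtered $C_2$-$\E_\infty$-bialgebra $\tau_{\geq *}\T^{\sigma,\vee}$ actually lies in the essential image of the fully faithful embedding from filtered derived involutive bialgebras—this is what makes the lift \emph{canonical}. Here the remarkable coincidence noted in the introduction enters: one must verify that the associated graded lift from Proposition \ref{prop:Dsigma_alg} is compatible with the filtered $C_2$-$\E_\infty$ structure coming from the Postnikov tower, and the cleanest way to do this is to check first that the regular slice and Postnikov connective covers of $\T^{\sigma,\vee}$ coincide (by computing $\Phi^{C_2}\T^{\sigma,\vee}$ and applying Lemma \ref{lemma:reg_slice_conn_conditions}), so that one can invoke the multiplicative compatibility of the regular slice filtration (Proposition \ref{prop:filtrations_are_compatible}) to control the lift. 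Uniqueness of the resulting derived involutive bialgebra structure then follows from full faithfulness of the Postnikov embedding (Proposition \ref{prop:nonconn_dalg_Post_filt}) combined with the uniqueness assertion in Proposition \ref{prop:Dsigma_alg}.
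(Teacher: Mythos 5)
Your approach is genuinely different from the paper's and, as written, has a gap. You try to transport the derived involutive structure from the associated graded $\gr(\tau_{\geq *}\T^{\sigma,\vee}) \simeq \mathbb{D}^{\sigma,\vee}_+$ (where Proposition \ref{prop:Dsigma_alg} provides it) back up to the filtered object $\tau_{\geq *}\T^{\sigma,\vee}$, and then take colimits. The sentence ``the graded-level lift above pins down the filtered derived involutive bialgebra structure'' is doing all the work, but no result in the paper furnishes such a lift: a derived involutive structure on $\gr(X)$, even together with a filtered $C_2$-$\E_\infty$-bialgebra structure on $X$, does not obviously determine a filtered derived involutive structure on $X$. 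Proposition \ref{prop:nonconn_dalg_Post_filt} only embeds coconnective derived involutive algebras \emph{into} filtered ones via $\tau_{\geq *}$; to use it you must already know $\T^{\sigma,\vee}$ carries a derived involutive structure, which is exactly what the lemma is asserting. Indeed in the paper the logical order is reversed from yours: the present lemma is used (together with Proposition \ref{prop:nonconn_dalg_Post_filt} and Observation \ref{obs:postnikov_regslice_agreement}) to prove the \emph{filtered} statement \ref{prop:filtered_Ssigma_bialg}\ref{propitem:filtered_Ssigma_dalg}, so your proposal is close to being circular.

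The paper's argument is far shorter and avoids all of the filtration apparatus. It observes that $\T^{\sigma,\vee}$ is the value on $S^\sigma$ of the finite-$C_2$-limit-preserving, $C_2$-symmetric monoidal cochain functor $\underline{\Z}^{(-)} \colon \underline{\Spc}^{C_2,\vop} \to \underline{\DAlg}^\sigma_{\underline{\Z}}$; since $S^\sigma$ is a $C_2$-$\E_\infty$-co-$C_2$-$\E_\infty$-bialgebra in $\underline{\Spc}^{C_2}$ (coalgebra structure from the $C_2$-cartesian structure, algebra structure from the group multiplication), this functor already produces a derived involutive bialgebra directly, and the description visibly lifts the one landing in $C_2\E_\infty\underline{\Alg}_{\underline{\Z}}$. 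Compatibility with Raksit's $\T^\vee$ is immediate on underlying objects. If you want to salvage your strategy, you would need to supply an obstruction-theoretic or deformation argument for lifting along the associated graded functor; as it stands, the cleaner route is to recognize the cochain functor as the source of the structure rather than trying to reconstruct it from slices.
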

\begin{proof}
	We may also describe $ \T^{\sigma,\vee} $ as the image of $ S^{\sigma} $ under the finite-$ C_2 $-limit-preserving $C_2$-symmetric monoidal functor $ \underline{\Z}^{(-)} \colon \underline{\Spc}^{C_2,\vop} \to \underline{\DAlg}_{\underline{\Z}}^\sigma $. 
	This description lifts along the forgetful $ C_2 $-functor $ \underline{\DAlg}_{\underline{\Z}}^\sigma \to C_2\E_\infty\underline{\Alg}_{\underline{\Z}} $, so we are done. 
\end{proof}
\begin{prop}
	Let $ \cat $ be an derived involutive algebraic context and let $ A $ be an derived involutive algebra in $ \cat $. 
	Then there is an equivalence of $ C_2 $-$ \infty $-categories
	\begin{equation*}
		\underline{\Fun}_{C_2}\left(BS^\sigma,\underline{\DAlg}_{A}^\sigma\right) \simeq \underline{\coMod}_{\T^{\sigma,\vee}}\underline{\DAlg}_{A}^\sigma  \,,
	\end{equation*} 
	where $ \T^{\sigma,\vee} $ is the derived involutive bialgebra of Lemma \ref{lemma:inv_circ_cochain_inv_dalg} and the $ C_2 $-$ \infty $-category on the right-hand side is from Construction \ref{cons:dalg_in_comodules}. 
	On underlying $ \infty $-categories, this equivalence recovers the equivalence of \cite[Remark 6.1.3]{Raksit20}. 
\end{prop}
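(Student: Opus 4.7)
The plan is to adapt Remark 6.1.3 of \cite{Raksit20} to the parametrized setting by passing through the $C_2$-symmetric monoidal equivalence of Example \ref{ex:functor_cat_as_module_cat}. First, observe that $\underline{\Fun}_{C_2}(BS^\sigma, \underline{\DAlg}_A^\sigma)$ is well-defined because $\underline{\DAlg}_A^\sigma$ admits $C_2$-limits by Proposition \ref{prop:invdalg_C2_colims}\ref{propitem:invdalg_admits_C2_lims}. Moreover, since the forgetful $C_2$-functor $\underline{\DAlg}_A^\sigma \to \underline{\Mod}_A$ is fiberwise monadic and strongly preserves $C_2$-limits and sifted $C_2$-colimits (Proposition \ref{prop:invdalg_C2_colims}), the category of derived involutive $A$-algebras with $S^\sigma$-action is computed pointwise: the monad $\LSym^\sigma$ may be applied inside the functor category, so
\begin{equation*}
 \underline{\Fun}_{C_2}(BS^\sigma, \underline{\DAlg}_A^\sigma) \simeq \underline{\DAlg}^\sigma\!\left(\underline{\Fun}_{C_2}(BS^\sigma, \underline{\Mod}_A)\right),
\end{equation*}
where the right-hand side uses the pointwise $C_2$-symmetric monoidal structure.

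The next step is to identify $\underline{\Fun}_{C_2}(BS^\sigma, \underline{\Mod}_A)$ as a category of comodules. Applying Example \ref{ex:functor_cat_as_module_cat} to the $C_2$-presentable $C_2$-symmetric monoidal $C_2$-$\infty$-category $\underline{\Mod}_A$ and the $C_2$-commutative monoid $S^\sigma$ yields a canonical $C_2$-symmetric monoidal equivalence $\underline{\Fun}_{C_2}(BS^\sigma, \underline{\Mod}_A) \simeq \underline{\LMod}_{\T^\sigma}(\underline{\Mod}_A)$, where $\T^\sigma$ is viewed in $\underline{\Mod}_A$ via the canonical map $\underline{\Z} \to A$. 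Since $S^\sigma$ is a dualizable $C_2$-space, $\T^\sigma$ and hence $\T^\sigma$ regarded over $A$ are dualizable, so Corollary \ref{cor:mod_comod_equivalence} furnishes a $C_2$-symmetric monoidal equivalence $\underline{\LMod}_{\T^\sigma}(\underline{\Mod}_A) \simeq \underline{\coMod}_{\T^{\sigma,\vee}}(\underline{\Mod}_A)$.

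Finally, I would apply $\underline{\DAlg}^\sigma$ to this chain of $C_2$-symmetric monoidal equivalences. By Remark \ref{rmk:dalg_functoriality_in_comodules}, a $C_2$-symmetric monoidal equivalence of (the ambient presentable $C_2$-$\infty$-categories underlying) derived involutive algebraic contexts that preserves compact projective generators and the localizing functor $P$ induces an equivalence between the resulting $C_2$-$\infty$-categories of derived involutive algebras in the associated comodule categories. Under this principle, the right-hand side becomes $\underline{\coMod}_{\T^{\sigma,\vee}}\underline{\DAlg}_A^\sigma$ by definition of Construction \ref{cons:dalg_in_comodules}, while the left-hand side is identified with $\underline{\Fun}_{C_2}(BS^\sigma, \underline{\DAlg}_A^\sigma)$ by the pointwise computation above. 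Composing, we obtain the desired equivalence, and its comparison with \cite[Remark 6.1.3]{Raksit20} on underlying $\infty$-categories is automatic since every step above recovers the corresponding non-equivariant construction on underlying fibers.

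The main obstacle is ensuring that the $C_2$-symmetric monoidal equivalence $\underline{\Fun}_{C_2}(BS^\sigma, \underline{\Mod}_A) \simeq \underline{\coMod}_{\T^{\sigma,\vee}}(\underline{\Mod}_A)$ respects enough auxiliary structure to pass to derived involutive algebras on both sides coherently—specifically, that the pointwise t-structure, the pointwise zeroth slice functor, and the pointwise compact projective generators on the functor category give $\underline{\Fun}_{C_2}(BS^\sigma, \underline{\Mod}_A)$ the structure of a derived involutive algebraic context in which the $\LSym^\sigma$-monad is computed levelwise. Once these compatibilities are verified (using that $BS^\sigma$ is a $C_2$-connected space and that sifted colimits and the relevant norm operations are computed pointwise in the functor category), the equivalence of derived involutive algebra categories is formal.
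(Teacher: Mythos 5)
Your proposal is correct in outline but takes a genuinely different route from the paper's proof. The paper works entirely at the level of derived involutive algebras: it equips both $\underline{\DAlg}_{A}^\sigma$ and $\underline{\Fun}_{C_2}(BS^\sigma, \underline{\DAlg}_{A}^\sigma)$ with $C_2$-cocartesian $C_2$-symmetric monoidal structures, uses the universal property of the $C_2$-presheaf category $\underline{\Fun}(BS^\sigma, \underline{\Spc}^{C_2})$ (via Theorem 11.5 of \cite{Shah18}) together with the coalgebra unit $A \to \T^{\sigma,\vee}$ to produce a $C_2$-colimit-preserving comparison functor out of the coproduct in $C_2\E_\infty\underline{\Alg}\left(C_2\Pr^L\right)$, and then concludes via fiberwise Barr--Beck--Lurie by checking that free objects map to free objects. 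You instead forget down to $\underline{\Mod}_A$, prove the module-level equivalence $\underline{\Fun}_{C_2}(BS^\sigma, \underline{\Mod}_A) \simeq \underline{\coMod}_{\T^{\sigma,\vee}}(\underline{\Mod}_A)$ using Example \ref{ex:functor_cat_as_module_cat} and Corollary \ref{cor:mod_comod_equivalence}, and then ``apply $\underline{\DAlg}^\sigma$'' to both sides.

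The trade-off is as follows. Your argument has the advantage of isolating the purely formal ingredient (the module-level comparison) from the algebraic one, but it offloads the difficulty onto two commutation facts: first, that $\underline{\Fun}_{C_2}(BS^\sigma, \underline{\DAlg}_A^\sigma)$ coincides with modules over the pointwise $\LSym^\sigma$ monad on $\underline{\Fun}_{C_2}(BS^\sigma, \underline{\Mod}_A)$; second, that modules over that monad on $\underline{\coMod}_{\T^{\sigma,\vee}}(\underline{\Mod}_A)$ agree with $\underline{\coMod}_{\T^{\sigma,\vee}}\underline{\DAlg}_A^\sigma$. The first is nearly tautological once stated carefully, but you must make sense of $\underline{\DAlg}^\sigma$ on a functor category that is not itself a derived involutive algebraic context in the paper's sense; the second is the entire content of Construction \ref{cons:dalg_in_comodules}, which constructs the lift of $\LSym^\sigma$ to comodules and verifies the compatibility. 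So rather than ``formally applying $\underline{\DAlg}^\sigma$ to a symmetric monoidal equivalence'' as you phrase it, what is actually required is to chase the diagram of Construction \ref{cons:dalg_in_comodules} through the equivalence of Example \ref{ex:functor_cat_as_module_cat}. The paper's route via Barr--Beck--Lurie avoids handling these commutations head-on by directly constructing the comparison at the algebra level and then verifying one monadicity criterion. Both are valid; the paper's is somewhat more self-contained, while yours makes the module-level content of the equivalence more transparent.

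One smaller point: you realize $\T^\sigma$ over $A$ as the image of $S^\sigma$ under the structure map from $\underline{\Z}$. This is fine, but in the paper's setup there is no assumed $\underline{\Z}$-linearity in the statement (the derived involutive algebraic context $\cat$ need not be $\underline{\Z}$-linear for this proposition); the relevant map is the unique $C_2$-colimit-preserving $C_2$-symmetric monoidal functor $\underline{\Spc}^{C_2} \to \underline{\Mod}_A(\cat)$, which exists without linearity assumptions and is what Example \ref{ex:functor_cat_as_module_cat} actually invokes. The derived involutive bialgebra structure on $\T^{\sigma,\vee}$ (Lemma \ref{lemma:inv_circ_cochain_inv_dalg}) does eventually use $\underline{\Z}$-linearity, but it is an input to the statement, not something your proof needs to reconstruct.
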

\begin{proof}
	Since $ \underline{\DAlg}^\sigma_A $ admits finite $ C_2 $-coproducts (Proposition \ref{prop:invdalg_C2_colims}), it acquires a distributive $ C_2 $-cocartesian $ C_2 $-symmetric monoidal structure \cite[Example 2.4.1]{NS22}. 
	In other words, $ \underline{\DAlg}^\sigma_A $ lifts to an object of $ \underline{\Alg}_{C_2}\left(\Cat_{C_2}^\otimes \right) $. 
	Similarly, by \cite[Corollary 6.0.12]{NS22}, $ \underline{\Fun}_{C_2}\left(BS^\sigma, \underline{\Spc}^{C_2}\right) $ acquires a $ C_2 $-symmetric monoidal structure because it is a $ C_2 $ presheaf category. 
	Finally, $ \underline{\Fun}_{C_2}\left(BS^\sigma,\underline{\DAlg}_A^\sigma\right) $ acquires a $ C_2 $-cocartesian $ C_2 $-symmetric monoidal structure from \cite[Corollary 9.18]{NS22}. 

	By Theorem 11.5 of \cite{Shah18}, the constant $ C_2 $-functor $ \left(BS^\sigma\right)^\vop \to \underline{\coMod}_{\T^{\sigma,\vee}}\underline{\DAlg}_{A}^\sigma $ induces a $ C_2 $-colimit preserving functor $ F \colon \underline{\Fun}\left(BS^\sigma, \underline{\Spc}^{C_2}\right) \to \underline{\coMod}_{\T^{\sigma,\vee}}\underline{\DAlg}_{A}^\sigma $. 
	The map of $ C_2 $-monoids $ S^\sigma \to \{*\} $ induces a map of derived involutive bicommutative bialgebras $ A \to \T^{\sigma, \vee} $. 
	This, in turn, induces a $ C_2 $-colimit preserving $ C_2 $-functor $ G \colon \underline{\DAlg}_{A}^\sigma \to \underline{\coMod}_{\T^{\sigma,\vee}}\underline{\DAlg}_{A}^\sigma $. 

	Now by \cite[Theorem 5.1.4(3)]{NS22}, $ \underline{\Alg}_{C_2}\left(\Cat_{C_2}^\otimes \right) $ has coproducts, hence the $ C_2 $-colimit-preserving functors $ F $ and $ G $ induce a $ C_2 $-functor $ \alpha \colon \underline{\Fun}\left(BS^\sigma, \underline{\Spc}^{C_2}\right) \otimes \underline{\DAlg}^\sigma_A \to \underline{\coMod}_{\T^{\sigma,\vee}}\underline{\DAlg}_{A}^\sigma $ making the diagram 
	\begin{equation*}
	\begin{tikzcd}[row sep=small]
		\underline{\Fun}\left(BS^\sigma, \Spc^{C_2}\right) \otimes \underline{\DAlg}^\sigma_{A} \ar[rr,"f"] \ar[rd] & & \underline{\coMod}_{\T^{\sigma,\vee}}\underline{\DAlg}_{A}^\sigma \ar[ld] \\
		& \underline{\DAlg}_{A}^\sigma & 
	\end{tikzcd} 
	\end{equation*}
	commute. 
	Noticing that each of the forgetful functors is fiberwise monadic such that $ f $ takes free algebras to free algebras, by the Barr--Beck--Lurie theorem \cite[Corollary 4.7.3.16]{LurHA} we conclude that $ f $ is an equivalence. 
\end{proof}
\begin{cons}\label{cons:filteredSsigma}
	We set $ \T^{\sigma,\vee}_{\fil} := \tau_{\geq*}^{\mathrm{Post}}\T^{-\sigma} \in \Fil\left(\Mod_{\underline{\Z}}\right) $ for the \emph{dual $\underline{\Z}$-linear filtered involutive circle} using the Postnikov filtration of Recollection \ref{rec:genuinepostnikov}. 
	We set $ \T^{\sigma}_{\fil} =\left(\T^{\sigma,\vee}_{\fil}\right)^{\vee} $ for the \emph{$\underline{\Z}$-linear filtered involutive circle}. 
\end{cons}
\begin{obs}\label{obs:postnikov_regslice_agreement}
	There is an equivalence $ \tau^\rslice_{\geq *} \underline{\Z}^{S^\sigma} \simeq \tau^{\mathrm{Post}}_{\geq *}\underline{\Z}^{S^\sigma} $ of filtered $ \underline{\Z} $-modules. 
	This follows from Example \ref{ex:spheres_as_slices} and Proposition \ref{prop:graded_fil_Ssigma}. 
\end{obs}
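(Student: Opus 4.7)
The plan is to produce the equivalence by an explicit computation after observing that $\underline{\Z}^{S^\sigma}$ splits into pieces each of which is pure in both filtrations simultaneously.

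First I would use Proposition \ref{prop:graded_fil_Ssigma} (which, per the hint, computes $\pi_{-1}\T^{-\sigma}$ and the other homotopy Mackey functors) to identify
\[
  \underline{\Z}^{S^\sigma} \;\simeq\; \underline{\Z} \,\oplus\, \Sigma^{-\sigma}\underline{\Z}
\]
as a $\underline{\Z}$-module. This splitting reflects the stable decomposition $\Sigma^\infty_{C_2,+}S^\sigma \simeq \sphere^0 \oplus \Sigma^\sigma\sphere$ coming from the $C_2$-equivariant collapse map $S^\sigma \to \ast$ and the identification of $S^\sigma$ with the one-point compactification of the sign representation from Example \ref{ex:signrepsphere}.

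Next I would analyze each summand separately under both filtrations. The unit $\underline{\Z}$ lies in the heart of the Postnikov t-structure (Variant \ref{var:eqvtmodulespostnikov}) and, being the $\underline{\Z}$-linearization of $\sphere^{0\cdot\rho}$, is a regular $0$-slice. For the other summand: Example \ref{ex:spheres_as_slices} says $\sphere^{-\sigma}$ is a regular $(-1)$-slice, and tensoring with $\underline{\Z}$ preserves this (for instance by the connectivity criterion of Lemma \ref{lemma:reg_slice_conn_conditions} applied over $\underline{\Z}$), so $\Sigma^{-\sigma}\underline{\Z}$ is a regular $(-1)$-slice. By Lemma \ref{lemma:sigma_discrete} there is moreover an identification $\Sigma^{-\sigma}\underline{\Z} \simeq \Sigma^{-1}\Z_{-}$, and since $\Z_{-}$ is discrete, $\Sigma^{-\sigma}\underline{\Z}$ is also concentrated in Postnikov degree $-1$ with $\underline{\pi}_{-1} = \Z_{-}$ in the heart.

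Consequently, for each $n \in \Z$, the objects $\tau^{\rslice}_{\geq n}\underline{\Z}^{S^\sigma}$ and $\tau^{\mathrm{Post}}_{\geq n}\underline{\Z}^{S^\sigma}$ coincide: both vanish for $n \geq 1$, both equal $\underline{\Z}$ for $n = 0$, and both equal $\underline{\Z}\oplus\Sigma^{-\sigma}\underline{\Z}$ for $n \leq -1$, with the transition maps being the canonical inclusions in both cases.

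The main obstacle is upgrading these degreewise identifications to a canonical equivalence of \emph{filtered} $\underline{\Z}$-modules, rather than just an abstract agreement stage by stage. To handle this I would argue as follows: each summand $\underline{\Z}$ and $\Sigma^{-\sigma}\underline{\Z}$ is a ``pure'' object for both filtrations, meaning it is both regular slice $n$-connective and regular slice $(n{+}1)$-coconnective (respectively Postnikov) for a single $n$ ($n=0$ and $n=-1$ respectively). For such pure objects, the filtered object $\tau_{\geq \bullet}$ is tautologically a step function, so the comparison of the regular slice and Postnikov filtered objects reduces to the equalities of the individual truncation values already established. Because both $\tau^{\rslice}_{\geq \bullet}$ and $\tau^{\mathrm{Post}}_{\geq \bullet}$ are right $C_2$-adjoints (Remark \ref{rmk:slice_param_enhancement}, Proposition \ref{prop:filtrations_are_parametrized}), they preserve finite products and hence direct sums, so the equivalence on each summand assembles into the desired equivalence in $\Fil\bigl(\underline{\Mod}_{\underline{\Z}}\bigr)$.
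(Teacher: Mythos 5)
Your proof is correct, and it captures the same core content the paper intends: identify the Mackey-functor homotopy of $\underline{\Z}^{S^\sigma}$ as concentrated in degrees $0$ and $-1$, recognize the pieces $\underline{\Z}$ and $\Sigma^{-\sigma}\underline{\Z} \simeq \Sigma^{-1}\Z_-$ as regular $0$- and $(-1)$-slices respectively via Example \ref{ex:spheres_as_slices}, and conclude that the Postnikov and regular slice filtrations must coincide. The one genuine variation is your use of the explicit stable splitting $\Sigma^\infty_{C_2,+}S^\sigma \simeq \sphere \oplus \sphere^\sigma$, which lets you treat the two summands independently; the paper's cited references instead hand you the Postnikov tower as a two-step extension. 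Both work, since regular slice connectivity and regular slice coconnectivity are each closed under extensions, but your route does buy a cleaner passage from the degreewise identification to an equivalence of \emph{filtered} objects: because $\tau^\rslice_{\geq\bullet}$ and $\tau^{\mathrm{Post}}_{\geq\bullet}$ are right $C_2$-adjoints and hence preserve the direct sum, the problem reduces to two pure summands for which both truncation towers are tautologically the same split step filtration. It would strengthen your last paragraph slightly to make the canonicity of that identification explicit (e.g., observe that $\tau^\rslice_{\geq n}\underline{\Z}^{S^\sigma}$ is Postnikov $n$-connective for every $n$ here, so the counit to $\underline{\Z}^{S^\sigma}$ factors through $\tau^{\mathrm{Post}}_{\geq n}$ by the universal property, yielding a natural map of filtered objects that you then check is a levelwise equivalence), but the idea you sketch is the right one and the argument is sound.
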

\begin{prop}\label{prop:graded_fil_Ssigma}
	The graded pieces of $ \T^{\sigma,\vee}_\fil $ are given by
	\begin{equation*}
		\gr^{-1} \T^{\sigma,\vee}_\fil = \Sigma^{-\sigma}\underline{\Z}, \qquad \gr^{0} \T^{\sigma,\vee}_\fil = \underline{\Z}, 
	\end{equation*}
	and $ \gr^n \T^{\sigma,\vee}_\fil = 0 $ otherwise. 
\end{prop}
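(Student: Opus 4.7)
The plan is to exploit a $C_2$-fixed basepoint of $S^\sigma$ to split $\T^{\sigma,\vee}$ into two summands whose Postnikov concentration can be analyzed separately. The unpointed $C_2$-space $S^\sigma$ has a $C_2$-fixed point $+1$, so $(S^\sigma)_+ \simeq \sphere^\sigma \vee \sphere^0$ as pointed $C_2$-spaces (where $\sphere^\sigma$ denotes the representation sphere). Applying $F(\Sigma^\infty_+(-), \underline{\Z})$ therefore yields a splitting
\[
\T^{\sigma,\vee} = \underline{\Z}^{S^\sigma} \simeq \underline{\Z} \oplus \Sigma^{-\sigma}\underline{\Z}
\]
in $\underline{\Mod}_{\underline{\Z}}$. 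Since $\tau^{\mathrm{Post}}_{\geq *}$ is additive, it suffices to show that $\underline{\Z}$ is concentrated in Postnikov degree $0$ (which is immediate) and that $\Sigma^{-\sigma}\underline{\Z}$ is concentrated in Postnikov degree $-1$, so that the two summands contribute $\underline{\Z}$ to $\gr^0$ and $\Sigma^{-\sigma}\underline{\Z}$ to $\gr^{-1}$, with all other graded pieces zero.

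For the concentration of $\Sigma^{-\sigma}\underline{\Z}$, I would apply $F(-,\underline{\Z})$ to the standard cell-decomposition cofiber sequence of the sign representation sphere in pointed $C_2$-spaces,
\[
(C_2)_+ \longrightarrow \sphere^0 \longrightarrow \sphere^\sigma,
\]
to obtain the fiber sequence
\[
\Sigma^{-\sigma}\underline{\Z} \longrightarrow \underline{\Z} \longrightarrow F((C_2)_+, \underline{\Z}) \simeq \underline{\Z}[C_2],
\]
where the last identification uses self-duality of finite $C_2$-sets. Since the two rightmost terms are discrete (lying in the heart of the Postnikov t-structure on $\underline{\Mod}_{\underline{\Z}}$), the long exact sequence of Mackey functor homotopy groups immediately forces $\underline{\pi}_n\Sigma^{-\sigma}\underline{\Z} = 0$ for $n \notin \{-1, 0\}$. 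Concentration in degree $-1$ then reduces to showing that the induced map of Mackey functors $\underline{\Z} \to \underline{\Z}[C_2]$ is injective.

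This middle map is the Spanier--Whitehead dual of the augmentation $\underline{\Z}[C_2] \to \underline{\Z}$: at the $C_2/e$ level it is the diagonal $\Z \hookrightarrow \Z \oplus \Z$, while at the $C_2/C_2$ level, compatibility with the restriction maps (the identity on $\underline{\Z}$ and the diagonal $n \mapsto (n,n)$ on $\underline{\Z}[C_2]$) forces it to be the identity $\Z \to \Z$. Both levels are injective, completing the argument. The main subtlety is this explicit identification of the dual augmentation at the genuine fixed-point level; once this is in hand, the rest is formal manipulation with the long exact sequence and the splitting. As a useful by-product, the cokernel computation yields $\underline{\pi}_{-1}\Sigma^{-\sigma}\underline{\Z} \simeq \Z_{-}$ as a Mackey functor, simultaneously supplying the input needed for Lemma~\ref{lemma:sigma_discrete}.
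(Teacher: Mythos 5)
Your proof is correct, and it takes essentially the same approach as the paper's. Both arguments dualize the $C_2$-CW structure on $S^\sigma$ (two fixed $0$-cells, one free $1$-cell) to present $\T^{\sigma,\vee}$ as a fiber of a map between discrete Mackey functors, then read off homotopy Mackey functors from the resulting long exact sequence using discreteness of $\underline{\Z}$ and $\underline{\Z}[C_2]$. The paper dualizes the cofiber sequence $\underline{\Z}[C_2] \to \underline{\Z}^{\oplus 2} \to \T^\sigma$ directly; you first split $\Sigma^\infty_+ S^\sigma \simeq \sphere^\sigma \vee \sphere^0$ via a fixed basepoint of $S^\sigma$, peeling off the summand $\underline{\Z}$ in degree $0$, and then dualize the reduced cell sequence $(C_2)_+ \to \sphere^0 \to \sphere^\sigma$. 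These are the same computation up to deleting one copy of $\underline{\Z}$ from the middle term and cancelling it against the split summand. One concrete benefit of your reorganization: since the decomposition $\T^{\sigma,\vee} \simeq \underline{\Z} \oplus \Sigma^{-\sigma}\underline{\Z}$ is visible from the outset, concentration of $\Sigma^{-\sigma}\underline{\Z}$ in Postnikov degree $-1$ identifies $\gr^{-1}$ with the literal summand $\Sigma^{-\sigma}\underline{\Z}$, exactly as the proposition states. The paper's proof instead concludes with $\underline{\pi}_{-1}\T^{\sigma,\vee} \simeq \underline{\Z}_{-}$ and appeals to Lemma~\ref{lemma:sigma_discrete} for the identification $\Sigma^{-\sigma}\underline{\Z} \simeq \Sigma^{-1}\underline{\Z}_{-}$, while that lemma's own proof cites this proposition back — a mild circularity in the cross-referencing that your version sidesteps by producing both facts in one pass. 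Your identification of the dual augmentation $\underline{\Z} \to \underline{\Z}[C_2]$ at the two orbit levels (diagonal at $C_2/e$, identity at $C_2/C_2$ forced by restriction compatibility) is correct and matches the paper's Lewis-diagram computation of the map $f$.
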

\begin{proof}
	Dualizing the exact sequence $ \underline{\Z}[C_2] \to \underline{\Z}^{\oplus 2} \to \T^\sigma $ gives an exact sequence $ \T^{\sigma,\vee} \to \underline{\Z}^{\oplus 2} \xrightarrow{f} \underline{\Z}[C_2] $, since $ \underline{\Z} $ and $ \underline{\Z}[C_2] $ are self-dual. 
	The morphism $ f $ induces, on Mackey functor homotopy groups $ \underline{\pi}_0 $
	\begin{equation*}
	\begin{tikzcd}
		\Z \oplus \Z \ar[d, bend right=20,"\mathrm{Res}"'] \ar[rr,"{(1, 0), (0,1) \mapsto e+\sigma}"] & & \Z \{e + \sigma \} \ar[d, bend right=20,"\mathrm{Res}"'] \\
		\Z \oplus \Z \ar[u, bend right=20,"\mathrm{Tr}"'] \ar[rr,"{(1, 0), (0,1) \mapsto e+\sigma }"] & & \Z \{e, \sigma\} \ar[u, bend right=20,"\mathrm{Tr}"']  
	\end{tikzcd} .
	\end{equation*}
	Since $ \underline{\Z} $ and $ \underline{\Z}[C_2] $ are discrete, the kernel of $ \underline{\pi}_0 f $ is $ \underline{\pi}_0 \T^{\sigma,\vee} \simeq \underline{\Z} $ and the cokernel is $ \underline{\pi}_{-1} \T^{\sigma,\vee} \simeq \underline{\Z}_{-} $ (see Lemma \ref{lemma:sigma_discrete}). 
\end{proof}
\begin{cor}\label{cor:dualfilteredcircles}
	The filtered object $ \T^{\sigma, \vee}_\fil $ is dualizable, and its dual $ \T^\sigma_{\mathrm{fil}} $ satisfies $ \colim \T^\sigma_{\fil} \simeq \T^\sigma $. 
\end{cor}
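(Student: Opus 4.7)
The plan is to exhibit $\T^{\sigma,\vee}_\fil$ as a two-step extension of manifestly dualizable filtered objects and then transport the resulting cofiber sequence across the $C_2$-symmetric monoidal functor $\colim$. The key observation, granted by Proposition \ref{prop:graded_fil_Ssigma} together with the fact that $\T^{\sigma,\vee}$ has homotopy concentrated in degrees $-1$ and $0$, is that $\T^{\sigma,\vee}_\fil = \tau^{\mathrm{Post}}_{\geq *}\T^{\sigma,\vee}$ is eventually constant in both directions (equal to $\T^{\sigma,\vee}$ for $* \leq -1$ and to $0$ for $* \geq 1$), so everything will reduce to a single two-term extension.

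Concretely, I would apply the levelwise connective covers to the cofiber sequence
\[
\underline{\Z} \;\longrightarrow\; \T^{\sigma,\vee} \;\longrightarrow\; \T^{\sigma,\vee}/\underline{\Z} \;\simeq\; \Sigma^{-\sigma}\underline{\Z}
\]
(where the last equivalence uses Lemma \ref{lemma:sigma_discrete}) to produce a levelwise, hence total, cofiber sequence in $\Fil(\underline{\Mod}_{\underline{\Z}})$ of the form
\[
\ins_{0}(\underline{\Z}) \;\longrightarrow\; \T^{\sigma,\vee}_\fil \;\longrightarrow\; \ins_{-1}(\Sigma^{-\sigma}\underline{\Z}),
\]
where $\ins_n$ is the left Kan insertion of Notation \ref{ntn:ins_ev} (so that $\tau^{\mathrm{Post}}_{\geq *}X \simeq \ins_n(X)$ whenever $X$ is concentrated in degree $n$). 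The first term is the $\otimes$-unit of the parametrized Day convolution of Corollary \ref{cor:param_gr_fil_day_convolution}, hence dualizable; the third is a unit-shift of $\ins_{0}(\Sigma^{-\sigma}\underline{\Z})$, which is dualizable because $\ins_0$ is $C_2$-symmetric monoidal and $\Sigma^{-\sigma}\underline{\Z}$ is $\otimes$-invertible in $\underline{\Mod}_{\underline{\Z}}$. Since dualizable objects are closed under cofiber sequences in any $C_2$-stable $C_2$-symmetric monoidal $C_2$-$\infty$-category, $\T^{\sigma,\vee}_\fil$ is dualizable, and dualizing gives a fiber sequence pinning down $\T^{\sigma}_\fil$.

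For the colimit assertion, I would use that parametrized left Kan extension along the monoidal projection $\Z_{C_2} \to \underline{\ast}$ is $C_2$-symmetric monoidal by the universal property of parametrized Day convolution, so the $C_2$-adjunction $\colim \dashv c$ of Recollection \ref{rec:splitfiltobj} lifts to the level of $C_2$-symmetric monoidal structures. Consequently $\colim$ preserves duals of dualizables, and since $\colim\T^{\sigma,\vee}_\fil \simeq \T^{\sigma,\vee}$ (the filtration stabilizes at $\T^{\sigma,\vee}$ for $n \ll 0$), we conclude $\colim \T^{\sigma}_\fil \simeq (\T^{\sigma,\vee})^\vee \simeq \T^{\sigma}$.

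The main obstacle I anticipate is the $C_2$-symmetric monoidal bookkeeping: I need to confirm that both $\ins_0$ and $\colim$ are $C_2$-symmetric monoidal (not merely fiberwise symmetric monoidal) with respect to the parametrized Day convolution, and that a $C_2$-symmetric monoidal functor out of a $C_2$-stable $C_2$-symmetric monoidal $C_2$-$\infty$-category carries dualizable objects to dualizable objects and sends dual pairs to dual pairs. These should be formal consequences of the universal properties established in \cite{NS22}, but they require careful unpacking of the parametrized formalism, and once in place they immediately deliver both halves of the corollary.
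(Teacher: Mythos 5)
Your proof is correct and follows essentially the same route as the paper's: both present $\T^{\sigma,\vee}_\fil$ via Proposition \ref{prop:graded_fil_Ssigma} as an extension of two invertible filtered objects and invoke closure of dualizables under cofiber sequences, with your version usefully making the cofiber sequence $\ins_0(\underline{\Z}) \to \T^{\sigma,\vee}_\fil \to \ins_{-1}(\Sigma^{-\sigma}\underline{\Z})$ explicit. For the colimit the paper only gestures at a ``straightforward computation''; your argument via $C_2$-symmetric monoidality of $\colim$ (preserving dual pairs, combined with exhaustiveness of the Postnikov filtration on the $(-1)$-connective object $\T^{\sigma,\vee}$) is a reasonable way to supply it.
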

\begin{proof}
	By Proposition \ref{prop:graded_fil_Ssigma}, $ \T^{\sigma,\vee}_\fil $ is an extension of an invertible object in $ \Fil\left(\Mod_{\underline{\Z}}\right) $ by another invertible object. 
	Dualizability follows from the fact that dualizable objects in a stable $ \infty $-category are stable under finite limits and colimits. 
	The identification of the colimit of $ \T^\sigma_\fil $ is a straightforward computation. 
\end{proof}
\begin{prop}\label{prop:filtered_Ssigma_bialg}
\begin{enumerate}[label=(\arabic*)]
	\item There exists a canonical $ C_2 $-$ \E_\infty $-bialgebra structure on $ \T^{\sigma, \vee}_{\mathrm{fil}} $ promoting the $ C_2 $-$ \E_\infty $-bialgebra structure on $ \T^{\sigma, \vee} $. 
	\item \label{propitem:filtered_Ssigma_dalg} There exists a unique derived involutive bialgebra structure on $ \T^{\sigma, \vee}_\fil $ promoting its $ C_2 $-$ \E_\infty $-bialgebra structure. 
	\item Under the forgetful functor of Remark \ref{rmk:inv_dalg_underlying}, we have canonical identifications $ (\T^{\sigma,\vee})^e \simeq \T^{\vee} $ of derived bicommutative bialgebras over $ \Z $ and $ (\T^{\sigma, \vee}_\fil )^e = \T^{\vee}_\fil $ of bicommutative bialgebras in filtered $ \Z $-modules, where $ \T^\vee $ and $ \T^\vee_\fil $ are from \cite[Theorem 6.1.6]{Raksit20}.  
\end{enumerate}
\end{prop}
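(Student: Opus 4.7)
My plan is to establish (2) first, from which (1) and (3) will follow with relatively little extra work.

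For (3), I would simply note that the Postnikov t-structure on $\underline{\Mod}_{\underline{\Z}}$ is characterized by demanding $(-)^e$- and $(-)^{C_2}$-connectivity simultaneously (Variant~\ref{var:eqvtmodulespostnikov} and Recollection~\ref{rec:genuinepostnikov}), so the restriction $(-)^e$ is t-exact. This gives $(\T^{\sigma,\vee}_\fil)^e \simeq \tau_{\geq *}(\T^{\sigma,\vee})^e \simeq \tau_{\geq *}\Z^{S^1} \simeq \T^\vee_\fil$. Compatibility of the bialgebra structures with $(-)^e$ follows from the commuting diagrams of Remark~\ref{rmk:inv_dalg_underlying} and the naturality of the Postnikov filtration under restriction. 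Part (1) will follow from (2) by composing with the forgetful $C_2$-functor $\underline{\DAlg}^\sigma_{\underline{\Z}} \to C_2\E_\infty\underline{\Alg}_{\underline{\Z}}$ of Notation~\ref{ntn:dalg_calg_forget} together with its enhancement to coalgebras.

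For (2), the idea is to apply the fully faithful embedding $\tau_{\geq *}^{\mathrm{Post}} \colon \underline{\DAlg}^{\sigma,\mathrm{ccn}}_{\underline{\Z}} \hookrightarrow \Fil\underline{\DAlg}^\sigma_{\underline{\Z}}$ of Proposition~\ref{prop:nonconn_dalg_Post_filt}. By Lemma~\ref{lemma:inv_circ_cochain_inv_dalg}, $\T^{\sigma,\vee}$ carries a canonical derived involutive bialgebra structure, and by Proposition~\ref{prop:graded_fil_Ssigma}, its underlying $\underline{\Z}$-module has homotopy Mackey functors concentrated in degrees $0$ and $-1$, so in particular $\T^{\sigma,\vee}$ lies in $\underline{\DAlg}^{\sigma,\mathrm{ccn}}_{\underline{\Z}}$. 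Applying the embedding produces the filtered derived involutive algebra $\T^{\sigma,\vee}_\fil$ together with the claimed uniqueness (from full faithfulness of the embedding). It remains to transfer the $C_2$-$\E_\infty$-coalgebra structure from $\T^{\sigma,\vee}$ (viewed in $\underline{\DAlg}^{\sigma,\mathrm{ccn}}_{\underline{\Z}}$) to $\T^{\sigma,\vee}_\fil$ (viewed in $\Fil\underline{\DAlg}^\sigma_{\underline{\Z}}$). For this, I will upgrade the Postnikov embedding of Proposition~\ref{prop:nonconn_dalg_Post_filt} to a $C_2$-symmetric monoidal functor by verifying that for coconnective $X, Y \in \underline{\Mod}_{\underline{\Z}}$ the canonical map $\tau_{\geq *}^{\mathrm{Post}} X \ostar \tau_{\geq *}^{\mathrm{Post}} Y \to \tau_{\geq *}^{\mathrm{Post}}(X \otimes Y)$ is an equivalence (and analogously for the Hill--Hopkins--Ravenel norm, using that $N^{C_2}$ is right t-exact on connective objects). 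Granting this, $C_2$-$\E_\infty$-coalgebras in $\underline{\DAlg}^{\sigma,\mathrm{ccn}}_{\underline{\Z}}$ are carried to $C_2$-$\E_\infty$-coalgebras in $\Fil\underline{\DAlg}^\sigma_{\underline{\Z}}$, completing (2).

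The hard part will be verifying the $C_2$-symmetric monoidal enhancement of the Postnikov embedding, i.e. that Day convolution of the filtrations computes the Postnikov filtration of the tensor product. Fortunately, coconnectivity makes the relevant computation essentially formal: the Day convolution colimit defining $(\tau_{\geq *}X \ostar \tau_{\geq *}Y)^n$ is supported on finitely many terms once $X, Y$ are coconnective, and each term is controlled by the Künneth-type identification $\pi_0(X \otimes Y) \simeq \pi_0 X \otimes \pi_0 Y$ (with analogous identifications in negative degrees). Observation~\ref{obs:postnikov_regslice_agreement} moreover lets us freely interchange the Postnikov and regular slice filtrations on $\T^{\sigma,\vee}$ (and on its tensor powers, which have the same coconnective profile), which is crucial for compatibility with the $C_2$-symmetric monoidal structure since it is the regular slice filtration whose lax $C_2$-symmetric monoidality was established in Proposition~\ref{prop:reg_slice_conn_cover}\ref{propitem:reg_slice_conn_cover_is_param_monoidal}.
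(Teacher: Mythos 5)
There is a genuine gap in your plan for part (2). The step you flag as ``the hard part''---verifying that for coconnective $X, Y$ the map $\tau_{\geq *}^{\mathrm{Post}} X \ostar \tau_{\geq *}^{\mathrm{Post}} Y \to \tau_{\geq *}^{\mathrm{Post}}(X \otimes Y)$ is an equivalence---is not merely hard, it is false. Take $X = Y = \underline{\Z/2}$: these are coconnective, but $\pi_1\left(\Z/2 \otimes_{\Z} \Z/2\right) = \Z/2 \neq 0$ by Tor, while the Day convolution $\left(\tau_{\geq *} X \ostar \tau_{\geq *} Y\right)_1$ vanishes because every $\tau_{\geq i}X$ with $i > 0$ is zero. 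So $\tau_{\geq *}^{\mathrm{Post}}$ is lax $C_2$-symmetric monoidal but not strict, and lax structure is the wrong direction for transporting \emph{coalgebra} structure: you need the comparison maps to be equivalences. Your appeal to ``K\"unneth-type identifications'' cuts the wrong way, because the Tor contributions that the K\"unneth spectral sequence detects are exactly what destroys strictness, and coconnectivity provides no protection against them.

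What actually makes the argument go through in the paper is not a generic feature of coconnective objects but a coincidence specific to $\T^{\sigma,\vee}$ and its tensor powers: by Example~\ref{ex:spheres_as_slices}, the sign representation spheres $\sphere^{n\sigma}$ are regular $n$-slices, so the Day convolution of the regular slice filtration on $\underline{\Z}^{S^\sigma}$ reproduces the regular slice filtration of the tensor powers on the nose. This is why the paper builds the $C_2$-$\E_\infty$-bialgebra structure in part (1) first, using the regular slice filtration and its \emph{strictness} on the relevant objects (not just its lax monoidality from Remark~\ref{rmk:regslicelaxsymmmon}, which only gives you the algebra side), and then deduces (2) by transporting along Observation~\ref{obs:postnikov_regslice_agreement} and Proposition~\ref{prop:nonconn_dalg_Post_filt}. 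Your closing sentence gestures at this but treats it as auxiliary bookkeeping; it is in fact the load-bearing step, and you would still need to argue that the interchange of filtrations persists on all tensor powers, which is again Example~\ref{ex:spheres_as_slices}. Note also a mild logical issue with proving (2) before (1): the statement of (2) asks to promote the $C_2$-$\E_\infty$-bialgebra structure whose existence is asserted in (1).
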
 
\begin{proof}
\begin{enumerate}[label=(\arabic*)]
	\item Because the regular slice filtration functor is lax $ C_2 $-symmetric monoidal (Proposition \ref{rmk:regslicelaxsymmmon}), $ \T^{\sigma,\vee}_\fil $ naturally inherits a $ C_2 $-$ \E_\infty $ algebra structure from $ \T^{\sigma,\vee} $. 
	In fact, the functor $ \tau_{\geq *}^\rslice $ is strict $C_2$-symmetric monoidal on tensor powers of $ \Z^{S^{\sigma}} $ by Example \ref{ex:spheres_as_slices}. 
	Then $ \T^{\sigma, \vee}_\fil $ inherits a $ C_2 $-$ \E_\infty $-bialgebra from $ \T^{\sigma,\vee} $ structure via Corollary \ref{cor:mod_comod_equivalence}. 
	Uniqueness of this $ C_2 $-$ \E_\infty $-algebra structure follows from, Corollary \ref{cor:colim_reg_slicefil}, which implies that $ \tau_{\geq *}^\rslice $ is fully faithful. 

	\item By Observation \ref{obs:postnikov_regslice_agreement}, there is an equivalence $ \tau^\rslice_{\geq *} \underline{\Z}^{S^\sigma} \simeq \tau^{\mathrm{Post}}_{\geq *}\underline{\Z}^{S^\sigma} $. 
	It suffices to observe that $ \T^{\sigma, \vee}_\fil $ is in the essential image of the functor $ \tau_{\geq *}^{\mathrm{Post}} $ of Proposition \ref{prop:nonconn_dalg_Post_filt}. 

	\item The first statement follows from the fact that the underlying diagram of a parametrized (co)limit diagram is an ordinary (co)limit, and that the functor $ \Mod_{\underline{\Z}} \to \Mod_{\Z} $ is symmetric monoidal, hence it preserves duals. 
	The second statement follows from the description of the regular slice filtration (see Lemma \ref{lemma:reg_slice_conn_conditions}). 
	 \qedhere
\end{enumerate}
\end{proof}
\begin{rmk}
	Note the use of two different filtrations on the $ C_2 $-$ \infty $-category $ \underline{\Mod}_{\underline{\Z}} $ in the proof of Proposition \ref{prop:filtered_Ssigma_bialg}. 
	We use the \emph{Postnikov} filtration to show that $ \T^{\sigma, \vee}_\fil $ admits a(n involutive) \emph{derived} algebra structure, while the $ C_2 $-$ \E_\infty $ bialgebra structure on $ \T^{\sigma, \vee}_\fil $ follows from how the \emph{regular slice filtration} interacts nicely with the cell structure on $ \T^{\sigma, \vee} $. 
\end{rmk}
\begin{ntn}
		Let $ \cat $ be a $ \underline{\Z} $-linear $ C_2 $-stable $ C_2 $-presentable $ C_2 $-symmetric monoidal $ C_2 $-$ \infty $-category. 
		The structure map $ \underline{\Mod}_{\underline{\Z}} \to \cat $ induces a $ C_2 $-symmetric monoidal functor $ \Fil\left(\underline{\Mod}_{\underline{\Z}}\right) \to \Fil\left(\cat\right) $, hence we may regard $ \T_\fil^\sigma $ and $ \T_\fil^{\sigma,\vee} $ as dualizable $ C_2 $-$ \E_\infty $-bialgebras in $ \Fil(\cat) $. 

		Now suppose $ \cat $ is additionally equipped with the structure of a derived involutive algebraic context. 
		Then by Proposition \ref{prop:filtered_Ssigma_bialg}\ref{propitem:filtered_Ssigma_dalg}, we may regard $ \T_\fil^{\sigma,\vee} $ as a derived involutive bialgebra in $ \Fil(\cat) $. 
\end{ntn}
\begin{defn}\label{defn:fil_mod_fil_signedS_action}
	Write $ \Fil_{S^\sigma}\left(\underline{\Mod}_{A}\right) := \underline{\Mod}_{\T^\sigma_\fil}\left(\Fil(\underline{\Mod}_{A})\right) $ and call this the $ C_2 $-$ \infty $-category of \emph{filtered $ A $-modules with filtered $ S^\sigma $-action}. 

	By Variant \ref{variant:module_pointwise_param_tensor}, we may regard $ \Fil_{S^\sigma}\left(\underline{\Mod}_{A}\right) $ as a $ C_2 $-symmetric monoidal $ C_2 $-$ \infty $-category. 
	We will call $C_2$-$\E_\infty$-algebra objects therein \emph{filtered $ C_2 $-$\E_\infty $-$ A $-algebras with filtered $ S^\sigma $-action}. 
\end{defn}
We are allowed to make the following definition by Proposition \ref{prop:filtered_Ssigma_bialg}\ref{propitem:filtered_Ssigma_dalg} and Construction \ref{cons:dalg_in_comodules}.
\begin{defn}
	Write $ \Fil_{S^\sigma}\underline{\DAlg}_{A}^\sigma $ for the $ C_2 $-$ \infty $-category $ \underline{\coMod}_{\T^{\sigma,\vee}_{\mathrm{fil}}}\left(\Fil\left(\underline{\DAlg}_{A}^\sigma\right)\right) $, and let us call $ C_2 $-objects of this $ C_2 $-$ \infty $-category \emph{filtered derived involutive $ A $-algebras with filtered $ S^\sigma $-action}. 
\end{defn}
\begin{rmk}\label{rmk:filteredSsigma_Ssigma_adjunction}
	Under the $ C_2 $-adjunction $ \colim \dashv \delta $ of Recollection \ref{rec:splitfiltobj}, there is a canonical equivalence of $ C_2 $-$ \E_\infty $-bialgebras $ \underline{\colim} \T^\sigma_{\mathrm{fil}} \simeq \T^\sigma $. 
	It follows that there is a $ C_2 $-symmetric monoidal $ C_2 $-adjunction
	\begin{equation*}
		\colim \colon \Mod_{\T^\sigma_\fil}\Fil\left(\underline{\Mod}_{A}\right) \rlarrows \underline{\Fun}\left(BS^\sigma, \underline{\Mod}_{A}\right) \colon c \,.
	\end{equation*}
	By Proposition \ref{prop:filtered_Ssigma_bialg}\ref{propitem:filtered_Ssigma_dalg}, since the equivalence $ \colim\left(\T^\sigma_\fil\right) \simeq \T^\sigma $ canonically promotes to one of derived involutive bialgebras, there is an induced $ C_2 $-adjunction
	\begin{equation*}
		\colim \colon  \Fil_{S^\sigma}\underline{\DAlg}_{A}^\sigma \rlarrows \underline{\Fun}\left(BS^\sigma, \underline{\DAlg}^\sigma_{A}\right) \simeq \underline{\DAlg}^\sigma\left(\underline{\Mod}_{S^\sigma}(\underline{\Mod}_{A})\right) \colon c .
	\end{equation*}
\end{rmk}
\begin{rmk}\label{rmk:filteredSsigma_gr_adjunction}
	Under the $ C_2 $-adjunction $ \gr^* \dashv \zeta $ of (\ref{eq:param_assoc_graded}), there is an equivalence of $ C_2 $-$ \E_\infty $-bialgebras $ \gr(\T^{\sigma,\vee}_\fil) \simeq \D^{\sigma, \vee}_+ $. 
	This induces a $ C_2 $-symmetric monoidal $ C_2 $-adjunction
	\begin{equation*}
		\gr \colon \underline{\Mod}_{\T^\sigma_\fil}\Fil\left(\underline{\Mod}_{A}\right) \rlarrows \Gr\left( \underline{\Mod}_{A}\right) \colon \zeta .
	\end{equation*}
	By Proposition \ref{prop:Dsigma_alg}, there is an equivalence $ \gr(\T^{\sigma,\vee}_\fil) \simeq \D^{\sigma, \vee}_+ $ of graded derived involutive bialgebras. 
	Hence by Remark \ref{rmk:dalg_functoriality_in_comodules}, there is an induced $ C_2 $-adjunction
	\begin{equation*}
		\gr \colon \Fil_{S^\sigma}\underline{\DAlg}_{A}^\sigma \rlarrows \DG_+^\sigma \underline{\DAlg}^\sigma_{A} \simeq \underline{\DAlg}^\sigma\left(\underline{\coMod}_{\D^{\sigma,\vee}_+}(\underline{\Mod}_{A})\right) \colon \zeta .
	\end{equation*}
	In words, this asserts that the associated graded of a filtered derived commutative $ A $-algebra with filtered $ S^\sigma $-action is an involutive $ h^\sigma_+ $-differential graded $ A $-algebra of Definition \ref{defn:dg_involutive_alg}. 
\end{rmk}

\subsection{HK\texorpdfstring{$\R$}{R}-filtered real Hochschild homology} \label{subsection:realHKR} 
In this section, we show how the results of \S\ref{subsection:fil_inv_circ} can be used to define a filtration on real Hochschild homology which recovers the ordinary HKR filtration on underlying $ \Z $-modules. 
To define a filtration on real Hochschild homology, we replace $ S^\sigma $-actions by filtered $ S^\sigma $-actions.  
Let $ \cat $ denote a fixed derived involutive algebraic context and let $ A $ denote a derived involutive algebra in $ \cat $. 
\begin{defn}
	We define the $ C_2 $-$ \infty $-category of \emph{nonnegative filtered derived involutive $ A $-algebras with filtered $ S^\sigma $-action} to be the pullback 
	\begin{equation*}
		\Fil^{\geq 0}_{S^{\sigma, \mathrm{fil}}}\underline{\DAlg}_{A}^\sigma := \Fil_{S^{\sigma, \mathrm{fil}}}\underline{\DAlg}_{A}^\sigma \times_{\Fil\underline{\DAlg}_{A}^\sigma} \Fil^{\geq 0}\underline{\DAlg}_{A}^\sigma .
	\end{equation*} 
\end{defn} 
In the language of Remark \ref{rmk:param_unstraighten}, the underlying $ \infty $-category of $ \Fil^{\geq 0}_{S^{\sigma, \mathrm{fil}}}\underline{\DAlg}_{A}^\sigma $ may be identified with the $ \infty $-category of nonnegative filtered derived $ A $-algebras with filtered $ S^1 $-action of \cite[Notation 6.2.3]{Raksit20}. 
\begin{rmk}
	On nonnegative filtered objects $ X \in \Fil^{\geq 0}(\cat) $, the colimit is computed by $ |X| \simeq \ev_0X $. 
\end{rmk}
\begin{prop}\label{prop:existence_freefilteredHR}
	The forgetful $ C_2 $-functor $ \Fil^{\geq 0}_{S^{\sigma, \mathrm{fil}}}\underline{\DAlg}_{A}^\sigma \to \Fil^{\geq 0}\underline{\DAlg}_{A}^\sigma \xrightarrow{\ev^0} \underline{\DAlg}_{A}^\sigma $ admits a left $ C_2 $-adjoint. 
	On underlying $ \infty $-categories, this recovers the adjunction of \cite[Proposition 6.2.4]{Raksit20}. 
\end{prop}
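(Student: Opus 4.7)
The plan is to apply Corollary \ref{cor:C2_left_adjoint_local_crit} to the composite $C_2$-functor
$$G \colon \Fil^{\geq 0}_{S^{\sigma,\fil}}\underline{\DAlg}_A^\sigma \xrightarrow{U} \Fil^{\geq 0}\underline{\DAlg}_A^\sigma \xrightarrow{\ev_0} \underline{\DAlg}_A^\sigma,$$
where $U$ forgets the coaction by $\T^{\sigma,\vee}_\fil$. This reduces the existence of a left $C_2$-adjoint to verifying two conditions: (i) both endpoints admit finite $C_2$-products and $G$ preserves them, and (ii) $G$ admits fiberwise left adjoints.

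For (i), the target $\underline{\DAlg}_A^\sigma$ admits all $C_2$-limits by Proposition \ref{prop:invdalg_C2_colims}\ref{propitem:invdalg_admits_C2_lims}. The functor $\ev_0$ is $C_2$-right adjoint to $\ins_0$ by the evident $\Fil^{\geq 0}$-variant of Remark \ref{rmk:zeroth_graded_dalg}, so it preserves $C_2$-limits. For $U$, observe that $\Fil^{\geq 0}_{S^{\sigma,\fil}}\underline{\DAlg}_A^\sigma$ is the $C_2$-$\infty$-category of $\T^{\sigma,\vee}_\fil$-comodules in the $C_2$-cocartesian $C_2$-symmetric monoidal $C_2$-$\infty$-category $\Fil^{\geq 0}\underline{\DAlg}_A^\sigma$ (using Proposition \ref{prop:filtered_Ssigma_bialg}\ref{propitem:filtered_Ssigma_dalg} together with Construction \ref{cons:dalg_in_comodules}); limits in such a comodule $C_2$-$\infty$-category are created fiberwise by the forgetful $C_2$-functor, since the comodule structure on a cone is uniquely determined by compatibility. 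Hence $U$ preserves $C_2$-products.

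For (ii), on the fiber over $C_2/e$ the functor $G^e$ recovers the forgetful functor $\Fil^{\geq 0}_{S^{1,\fil}}\DAlg_{A^e} \to \DAlg_{A^e}$, whose left adjoint $\fil_{\HKR}\HH(-/A^e)$ is constructed in \cite[Proposition 6.2.4]{Raksit20}. On the fiber over $C_2/C_2$, I would invoke the adjoint functor theorem \cite[Theorem 5.5.2.9(2)]{LurHTT}: both $\Fil^{\geq 0}_{S^{\sigma,\fil}}\DAlg_A^\sigma$ and $\DAlg_A^\sigma$ are presentable (as the $C_2/C_2$-fibers of $C_2$-presentable $C_2$-$\infty$-categories), the functor $G^{C_2}$ preserves all small limits by (i), and it is accessible since each constituent functor preserves filtered colimits. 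Corollary \ref{cor:C2_left_adjoint_local_crit} then produces the desired $C_2$-left adjoint $F$, and the claim that $F^e$ agrees with Raksit's construction follows from the fiberwise construction of $F$ in the corollary together with the uniqueness of left adjoints.

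The main obstacle will be justifying the comodule-limit claim rigorously in the parametrized setting, since the formalism of \S\ref{subsection:param_cocart_tensor} is developed for comodules in a general $C_2$-symmetric monoidal $C_2$-$\infty$-category, whereas here we take comodules inside the $C_2$-cocartesian $C_2$-symmetric monoidal $C_2$-$\infty$-category $\Fil^{\geq 0}\underline{\DAlg}_A^\sigma$. This should be a largely formal extension of Proposition \ref{prop:comod_functor_ess_image} (creating limits is a pointwise condition, and fiberwise comodule categories certainly create limits), but it requires careful bookkeeping with the parametrized bialgebra formalism to ensure the fiberwise constructions glue into honest $C_2$-limits.
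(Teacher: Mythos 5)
Your overall strategy matches the paper's proof: the paper proves this "similar[ly] to the proof of Proposition \ref{prop:inv_deRham_asadjoint}," which likewise reduces to Corollary \ref{cor:C2_left_adjoint_local_crit} (check preservation of finite $C_2$-products, then produce fiberwise left adjoints). Your fiberwise arguments — quoting Raksit over $C_2/e$ and invoking the adjoint functor theorem over $C_2/C_2$ — are also in line with what the paper is implicitly doing when it says the fiberwise adjoint follows "from a nearly identical argument to that of \cite[Proposition 5.3.2]{Raksit20}."

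However, your justification for why the forgetful $C_2$-functor $U$ preserves (indeed, creates) limits is not correct as stated, and the imprecision is in precisely the place where the paper's own proof takes care to add a caveat. You write that "limits in such a comodule $C_2$-$\infty$-category are created fiberwise by the forgetful $C_2$-functor, since the comodule structure on a cone is uniquely determined by compatibility," and again in your closing paragraph that "fiberwise comodule categories certainly create limits." Neither of these is true for comodules over a general coalgebra $C$: the forgetful functor from comodules is comonadic and creates \emph{colimits}, not limits, because the comonad $- \otimes C$ fails to preserve limits in general. Limits are created only when $C$ is dualizable, in which case $\underline{\coMod}_{C}\simeq \underline{\Mod}_{C^\vee}$ (Corollary \ref{cor:mod_comod_equivalence}) and one instead uses that the monadic forgetful functor creates limits. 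This is exactly why the paper's proof of the analogous Proposition \ref{prop:inv_deRham_asadjoint} ends with ``we only need note that $(\D^\sigma_+)^\vee$ is dualizable,'' and the same remark applies here via Corollary \ref{cor:dualfilteredcircles} for $\T^{\sigma,\vee}_\fil$. You do flag the comodule-limit claim as ``the main obstacle,'' but you locate the difficulty in the parametrized bookkeeping rather than in the dualizability of the coalgebra; without the latter, the step would fail outright rather than merely require care. Once you insert the observation that $\T^{\sigma,\vee}_\fil$ is dualizable (so that the comodule forgetful functor is equivalent to a module forgetful functor, hence conservative with a left adjoint and so creates limits fiberwise), your argument goes through and the rest — presentability of the fibers, accessibility of $G^{C_2}$, and the identification with Raksit's adjunction on underlying $\infty$-categories — is correct.
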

\begin{proof}
	Similar to the proof of Proposition \ref{prop:inv_deRham_asadjoint}. 
\end{proof}
\begin{defn}\label{defn:filtered_real_HH}
		Denote the left $C_2$-adjoint of Proposition \ref{prop:existence_freefilteredHR} by $ \HR_\fil(-/A) $. 
		If $ B $ is a derived involutive $ A $-algebra, we will refer to $ \HR_\fil(B/A) $ as the \emph{HKR-filtered real Hochschild homology of $ B $ over $ A $}. 
\end{defn}
\begin{rmk}\label{rmk:filtered_real_HH_underlying}
		By construction, the underlying filtered derived $ k^e $-algebra with filtered $ S^1 $-action associated to $ \HR_{\fil}(B/A) $ is the HKR-filtered Hochschild homology $ \HH_{\fil}(B^e/A^e ) $ of \cite[Definition 6.2.5]{Raksit20}. 
\end{rmk}
The following result shows that filtered real Hochschild homology interpolates between real Hochschild homology and the involutive de Rham complex. 
\begin{theorem}\label{thm:gr_of_filtered_HR}
	Let $ \cat $ denote a fixed derived involutive algebraic context and let $ A $ denote a derived involutive algebra in $ \cat $. 	
	Then there are canonical equivalences
	\begin{align*}
		\ev^0\HR_\fil(B/A) &\simeq \HR(B/A) \qquad \text{in } \Mod_{S^\sigma}\DAlg_{A}^\sigma \\
		\gr^\bullet \HR_\fil(B/A) &\simeq \L\Omega^{\sigma,\bullet}_{B/A} \qquad \text{ in } \DG^\sigma_+\DAlg_{A} 
	\end{align*}
	which are natural in $ B $. 
	On underlying $ \infty $-categories, these natural equivalences recover those of \cite[Theorem 6.2.6]{Raksit20}. 
\end{theorem}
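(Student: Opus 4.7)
The plan is to deduce both equivalences from the universal property of $\HR_{\mathrm{fil}}(-/A)$ by exhibiting each of $\HR(-/A)$ and $\L\Omega^{\sigma,\bullet}_{-/A}$ as a composite of left $C_2$-adjoints with $\HR_{\mathrm{fil}}(-/A)$, and then invoking the essential uniqueness of $C_2$-adjoints. The two adjunctions I intend to feed $\HR_{\mathrm{fil}}(-/A)$ into are already constructed:
\begin{equation*}
\colim \colon \Fil^{\geq 0}_{S^\sigma}\underline{\DAlg}_{A}^\sigma \rlarrows \underline{\Fun}\left(BS^\sigma,\underline{\DAlg}_A^\sigma\right) \colon c \qquad \text{(Remark \ref{rmk:filteredSsigma_Ssigma_adjunction})}
\end{equation*}
\begin{equation*}
\gr \colon \Fil^{\geq 0}_{S^\sigma}\underline{\DAlg}_{A}^\sigma \rlarrows \DG^{\sigma,\geq 0}_+\underline{\DAlg}_A^\sigma \colon \zeta \qquad \text{(Remark \ref{rmk:filteredSsigma_gr_adjunction})}
\end{equation*}
Both of these derive from the fact that $\underline{\colim}\,\T^\sigma_{\mathrm{fil}} \simeq \T^\sigma$ and $\gr\,\T^{\sigma,\vee}_{\mathrm{fil}} \simeq \D^{\sigma,\vee}_+$ as derived involutive bialgebras, established in Proposition \ref{prop:filtered_Ssigma_bialg}\ref{propitem:filtered_Ssigma_dalg} and Proposition \ref{prop:Dsigma_alg}. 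On nonnegative filtered objects, $\colim$ is computed by $\ev^0$, so $\colim \circ \HR_{\mathrm{fil}}(-/A)$ coincides with $\ev^0 \circ \HR_{\mathrm{fil}}(-/A)$ once I record that it retains the $\T^\sigma$-action.

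For the first equivalence, I would compose the right $C_2$-adjoints in the opposite direction. The composite
\begin{equation*}
\underline{\Fun}\left(BS^\sigma,\underline{\DAlg}_A^\sigma\right) \xrightarrow{c} \Fil^{\geq 0}_{S^\sigma}\underline{\DAlg}_A^\sigma \xrightarrow{U} \Fil^{\geq 0}\underline{\DAlg}_A^\sigma \xrightarrow{\ev^0} \underline{\DAlg}_A^\sigma
\end{equation*}
sends $X$ to the constant filtered object at $X$, forgotten to $\underline{\DAlg}_A^\sigma$, and thus agrees with the basepoint-evaluation $g$ whose left $C_2$-adjoint defines $\HR(-/A)$ (Lemma \ref{lemma:existence_freeSsigma_alg}). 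Uniqueness of $C_2$-left adjoints (over $\mathcal{O}^{\op}_{C_2}$, using the Beck--Chevalley identification built into $C_2$-adjunctions of \cite{Shah18}) then yields $\ev^0\HR_{\mathrm{fil}}(B/A)\simeq \HR(B/A)$ in $\underline{\Fun}(BS^\sigma,\underline{\DAlg}_A^\sigma)$, naturally in $B$.

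For the second equivalence, I carry out the analogous argument with $\gr$. The composite of right adjoints
\begin{equation*}
\DG^{\sigma,\geq 0}_+\underline{\DAlg}_A^\sigma \xrightarrow{\zeta} \Fil^{\geq 0}_{S^\sigma}\underline{\DAlg}_A^\sigma \xrightarrow{U} \Fil^{\geq 0}\underline{\DAlg}_A^\sigma \xrightarrow{\ev^0} \underline{\DAlg}_A^\sigma
\end{equation*}
sends an $h^\sigma_+$-dg involutive algebra to its underlying graded object's $0$-th piece, which is precisely the right $C_2$-adjoint defining $\L\Omega^{\sigma,\bullet}_{-/A}$ in Proposition \ref{prop:inv_deRham_asadjoint}. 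Again by uniqueness of $C_2$-left adjoints, $\gr^\bullet\HR_{\mathrm{fil}}(B/A) \simeq \L\Omega^{\sigma,\bullet}_{B/A}$, naturally in $B$. The final clause on underlying $\infty$-categories reduces to the observation that each $C_2$-adjunction used here restricts on the fiber over $C_2/e$ to the corresponding non-equivariant adjunction of \cite[\S6.2]{Raksit20}; this is the content of Remark \ref{rmk:filtered_real_HH_underlying} together with the compatibility of $\gr$, $\zeta$, $c$, $\colim$ across the parametrized-to-fiberwise restriction.

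The main obstacle I anticipate is not logical but bookkeeping: one must be careful that the equivalences $\underline{\colim}\,\T^\sigma_{\mathrm{fil}}\simeq \T^\sigma$ and $\gr\,\T^{\sigma,\vee}_{\mathrm{fil}}\simeq \D^{\sigma,\vee}_+$ promote to morphisms of derived involutive bialgebras (not merely $C_2$-$\E_\infty$-bialgebras), so that the induced adjunctions land in $\Fil^{\geq 0}_{S^\sigma}\underline{\DAlg}_A^\sigma$ rather than merely in the underlying $C_2$-$\E_\infty$-comodule categories. This is precisely the content of the uniqueness clauses in Propositions \ref{prop:filtered_Ssigma_bialg}\ref{propitem:filtered_Ssigma_dalg} and \ref{prop:Dsigma_alg}, and the functoriality recorded in Remark \ref{rmk:dalg_functoriality_in_comodules}; once these are invoked the remainder of the argument is formal manipulation of left $C_2$-adjoints.
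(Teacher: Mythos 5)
Your proposal takes essentially the same approach as the paper's proof: both construct a diagram of $C_2$-adjoint pairs (the adjunctions of Remarks~\ref{rmk:filteredSsigma_Ssigma_adjunction} and~\ref{rmk:filteredSsigma_gr_adjunction}, Definition~\ref{defn:realHH}, Proposition~\ref{prop:inv_deRham_asadjoint}, and Proposition~\ref{prop:existence_freefilteredHR}), observe that the composites of right $C_2$-adjoints agree, and conclude by uniqueness of left $C_2$-adjoints. The paper packages this in a single commutative diagram of adjoint triples while you unwind the two triangles separately, but the argument is the same, including the identification of the underlying non-equivariant statement with \cite[Theorem~6.2.6]{Raksit20}.
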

\begin{proof}
	We form the following diagram of $ C_2 $-adjoint pairs 
	\begin{equation}\label{diagram:gr_of_filtered_HR}
	\begin{tikzcd}[column sep=huge,row sep=huge]
		\underline{\Fun}\left(BS^\sigma,\underline{\DAlg}_{A}^\sigma\right) \ar[r,shift left=1,"c"] \ar[rd, shift left=1,"U"] & \Fil^{\geq 0}_{S^{\sigma, \mathrm{fil}}}\underline{\DAlg}_{A}^\sigma \ar[d,shift left=1,"\ev_0"] \ar[l, shift left=1,"\ev_0"] \ar[r, shift left=1,"\gr"] & \DG^{\sigma,\geq 0}_+\underline{\DAlg}_{A}^\sigma \ar[l,shift left=1,"\zeta"] \ar[ld, shift left=1,"\ev_0", out=240,in=0] \\
		& \underline{\DAlg}_{A}^\sigma \ar[lu, shift left=1,"\HR(-/A)"] \ar[ru, shift left=1,"\L\Omega^{\sigma ,\bullet}_{-/A}", in=240,out=0] \ar[u, shift left=1,"\HR_\fil"]
	\end{tikzcd}
	\end{equation}
	where the upper left adjoint pair is from Remark \ref{rmk:filteredSsigma_Ssigma_adjunction}, the upper right adjoint pair is from Remark \ref{rmk:filteredSsigma_gr_adjunction}, the left diagonal adjoint pair is that of Definition \ref{defn:realHH}, and the right diagonal adjoint pair is from Definition \ref{defn:inv_deriveddeRhamcplx}. 
	The right $C_2$-adjoints commute, from which it follows that the left $ C_2 $-adjoints commute. 
	The statement about underlying $ \infty $-categories follows from the fact that the diagram (\ref{diagram:gr_of_filtered_HR}) specializes to the diagram considered in \cite[Theorem 6.2.6]{Raksit20} by definition. 
\end{proof}
\begin{prop}
	Suppose $ B = \LSym^\sigma_{A} (M) $ for some $ A $-module $ M $. 
	Then the filtered object $ \HR_\fil (B/A) $ is split.
\end{prop}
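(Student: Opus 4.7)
The plan is to use the universal property of $\HR_\fil$ to express $\HR_\fil(\LSym^\sigma_A(M)/A)$ as a free object built from the filtered involutive circle $\T^\sigma_\fil$, and then to observe that this free object is split by direct inspection of $\T^\sigma_\fil$.

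First, by composing the left $C_2$-adjoints $\ins_0 \colon \underline{\Mod}_A \to \Fil^{\geq 0}(\underline{\Mod}_A)$, $(\T^\sigma_\fil \otimes_{\underline{\Z}} -) \colon \Fil^{\geq 0}(\underline{\Mod}_A) \to \Fil^{\geq 0}_{S^{\sigma,\mathrm{fil}}}(\underline{\Mod}_A)$, and $\LSym^\sigma_A$, and noting that their right adjoints compose to the forgetful-and-evaluate functor $X \mapsto \ev_0(UX)$ (first forget the $S^\sigma$-action, then evaluate at $0$, then forget the algebra structure), I obtain by uniqueness of left adjoints a natural equivalence
\begin{equation*}
	\HR_\fil(\LSym^\sigma_A(M)/A) \simeq \LSym^\sigma_A\bigl(\T^\sigma_\fil \otimes_{\underline{\Z}} \ins_0(M)\bigr)
\end{equation*}
in $\Fil^{\geq 0}_{S^{\sigma,\mathrm{fil}}}\underline{\DAlg}^\sigma_A$.

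Second, I would establish that $\T^\sigma_\fil$ is split as a filtered $\underline{\Z}$-module: specifically, that $\T^\sigma_\fil \simeq \ins_0 \underline{\Z} \oplus \ins_1 \Sigma^\sigma \underline{\Z}$. The $C_2$-fixed basepoint $1 \in S^\sigma$ from Example \ref{ex:signrepsphere} provides a $C_2$-equivariant retraction $\underline{\Z}^{S^\sigma} \to \underline{\Z}$ of the inclusion of constants, yielding a splitting $\underline{\Z}^{S^\sigma} \simeq \underline{\Z} \oplus \Sigma^{-\sigma}\underline{\Z}$. Since the Postnikov truncation $\tau_{\geq *}^{\mathrm{Post}}$ is a right adjoint between stable $C_2$-$\infty$-categories and hence preserves finite direct sums, and since Lemma \ref{lemma:sigma_discrete} identifies $\Sigma^{-\sigma}\underline{\Z}$ as concentrated in Postnikov degree $-1$, this splitting yields
\begin{equation*}
	\T^{\sigma,\vee}_\fil \simeq \tau_{\geq *}^{\mathrm{Post}}\underline{\Z} \oplus \tau_{\geq *}^{\mathrm{Post}}\Sigma^{-\sigma}\underline{\Z} \simeq \ins_0 \underline{\Z} \oplus \ins_{-1}\Sigma^{-\sigma}\underline{\Z}.
\end{equation*}
Dualizing in $\Fil(\underline{\Mod}_{\underline{\Z}})$, using the Day convolution identity $\ins_n(X)^\vee \simeq \ins_{-n}(X^\vee)$ together with self-duality of $\underline{\Z}$, delivers the asserted splitting of $\T^\sigma_\fil$.

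The remaining steps are formal. Applying the Day convolution identity $\ins_k \otimes \ins_\ell \simeq \ins_{k+\ell}$ componentwise gives
\begin{equation*}
	\T^\sigma_\fil \otimes_{\underline{\Z}} \ins_0(M) \simeq \ins_0(M) \oplus \ins_1(\Sigma^\sigma M),
\end{equation*}
which is manifestly split in $\Fil^{\geq 0}(\underline{\Mod}_A)$. Finally, the $C_2$-symmetric monoidal functor $\spl$ of Observation \ref{obs:spl_und_param_monoidal} is left $C_2$-adjoint to $\und$, and the induced adjunction on derived involutive algebras (Remark \ref{rmk:dalg_construction_naturality}) has the property that $\spl$ commutes with $\LSym^\sigma_A$: both are left $C_2$-adjoints whose right adjoints ($\und$ and the algebra-to-module forgetful functor) commute by pointwise inspection. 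Hence $\LSym^\sigma_A$ carries split filtered modules to split filtered derived involutive algebras, and applying this to the preceding display concludes the argument. The main subtlety lies in the second step: while the basepoint-induced decomposition of $\underline{\Z}^{S^\sigma}$ is formally analogous to the non-equivariant splitting of $\underline{\Z}^{S^1}$, identifying the resulting Postnikov summands with $\ins_0$ and $\ins_{-1}$ of discrete modules relies on the genuinely equivariant fact (Lemma \ref{lemma:sigma_discrete}) that $\Sigma^{-\sigma}\underline{\Z}$ occupies a single Postnikov degree, a phenomenon with no counterpart in the non-equivariant setting.
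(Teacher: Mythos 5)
Your argument is correct and follows the same route as the paper's, which reduces to Raksit's Lemma 6.2.7: factor the free/forgetful adjunction to obtain $\HR_\fil(\LSym^\sigma_A(M)/A)\simeq\LSym^\sigma_A\bigl(\T^\sigma_\fil\otimes\ins_0 M\bigr)$, split $\T^\sigma_\fil\simeq\ins_0\underline{\Z}\oplus\ins_1\Sigma^\sigma\underline{\Z}$, and invoke compatibility of $\LSym^\sigma_A$ with $\spl$. You also correctly isolate the genuinely equivariant input---the $C_2$-fixed basepoint retraction of $S^\sigma$ together with Lemma \ref{lemma:sigma_discrete}, placing $\Sigma^{-\sigma}\underline{\Z}$ in a single Postnikov degree---that makes the filtered involutive circle split.
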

\begin{proof}
		In view of Example \ref{ex:cotangent_computations}, Theorem \ref{thm:invdeRham_equals_LSymcotangent}, and Theorem \ref{thm:real_hkr}, the result follows from a similar argument to that of \cite[Lemma 6.2.7]{Raksit20}. 
\end{proof}

\subsection{Filtered orbits, fixed points, Tate construction}\label{subsection:filteredHCHPetc}
In this subsection, we apply the constructions of \S\ref{subsection:param_bialg} to the setting of filtered $ S^\sigma $-actions. 
When applied to real Hochschild homology, we will obtain filtrations on negative real cyclic homology, real cyclic homology, and real periodic cyclic homology. 
\begin{obs}  
		Let $ \T^\sigma $ denote the $ \underline{\Z} $-linear involutive circle of Notation \ref{ntn:Z_linear_circle}. 
		There is an equivalence of $ \T^\sigma $-modules $ \Sigma^{-\sigma} \T^\sigma \simeq \T^{-\sigma} $. 
		Taking regular slice filtrations on both sides, we obtain an equivalence of $ \T_\fil^\sigma $-modules $ \T^\sigma_\fil \simeq \Sigma^{\sigma}\underline{\Z}(1) \otimes \T^{-\sigma}_\fil $ promoting the equivalence of \cite[Remark 6.3.1]{Raksit20}. 
\end{obs}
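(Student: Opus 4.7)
The plan is to prove the two equivalences in sequence: first the unfiltered equivalence of $\T^\sigma$-modules, then to transport it through the regular slice filtration while tracking how $\Sigma^\sigma$ interacts with the filtration degree.

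For the first equivalence, I would obtain $\Sigma^{-\sigma}\T^\sigma\simeq \T^{-\sigma}$ from Atiyah duality applied to $S^\sigma$ viewed as a compact Lie $C_2$-group: its framing (coming from left multiplication) has virtual dimension $\sigma$, so the Atiyah dual of $\T^\sigma=\underline{\Z}[S^\sigma]$ is $\Sigma^{-\sigma}\T^\sigma$, and this identification is canonically $\T^\sigma$-linear via the group structure. The notation $\T^{-\sigma}$ is simply shorthand for this dual object, consistent with the non-equivariant convention $\Sigma^{-1}\T\simeq \T^\vee$ from \cite[Remark 6.3.1]{Raksit20}.

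For the second equivalence, I would apply the regular slice filtration $\tau^\rslice_{\geq *}$ to the reformulation $\T^\sigma\simeq \Sigma^\sigma \T^{-\sigma}$. By Observation \ref{obs:postnikov_regslice_agreement} the left-hand side identifies with $\T^\sigma_\fil$. For the right-hand side, the key computation is how $\tau^\rslice_{\geq *}$ interacts with $\Sigma^\sigma$. Writing $\rho = 1+\sigma$ and combining Observation \ref{obs:rslice_shift_by_rep_sphere} (the equivalence $-\otimes \sphere^\rho$ shifts the regular slice filtration by $2$) with Example \ref{ex:spheres_as_slices} (the sphere $\sphere^1$ is regular slice $1$-connective), I obtain that $-\otimes \sphere^\sigma$ restricts to an equivalence $\tau^\rslice_{\geq n}\Spectra^{C_2}\xrightarrow{\sim}\tau^\rslice_{\geq n+1}\Spectra^{C_2}$, and hence
\begin{equation*}
\tau^\rslice_{\geq n}(\Sigma^\sigma X)\simeq \Sigma^\sigma\,\tau^\rslice_{\geq n-1}(X).
\end{equation*}
Taking $X=\T^{-\sigma}$ and assembling over $n$, this yields $\tau^\rslice_{\geq *}(\Sigma^\sigma\T^{-\sigma})\simeq \Sigma^\sigma\underline{\Z}(1)\otimes \T^{-\sigma}_\fil$, where $\T^{-\sigma}_\fil := \tau^\rslice_{\geq *}\T^{-\sigma}$ and $(1)$ denotes the weight shift in the filtration.

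The main obstacle will be tracking $\T^\sigma_\fil$-linearity of the final equivalence (as opposed to an equivalence of mere filtered $\underline{\Z}$-modules). This requires that each step — Atiyah duality, the shear formula for $\Sigma^\sigma$, and the identification $\colim \T^\sigma_\fil\simeq \T^\sigma$ of Remark \ref{rmk:filteredSsigma_Ssigma_adjunction} — be performed in a manner compatible with the filtered $C_2$-symmetric monoidal structure on $\Fil(\underline{\Mod}_{\underline{\Z}})$. Since $\tau^\rslice_{\geq *}$ is lax $C_2$-symmetric monoidal by Remark \ref{rmk:regslicelaxsymmmon} and is moreover strict on tensor powers of $\underline{\Z}^{S^\sigma}$ (as used in the proof of Proposition \ref{prop:filtered_Ssigma_bialg}), the required compatibilities are available, so the obstacle is essentially bookkeeping rather than genuinely new input.
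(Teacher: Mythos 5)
The unfiltered equivalence $\Sigma^{-\sigma}\T^\sigma\simeq\T^{-\sigma}$ via equivariant Atiyah duality for the $C_2$-Lie group $S^\sigma$ (with adjoint representation $\sigma$) is correct.

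The filtered half has a genuine gap. The shift formula $\tau^\rslice_{\geq n}(\Sigma^\sigma X)\simeq \Sigma^\sigma\,\tau^\rslice_{\geq n-1}(X)$ you assert is false, and so is the claim that $-\otimes\sphere^\sigma$ restricts to an equivalence $\tau^\rslice_{\geq n}\Spectra^{C_2}\to\tau^\rslice_{\geq n+1}\Spectra^{C_2}$. By Lemma \ref{lemma:reg_slice_conn_conditions}, for $\sphere^\sigma$ to be regular slice $1$-connective one would need $\Phi^{C_2}\sphere^\sigma=\sphere^0$ to be $\lceil 1/2\rceil=1$-connective, which fails; thus $-\otimes\sphere^\sigma$ does not even send $\tau^\rslice_{\geq n}$ into $\tau^\rslice_{\geq n+1}$, and the case $X=\sphere^0$, $n=1$ of your formula asserts the false statement $\sphere^\sigma\in\tau^\rslice_{\geq 1}$. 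You have also misread Example \ref{ex:spheres_as_slices}: it concerns $\sphere^{n\sigma}$ only for $n\leq 0$ and says nothing about $\sphere^1$, and the only representation sphere which shears the regular slice filtration by its dimension is $\sphere^\rho$ (Observation \ref{obs:rslice_shift_by_rep_sphere}). Separately, your appeal to Observation \ref{obs:postnikov_regslice_agreement} to identify $\tau^\rslice_{\geq *}\T^\sigma$ with $\T^\sigma_\fil$ is unsupported: that observation concerns $\underline{\Z}^{S^\sigma}=\T^{-\sigma}$, and by Construction \ref{cons:filteredSsigma} the object $\T^\sigma_\fil$ is \emph{defined} as $(\T^{\sigma,\vee}_\fil)^\vee$, not as a slice filtration of $\T^\sigma$; identifying the two is essentially what the observation itself is claiming. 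A repair is available but requires using the specific structure of $\T^{-\sigma}$ rather than a general commutation: its regular slices are $\underline{\Z}$ in degree $0$ and $\Sigma^{-\sigma}\underline{\Z}$ in degree $-1$ (Proposition \ref{prop:graded_fil_Ssigma}), and it is \emph{for these particular slices} that applying $\Sigma^\sigma$ lands you in regular $1$- and $0$-slices respectively (Example \ref{ex:spheres_as_slices}). Alternatively, dualize the extension $\underline{\Z}(0)\to\T^{\sigma,\vee}_\fil\to\Sigma^{-\sigma}\underline{\Z}(-1)$ directly and compare the resulting two-term filtered object with $\Sigma^\sigma\underline{\Z}(1)\ostar\T^{\sigma,\vee}_\fil$.
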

Thus by Proposition \ref{prop:tate_cons_is_param_compatibility}, we may take the parametrized Tate construction with respect to $ \mathbb{T}^\sigma_\fil $. 
\begin{defn}\label{defn:filtered_real_trace_theories} 
	Let $ \cat $ denote a fixed derived involutive algebraic context, let $ A $ denote a derived involutive algebra in $ \cat $, and suppose $ B $ is a derived involutive $ A $-algebra. 
	Then we define \emph{filtered real cyclic homology, real negative cyclic homology, and real periodic cyclic homology of $ B $ over $ A $} to be
		\begin{equation*}
			\HC_\fil(B/A) = \HR_\fil(B/A)_{\mathbb{T}^\sigma_\fil} \qquad  \HC^{-}_\fil(B/A) = \HR_\fil(B/A)^{\mathbb{T}^\sigma_\fil} \qquad  \HP_\fil(B/A) = \HR_\fil(B/A)^{t\mathbb{T}^\sigma_\fil}
		\end{equation*}
		where $ \HR_\fil(B/A) $ is the filtered real Hochschild homology of $ A $ of Definition \ref{defn:filtered_real_HH}. 
\end{defn} 
The following is an immediate consequence of working in the parametrized setting.  
\begin{prop}
	Taking $ (-)^e $ in Definition \ref{defn:filtered_real_trace_theories} recovers the filtered trace theories in Example 6.3.8 of \cite{Raksit20}. 
\end{prop}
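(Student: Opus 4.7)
The plan is to unwind the definitions and verify that restriction to the underlying (i.e.\ $C_2/e$-fiber) functor $(-)^e$ commutes with each of the three constructions (orbits, fixed points, Tate construction) with respect to the filtered involutive circle. First I would observe that $(-)^e$, realized as restriction along $\alpha\colon C_2/e \to C_2/C_2$, is a $C_2$-symmetric monoidal functor which admits both a left and a right $C_2$-adjoint in the setting of a derived involutive algebraic context $\cat$ (this is part of the structure of $\cat^{C_2}$-$\cat^e$ as classified by a cocartesian fibration); in particular, $(-)^e$ strongly preserves all small limits and colimits.

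Next, I would invoke the compatibilities already established in the paper: by Proposition~\ref{prop:filtered_Ssigma_bialg}(3), $(\T^\sigma_\fil)^e \simeq \T_\fil$ as dualizable $\E_\infty$-bialgebras in filtered $\Z$-modules, and by Remark~\ref{rmk:filtered_real_HH_underlying}, the underlying filtered derived $k^e$-algebra with filtered $\T$-action of $\HR_\fil(B/A)$ is $\HH_\fil(B^e/A^e)$. Thus the comparison reduces to showing that, for the dualizable bialgebra $\T^\sigma_\fil$ acting on $\HR_\fil(B/A)$, the canonical maps
\begin{equation*}
  \HR_\fil(B/A)^e_{\T_\fil} \to \left(\HR_\fil(B/A)_{\T^\sigma_\fil}\right)^e, \qquad
  \left(\HR_\fil(B/A)^{\T^\sigma_\fil}\right)^e \to \HR_\fil(B/A)^{e,\T_\fil}
\end{equation*}
and the analogous map for the Tate construction are equivalences.

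The first two follow immediately from the fact that $(-)^e$ preserves all small $C_2$-colimits and $C_2$-limits (orbits and fixed points are computed as particular such (co)limits in $\underline{\Mod}_{\T^\sigma_\fil}$), together with $C_2$-symmetric monoidality which identifies the resulting module structures. For the Tate construction, I would apply Remark~\ref{rmk:param_tate_naturality} to the $C_2$-symmetric monoidal functor $F = (-)^e \colon \cat \to \cat^e$ with $A = \T^\sigma_\fil$: since $F$ preserves both $C_2$-limits and $C_2$-colimits, the natural transformation $F((-)^{tA}) \to F(-)^{tF(A)}$ is an equivalence, which under the identification $F(\T^\sigma_\fil)\simeq\T_\fil$ yields the desired comparison.

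The main (and essentially only) potential subtlety is the Tate case, since it does not arise as a single limit or colimit; here the work has been subsumed into Remark~\ref{rmk:param_tate_naturality}, which is precisely tailored for this situation. Once the three (co)limit-type identifications are in hand, comparison with the filtered trace theories of \cite[Example 6.3.8]{Raksit20} is a direct matter of matching definitions.
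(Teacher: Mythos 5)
Your overall strategy is correct and fills in the justification the paper treats as immediate: $(-)^e$, being the restriction from the fiber over $C_2/C_2$ to the fiber over $C_2/e$, has both $C_2$-adjoints in a $C_2$-presentable $C_2$-$\infty$-category and hence preserves all small limits and colimits; the filtered involutive circle restricts correctly by Proposition~\ref{prop:filtered_Ssigma_bialg}(3); and $\HR_\fil$ restricts to $\HH_\fil$ by Remark~\ref{rmk:filtered_real_HH_underlying}, so the orbit and fixed-point cases follow by (co)limit-preservation together with the fiberwise description of parametrized module categories.

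One imprecision: you cite Remark~\ref{rmk:param_tate_naturality} for the Tate case with $F = (-)^e$, but that remark is stated for a $C_2$-symmetric monoidal $C_2$-functor $F\colon \cat \to \mathcal{D}$ \emph{between} $C_2$-$\infty$-categories, whereas $(-)^e\colon \cat^{C_2} \to \cat^e$ is the fiberwise restriction functor, which is an ordinary (not $C_2$-) symmetric monoidal functor landing in an ordinary $\infty$-category, so the hypotheses of that remark are not literally met. The compatibility you actually need---that the Tate construction commutes with $(-)^e$---is recorded directly in the unnamed remark between Proposition~\ref{prop:tate_cons_is_param_compatibility} and its proof, which exhibits the commuting square identifying $\left((-)^{tA}\right)^e$ with $\left((-)^e\right)^{tA^e}$. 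Substituting that reference for Remark~\ref{rmk:param_tate_naturality}, your argument is sound.
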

\begin{lemma}\label{lemma:gr_of_filtered_orbits_fixpt_Tate}
		Let $ \cat $ be a $C_2$-stable $ C_2 $-presentable $ \underline{\Z} $-linear $ C_2 $-symmetric monoidal $ C_2 $-$ \infty $-category. 
		Then for $ X \in \Fil_{S^\sigma}(\cat) $, there are canonical natural equivalences
		\begin{align*}
				\gr\left(X_{\mathbb{T}^\sigma_\fil}\right) &\simeq \mathrm{und}\left(|\gr(X) |^{\leq *}\right)[\rho *] \\
				\gr\left(X^{\mathbb{T}^\sigma_\fil}\right) &\simeq \mathrm{und}\left(|\gr(X) |^{\geq *}\right)[\rho *] \\
				\gr\left(X^{t\mathbb{T}^\sigma_\fil}\right) &\simeq \delta\left(|\gr(X) |\right)[\rho *] 
		\end{align*} 
		in $ \Gr(\cat) $. 
\end{lemma}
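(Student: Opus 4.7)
The strategy is to first reduce each of the three equivalences to an analogous statement for $\gr(X)$ as a $\mathbb{D}^\sigma_+$-module, using that associated graded is $C_2$-symmetric monoidal and carries $\mathbb{T}^\sigma_\fil$ to $\mathbb{D}^\sigma_+$; and then to identify those expressions with the claimed cohomology-type formulas via Proposition \ref{prop:param_coh_formula} and the complete-filtered/cochain-complex dictionary of Proposition \ref{prop:complete_fil_as_cochain_cplx_monoidal}.

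First I would establish canonical equivalences
\begin{equation*}
\gr(X_{\mathbb{T}^\sigma_\fil}) \simeq \gr(X)_{\mathbb{D}^\sigma_+}, \qquad \gr(X^{\mathbb{T}^\sigma_\fil}) \simeq \gr(X)^{\mathbb{D}^\sigma_+}, \qquad \gr(X^{t\mathbb{T}^\sigma_\fil}) \simeq \gr(X)^{t\mathbb{D}^\sigma_+}.
\end{equation*}
The orbits case is immediate: $\gr$ is a $C_2$-symmetric monoidal $C_2$-colimit-preserving functor (Proposition \ref{prop:param_assoc_gr_is_C2_monoidal} and the adjunction of Remark \ref{rmk:filteredSsigma_gr_adjunction}), so it commutes with relative tensor products over bialgebras, and $\gr(\mathbb{T}^\sigma_\fil) \simeq \mathbb{D}^\sigma_+$ as $C_2$-$\E_\infty$-bialgebras. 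For the Tate case, since $\mathbb{T}^\sigma_\fil$ is dualizable (Corollary \ref{cor:dualfilteredcircles}) and satisfies the hypotheses of Recollection \ref{rec:Tateconstruction}, the Tate construction is the cofiber of the norm map; that $\gr$ commutes with its formation follows from $C_2$-symmetric monoidality together with Remark \ref{rmk:param_tate_naturality}. For the fixed points case I would use the same dualizability to rewrite $(-)^{\mathbb{T}^\sigma_\fil}$ as the target of the norm map applied to $(-)_{\mathbb{T}^\sigma_\fil}$ twisted by the invertible $\omega_{\mathbb{T}^\sigma_\fil}$, which converts the statement into a claim about tensor products that $\gr$ handles automatically.

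Second I would identify each of the three $\mathbb{D}^\sigma_+$-constructions applied to $\gr(X)$ with the cohomology-type expressions. The Tate case is immediate from Proposition \ref{prop:param_coh_formula}\ref{prop_item:param_coh_formula_Dsigmaplus_mod} combined with Definition \ref{defn:cohomology_of_inv_chain_cplx}: $\gr(X)^{t\mathbb{D}^\sigma_+} = H^*(\gr(X)) \simeq \delta(|\gr(X)|)[\rho *]$. For the fixed points and orbits cases I would transport along the shear $C_2$-symmetric monoidal equivalence $[-\rho *] \colon \underline{\Mod}_{\mathbb{D}^\sigma_+}(\Gr(\cat)) \xrightarrow{\sim} \underline{\Mod}_{\mathbb{D}_-}(\Gr(\cat))$ of Proposition \ref{prop:shear_gr_on_dg_mod}, picking up a $[\rho *]$ shift, and then apply the complete-filtered description of Proposition \ref{prop:complete_fil_as_cochain_cplx_monoidal}. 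Under that equivalence, the $\mathbb{D}_-$-fixed points of the $\mathbb{D}_-$-module corresponding to the complete filtered object $|\gr(X)[-\rho *]|^{\geq *}$ are computed as the underlying graded object of its positive filtration (this unwinds Definition \ref{defn:cohomology_type}), producing $\mathrm{und}(|\gr(X)|^{\geq *})[\rho *]$; dually for orbits, which computes the underlying graded of the cofiber filtration $|\gr(X)|^{\leq *}$.

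The main obstacle will be the fixed-points case of the first step, where we are propagating $\gr$ through a \emph{right} adjoint while $\gr$ is only a priori a $C_2$-left adjoint. Everything hinges on exploiting dualizability of $\mathbb{T}^\sigma_\fil$ to rewrite the right adjoint via the norm map of Recollection \ref{rec:Tateconstruction}; the precise identification of the relevant invertible dualizing object $\omega_{\mathbb{T}^\sigma_\fil}$ and its associated graded $\omega_{\mathbb{D}^\sigma_+}$ (which should be $\Sigma^\sigma \underline{\Z}(1)$) will be what ultimately produces the $[\rho *]$ shifts in the statement. A secondary bookkeeping challenge is keeping the two shear conventions $[\pm\rho *]$ of Proposition \ref{prop:shear_gr_on_dg_mod} consistent throughout so that the final formulas match those stated in the lemma; cross-checking with the underlying non-equivariant statement (where the shifts are by powers of $2$, per Proposition \ref{prop:shear_gr_by_reg_rep_sphere_is_mult}(2)) provides a useful sanity check.
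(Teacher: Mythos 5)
Your two-step structure matches the paper's proof exactly: first commute $\gr$ past the three $\mathbb{T}^\sigma_\fil$-constructions to reduce to $\mathbb{D}^\sigma_+$-module statements on $\gr(X)$, then identify those with the cohomology-type formulas. The paper's actual proof is terser (two sentences: ``since $\gr$ is $C_2$-symmetric monoidal, we have the equivalences [...]; the result follows from Proposition \ref{prop:param_coh_formula}''), so you are filling in detail that the paper leaves implicit. In particular, the paper's Proposition \ref{prop:param_coh_formula} as stated only identifies the Tate construction with $\delta(|\,\cdot\,|)$; your step two supplies the orbits and fixed-point cases via the shear and Proposition \ref{prop:complete_fil_as_cochain_cplx_monoidal}, which is the intended but unstated route.

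One point deserves tightening. For the fixed-point commutation you propose to ``rewrite $(-)^{\mathbb{T}^\sigma_\fil}$ as the target of the norm map.'' But being the target of a map is not a formula for the object: the norm map $\Nm_M \colon M_{\mathbb{T}^\sigma_\fil} \otimes \omega \to M^{\mathbb{T}^\sigma_\fil}$ is constructed (Recollection \ref{rec:Tateconstruction}) as the adjoint of a map into $M$, not as an identification of $M^{\mathbb{T}^\sigma_\fil}$ with a tensor expression, so dualizability of $\mathbb{T}^\sigma_\fil$ does not by itself let you compute the fixed points via a colimit diagram that $\gr$ automatically respects. The fixed points are a totalization (cobar limit), and $\gr$ is only a $C_2$-left adjoint. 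What actually makes this work — which the paper's proof also elides — is the specific filtered structure of $\mathbb{T}^\sigma_\fil$: the cobar stages shift filtration degree strictly, so in each fixed graded piece the totalization stabilizes after finitely many steps and is therefore a finite limit, which the exact functor $\gr$ does preserve. You should replace the norm-map heuristic with this convergence argument (or a direct appeal to the non-equivariant counterpart \cite[Proposition 5.2.6]{Raksit20} applied fiberwise, which is presumably how the paper intends the reader to fill the gap). Your dualizing-object bookkeeping $\omega_{\mathbb{D}^\sigma_+} \simeq \Sigma^\sigma \underline{\Z}(1)$ and the source of the $[\rho *]$ shift are correct.
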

\begin{proof}
		Since $ \gr $ is $ C_2 $-symmetric monoidal by Proposition \ref{prop:param_assoc_gr_is_C2_monoidal}, we have canonical natural equivalences %  Lemma \ref{lemma:assocgr_symmonoidal}
		\begin{equation*}
			\gr\left(X_{\mathbb{T}^\sigma_\fil}\right)\simeq X_{\mathbb{D}^\sigma_+} \qquad \gr\left(X^{\mathbb{T}^\sigma_\fil}\right)\simeq X^{\mathbb{D}^\sigma_+} \qquad \gr\left(X^{t\mathbb{T}^\sigma_\fil}\right)\simeq X^{t\mathbb{D}^\sigma_+}  	\,.
		\end{equation*}
		The result now follows from Proposition \ref{prop:param_coh_formula}.
\end{proof}
\begin{prop}\label{prop:colim_of_fil_fix_orb_Tate}
		Let $ \cat $ be a $ \underline{\Z} $-linear distributive $ C_2 $-symmetric monoidal $ C_2 $-$ \infty $-category and let $ X \in \Fil_{S^\sigma}(\cat) $. 
		Then there is a canonical equivalence $ \colim X_{\T^{\sigma,\fil}} \simeq \left(\colim X \right)_{\T^\sigma} $ and there are canonical maps
		\begin{equation*}
				\colim \left(X^{\T^{\sigma,\fil}}\right) \to \left(\colim X\right)^{\T^\sigma} \qquad \qquad \colim \left(X^{t\T^{\sigma,\fil}}\right) \to \left(\colim X\right)^{t\T^\sigma }\,.
		\end{equation*}
\end{prop}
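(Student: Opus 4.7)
[Proof proposal for Proposition \ref{prop:colim_of_fil_fix_orb_Tate}]
The plan is to deduce the entire proposition as a direct application of the naturality of the parametrized norm and Tate construction (Remark \ref{rmk:param_tate_naturality}) to the colimit $C_2$-functor $\colim \colon \Fil(\cat) \to \cat$. To set this up, I will first assemble the following ingredients. By Observation \ref{obs:spl_und_param_monoidal} and the $C_2$-adjunction $\colim \dashv c$ of Recollection \ref{rec:splitfiltobj}, the $C_2$-functor $\colim \colon \Fil(\cat) \to \cat$ is $C_2$-symmetric monoidal and is a left $C_2$-adjoint, so in particular it preserves all $C_2$-small $C_2$-colimits. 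By Remark \ref{rmk:filteredSsigma_Ssigma_adjunction}, applied to the $C_2$-$\E_\infty$-bialgebra $\T^\sigma_\fil$, the unit of this adjunction upgrades to a canonical equivalence $\colim(\T^\sigma_\fil) \simeq \T^\sigma$ of $C_2$-$\E_\infty$-bialgebras in $\cat$, so that $\colim$ sends a filtered $\T^\sigma_\fil$-module to its underlying object endowed with a $\T^\sigma$-action.

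Now I would apply Remark \ref{rmk:param_tate_naturality} to the $C_2$-symmetric monoidal functor $F = \colim \colon \Fil(\cat) \to \cat$ and the dualizable bialgebra $A = \T^\sigma_\fil$. This remark produces, for any $X \in \Fil_{S^\sigma}(\cat) = \underline{\LMod}_{\T^\sigma_\fil}(\Fil(\cat))$, a canonical sequence of natural transformations
\begin{equation*}
    \colim(X)_{\T^\sigma} \to \colim(X_{\T^\sigma_\fil}) \xrightarrow{\colim(\mathrm{Nm})} \colim(X^{\T^\sigma_\fil}) \to \colim(X)^{\T^\sigma}
\end{equation*}
whose composite is the norm map for the $\T^\sigma$-action on $\colim(X)$; applying $\cofib$ to the outer composite then gives the comparison map $\colim(X^{t\T^\sigma_\fil}) \to \colim(X)^{t\T^\sigma}$. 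The same remark tells us that the leftmost arrow is an equivalence precisely because $\colim$ preserves $C_2$-colimits, yielding the claimed equivalence on orbits. The remaining two arrows are merely natural transformations, which gives exactly the canonical maps on fixed points and Tate asserted in the proposition.

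The reason these latter two maps are not equivalences in general is the expected one: $\colim$ is a left $C_2$-adjoint and so need not preserve the $C_2$-limits (in particular, the totalization-type limits) used to compute $(-)^{\T^\sigma_\fil}$. There is no real obstacle to the argument itself — every step is a direct invocation of structure already established in \S\ref{subsection:filgr} and \S\ref{subsection:parametrized_Tate_cons} — so the only thing to take care with is bookkeeping: that the equivalence $\colim(\T^\sigma_\fil) \simeq \T^\sigma$ is compatible with the dualizing data on both sides (so that the relevant norm maps of Recollection \ref{rec:Tateconstruction} are transported correctly), and that the $C_2$-symmetric monoidal structure on $\colim$ intertwines the $\T^\sigma_\fil$- and $\T^\sigma$-module structures. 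Both of these compatibilities are immediate from the fact that the equivalence $\colim(\T^\sigma_\fil) \simeq \T^\sigma$ is one of $C_2$-$\E_\infty$-bialgebras, as produced in Remark \ref{rmk:filteredSsigma_Ssigma_adjunction}.
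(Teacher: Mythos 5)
Your proof is correct and takes essentially the same approach as the paper, which simply writes ``Follows from Remark \ref{rmk:param_tate_naturality}''; your version is an accurate expansion of that one-line argument, with the correct choices $F = \colim$ and $A = \T^\sigma_\fil$. One small citation note: Observation \ref{obs:spl_und_param_monoidal} concerns $\spl \dashv \und$ rather than $\colim \dashv c$, so it does not directly establish that $\colim$ is $C_2$-symmetric monoidal; that fact (together with the bialgebra identification $\colim(\T^\sigma_\fil) \simeq \T^\sigma$) is instead part of what Remark \ref{rmk:filteredSsigma_Ssigma_adjunction} records, which you also cite, so the argument stands.
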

\begin{proof} 
		Follows from Remark \ref{rmk:param_tate_naturality}. 
\end{proof}
\begin{rmk} 
		Let $ \underline{k} $ be the fixed point $ C_2 $-Green functor associated to a commutative ring with an involution, and let $ A $ be a derived involutive algebra over $ \underline{k} $.  
		There is a canonical equivalence
		$ \colim \HC_\fil(A/\underline{k}) \simeq \HC(A/\underline{k}) $ and there are canonical maps
		\begin{equation*}
			\colim \HC^{-}_\fil(A/\underline{k}) \to \HC^{-}_\fil(A/\underline{k}) \qquad \qquad \colim \HP_\fil(A/\underline{k}) \to \HP(A/\underline{k}) \,.
		\end{equation*} 
\end{rmk}

\subsection{Computations \& comparisons}\label{subsection:computations_comparisons}
The presentation of the real HKR filtration given here is rather anachronistic--classically, one can prove the Hochschild--Kostant--Rosenberg theorem by simply computing the homotopy groups of Hochschild homology on free polynomial algebras, observing that in degree $ i $, they are given by differential $ i $-forms (hence one can use the Postnikov filtration), and concluding for smooth algebras via an étale base change argument. 
Here we sketch an approach to a `real Hochschild--Kostant--Rosenberg theorem' in the spirit of \cite[\S3.2]{MR1600246}.  
Our results show that identifying an HKR-style filtration on real Hochschild homology is more subtle than in the non-equivariant case. 
Furthermore, we show how the theory simplifies when we work over a base on which $ 2 $ is invertible, and provide a dictionary for putting our results in the context of classical results such as \cite{MR1384461,MR972090,SVP96,MR917736}. 
\begin{prop}\label{prop:resolution_by_norms}
	Let $ \underline{k} $ be the constant $ C_2 $-Mackey functor associated to a commutative ring and write $ \underline{N}^{C_2} $ for the relative norm (Definition \ref{defn:relativenorm}). Then
	\begin{enumerate}
		\item There is a resolution of $ \underline{k}[x] $ as an $ \underline{N}^{C_2}\underline{k}[x] $-module
		\begin{equation*}
			\Sigma^{\sigma-1} \underline{N}^{C_2}\underline{k}[x] \xrightarrow{d_0} \underline{N}^{C_2}\underline{k}[x] \to \underline{k}[x] .
		\end{equation*}
		Moreover, the base change along the augmentation $ d_0 \otimes_{\underline{N}^{C_2}\underline{k}[x]} \underline{k}[x] $ is the zero map. 
		\item There is a resolution of $ \underline{k}[x,x_\sigma] $ as an $ \underline{N}^{C_2} \underline{k}[x,x_\sigma] $-module
		\begin{equation*}
			\Sigma^{\sigma-1}\underline{N}^{C_2} \underline{k}[x,x_\sigma] \xrightarrow{d_1'} \underline{N}^{C_2} \underline{k}[x,x_\sigma] \otimes C_2 \xrightarrow{d_0'} \underline{N}^{C_2} \underline{k}[x,x_\sigma] \to \underline{k}[x,x_\sigma] .
		\end{equation*} 
			Moreover, the base change along the augmentation $ d_i' \otimes_{\underline{N}^{C_2}\underline{k}[x,x_\sigma]} \underline{k}[x,x_\sigma] $ is the zero map for $ i = 0, 1$. 
	\end{enumerate}
\end{prop}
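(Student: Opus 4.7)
The plan is to recognize both sequences as equivariant Koszul complexes for explicit regular sequences, then verify exactness via the recollement of Proposition \ref{prop:eqvtspectrarecollement}. First I would make the objects concrete: since $\underline{N}^{C_2}$ is symmetric monoidal, $R := \underline{N}^{C_2}\underline{k}[x]$ has underlying $k$-algebra $k[x_1, x_2]$ with $\sigma$ swapping the indices, and the adjunction counit $R \to \underline{k}[x]$ on underlying sends both $x_i \mapsto x$. Similarly, $S := \underline{N}^{C_2}\underline{k}[x, x_\sigma]$ has underlying $k[x_1, x_{1,\sigma}, x_2, x_{2,\sigma}]$ with $\sigma$ permuting $1 \leftrightarrow 2$; the unique equivariant augmentation to $\underline{k}[x, x_\sigma]$ sends $x_1, x_{2,\sigma} \mapsto x$ and $x_{1,\sigma}, x_2 \mapsto x_\sigma$, as forced by equivariance together with being a ring map.

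For part~(1), let $y := x_1 - x_2$; it is a non-zero-divisor in $R^e$ with $\sigma(y) = -y$. Via Lemma \ref{lemma:sigma_discrete}, such a $\sigma$-antisymmetric class determines an $R$-linear, $C_2$-equivariant map $\Sigma^{\sigma - 1} R \to R$, so multiplication by $y$ yields the desired $d_0$. The cofiber is $R/(y)$, identified with $\underline{k}[x]$ by the universal property of the free derived involutive algebra on $\underline{k}$. Exactness in $\underline{\Mod}_{\underline{k}}$ reduces, via the joint conservativity of $(-)^e$ and $\Phi^{C_2}$, to exactness on underlying (the standard length-one Koszul resolution for the regular element $y$) and on geometric fixed points (using that $\Phi^{C_2}\underline{N}^{C_2}M \simeq \Phi^{C_2}\underline{k} \otimes_k M$ by symmetric monoidality of the norm, together with $\Phi^{C_2}y = 0$).

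For part~(2), the kernel of the augmentation $S \to \underline{k}[x, x_\sigma]$ on underlying is the ideal generated by the regular sequence $g_1 := x_1 - x_{2,\sigma}$, $g_2 := x_{1,\sigma} - x_2$, satisfying $\sigma(g_1) = -g_2$ and $\sigma(g_2) = -g_1$. The free $S$-module on $\{g_1, g_2\}$ with this sign-twisted swap action is identified with $S \otimes C_2$ via the $S$-linear change of basis $(a,b) \mapsto (a, -b)$, which one directly checks intertwines the two $C_2$-actions. The top exterior power $\Lambda^2(k g_1 \oplus k g_2)$ carries the action $g_1 \wedge g_2 \mapsto -g_1 \wedge g_2$, identifying the top Koszul piece with $\Sigma^{\sigma - 1}S$ by Lemma \ref{lemma:sigma_discrete}. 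The Koszul differentials for $(g_1, g_2)$ then assemble into the claimed equivariant resolution; exactness again reduces via the recollement to the standard Koszul-complex statement on underlying together with the analogous statement after $\Phi^{C_2}$.

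The ``moreover'' statements are immediate from the Koszul description: every differential is (up to sign) multiplication by a combination of the $g_i$ (resp. by $y$), and these elements vanish after base change along the augmentation to $\underline{k}[x]$ (resp.\ $\underline{k}[x, x_\sigma]$). The main obstacle I anticipate is the careful bookkeeping of $C_2$-equivariance: confirming that the sign-twisted $S$-modules produced by the Koszul construction agree as $S$-modules with the stated $\Sigma^{\sigma - 1}S$ and $S \otimes C_2$, and verifying exactness after $\Phi^{C_2}$, where one no longer has a literal polynomial-ring Koszul complex (since $\Phi^{C_2}\underline{k}$ is not in general $k$) and must instead extract exactness from the module-theoretic behavior of the norm under geometric fixed points.
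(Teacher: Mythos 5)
Your proof is correct and follows essentially the same strategy as the paper: identify the kernel of the augmentation on underlying objects, build the resolution from the resulting length-one (resp.\ length-two) Koszul-type sequence, and verify exactness by the joint conservativity of $(-)^e$ and $\Phi^{C_2}$. The paper constructs the differentials somewhat more laboriously — descending $\varphi : \underline{k} \to \underline{k}[x]$ (resp.\ $\psi$) through the cofiber of $\varepsilon$ and desuspending, rather than directly exhibiting the $\sigma$-antisymmetric generators $y$ and $g_1,g_2$ — but the content is identical, and your observation that $\mathrm{Tr}(y)=0$ holds automatically means your shortcut to the equivariant map $\Sigma^{\sigma-1}\underline{k}\to R$ is sound.
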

The following corollary is \cite[Lemma 4.27 \& Proposition 4.35]{PHrealTHH_perfectoid}.
\begin{cor}\label{cor:gr_HR_via_resolutions}
	The real Hochschild homology of $ \underline{k}[x] $ and $ \underline{k}[x,x_\sigma] $ admit complete descending exhaustive filtrations with associated graded 
	\begin{align*}
		\gr^i \HR(\underline{k}[x]/\underline{k}) \simeq 
			\begin{cases}
				\Sigma^\sigma \underline{k}[x] & \text{ if } i=1 \\
				\underline{k}[x] & \text{ if } i=0 \\
				0 & \text{ otherwise }
			\end{cases}
		\qquad
		\gr^i \HR(\underline{k}[x,x_\sigma]/\underline{k}) \simeq 
			\begin{cases}
				\Sigma^{\sigma+1} \underline{k}[x,x_\sigma] & \text{ if } i=2 \\
				\Sigma \underline{k}[x,x_\sigma] \otimes C_2 & \text{ if } i=1 \\
				\underline{k}[x,x_\sigma] & \text{ if } i=0 \\
				0 & \text{ otherwise }
			\end{cases}.
	\end{align*}
\end{cor}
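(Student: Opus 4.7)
The strategy is to bootstrap directly from Proposition \ref{prop:resolution_by_norms}, using the identification $\HR(B/\underline{k}) \simeq B \otimes_{\underline{N}^{C_2}B} B$ recorded in Remark \ref{rmk:real_HH_as_THR_linearization}.

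For $B = \underline{k}[x]$, the cofiber sequence
\begin{equation*}
\Sigma^{\sigma-1} \underline{N}^{C_2}\underline{k}[x] \xrightarrow{d_0} \underline{N}^{C_2}\underline{k}[x] \to \underline{k}[x]
\end{equation*}
in $\underline{N}^{C_2}\underline{k}[x]$-modules from Proposition \ref{prop:resolution_by_norms}(1), tensored with $\underline{k}[x]$ over $\underline{N}^{C_2}\underline{k}[x]$, yields a cofiber sequence
\begin{equation*}
\Sigma^{\sigma-1}\underline{k}[x] \xrightarrow{d_0 \otimes 1} \underline{k}[x] \to \HR(\underline{k}[x]/\underline{k})
\end{equation*}
in $\underline{k}[x]$-modules. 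Since $d_0 \otimes 1$ is null by the ``moreover'' clause, the cofiber splits and $\HR(\underline{k}[x]/\underline{k}) \simeq \underline{k}[x] \oplus \Sigma^{\sigma}\underline{k}[x]$. The two-term filtration $\fil^1\HR = \Sigma^{\sigma}\underline{k}[x] \hookrightarrow \fil^0\HR = \HR$, with $\fil^j\HR = 0$ for $j \geq 2$, is then descending, complete, exhaustive, and has the claimed associated graded.

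The case $B = \underline{k}[x,x_\sigma]$ proceeds analogously using the length-two resolution of Proposition \ref{prop:resolution_by_norms}(2). Writing $R := \underline{N}^{C_2}\underline{k}[x,x_\sigma]$ and splitting the resolution into two cofiber sequences
\begin{equation*}
\Sigma^{\sigma-1} R \xrightarrow{d_1'} R \otimes C_2 \to C_{d_1'}, \qquad C_{d_1'} \xrightarrow{d_0'} R \to \underline{k}[x,x_\sigma],
\end{equation*}
tensoring with $\underline{k}[x,x_\sigma]$ over $R$ and invoking the nullity of both $d_i' \otimes 1$ produces a splitting
\begin{equation*}
\HR(\underline{k}[x,x_\sigma]/\underline{k}) \simeq \underline{k}[x,x_\sigma] \oplus \Sigma\bigl(\underline{k}[x,x_\sigma] \otimes C_2\bigr) \oplus \Sigma^{\sigma+1}\underline{k}[x,x_\sigma].
\end{equation*}
The three-term filtration is the evident one: $\fil^0\HR$ is everything, $\fil^1\HR$ is the fiber of the projection to $\underline{k}[x,x_\sigma]$, and $\fil^2\HR$ is the image of the last summand. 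The suspension shifts $\Sigma^{\sigma-1+k}$ for $k = 0, 1, 2$ account for the indices in the statement.

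The substantive content has been absorbed into Proposition \ref{prop:resolution_by_norms}: once the resolutions and the vanishing of the base-changed differentials are in hand, the corollary is formal, so I expect no serious obstacle here. Completeness and exhaustiveness are automatic from the finite length of the filtrations. Although not needed for the statement, one should in principle compare this filtration with the real HKR filtration produced by Theorem \ref{thm:real_hkr}; this would presumably proceed via the universal property of $\HR_\fil$ together with the fact that on free involutive algebras $\LSym^\sigma_{\underline{k}}(M)$ the filtered real Hochschild homology is split, matching the termwise splittings obtained above.
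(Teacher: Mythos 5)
Your approach differs from the paper's, which simply cites Ariotta's Proposition 3.16 (a general result converting a chain complex in a stable $\infty$-category into a filtered object with the terms of the complex as its associated graded) together with Proposition \ref{prop:resolution_by_norms}, and does not construct splittings by hand.

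For $\underline{k}[x]$ your argument is correct: the tensored two-term complex is a single cofiber sequence with null boundary, and $\cofib(0) = B \oplus \Sigma A$, so you genuinely do get a splitting (stronger than the stated filtration).

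For $\underline{k}[x,x_\sigma]$ there is a gap. After tensoring, the nullity of $d_1'\otimes 1$ gives $C_{d_1'}\otimes_R \underline{k}[x,x_\sigma] \simeq \bigl(\underline{k}[x,x_\sigma]\otimes C_2\bigr)\oplus \Sigma^{\sigma}\underline{k}[x,x_\sigma]$, and your second cofiber sequence becomes
\begin{equation*}
		C_{d_1'}\otimes_R \underline{k}[x,x_\sigma] \xrightarrow{\ \overline{d_0'}\otimes 1\ } \underline{k}[x,x_\sigma] \longrightarrow \HR(\underline{k}[x,x_\sigma]/\underline{k}) \,.
\end{equation*}
The map $\overline{d_0'}\otimes 1$ is \emph{not} the map $d_0'\otimes 1$ whose nullity Proposition \ref{prop:resolution_by_norms}(2) provides; it is the map out of the cofiber. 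Its restriction to the summand $\underline{k}[x,x_\sigma]\otimes C_2$ is $d_0'\otimes 1$, which is null, but its restriction to the summand $\Sigma^{\sigma}\underline{k}[x,x_\sigma]$ is an a priori nontrivial ``secondary'' attaching map (a Toda-bracket type class in $\pi_{-\sigma}^{C_2}\underline{k}[x,x_\sigma]$) that the stated nullhomotopies of $d_0'\otimes 1$ and $d_1'\otimes 1$ do not control. So ``invoking the nullity of both $d_i'\otimes 1$'' does not produce the claimed direct-sum decomposition. This is exactly the sort of coherence bookkeeping that Ariotta's result absorbs: one passes from the coherent chain complex of Proposition \ref{prop:resolution_by_norms} directly to a filtered object with the stated associated graded, without ever needing the total object to split. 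Your splitting happens to be true here — the extra component vanishes because $\pi_{-\sigma}^{C_2}\underline{k}[x,x_\sigma] = 0$ (restriction is injective on this zero slice), or alternatively because the unit $\underline{k}[x,x_\sigma]\to\HR$ has a section given by multiplication — but neither of these is in your argument as written, and one of them is needed to close the gap.
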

\begin{rmk}
		When $ k $ is endowed with the trivial action, the filtration on $ \HR(\underline{k}[x]) $ in Corollary \ref{cor:gr_HR_via_resolutions} manifestly agrees with that of \cite[Lemma 4.27]{PHrealTHH_perfectoid}. 
		We expect the filtration on $ \HR(\underline{k}[x,x_\sigma]/\underline{k}) $ from Corollary \ref{cor:gr_HR_via_resolutions} to agree with that of Theorem \ref{thm:real_hkr}, but defer this to future work. 
\end{rmk}
\begin{proof} [Proof of Corollary \ref{cor:gr_HR_via_resolutions}] 
	This follows from \cite[Proposition 3.16]{Ariotta} and \ref{prop:resolution_by_norms}. 
\end{proof}
\begin{proof}
	[Proof of Proposition \ref{prop:resolution_by_norms}]
	We begin with the first case. 
	Note that $ \underline{N}^{C_2}\underline{k}[x] \simeq \underline{k}[x,x_\sigma] $.  
	Write $ {\varepsilon \colon \underline{k}[x,x_\sigma] \to \underline{k}[x]} $ for the structure map. 
	Observe that we have a map $ \varphi: \underline{k} \to \underline{k}[x] $ represented by the element $ x \in \pi_0 \underline{k}[x]^{C_2} $. 
	Since the composite $ \underline{k}[C_2] \to \underline{k} \xrightarrow{\varphi}  \underline{k}[x] $ factors through $ \varepsilon $, $ \varphi $ descends to a map $ \overline{\varphi} $: 
	\begin{equation*}
	\begin{tikzcd}
		\underline{k}[x,x_\sigma] \ar[r,"\varepsilon"] & \underline{k}[x] \ar[r] & \cofib(\varepsilon) \\
		\underline{k}[C_2] \ar[r] \ar[u,dotted,"{1 \mapsto x}"] &\underline{k} \ar[u,"{1 \mapsto x}"] \ar[r] & \Sigma^\sigma \underline{k} \ar[u,dotted,"\exists","{\overline{\varphi}}"'] 
	\end{tikzcd}.
	\end{equation*}
	In particular, note that the morphism $ {\overline{\varphi}} $ induces
	\begin{align*}
		\pi_1^e{\overline{\varphi}} \colon \pi_1^e \Sigma^\sigma \underline{k} \simeq \pi_0^e \Sigma^{\sigma -1} \underline{k} \simeq k \to \pi_0^e \fib(\varepsilon) \\
			1 \mapsto x-x_\sigma .
	\end{align*}
	Taking the desuspension and noting that $ \varepsilon $ is a map of $ \underline{k}[x,x_\sigma] $-modules gives a sequence 
	\begin{equation*}
		\Sigma^{\sigma-1}\underline{k}[x,x_\sigma] \to \underline{k}[x,x_\sigma] \xrightarrow{\varepsilon} \underline{k}[x] .
	\end{equation*} 
	A computation on homotopy groups of both underlying and geometric fixed points shows that this is indeed an exact sequence of $ C_2 $-spectra. 
	Finally, the final statement follows from the fact that $ \varepsilon (x-x_\sigma) = 0 $. 

	Now we address the second case. 
	Write $ \varepsilon $ for the augmentation $ \underline{k}[x,x_\sigma,y, y_\sigma] \simeq \underline{N}^{C_2}\underline{k}[x,x_\sigma] \to \underline{k}[x,x_\sigma] $. 
	On underlying $k$-modules, the map $ \varepsilon $ satisfies $ \varepsilon(x) = x = \varepsilon(y) $. 
	The morphism
	\begin{align*}
		g \colon &\underline{k}[x,x_\sigma,y, y_\sigma] \otimes C_2 \to \underline{k}[x,x_\sigma, y, y_\sigma] \\
		& 1 \otimes e \mapsto (x-y) \\
		& 1 \otimes \sigma \mapsto (x_\sigma - y_\sigma)
	\end{align*}
	of $ \underline{k}[x,x_\sigma,y, y_\sigma] $-modules clearly factors through the fiber of $ \varepsilon $. 
	Now we notice that the morphism $ \underline{k} \to \underline{k}[x,x_\sigma,y, y_\sigma] $ represented by the element $ xx_\sigma - y y_\sigma \in \underline{k}[x,x_\sigma,y, y_\sigma]^{C_2} $ fits into a commutative diagram
	\begin{equation*}
	\begin{tikzcd}
		\underline{k}[x,x_\sigma,y, y_\sigma] \otimes C_2 \ar[r,"g"] & \underline{k}[x,x_\sigma, y, y_\sigma] \ar[r] & \cofib (g) \\
		\underline{k}[C_2] \ar[r] \ar[u,dotted,"\psi"] &\underline{k} \ar[u,"{1 \mapsto x_N - y_N}"] \ar[r] & \Sigma^\sigma \underline{k} \ar[u,dotted,"\exists","{\overline{\psi}}"'] 
	\end{tikzcd}
	\end{equation*}
	where $ \psi $ is defined by
	\begin{align*}
		\psi(e) &= x_\sigma e + y \sigma \\
		\psi(\sigma) &= y_\sigma e + x \sigma 
	\end{align*}
	and the left-hand square commutes because of the identities
	\begin{align*}
		xx_\sigma - yy_\sigma &= x_\sigma (x-y) + y(x_\sigma - y_\sigma) \\
						&=y_\sigma(x-y) + x(x_\sigma - y_\sigma) .
	\end{align*} 
	In particular, the morphism $ \pi_0^e \Sigma^{\sigma-1}\underline{k} \simeq k \to \fib (g) \to (\underline{k}[x,x_\sigma,y, y_\sigma] \otimes C_2)^e $ takes
	\begin{align*}
		\overline{\psi}(1) = \psi(e-\sigma) = x_\sigma e + y \sigma - (y_\sigma e + x\sigma) = (x_\sigma - y_\sigma)e - (x-y)\sigma
	\end{align*}
	which generates the kernel of $ \pi_0^e g $. 
	Since $ g $ is a morphism of $ \underline{k}[x,x_\sigma, y, y_\sigma] $-modules, $ \overline{\psi} $ extends to a morphism $ \Sigma^{\sigma-1}\underline{k}[x,x_\sigma, y, y_\sigma] \to \underline{k}[x,x_\sigma,y, y_\sigma] \otimes C_2 $. 
	Again a computation on homotopy groups of both underlying and geometric fixed points shows that this is indeed an exact sequence of $ C_2 $-spectra. 
	The final statement in the proposition follows from similar considerations as in the proof of the first half of this proposition. 
\end{proof}
Many existing results and computations for real trace theories assume that $ \frac{1}{2} \in k $. 
We discuss how the study of $ C_2 $-actions simplifies under the assumption that $ \frac{1}{2} \in k $ and relate our results to classical results under these assumptions.  
\begin{recollection}\label{rec:classical_HR_char_neq_2_splitting}
		Let $ k $ be a discrete commutative ring with $ \frac{1}{2} \in k $. 
		Let $ A $ be a commutative ring with involution over $ k $, or a cdga with involution over $ k $. 
		Then the Hochschild complex $ B^\cyc A $ of $ A $ admits an involution \cites[Proposition 5.2.3]{MR1600246}{MR972090} and the complex splits $ B^\cyc A \simeq B^{\cyc,+}A \oplus B^{\cyc,-}A $. 
		This splitting induces a splitting of Hochschild homology groups
		\begin{equation*}
				\HH_*(A/k) \simeq \HH_*^+(A/k) \oplus \HH_*^{-}(A/k)
		\end{equation*}
		where the generator $ \sigma \in C_2 $ acts by $ +1 $ on $ \HH_*^+(A/k) $ and by $ -1 $ on $ \HH_*^-(A/k) $. 
		Now the involution on $ B^\cyc A $ lifts to the cyclic bicomplex of $ A $, inducing a splitting of the cyclic bicomplex and hence a splitting of its homology, the cyclic homology of $ A $ over $ k $ \cite[(5.2.7.1)]{MR1600246}
		\begin{equation*}
				\HC_*(A/k) \simeq \HD_*(A/k) \oplus \HD_*'(A/k)
		\end{equation*}
		where $ \HD_*(A/k) $ ($ \HD_*'(A/k)$) is called the \emph{(skew) dihedral homology of $ A $ over $ k $}.
\end{recollection}
\begin{lemma}\label{lemma:char_neq_2_C2_action_splits}
		Let $ X \in \mathcal{D}^{BC_2} $ be an object with $ C_2 $-action (in an ordinary $ \infty $-category $ \mathcal{D} $), and suppose $ \mathcal{D} $ is stable and idempotent-complete. 
		Assume that $ \frac{1}{2} \in \pi_0\End_{\mathcal{D}}(X) $. 
		Then there is a canonical splitting $ X \simeq X^+ \oplus X^{-} $ in $ \mathcal{D}^{BC_2} $ so that for all $ a \in \pi_*X^+ $, the generator $ \sigma $ of $ C_2 $ acts by $ \sigma(a) = a $ and for all $ b \in \pi_*X^{-} $, $ \sigma(b) = -b $.  

		Moreover, the splitting is natural, i.e. if $ Y \in \left(\mathcal{D}^{BC_2}\right)_{X/} $, then the map $ f \colon X \to Y $ decomposes as a direct sum $ f = f^+ \oplus f^{-} $ where $ f^+ \colon X^+ \to Y^+ $ and $ f^- \colon X^{-} \to Y^{-} $. 
\end{lemma}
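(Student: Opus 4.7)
The plan is to construct the splitting via the standard symmetrization/antisymmetrization idempotents, and then verify functoriality. First I would observe that although the involution $\sigma$ is part of the data defining $X$ as an object of $\mathcal{D}^{BC_2}$, it also tautologically defines a $C_2$-equivariant endomorphism of $X$ (the relation $\sigma \cdot \sigma = \sigma \cdot \sigma$ is vacuous), so $\sigma \in \pi_0 \End_{\mathcal{D}^{BC_2}}(X)$. Moreover, the hypothesis that $\frac{1}{2} \in \pi_0 \End_{\mathcal{D}}(X)$ transfers to $\pi_0 \End_{\mathcal{D}^{BC_2}}(X)$ since central elements in $\pi_0\End_\mathcal{D}(X)$ automatically lift to equivariant endomorphisms. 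Hence the elements
\begin{equation*}
    e_{+} := \tfrac{1}{2}(1 + \sigma), \qquad e_{-} := \tfrac{1}{2}(1 - \sigma)
\end{equation*}
are well-defined elements of $\pi_0 \End_{\mathcal{D}^{BC_2}}(X)$, satisfying $e_\pm^2 = e_\pm$, $e_+ e_- = e_- e_+ = 0$, and $e_+ + e_- = 1$.

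Next I would invoke idempotent-completeness. Since $\mathcal{D}$ is stable and idempotent-complete, so is $\mathcal{D}^{BC_2} \simeq \Fun(BC_2, \mathcal{D})$; splittings of idempotents in a functor category are computed pointwise from splittings in $\mathcal{D}$ (via sequential colimits that exist in $\mathcal{D}$ by idempotent-completeness). Define $X^{\pm}$ to be the image of the idempotent $e_{\pm}$ in $\mathcal{D}^{BC_2}$. Then the orthogonality and completeness relations $e_+ e_- = 0$ and $e_+ + e_- = 1$ yield a canonical decomposition $X \simeq X^{+} \oplus X^{-}$ in $\mathcal{D}^{BC_2}$. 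On $X^{+}$, the endomorphism $\sigma = 2e_{+} - 1$ restricts to $2 \cdot \id - \id = \id$, while on $X^{-}$ it restricts to $-\id$, giving the desired sign conventions on $\pi_\ast X^{\pm}$.

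For naturality, given $f \colon X \to Y$ in $\mathcal{D}^{BC_2}$, $f$ commutes with the respective $\sigma$ actions and hence with the idempotents $e_{\pm}$. Functoriality of idempotent splittings then produces $f^{\pm} \colon X^{\pm} \to Y^{\pm}$ whose direct sum recovers $f$ under the canonical decomposition. (More formally, one may assemble everything by noting that the assignment $X \mapsto (X^{+}, X^{-})$ comes from applying the two projection functors associated with the decomposition $\mathcal{D}^{BC_2} \simeq \mathcal{D}^{BC_2,+} \times \mathcal{D}^{BC_2,-}$ coming from the orthogonal idempotents.)

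The only point that requires even a moment of thought is the observation that $\sigma$ is itself an equivariant endomorphism of $X$ and that $\frac{1}{2}$ transfers from $\pi_0\End_{\mathcal{D}}(X)$ to $\pi_0\End_{\mathcal{D}^{BC_2}}(X)$; once that is in place, the argument is mechanical, and I do not anticipate any serious obstacle.
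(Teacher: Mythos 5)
Your approach is correct and essentially matches the paper's: both construct the equivariant idempotent $e_{\pm} = \tfrac{1}{2}(1\pm\sigma)$ and split it, with the paper phrasing the splitting as the equalizer of $(e,\id_X)$ and deferring the subtlety about homotopy versus coherent idempotents to HA Remark 1.2.4.6, which your invocation of idempotent-completeness handles equivalently. One small correction to your justification that $\tfrac{1}{2}$ lifts to $\pi_0\End_{\mathcal{D}^{BC_2}}(X)$: ``centrality'' is not the right reason (central elements need not be in the image of the restriction map); the correct argument is that $2$ lies in the image of the unit $\Z \to \pi_0\End_{\mathcal{D}^{BC_2}}(X)$ and the forgetful functor $\mathcal{D}^{BC_2} \to \mathcal{D}$ is conservative (since $BC_2$ is connected), so $2\colon X \to X$ being an equivalence in $\mathcal{D}$ forces it to be one in $\mathcal{D}^{BC_2}$.
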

\begin{proof}
		Since $ \mathcal{D} $ is assumed to be stable, an idempotent in the homotopy category of $ \mathcal{D}$ lifts to $ \mathcal{D} $ by the proof of Remark 1.2.4.6 (also see Warning 1.2.4.8) of \cite{LurHA}, and we do not need to verify \cite[Prop 4.4.5.20]{LurHTT}. 
		Note that $ e:= \frac{1}{2}(1+\sigma) $ is a $ C_2 $-equivariant idempotent endomorphism of $ X $. 
		We define $ X^+ $ to be the equalizer of $ (e, \id_X) $. 
\end{proof}
\begin{prop}\label{prop:HR_fixpt_as_positive_HH}
		Let $ k $ be a commutative ring with $ \frac{1}{2} \in k $ and let $ A $ be a connective derived involutive algebra over $ k $. 
		Then there is an isomorphism of graded abelian groups 
		\begin{equation}
			 \pi_*\left(\HR(\underline{A}/\underline{k})^{C_2}\right) \simeq \HH_*^+(A^e/k)
		\end{equation}
		where the right-hand side is from Recollection \ref{rec:classical_HR_char_neq_2_splitting}. 
\end{prop}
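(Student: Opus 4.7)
The plan is to exploit the hypothesis $\tfrac{1}{2} \in k$ to collapse the recollement describing genuine $C_2$-fixed points and to import the classical idempotent decomposition of the cyclic bar complex with involution.

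First I would show that for every $M \in \Mod_{\underline{k}}$ there is a canonical natural equivalence $M^{C_2} \xrightarrow{\sim} (M^e)^{hC_2}$. The key input is that $\Phi^{C_2}\underline{k} \simeq \tau_{\geq 0} k^{tC_2}$, and because $\underline{k}$ is a cohomological Mackey functor the Tate cohomology $k^{tC_2}$ is $2$-torsion, so inverting $2$ forces $\Phi^{C_2}\underline{k} \simeq 0$. By Proposition \ref{prop:rel_eqvt_recollement}, $\Phi^{C_2} M$ is a module over $\Phi^{C_2}\underline{k} \simeq 0$ and therefore vanishes. Feeding $\Phi^{C_2} M \simeq 0$ into the recollement square \eqref{eq:recollementsq} collapses it to the claimed equivalence.

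Applied to $M = \HR(\underline{A}/\underline{k})$, Remark \ref{rmk:real_HH_as_THR_linearization} identifies $M^e$ with the ordinary Hochschild spectrum $\HH(A^e/k)$, reducing the claim to a computation of $\HH(A^e/k)^{hC_2}$ with respect to the residual $C_2$-action. By Proposition \ref{prop:HR_Ssigma_colimit} this action is induced by the $C_2$-action on $S^\sigma$ (complex conjugation, i.e.\ loop reversal on $(S^\sigma)^e \simeq S^1$) together with the involution on $A^e$ that is part of its involutive algebra structure. A brief comparison with the simplicial model of $\HH$ identifies this with the classical involution on the cyclic bar complex $B^{\cyc}A^e$ recalled in Recollection \ref{rec:classical_HR_char_neq_2_splitting}. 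Then Lemma \ref{lemma:char_neq_2_C2_action_splits}, applied in $(\Mod_k)^{BC_2}$, yields a canonical splitting $\HH(A^e/k) \simeq \HH^+(A^e/k) \oplus \HH^-(A^e/k)$ on which $\sigma$ acts by $+1$ and $-1$ respectively. Since $\tfrac{1}{2}$ is available, the homotopy fixed points of the $+$ summand are $\HH^+(A^e/k)$ itself (trivial action) while those of the $-$ summand vanish (the averaging idempotent acts simultaneously as zero and as the identity). Combining with the first paragraph gives $\pi_*(\HR(\underline{A}/\underline{k})^{C_2}) \simeq \HH^+_*(A^e/k)$.

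The main technical obstacle is the compatibility assertion in the second paragraph: that the $C_2$-action on $\HR(\underline{A}/\underline{k})^e$ extracted from the genuine $C_2$-equivariant structure coincides with the classical Loday--Hesselholt--Madsen involution on $B^{\cyc} A^e$ used in Recollection \ref{rec:classical_HR_char_neq_2_splitting}. Establishing this requires unwinding the $C_2$-parametrized colimit $A^{\otimes_{\underline{k}} S^\sigma}$ of Proposition \ref{prop:HR_Ssigma_colimit}, writing down a simplicial model, and checking that complex conjugation on $S^\sigma$ together with the involution on $A$ recovers the reversal-of-cyclic-order automorphism. I expect this to be routine but somewhat tedious; an alternative would be to verify the identification first at the $\THR$-level via the model of Dotto--Moi--Patchkoria--Reeh and then linearize.
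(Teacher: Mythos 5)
Your proposal follows the same overall strategy as the paper's proof: use $\tfrac{1}{2}\in k$ to show $\HR(\underline{A}/\underline{k})$ is Borel, reduce to $\HH(A^e/k)^{hC_2}$, split by the idempotent from Lemma \ref{lemma:char_neq_2_C2_action_splits}, and show the homotopy fixed points of the minus summand vanish. The two arguments are essentially the same, with two small differences worth noting.

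First, in your opening step you deduce $M^{C_2}\simeq (M^e)^{hC_2}$ solely from $\Phi^{C_2}M\simeq 0$. That single vanishing only collapses the recollement square \eqref{eq:recollementsq} to $M^{C_2}\simeq \mathrm{fib}\bigl((M^e)^{hC_2}\to M^{tC_2}\bigr)\simeq (M^e)_{hC_2}$, i.e.\ to homotopy \emph{orbits}, not fixed points. To get the homotopy fixed points you also need $M^{tC_2}\simeq 0$, so that the right-hand vertical map in the square is $0\to 0$. This does follow from your observation that $k^{tC_2}\simeq 0$ (since $M^{tC_2}$ is a $k^{tC_2}$-module), but you should state it; otherwise the step is incomplete. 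The paper avoids this by invoking the Borel property directly, which packages $\Phi^{C_2}\to (\cdot)^{tC_2}$ being an equivalence.

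Second, the compatibility you flag — that the residual $C_2$-action on $\HR(\underline{A}/\underline{k})^e\simeq \HH(A^e/k)$ obtained from the genuine structure agrees with the classical Loday--Hesselholt--Madsen involution on $B^{\cyc}A^e$ — is indeed the one place the argument is not self-contained. The paper's proof of this proposition does not address it explicitly either; it relies tacitly on the last assertion of Remark \ref{rmk:real_HH_as_THR_linearization} (agreement with the definition in \cite{SVP96}). So your concern is well taken, and your suggested route of unwinding $A^{\otimes_{\underline{k}}S^\sigma}$ or going through the $\THR$-level model is a reasonable way to close it, but be aware the paper also leaves this as an implicit compatibility rather than proving it in the body of this proof.

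Also a cosmetic point: you attribute the $2$-torsion of $k^{tC_2}$ to $\underline{k}$ being cohomological. In fact $X^{tC_2}$ is $2$-torsion for \emph{any} $X\in\Spectra^{BC_2}$, because $2=|C_2|$ vanishes in $\pi_0\mathbb{S}^{tC_2}$; the cohomological hypothesis is not what is doing the work here.
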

\begin{proof}
		Since $ \frac{1}{2} \in k $, $ \underline{k} $ and $ A $ are Borel $ C_2 $-spectra--in particular, their geometric fixed points vanish. 
		It follows that $ \HR(\underline{A}/\underline{k}) $ is Borel, so we have that $ \HR(\underline{A}/\underline{k})^{C_2} \simeq \left(\HR(\underline{A}/\underline{k})^{e}\right)^{hC_2} \simeq \HH(A^e/k)^{hC_2} $. 
		The result follows from the observation that when $ \mathcal{D} = \Spectra $ in Lemma \ref{lemma:char_neq_2_C2_action_splits}, the homotopy fixed points of $ X $ satisfy $ X^{hC_2} \simeq (X^+)^{hC_2} \oplus (X^-)^{hC_2} \simeq X^+ $, where $ (X^-)^{hC_2} = 0 $ by the homotopy fixed point spectral sequence. 
\end{proof}
\begin{prop}
		Let $ k $ be a commutative ring with $ \frac{1}{2} \in k $ and let $ A $ be a connective derived involutive algebra over $ k $. 
		Then there is an isomorphism
		\begin{equation}
			 \pi_*\left(\HCR(A/\underline{k})^{C_2}\right) \simeq \HD_*(A^e/k)
		\end{equation}
		where the right-hand side is the dihedral homology of Recollection \ref{rec:classical_HR_char_neq_2_splitting}. 
\end{prop}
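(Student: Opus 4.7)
[Proof strategy]
The plan is to mirror the argument of Proposition \ref{prop:HR_fixpt_as_positive_HH}, reducing the problem to a classical statement via the Borel property and then invoking Lemma \ref{lemma:char_neq_2_C2_action_splits}. Since $\frac{1}{2} \in k$, the $C_2$-Mackey functor $\underline{k}$ has vanishing geometric fixed points, and the same holds for $A$ (as $A$ is an $\underline{k}$-module in a $C_2$-stable setting and the geometric fixed points functor is $\underline{k}$-linear). Real Hochschild homology $\HR(A/\underline{k})$ is computed by a parametrized colimit over $S^\sigma$ of copies of $A$ (Proposition \ref{prop:HR_Ssigma_colimit}); since geometric fixed points commute with colimits and finite tensor products, $\HR(A/\underline{k})$ is again Borel. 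The $S^\sigma$-orbits are also computed by a $C_2$-colimit, so $\HCR(A/\underline{k}) = \HR(A/\underline{k})_{S^\sigma}$ is Borel as well.

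Next, I would identify the underlying spectrum and $C_2$-action on $\HCR(A/\underline{k})^e$. The underlying functor $(-)^e$ is symmetric monoidal and preserves $C_2$-colimits, so $\HCR(A/\underline{k})^e \simeq \HH(A^e/k)_{S^1}$ with the usual $S^1$-action on ordinary Hochschild homology, giving precisely $\HC(A^e/k)$. The $C_2$-action arises from two sources: the given involution on $A^e$, and the complex-conjugation action on $S^\sigma$ whose underlying action on $S^1$ is inversion. The combination of these is exactly the classical dihedral involution on the cyclic bicomplex as described in Recollection \ref{rec:classical_HR_char_neq_2_splitting} (following \cite{MR1600246,MR972090}); verifying this compatibility carefully is the main technical point. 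One way to do this is to observe that the action factors through the norm map $\HR(A/\underline{k})^e \to \HR(A/\underline{k})^{hC_2}$ and to trace the $S^\sigma$-action through the cyclic bar construction explicitly, using that the involution on $B^\cyc A$ of \emph{loc.\ cit.}\ is itself described via the dihedral group $D_\infty$ containing both $S^1$ and $C_2$ with $C_2$ acting by inversion on $S^1$.

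Finally, I would conclude as in the proof of Proposition \ref{prop:HR_fixpt_as_positive_HH}. Because $\HCR(A/\underline{k})$ is Borel, the recollement square of (\ref{eq:recollementsq}) gives $\HCR(A/\underline{k})^{C_2} \simeq (\HCR(A/\underline{k})^e)^{hC_2} \simeq \HC(A^e/k)^{hC_2}$. Applying Lemma \ref{lemma:char_neq_2_C2_action_splits} to $\HC(A^e/k)$ with the dihedral $C_2$-action (justified by $\frac{1}{2} \in k$ so that $\frac{1}{2} \in \pi_0\End(\HC(A^e/k))$), we obtain a splitting $\HC(A^e/k) \simeq \HC(A^e/k)^+ \oplus \HC(A^e/k)^-$, and the homotopy fixed point spectral sequence shows $(\HC(A^e/k)^-)^{hC_2} = 0$ while $(\HC(A^e/k)^+)^{hC_2} \simeq \HC(A^e/k)^+$. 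By Recollection \ref{rec:classical_HR_char_neq_2_splitting}, $\pi_* \HC(A^e/k)^+ = \HD_*(A^e/k)$, yielding the desired isomorphism.

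The main obstacle, as noted above, is the compatibility of the $C_2$-action on the underlying cyclic homology (coming from $S^\sigma$-orbits and the given involution on $A$) with the classical dihedral action. Once this is in hand, the rest of the argument is formal.
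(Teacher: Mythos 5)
Your reduction—Borel property of $\HCR$, identification $\HCR(A/\underline{k})^{C_2} \simeq \HC(A^e/k)^{hC_2}$, and then the splitting of Lemma \ref{lemma:char_neq_2_C2_action_splits}—is the same setup that the paper uses, and the opening steps of your argument are correct. The divergence is precisely at the point you flag as ``the main technical point'': you propose to directly identify the $C_2$-action on $\HC(A^e/k) = \HCR(A/\underline{k})^e$ with the classical dihedral involution by tracing the $S^\sigma$-action through the cyclic bar construction, and then read off $\pi_*\HC^+ = \HD_*$ from Loday's decomposition. That last step is exactly where one needs to be careful: Recollection \ref{rec:classical_HR_char_neq_2_splitting} only records $\HC_* = \HD_* \oplus \HD_*'$ as a consequence of the splitting of the cyclic bicomplex; it does not by itself identify which summand matches the ``$+$'' eigenspace of the homotopy-theoretic action, and the classical dihedral involution carries a degree-dependent sign $(-1)^{n(n+1)/2}$ that one must account for.

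The paper sidesteps this direct computation with a different device. It establishes a cofiber sequence $\HR(A/\underline{k}) \to \HR(A/\underline{k})_{hS^\sigma} \to \left(\Sigma^\rho \HR(A/\underline{k})\right)_{hS^\sigma}$ via an equivariant Thom isomorphism, passes to $C_2$-fixed points (= homotopy fixed points, by the Borel property) to get $\HH^+ \to \HC^+ \to (\Sigma^\rho \HC)^+$, and then uses the key observation that for a spectrum $Y$ with $C_2$-action, $\pi_*(S^\rho \otimes Y)^{\pm} \simeq \pi_{*-2}Y^{\mp}$—i.e.\ tensoring by $S^\rho$ exchanges the $\pm$ eigenspaces. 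This automatically encodes the sign $(-1)^{n(n+1)/2}$ in the classical dihedral involution via the anti-equivariance of the periodicity operator. The paper then compares this sequence with the classical SBI sequence for dihedral homology (Lodder) and concludes by an inductive five-lemma argument, bootstrapping from the already-established identification $\pi_*\HR^{C_2} \simeq \HH^+_*$ of Proposition \ref{prop:HR_fixpt_as_positive_HH}. Both routes are viable, but the paper's SBI-comparison replaces the direct ``trace through the cyclic bar construction'' step—which is precisely where the signs become delicate—with a purely formal comparison of long exact sequences, at the cost of having to set up the parametrized Thom isomorphism. If you pursue your approach, you should make the degree-dependent sign in Loday's involution explicit and verify it matches the homotopy-theoretic $S^\sigma$-action; the paper's $\Sigma^\rho$ observation is a useful guide for how that sign arises intrinsically.
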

\begin{proof}
		As in the proof of Proposition \ref{prop:HR_fixpt_as_positive_HH}, our assumptions imply that $ \HCR(A/\underline{k}) $ is a Borel $ \underline{k} $-module. 
		Now, we have an equivalence $ \HCR(A/\underline{k}) \simeq \HR(A/\underline{k})_{hS^\sigma} $. 
		Endow the universal complex line bundle $ E $ on $ \C \mathrm{P}^\infty $ with a $ C_2 $-equivariant metric. 
		Then the unit disk bundle $ D(E) $ includes into the unit sphere bundle $ S(E) \simeq ES^1 $, and they are classified by $ C_2 $-functors $ f, g \colon BS^\sigma \to \Spc^{C_2} $, respectively, and the inclusion $ S(E) \subseteq D(E)$  is classified by a natural transformation $ \iota \colon g \implies f $.  
		Since colimits commute with each other, there is a cofiber sequence $ \colim g \xrightarrow{\iota} \colim f  \to \mathrm{Th}(E) \simeq \colim (\cofib (g \to f)) $ where $ \mathrm{Th}(E) $ is the $ C_2 $-equivariant Thom space. 
		Now let $ X $ be a $C_2$-spectrum with $ S^\sigma $-action classified by a functor $ BS^\sigma \xrightarrow{X} \Spectra^{C_2} $. 
		Replacing $ g $ by $ BS^\sigma \xrightarrow{g, X} \Spc^{C_2} \times \Spectra^{C_2} \xrightarrow{\otimes} \Spectra^{C_2} $ and applying an equivariant Thom isomorphism \cite[Proposition 2.3]{MR1139971}, we obtain an exact sequence $ X \to X_{hS^\sigma} \to \left(\Sigma^\rho X\right)_{hS^\sigma} $. 
		Taking $ X = \HR(A/\underline{k})$, this induces an exact sequence of $ \underline{k} $-modules
		\begin{equation*}
		\begin{tikzcd}
				\HR(A/\underline{k}) \ar[r]& \HR(A/\underline{k})_{hS^\sigma} \ar[r]& \left(\Sigma^{\rho} \HR(A/\underline{k})\right)_{hS^\sigma} \,. 
		\end{tikzcd}
		\end{equation*}
		Since $ \HCR(A/\underline{k}) $ is Borel, we have an equivalence of exact sequences
		\begin{equation*}
		\begin{tikzcd}[row sep=tiny]
				\HR(A/\underline{k})^{C_2} \ar[r]\ar[d,"\sim"] &\HR(A/\underline{k})_{hS^\sigma}^{C_2} \ar[d,"\sim"] \ar[r]& \Sigma^{\rho} \HR(A/\underline{k})_{hS^\sigma}^{C_2} \ar[d,"\sim"] \\
				\HH(A^e/\underline{k})^{hC_2} \ar[r]& \HH(A^e/\underline{k})_{hS^1}^{hC_2} \ar[r]& \Sigma^{\rho} \HH(A^e/\underline{k})_{hS^1}^{hC_2} \,. 
		\end{tikzcd}
		\end{equation*}
		Naturality of the splitting in Lemma \ref{lemma:char_neq_2_C2_action_splits} implies that the preceding sequence may be rewritten as
		\begin{equation}\label{eq:SBI_exact_seq}
		\begin{tikzcd}
				\HH(A^e/\underline{k})^+ \ar[r]& \HH(A^e/\underline{k})_{hS^1}^{+} \ar[r]& \left(\Sigma^{\rho} \HH(A^e/\underline{k})_{hS^1}\right)^{+} \,. 
		\end{tikzcd}
		\end{equation}
		Now observe that if $ Y $ is a spectrum with $ C_2 $-action and $ y \in \pi_n(Y) $ ($ z \in \pi_n(Y) $) so that $ \sigma(y) = y $ ($\sigma(z) = -z $), then $ \id \otimes y \in \pi_0 \hom\left(S^{n+\rho},S^\rho \otimes Y\right) \simeq \pi_{n+2}(S^\rho \otimes Y) $ satisfies $ \sigma(\id \otimes y) = -\id \otimes y $ ($ \sigma(\id \otimes z) = \id \otimes z $). 
		In particular, there are isomorphisms $ \pi_*(S^\rho \otimes Y)^\pm \simeq \pi_{*-2}Y^{\mp} $. 
		The result follows from applying induction on $ * $ and the five lemma to compare the homotopy long exact sequence of (\ref{eq:SBI_exact_seq}) with the SBI sequence of \cite{MR917736}. 
\end{proof}

\appendix

\section{Parametrized module categories}\label{appendix:param_module_cats}
\begin{ntn}
		Let $ \underline{\Com}^\otimes_{\mathcal{O}_G^\simeq} \to \underline{\Fin}_{G,*} $ denote the minimal $ G $-$ \infty $-operad of \cite[Example 2.4.7]{NS22}. 
		Write $ p \colon \Com^\otimes_{\mathcal{O}_G^\simeq} \to \Span(\Fin_G) $ for the corresponding fibrous pattern under the equivalence of \cite[Proposition 5.2.14]{BHS2022envelopes}. 
		Endow $ \Com^\otimes_{\mathcal{O}_G^\simeq} $ with the structure of an algebraic pattern where the elementary objects are $ G/H $ and inert morphisms are those whose image under the structure map $ p $ are inert \cite[Definition 4.1.11]{BHS2022envelopes}. 

		There is a canonical orbit functor $ (-)_G \colon \underline{\Com}^\otimes_{\mathcal{O}_G^\simeq} \xrightarrow{s} \Span(\Fin_{G,*}, all, \nabla) \xrightarrow{(-)_G} \Fin_* $. 
\end{ntn}
\begin{recollection}
		[{\cite[Definition 3.1.12]{BHS2022envelopes}}] 
		Let $ \mathcal{P} $, $ \mathcal{O} $ be algebraic patterns. 
		A \emph{morphism of algebraic patterns} is a functor $ f \colon \mathcal{O} \to \mathcal{P} $ which preserves elementary objects, inert morphisms, and active morphisms. 
		Given such an $ f $, for each $ X \in \mathcal{O} $, there is a functor
		\begin{equation}
				f^{\mathrm{el}}_{X/} \colon \mathcal{O}^{\mathrm{el}}_{X/} \to \mathcal{P}^{\mathrm{el}}_{f(X)/} \,.
		\end{equation}
		If the functor $ f^{\mathrm{el}}_{X/} $ is coinitial for all $ X \in \mathcal{O} $, we say $ f $ is \emph{strong Segal}. 
		If the functor $ f^{\mathrm{el}}_{X/} $ is an equivalence for all $ X \in \mathcal{O} $, we say $ f $ is \emph{iso-Segal}. 
\end{recollection}
\begin{lemma}\label{lemma:orbits_is_strong_Segal}
		Let $ G $ be a finite group. 
		Then the orbit functor $ (-)_G \colon \underline{\Com}^\otimes_{\mathcal{O}_G^\simeq} \to \Fin_* $ is \emph{strong Segal}. 
\end{lemma}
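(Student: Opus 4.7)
The plan is to verify the coinitiality condition in the definition of strong Segal morphism directly, by unpacking both elementary slices and checking contractibility of the relevant comma categories. Fix an object $X \in \underline{\Com}^\otimes_{\mathcal{O}_G^\simeq}$; under the identification with pointed finite $G$-sets supplied by \cite[Example 2.4.7]{NS22}, $X$ corresponds to a pair $(T \in \mathcal{O}_G, U \to T)$ in $\underline{\Fin}_{G,*}$, and its image $X_G \in \Fin_*$ is the pointed set of $G$-orbits of $U \setminus T$. The first step is to identify both elementary slices concretely: the slice $(\underline{\Com}^\otimes_{\mathcal{O}_G^\simeq})^{\mathrm{el}}_{X/}$ is (by unravelling the $G$-operadic inert morphisms over inert spans in $\Span(\Fin_G)$) the groupoid of pairs (orbit summand $W \subseteq U \setminus T$, $G$-isomorphism $\varphi\colon W \xrightarrow{\sim} G/H$ to some elementary object of $\mathcal{O}_G^\simeq$), while $(\Fin_*)^{\mathrm{el}}_{X_G/}$ is the discrete set of non-basepoint elements of $X_G$, i.e.\ of $G$-orbits in $U \setminus T$.

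Next, I would describe the induced functor $f^{\mathrm{el}}_{X/}$ as the map $(W, \varphi) \mapsto [W] \in X_G$ that forgets the identification $\varphi$. Since the target is a set, for each $y \in X_G$ with corresponding orbit $W_y \subseteq U$, the slice $((\Fin_*)^{\mathrm{el}}_{X_G/})_{/y}$ is terminal on the object $y$, so the comma category whose contractibility we need reduces to the \emph{fiber} of $f^{\mathrm{el}}_{X/}$ over $y$. This fiber is canonically the groupoid $\bigsqcup_H \mathrm{Iso}_G(W_y, G/H)$, equivalently the full subgroupoid of $\mathcal{O}_G^\simeq$ on those $G/H$ admitting an isomorphism to $W_y$.

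The key observation, and the place where the proof must be done carefully, is that this fiber groupoid is \emph{weakly contractible}. Even though naively it seems to carry the automorphism group $\mathrm{Aut}_G(W_y)$, the contractibility follows from the fact that choosing \emph{any} specific identification $W_y \simeq G/H_y$ (for $H_y$ a stabilizer) gives a final object of the fiber: every other object $(G/H, \psi)$ admits a unique morphism from this chosen basepoint, because all morphisms in the fiber are forced by the factorization through the chosen orbit summand $W_y$. This is essentially the statement that the groupoid of orbits of $W_y$ in $\mathcal{O}_G^\simeq$ is \emph{represented} by $W_y$ itself, making its $\infty$-categorical representing object contractible.

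The main potential obstacle is pinning down exactly which morphisms appear in the fiber groupoid, since being careless about what counts as a morphism of pairs $(W, \varphi)$ would lead one to $BW_G(H_y)$ rather than a contractible groupoid. I would overcome this by working with the precise definition of inert morphism in the minimal $G$-$\infty$-operad (as given in \cite[Example 2.4.7]{NS22}), where morphisms between inert factorizations are restricted to commuting triangles over $X$; the strict commutativity over $X$ eliminates the automorphisms of individual orbits. Once this is established, summing over $y \in X_G \setminus *$ completes the verification that $f^{\mathrm{el}}_{X/}$ is coinitial for every $X$.
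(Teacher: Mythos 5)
Your proof is correct and is exactly the unravelling that the paper's terse proof ("Unravel the definitions!") calls for: identifying the elementary slice of $\underline{\Com}^\otimes_{\mathcal{O}_G^\simeq}$ at $X$ with orbit summands equipped with an identification with an orbit, observing that the orbit functor forgets the identification, and reducing coinitiality to contractibility of the fibers since the target is discrete. The point you flag and correctly resolve — that strict commutativity over $X$ makes each fiber a contractible groupoid of isomorphisms $W_y \xrightarrow{\sim} G/H$ rather than $B\Aut_G(W_y)$ — is indeed the crux; in fact this shows $f^{\mathrm{el}}_{X/}$ is an equivalence, so the orbit functor is even \emph{iso}-Segal.
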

\begin{proof}
	Unravel the definitions! Or combine Observation 4.1.14 and with an argument as in Example 5.2.17 of \cite{BHS2022envelopes}. 
\end{proof}
\begin{defn}\label{defn:operad_induces_G_operad}
		There is a canonical functor $ \underline{(-)} \colon \Op_\infty \to \Op_{G,\infty} $, defined as a composite
		\begin{equation}
		\begin{tikzcd}
				\Op_\infty \simeq \mathrm{Fbrs}\left(\Fin_*\right) \ar[r, "{(-)_G^*}"] & \mathrm{Fbrs}\left(\underline{\Com}^\otimes_{\mathcal{O}_G^\simeq}\right) \ar[r,"{p \circ}"] & \mathrm{Fbrs}\left(\Span\left(\Fin_G\right) \right) \simeq \Op_{G,\infty}
		\end{tikzcd}
		\end{equation} 
		where pullback along the functor $ (-)_G $ of Lemma \ref{lemma:orbits_is_strong_Segal} restricts to the appropriate functor by \cite[Lemma 4.1.19 \& Corollary B]{BHS2022envelopes}. 
		Then by Corollary 4.1.17 of \emph{loc. cit.} postcomposition with $ p $ takes fibrous $ \underline{\Com}^\otimes_{\mathcal{O}_G^\simeq} $-patterns to fibrous $ \Span\left(\Fin_G\right) $-patterns. 
		Given an ordinary $ \infty $-operad $ \mathcal{O}^\otimes $, we will refer to its image under this functor as the \emph{constant $ G $-$ \infty $-operad at $ \mathcal{O} $} and denote it by $ \underline{O}^\otimes $.
\end{defn}
\begin{rmk}\label{rmk:const_G_operad_as_product}
		If $ \mathcal{O}^\otimes $ is an ordinary $ \infty $-operad, there is a canonical functor $ \mathcal{O}^\otimes \times \mathcal{T}^\op \to \underline{\mathcal{O}}^\otimes $. 	
\end{rmk}	
\begin{ntn} [{\cite[Remark 4.1.1.4, Definition 4.2.1.7, Definition 4.3.1.6]{LurHA}}]
		Write $ \underline{\Assoc}^\otimes $, $ \underline{\mathcal{L}M}^\otimes $, $ \underline{\mathcal{R}M}^\otimes $,  $ \underline{\mathcal{B}M}^\otimes $ denote the constant $ G $-$ \infty $-operads associated to the ordinary $ \infty $-operads $ \Assoc^\otimes $, $ \mathcal{L}M^\otimes $, $ \mathcal{R}M^\otimes $,  $ \mathcal{B}M^\otimes $ under the functor of Definition \ref{defn:operad_induces_G_operad}. 
\end{ntn}
\begin{rmk}
		Under the equivalence of \cite[Corollary 5.2.13]{BHS2022envelopes}, the $ G $-$ \infty $-operad $ \underline{\Assoc}^\otimes $ has the same objects as $ \Com^\otimes_{\mathcal{O}_G^\simeq} $. 
		The `forwards' morphisms are those spans 
		\begin{equation*}
		\begin{tikzcd}
			U \ar[d] & \ar[l] Z \ar[r,"f"]  \ar[d] & X \ar[d] \\
			V &  \ar[l] Y \ar[r,equals] & Y
		\end{tikzcd}
		\end{equation*}
		where the induced map $ Z \to U \times_V Y $ is a summand inclusion, plus the data of a pair $ (f, \leq) $ where $ f $ is equivalent to a fold map and for all $ x \in X $, an ordering on the orbits in the preimage $ f^{-1}(\{x\}) \subseteq Z $. 

		Notice that for subgroups $ K \leq H \leq G $, and some fold map $ g \colon G/H^{\sqcup n} \to G/H $, the set of orbits of the domain $ G/K^{\sqcup n} $ of $ g \times_{G/H} G/K \colon  G/K^{\sqcup n} \to G/K $ is canonically identified with the set of orbits of $ G/H^{\sqcup n} $. 
		This, along with the lexicographic ordering of \cite[Definition 4.1.1.4]{LurHA}, specifies composition on $ \underline{\Assoc}^\otimes $. 

		The canonical forgetful functor/operad structure map $ \underline{\Assoc}^\otimes \to \underline{\Fin}_{G,*} $ forgets the ordering. 
\end{rmk}

Using these notions, we can formulate what it means for a $ \underline{\E}_1 $-monoidal $ G $-$ \infty $-category to act on another $ G $-$ \infty $-category. 
\begin{defn}
	[{\cite[Definition 4.2.1.13]{LurHA}}] 
	Let $ q \colon \cat^\otimes \to \underline{\Assoc}^\otimes $ be a fibration of $ G $-$ \infty $-operads and let $ \mathcal{M} $ be a $ G $-$ \infty $-category. 
	A \emph{weak enrichment of $ \mathcal{M} $ over $ \cat^\otimes $} is a fibration of $ G $-$ \infty $-operads $ q \colon \mathcal{O}^\otimes \to \underline{\mathcal{L}M}^\otimes $ together with equivalences $ \mathcal{O}^\otimes_{\underline{a}} \simeq \cat^\otimes $ and $ \mathcal{O}^\otimes_{\underline{m}} \simeq \mathcal{M} $. 

	Let $ q \colon \mathcal{O}^\otimes \to \underline{\mathcal{L}M}^\otimes $ exhibit $ \mathcal{M} $ as weakly enriched over $ \cat^\otimes $. 
	Let $ \LMod(\mathcal{M}) $ denote the $ G $-$ \infty $-category $ \Alg_{/\underline{\mathcal{L}M}}(\mathcal{O}) $, and refer to it as the $ G $-$ \infty $-category of left module objects of $ \mathcal{M} $. 
	Composition with the inclusion $ \underline{\Assoc}^\otimes \to\underline{\mathcal{L}M}^\otimes $ gives a categorical fibration
	\begin{equation*}
		\LMod(\mathcal{M}) = \Alg_{/\underline{\mathcal{L}M}}(\mathcal{O}) \to \Alg_{/\underline{\Assoc}}(\mathcal{O}) = \Alg(\cat)\,.
	\end{equation*}
	If $ A $ is an $ \underline{\Assoc} $-algebra object of $ \cat $ (i.e. a global section of the $ G $-$ \infty $-category $ \Alg(\cat) $, denote the parametrized fiber (as a $ G $-$ \infty $-category) by $ \LMod_A(\mathcal{M}) $. 
\end{defn}
\begin{defn}\label{defn:param_cat_tensored} 
		Let $ q \colon \cat^\otimes \to \underline{\mathcal{L}M}^\otimes $ be a fibration of $ C_2 $-$ \infty $-operads. 
		We say that $ q $ \emph{exhibits $ \cat_{\underline{m}^{C_2/C_2}}=:\mathcal{M} $ as left-tensored over $ \cat_{\underline{a}^{C_2/C_2}} $} if $ q $ is a coCartesian fibration of $ C_2 $-$ \infty $-operads. 

		Given $ q \colon \cat^\otimes \to \underline{\mathcal{R}M}^\otimes $, there is a similar notion of a $ C_2 $-$ \infty $-category being \emph{right-tensored} over an $ \underline{\E}_1 $-monoidal $ C_2 $-$ \infty $-category. 
\end{defn}
\begin{defn}\label{defn:param_linear_functors} 
		Let $ q \colon \mathcal{M}^\otimes \to \underline{\mathcal{L}M}^\otimes $, $ p \colon \mathcal{N}^\otimes \to \underline{\mathcal{L}M}^\otimes $ be cocartesian fibrations of $ C_2 $-$ \infty $-operads so that $ \displaystyle \mathcal{M}^\otimes \times_{\underline{\mathcal{L}M}^\otimes} \underline{\Assoc}^\otimes \simeq \mathcal{N}^\otimes \times_{\underline{\mathcal{L}M}^\otimes} \underline{\Assoc}^\otimes =: \cat^\otimes $. 
		A \emph{$ \cat $-linear functor from $ \mathcal{M}^\otimes_{\mathfrak{m^{C_2/C_2}}} $ to $ \mathcal{N}^\otimes_{\mathfrak{m^{C_2/C_2}}} $} is a $ \underline{\mathcal{L}M}^\otimes $-monoidal functor from $ \mathcal{M}^\otimes $ to $ \mathcal{N}^\otimes $ which is the identity on $ \cat^\otimes $. 
\end{defn}
\begin{rmk}
		We write right- (resp. left-)tensored instead of `$ C_2 $ right- (resp. left-)tensored' because we find the proliferation of $ C_2 $'s appearing in modifiers to be a bit unwieldy. 
		We trust that one can ascertain whether we refer to Definition \ref{defn:param_cat_tensored} or \cite[Definition 4.2.1.19]{LurHA} based on the context. 
\end{rmk}
The following proposition is inspired by \cite[Corollary 2.4.15]{NS22}.  
\begin{prop}\label{prop:param_cats_monoidal_const_operads}
		For any $ \infty $-operad $ \mathcal{O}^\otimes $, pullback along the functor of Remark \ref{rmk:const_G_operad_as_product} induces a canonical identification of $ \underline{\mathcal{O}}^\otimes $-monoidal $ G $-$ \infty $-categories and $ \mathcal{O}^\op_G $-cocartesian families of $ \mathcal{O}^\otimes $-monoidal $ \infty $-categories (\cite[Definition 4.8.3.1]{LurHA}). 
\end{prop}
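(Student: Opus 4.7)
The plan is to unwind the construction of $\underline{\mathcal{O}}^\otimes$ from Definition \ref{defn:operad_induces_G_operad} and reduce the statement to a straightening computation plus verification that the relevant Segal conditions match. First, I would observe that $\underline{\mathcal{O}}^\otimes = p_!(-)_G^*\mathcal{O}^\otimes$, where $(-)_G \colon \underline{\Com}^\otimes_{\mathcal{O}_G^\simeq} \to \Fin_*$ is the strong Segal orbit functor (Lemma \ref{lemma:orbits_is_strong_Segal}) and $p \colon \underline{\Com}^\otimes_{\mathcal{O}_G^\simeq} \to \Span(\Fin_G)$ is the cocartesian fibration encoding the orbital pattern structure. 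The canonical comparison $\mathcal{O}^\otimes \times \mathcal{T}^{\op} \to \underline{\mathcal{O}}^\otimes$ of Remark \ref{rmk:const_G_operad_as_product} then factors through $(-)_G^*\mathcal{O}^\otimes$, with the second map induced by $p$ being a cocartesian fibration whose fiber over $G/H$ picks out a copy of $\mathcal{O}^\otimes$.

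Next, I would invoke the straightening--unstraightening equivalence to translate both sides into functor data. On the left, a $\underline{\mathcal{O}}$-monoidal $G$-$\infty$-category is a cocartesian fibration of $G$-$\infty$-operads $\cat^\otimes \to \underline{\mathcal{O}}^\otimes$, which after straightening corresponds to a functor $\underline{\mathcal{O}}^\otimes \to \Cat_\infty$ satisfying the appropriate Segal condition dictated by the elementary objects of $\underline{\mathcal{O}}^\otimes$. Pulling back along $\mathcal{O}^\otimes \times \mathcal{T}^{\op} \to \underline{\mathcal{O}}^\otimes$ and adjointing gives a functor $\mathcal{T}^{\op} \to \Fun(\mathcal{O}^\otimes, \Cat_\infty)$, which is precisely the datum of a $\mathcal{T}^{\op}$-cocartesian family of functors $\mathcal{O}^\otimes \to \Cat_\infty$.

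The main obstacle lies in showing that the Segal conditions correspond. Concretely, I would argue that the strong Segal property of $(-)_G$ from Lemma \ref{lemma:orbits_is_strong_Segal} ensures that the $G$-Segal condition at an elementary object of $\underline{\mathcal{O}}^\otimes$ sitting over $G/H$ factors, using the fact that elementary-under categories of $(-)_G$ are trivial, into (i) the ordinary $\mathcal{O}^\otimes$-monoidal Segal condition for the restriction of $\cat^\otimes$ to the fiber over $G/H$, and (ii) a compatibility condition along inert morphisms in $\mathcal{O}_G^{\op}$ saying that restriction functors are $\mathcal{O}^\otimes$-monoidal. These are exactly the conditions characterizing a $\mathcal{O}_G^{\op}$-cocartesian family of $\mathcal{O}^\otimes$-monoidal $\infty$-categories.

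To upgrade the object-level correspondence to an equivalence of $\infty$-categories, I would verify that pullback along $\mathcal{O}^\otimes \times \mathcal{T}^{\op} \to \underline{\mathcal{O}}^\otimes$ is fully faithful on the relevant subcategories (by the straightening equivalence) and essentially surjective by exhibiting an inverse via parametrized Kan extension along the same functor, following the template of \cite[Corollary 2.4.15]{NS22}. Since Nardin--Shah's argument there treats the analogous statement for their flavor of constant $G$-operads, the bulk of the work will be translating between our formulation (via fibrous patterns over $\Span(\Fin_G)$) and theirs; this translation is facilitated by the equivalence \cite[Proposition 5.2.14]{BHS2022envelopes} already used in Definition \ref{defn:operad_induces_G_operad}.
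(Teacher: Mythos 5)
The paper does not give a proof of Proposition \ref{prop:param_cats_monoidal_const_operads}; it is stated without a proof environment, with only the remark that it is ``inspired by'' \cite[Corollary 2.4.15]{NS22}. So there is no official argument to compare against, and your proposal must stand on its own.

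Your overall strategy — restrict a cocartesian fibration over $\underline{\mathcal{O}}^\otimes$ along $\mathcal{O}^\otimes \times \mathcal{T}^\op \to \underline{\mathcal{O}}^\otimes$, argue that the $G$-Segal conditions translate into the $\mathcal{O}^\otimes$-Segal conditions fiberwise, and construct an inverse via operadic right Kan extension — is the standard and correct route, and matches what the cited \cite[Corollary 2.4.15]{NS22} suggests. However there are a few places where the argument as written would not go through without repair.

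First, the claim that pullback is ``fully faithful \ldots (by the straightening equivalence)'' is a gap: precomposition along an arbitrary functor is not fully faithful under straightening. Fully faithfulness here must be extracted from the Segal conditions on both sides, i.e.\ from the fact that a $\underline{\mathcal{O}}^\otimes$-monoidal $G$-$\infty$-category is determined by its values on elementary objects and that all elementary objects of $\underline{\mathcal{O}}^\otimes$ lie in the image of the comparison functor. (Once you have a two-sided inverse from Kan extension, fully faithfulness is redundant anyway; but if you want to argue it separately you cannot outsource it to straightening.) Second, your assertion that ``elementary-under categories of $(-)_G$ are trivial'' is stronger than Lemma \ref{lemma:orbits_is_strong_Segal} actually provides: strong Segal means the functors $f^{\mathrm{el}}_{X/}$ are coinitial, not that they are contractible; the latter would be the iso-Segal property, which is what would make the Segal factorization you describe immediate. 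You need to argue that coinitiality is enough, which it is, but it is a nontrivial step. Third, two smaller imprecisions worth noting: the map $\mathcal{O}^\otimes \times \mathcal{T}^\op \to \underline{\mathcal{O}}^\otimes$ does not ``factor through $(-)_G^*\mathcal{O}^\otimes$'' in any nontrivial sense, since $\underline{\mathcal{O}}^\otimes$ \emph{is} $(-)_G^* \mathcal{O}^\otimes$ as a total space (only the structure map changes from $\underline{\Com}^\otimes_{\mathcal{O}_G^\simeq}$ to $\Span(\Fin_G)$ via $p$); and the fiber of $\underline{\mathcal{O}}^\otimes$ over $G/H \in \Span(\Fin_G)$ is the underlying category $\mathcal{O}$, not the total space $\mathcal{O}^\otimes$. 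Finally, the phrase ``inert morphisms in $\mathcal{O}_G^\op$'' is a slip — the orbit category is not a pattern, and the compatibility in \cite[Definition 4.8.3.1]{LurHA} is not a separate condition on inert morphisms but is packaged in the requirement that the fibration over $\mathcal{O}_G^\op \times \mathcal{O}^\otimes$ be cocartesian. None of these are fatal, but together they mean the Segal-condition comparison, which you have correctly identified as the crux, is still a sketch rather than a proof.
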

\begin{cor}\label{cor:param_comodules_tensored} 
	Let $ \cat $ be a $ G $-$ \infty $-category. 
	Let $ A $ be an $ \underline{\E}_1 $-coalgebra in $ \cat $, i.e. an $ \underline{\E}_1 $-algebra in $ \cat^\vop $. 
	Then left comodules over $ A $ is \emph{right-tensored} over $ \cat $. 
\end{cor}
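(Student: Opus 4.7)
The plan is to reduce to the non-parametrized statement via Proposition \ref{prop:param_cats_monoidal_const_operads}, then apply Lurie's result on tensorings by endomorphism algebras fiberwise, and finally verify the cocartesian gluing.

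First, observe that since $\underline{\mathcal{R}M}^\otimes$ is the constant $G$-$\infty$-operad on $\mathcal{R}M^\otimes$ (Definition \ref{defn:operad_induces_G_operad}), Proposition \ref{prop:param_cats_monoidal_const_operads} identifies the datum of a right-tensoring of $\underline{\coLMod}_A(\cat)$ over $\cat$ with the datum of an $\mathcal{O}_G^\op$-cocartesian family of right-tensorings of $\coLMod_{A_t}(\cat_t)$ over $\cat_t$, indexed by $t \in \mathcal{O}_G^\op$, where $A_t$ is the restriction of $A$ to the fiber over $t$. (Note that $\underline{\Assoc}^\otimes$ is also a constant $G$-$\infty$-operad, so under this identification the ``algebra part'' of the tensoring for each $t$ is precisely the $\E_1$-coalgebra $A_t$ in $\cat_t$.)

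Next, for each $t \in \mathcal{O}_G^\op$, I would invoke the ordinary result that an $\E_1$-coalgebra $B$ in an $\infty$-category $\mathcal{D}$ equips $\coLMod_B(\mathcal{D})$ with a canonical right-tensoring over $\mathcal{D}$. This is obtained by applying $(-)^\op$ to the construction of \cite[\S4.2.1]{LurHA}, which equips $\LMod_{B}(\mathcal{D}^\op)$ with a canonical left-tensoring over $\mathcal{D}^\op$; taking opposites converts the left-tensoring into a right-tensoring of $\coLMod_B(\mathcal{D})$ over $\mathcal{D}$. Applied to $B = A_t$ and $\mathcal{D} = \cat_t$, this gives the desired fiberwise tensoring.

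The remaining step is to check that these fiberwise tensorings assemble into an $\mathcal{O}_G^\op$-cocartesian family. Concretely, for each morphism $\alpha \colon s \to t$ in $\mathcal{O}_G^\op$, the restriction functor $\alpha^\ast \colon \cat_t \to \cat_s$ is symmetric monoidal (as part of the data of $\cat$ being a $G$-$\infty$-category) and, because $A$ is a \emph{$G$-}coalgebra, the induced functor $\alpha^\ast \colon \coLMod_{A_t}(\cat_t) \to \coLMod_{A_s}(\cat_s)$ intertwines the right-tensorings. Since the underlying constructions of left modules, vertical opposite, and tensoring are each functorial in $\alpha^\ast$, this compatibility is automatic from the functoriality of the non-parametrized construction.

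The main obstacle will be the bookkeeping around the vertical opposite in the parametrized setting (Recollection \ref{rec:vertical_op}): passing between $\underline{\E}_1$-coalgebras in $\cat$ and $\underline{\E}_1$-algebras in $\cat^\vop$ reverses the direction of fiberwise morphisms, and one must check that the cocartesian pushforwards classified by morphisms in $\mathcal{O}_G^\op$ are compatible on both sides. However, because $\underline{\mathcal{R}M}^\otimes$ and $\underline{\Assoc}^\otimes$ are \emph{constant} $G$-$\infty$-operads, Proposition \ref{prop:param_cats_monoidal_const_operads} reduces all of the coherence data to fiberwise data together with restriction compatibility, and the latter follows from functoriality of $(-)^\op$ and $\coLMod_{(-)}(-)$ in symmetric monoidal functors.
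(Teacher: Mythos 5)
Your high-level approach matches the paper's: reduce to the non-parametrized statement fiberwise via Proposition \ref{prop:param_cats_monoidal_const_operads} and then invoke Lurie. However, the fiberwise ingredient as you've written it contains two compensating errors.

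First, the reference is off: \cite[\S4.2.1]{LurHA} only \emph{defines} left module objects and the notion of a left-tensored $\infty$-category; it does not construct a tensoring of $\LMod_B(\mathcal{D})$ over $\mathcal{D}$. The relevant result is in \cite[\S4.3.2]{LurHA} (which is what the paper actually cites), and what it establishes is that $\LMod_B(\mathcal{D})$ is canonically \emph{right}-tensored over $\mathcal{D}$, not left-tensored: for a left $B$-module $M$ and $X\in\mathcal{D}$, it is $M\otimes X$ (acting on the left factor) that inherits a left $B$-module structure, whereas $X\otimes M$ does not in the absence of a braiding. Second, passing to opposites does \emph{not} interchange left- and right-tensoring — it is the reversal of the monoidal structure that does that. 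In Lurie's conventions (which the paper follows, and which the paper's own fiberwise $(-)^{\vop}$ inherits), $(-)^{\op}$ preserves chirality: a right-tensoring $\mathcal{M}\times\mathcal{D}\to\mathcal{M}$ induces a right-tensoring $\mathcal{M}^{\op}\times\mathcal{D}^{\op}\to\mathcal{M}^{\op}$. So your two claims ("$\LMod_B(\mathcal{D}^\op)$ is left-tensored" and "$(-)^\op$ turns left-tensoring into right-tensoring") are each incorrect, and they happen to cancel to give the correct output. The clean version of the fiberwise step is: $\LMod_{A_t}(\cat_t^{\op})$ is right-tensored over $\cat_t^{\op}$ by \cite[\S4.3.2]{LurHA}, and taking opposites (which preserves right-tensoring) yields the right-tensoring of $\coLMod_{A_t}(\cat_t)=\LMod_{A_t}(\cat_t^{\op})^{\op}$ over $\cat_t$. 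The rest of your outline — identifying the parametrized tensoring with an $\mathcal{O}^{\op}_G$-cocartesian family of fiberwise tensorings via the constant-operad proposition — agrees with the paper's proof.
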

\begin{proof} 
		Follows from the results of \cite[\S4.3.2]{LurHA} and Proposition \ref{prop:param_cats_monoidal_const_operads}. 
\end{proof}
\begin{cor}
		Let $ \cat^\otimes $ be an $ \underline{\Assoc}^\otimes $-monoidal $ C_2 $-$ \infty $-category. 
		Under the equivalence of Proposition \ref{prop:param_cats_monoidal_const_operads}, a $ C_2 $-$\infty$-category $ \mathcal{M} $ is left-tensored over $ \cat $ if and only if $ \mathcal{M} $ defines a $ \mathcal{O}_G^\op $-cocartesian family of $ \infty  $-categories left-tensored over $ \cat $ in the sense of \cite[Definition 4.8.3.9]{LurHA}. 
\end{cor}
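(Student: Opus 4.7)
[Proof sketch]
The plan is to deduce the corollary formally from Proposition \ref{prop:param_cats_monoidal_const_operads} by taking $\mathcal{O}^\otimes = \mathcal{L}M^\otimes$ and unwinding definitions. By Definition \ref{defn:param_cat_tensored}, a left-tensoring of $\mathcal{M}$ over $\cat^\otimes$ is the data of a cocartesian fibration of $G$-$\infty$-operads $q \colon \mathcal{M}^\otimes \to \underline{\mathcal{L}M}^\otimes$ whose pullback along the inclusion $\underline{\Assoc}^\otimes \hookrightarrow \underline{\mathcal{L}M}^\otimes$ is identified with $\cat^\otimes \to \underline{\Assoc}^\otimes$, and whose fiber over the distinguished object $\underline{m}$ recovers $\mathcal{M}$. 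Equivalently (unwinding via \cite[\S4.2.1]{LurHA}, suitably parametrized), this is precisely an $\underline{\mathcal{L}M}^\otimes$-monoidal $G$-$\infty$-category whose restriction along $\underline{\Assoc}^\otimes \hookrightarrow \underline{\mathcal{L}M}^\otimes$ is $\cat^\otimes$.

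First, I would apply Proposition \ref{prop:param_cats_monoidal_const_operads} with $\mathcal{O}^\otimes = \mathcal{L}M^\otimes$ to identify $\underline{\mathcal{L}M}^\otimes$-monoidal $C_2$-$\infty$-categories with $\mathcal{O}_{C_2}^\op$-cocartesian families of $\mathcal{L}M^\otimes$-monoidal $\infty$-categories. Next, I would observe that the inclusion of $\infty$-operads $\Assoc^\otimes \hookrightarrow \mathcal{L}M^\otimes$ induces, under the functor $\underline{(-)}$ of Definition \ref{defn:operad_induces_G_operad}, the inclusion $\underline{\Assoc}^\otimes \hookrightarrow \underline{\mathcal{L}M}^\otimes$ (both arise via pullback along the orbit functor $(-)_{C_2}$). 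Consequently, the equivalence in the previous step is natural for restriction along this inclusion: the restriction $\mathcal{M}^\otimes|_{\underline{\Assoc}^\otimes}$ corresponds to the underlying $\mathcal{O}_{C_2}^\op$-cocartesian family of $\Assoc^\otimes$-monoidal $\infty$-categories obtained from the $\mathcal{L}M^\otimes$-monoidal families via restriction along $\Assoc^\otimes \hookrightarrow \mathcal{L}M^\otimes$.

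Combining these two observations gives a bijection (of $\infty$-groupoids) between the two data in question: on one side, a parametrized left-tensoring of $\mathcal{M}$ over $\cat$; on the other side, an $\mathcal{O}_{C_2}^\op$-cocartesian family of $\mathcal{L}M^\otimes$-monoidal $\infty$-categories together with a compatible identification of its $\Assoc^\otimes$-restriction with the given cocartesian family of monoidal $\infty$-categories corresponding to $\cat^\otimes$ under Proposition \ref{prop:param_cats_monoidal_const_operads}. The latter is exactly the $\mathcal{O}_{C_2}^\op$-cocartesian family of $\infty$-categories left-tensored over $\cat$ in the sense of \cite[Definition 4.8.3.9]{LurHA}. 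Finally I would check that, under these equivalences, the fiber $\mathcal{M}_{\underline{m}^{C_2/H}}$ at each orbit matches the corresponding left-tensored $\infty$-category in the Lurie family.

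The only substantive point — and hence the main obstacle — is verifying the naturality in Step 2: that the identification furnished by Proposition \ref{prop:param_cats_monoidal_const_operads} commutes with restriction along the operad inclusion $\Assoc^\otimes \hookrightarrow \mathcal{L}M^\otimes$ when promoted via $\underline{(-)}$. This ultimately reduces to the fact that $\underline{(-)}$ is a functor of $\infty$-categories (of $\infty$-operads), so the resulting pullback square of fibrous patterns commutes on the nose, and the equivalence of Proposition \ref{prop:param_cats_monoidal_const_operads} is manifestly natural in the input operad.
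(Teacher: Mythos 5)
The paper gives this corollary without a written proof, regarding it as an immediate consequence of Proposition \ref{prop:param_cats_monoidal_const_operads}; your sketch simply makes that implicit reasoning explicit — apply the proposition with $\mathcal{O}^\otimes = \mathcal{L}M^\otimes$, unwind both definitions of left-tensoring via cocartesian fibrations over $\underline{\mathcal{L}M}^\otimes$ (resp.\ $\mathcal{L}M^\otimes$-monoidal families), and use functoriality of $\underline{(-)}$ from Definition \ref{defn:operad_induces_G_operad} to see that restriction along $\Assoc^\otimes \hookrightarrow \mathcal{L}M^\otimes$ is compatible with the identification. This is the right argument and there is no gap; you have correctly identified the one point (naturality of the proposition's equivalence in the source operad, which reduces to the commutativity of the evident square of pullbacks) that deserves a sentence of justification.
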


\begin{cor}\label{cor:mod_to_alg_cart_fibration}
		Let $ \cat $ be a $ \underline{\E}_1 $-monoidal $ \mathcal{T} $-$ \infty $-category and let $ \mathcal{M} $ be a $ \mathcal{T} $-$ \infty $-category which is left-tensored over $ \cat $. 
		Then the canonical forgetful functor $ \LMod(\mathcal{M}) = \Alg_{/\underline{\mathcal{L}M}}(\mathcal{O}) \to \Alg_{/\underline{\Assoc}}(\mathcal{O}) = \Alg(\cat) $ is a $ \mathcal{T} $-cartesian fibration. 
\end{cor}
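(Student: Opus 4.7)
[Proof proposal]
The plan is to leverage Proposition \ref{prop:param_cats_monoidal_const_operads} to reduce the claim to its non-parametrized counterpart, \cite[Corollary 4.2.3.2]{LurHA}, applied fiberwise, and then verify the Beck--Chevalley-type compatibility that promotes a family of cartesian fibrations to a $\mathcal{T}$-cartesian fibration. First, under the equivalence of Proposition \ref{prop:param_cats_monoidal_const_operads}, an $\underline{\E}_1$-monoidal $\mathcal{T}$-$\infty$-category $\cat^\otimes$ corresponds to an $\mathcal{O}^\op_G$-cocartesian family of $\E_1$-monoidal $\infty$-categories $\{\cat_t^\otimes\}_{t \in \mathcal{T}}$, and the left-tensored structure on $\mathcal{M}$ corresponds to an $\mathcal{O}^\op_G$-cocartesian family $\{\mathcal{M}_t\}$ where each $\mathcal{M}_t$ is left-tensored over $\cat_t$ in the ordinary sense, with the restriction functors associated to $\alpha: s \to t$ being $\cat_t$-linear with respect to the $\cat_s$-action on $\mathcal{M}_s$ induced by (the monoidal structure of) $\alpha^* \colon \cat_t \to \cat_s$.

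Second, I would apply \cite[Corollary 4.2.3.2]{LurHA} fiberwise to conclude that for each $t \in \mathcal{T}$, the forgetful functor $p_t \colon \LMod(\mathcal{M}_t) \to \Alg(\cat_t)$ is a cartesian fibration, with cartesian edges described by restriction of scalars: given an algebra map $f \colon A \to B$ in $\cat_t$ and a $B$-module $N \in \mathcal{M}_t$, the cartesian lift $f^{*}N \to N$ is the $A$-module obtained by pulling back the $B$-action along $f$, which is computed in $\mathcal{M}_t$.

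Third, I would verify that for every morphism $\alpha \colon s \to t$ in $\mathcal{T}$, the restriction functor $\alpha^* \colon \LMod(\mathcal{M}_t) \to \LMod(\mathcal{M}_s)$ preserves cartesian edges over $\Alg(\cat)$. Since $\alpha^* \colon \cat_t \to \cat_s$ is monoidal and $\alpha^* \colon \mathcal{M}_t \to \mathcal{M}_s$ is linear over it, the underlying morphism in $\mathcal{M}_s$ of the restricted cartesian lift $\alpha^*(f^*N \to N)$ is still computed as restriction of scalars along $\alpha^*(f)$. Because the underlying forgetful functor $\LMod(\mathcal{M}_s) \to \mathcal{M}_s$ is conservative and preserves limits, and the cartesian lifts in $p_s$ are determined by the underlying morphism in $\mathcal{M}_s$, the image $\alpha^*(f^*N) \to \alpha^*N$ is itself $p_s$-cartesian.

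The main obstacle is articulating the last step precisely: namely, extracting from the pointwise data of cartesian lifts together with the preservation-of-cartesian-edges property a bona fide $\mathcal{T}$-cartesian fibration in the sense of \cite{Shah18}. I would handle this by invoking the dual of \cite[Proposition 7.3.2.11]{LurHA} (relative left adjoints), rephrased for cartesian fibrations: the composite $\LMod(\mathcal{M}) \to \Alg(\cat) \to \mathcal{T}^\op$ is a cartesian categorical fibration (both maps being cocartesian over $\mathcal{T}^\op$ with compatible fibers), the first map admits fiberwise cartesian lifts that are preserved by the restriction functors of the second map, and this is exactly the characterization of a $\mathcal{T}$-cartesian fibration. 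The key check reduces again to the monoidality of the restrictions $\alpha^*$ and the concrete description of cartesian lifts via restriction of scalars.
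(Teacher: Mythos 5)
Your proposal takes essentially the same route as the paper, which simply cites Proposition \ref{prop:param_cats_monoidal_const_operads} and \cite[Corollary 4.2.3.2]{LurHA}; you have just unpacked the implicit work of checking that the fiberwise cartesian fibrations assemble to a $\mathcal{T}$-cartesian fibration via compatibility of restriction-of-scalars with the monoidal restriction functors. One small caution on wording: in your closing paragraph, the composite $\LMod(\mathcal{M}) \to \Alg(\cat) \to \mathcal{T}^\op$ is the co\-cartesian fibration exhibiting $\LMod(\mathcal{M})$ as a $\mathcal{T}$-$\infty$-category (not a cartesian one); what you want is that the first map is cartesian fiberwise with cartesian edges preserved by the cocartesian pushforwards over $\mathcal{T}^\op$, which is precisely the definition of a $\mathcal{T}$-cartesian fibration and is what your preceding paragraphs establish.
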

\begin{proof}
		Follows from Proposition \ref{prop:param_cats_monoidal_const_operads} and \cite[Corollary 4.2.3.2]{LurHA}. 
\end{proof}
\begin{cor}\label{cor:param_mod_fiberwise_description}
		Let $ \cat $ be an $ \underline{\E}_1 $-monoidal $ \mathcal{T} $-$ \infty $-category and let $ A $ be a $ \mathcal{T} $-object of $ \underline{\E}_1\Alg(\cat) $. 
		Let $ \mathcal{M} $ be a $ \mathcal{T}$-$ \infty $-category which is left-tensored over $\cat$ and consider the parametrized module category $ \underline{\LMod}_A(\mathcal{M}) $. 
		Then its (non-parametrized) fibers satisfy
		\begin{equation*}
				\underline{\LMod}_A(\mathcal{M})_t \simeq \LMod_{A_t}(\mathcal{M}_t)
		\end{equation*}
		for all $ t \in \mathcal{T} $, where the right-hand side denotes the non-parametrized left module category of, for instance \cite[\S4.2.1]{LurHA}. 
\end{cor}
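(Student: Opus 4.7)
The plan is to reduce the computation of the fiber at $t$ to an ordinary (non-parametrized) statement by systematically applying Proposition \ref{prop:param_cats_monoidal_const_operads}. Since $\underline{\mathcal{L}M}^\otimes$ and $\underline{\Assoc}^\otimes$ are both constant $\mathcal{T}$-$\infty$-operads, the $\underline{\mathcal{L}M}^\otimes$-monoidal $\mathcal{T}$-$\infty$-category $\mathcal{O}^\otimes$ encoding the left-tensoring of $\mathcal{M}$ over $\cat$ corresponds under this equivalence to a $\mathcal{T}^\op$-cocartesian family $t \mapsto (\mathcal{O}_t^\otimes \to \mathcal{L}M^\otimes)$ of ordinary fibrations of $\infty$-operads, with $\cat_t$ left-tensoring $\mathcal{M}_t$ in the ordinary sense of \cite[Definition 4.2.1.19]{LurHA}. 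Under this identification, the $\mathcal{T}$-object $A$ of $\underline{\E}_1\Alg(\cat)$ picks out, for each $t$, an ordinary $\E_1$-algebra $A_t \in \E_1\Alg(\cat_t)$.

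The core step is to show that the parametrized algebra $\mathcal{T}$-$\infty$-category $\Alg_{/\underline{\mathcal{L}M}}(\mathcal{O})$ has fibers at $t$ given by the ordinary algebra $\infty$-category $\Alg_{/\mathcal{L}M}(\mathcal{O}_t)$. For this, I would use the description of parametrized functor categories from Proposition \ref{prop:param_functors}: a section $\underline{\mathcal{L}M}^\otimes \to \mathcal{O}^\otimes$ lying over $\{t\} \hookrightarrow \mathcal{T}^\op$ is equivalently a section $\mathcal{L}M^\otimes \to \mathcal{O}_t^\otimes$ of the ordinary fibration obtained as the pullback. The inert-to-cocartesian condition transports correctly because inert morphisms in $\underline{\mathcal{L}M}^\otimes$ project to inert morphisms in $\mathcal{L}M^\otimes$ under the structure map $\underline{\mathcal{L}M}^\otimes \to \mathcal{T}^\op \times \mathcal{L}M^\otimes$, and cocartesianness of lifts is preserved by the pullback $\mathcal{O}_t^\otimes \hookrightarrow \mathcal{O}^\otimes$.

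Finally, to compute the fiber of $\underline{\LMod}_A(\mathcal{M}) = \Alg_{/\underline{\mathcal{L}M}}(\mathcal{O}) \times_{\underline{\E}_1\Alg(\cat)} \{A\}$ at $t$, we combine the previous step with the observation that pullback of $\infty$-categories over $\mathcal{T}^\op$ commutes with taking fibers, so
\begin{equation*}
\underline{\LMod}_A(\mathcal{M})_t \simeq \Alg_{/\mathcal{L}M}(\mathcal{O}_t) \times_{\E_1\Alg(\cat_t)} \{A_t\} =: \LMod_{A_t}(\mathcal{M}_t),
\end{equation*}
which is exactly the non-parametrized left module category of \cite[\S4.2.1]{LurHA}. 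Naturality in $t$ is immediate from functoriality of the construction in the chosen orbit.

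The main technical obstacle I expect is making the second step rigorous: the identification of parametrized sections satisfying the inert-to-cocartesian condition with their pointwise counterparts. The verification is straightforward but bookkeeping-heavy, requiring careful use of Proposition \ref{prop:param_functors} together with the description of inert morphisms in constant $\mathcal{T}$-$\infty$-operads from Definition \ref{defn:operad_induces_G_operad}. Alternatively, one may invoke Corollary \ref{cor:mod_to_alg_cart_fibration} to reduce part of this work: the $\mathcal{T}$-cartesian fibration $\underline{\LMod}(\mathcal{M}) \to \underline{\E}_1\Alg(\cat)$ restricts, on the fiber over $t$, to the ordinary cartesian fibration $\LMod(\mathcal{M}_t) \to \E_1\Alg(\cat_t)$ of \cite[Corollary 4.2.3.2]{LurHA}, so pulling back $A_t$ on both sides yields the claimed equivalence.
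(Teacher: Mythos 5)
Your proposal has a gap in the core step, namely the identification of the fiber of the parametrized algebra category at $t$. You write that "a section $\underline{\mathcal{L}M}^\otimes \to \mathcal{O}^\otimes$ lying over $\{t\} \hookrightarrow \mathcal{T}^\op$ is equivalently a section $\mathcal{L}M^\otimes \to \mathcal{O}^\otimes_t$ of the ordinary fibration obtained as the pullback," i.e., you treat the fiber at $t$ as though it were computed by an ordinary pullback along $\{t\} \hookrightarrow \mathcal{T}^\op$. That is not how the parametrized functor (and algebra) category is defined. By Proposition \ref{prop:param_functors} (equivalently \cite[\S3]{Shah18}), the fiber of $\underline{\Fun}(\cat, \mathcal{D})$ at $t$ is the $\infty$-category of $\mathcal{T}^{/t}$-functors $\cat_{\underline{t}} \to \mathcal{D}_{\underline{t}}$, where the parametrized fiber $\cat_{\underline{t}}$ lives over the slice $(\mathcal{T}^{/t})^\op$ (Notation \ref{ntn:param_fiber}), not over $\{t\}$. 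Correspondingly, the fiber of $\Alg_{/\underline{\mathcal{L}M}}(\mathcal{O})$ at $t$ is, under the equivalence of Proposition \ref{prop:param_cats_monoidal_const_operads}, the $\infty$-category of $(\mathcal{T}^{/t})^\op$-cocartesian families of $\mathcal{L}M$-algebras, and likewise for modules. The ordinary module category $\LMod_{A_t}(\mathcal{M}_t)$ is not what the fiber literally is; it is what it is \emph{equivalent to}, and the reason for the equivalence is that $(\mathcal{T}^{/t})^\op$ has an initial object $\mathrm{id}_t$, so cocartesian sections over it are determined by their value at the initial object. This is exactly the argument the paper makes and exactly the step your proposal omits.

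Your alternative via Corollary \ref{cor:mod_to_alg_cart_fibration} inherits the same gap: asserting that the $\mathcal{T}$-cartesian fibration $\underline{\LMod}(\mathcal{M}) \to \underline{\E}_1\Alg(\cat)$ restricts on the fiber over $t$ to Lurie's $\LMod(\mathcal{M}_t) \to \E_1\Alg(\cat_t)$ presupposes the identifications $\underline{\LMod}(\mathcal{M})_t \simeq \LMod(\mathcal{M}_t)$ and $\underline{\E}_1\Alg(\cat)_t \simeq \E_1\Alg(\cat_t)$, which is precisely what needs to be shown. To repair the argument, replace the "section over $\{t\}$" claim by: the fiber at $t$ is the $\infty$-category of $(\mathcal{T}^{/t})^\op$-cocartesian families of $A_{\underline{t}}$-modules in $\mathcal{M}_{\underline{t}}$, and then conclude by restriction to the initial object of $(\mathcal{T}^{/t})^\op$.
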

\begin{proof}
		By definition of parametrized mapping spaces (see \cite[\S3]{Shah18}), under the equivalence of Proposition \ref{prop:param_cats_monoidal_const_operads}, the fiber $ \underline{\LMod}_A(\mathcal{M})_t $ is given by $ \left(\mathcal{T}^{/t}\right)^\op $-cocartesian families of modules over $ A_{\underline{t}} \colon \left(\mathcal{T}^{/t}\right)^\op \to \cat_{\underline{t}} $ in $ \mathcal{M}_{\underline{t}} $. 
		The result follows from noting that $ \left(\mathcal{T}^{/t}\right)^\op $ has an initial object. 
\end{proof}
\begin{ex}\label{ex:param_endomorphisms_cats}
		Let $ \cat $ be a $ C_2 $-$ \infty $-category. 
		By Proposition \ref{prop:param_cats_monoidal_const_operads}, $ \underline{\Fun}(\cat, \cat) $ is a $ \underline{\E}_1 $-monoidal $ C_2 $-$ \infty $-category. 
		Furthermore, $ \cat $ is left-tensored over $ \underline{\Fun}(\cat, \cat) $. 
		On underlying $ \infty $-categories, this recovers the action of $ \Fun(\cat^e, \cat^e) $ on $ \cat^e $ described in \cite[Introduction to \S4.7]{LurHA}. 
\end{ex}
\begin{prop}\label{prop:param_modules_limits}
		Let $ \cat $ be an $\underline{\E}_1$-monoidal $ C_2 $-$ \infty $-category, $ \mathcal{M} $ a $ C_2 $-$ \infty $-category which is left-tensored over $ \cat $, and let $ A $ be a $ \mathcal{T} $-algebra object of $ \cat $. 
		Suppose $ \mathcal{M} $ is $ C_2 $-complete. 
		Then
		\begin{enumerate}[label=(\alph*)]
			 	\item \label{prop_item:param_modules_admit_lims} the $ C_2 $-$ \infty $-category $ \underline{\LMod}_A(\mathcal{M}) $ is $C_2$-complete  
			 	\item a map $ p \colon \underline{\LMod}_A(\mathcal{M}) $ is a $ \mathcal{T} $-limit diagram if and only if the induced map $ K^{\underline{\triangleleft}} \to \mathcal{M} $ is a $ \mathcal{T} $-limit diagram. 
			 	\item \label{prop_item:param_mod_lim_change_of_alg_forget_is_right_adjoint} given a morphism of algebra objects $ A\to B $ of $ \mathcal{C} $, the induced functor $ \underline{\LMod}_B(\cat) \to \underline{\LMod}_A(\cat) $ admits a left $ \mathcal{T} $-adjoint.  
	  \end{enumerate} 
\end{prop}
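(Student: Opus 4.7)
The plan is to reduce each of the three assertions to fiberwise statements about ordinary module categories, then promote these to parametrized statements using the criteria for parametrized (co)limits and adjunctions recorded earlier in the paper.

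First I would apply Corollary \ref{cor:param_mod_fiberwise_description} to identify, for each $t \in \mathcal{T}$, the fiber $\underline{\LMod}_A(\mathcal{M})_t$ with the ordinary $\infty$-category $\LMod_{A_t}(\mathcal{M}_t)$. Under this identification, the forgetful functor $U_t \colon \underline{\LMod}_A(\mathcal{M})_t \to \mathcal{M}_t$ agrees with the usual forgetful functor for ordinary left modules, and for any morphism $\alpha \colon s \to t$ in $\mathcal{T}$ the restriction $\alpha^*$ on parametrized module categories fits into a commuting square with the restriction on the underlying left-tensored $C_2$-$\infty$-category. By \cite[Corollary 4.2.3.3 \& Corollary 4.2.3.5]{LurHA}, each fiber $\LMod_{A_t}(\mathcal{M}_t)$ admits all small limits, these are computed (and detected) by $U_t$, and the restriction maps on module categories preserve them (since they are computed fiberwise in $\mathcal{M}$, which preserves limits under restriction by $C_2$-completeness of $\mathcal{M}$).

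Next, I would invoke Theorem B of \cite{Shah_paramII} exactly as in the proof of Proposition \ref{prop:invdalg_C2_colims}\ref{propitem:invdalg_admits_C2_lims}: fiberwise existence of limits together with the fact that restriction functors preserve them is equivalent to $C_2$-completeness of $\underline{\LMod}_A(\mathcal{M})$, giving part \ref{prop_item:param_modules_admit_lims}. For part (b), the same fiberwise criterion reduces the claim to the corresponding non-parametrized statement (conservativity and limit-detection by $U_t$), which then assembles into the parametrized statement via the pointwise characterization of $\mathcal{T}$-limit diagrams in \cite[Definition 5.1-2]{Shah18}.

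For part \ref{prop_item:param_mod_lim_change_of_alg_forget_is_right_adjoint}, I would exhibit the putative left $\mathcal{T}$-adjoint fiberwise as relative tensor product $B \otimes_A - \colon \underline{\LMod}_A(\cat)_t \to \underline{\LMod}_B(\cat)_t$; that each such functor is a left adjoint to restriction of scalars follows from \cite[Corollary 4.6.2.17]{LurHA}. To promote the fiberwise adjunction to a parametrized one I would apply Corollary \ref{cor:C2_left_adjoint_local_crit}: I need that the restriction-of-scalars $C_2$-functor preserves $C_2$-products, which it does because $C_2$-limits in $\underline{\LMod}_B$ and $\underline{\LMod}_A$ are created by the forgetful functors to $\cat$ (part (b)) and restriction of scalars commutes with these forgetful functors.

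The main obstacle is verifying the Beck--Chevalley condition implicit in Corollary \ref{cor:C2_left_adjoint_local_crit}, namely that the canonical exchange transformation $\alpha^*(B \otimes_A -) \to B \otimes_A \alpha^*(-)$ is an equivalence for every $\alpha \in \mathcal{O}_{C_2}$. Since both functors preserve sifted colimits and agree after postcomposition with the conservative forgetful functor to $\cat$, this reduces to checking that the relative tensor product is computed by the two-sided bar construction compatibly with restriction, which follows from the fact that the $\underline{\E}_1$-monoidal structure on $\cat$ is $\mathcal{T}$-symmetric-monoidal and the tensor product on $\mathcal{M}$ commutes with restriction by assumption on the left-tensoring.
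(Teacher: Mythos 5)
The proposal has a genuine gap in part (a). You assert that "fiberwise existence of limits together with the fact that restriction functors preserve them is equivalent to $C_2$-completeness," but that is not the correct criterion. When you invoke Theorem B of \cite{Shah_paramII} (as the paper does in the proof of Proposition \ref{prop:invdalg_C2_colims}\ref{propitem:invdalg_admits_C2_lims}), you must also verify the dual of the proof's condition \ref{item:param_res_adjoint}: the restriction functor $\underline{\LMod}_A(\mathcal{M})^{C_2} \to \underline{\LMod}_A(\mathcal{M})^e$ must admit a \emph{right} adjoint (this is what supplies indexed $C_2$-products), and there is also a Beck--Chevalley condition. In Proposition \ref{prop:invdalg_C2_colims} that right adjoint is produced by the adjoint functor theorem, but the hypotheses there include presentability of the fibers, whereas here $\mathcal{M}$ is only assumed $C_2$-complete, so the adjoint functor theorem is unavailable. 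Thus "exactly as in" Proposition \ref{prop:invdalg_C2_colims} does not work, and you have not supplied the coinduction functor that is the substantive content of the proposition.

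The paper sidesteps this by taking a different decomposition: via (the dual to) \cite[Corollary 12.15]{Shah18} it suffices to produce totalizations and finite $C_2$-products in $\underline{\LMod}_A(\mathcal{M})$, and the $C_2$-products are then built \emph{by hand}. Since the restriction $\mathcal{M}^{C_2} \to \mathcal{M}^e$ is monoidal, its right adjoint $G$ is lax monoidal, so the unit gives a map of $\E_1$-algebras $\eta \colon A \to G(A^e)$, and the composite
\begin{equation*}
\LMod_{A^e}(\mathcal{M}^e) \xrightarrow{G} \LMod_{G(A^e)}(\mathcal{M}^{C_2}) \xrightarrow{\eta^*} \LMod_A(\mathcal{M}^{C_2})
\end{equation*}
is the desired right adjoint to restriction. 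This construction is what you are missing; without it part (a) does not follow from the stated hypotheses. A secondary concern is your treatment of (c): you invoke \cite[Corollary 4.6.2.17]{LurHA} and discuss the two-sided bar construction, but the relative tensor product $B \otimes_A -$ requires geometric realizations, a colimit condition not supplied by $C_2$-completeness; you should either restrict attention to the case where the fiberwise left adjoint exists for formal reasons (e.g.\ the free-forget adjunction, where the left adjoint is simply $B \otimes -$) or explain where those colimits come from.
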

\begin{proof}
\begin{enumerate}[label=(\alph*)]
		\item By (the dual to) \cite[Corollary 12.15]{Shah18}, it suffices to show that $ \underline{\LMod}_A(\mathcal{M}) $ admits totalizations and all $ C_2 $-products. 
		It follows from \cite[Proposition 4.2.3.3(1)]{LurHA} that the category admits totalizations. 
		By the dual to \cite[Proposition 5.11]{Shah18}, we must show that the restriction functor $ \LMod_A(\mathcal{M}^{C_2}) \to \LMod_{A^e}(\mathcal{M}^e) $ admits a right adjoint. 
		Since the restriction functor $ \mathcal{M}^{C_2} \to \mathcal{M}^e $ is monoidal, its right adjoint $ G $ is lax monoidal. 
		In particular, the unit promotes to a map of $ \E_1 $-algebra objects $ \eta \colon A \to G (A^e) $. 
		Now consider the composite $ \LMod_{A^e}(\mathcal{M}^e) \xrightarrow{G} \LMod_{G(A^e)}(\mathcal{M}^{C_2}) \xrightarrow{\eta^*} \LMod_{A}(\mathcal{M}^{C_2}) $. 
		It is right adjoint to the restriction functor $ \LMod_{A}(\mathcal{M}^{C_2}) \to \LMod_{A^e}(\mathcal{M}^{e}) $. 
		\item Follows from \cite[Corollary 4.2.3.3(2)]{LurHA} and the proof of part \ref{prop_item:param_modules_admit_lims}.
		\item Follows from Corollary \ref{cor:C2_left_adjoint_local_crit} and \cite[Corollary 4.2.3.3(3)]{LurHA}. \qedhere
\end{enumerate}		
\end{proof}

\begin{prop}\label{prop:param_modules_colimits}
		Let $ \cat $ be an $\underline{\E}_1$-monoidal $ C_2 $-$ \infty $-category, $ \mathcal{M} $ a $ C_2 $-$ \infty $-category which is left-tensored over $ \cat $, and let $ A $ be a $ \mathcal{T} $-algebra object of $ \cat $. 
		Suppose $ \mathcal{M} $ is $ C_2 $-cocomplete, and that for each $ \alpha \colon s \to t $ in $ \mathcal{T} $ the left adjoint to the restriction map $ \mathcal{M}_t \to \mathcal{M}_s $ is monoidal. 
		Then
		\begin{enumerate}[label=(\alph*)]
			 	\item \label{prop_item:param_modules_admit_colims} the $ C_2 $-$ \infty $-category $ \underline{\LMod}_A(\mathcal{M}) $ is $C_2$-cocomplete  
			 	\item a map $ p \colon \underline{\LMod}_A(\mathcal{M}) $ is a $ \mathcal{T} $-colimit diagram if and only if the induced map $ K^{\underline{\triangleright}} \to \mathcal{M} $ is a $ \mathcal{T} $-colimit diagram. 
			 	\item \label{prop_item:param_mod_colim_change_of_alg_tensor_is_left_adjoint} given a morphism of algebra objects $ A\to B $ of $ \mathcal{C} $, the induced functor $ \underline{\LMod}_B(\cat) \to \underline{\LMod}_A(\cat) $ is a left $ \mathcal{T} $-adjoint.  
	  \end{enumerate} 
\end{prop}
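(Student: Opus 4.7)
The plan is to prove Proposition \ref{prop:param_modules_colimits} by following the dualization strategy used in Proposition \ref{prop:param_modules_limits}, with the key new input being the hypothesis that the left adjoint to each restriction functor $\mathcal{M}_t \to \mathcal{M}_s$ is monoidal. For part (a), by the dual to \cite[Corollary 12.15]{Shah18}, it suffices to show that $\underline{\LMod}_A(\mathcal{M})$ admits fiberwise geometric realizations (hence all sifted colimits in fibers) and all finite $C_2$-coproducts. The former follows from applying \cite[Proposition 4.2.3.5]{LurHA} fiberwise, noting that the relative tensor product structure is computed pointwise in $\mathcal{M}_t$. For $C_2$-coproducts, by \cite[Proposition 5.11]{Shah18} (in its covariant form), it is enough to exhibit a left adjoint to the restriction $\LMod_A(\mathcal{M}^{C_2}) \to \LMod_{A^e}(\mathcal{M}^e)$.

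The construction of this left adjoint is where the monoidality hypothesis enters. Writing $L \colon \mathcal{M}^e \to \mathcal{M}^{C_2}$ for the left adjoint to restriction, which is monoidal by assumption, the counit of the $(L,\mathrm{Res})$-adjunction on $\cat$ gives a morphism of $\underline{\E}_1$-algebras $L(A^e) \to A$ in $\cat^{C_2}$. We then form the composite
\begin{equation*}
\LMod_{A^e}(\mathcal{M}^e) \xrightarrow{L} \LMod_{L(A^e)}(\mathcal{M}^{C_2}) \xrightarrow{A \otimes_{L(A^e)} -} \LMod_A(\mathcal{M}^{C_2}),
\end{equation*}
where the first functor uses that $L$ is monoidal and the second is extension of scalars along $L(A^e) \to A$. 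A direct adjunction check (matching the proof of part (a) of Proposition \ref{prop:param_modules_limits}, with arrows reversed) shows this is left adjoint to restriction. Part (b) then follows as in the dual statement by combining Proposition \ref{prop:param_modules_limits}(b) with the pointwise description of cocartesian-indexed $C_2$-colimits and the covariant form of \cite[Proposition 4.2.3.5]{LurHA}.

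For part (c), I would apply Corollary \ref{cor:C2_left_adjoint_local_crit} to the $C_2$-functor $\underline{\LMod}_B(\cat) \to \underline{\LMod}_A(\cat)$. Fiberwise, the restriction along $A \to B$ admits a left adjoint given by $- \otimes_A B$ by \cite[Corollary 4.2.3.3(3)]{LurHA}. To upgrade this to a left $\mathcal{T}$-adjoint, we must verify that restriction along $A \to B$ preserves finite $C_2$-products (equivalently, that its left adjoints satisfy a Beck--Chevalley condition). Since finite $C_2$-products in $\underline{\LMod}_{A}(\cat)$ and $\underline{\LMod}_{B}(\cat)$ are both reflected by the forgetful $C_2$-functors to $\underline{\mathcal{M}}$ (by part (a), since products and coproducts indexed by $C_2$-sets agree with $C_2$-(co)products in the relevant sense via \cite[Proposition 5.11]{Shah18}), and restriction of scalars along $A \to B$ is the identity on underlying objects, the desired Beck--Chevalley condition is automatic.

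The main obstacle I anticipate is the verification in part (a) that the candidate left adjoint $A \otimes_{L(A^e)} L(-)$ genuinely satisfies the expected mapping space identities at the level of $C_2$-$\infty$-categories (i.e., compatibly with the cocartesian fibration over $\mathcal{O}^{\op}_{C_2}$), rather than just at the level of each fiber. This hinges on using the monoidality of $L$ in a structured way, which is why the hypothesis that the left adjoints to restriction are monoidal is needed; without it, $L(A^e)$ would not be an $\underline{\E}_1$-algebra and the base-change construction would fail.
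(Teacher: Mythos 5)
Your proof matches the paper's step for step: for part (a), both reduce via \cite[Corollary 12.15]{Shah18} to geometric realizations (handled by \cite[Proposition 4.2.3.5(1)]{LurHA}) and $C_2$-coproducts, and both construct the left adjoint to the restriction $\LMod_A(\mathcal{M}^{C_2}) \to \LMod_{A^e}(\mathcal{M}^e)$ by composing $L$ (which lands in $L(A^e)$-modules precisely because $L$ is monoidal) with extension of scalars along the counit $\varepsilon\colon L(A^e) \to A$. The paper dispatches (b) and (c) in a single sentence by dualizing the proof of Proposition \ref{prop:param_modules_limits}, and your elaborations of (b) and (c) via Corollary \ref{cor:C2_left_adjoint_local_crit} and the Beck--Chevalley check follow exactly the citation pattern of that dual argument.
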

\begin{rmk}
		Note that Proposition \ref{prop:param_modules_colimits} has a stronger assumption than Proposition \ref{prop:param_modules_limits} because given $ \alpha \colon s \to t $ in $ \mathcal{T} $ so that $ \alpha^* $ is monoidal, the right adjoint of $ \alpha^* $ is automatically lax monoidal (and hence takes algebra objects to algebra objects), while the left adjoint to $ \alpha^* $ is \emph{oplax} monoidal, hence does not a priori preserve algebra objects. 
\end{rmk}
\begin{proof}
		[Proof of Proposition \ref{prop:param_modules_colimits}] 
		We prove (a); items (b) and (c) follow almost immediately from (a) as in the proof of Proposition \ref{prop:param_modules_limits}. 

		By \cite[Corollary 12.15]{Shah18}, it suffices to show that $ \underline{\LMod}_A(\mathcal{M}) $ admits geometric realizations and all $ C_2 $-coproducts. 
		It follows from \cite[Proposition 4.2.3.5(1)]{LurHA} that the category admits geometric realizations. 
		By \cite[Proposition 5.11]{Shah18}, we must show that the restriction functor $ \LMod_A(\mathcal{M}^{C_2}) \to \LMod_{A^e}(\mathcal{M}^e) $ admits a left adjoint. 
		By assumption, the left adjoint $ L\colon \mathcal{M}^e \to \mathcal{M}^{C_2} $ to the restriction functor $ \mathcal{M}^{C_2} \to \mathcal{M}^e $ is monoidal. 
		In particular, the counit promotes to a map of $ \E_1 $-algebra objects $ \varepsilon \colon L(A^e) \to A $. 
		Now consider the composite $ \LMod_{A^e}(\mathcal{M}^e) \xrightarrow{L} \LMod_{L(A^e)}(\mathcal{M}^{C_2}) \xrightarrow{\varepsilon_* = \left(- \otimes_{L(A^e)} A\right)} \LMod_{A}(\mathcal{M}^{C_2}) $. 
		It is left adjoint to the restriction functor $ \LMod_{A}(\mathcal{M}^{C_2}) \to \LMod_{A^e}(\mathcal{M}^{e}) $. 
\end{proof}

\addcontentsline{toc}{section}{\protect\numberline{\thesection}References}
\renewcommand*{\bibfont}{\small}

\printbibliography

\end{document}